\documentclass[10pt]{amsart}

\usepackage{graphicx}
\usepackage{times}
\usepackage[colorlinks=true,linkcolor=blue,citecolor=blue]{hyperref}%

\usepackage{xcolor}
\usepackage{hyperref}
\hypersetup{
    linktoc=all,
    colorlinks=true
}
\usepackage{stmaryrd}
\usepackage{pdfrender,xcolor}

\newtheorem{theorem}{Theorem}[section]
\newtheorem{lemma}[theorem]{Lemma}
\newtheorem{corollary}[theorem]{Corollary}
\newtheorem{prop}[theorem]{Proposition}

\newtheorem{ass}[theorem]{Assumption}
\newtheorem{notation}[theorem]{Notation}

\theoremstyle{definition}
\newtheorem{definition}[theorem]{Definition}

\theoremstyle{remark}
\newtheorem{remark}[theorem]{Remark}
\numberwithin{equation}{section}

\usepackage[top=0.75in, bottom=0.75in, left=0.75in, right=0.75in]{geometry}

\usepackage[scr]{rsfso}
\usepackage[english]{babel}
\usepackage{fancyhdr}
\usepackage{amsmath}
\usepackage{amsthm}
\usepackage{amssymb}
\usepackage{eucal}
\usepackage{comment}
\usepackage{enumitem}
\setlist{leftmargin=*}
\usepackage[integrals]{wasysym}

\newcommand\nc{\newcommand}
\nc{\on}{\operatorname}
\nc{\E}{\mathbf{E}}
\nc{\R}{\mathbb R}
\nc{\C}{\mathbb C}
\nc{\Q}{\mathbb Q}
\nc{\Z}{\mathbb Z}
\nc{\N}{\mathbb N}
\nc{\F}{\mathbb F}
\nc{\wt}{\widetilde}
\nc{\ol}{\overline}
\nc{\short}[3]{0 \longrightarrow #1 \longrightarrow #2 \longrightarrow #3 \longrightarrow 0}
\nc{\pd}[2]{\frac{\partial #1}{\partial #2}}
\nc{\rnc}{\renewcommand}
\nc{\e}{\varepsilon}
\nc{\DMO}{\DeclareMathOperator}
\nc{\grad}{\nabla}
\nc{\Exp}{\mathbf{Exp}}
\nc{\fsp}{\fontdimen2\font=2.17pt}

\rnc{\t}{\mathfrak{t}}
\nc{\s}{\mathfrak{s}}
\rnc{\r}{\mathfrak{r}}
\nc{\x}{\mathrm{x}}
\nc{\y}{\mathrm{y}}
\nc{\z}{\mathrm{z}}
\nc{\w}{\mathrm{w}}
\rnc{\and}{\quad\mathrm{and}\quad}

\rnc{\leq}{\leqslant}
\rnc{\geq}{\geqslant}
%\rnc{\int}{\mathlarger{\varint}}
\rnc{\d}{\mathrm{d}}
\rnc{\O}{\mathrm{O}}
\rnc{\exp}{\mathbf{Exp}}
\newenvironment{nouppercase}{%
  \renewcommand{\uppercasenonmath}[1]{}}{}
\pagestyle{plain}

\def\XXint#1#2#3{{\setbox0=\hbox{$#1{#2#3}{\int}$ }
\vcenter{\hbox{$#2#3$ }}\kern-.6\wd0}}

\title{\fontdimen2\font=1.7pt\Large KPZ Equation from non-simple variations on open ASEP  \vspace{-0.3cm}}
\author{ \large Kevin Yang}
\usepackage{setspace}
\begin{document}
%\pdfrender{StrokeColor=gray,TextRenderingMode=2,LineWidth=0.01pt}
\setstretch{0.99}
\fontdimen2\font=1.7pt
\raggedbottom
\begin{nouppercase}
\maketitle
\end{nouppercase}
\vspace{-15pt}
\begin{center}
\today
\end{center}
\begin{abstract}
\fontdimen2\font=1.7pt This paper has two main goals. The first is universality of the KPZ equation for fluctuations of dynamic interfaces associated to interacting particle systems in the presence of open boundary. We consider generalizations on the open-ASEP from \cite{CS,P} but admitting non-simple interactions both at the boundary and within the bulk of the particle system. These variations on open-ASEP are \emph{not} integrable models, similar to the long-range variations on ASEP considered in \cite{DT,Y}. We establish the KPZ equation with the appropriate Robin boundary conditions as scaling limits for height function fluctuations associated to these non-integrable models, providing further evidence for the aforementioned universality of the KPZ equation. We specialize to compact domains and address non-compact domains in a second paper \cite{Y20}. The procedure that we employ to establish the aforementioned theorem is the second main point of this paper. Invariant measures in the presence of boundary interactions generally lack reasonable descriptions. Thus, global analyses done through the invariant measure, including the theory of energy solutions in \cite{GJ15,GJ16,GJS15}, is immediately obstructed. To circumvent this obstruction, we appeal to the almost entirely local nature of the analysis in \cite{Y}.
\end{abstract}

{\hypersetup{linkcolor=blue}
\setcounter{tocdepth}{1}
\tableofcontents}

%%%
\section{Introduction}
%%%
\fsp The Kardar-Parisi-Zhang (KPZ) equation, on a one-dimensional domain $\mathbb{D}$ is the following nonlinear SPDE:
\begin{align}
{\partial_{T}\mathbf{h} \ = \ 2^{-1}\alpha\Delta\mathbf{h} \ - \ 2^{-1}\alpha'|\grad\mathbf{h}|^{2} \ + \ \alpha^{1/2} \xi \quad \mathrm{on} \ \ \R_{>0} \times {\mathbb{D}}.} \label{eq:KPZ}
\end{align}
Above, $\alpha \in \R_{>0}$ is the diffusivity parameter, while $\alpha' \in \R$ provides the effective drift for the field $\mathbf{h}^{N}$. Moreover, the field $\xi$ is space-time white noise. Lastly, it is implicitly understood that if $\mathbb{D} \subseteq \R$ has one-sided or two-sided boundary, the Laplacian is equipped with Robin boundary parameters $\mathscr{A}_{\pm} \in \R$; precisely, if $\mathbb{D} = [a,b]$, then we stipulate boundary data $\partial_{X}\mathbf{h}|_{X=a} = \mathscr{A}_{-}$ and $\partial_{X}\mathbf{h}|_{X=b} = \mathscr{A}_{+}$, where we certainly allow $a = -\infty$ or $b = \infty$, in which case the corresponding boundary data is removed.

Historically, the KPZ equation \eqref{eq:KPZ} was obtained through physical reasoning via the renormalization group to provide a universal model for fluctuations of dynamical random interfaces; see \cite{KPZ}. In physics and applied sciences, several examples of random interfaces of interest include paper-wetting, burning fronts, crack formation, and epidemics; precise examples include the ballistic deposition model, and the Eden model. We refer to \cite{C11} for a more detailed discussion. Though the renormalization group methods employed in \cite{KPZ} are non-rigorous, in work of Bertini-Giacomin in \cite{BG}, the authors rigorously prove that the fluctuations of the dynamical interface known as the solid-on-solid model has the KPZ equation as its continuum limit. Inspired by this result and duality of this interface with the interacting particle system known as the asymmetric simple exclusion process (ASEP), the KPZ equation has been rigorously established as the continuum model for fluctuations of interfaces associated to a handful of other interacting particle systems, which we discuss in more detail below. First, let us somewhat precisely introduce ASEP as a continuous-time stochastic process.
%%%
\begin{itemize}[leftmargin=*]
\item Consider any configuration of particles on the full-lattice $\Z$ subject to the constraint that each site admits at most one particle.
\item The collection of particles in the configuration performs jointly independent simple random walks with ``weak" asymmetry; here, ``weak" refers to an asymmetry that vanishes as the system becomes larger, namely with more particles and the speed of the simple random walks increasing.
\item The simple random walk dynamics in the previous bullet point are subject to another exclusion principle, namely that particles attempting to jump to already-occupied sites are prohibited from executing the jump.
\item The associated interface is defined in Definition \ref{definition:HFmCH}.
\end{itemize}
%%%

Since the work of Bertini-Giacomin in \cite{BG}, an open problem became to establish the same result but for variations on ASEP, replacing the simple random walk for the individual particles with random walks whose step distributions admit long-range jumps; see the list of ``Big Picture Questions" from the workshop on the KPZ equation from the American Institute of Mathematics in 2013. Assuming the restrictive assumption of beginning the particle system at an invariant measure, this was achieved in \cite{GJ16}. Concerning generic initial data, such a universality result was ultimately established in \cite{Y} by localizing certain aspects of the analysis in \cite{GJ16}; for the maximal jump-length less than or equal to 3, however, it was achieved by much simpler means in \cite{DT}. Meanwhile, in work of Corwin-Shen in \cite{CS}, the authors analyze interface fluctuations for open-ASEP, which is a variation of ASEP on the half-space with additional boundary interactions. In particular, in \cite{CS} the authors establish a KPZ limit with boundary conditions determined by the boundary interactions in the particle system. This was furthered in \cite{P} for a wider family of allowable boundary interactions. For more detailed discussion of open-ASEP, we refer the reader to both \cite{CS,P}. Again, let us somewhat precisely introduce open-ASEP as another continuous-time stochastic process.
%%%
\begin{itemize}[leftmargin=*]
\item First, we introduce open-ASEP on the half-space $\Z_{\geq 1}$. Again, consider any particle configuration on $\Z_{\geq 1}$ subject to the constraint that each site hosts at most one particle. The system of particles performs jointly independent simple random walks on $\Z_{\geq 1}$ with ``weak" right-wards asymmetry subject to preventing any attempt jumps onto already-occupied sites. Additionally consider annihilation-creation interactions at the site $1 \in \Z_{\geq 1}$. More precisely, with ``weak" asymmetry biased towards one of the annihilation or creation dynamic, the site $1 \in \Z_{\geq 1}$ interacts with a reservoir that either annihilates an existing particle at this site or creates a particle when the site is vacant.
\item Second, we introduce open-ASEP on the interval $[1,N] \cap \Z$, where $N \in \Z_{>0}$ is the underlying scaling parameter from which we obtain the continuum limit. This process is defined in exactly the same manner upon replacing the half-space with the interval; we also introduce analogous boundary interactions at the boundary $N \in [1,N]\cap\Z$.
\end{itemize}
%%%
We now define precisely what it exactly means to solve \eqref{eq:KPZ}; similar to \cite{BG,CS,DT,P,Y}, the following \emph{Cole-Hopf} solution theory employs another auxiliary linear SPDE that is well-posed in the space of space-time continuous functions with probability 1.{
%%%
\begin{itemize}
\item Take any pair of constants $\mathscr{A}_{\pm}$ and define $\mathbf{Z}$ to solve, in a mild sense, the following SPDE that we refer to as $\mathrm{SHE}(\mathscr{A}_{\pm})$, in which $\lambda=\alpha^{-1}\alpha'$ and the Laplacian has \emph{multiplicative} boundary conditions $\partial_{X}\mathbf{Z}|_{X=a}=\mathscr{A}_{-}\mathbf{Z}$ and $\partial_{X}\mathbf{Z}|_{X=b}=\mathscr{A}_{+}\mathbf{Z}$
\begin{align}
\partial_{T}\mathbf{Z} \ = \ 2^{-1}\alpha\Delta\mathbf{Z} \ + \ \lambda\alpha^{1/2} \mathbf{Z}\xi \quad \mathrm{on} \ \ \R_{>0} \times \mathbb{D}; \label{eq:SHE}
\end{align}
\item Third, by the comparison result in Proposition 2.7 from \cite{CS}, which is identical to the comparison theorem in \cite{Mu} at the level of proof, positive initial data remains positive with probability 1; define $\mathbf{h} = -\lambda^{-1}\log\mathbf{Z}$.
\end{itemize}
%%%
}
We now combine the two stories from the previous discussions. We consider variations on open-ASEP upon replacing the nearest-neighbor structure of the random walks with random walks whose step distribution allow for jumps of arbitrary length, and we establish a KPZ limit with boundary as the continuum limit for the associated interface fluctuations. The difficulties and interesting aspects of such a result are explained as follows.
%%%
\begin{itemize}[leftmargin=*]
\item Proceeding in reverse order, the open-ASEP system considered in \cite{CS,P} exhibits a self-duality property which reveals its integrability property as an interacting particle system. The minute we adapt the step distribution for the underlying random walks for open-ASEP to admit jumps of arbitrary length, this algebraic property is lost. In particular, like \cite{DT,GJ16,Y}, we take a step towards the universality of the KPZ equation with boundary.
\item Although the current discussion presents \cite{Y} as a possible robust method for establishing universality of the KPZ equation among a family of interface fluctuations associated to interacting particle systems, there is a significant deficiency -- we require an explicit understanding of the invariant measures for ASEP and its variations with long-range interactions on the full-line $\Z$ or the discrete-torus. Such understanding is fortunately attainable through completely elementary means. However, for open-ASEP on a domain with boundary, this requires a sophisticated approach known as the matrix product ansatz whose analysis is facilitated through another algebraic duality with a rather complicated stochastic process. Moreover, for long-range variations on open-ASEP, the matrix product ansatz likely fails. 
\end{itemize}
%%%
The solution discussed in great detail in this paper to the deficiencies in existing technology discussed above is the following strategy. We adopt the framework of \cite{Y} to analyze fluctuations of the interfaces associated to non-simple variations on open-ASEP. However, as alluded to in the second bullet point, the analysis in \cite{Y} cannot be applied directly. The key observation is that the analysis in \cite{Y} is \emph{entirely local}, and away from the boundary, which constitutes almost all of the domain at hand, the local dynamics are exactly that of non-simple variations on ASEP \emph{without} boundary interactions. A main goal for this paper is to fully exploit and develop the local nature of the techniques introduced in \cite{Y} towards analyzing fluctuations of interfaces in the KPZ-class corresponding to interesting particle systems given by long-range variations on ASEP, as well as many other interacting particle systems. Indeed, there is current work-in-progress by the author to implement the local analysis developed for this paper towards establishing KPZ-type fluctuations for the interfaces associated to variations on ASEP with slow bonds.

We conclude this preliminary discussion with the possible alternative approaches to universality of the KPZ equation.
%%%
\begin{itemize}[leftmargin=*]
\item First, consider the energy solution theory in \cite{GJ15,GJ16,GJS15}, for example. This probabilistic approach, which was substantially furthered in \cite{GP}, was designed as an approach to the KPZ equation through a martingale problem for its weak derivative. Given its structure as a martingale problem, this solution theory was engineered to establish convergence in fashion almost identical to our main interest but with hopes to successfully treat a large class of interacting particle systems. However, its methodology for defining the problematic nonlinearity in the KPZ equation is entirely reliant on analyzing exclusively interacting particle systems both admitting and starting at an invariant measure satisfying strong mixing and ergodicity assumptions. In particular, any attempt to approach KPZ via energy solutions for general Robin boundary parameters $\mathscr{A}_{\pm} \in \R$ is currently immediately obstructed by the lack of any reasonable description for the invariant measure of non-simple variations on open-ASEP. Even in the nearest-neighbor open-ASEP, the invariant measure is quite delicate to analyze because it depends heavily on the matrix product ansatz; see \cite{CS} for a further discussion.
\item Second, consider the theory of regularity structures initiated in \cite{Hai13} and developed in greater sophistication in \cite{Hai14}. To the author's knowledge there has not yet been any literature for analyzing interface fluctuations associated to interacting particle systems via regularity structures. Moreover, the theory of regularity structures is limited in the following way.
%%%
\begin{itemize}
\item The approach via regularity structures relies heavily on precise regularity estimates for the heat kernel corresponding to the parabolic component of the KPZ equation. However, in this paper, the height function, at the microscopic level of the interacting particle system, evolves according to a stochastic evolution equation in which the parabolic component does not clearly admit heat kernel estimates of the necessary precision.
\end{itemize}
%%%
\end{itemize}
%%%
%%%
\subsection{Definitions}
%%%
We proceed to precisely introduce the relevant interacting particle systems. We define the non-simple generalization on open-ASEP as a Markov process.
%%%
\begin{itemize}
\item We consider the interacting particles updating on the asymptotically compact space {$\mathbb{I}_{N,0} \overset{\bullet}= \llbracket0,N\rrbracket$. We also define $\mathbb{I}_{\infty} \overset{\bullet}= [0,1]$.}
{\item Define the state space of our process to be $\Omega = \{\pm1\}^{\mathbb{I}_{N,0}}$. Moreover, provided any sub-lattice $\mathbb{I} \subseteq \Z$, let us define the space $\Omega_{\mathbb{I}} = \{\pm1\}^{\mathbb{I}}$. Equivalently, we define the space $\Omega_{\mathbb{I}}$ to denote the set of admissible particle configurations, or equivalently spin configurations, on the sub-lattice $\mathbb{I} \subseteq \Z$. Elements in $\Omega_{\mathbb{I}}$ are denoted by $\eta$. We also note that the particle system will not see the point $0$. Its height function, however, will be defined at the origin $0$; see Definition \ref{definition:HFmCH} for details.}{\item Given any pair of sub-lattices satisfying the containment relation $\mathbb{I} \subseteq \mathbb{I}' \subseteq \Z$, we obtain the induced canonical ``contravariant" projection operator $\Pi_{\mathbb{I}' \to \mathbb{I}}: \Omega_{\mathbb{I}'} \to \Omega_{\mathbb{I}}$, which justifies the lack of explicitly labeling to which space $\Omega_{\mathbb{I}}$ the particle configuration $\eta$ belongs to; thus, for any $\eta \in \Omega$, we obtain a canonical particle configuration $\eta \in \Omega_{\mathbb{I}}$ for any sub-lattice $\mathbb{I} \subseteq \mathbb{I}_{N,0}$:
\begin{align}
\Pi_{\mathbb{I} \to \mathbb{I}'}: \Omega_{\mathbb{I}} \to \Omega_{\mathbb{I}'}, \quad (\eta_{x})_{x \in \mathbb{I}} \mapsto (\eta_{x})_{x \in \mathbb{I}'}.
\end{align}
}{\item Consider any possibly $N$-dependent positive integer $m_{N}$ along with three generic sets:
\begin{subequations}
\begin{align}
\mathrm{A}^{N} \ &\overset{\bullet}= \ \left\{ \alpha_k^{N} \in \R_{>0} \right\}_{k = 1}^{m_{N}} \and \mathrm{B}^{N} \ \overset{\bullet}= \ \left\{\beta_{k,\pm}^{N,\pm} \in \R_{>0}\right\}_{k=1}^{m_{N}} \and \Gamma^{N} \ \overset{\bullet}= \ \left\{ \gamma_k^{N} \in \R_{\leq 0} \right\}_{k = 1}^{m_{N}}.
\end{align}
\end{subequations}
}
Second, {given any} $x,y \in \Z$, let us write $\mathfrak{S}_{x,y}$ to be the generator corresponding to the symmetric simple exclusion process on the bond $\{x,y\}$ with rate 1. Moreover, provided any site $x \in \Z$, let us denote by $\mathfrak{T}_{x,+}$ {the} generator corresponding to the totally asymmetric Glauber dynamic for this site, which updates the spin from $-$ to $+$ with rate 1. Similarly, we define $\mathfrak{T}_{x,-}$ to be the generator for the totally asymmetric Glauber dynamic which updates the spin from $+$ to $-$ with rate 1. {Define $\mathscr{L}^{N,!!}$ via}
\begin{align}
\mathscr{L}^{N,!!} \varphi(\eta) \ &\overset{\bullet}= \ 2^{-1} N^{2} \sum_{k = 1}^{m_{N}} \alpha^{N}_{k} \sum_{x \in \mathbb{I}_{N,0}} \left(\left(1 + \frac{\gamma_{k}^{N}}{\sqrt{N}} \right) \frac{1-\eta_{x}}{2}\frac{1+\eta_{x+k}}{2} \ + \ \left(1 - \frac{\gamma_{k}^{N}}{\sqrt{N}}\right) \frac{1 + \eta_{x}}{2} \frac{1 - \eta_{x+k}}{2}\right) \mathfrak{S}_{x,x+k} \varphi \\
&\quad\quad + \ N^{2} \sum_{k = 1}^{m_{N}} \beta_{k,+}^{N,-} \frac{1-\eta_{k}}{2} \mathfrak{T}_{k,+}\varphi \ + \ N^{2}\sum_{k = 1}^{m_{N}} \beta_{k,-}^{N,-} \frac{1+\eta_{k}}{2}\mathfrak{T}_{k,-}\varphi \nonumber \\
&\quad\quad + \ N^{2} \sum_{k=1}^{m_{N}} \beta_{k,+}^{N,+}\frac{1-\eta_{N-k+1}}{2}\mathfrak{T}_{k,+}\varphi \ + \ N^{2}\sum_{k=1}^{m_{N}} \beta_{k,-}^{N,+}\frac{1+\eta_{N-k+1}}{2}\mathfrak{T}_{k,-}\varphi. \nonumber
\end{align}
{Let $\eta_{T}^{N}$ be the particle configuration after time-$T$ under the above dynamic. We also define $\mathscr{F}_{\bullet}$ as the canonical filtration.}
\end{itemize}
%%%
We proceed to introduce the interface fluctuations associated to the interacting particle system.
%%%
\begin{definition}\fsp \label{definition:HFmCH}
{Provided any $T\geq0$, we will first define $\wt{\mathbf{h}}^{N}$ at space-time coordinate $(T,0)$ to be twice the net flux of particles at the site $1$, with the convention that annihilation of particles contributes a positive flux of $+1$ and creation of particles contributes a negative flux of $-1$. Observe that $\wt{\mathbf{h}}^{N} = 0$ at space-time coordinate $(0,0)$. Moreover, provided any $x\in\mathbb{I}_{N,0}$, let us now define the associated \emph{height function} and \emph{microscopic Cole-Hopf transform}:}
\begin{subequations}
\begin{align}
\mathbf{h}_{T,x}^{N} \ &\overset{\bullet}= \ N^{-1/2}\wt{\mathbf{h}}_{T,0}^{N} \ + \ N^{-1/2} {\sum}_{y = 1}^{x} \eta_{T,y}^{N} \and \mathbf{Z}_{T,x}^{N} \ \overset{\bullet}= \ \exp\left(-\lambda_{N}\mathbf{h}_{T,x}^{N} + \nu_{N} T\right),
\end{align}
\end{subequations}
{where $\nu_{N}$ is the same drift introduced in \cite{DT}, which encodes the averaged instantaneous flux of the particle system plus lower-order terms that come from elementary algebraic manipulations based on the exponential nature of $\mathbf{Z}^{N}$; see \eqref{eq:lambdastuff} for $\lambda_{N}$:}
\begin{align}
\nu_{N} \ &\overset{\bullet}= \ 4^{-1}N {\sum}_{k = 1}^{\infty} k\left(\gamma_{k}^{N} u(N^{-1}) - \alpha_{k}^{N} v(N^{-1})\right) \ + \ \lambda_{N}^{2} N {\sum}_{k = 1}^{\infty} \wt{\alpha}_{k}^{N} \left( k + \lambda_{N}^{2} \frac{6k^{2} - 5k}{12N} \right);
\end{align}
above, we defined $u(x) = x^{-1/2} \sinh(2\lambda_{N}x)$ and $v(x) = x^{-1}(\cosh(2\lambda_{N}x) - 1)$, where we have introduced special parameters $\{\wt{\alpha}_{k}^{N}\}_{k = 1}^{m_{N}}$ satisfying $|\wt{\alpha}_{k}^{N} - \alpha_{k}^{N}| \lesssim k N^{-1}$ for a universal implied constant; we refer to Notation \ref{notation:LapCoefficients} for a precise definition.
\end{definition}
%%%
%%%
\subsection{Main Results}
%%%
Before introducing the interface fluctuations, {a few preliminary assumptions concerning the structural aspects of the interacting particle system itself will now be introduced in Assumption \ref{ass:Category1} below. This first assumption is a combination of the separate assumptions in \cite{Y}.}
%%%
\begin{ass}\fsp \label{ass:Category1}
First, we assume the maximal jump-length is uniformly bounded; equivalently, we assume $m_{N} \lesssim 1$ with a universal implied constant. Moreover, there exist universal constants $\alpha,\alpha'$ such that
\begin{align}
{\sum}_{k=1}^{m_{N}} k^{2} \alpha_{k}^{N} \ &\to_{N \to \infty} \ \alpha; \quad {\sum}_{k=1}^{m_{N}} k \alpha_{k}^{N}\gamma_{k}^{N} \ \to_{N\to\infty} \ \alpha'.
\end{align}
Additionally, we assume $\alpha_{1}^{N} \gtrsim 1$ for some universal implied constant; equivalently, the coefficient corresponding to the nearest-neighbor symmetric component of the random walk is uniformly elliptic. Second, we define the \emph{specialized} asymmetry:
\begin{align}
\alpha_{k}^{N} \gamma_{k}^{N,\ast} \ &\overset{\bullet}= \ 2 \lambda_{N} {\sum}_{\ell = k+1}^{m_{N}} \frac{\ell - k}{k} \alpha_{\ell}^{N} \ + \ \lambda_{N} \alpha_{k}^{N}; \quad \lambda_{N} \ \overset{\bullet}= \ \left({\sum}_{k = 1}^{m_{N}} k\alpha_{k}^{N} \gamma_{k}^{N}\right)\left({\sum}_{k=1}^{m_{N}} k^{2} \alpha_{k}^{N}\right)^{-1}. \label{eq:lambdastuff}
\end{align}
We assume the following a priori bound for some sufficiently small though universal positive constant $\beta_{\mathrm{c}} \in \R_{>0}$:
\begin{align}
{\sum}_{k=1}^{m_{N}} k \alpha_{k}^{N} |\gamma_{k}^{N} - \gamma_{k}^{N,\ast}| \ &\lesssim \ N^{-1/2+\beta_{\mathrm{c}}}.
\end{align}
\end{ass}
%%%
%%%
\begin{remark}\fsp
We address a few subtleties in Assumption \ref{ass:Category1} above. { We assumed a universal upper bound on the maximal jump-length for the random walk. This should certainly be removable upon appropriate modifications of our analysis, but this seems to be just an elementary exercise in the theory of random walks and the associated elliptic and parabolic problems. Second, though the second a priori estimate in Assumption \ref{ass:Category1} above could be replaced by a supremum over jump-lengths or any topology, if one were to extend our analysis to unbounded jump-length then the topology used above would be the appropriate topology.}
\end{remark}
%%%
Roughly speaking, Assumption \ref{ass:Category1} {gives} structural assumptions {on} the ``Kawasaki" component of the particle system, {namely the} spin-exchanges. The second and final assumption we require concerns the ``Glauber" component of the particle system, or more precisely the boundary interactions which create and annihilate particles when appropriate. {Similar to \cite{CS}, it corresponds to zooming into the so-called ``triple critical point" of a phase diagram associated to open ASEP models; see Remark 2.11 of \cite{CS}.}
%%%
\begin{ass}\fsp \label{ass:Category2}
{Take any pair $\mathscr{A}_{\pm}\in\R$. We say \emph{Assumption \ref{ass:Category2}} holds for $\mathscr{A}_{\pm}\in\R$ if the coefficients $\{\beta_{j,\pm}^{N,\pm}\}_{j=1}^{m_{N}}$ satisfy}
\begin{subequations}
\begin{align}
\beta_{j,+}^{N,\pm} \ + \ \beta_{j,-}^{N,\pm} \ &= \ {\sum}_{k=j}^{m_{N}} \wt{\alpha}_{k}^{N} \ + \ \mathscr{O}(N^{-1/2}) \and  \beta_{j,+}^{N,\pm} \ - \ \beta_{j,-}^{N,\pm} \ = \ \mathbf{I}_{\pm} \ + \ \mathbf{II}_{\pm} \ + \ \mathbf{III}_{\pm} \ + \ \mathbf{IV}_{\pm}; 
\end{align}
\end{subequations}
above, we have introduced the quantities
\begin{subequations}
\begin{align}
\mathbf{I}_{\pm} \ &\overset{\bullet}= \ -{\sum}_{\ell = j+1}^{m_{N}} \left( \beta_{\ell,+}^{N,\pm} - \beta_{\ell,-}^{N,\pm} \right); \\
\mathbf{II}_{+} \ &\overset{\bullet}= \ 2^{-1}\lambda_{N} N^{-1/2} \left({\sum}_{k = 1}^{m_{N}} k \alpha_{k}^{N} \ + \ {\sum}_{k = 1}^{j-1} k \alpha_{k}^{N} \ + \ (j-1) {\sum}_{k = j}^{m_{N}} \alpha_{k}^{N}\right); \\
\mathbf{II}_{-} \ &\overset{\bullet}= \ 2^{-1}\lambda_{N} N^{-1/2} \left({\sum}_{k = 1}^{m_{N}} k \alpha_{k}^{N} \ + \ {\sum}_{k = 1}^{j-1} k \alpha_{k}^{N} \ + \ (j-1) {\sum}_{k = j}^{m_{N}} \alpha_{k}^{N}\right); \\
\mathbf{III}_{\pm} \ &\overset{\bullet}= \ \lambda_{N}^{-1}N^{-1/2}\mathscr{A}_{\pm}; \\
\mathbf{IV}_{\pm} \ &\overset{\bullet}= \ -2\lambda_{N}^{-1}N^{1/2} \kappa_{N,j-1}^{\pm},
\end{align}
\end{subequations}
where in what follows, provided $x \in \Z_{\geq 0}$, we have $k_{x}^{-} = k \wedge x$ and $k_{x}^{+} = k \vee (N-x+1)$:
\begin{align}
4\kappa_{N,x}^{\pm} \ &= \ {\sum}_{k = 1}^{m_{N}} \left( \alpha_{k}^{N} + \frac{\alpha_{k}^{N}\gamma_{k}^{N}}{\sqrt{N}} \right)k_{x}^{\pm} \left( \exp(-2\lambda_{N}N^{-\frac12}) - 1 \right) \\
&\quad+ \ {\sum}_{k = 1}^{m_{N}} \left( \alpha_{k}^{N} - \frac{\alpha_{k}^{N}\gamma_{k}^{N}}{\sqrt{N}} \right)k_{x}^{\pm} \left( \exp(2\lambda_{N}N^{-\frac12}) - 1 \right) \ + \ \mathscr{O}(N^{-3/2}). \nonumber
\end{align}
\end{ass}
%%%
%%%
\begin{remark}\fsp
Assumption \ref{ass:Category2}, though technically involved, is a non-simple variation of ``Liggett's condition" for open-ASEP; see Definition 2.8 and Remark 2.9 in \cite{CS}. In particular, the role of Assumption \ref{ass:Category2} is exactly that of the aforementioned Liggett's condition in \cite{CS,P}; it identifies \emph{which} boundary condition for the KPZ equation appears in the continuum limit, which is manifest in its facilitation of an ``approximate" duality for the particle system of interest.
\end{remark}
%%%
The final assumption we introduce before presenting the {main results} concerns {``analytic" stability of the height function $\mathbf{h}^{N}$}. 
%%%
\begin{definition}\fsp 
{We say a probability measure $\mu$ on $\Omega$ is \emph{near-stationary} if it satisfies the following moment estimates provided any moment exponent $p>1$ along with any Holder regularity exponent $u \in (0,2^{-1})$:}
\begin{subequations}
\begin{align}
\| \mathbf{Z}_{0,X}^{N} \|_{\mathscr{L}^{2p}_{\omega}} \ &\lesssim_{p} \ 1 \and \| \mathbf{Z}_{0,X}^{N} \ - \ \mathbf{Z}_{0,Y}^{N} \|_{\mathscr{L}^{2p}_{\omega}} \ \lesssim_{p,u} \ N^{-u} |X-Y|^{u}.
\end{align}
\end{subequations}
{Moreover, we require some continuous function $\mathbf{Z}_{0,\bullet}$ such that $\mathbf{Z}^{N}_{0,N\bullet} \to \mathbf{Z}_{0,\bullet}$ in the uniform topology on $\mathscr{C}(\mathbb{I}_{\infty})$.}
\end{definition}
%%%
We now introduce our main results; the following Theorem \ref{theorem:KPZ} depends heavily on the technology of the following Skorokhod spaces of cadlag trajectories valued in the space $\mathscr{C}(\mathbb{I}_{\infty})$ of continuous functions on $\mathbb{I}_{\infty}$; see \cite{Bil} for its properties:
\begin{align}
\mathscr{D}_{\infty} \ &\overset{\bullet}= \ \mathscr{D}(\R_{\geq 0},\mathscr{C}(\mathbb{I}_{\infty})).
\end{align}
{We also define the spatially rescaled process $\mathbf{Z}^{N,!}_{\bullet,\bullet}=\mathbf{Z}^{N}_{\bullet,N\bullet}$, so that near-stationary initial data implies convergence of $\mathbf{Z}^{N,!}_{0}$.}
%%%
\begin{theorem}\fsp \label{theorem:KPZ}
{Suppose \emph{Assumption \ref{ass:Category2}} holds for a pair of universal parameters $\mathscr{A}_{\pm} \in \R$. First, assume near-stationary initial data. The space-time process $\mathbf{Z}^{N,!}$ converges to the solution of the $\emph{SHE}(\mathscr{A}_{\pm})$ with parameters $\alpha,\lambda \in \R$ defined in \emph{Assumption \ref{ass:Category1}} in the Skorokhod space $\mathscr{D}_{\infty}$.}
\end{theorem}
%%%
{We conclude this section with commentary concerning Theorem \ref{theorem:KPZ}. First, for the proof of Theorem \ref{theorem:KPZ}, we will conveniently assume that $\beta_{\mathrm{c}} = 0$, where we recall $\beta_{\mathrm{c}}$ is the sufficiently small constant from Assumption \ref{ass:Category1}. The interested reader is certainly invited to find the optimal value of $\beta_{\mathrm{c}}$ allowable under our proof of Theorem \ref{theorem:KPZ}. {Second, narrow-wedge initial data, for which $\mathbf{Z}^{N,!}$ converges to a delta function, is addressed in \cite{P} given its connection to exact random matrix statistics. We will not address this here because it would require adapting the probabilistic heart of the proof of Theorem \ref{theorem:KPZ} in a topology that controls the short-time singularity of narrow-wedge data. This requires only relatively simple but a considerable number of technical adjustments. This will be done in a future work.} Third, we replace all time-intervals $\R_{\geq 0}$ defining the Skorokhod spaces in Theorem \ref{theorem:KPZ} with the family of compact time-intervals $[0,\mathfrak{t}^{\max}] \subseteq \R_{\geq 0}$, where the time-parameter $\mathfrak{t}^{\max}\geq0$ is given by a generic $N$-independent terminal time. Indeed, convergence in $\mathscr{D}_{\infty}$ is equivalent to convergence up to an arbitrary but finite terminal time horizon. Fourth, towards the non-compact regime, consider the same identical particle system but where the domain $\mathbb{I}_{N,0} \subseteq \Z_{\geq 0}$ is replaced by the non-compact domain $\mathbb{I}_{N,0}' = [0,N^{1+\e}] \subseteq \Z_{\geq 0}$ with $\e>0$. If this parameter $\e$ is sufficiently small, the methods in this paper also provide the scaling limit for the associated height function, which is the solution to $\mathrm{SHE}(\mathscr{A}_{-})$ on the non-compact domain $\R_{\geq 0}$; in particular, the Robin boundary parameter $\mathscr{A}_{+}$ becomes irrelevant in the large-$N$ limit. We do not explicitly provide any analysis for this, however, because the forthcoming paper \cite{Y20} addresses a more general problem of extension to the genuinely non-compact setting which requires significantly more analysis. Lastly, in the appendix is an index for relevant notation.}

{We now give a brief outline for the rest of this paper. Section \ref{section:mSHE} derives an approximate microscopic stochastic heat equation for the microscopic Cole-Hopf transform. Sections \ref{section:HKEI}, \ref{section:HKEII}, and \ref{section:HKEIII} provide the necessary heat kernel estimates in studying said stochastic equation. Section \ref{section:Q} provides auxiliary estimates that are necessary for studying said stochastic equation. Section \ref{section:d1b} isolates the most delicate terms in the stochastic equation and analyzes them using a dynamical version of the one-block scheme. Section \ref{section:Proof} studies the remaining terms in the stochastic equation, namely those not treated in Section \ref{section:d1b}, using basically an approach that is based on hydrodynamic limits that was employed in \cite{DT}, thereby proving Theorem \ref{theorem:KPZ}.}
%%%
\subsection{Acknowledgements}
%%%
The author thanks Amir Dembo for detailed conversations and valuable advice. The author thanks Cole Graham for insight into parabolic problems with boundary and also Ivan Corwin for suggesting the problem. The author is supported by the Northern California chapter of the ARCS Fellowship. Lastly, the author thanks anonymous reviewers for their valuable comments and suggestions.
%
%
%
%%%
\section{Microscopic Stochastic Heat Equation}\label{section:mSHE}
%%%
We first introduce the following notation.
%%%
\begin{notation}\fsp 
{Consider any $T \in \R_{\geq 0}$, any $x \in \mathbb{I}_{N,0}$ and any $k \in \llbracket1,m_{N}\rrbracket$. We define $\wt{\xi}_{T,x}^{N,k}(\pm)$ to be $\exp(-2\lambda_{N}N^{-1/2})-1$ times a standard Poisson process of the following particle-system-dependent rate 
\begin{align}
2^{-1}N^{2}\left( \alpha_{k}^{N} \pm \frac{\alpha_{k}^{N}\gamma_{k}^{N}}{\sqrt{N}} \right) \frac{1\mp\eta_{T,x}^{N}}{2}\frac{1\pm\eta_{T,x+k}^{N}}{2}.
\end{align}
Let $\wt{\xi}_{T,-}^{N,k}(\pm)$ and $\wt{\xi}_{T,+}^{N,k}(\pm)$ be $\exp(\pm2\lambda_{N}N^{-1/2})-1$ times Poisson processes of rate $r_{k,-}^{N,\pm},r_{k,+}^{N,\pm} \in \R_{\geq0}$, respectively, where
\begin{align}
r_{k,-}^{N,\pm} \ &= \ 2^{-1}N^{2}\beta_{k,\pm}^{N,-} (1\mp\eta_{T,k}^{N}), \quad r_{k,+}^{N,\pm} \ = \ 2^{-1}N^{2}\beta_{k,\pm}^{N,+}(1\mp\eta_{T,N-k+1}^{N}).
\end{align}
Lastly, denote by $\xi^{N}$ the associated Poisson-martingale to $\wt{\xi}^{N}$; we make this more precise in the proof of \emph{Proposition \ref{prop:SDECpt}}.}
\end{notation}
%%%
%%%
\begin{notation}\fsp \label{notation:LapCoefficients}
Provided any $k \in \llbracket1,m_{N}\rrbracket$, let us define $\wt{\alpha}_{k}^{N} \overset{\bullet}= \alpha_{k}^{N} + N^{-1} \lambda^{2} \frac{k-2}{2} \alpha_{k}^{N} - N^{-1} \frac{\lambda^{2}}{k} {\sum}_{\ell = k+1}^{m_{N}} (2\ell - k) \alpha_{\ell}^{N}$.
\end{notation}
%%%
%%%
\subsection{Stochastic Differential Equation}
%%%
Let us establish the SDE evolution governing the microscopic Cole-Hopf transform in this subsection. Provided the expansion via SDEs performed in Section 2 from \cite{DT} for the exclusion dynamic on the full lattice $\Z$, as the interactions are local it will often serve convenient to cite the expansion therein when boundary interactions are absent. Indeed, the purpose of this subsection is to address the contribution from boundary interactions.
%%%
\begin{prop}\fsp \label{prop:SDECpt}
Recall the maximal jump-length $m_{N} \in \Z_{>0}$.
%%%
\begin{itemize}
\item Consider any $x \in \llbracket 0, m_{N}-1 \rrbracket$; for deterministic coefficients $\kappa_{N,x}^{-} \in \R_{>0}$ and $\{\kappa_{j,\ell}^{-}\}_{j,\ell} \in \R_{>0}$, we have {
\begin{align}
N^{-2}\d\mathbf{Z}_{T,x}^{N} \ &= \ -4^{-1}{\sum}_{\ell = 0}^{x-1} \eta_{T,x-\ell}^{N}\left(\exp(-2\lambda_{N}N^{-1/2}) - 1 \right) {\sum}_{k = \ell}^{m_{N}} \left( \alpha_{k}^{N} - \frac{\alpha_{k}^{N}\gamma_{k}^{N}}{\sqrt{N}} \right)\mathbf{Z}_{T,x}^{N}\d T \label{eq:SDEB-} \\
&\quad\quad\quad + \ 4^{-1}{\sum}_{\ell = 0}^{x-1} \eta_{T,x-\ell}^{N}\left(\exp(2\lambda_{N}N^{-1/2}) - 1 \right) {\sum}_{k = \ell}^{m_{N}} \left( \alpha_{k}^{N} + \frac{\alpha_{k}^{N} \gamma_{k}^{N}}{\sqrt{N}}\right) \mathbf{Z}_{T,x}^{N} \ \d T \nonumber \\
&\quad\quad\quad + \ 4^{-1} {\sum}_{\ell = 1}^{m_{N}} \eta_{T,x+\ell}^{N} \left( \exp(-2\lambda_{N}N^{-1/2}) - 1 \right) {\sum}_{k = \ell}^{x+\ell-1} \left( \alpha_{k}^{N} - \frac{\alpha_{k}^{N}\gamma_{k}^{N}}{\sqrt{N}} \right)\mathbf{Z}_{T,x}^{N}\d T \nonumber \\
&\quad\quad\quad - \ 4^{-1} {\sum}_{\ell = 1}^{m_{N}} \eta_{T,x+\ell}^{N}\left(\exp(2\lambda_{N}N^{-1/2}) - 1\right) {\sum}_{k = \ell}^{x+\ell-1} \left( \alpha_{k}^{N} + \frac{\alpha_{k}^{N}\gamma_{k}^{N}}{\sqrt{N}} \right) \mathbf{Z}_{T,x}^{N} \ \d T \nonumber \\
&\quad\quad\quad - \ 4^{-1} {\sum}_{\ell = 1}^{m_{N}-x} \eta_{T,x+\ell}^{N}\beta_{x+\ell,+}^{N,-} \left(\exp(2\lambda_{N}N^{-1/2}) - 1 \right)\mathbf{Z}_{T,x}^{N}\d T \nonumber \\
&\quad\quad\quad + \ 4^{-1} {\sum}_{\ell = 1}^{m_{N}-x} \eta_{T,x+\ell}^{N}\beta_{x+\ell,-}^{N,-} \left(\exp(-2\lambda_{N}N^{-1/2}) - 1 \right)\mathbf{Z}_{T,x}^{N} \ \d T \nonumber \\
&\quad\quad\quad + \ \kappa_{N,x}^{-} \mathbf{Z}_{T,x}^{N} \ \d T \  + \ {\sum}_{\substack{j,\ell \in \Z_{>0}: \\ |j-\ell| \leq m_{N}}} \kappa_{j,\ell}^{-} \eta_{T,x-j}^{N} \eta_{T,x+\ell}^{N} \mathbf{Z}_{T,x}^{N} \ \d T \ + \ N^{-2}\mathbf{Z}_{T,x}^{N} \ \d\xi_{T,x}^{N}; \nonumber
\end{align}
}
the coefficients $\{\kappa_{j,\ell}^{-}\}_{j,\ell}$ satisfy the bounds $|\kappa_{j,\ell}^{-}| \lesssim_{p} |j-\ell|^{-p}$ for all $p \in \R_{>1}$, and for $k_{x}^{-} = k \wedge x$, we have
\begin{align*}
\kappa_{N,x}^{-} \ &= \ 4^{-1}{\sum}_{k = 1}^{m_{N}} \left( \alpha_{k}^{N} + \frac{\alpha_{k}^{N}\gamma_{k}^{N}}{\sqrt{N}} \right) \times k_{x}^{-} \left(\exp(-2\lambda_{N}N^{-1/2}) - 1 \right) \\
&\quad\quad\quad + \ 4^{-1}{\sum}_{k = 1}^{m_{N}} \left( \alpha_{k}^{N} - \frac{\alpha_{k}^{N}\gamma_{k}^{N}}{\sqrt{N}} \right) \times k_{x}^{-} \left(\exp(2\lambda_{N}N^{-1/2}) - 1 \right) \ + \ \mathscr{O}(N^{-3/2}),
\end{align*}
where the implied constant is universal. In particular, we have $\kappa_{N,x}^{-} = \mathscr{O}(N^{-1})$ with universal implied constant.
\item {Take any $x \in \llbracket N-m_{N}+1,N\rrbracket$; for some deterministic $\kappa_{N,x}^{+}>0$ and deterministic $x$-dependent $\{\kappa_{j,\ell}^{+}\}_{j,\ell}>0$, we have}
\begin{align}
N^{-2}\d\mathbf{Z}_{T,x}^{N} \ &= \ -4^{-1}{\sum}_{\ell = 1}^{N-x} \eta_{T,x+\ell}^{N}\left(\exp(-2\lambda_{N}N^{-1/2}) - 1 \right) {\sum}_{k = \ell}^{m_{N}} \left( \alpha_{k}^{N} - \frac{\alpha_{k}^{N}\gamma_{k}^{N}}{\sqrt{N}} \right)\mathbf{Z}_{T,x}^{N}\d T \label{eq:SDEB+}\\
&\quad\quad\quad + \ 4^{-1}{\sum}_{\ell = 1}^{N-x} \eta_{T,x+\ell}^{N}\left( \exp(2\lambda_{N}N^{-1/2}) - 1 \right) {\sum}_{k = \ell}^{m_{N}} \left( \alpha_{k}^{N} + \frac{\alpha_{k}^{N} \gamma_{k}^{N}}{\sqrt{N}} \right)\mathbf{Z}_{T,x}^{N} \ \d T \nonumber \\
&\quad\quad\quad + \ 4^{-1}{\sum}_{\ell = 0}^{m_{N}-1} \eta_{T,x-\ell}^{N}\left( \exp(-2\lambda_{N}N^{-1/2}) - 1 \right) {\sum}_{k = \ell+1}^{N-\ell-x} \left( \alpha_{k}^{N} - \frac{\alpha_{k}^{N}\gamma_{k}^{N}}{\sqrt{N}} \right)\mathbf{Z}_{T,x}^{N}\d T \nonumber \\
&\quad\quad\quad - \ 4^{-1}{\sum}_{\ell = 0}^{m_{N}-1} \eta_{T,x-\ell}^{N}\left( \exp(2\lambda_{N}N^{-1/2}) - 1\right) {\sum}_{k = \ell+1}^{N-\ell-x} \left( \alpha_{k}^{N} + \frac{\alpha_{k}^{N}\gamma_{k}^{N}}{\sqrt{N}} \right)\mathbf{Z}_{T,x}^{N}\d T \nonumber \\
&\quad\quad\quad - \ 4^{-1} {\sum}_{\ell = 0}^{m_{N} - N + x - 1} \eta_{T,x-\ell}^{N}\beta_{N-x+1+\ell,+}^{N,+} \left( \exp(2\lambda_{N}N^{-1/2}) - 1 \right)\mathbf{Z}_{T,x}^{N}\d T \nonumber \\
&\quad\quad\quad + \ 4^{-1}{\sum}_{\ell = 0}^{m_{N} - N + x - 1} \eta_{T,x-\ell}^{N}\beta_{N-x+1+\ell,-}^{N,+} \left( \exp(-2\lambda_{N}N^{-1/2}) - 1 \right) \mathbf{Z}_{T,x}^{N} \ \d T \nonumber \\
&\quad\quad\quad + \ \kappa_{N,x}^{+} \mathbf{Z}_{T,x}^{N} \ \d T \ + \  {\sum}_{\substack{j,\ell \in \Z_{>0}: \\ |j-\ell| \leq m_{N}}} \kappa_{j,\ell}^{+} \eta_{T,x-j}^{N} \eta_{T,x+\ell}^{N} \mathbf{Z}_{T,x}^{N} \ \d T \ + \ N^{-2}\mathbf{Z}_{T,x}^{N} \ \d\xi_{T,x}^{N}; \nonumber
\end{align}
again, the coefficients $\{\kappa_{j,\ell}^{+}\}_{j,\ell}$ satisfy the bounds $|\kappa_{j,\ell}^{+}| \lesssim_{p} |j-\ell|^{-p}$ for all $p \in \R_{>1}$, and for $k_{x}^{+} = k \wedge (N-x+1)$, 
\begin{align*}
\kappa_{N,x}^{+} \ &= \ 4^{-1}{\sum}_{k = 1}^{m_{N}} \left( \alpha_{k}^{N} + \frac{\alpha_{k}^{N}\gamma_{k}^{N}}{\sqrt{N}} \right) \times k_{x}^{+} \left(\exp(-2\lambda_{N}N^{-1/2}) - 1 \right) \\
&\quad\quad\quad + 4^{-1}{\sum}_{k = 1}^{m_{N}} \left( \alpha_{k}^{N} - \frac{\alpha_{k}^{N}\gamma_{k}^{N}}{\sqrt{N}} \right) \times k_{x}^{+} \left(\exp(2\lambda_{N}N^{-1/2}) - 1 \right) \ + \ \mathscr{O}(N^{-3/2}).
\end{align*}
\end{itemize}
%%%
\end{prop}
%%%
%%%
\begin{proof}
{We provide the proof of the dynamics for $x \in \llbracket0,m_{N}-1\rrbracket$; for sites $x \in \llbracket N-m_{N}+1,N \rrbracket$, the same argument applies. We first claim the following preliminary calculation in which $k_{x} = k \wedge x$ for any $k>0$:}
{
\begin{align}
N^{-2}\d\mathbf{Z}_{T,x}^{N} \ &= \ {\sum}_{k = 1}^{m_{N}} \left( \alpha_{k}^{N} + \frac{\alpha_{k}^{N}\gamma_{k}^{N}}{\sqrt{N}} \right) {\sum}_{\ell = 0}^{k_{x}-1} \frac{1 - \eta_{T,x-\ell}^{N}}{2} \frac{1 + \eta_{T,x-\ell+k}^{N}}{2} \left(\exp(-2 \lambda_{N} N^{-1/2}) - 1 \right) \mathbf{Z}_{T,x}^{N} \ \d T \label{eq:SDEFirstStep} \\
&\quad\quad\quad + \ {\sum}_{k = 1}^{m_{N}} \left( \alpha_{k}^{N} - \frac{\alpha_{k}^{N} \gamma_{k}^{N}}{\sqrt{N}} \right) {\sum}_{\ell = 0}^{k_{x} -1} \frac{1 + \eta_{T,x-\ell}^{N}}{2} \frac{1 - \eta_{T,x-\ell+k}^{N}}{2} \left( \exp(2 \lambda_{N} N^{-1/2}) - 1 \right) \mathbf{Z}_{T,x}^{N} \ \d T \nonumber \\
&\quad\quad\quad + \ {\sum}_{k = x+1}^{m_{N}} \beta_{k,+}^{N,-} \frac{1 - \eta_{T,k}^{N}}{2} \left( \exp(2\lambda_{N}N^{-1/2}) - 1 \right) \mathbf{Z}_{T,x}^{N} \ \d T \nonumber \\
&\quad\quad\quad + \ {\sum}_{k = x+1}^{m_{N}} \beta_{k,-}^{N,-} \frac{1 + \eta_{T,k}^{N}}{2} \left( \exp(-2\lambda_{N}N^{-1/2}) - 1 \right) \mathbf{Z}_{T,x}^{N} \ \d T \nonumber \\
&\quad\quad\quad + \ \nu_{N} \mathbf{Z}_{T,x}^{N} \ \d T \ + \ N^{-2}\mathbf{Z}_{T,x}^{N} \d \xi_{T,x}^{N}. \nonumber
\end{align}
}
Indeed, the first-step \eqref{eq:SDEFirstStep} is consequence of the Ito formula as follows; we refer to the SDE expansion in Section 2 of \cite{DT}:
%%%
\begin{itemize}
\item The constant drift $\nu_{N} \in \R_{\neq0}$ arises straightforwardly from the constant growth in the exponential defining $\mathbf{Z}_{T,x}^{N}$.
\item The martingale integrator $\d\xi_{T,x}^{N}$ is a sum of the underlying Poisson clock processes attached to bonds and boundary interactions crossing $x \mathbb{I}_{N,0}$, afterwards compensated by their respective drifts. More precisely, we have
{
\begin{align}
N^{-2}\d\xi_{T,x}^{N} \ &= \ {\sum}_{k = 1}^{m_{N}} {\sum}_{\ell = 0}^{k_{x}-1} N^{-2}\d\wt{\xi}_{T,x-\ell}^{N,k}(-) + {\sum}_{k = 1}^{m_{N}} {\sum}_{\ell = 0}^{k_{x}-1} N^{-2}\d\wt{\xi}_{T,x-\ell}^{N,k}(+)  \nonumber \\
&\quad\quad - \ \left( \exp(- 2 \lambda_{N} N^{-1/2}) - 1 \right) \cdot {\sum}_{k = 1}^{m_{N}} \left( \alpha_{k}^{N} + \frac{\alpha_{k}^{N}\gamma_{k}^{N}}{\sqrt{N}} \right) {\sum}_{\ell = 0}^{k_{x}-1} \frac{1 - \eta_{T,x-\ell}^{N}}{2} \frac{1 + \eta_{T,x-\ell+k}^{N}}{2} \ \d T \nonumber \\
&\quad\quad - \ \left( \exp(2 \lambda_{N} N^{-1/2}) - 1 \right) \cdot {\sum}_{k = 1}^{m_{N}} \left( \alpha_{k}^{N} - \frac{\alpha_{k}^{N} \gamma_{k}^{N}}{\sqrt{N}} \right) {\sum}_{\ell = 0}^{k -1} \frac{1 + \eta_{T,x-\ell}^{N}}{2} \frac{1 - \eta_{T,x-\ell+k}^{N}}{2} \ \d T \nonumber \\
&\quad\quad + \ {\sum}_{k = x+1}^{m_{N}} N^{-2}\d\wt{\xi}_{T,-}^{N,k}(+) \ - \ {\sum}_{k = x+1}^{m_{N}} r_{k,-}^{N,+} \ \d T \ + \ {\sum}_{k = x+1}^{m_{N}} N^{-2}\d\wt{\xi}_{T,-}^{N,k}(-) \ - \ {\sum}_{k = x+1}^{m_{N}} r_{k,-}^{N,-} \ \d T. \nonumber
\end{align}
}
\item The aforementioned drifts of the Poisson processes contribute the first three lines on the RHS of \eqref{eq:SDEFirstStep}, almost exactly as in the expansion performed in Section 2 of \cite{DT}. The only difference between the particle system for this paper and the particle system without boundary interactions in \cite{DT} amount to the following two bullet points.
\item The prevalent interactions in the particle system with boundary involve points to the left of $x \in \mathbb{I}_{N,0}$, of which there exists strictly less than the maximal jump-length $m_{N} \in \Z_{>0}$ in number. This provides summations over $\ell \in \llbracket 0, k_{x}-1\rrbracket$, which would be replaced by a summation over $\ell \in \llbracket 0,k-1\rrbracket$ if without the boundary condition.
\item Second, terms on the RHS of \eqref{eq:SDEFirstStep} involving $\beta_{\bullet,\pm}^{N,\pm}$-coefficients are induced by creation-annihilation dynamics.
\end{itemize}
%%%
Given \eqref{eq:SDEFirstStep}, however, the desired equation now follows from organizing quantities on the RHS of \eqref{eq:SDEFirstStep} in terms of dependence on the particle system; this matches that of Section 2 in \cite{DT} almost identically, so we omit it.
\end{proof}
%%%
%%%
\begin{remark}\fsp
Technically, the martingale term $\mathbf{Z}^{N}\d\xi^{N}$ appearing in the stochastic evolution equation in Proposition \ref{prop:SDECpt} is a martingale $\mathfrak{m}^{N}$ whose jumps at the space-time coordinates $(T,x) \in \R_{\geq 0} \times \Z_{\geq 0}$ are those of $\xi^{N}$ scaled by $\mathbf{Z}_{T,x}^{N}$. 

In general, let us declare that any martingale term of the form $\Phi\d\xi^{N}$ provided $\Phi$ a space-time adapted random field is the Poisson martingale whose jumps at space-time coordinates $(T,x) \in \R_{\geq 0} \times \Z_{\geq 0}$ are those $\xi^{N}$ scaled by $\Phi_{T,x}$.
\end{remark}
%%%
%%%
\subsection{Differential Operator Expansion}
%%%
We compute the action of the following discrete-type Laplacian on $\mathbf{Z}^{N}$, {for which let us also consider the non-rescaled version $\mathscr{L}_{\mathrm{Lap}}^{N}=N^{-2}\mathscr{L}_{\mathrm{Lap}}^{N,!!}$ for later presentational convenience:}
\begin{align}
\mathscr{L}_{\mathrm{Lap}}^{N,!!}\varphi_{x} \ \overset{\bullet}= \ \mathbf{1}_{x \in \llbracket m_{N},N-m_{N}\rrbracket} \cdot 2^{-1} {\sum}_{k = 1}^{m_{N}} \wt{\alpha}_{k}^{N} \Delta_{k}^{!!} \varphi_{x} \ &+ \ \mathbf{1}_{x \in \llbracket 0,m_{N}-1\rrbracket} \cdot 2^{-1} {\sum}_{k = 1}^{m_{N}} N \wt{\alpha}_{k}^{N} \grad_{k}^{!} \varphi_{x} \\
&+ \ \mathbf{1}_{x \in \llbracket 0,m_{N}-1\rrbracket} \cdot 2^{-1} {\sum}_{k = 1}^{m_{N}} N \wt{\alpha}_{k}^{N} \grad_{-(k \wedge x)}^{!} \varphi_{x} \nonumber \\
&+ \ \mathbf{1}_{x \in \llbracket N-m_{N}+1,N\rrbracket} \cdot 2^{-1} {\sum}_{k = 1}^{m_{N}} N \wt{\alpha}_{k}^{N} \grad_{-k}^{!}\varphi_{x} \nonumber \\
&+ \ \mathbf{1}_{x \in \llbracket N-m_{N}+1,N \rrbracket} \cdot 2^{-1} {\sum}_{k=1}^{m_{N}} N \wt{\alpha}_{k}^{N} \grad_{k \wedge (N-x)}^{!} \varphi_{x}. \nonumber
\end{align}
%
%%%
\begin{prop}\fsp \label{prop:LapCpt}
Recall the maximal jump-length $m_{N} \in \Z_{>0}$.
%%%
\begin{itemize}
\item Consider $x \in \llbracket 1,m_{N}-1 \rrbracket$; for deterministic $\{\wt{\kappa}_{j,\ell}^{-}\}_{j,\ell}$ and a functional $\mathfrak{f}_{T,x}^{N,+}$, both uniformly bounded, we have
{
\begin{align}
N^{-2}\mathscr{L}_{\mathrm{Lap}}^{N} \mathbf{Z}_{T,x}^{N} \ &= \ -2^{-1}{\sum}_{\ell = 1}^{m_{N}} \eta_{T,x+\ell}^{N} \times\lambda_{N}N^{-1/2} {\sum}_{k = \ell}^{m_{N}} \wt{\alpha}_{k}^{N}\mathbf{Z}_{T,x}^{N} \ + \ 2^{-1}{\sum}_{\ell = 0}^{x-1} \eta_{T,x-\ell}^{N} \times \lambda_{N} N^{-1/2} {\sum}_{k = \ell+1}^{m_{N}} \wt{\alpha}_{k}^{N}\mathbf{Z}_{T,x}^{N} \nonumber \\
&\quad\quad\quad + \ 4^{-1}\lambda_{N}^{2} N^{-1}\left({\sum}_{k = 1}^{m_{N}} k \wt{\alpha}_{k}^{N} \ + \ {\sum}_{k = 1}^{x} k \wt{\alpha}_{k}^{N} \ + \ x {\sum}_{k = x+1}^{m_{N}} \wt{\alpha}_{k}^{N}\right)\mathbf{Z}_{T,x}^{N} \nonumber \\
&\quad\quad\quad + \ N^{-3/2} \mathfrak{f}_{T,x}^{N,+} \mathbf{Z}_{T,x}^{N} \ + \ \lambda_{N}^{2} N^{-1} {\sum}_{\substack{j\neq\ell \in \Z_{\geq0}: \\ |j-\ell| \leq m_{N}}} \wt{\kappa}_{j,\ell}^{-} \eta_{T,x-j}^{N} \eta_{T,x+\ell}^{N} \mathbf{Z}_{T,x}^{N}. \nonumber
\end{align}
}
\item For deterministic $\{\wt{\wt{\kappa}}_{j,\ell}^{-}\}_{j,\ell}$ and a functional $\mathfrak{f}_{T,0}^{N,-}$, both uniformly bounded, we have 
\begin{align}
N^{-2}\mathscr{L}_{\mathrm{Lap}}^{N}\mathbf{Z}_{T,0}^{N} \ &= \ -2^{-1}{\sum}_{\ell = 1}^{m_{N}} \eta_{T,x+\ell}^{N} \times \lambda_{N}N^{-1/2} {\sum}_{k = \ell}^{m_{N}} \wt{\alpha}_{k}^{N}\mathbf{Z}_{T,0}^{N} \ + \ 4^{-1} \lambda_{N}^{2}N^{-1} {\sum}_{k=1}^{m_{N}} k \wt{\alpha}_{k}^{N} \mathbf{Z}_{T,0}^{N} \ - \ \mathscr{A}_{-}N^{-1}\mathbf{Z}_{T,0}^{N} \nonumber \\
&\quad\quad\quad + \ \lambda_{N}^{2} N^{-1} {\sum}_{\substack{j\neq\ell \in \Z_{\geq0}: \\ |j-\ell| \leq m_{N}}} \wt{\wt{\kappa}}_{j,\ell}^{-} \eta_{T,x-j}^{N} \eta_{T,x+\ell}^{N} \mathbf{Z}_{T,0}^{N} \ + \ N^{-3/2} \mathfrak{f}_{T,0}^{N,-}\mathbf{Z}_{T,0}^{N}. \nonumber
\end{align}
\item Consider any $x \in \llbracket N-m_{N}+1,N-1 \rrbracket$; for deterministic $\{\wt{\kappa}_{j,\ell}^{+}\}_{j,\ell}$ and a functional $\mathfrak{f}_{T,x}^{N,-}$, both uniformly bounded,
{ 
\begin{align}
N^{-2}\mathscr{L}_{\mathrm{Lap}}^{N} \mathbf{Z}_{T,x}^{N} \ &= \ -2^{-1}{\sum}_{\ell = 0}^{m_{N}-1} \eta_{T,x-\ell}^{N} \times \lambda_{N}N^{-1/2} {\sum}_{k = \ell+1}^{m_{N}} \wt{\alpha}_{k}^{N}\mathbf{Z}_{T,x}^{N} \ + \ 2^{-1}{\sum}_{\ell = 1}^{N-x} \eta_{T,x+\ell}^{N} \times \lambda_{N} N^{-1/2} {\sum}_{k = \ell}^{m_{N}-1} \wt{\alpha}_{k}^{N}\mathbf{Z}_{T,x}^{N} \nonumber \\
&\quad\quad\quad + \ 4^{-1}\lambda_{N}^{2} N^{-1} \left({\sum}_{k = 1}^{m_{N}} k \wt{\alpha}_{k}^{N} \ + \ {\sum}_{k = 1}^{N-x} k \wt{\alpha}_{k}^{N} \ + \ (N-x) {\sum}_{k = N-x+1}^{m_{N}} \wt{\alpha}_{k}^{N}\right) \mathbf{Z}_{T,x}^{N} \nonumber \\
&\quad\quad\quad + \ N^{-3/2} \mathfrak{f}_{T,x}^{N,-} \mathbf{Z}_{T,x}^{N} \ + \ \lambda_{N}^{2} N^{-1} {\sum}_{\substack{j\neq\ell \in \Z_{\geq0}: \\ |j-\ell| \leq m_{N}}} \wt{\kappa}_{j,\ell}^{+} \eta_{T,x-j}^{N} \eta_{T,x+\ell}^{N} \mathbf{Z}_{T,x}^{N}. \nonumber
\end{align}
}
\item For deterministic $\{\wt{\wt{\kappa}}_{j,\ell}^{+}\}_{j,\ell}$ and a functional $\mathfrak{f}_{T,0}^{N,+}$, both uniformly bounded, we have
\begin{align}
N^{-2}\mathscr{L}_{\mathrm{Lap}}^{N}\mathbf{Z}_{T,N}^{N} \ &= \ -2^{-1}{\sum}_{\ell = 0}^{m_{N}-1} \eta_{T,x-\ell}^{N} \times \lambda_{N}N^{-1/2} {\sum}_{k = \ell}^{m_{N}} \wt{\alpha}_{k}^{N}\mathbf{Z}_{T,N}^{N} \ + \ 4^{-1} \lambda_{N}^{2}N^{-1} {\sum}_{k=1}^{m_{N}} k \wt{\alpha}_{k}^{N} \mathbf{Z}_{T,N}^{N} \ - \ \mathscr{A}_{+}N^{-1}\mathbf{Z}_{T,N}^{N} \nonumber \\
&\quad\quad\quad + \ \lambda_{N}^{2} N^{-1} {\sum}_{\substack{j\neq\ell \in \Z_{\geq0}: \\ |j-\ell| \leq m_{N}}} \wt{\wt{\kappa}}_{j,\ell}^{+} \eta_{T,x-j}^{N} \eta_{T,x+\ell}^{N} \mathbf{Z}_{T,N}^{N} \ + \ N^{-3/2} \mathfrak{f}_{T,0}^{N,+}\mathbf{Z}_{T,N}^{N}. \nonumber
\end{align}
\end{itemize}
%%%
\end{prop}
%%%
%%%
\begin{proof}
Assume $x \in \llbracket0,m_{N}-1\rrbracket$; for $x \in \llbracket N-m_{N}+1,N \rrbracket$, the same holds. For $k \in \llbracket1,m_{N}\rrbracket$ and $k \in \llbracket1,x\rrbracket$ respectively, note
\begin{align}
\mathbf{Z}_{T,x+k}^{N} - \mathbf{Z}_{T,x}^{N} \ &= \ - \lambda_{N} N^{-1/2} {\sum}_{\ell = 1}^{k} \eta_{T,x+\ell}^{N} \mathbf{Z}_{T,x}^{N} \ + \ 2^{-1}\lambda_{N}^{2} N^{-1} \left( {\sum}_{\ell = 1}^{k} \eta_{T,x+\ell}^{N} \right)^{2} \mathbf{Z}_{T,x}^{N} \ + \ N^{-3/2} \mathfrak{f}_{T,x}^{N;k} \mathbf{Z}_{T,x}^{N}; \label{eq:LapCpt+} \\
\mathbf{Z}_{T,x-k}^{N} - \mathbf{Z}_{T,x}^{N} \ &= \ \lambda_{N} N^{-1/2} {\sum}_{\ell = 0}^{k-1} \eta_{T,x-\ell}^{N} \mathbf{Z}_{T,x}^{N} \ + \ 2^{-1}\lambda_{N}^{2} N^{-1} \left( {\sum}_{\ell = 0}^{k-1} \eta_{T,x-\ell}^{N} \right)^{2} \mathbf{Z}_{T,x}^{N} \ + \ N^{-3/2} \wt{\mathfrak{f}}_{T,x}^{N;k} \mathbf{Z}_{T,x}^{N}, \label{eq:LapCpt-}
\end{align}
where $|k|^{-3} |\mathfrak{f}_{T,x}^{N;k}|$ and $|k|^{-3}|\wt{\mathfrak{f}}_{T,x}^{N;k}|$ are uniformly bounded. Combining \eqref{eq:LapCpt+} and \eqref{eq:LapCpt-}, we obtain, for $\mathscr{L}_{\mathrm{Lap}}=N^{-2}\mathscr{L}_{\mathrm{Lap}}^{N}$,
\begin{align}
2\mathscr{L}_{\mathrm{Lap}}\mathbf{Z}_{T,x}^{N} \ &= \ {\sum}_{k = 1}^{m_{N}} \wt{\alpha}_{k}^{N}\left(-\lambda_{N}N^{-1/2} {\sum}_{\ell = 1}^{k} \eta_{T,x+\ell}^{N} \ + \ 2^{-1}\lambda_{N}^{2} N^{-1} \left( {\sum}_{\ell = 1}^{k} \eta_{T,x+\ell}^{N} \right)^{2}\right)\mathbf{Z}_{T,x}^{N} \label{eq:LapCpt1} \\
&\quad + \ {\sum}_{k = 1}^{x} \wt{\alpha}_{k}^{N}\left(\lambda_{N} N^{-1/2} {\sum}_{\ell = 0}^{k-1} \eta_{T,x-\ell}^{N} \ + \ 2^{-1}\lambda_{N}^{2} N^{-1} \left( {\sum}_{\ell = 0}^{k-1} \eta_{T,x-\ell}^{N} \right)^{2}\right)\mathbf{Z}_{T,x}^{N} \nonumber \\
&\quad + \ {\sum}_{k = x+1}^{m_{N}} \wt{\alpha}_{k}^{N}\left(\lambda_{N}N^{-1/2} {\sum}_{\ell = 0}^{x-1} \eta_{T,x-\ell}^{N} + 2^{-1}\lambda_{N}^{2} N^{-1} \left( {\sum}_{\ell = 0}^{x-1} \eta_{T,x-\ell}^{N} \right)^{2}\right)\mathbf{Z}_{T,x}^{N} \nonumber \\
&\quad + \ N^{-3/2} {\sum}_{k = 1}^{m_{N}} \wt{\alpha}_{k}^{N} \mathfrak{f}_{T,x}^{N;k} \mathbf{Z}_{T,x}^{N} \ + \ N^{-3/2} {\sum}_{k = 1}^{x} \wt{\alpha}_{k}^{N} \wt{\mathfrak{f}}_{T,x}^{N;k} \mathbf{Z}_{T,x}^{N} \ + \ N^{-3/2} {\sum}_{k = x+1}^{m_{N}} \wt{\alpha}_{k}^{N} \wt{\mathfrak{f}}_{T,x}^{N;x} \mathbf{Z}_{T,x}^{N}. \nonumber
\end{align}
We now expand the squares appearing on the RHS of \eqref{eq:LapCpt1} upon recalling $\eta_{x}^{2} = 1$:
\begin{subequations}
\begin{align}
\left( {\sum}_{\ell = 1}^{k} \eta_{T,x+\ell}^{N} \right)^{2} \ &= \ k \ + \ {\sum}_{\ell_{1},\ell_{2} = 1}^{k} \mathbf{1}_{\ell_{1} \neq \ell_{2}} \eta_{T,x+\ell_{1}}^{N} \eta_{T,x+\ell_{2}}^{N}; \\
\left( {\sum}_{\ell = 0}^{k-1} \eta_{T,x-\ell}^{N} \right)^{2} \ &= \ k \ + \ {\sum}_{\ell_{1},\ell_{2} = 0}^{k-1} \mathbf{1}_{\ell_{1} \neq \ell_{2}} \eta_{T,x-\ell_{1}}^{N} \eta_{T,x-\ell_{2}}^{N}; \\
\left( {\sum}_{\ell = 0}^{x-1} \eta_{T,x+\ell}^{N} \right)^{2} \ &= \ x \ + \ {\sum}_{\ell_{1},\ell_{2} = 0}^{x-1} \mathbf{1}_{\ell_{1} \neq \ell_{2}} \eta_{T,x-\ell_{1}}^{N} \eta_{T,x-\ell_{2}}^{N}.
\end{align}
\end{subequations}
Rearranging quantities as in the proof of Proposition \ref{prop:SDECpt} completes the proof; once again, although this calculation is quite involved, it is elementary and a simpler version of the proof of Proposition 2.2 in \cite{DT}, so we omit it.

We now compute the evolution equation for $x = 0$. To this end, observe that it suffices to compute only forward-gradients. Proceeding with the previous calculation and using the Robin boundary condition, we have
\begin{align*}
\mathscr{L}_{\mathrm{Lap}}^{N} \mathbf{Z}_{T,0}^{N} \ &= \ 2^{-1}{\sum}_{k = 1}^{m_{N}} \wt{\alpha}_{k}^{N} \left( \mathbf{Z}_{T,x+k}^{N} - \mathbf{Z}_{T,0}^{N} \right) \ + \ \left( \mathbf{Z}_{T,-1}^{N} - \mathbf{Z}_{T,0}^{N} \right) \\
&= \ 2^{-1}{\sum}_{k = 1}^{m_{N}} \wt{\alpha}_{k}^{N}\left(-\lambda_{N}N^{-1/2} {\sum}_{\ell = 1}^{k} \eta_{T,x+\ell}^{N} \ + \ 2^{-1}\lambda_{N}^{2} N^{-1} \left( {\sum}_{\ell = 1}^{k} \eta_{T,x+\ell}^{N} \right)^{2}\right) \mathbf{Z}_{T,0}^{N} \ - \ \mathscr{A}_{-} N^{-1} \mathbf{Z}_{T,0}^{N},
\end{align*}
from which the evolution equation follows via another elementary calculation as before. This completes the proof.
\end{proof}
%%%
%%%
\subsection{Matching Expressions}
%%%
{The purpose of the final subsection is to present the matching between the expressions in Proposition \ref{prop:SDECpt} and Proposition \ref{prop:LapCpt}, respectively. However, the details behind the proof for the main result in Proposition \ref{prop:MatchCpt} is the Taylor expansion procedure performed in detail throughout Section 2 of \cite{DT} and Proposition 2.7 from \cite{Y}, and so, provided their elementary nature, we omit them from this paper. Before we state the main result, we introduce important definitions from Section 2 of \cite{Y}, one of which is an important modification of the ``weakly-vanishing terms" from Section 2 of \cite{DT}. These are essentially stated verbatim but with minor changes taking into account the set $\mathbb{I}_{N,0}$; for philosophies of these definitions, see Section 2 of \cite{Y}.
%%%
\begin{definition}\fsp 
A space-time random field $\mathfrak{w}_{T,X}(\eta): \R_{\geq 0} \times \mathbb{I}_{N,0} \times \Omega \to \R$ is a \emph{weakly vanishing random field} if:
%%%
\begin{itemize}
\item Given any $(T,X,\eta) \in \R_{\geq 0} \times \mathbb{I}_{N,0} \times \Omega$, we have $\mathfrak{w}_{T,X}(\eta) = \tau_{T,X} \mathfrak{w}_{0,0}(\eta)$ and $|\mathfrak{w}_{0,0}| \lesssim 1$ uniformly in $N$. {Here, $\tau_{T,X}$ acts on functionals of the particle system first by shifting the configuration at which we evaluate in space-time by $(T,X)$.}

\item We have $\E^{\mu_{0,\Z}^{}} \mathfrak{w}_{0,0}(\eta) = 0$, where $\mu_{0,\Z}^{}$ is the grand-canonical ensemble on $\Z$ of parameter $0$, namely the product measure on $\Omega$ whose one-dimensional marginals all have expectation zero. {Alternatively, we allow $|\mathfrak{w}|\lesssim N^{-\beta}$ for some positive $\beta$.}
\end{itemize}
%%%
\end{definition}
%%%
%%%
\begin{remark}\fsp\label{remark:WVEx}
Per Lemma 2.5 in \cite{DT}, degree-$n$ polynomials of the following form are weakly vanishing quantities for distinct $x_{i}$:
\begin{align}
\Phi(\eta) \ = \ {\prod}_{j = 1}^{n} \eta_{x_{j}}, \quad x_{1},\ldots,x_{n} \in \mathbb{I}_{N,0}.
\end{align}
\end{remark}
%%%
%%%
\begin{definition}\fsp 
A space-time random field $\wt{\mathfrak{g}}_{T,X}(\eta): \R_{\geq 0} \times \mathbb{I}_{N,0} \times \Omega \to \R$ is said to be a \emph{pseudo-gradient field} if:
%%%
\begin{itemize}
\item For all $(T,X,\eta) \in \R_{\geq 0} \times \mathbb{I}_{N,0} \times \Omega$, we have $\wt{\mathfrak{g}}_{T,X}(\eta) = \tau_{T,X} \wt{\mathfrak{g}}_{0,0}(\eta)$.

\item Given any canonical-ensemble parameter $\sigma\in\R$, we have $\E^{\mu_{\sigma,\mathbb{I}_{N,0}}^{\mathrm{can}}} \wt{\mathfrak{g}}_{0,0}(\eta) = 0$, the measure in this expectation is given by the uniform measure on the subset of configurations on $\mathbb{I}_{N,0}$ whose average of $\eta$-values is equal to $\sigma$.
\item The support of $\wt{\mathfrak{g}}_{0,0}$, which is the subset of $\mathbb{I}_{N,0}$ such that $\wt{\mathfrak{g}}_{0,0}$ depends only on $\eta$-values on this subset, has size bounded above by $N^{\e_{\mathrm{PG}}}$ for arbitrarily small but universal constant $\e_{\mathrm{PG}}>0$. Lastly, we have the universal bound $|\wt{\mathfrak{g}}_{0,0}| \lesssim 1$ uniformly in $N$.
\end{itemize}
%%%
\end{definition}
%%%
%%%
\begin{definition}\fsp 
A given space-time random field $\bar{\mathfrak{g}}_{T,X}(\eta): \R_{\geq 0} \times \mathbb{I}_{N,0} \times \Omega \to \R$ is said to admit a \emph{pseudo-gradient factor} if it is uniformly bounded and can be written in the following form with constraints to be satisfied that are listed afterwards:
\begin{align}
\bar{\mathfrak{g}}_{T,X}(\eta) \ = \ \wt{\mathfrak{g}}_{T,X}(\eta) \cdot \mathfrak{f}_{T,X}(\eta).
\end{align}
%
%%%
\begin{itemize}
\item We have $\mathfrak{f}_{T,X}(\eta) = \tau_{T,X} \mathfrak{f}_{0,0}(\eta)$ for $(T,X,\eta) \in \R_{\geq 0} \times \mathbb{I}_{N,0} \times \Omega$, and $|\mathfrak{f}_{0,0}| \lesssim 1$. Moreover, the term $\wt{\mathfrak{g}}_{T,X}(\eta)$ is a pseudo-gradient field. Lastly, the supports of $\wt{\mathfrak{g}}_{T,X}(\eta)$ and $\mathfrak{f}_{T,X}(\eta)$, which are subsets of $\mathbb{I}_{N,0}$, are disjoint.
\end{itemize}
%%%
\end{definition}
%%%
}
%%%
\begin{prop}\fsp \label{prop:MatchCpt}
Recall the maximal jump-length {$m_{N}$} and \emph{Assumption \ref{ass:Category1}} and \emph{Assumption \ref{ass:Category2}}. {Further, define $\beta_{X} = 4^{-1} + \e_{X}$ with $\e_{X}>0$ arbitrarily small but universal and therefore uniformly positive.}
%%%
\begin{itemize}
\item Suppose $x \in \llbracket m_{N}, N-m_{N} \rrbracket$; then, upon defining $\mathbb{I}_{N,\beta} \overset{\bullet}= (\mathbb{I}_{N,0} \setminus \llbracket 0,N^{\beta} \rrbracket) \setminus \llbracket N-N^{\beta},N \rrbracket$ for any {positive $\beta$}, we have
{
\begin{align}
\d\mathbf{Z}_{T,x}^{N} \ = \ \mathscr{L}_{\mathrm{Lap}}^{N,!!} \mathbf{Z}_{T,x}^{N} \ \d T \ &+ \ \mathbf{Z}_{T,x}^{N} \d\xi_{T,x}^{N} \ + \ \mathbf{1}_{x\in\mathbb{I}_{N,\beta_{X}+2\e_{X}}}\left(N^{1/2} \wt{{\sum}}_{1\leq w\leq N^{\beta_{X}}} \tau_{-10m_{N}w} \mathfrak{g}_{T,x}^{N} + N^{\beta_{X}}\wt{\mathfrak{g}}_{T,x}^{N}\right)\mathbf{Z}_{T,x}^{N} \ \d T \nonumber\\
&+ \ \mathbf{1}_{x\not\in\mathbb{I}_{N,\beta_{X}+2\e_{X}}} N^{1/2} \mathfrak{b}_{T,x}^{N} \mathbf{Z}_{T,x}^{N} \ \d T \ + \ N^{-1/2}\wt{\sum}_{|k|\leq N^{\beta_{X}}}\grad_{10m_{N}k}^{!}\left(\mathbf{1}_{x\in\mathbb{I}_{N,\beta_{X}-\e_{X}}}\mathfrak{b}^{N,k}_{T,x}\mathbf{Z}_{T,x}^{N}\right)\nonumber \\
&+ \ \mathfrak{w}_{T,x}^{N} \mathbf{Z}_{T,x}^{N} \ \d T \ + \ {\sum}_{|k| \leq m_{N}} c_{k} \grad_{k}^{!} \left( \mathfrak{w}_{T,x}^{N,k} \mathbf{Z}_{T,x}^{N} \right) \ \d T. \nonumber
\end{align}
}
Above, we have {introduced} the following data:
%%%
\begin{itemize}
\item {The process $\mathfrak{g}^{N}$ is a pseudo-gradient, and $\mathfrak{w}^{N}, \mathfrak{w}^{N,1}, \ldots, \mathfrak{w}^{N,m_{N}}$ are weakly vanishing, and $\mathfrak{b}^{N},\mathfrak{b}^{N,k}$ are uniformly bounded. The support of the pseudo-gradient $\mathfrak{g}^{N}$ is contained in an interval whose length is bounded above by $10m_{N}$.
\item The {process $\wt{\mathfrak{g}}^{N}$ is an average of functionals that each admit a pseudo-gradient factor whose support length is at most $10m_{N}$. The support length of $\wt{\mathfrak{g}}^{N}$ is order $N^{\beta_{X}}$. Lastly, the $\{c_{k}\}_{k = 1}^{\infty}$ are deterministic and admit all moments as measures on $\Z_{>0}$.}}
\end{itemize}
%%%
\item {Suppose $x \in \llbracket 0,m_{N}-1 \rrbracket$; then we have, where $\mathfrak{w}_{T,x}^{N,-}$ is a weakly vanishing term supported on $\llbracket0,2m_{N}\rrbracket$,
\begin{align}
\d\mathbf{Z}_{T,x}^{N} \ = \ \mathscr{L}_{\mathrm{Lap}}^{N,!!}\mathbf{Z}_{T,x}^{N} \ \d T \ + \ \mathbf{Z}_{T,x}^{N}\d\xi_{T,x}^{N} \ + \ N \mathfrak{w}_{T,x}^{N,-} \mathbf{Z}_{T,x}^{N} \ \d T,
\end{align}
\item Suppose $x \in \llbracket N-m_{N}+1, N \rrbracket$; then we have, where $\mathfrak{w}_{T,x}^{N,+}$ is a weakly vanishing term supported on $\llbracket N-2m_{N},N \rrbracket$,
\begin{align}
\d\mathbf{Z}_{T,x}^{N} \ = \ \mathscr{L}_{\mathrm{Lap}}^{N,!!}\mathbf{Z}_{T,x}^{N} \ \d T \ + \ \mathbf{Z}_{T,x}^{N}\d\xi_{T,x}^{N} \ + \ N \mathfrak{w}_{T,x}^{N,+} \mathbf{Z}_{T,x}^{N} \ \d T,
\end{align}
}
\end{itemize}
%%%
\end{prop}
%%%
%%%
\begin{proof}
{For points $x \in \llbracket0,m_{N}-1\rrbracket$, the proposed evolution equation holds upon combining both Proposition \ref{prop:SDECpt} and Proposition \ref{prop:LapCpt} and a further Taylor expansion of the exponential quantities in the former through an identical procedure similar to Section 2 of \cite{DT}. This same calculation, upon the reflection change-of-variables $x \mapsto N-x$, also yields the claimed evolution equation for points $x \in \llbracket N-m_{N}+1,N\rrbracket$. For both of these latter two boundary-type domains, the weakly vanishing quantities $\mathfrak{w}^{N,\pm}$ are given by the error terms obtained by trying to match the equations from Proposition \ref{prop:SDECpt} and Proposition \ref{prop:LapCpt} exactly and observing that these error terms are equipped with coefficients of the form $\Phi(\eta)$ explicitly addressed in Remark \ref{remark:WVEx}.} {Thus, the proof of Proposition \ref{prop:MatchCpt} amounts to verifying the equation away from the boundary of $\mathbb{I}_{N,0}$. This is done in Section 2 of \cite{Y}, though we provide the whole of the argument here because of a few technical differences. First, we observe that away from the boundary of $\mathbb{I}_{N,0}$, there are no boundary interactions in a single step, thus we may appeal to Section 2 of \cite{DT}, which addresses the particle system here without boundary interactions. According to Section 2 of \cite{DT}, we obtain the desired equation, but we do not have the fourth and fifth terms therein, and instead of the third term supported in $\mathbb{I}_{N,\beta_{X}+2\e_{X}}$, we have instead a certain quadratic functional that we detail at the end of this argument, along with the following ``cubic functional":
\begin{align}
N^{1/2}\mathfrak{c}_{T,x}^{N}\mathbf{Z}_{T,x}^{N} \quad\mathrm{where}\quad \mathfrak{c}_{T,x}^{N} \ = \ {\sum}_{0<i<j<k\leq m_{N}}c_{ijk}\left(\eta_{T,x+i}^{N}\eta_{T,x+j}^{N}\eta_{T,x+k}^{N}-\eta_{T,x-i}^{N}\eta_{T,x-j}^{N}\eta_{T,x-k}^{N}\right).
\end{align}
The coefficients $c_{ijk}$ are uniformly bounded, so if we restrict the above to $\mathbb{I}_{N,\beta_{X}+2\e_{X}}$, the error, which is certainly supported outside $\mathbb{I}_{N,\beta_{X}+2\e_{X}}$, contributes to the $\mathfrak{b}^{N}$ term in the proposed equation. Thus, we assume  that $x\in\mathbb{I}_{N,\beta_{X}+2\e_{X}}$. For these points, observe $\mathfrak{c}^{N}$ is a pseudo-gradient, because any uniform measure that $\mathfrak{c}^{N}$ must vanish in expectation of is invariant under swapping $\eta$-values at any deterministic points, because swaps are bijections on the support of these uniform measures, and bijections always preserve uniform measures. Let us now define $\wt{\mathfrak{c}}_{T,x}^{N}=\mathfrak{c}_{T,x-m_{N}}^{N}$; observe that the support of $\wt{\mathfrak{c}}_{T,x}^{N}$ is to the left of $x$. The error we get when replacing $\mathfrak{c}^{N}$ by $\wt{\mathfrak{c}}^{N}$ is given via the following discrete-Leibniz-rule calculation:
\begin{align}
-N^{1/2}\grad_{-m_{N}}\mathfrak{c}^{N}_{T,x}\mathbf{Z}_{T,x}^{N} \ = \ -N^{1/2}\grad_{-m_{N}}\left(\mathfrak{c}_{T,x}^{N}\mathbf{Z}_{T,x}^{N}\right) + N^{1/2}\wt{\mathfrak{c}}_{T,x}^{N}\grad_{-m_{N}}\mathbf{Z}_{T,x}^{N}. \label{eq:MatchCpt1}
\end{align}
We observe the first term on the RHS of \eqref{eq:MatchCpt1} is a gradient term in the proposed equation for $\mathbf{Z}^{N}$. Moreover, the second term can be computed by Taylor expanding $\mathbf{Z}^{N}$ as in Section 2 of \cite{DT}. Because $\mathbf{Z}^{N}$ is the exponential of $N^{-1/2}$ times a bounded functional of the particle system, only the first-order term in the Taylor expansion of $\mathbf{Z}^{N}$, after multiplication by the $N^{-1/2}$ factor in \eqref{eq:MatchCpt1}, is not clearly bounded by order $N^{-1/2}$. The first order term in said Taylor expansion is a linear statistic in $\eta$-values with a factor of $N^{-1/2}$ that cancels the $N^{1/2}$ factors in \eqref{eq:MatchCpt1}. Moreover, multiplying this linear statistic with $\wt{\mathfrak{c}}^{N}$, we obtain a uniformly bounded linear combination of $\Phi(\eta)$-terms in Remark \ref{remark:WVEx}. Indeed, the product between a linear statistic with any cubic statistic cannot have constant terms in $\eta$, because linear statistics could only cancel at most one of the three $\eta$-factors in any cubic statistic. In particular, the second term on the RHS of \eqref{eq:MatchCpt1} can be absorbed as a weakly vanishing term. Now, we will set $\mathfrak{g}^{N}$ in the proposed equation for $\mathbf{Z}^{N}$ equal to $\wt{\mathfrak{c}}^{N}$, and we now replace $\wt{\mathfrak{c}}^{N}$ with its average over spatial shifts $\tau_{-10m_{N}w}\wt{\mathfrak{c}}^{N}$ for $1\leq w\leq N^{\beta_{X}}$. As with \eqref{eq:MatchCpt1}, the error we get after this replacement is given by an average over such $w$ of
\begin{align}
-N^{1/2}\grad_{-10m_{N}w}\wt{\mathfrak{c}}^{N}\mathbf{Z}^{N} \ = \ -N^{1/2}\grad_{-10m_{N}w}\left(\wt{\mathfrak{c}}^{N}\mathbf{Z}^{N}\right)\ + \ N^{1/2}\tau_{-10m_{N}w}\wt{\mathfrak{c}}^{N}\grad_{-10m_{N}w}\mathbf{Z}^{N}. \label{eq:MatchCpt2}
\end{align}
The first term on the RHS of \eqref{eq:MatchCpt2} is of the form of gradient terms in the proposed equation, because if we equip an extra factor of $N$ for the gradient, the factor of $N^{-1}$ we gain beats the $N^{1/2}$ factor in \eqref{eq:MatchCpt2}. Now, observe the second term on the RHS of \eqref{eq:MatchCpt2} is equal to $\mathbf{Z}^{N}$ times the product of the shifted $\wt{\mathfrak{c}}^{N}$ functional and a functional of order $N^{-1/2+\beta_{X}}$, which comes again from Taylor expansion, where these two functionals have supports that are disjoint, since the gradient of $\mathbf{Z}^{N}$ in \eqref{eq:MatchCpt2} looks only to the right of $x-10m_{N}w$, whereas the shifted $\wt{\mathfrak{c}}^{N}$ in \eqref{eq:MatchCpt2} is supported to the left of $x-10m_{N}w$. Thus, the last term on the RHS of \eqref{eq:MatchCpt2} is $\mathbf{Z}^{N}$ times an order $N^{\beta_{X}}$ functional with a pseudo-gradient factor given by a spatially shifted $\mathfrak{c}^{N}$, whose average over $w$ we denote by $\wt{\mathfrak{g}}^{N}$. This completes our analysis of the cubic nonlinearity. To finish the proof, we return to the quadratic functional from Section 2 of \cite{DT}. Up to a universal factor, this quadratic functional, which again can be found in Section 2 of \cite{DT}, has the form
\begin{align}
N{\sum}_{k=1}^{m_{N}}\alpha_{k}^{N}\left(\gamma_{k}^{N}-\bar{\gamma}_{k}^{N}\right){\sum}_{\substack{y\leq x<z\\|z-y|=k}}\eta_{T,y}^{N}\eta_{T,z}^{N} \ &= \ N{\sum}_{k=1}^{m_{N}}\alpha_{k}^{N}\left(\gamma_{k}^{N}-\bar{\gamma}_{k}^{N}\right){\sum}_{\substack{y\leq x<z\\|z-y|=k}}\left(\eta_{T,y}^{N}\eta_{T,z}^{N}-\eta_{T,x}^{N}\eta_{T,x+1}^{N}\right) \label{eq:MatchCpt3} \\
&\quad + N{\sum}_{k=1}^{m_{N}}\alpha_{k}^{N}\left(\gamma_{k}^{N}-\bar{\gamma}_{k}^{N}\right){\sum}_{\substack{y\leq x<z\\|z-y|=k}}\eta_{T,x}^{N}\eta_{T,x+1}^{N}. \nonumber
\end{align}
The first term on the RHS of \eqref{eq:MatchCpt3} is a pseudo-gradient for the same reason the cubic nonlinearity was, so we will treat it exactly as we did the cubic nonlinearity. Indeed, Assumption \ref{ass:Category1} with $\beta_{\mathrm{c}}=0$ implies the prefactor in front of the difference of $\eta$-quadratics is order $N^{1/2}$, similar to the cubic nonlinearity. We will now argue that the second term vanishes identically. To justify this, we first realize the $\eta$-terms therein are independent of the sum variable, so we ignore them for now. The inner sum is then equal to $k$, and the $\bar{\gamma}^{N}$ coefficients are defined so that $\sum_{k\geq1}k\alpha_{k}^{N}\gamma_{k}^{N}=\sum_{k\geq1}k\alpha_{k}^{N}\bar{\gamma}_{k}^{N}$. This last identity is detailed in Section 2 of \cite{Y}, but it is also a quick calculation that can be readily checked. Ultimately, the quadratic nonlinearity of Section 2 in \cite{DT} gives us the what the cubic nonlinearity did in terms of the equation for $\d\mathbf{Z}^{N}$, and this completes the proof.}
\end{proof}
%%%
%%%
\begin{remark}\fsp\label{remark:mCHDynamics}
{Combining bullet points in Proposition \ref{prop:MatchCpt} gives that $\mathbf{Z}^{N}$ evolves according to the microscopic SHE propagated by the heat kernel corresponding to a parabolic operator with contribution of the various functionals of the particle system varying between both the bulk and the edge. Analysis of the equations in Proposition \ref{prop:MatchCpt} thus naturally separates into two parts -- analysis of bulk functionals and of edge functionals, the latter of which are much simpler because the edge is small in size.}
\end{remark}
%%%

%
%
%
%%%
\section{Heat Kernel Estimates I -- Preliminaries} \label{section:HKEI}
%%%
In view of Proposition \ref{prop:MatchCpt}, {in} this subsection we are {mainly} concerned with obtaining a priori pointwise and regularity estimates for the following heat kernel in which $\mu_{\mathscr{A}} = 1 - N^{-1}\mathscr{A}$ for $\mathscr{A} \in \R$:
{%
\begin{subequations}
\begin{align}
\partial_{T} \mathbf{P}_{S,T,x,y}^{N} \ = \ \mathscr{L}_{\mathrm{Lap}}^{N,!!} \mathbf{P}_{S,T,x,y}^{N} &\and \mathbf{P}_{S,S,x,y}^{N}\ = \ \mathbf{1}_{x=y} \\ \mathbf{P}_{S,T,-1,y}^{N} \ = \ \mu_{\mathscr{A}_{-}} \mathbf{P}_{S,T,0,y}^{N} &\and \mathbf{P}_{S,T,N+1,y}^{N} \ = \ \mu_{\mathscr{A}_{+}} \mathbf{P}_{S,T,N,y}^{N}.
\end{align}
\end{subequations}
}
{Above, the understanding is that $0\leq S \leq T$, the spatial coordinates satisfy $x,y \in \mathbb{I}_{N,0}$, and the operator acts on the backwards spatial variable. For analysis of the heat kernel $\mathbf{P}^{N}$, it will be helpful to study the following nearest-neighbor $\bar{\mathbf{P}}^{N}$:}
{%
\begin{subequations}
\begin{align}
\partial_{T} \bar{\mathbf{P}}_{S,T,x,y}^{N} \ = \ 2^{-1}\left( {\sum}_{k = 1}^{m_{N}} \wt{\alpha}_{k}^{N} |k|^{2} \right) \Delta_{1}^{!!} \bar{\mathbf{P}}_{S,T,x,y}^{N} &\and \bar{\mathbf{P}}_{S,S,x,y}^{N} \ = \ \mathbf{1}_{x=y} \\
\bar{\mathbf{P}}_{S,T,-1,y}^{N} \ = \ \mu_{\mathscr{A}_{-}}\bar{\mathbf{P}}_{S,T,0,y}^{N} &\and \bar{\mathbf{P}}_{S,T,N+1,y}^{N} \ = \ \mu_{\mathscr{A}_{+}} \bar{\mathbf{P}}_{S,T,N,y}^{N}.
\end{align}
\end{subequations}
}
{
Above, the understanding is that $0\leq S \leq T$, the spatial coordinates satisfy $x,y \in \mathbb{I}_{N,0}$, and the operator $\Delta_{1}^{!!}$ acts on the backwards spatial variable. Moreover, by $\mathbf{P}^{N,0}$ and $\bar{\mathbf{P}}^{N,0}$ we refer to the heat kernel with Neumann boundary, so $\mathscr{A}_{\pm} = 0$.}

As suggested {in} the title of this section, we {direct} our attention {to} various heat kernel estimates for $\mathbf{P}^{N}$; {we now discuss} the necessary preliminary ingredients {below}, which we {then} combine in a subsequent section.
%%%
\begin{itemize}
\item The first ingredient consists of establishing an elliptic-type estimate for the invariant measure associated to $\mathbf{P}^{N,0}$, or equivalently the kernel of the operator $\left(\mathscr{L}_{\mathrm{Lap}}^{N}\right)^{\ast}$ with respect to the uniform ``Lebesgue measure" on $\mathbb{I}_{N,0} \subseteq \Z_{\geq 0}$. We perform this step by a discrete-type elliptic maximum principle with additional direct analysis {of the invariant measure} at the boundary.
\item Provided the elliptic-type estimate mentioned in the previous bullet point, we then employ the auxiliary nearest-neighbor specialization $\bar{\mathbf{P}}^{N,0}$ to establish a suitable Nash-Sobolev inequality with respect to the invariant measure associated to $\bar{\mathbf{P}}^{N,0}$, which is actually the uniform ``Lebesgue measure". To this end, we require an a priori estimate for $\bar{\mathbf{P}}^{N,0}$ achieved through the method-of-images calculations in \cite{CS}, for example, { that come from direct and explicit calculations.}
\item Lastly, we {need a few} Duhamel-type perturbative schemes; these will serve {to} both {prove} sub-optimal regularity estimates for $\mathbf{P}^{N,0}$ that are robust at mesoscopic scales and to extend our heat kernel estimates for $\mathbf{P}^{N,0}$ to arbitrary Robin parameters.
\end{itemize}
%%%
%%%
\subsection{Elliptic Estimates}
%%%
The main result for this subsection is, roughly speaking, a Harnack estimate.
%%%
\begin{lemma}\fsp \label{lemma:EllipticEstimateCpt}
Consider the { following linear space of harmonic functions with respect to the adjoint, where the adjoint is taken with respect to the uniform measure on $\mathbb{I}_{N,0} \subseteq \Z_{\geq 0}$:}
\begin{align}
\Pi^{N} \ \overset{\bullet}= \ \ker \left( \mathscr{L}_{\mathrm{Lap}}^{N,!!} \right)^{\ast}.
\end{align}
Provided any positive measure $\pi \in \Pi^{N}$, we {have} the following upper bound with implied constants depending only on $m_{N}$:
\begin{align}
\sup_{x \in \mathbb{I}_{N,0}} \pi_{x} \ \lesssim_{m_{N}} \ \inf_{x \in \mathbb{I}_{N,0}} \pi_{x}.
\end{align}
In particular, there exists an invariant measure, which we denote by $\pi$ throughout the remainder of this section, that satisfies
\begin{align}
1 \ \lesssim_{m_{N}} \ \inf_{x \in \mathbb{I}_{N,0}} \pi_{x} \ \leq \ \sup_{x \in \mathbb{I}_{N,0}} \pi_{x} \ \lesssim_{m_{N}} \ 1.
\end{align}
\end{lemma}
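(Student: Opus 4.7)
The plan is to prove the comparability via a two-step analysis: a discrete elliptic maximum principle in the bulk, combined with a direct finite-dimensional argument at the $\mathscr{O}(m_N)$-sized boundary layers. I would first write out the adjoint equation $(\mathscr{L}_{\mathrm{Lap}}^{N,!!})^{\ast} \pi_y = 0$ pointwise on $\mathscr{I}_{N,0}$. The operator $\mathscr{L}_{\mathrm{Lap}}^{N,!!}$ is a Markov generator, with nonnegative off-diagonals and zero row sums, and in the deep bulk $y \in \llbracket 2m_N, N - 2m_N \rrbracket$ every source $x$ contributing to $\pi_y$ is itself a bulk site. The bulk jumps being symmetric, the equation collapses there to the discrete harmonic identity
\begin{align*}
\sum_{k=1}^{m_N} \wt{\alpha}_k^N \bigl( \pi_{y+k} + \pi_{y-k} \bigr) \ = \ 2 \pi_y \sum_{k=1}^{m_N} \wt{\alpha}_k^N,
\end{align*}
expressing $\pi_y$ as a convex combination of $\{\pi_{y \pm k}\}_{k=1}^{m_N}$ with weights uniformly bounded below by the ellipticity assumption $\alpha_1^N \gtrsim 1$ from Assumption \ref{ass:Category1}. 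The discrete strong maximum principle then propagates equality of $\pi$ through the nearest-neighbor graph within the bulk, yielding $\max_{\text{bulk}} \pi \leq \max_{\text{boundary layer}} \pi$ and $\min_{\text{bulk}} \pi \geq \min_{\text{boundary layer}} \pi$, where the boundary layer has uniformly bounded size $\mathscr{O}(m_N)$.

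The second step handles the two boundary layers $\llbracket 0, 2m_N - 1 \rrbracket$ and $\llbracket N - 2m_N + 1, N \rrbracket$ directly. On these sites the adjoint equation involves order-$N$ boundary rates together with an order-$1$ bulk-to-boundary inflow, but the $N$-factor cancels between inflow and outflow because both arise from the same $\grad_k^{!}$ and $\grad_{-(k \wedge x)}^{!}$ operators acting at boundary sites; what remains, after dividing through by $N$, is a finite-dimensional linear system on $\mathscr{O}(m_N)$ variables whose coefficients depend only on $\{\wt{\alpha}_k^N\}_{k=1}^{m_N}$. The irreducibility of this reduced system, ensured by the uniformly elliptic nearest-neighbor rate, implies that its positive kernel has all entries comparable up to a constant depending only on $m_N$, giving $\max_{\text{boundary layer}} \pi \lesssim_{m_N} \min_{\text{boundary layer}} \pi$ and completing the proof of the first claim upon combination with Step 1.

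The existence of a normalized $\pi$ with both $\inf_x \pi_x$ and $\sup_x \pi_x$ of order $1$ up to $m_N$-dependent constants follows from the Perron--Frobenius theorem applied to the irreducible Markov generator $\mathscr{L}_{\mathrm{Lap}}^{N,!!}$ on the finite state space $\mathscr{I}_{N,0}$: this yields a strictly positive invariant measure, and renormalizing so that $\inf_x \pi_x = 1$ produces the claimed two-sided estimate via the first part. I expect the principal technical hurdle to lie in Step 2, namely in verifying rigorously that the sub-leading order-$1$ bulk-to-boundary inflow does not, after combination with the $N$-enhanced boundary-to-boundary fluxes, produce an $N$-dependent distortion of the invariant measure. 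The resolution should hinge on a careful bookkeeping of the reflected-gradient terms $\grad_{-(k \wedge x)}^{!}$ together with the observation that this $\mathscr{O}(1)$ inflow contributes at most an $N^{-1}$ relative perturbation to the boundary linear system, which the irreducibility absorbs uniformly in $N$.
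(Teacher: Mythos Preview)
Your two-step strategy --- bulk maximum principle followed by a direct boundary-layer analysis --- is exactly the paper's decomposition (its Lemmas \ref{lemma:MaxPrincipleElliptic} and \ref{lemma:BoundaryElliptic}), and your Step~1 is correct and matches the paper's argument essentially verbatim.

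Step~2, however, contains a misconception and a genuine gap. First, there is no scale mismatch: in the definition of $\mathscr{L}_{\mathrm{Lap}}^{N}$ the boundary terms are $\frac12\sum_k N\wt{\alpha}_k^N\grad_k^{!} = \frac12\sum_k N^2\wt{\alpha}_k^N\grad_k$, which is the same $N^2$ scale as the bulk $\Delta_k^{!!}$. So the ``order-$N$ vs.\ order-$1$'' tension you flag as the principal hurdle does not exist, and the $N$-dependent distortion you worry about never arises. Second, your ``irreducibility of the reduced finite system'' argument is too vague to carry the proof: the adjoint equation at a left boundary-layer site $j$ involves $\pi_{j+k}$ for $k\leq m_N$, some of which lie in the bulk, so the system is not closed on the boundary layer alone. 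An abstract Perron--Frobenius invocation on an unspecified reduced matrix does not give the uniform-in-$N$ comparability constant. You also omit the step linking the two boundary layers; without it the sup could sit on the left and the inf on the right with no a priori relation.

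The paper's Step~2 is concrete and avoids all of this. Using the explicit adjoint (Lemma \ref{lemma:AdjointFlat}), it first shows that if the boundary-layer maximum were at some $j_0\in\llbracket 1,m_N-1\rrbracket$ then the adjoint equation at $j_0$ contains a strictly negative term $-\bigl[\frac12\sum_{k>j_0}\wt\alpha_k^N\bigr]\pi_{j_0}$ that cannot be cancelled, forcing the max to be $\pi_0$. It then reads off from the same equation at $j$ the one-step bound $\wt\alpha_1^N\pi_{j-1}\lesssim\pi_j$ and iterates $m_N$ times to get $\pi_0\leq\kappa^{m_N}\min_{\llbracket 0,m_N-1\rrbracket}\pi$. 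Finally it uses the reflection symmetry of $\mathscr{L}_{\mathrm{Lap}}^{N}$ about the midpoint of $\mathscr{I}_{N,0}$ to conclude $\pi_0=\pi_N$, which ties the two boundary layers together. This is the missing idea in your proposal.
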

%%%
{We first note the existence of a positive invariant measure that we claimed in Lemma \ref{lemma:EllipticEstimateCpt} follows by standard Markov process theory, as the interacting particle system at hand is irreducible. Therefore, we are left with proving only the proposed estimates in Lemma \ref{lemma:EllipticEstimateCpt}.} Before proving Lemma \ref{lemma:EllipticEstimateCpt}, it will {be} convenient beyond {this} section to compute the adjoint in Lemma \ref{lemma:EllipticEstimateCpt}.
%%%
\begin{lemma}\fsp \label{lemma:AdjointFlat}
{Provided any function $\varphi: \mathbb{I}_{N,0} \to \R$, we have, for all $x \in \mathbb{I}_{N,0} $,
\begin{align}
\left(\mathscr{L}_{\mathrm{Lap},\infty}^{N,!!}\right)^{\ast} \varphi_{x} \ &= \ 2^{-1}{\sum}_{k = 1}^{m_{N}} \wt{\alpha}_{k}^{N} \left( \mathbf{1}_{x\in\llbracket k,N-k\rrbracket} \Delta_{k}^{!!} \varphi_{x} \ + \ \mathbf{1}_{x \in \llbracket0,k-1\rrbracket} \grad_{k}^{!!} \varphi_{x} \ + \ \mathbf{1}_{x\in\llbracket N-k-1,N\rrbracket} \grad_{-k}^{!!} \varphi_{x} \right) \\
&\quad\quad - \ 2^{-1}{\sum}_{k = 1}^{m_{N}} \wt{\alpha}_{k}^{N} \left( N^{2} \mathbf{1}_{x\in\llbracket0,k-1\rrbracket} \varphi_{x} \ + \ N^{2} \mathbf{1}_{x=0}\left( {\sum}_{j \in \llbracket0,k-1\rrbracket} \varphi_{j} \right) \right) \nonumber \\
&\quad\quad - \ 2^{-1} {\sum}_{k=1}^{m_{N}} \wt{\alpha}_{k}^{N} \left( N^{2} \mathbf{1}_{x \in \llbracket N-k+1, N \rrbracket} \varphi_{x} \ + \ N^{2} \mathbf{1}_{x=N}\left( {\sum}_{j \in \llbracket N-k+1,N\rrbracket} \varphi_{j} \right) \right). \nonumber
\end{align}
}\end{lemma}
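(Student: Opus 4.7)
The plan is to compute the adjoint by pairing $\bigl(\mathscr{L}_{\mathrm{Lap}}^{N,!!}\psi\bigr)$ against a test function $\varphi$ with respect to the uniform counting measure on $\mathscr{I}_{N,0}$, transferring every discrete difference from $\psi$ to $\varphi$ via summation by parts, and then reading off the coefficient of $\psi_{x}$. Throughout I would extend $\varphi$ by zero outside $\mathscr{I}_{N,0}$ so that the gradient and Laplacian symbols $\grad_{\pm k}^{!}$, $\grad_{k}^{!!}$, and $\Delta_{k}^{!!}$ are unambiguous on all of $\Z$, and I would decompose $\mathscr{L}_{\mathrm{Lap}}^{N}$ according to the three indicator blocks appearing in its definition (bulk, left boundary, right boundary).

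First I would handle the bulk block $\mathbf{1}_{y \in \llbracket m_{N},N-m_{N}\rrbracket}\,\tfrac12\sum_{k}\wt\alpha_{k}^{N}\Delta_{k}^{!!}\psi_{y}$. Since $\Delta_{k}^{!!}$ is formally self-adjoint on $\Z$, Abel summation pushes the symmetric second difference onto $\varphi$; tracking where the shifted indices $y\pm k$ actually live in $\mathscr{I}_{N,0}$ shows that the dual of this block equals $\mathbf{1}_{x\geq k}\Delta_{k}^{!!}\varphi_{x}$ when both $x\pm k$ remain inside the lattice, while for $x\in\llbracket 0,k-1\rrbracket$ only the forward neighbour survives and produces the one-sided second difference $\grad_{k}^{!!}\varphi_{x}$. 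This recovers the first bracket of the stated formula. Next I would pair $\varphi$ against the left-boundary block $\mathbf{1}_{y\in\llbracket 0,m_{N}-1\rrbracket}\tfrac{N}{2}\sum_{k}\wt\alpha_{k}^{N}\bigl[\grad_{k}^{!}\psi_{y}+\grad_{-(k\wedge y)}^{!}\psi_{y}\bigr]$. The reflected gradient $\grad_{-(k\wedge y)}^{!}\psi_{y}=N\bigl(\psi_{\max(y-k,0)}-\psi_{y}\bigr)$ pins every term to the site $0$, so after re-indexing the summation variable the contribution collapses onto $x=0$, producing the singular $N^{2}\mathbf{1}_{x=0}\grad_{-1}^{!!}\varphi_{x}$ term together with the diagonal correction $-N^{2}\mathbf{1}_{x\in\llbracket 0,k-1\rrbracket}\varphi_{x}$ and the aggregated mass $-N^{2}\mathbf{1}_{x=0}\sum_{j\in\llbracket 0,k-1\rrbracket}\varphi_{j}$. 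The symmetric computation at the right boundary, related to the left one by the reflection $x\mapsto N-x$, produces the analogous $\mathbf{1}_{x\in\llbracket N-k+1,N\rrbracket}$ and $\mathbf{1}_{x=N}$ pieces in the third line of the formula (which is to be read in place of the second occurrence of $\mathbf{1}_{x=0}$).

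The main obstacle here is not conceptual but entirely clerical. The three blocks of $\mathscr{L}_{\mathrm{Lap}}^{N}$ have overlapping supports at the edges $y=m_{N}$ and $y=N-m_{N}$, and the reflected gradients $\grad_{-(k\wedge y)}^{!}$ collapse many different shifted indices onto the two boundary sites, so one has to verify that the Abel telescoping from each block neither double-counts the bulk endpoints nor drops any boundary contribution, and that each weight $\wt\alpha_{k}^{N}$ lands in front of the correct indicator in the final collection. Once this bookkeeping is carried out the formula follows by reading off the coefficient of $\psi_{x}$.
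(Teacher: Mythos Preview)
Your approach is essentially the same as the paper's: pair $\varphi$ against $\mathscr{L}_{\mathrm{Lap}}^{N}\psi$ with respect to counting measure, re-index (summation by parts), and read off the coefficient of $\psi_{x}$, using reflection symmetry for the right boundary. The paper streamlines the bookkeeping slightly differently: rather than working block-by-block on the three indicators (which, incidentally, are disjoint rather than overlapping at $y=m_{N}$ and $y=N-m_{N}$), it first observes that for $x$ in the left half the operator can be written uniformly as
\[
\mathscr{L}_{\mathrm{Lap}}^{N}\psi_{x}\;=\;\tfrac12\sum_{k}\wt\alpha_{k}^{N}\Bigl[(\psi_{x+k}-\psi_{x})+(\psi_{(x-k)\vee 0}-\psi_{x})\Bigr],
\]
then restricts to test functions $\varphi$ supported in the left half of $\mathscr{I}_{N,0}$ so that only one boundary is in play, and shifts the sums directly. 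This sidesteps the cross-block accounting you flag as the main obstacle; otherwise the computation is the same re-indexing you describe.
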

%%%
%%%
\begin{proof}
{Consider any functions $\varphi,\psi: \mathbb{I}_{N,0} \to \R$ such that $\varphi$ has support contained in the left-half $2^{-1}\mathbb{I}_{N,0} \subseteq \mathbb{I}_{N,0}$; we compute
\begin{align}
2{\sum}_{x\in\mathbb{I}_{N,0}} \varphi_{x} \mathscr{L}_{\mathrm{Lap}}^{N} \psi_{x} \ &= \ {\sum}_{k = 1}^{m_{N}} \wt{\alpha}_{k}^{N} {\sum}_{x = 0}^{N-k} \varphi_{x} \left( \psi_{x+k} - \psi_{x} \right) \ + \  {\sum}_{k = 1}^{m_{N}} \wt{\alpha}_{k}^{N} {\sum}_{x = k}^{N} \varphi_{x} \left( \psi_{x-k} - \psi_{x} \right) \\
&\quad\quad\quad + \  {\sum}_{k = 1}^{m_{N}} \wt{\alpha}_{k}^{N} {\sum}_{x = 0}^{k-1} \varphi_{x} \left( \psi_{0} - \psi_{x} \right) \nonumber \\
&= \  {\sum}_{k = 1}^{m_{N}} \wt{\alpha}_{k}^{N} \left( {\sum}_{x=k}^{N} \varphi_{x-k} \psi_{x} \ - \ {\sum}_{x = 0}^{N-k} \varphi_{x}\psi_{x} \right) \\
&\quad\quad\quad + \  {\sum}_{k = 1}^{m_{N}} \wt{\alpha}_{k}^{N} \left( {\sum}_{x = 0}^{N-k} \varphi_{x+k} \psi_{x} \ - \ {\sum}_{x = k}^{N} \varphi_{x} \psi_{x} \right) \nonumber\\
&\quad\quad\quad + \ \left(  {\sum}_{k=1}^{m_{N}} \wt{\alpha}_{k}^{N}  \left({\sum}_{x=0}^{k-1} \varphi_{x} \right)\right) \psi_{0} \ - \ {\sum}_{k=1}^{m_{N}}\wt{\alpha}_{k}^{N} {\sum}_{x=0}^{k-1} \varphi_{x}\psi_{x} \nonumber \\
&= \ {\sum}_{k=1}^{m_{N}} \wt{\alpha}_{k}^{N} {\sum}_{x = k}^{N-k} \left( \varphi_{x-k} + \varphi_{x+k} - 2 \varphi_{x} \right) \psi_{x} \ + \  {\sum}_{k=1}^{m_{N}} \wt{\alpha}_{k}^{N} {\sum}_{x=0}^{k-1} \left( \varphi_{x+k} - \varphi_{x} \right) \psi_{x} \\
&\quad\quad\quad + \  {\sum}_{k=1}^{m_{N}} \wt{\alpha}_{k}^{N} {\sum}_{x=N-k-1}^{N} \left( \varphi_{x-k} - \varphi_{x} \right) \psi_{x} \nonumber \\
&\quad\quad\quad + \ \left( {\sum}_{k=1}^{m_{N}}\wt{\alpha}_{k}^{N} \left( {\sum}_{x = 0}^{k-1} \varphi_{x} \right) \right) \psi_{0} \ - \  {\sum}_{k = 1}^{m_{N}} \wt{\alpha}_{k}^{N} \sum_{x=0}^{k-1} \varphi_{x} \psi_{x}. \nonumber
\end{align}
To deduce the desired relation, it now suffices to replicate this argument on the right-half $\mathbb{I}_{N,0} \setminus 2^{-1}\mathbb{I}_{N,0} \subseteq \mathbb{I}_{N,0}$, which ultimately results in an identical calculation upon changing coordinates $x \to N-x$. This completes the proof.}
\end{proof}
%%%
The proof of Lemma \ref{lemma:EllipticEstimateCpt} begins with the following consequence of the maximum principle; roughly speaking, it localizes the proof of Lemma \ref{lemma:EllipticEstimateCpt} to the boundary of the domain $\mathbb{I}_{N,0}$, {as in the interior, we may work as if there is no boundary.}
%%%
\begin{lemma}\fsp \label{lemma:MaxPrincipleElliptic}
Take $\pi \in \Pi^{N}$; then { $\max_{x \in \llbracket0,m_{N}-1\rrbracket} \pi_{x}=\max_{x\in \llbracket N - m_{N} + 1, N \rrbracket} \pi_{x}$, the same for $\min$ instead of $\max$, and}
\begin{subequations}
\begin{align}
\sup_{x \in \mathbb{I}_{N,0}} \pi_{x} \ &= \ \max\left\{\max_{x \in \llbracket0,m_{N}-1\rrbracket} \pi_{x}, \ \max_{x\in \llbracket N - m_{N} + 1, N \rrbracket} \pi_{x} \right\}; \\
\inf_{x \in \mathbb{I}_{N,0}} \pi_{x} \ &= \ \min\left\{\min_{x \in \llbracket0,m_{N}-1\rrbracket} \pi_{x}, \ \min_{x\in \llbracket N - m_{N} + 1, N \rrbracket} \pi_{x} \right\}.
\end{align}
\end{subequations}
\end{lemma}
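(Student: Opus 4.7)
The plan is to reduce the statement to a discrete maximum principle on the bulk portion of $\mathscr{I}_{N,0}$, made available because the adjoint $(\mathscr{L}_{\mathrm{Lap}}^{N,!!})^{\ast}$ acts there as a genuine non-degenerate second-difference operator.

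First, I read off from Lemma \ref{lemma:AdjointFlat} that every term involving an indicator of the form $\mathbf{1}_{x \in \llbracket 0,k-1\rrbracket}$, $\mathbf{1}_{x=0}$, or $\mathbf{1}_{x \in \llbracket N-k+1,N\rrbracket}$ is supported on the boundary regions $\llbracket 0,m_{N}-1\rrbracket \cup \llbracket N-m_{N}+1,N\rrbracket$. Consequently, for any $x$ in the bulk $\llbracket m_{N},N-m_{N}\rrbracket$ we have the clean identity
\begin{align}
\left(\mathscr{L}_{\mathrm{Lap}}^{N,!!}\right)^{\ast}\pi_{x} \ = \ \frac{1}{2}\sum_{k=1}^{m_{N}} \wt{\alpha}_{k}^{N}\, \Delta_{k}^{!!} \pi_{x} \ = \ 0,
\end{align}
using $\pi \in \ker(\mathscr{L}_{\mathrm{Lap}}^{N,!!})^{\ast}$. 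Rearranging, for each such bulk point $x$ the value $\pi_{x}$ is an explicit convex combination of the values $\{\pi_{x\pm k}\}_{k=1}^{m_{N}}$ with strictly positive weights, since Assumption \ref{ass:Category1} gives $\wt{\alpha}_{1}^{N}\gtrsim 1$ (and all other $\wt{\alpha}_{k}^{N}$ are non-negative) for $N$ large.

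Second, I invoke the standard discrete maximum principle. Suppose the supremum of $\pi$ over $\mathscr{I}_{N,0}$ is attained at some $x_{0}\in\llbracket m_{N},N-m_{N}\rrbracket$. The convex combination identity forces $\pi_{x_{0}\pm k} = \pi_{x_{0}}$ for every index $k$ whose weight is strictly positive; in particular, since $\wt{\alpha}_{1}^{N}>0$, we obtain $\pi_{x_{0}\pm 1} = \pi_{x_{0}}$. Iterating this nearest-neighbor propagation step along $\Z$ inside the bulk, the maximum value propagates outward until the iteration exits the bulk and lands inside $\llbracket 0,m_{N}-1\rrbracket \cup \llbracket N-m_{N}+1,N\rrbracket$. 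This produces a point in the boundary region realizing the supremum, which establishes the first displayed identity. The argument for the infimum is identical upon flipping the sign, as the same convex-combination identity forces equality at the neighbors of any interior minimizer.

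The only subtlety, and the step I expect to require the most care, is making sure the nearest-neighbor positivity $\wt{\alpha}_{1}^{N}>0$ indeed yields single-step propagation rather than only propagation by the full jump-range $m_{N}$; the latter would leave gaps near the boundary of the bulk. Using $\wt{\alpha}_{1}^{N}\gtrsim 1$ as noted above, the single-step propagation is immediate, and the iteration reaches the boundary regions in at most $m_{N}$ steps, which is also the mechanism behind the qualifier ``implied constants depending only on $m_{N}$'' in the second part of Lemma \ref{lemma:EllipticEstimateCpt} once it is combined with the boundary analysis performed in the next lemmas.
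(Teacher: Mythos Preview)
Your proposal is correct and follows essentially the same approach as the paper's own proof: use Lemma~\ref{lemma:AdjointFlat} to see that in the bulk $\llbracket m_{N},N-m_{N}\rrbracket$ the adjoint reduces to $\tfrac12\sum_{k}\wt{\alpha}_{k}^{N}\Delta_{k}^{!!}$, deduce that any interior maximizer forces equality at nearest neighbors via $\wt{\alpha}_{1}^{N}>0$, and propagate to the boundary. The paper phrases the key step as ``$\grad_{\pm k}\pi_{x}\leq 0$ summing to zero with nonnegative weights'' rather than as a convex combination, but these are equivalent, and the sign-flip for the infimum is identical.
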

%%%
%%%
\begin{proof}
Observe it suffices to prove exactly one of these identities, since the other is obtained by the former applied {after making the substitution $\pi\to-\pi$. For the proof of Lemma \ref{lemma:MaxPrincipleElliptic} only, we define the following pseudo-norms on functions $\varphi: \Z_{\geq 0} \to \R$:}
\begin{align}
[\varphi]_{\infty}^{+} \ = \ \sup_{x \in \mathbb{I}_{N,0}} \varphi_{x}, \quad [\varphi]_{\infty}^{-} \ = \ \inf_{x \in \mathbb{I}_{N,0}} \varphi_{x}.
\end{align}
We consider the following scenario.
%%%
\begin{itemize}
\item Suppose $x \in \llbracket m_{N}, N-m_{N}\rrbracket$ satisfies $[\pi]_{\infty}^{+} = \pi_{x}$; by Lemma \ref{lemma:AdjointFlat} we obtain the difference equation
\begin{align}
{\sum}_{k = 1}^{m_{N}} \wt{\alpha}_{k}^{N} \left( \pi_{x+k} + \pi_{x-k} - 2\pi_{x} \right) \ = \ 0.
\end{align}
By definition, we observe $\grad_{\pm k} \pi_{x} \leq 0$, and thereby provided the above difference equation, we deduce $\grad_{\pm k} \pi_{x} = 0$. Because $\wt{\alpha}_{1}^{N} \in \R_{>0}$ is strictly positive, we deduce $\pi_{x\pm1} = \pi_{x} = [\pi]_{\infty}^{+}$. 
\end{itemize}
%%%
Continuing inductively until we arrive {at} the boundary completes the proof {of the two displayed identities. The proposed identification of maxima and minima follows by symmetry of $\pi$, given the symmetry of its defining operator, and thus of its adjoint with respect to uniform measure on $\mathbb{I}_{N,0}$ and space $\Pi^{N}$ of invariant measures, under the transformation $x\mapsto N-x$.}
\end{proof}
%%%
By Lemma \ref{lemma:MaxPrincipleElliptic}, to estimate any invariant measure \emph{globally} from above or below, it suffices to provide estimates at the boundary. This is exactly the content of this second result; we emphasize the importance of the assumption $m_{N} \lesssim 1$ for this estimate, which is the only part of our analysis that requires such a priori $\mathscr{O}(1)$ bound on the maximal jump-length.
%%%
\begin{lemma}\fsp \label{lemma:BoundaryElliptic}
{Consider any $\pi\in\Pi^{N}$ satisfying $\pi\geq0$. We have $\pi_{0}=\pi_{N}$, and for some universal $\kappa>0$,}
\begin{subequations}
\begin{align}
\max_{x \in \llbracket 0,m_{N}-1 \rrbracket} \pi_{x} \ \leq \ &\pi_{0} \ \leq \ \kappa^{m_{N}} \ \min_{x \in \llbracket 0,m_{N}-1 \rrbracket} \pi_{x}; \\
\max_{x \in \llbracket N-m_{N}+1,N\rrbracket} \pi_{x} \ \leq \ &\pi_{N} \ \leq \ \kappa^{m_{N}} \ \min_{x \in \llbracket N-m_{N}+1,N \rrbracket} \pi_{x}.
\end{align}
\end{subequations}
\end{lemma}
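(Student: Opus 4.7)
The plan is to exploit the stationarity equation $\bigl((\mathscr{L}_{\mathrm{Lap}}^{N,!!})^{\ast}\pi\bigr)_x = 0$ at each boundary site $x \in \llbracket 0, m_N - 1 \rrbracket$, using the explicit adjoint formula from Lemma \ref{lemma:AdjointFlat}, combined with the positivity of $\pi$. The argument for the right boundary will follow by the reflection symmetry $x \mapsto N - x$ of the operator, so I focus on the left boundary.

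For the inequality $\max_{x \in \llbracket 0, m_N-1 \rrbracket} \pi_x \leq \pi_0$, I would write out the equation $\bigl((\mathscr{L}_{\mathrm{Lap}}^{N,!!})^{\ast}\pi\bigr)_x = 0$ at a generic $x \in \llbracket 1, m_N - 1 \rrbracket$. Due to the gradient-style boundary contributions in $\mathscr{L}_{\mathrm{Lap}}^{N,!!}$ together with the term $-\frac{1}{2}\sum_k \wt{\alpha}_k^N N^{2} \mathbf{1}_{x \in \llbracket 0, k-1 \rrbracket}\pi_x$ appearing in the adjoint, this equation naturally balances a large positive multiple of $\pi_x$ against a positively weighted linear combination of $\pi_0$ and longer-range interior values $\pi_y$. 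Positivity of $\pi$ then immediately yields the bound $\pi_x \lesssim \pi_0$ after sorting out signs, and indeed the distinguished weight on $\pi_0$ arising from the $\mathbf{1}_{x=0}\sum_{j \in \llbracket 0,k-1\rrbracket}\pi_j$ term ensures $\pi_0$ dominates.

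For the reverse comparison $\pi_0 \leq \kappa^{m_N} \min_{x \in \llbracket 0, m_N - 1 \rrbracket} \pi_x$, the plan is an induction working inward from $x = 0$. The equation at $x = 0$ expresses $\pi_0$ as an explicit linear combination of $\{\pi_j\}_{j=1}^{m_N}$ with coefficients uniformly bounded above, so $\pi_0 \leq \kappa \cdot \max_{j \in \llbracket 1, m_N \rrbracket} \pi_j$ for a universal constant $\kappa$. Applying the analogous equation at $x = j$ substitutes each $\pi_j$ by a combination of other $\pi_y$, and iterating through the boundary layer of width $m_N$ produces the factor $\kappa^{m_N}$. The hypothesis $m_N \lesssim 1$ from Assumption \ref{ass:Category1} is essential here and is precisely where it is invoked.

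For the identity $\pi_0 = \pi_N$, observe that both $\pi$ and its reflection $x \mapsto \pi_{N-x}$ lie in $\ker\bigl((\mathscr{L}_{\mathrm{Lap}}^{N,!!})^{\ast}\bigr)$ because $\mathscr{L}_{\mathrm{Lap}}^{N,!!}$ is invariant under $x \mapsto N - x$. Coupled with one-dimensionality of this kernel, which follows from the maximum principle already established in Lemma \ref{lemma:MaxPrincipleElliptic} together with the irreducibility of the associated random walk guaranteed by the uniform ellipticity $\wt{\alpha}_1^N \gtrsim 1$ from Assumption \ref{ass:Category1}, the two solutions must be proportional. Normalization then forces the proportionality constant to be $1$, yielding $\pi_x = \pi_{N-x}$ for all $x$ and in particular $\pi_0 = \pi_N$.

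The main obstacle I expect is the inductive step of the second part: keeping track of exactly which interior $\pi_y$ appear on the right-hand side of each boundary equation, since long-range jumps of length up to $m_N$ allow each equation to couple $\pi_x$ to values reaching into the bulk at distance up to $m_N$ from the boundary. One must carefully isolate the $m_N \times m_N$ linear subsystem that governs only the boundary-layer values $\pi_0, \ldots, \pi_{m_N - 1}$, verify that it is diagonally dominant in the appropriate sense so that its inverse preserves positivity with entries of order $1$, and thereby obtain the $\kappa^{m_N}$ constant as the norm of this inverse raised to the $m_N$-th power. Because $\mathscr{L}_{\mathrm{Lap}}^{N,!!}$ is not self-adjoint, detailed balance is unavailable and one genuinely must work with this linear system rather than with a Gibbsian expression.
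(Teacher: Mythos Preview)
Your plan for the first inequality has a concrete error: the term $\mathbf{1}_{x=0}\sum_{j \in \llbracket 0,k-1\rrbracket}\pi_j$ in the adjoint formula fires only at $x=0$, so at a generic $x\in\llbracket 1,m_N-1\rrbracket$ there is no ``distinguished weight on $\pi_0$.'' The stationarity equation there reads
\[
\Bigl[\tfrac12\textstyle\sum_{k=1}^{m_N}\wt{\alpha}_k^N+\tfrac12\sum_{k=x+1}^{m_N}\wt{\alpha}_k^N\Bigr]\pi_x
=\tfrac12\sum_{k=1}^{x}\wt{\alpha}_k^N\pi_{x-k}+\tfrac12\sum_{k=1}^{m_N}\wt{\alpha}_k^N\pi_{x+k},
\]
and positivity alone only gives $\pi_x\lesssim$ a convex combination of \emph{all} neighbouring values, not $\pi_x\leq\pi_0$. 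The paper's argument is different and essentially a maximum principle: assume the maximum over $\llbracket 0,m_N-1\rrbracket$ is attained at some $j_0\neq 0$; then (using Lemma~\ref{lemma:MaxPrincipleElliptic}) each $\Delta_k\pi_{j_0}\leq 0$ and $\grad_{k}\pi_{j_0}\leq 0$, while the extra term $-\tfrac12\sum_{k>j_0}\wt{\alpha}_k^N\,\pi_{j_0}$ is strictly negative, contradicting $(\mathscr{L}_{\mathrm{Lap}}^{N,!!})^*\pi_{j_0}=0$. That strict negativity is exactly what forces the max to sit at $x=0$; your proposal never isolates this mechanism.

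For the second inequality your induction goes the wrong way. Starting from $\pi_0\leq\kappa\max_{j}\pi_j$ cannot by iteration produce $\pi_0\leq\kappa^{m_N}\min_j\pi_j$; you are pushing toward the maximum, not the minimum. The paper instead extracts from the displayed equation the single term $\tfrac12\wt{\alpha}_1^N\pi_{j-1}$ on the right and bounds it by the left-hand side $\lesssim\pi_j$, yielding $\pi_{j-1}\lesssim\pi_j$ for every $j\in\llbracket 1,m_N-1\rrbracket$ (here the uniform ellipticity $\wt{\alpha}_1^N\gtrsim 1$ is essential). Chaining these gives $\pi_0\leq\kappa^{j}\pi_j$ for each $j$, hence $\pi_0\leq\kappa^{m_N}\min_j\pi_j$. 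No $m_N\times m_N$ linear system or diagonal-dominance argument is needed; the boundary-layer equations already decouple into a one-step recursion. Your treatment of $\pi_0=\pi_N$ via reflection symmetry plus one-dimensionality of $\Pi^N$ is correct and in fact spells out what the paper leaves implicit.
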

%%%
%%%
\begin{proof}
We provide a proof of the former estimate; the latter follows from identical considerations upon reflection of coordinates. We first claim the following relation:
\begin{align}
\max_{x\in \llbracket0,m_{N}-1\rrbracket} \pi_{x} \ = \ \pi_{0}. \label{eq:BoundaryEllipticUpper1}
\end{align}
To this end, suppose there exists some $j_{0} \in \llbracket1,m_{N}-1\rrbracket$ such that $\pi_{j_{0}} = \max_{j \in \llbracket0,m_{N}-1\rrbracket} \pi_{j}$. By Lemma \ref{lemma:AdjointFlat}, we have
\begin{align}
\left( \mathscr{L}_{\mathrm{Lap}}^{N} \right)^{\ast} \pi_{j_{0}} \ &= \ 2^{-1}{\sum}_{k = 1}^{j_{0}} \wt{\alpha}_{k}^{N} \Delta_{k} \pi_{j_{0}} \ + \ 2^{-1}{\sum}_{k = j_{0}+1}^{m_{N}} \wt{\alpha}_{k}^{N} \grad_{k,+} \pi_{j_{0}} \ - \ \left(2^{-1}{\sum}_{k = j_{0}+1}^{m_{N}} \wt{\alpha}_{k}^{N}\right)\pi_{j_{0}} \ = \ 0.
\end{align}
As with the proof of Lemma \ref{lemma:MaxPrincipleElliptic}, the first two summations in the expansion above are non-positive. However, the positivity constraint shows that this final term is strictly negative as well, from which we obtain a contradiction to this previous adjoint relation. Thus, we obtain the stated relation \eqref{eq:BoundaryEllipticUpper1}. It remains to prove the stated upper bound. To this end, we again appeal to the exact calculation in Lemma \ref{lemma:AdjointFlat} -- given $j \in \llbracket 1,m_{N}-1\rrbracket$, we have the following by continuing the previous relevant calculation:
\begin{align}
\left( \mathscr{L}_{\mathrm{Lap}}^{N} \right)^{\ast} \pi_{j}\ &= \ 2^{-1} {\sum}_{k = 1}^{j} \wt{\alpha}_{k}^{N} \pi_{j-k} \ + \ 2^{-1} {\sum}_{k = 1}^{m_{N}} \wt{\alpha}_{k}^{N} \pi_{j+k} \ - \ \left(2^{-1}{\sum}_{k = 1}^{m_{N}} \wt{\alpha}_{k}^{N} \ + \ 2^{-1}{\sum}_{k = j+1}^{m_{N}} \wt{\alpha}_{k}^{N}\right)\pi_{j} \ = \ 0.
\end{align}
Provided the positivity assumption, we deduce { the following estimates with universal implied constant:}
\begin{align}
2^{-1}\wt{\alpha}_{1}^{N} \pi_{j-1} \ &\leq \ \left(2^{-1}{\sum}_{k = 1}^{m_{N}} \wt{\alpha}_{k}^{N} \ + \ 2^{-1} {\sum}_{k = j+1}^{m_{N}} \wt{\alpha}_{k}^{N}\right) \pi_{j} \ \lesssim \ \pi_{j}.
\end{align}
As $\wt{\alpha}_{1}^{N} \gtrsim 1$ with another universal implied constant, we {have} $\pi_{j-1} \lesssim \pi_{j}$. Iterating inductively provides the desired upper bound. Finally, it suffices to justify $\pi_{0} = \pi_{N}$. To this end, we simply observe that the operator $\mathscr{L}_{\mathrm{Lap}}^{N}$ is invariant under reflection about the midpoint of $\mathbb{I}_{N,0} \subseteq \Z_{\geq 0}$, {or equivalently under the change of variables $x\mapsto N-x$}; this completes the proof.
\end{proof}
%%%
%%%
\subsection{Nash Inequalities}
%%%
We redirect our attention towards the second ingredient necessary to establish the relevant heat kernel estimates for $\mathbf{P}^{N}$, which ultimately take the form of suitable Nash inequalities. Although Nash inequalities are generally employed in the non-compact regime, we adapt them to the compact regime in this paper; the important ingredient for our approach begins with heat kernel estimates $\bar{\mathbf{P}}^{N,0}$. {As for} relevant heat kernel estimates for {$\bar{\mathbf{P}}^{N,0}$}, the important, {but} certainly treatable, distinction between the asymptotically compact geometry and the non-compact geometry is lack of any similar long-time estimates. Indeed, in the long-time limit with respect to the parabolic space-time scaling, the heat kernel {$\bar{\mathbf{P}}^{N,0}$} approximates the flat measure on $\mathbb{I}_{N,0}$. We {will} account for this non-vanishing long-time behavior in the following estimates, which actually categorizes the upper bound for {$\bar{\mathbf{P}}^{N,0}$} into two time-regimes, despite its presentation.
%%%
\begin{lemma}\fsp \label{lemma:NashPrelimCpt}
Consider times $S,T \in \R_{\geq 0}$ satisfying $S \leq T$. For an implied constant depending only on {$\wt{\alpha}_{1}^{N}$}, we have
\begin{align}
0 \ \leq \ \bar{\mathbf{P}}_{S,T,x,y}^{N,0} \ &\lesssim_{\wt{\alpha}_{1}^{N}} \ N^{-1} \ + \ N^{-1} \rho_{S,T}^{-1/2} \quad\mathrm{where}\quad \rho_{S,T} = |T-S|.
\end{align}
\end{lemma}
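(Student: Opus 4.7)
The non-negativity of $\bar{\mathscr{U}}^{N,0}$ is immediate. Indeed, with Neumann parameters $\mathscr{A}_{\pm} = 0$ one has $\mu_{\mathscr{A}_{\pm}} = 1$, so the stated boundary conditions $\bar{\mathscr{U}}^{N,0}_{S,T,-1,y} = \bar{\mathscr{U}}^{N,0}_{S,T,0,y}$ and $\bar{\mathscr{U}}^{N,0}_{S,T,N+1,y} = \bar{\mathscr{U}}^{N,0}_{S,T,N,y}$ identify $\bar{\mathscr{U}}^{N,0}$ as the transition kernel on $\mathscr{I}_{N,0}$ of the continuous-time nearest-neighbor random walk with jump rate $\beta_{N} \overset{\bullet}= \tfrac{1}{2}\sum_{k=1}^{m_{N}} \wt{\alpha}_{k}^{N} k^{2} \cdot N^{2}$ reflected at both endpoints. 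Positivity is then manifest, and indeed $\sum_{y \in \mathscr{I}_{N,0}} \bar{\mathscr{U}}^{N,0}_{S,T,x,y} = 1$ for every $x$.

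For the pointwise upper bound, the plan is to apply the classical method of images to express $\bar{\mathscr{U}}^{N,0}$ in terms of the unbounded-domain transition kernel $\bar{p}_{t}(\cdot,\cdot)$ of the analogous nearest-neighbor random walk on all of $\Z$. The reflections across the two Neumann walls, located at $-\tfrac{1}{2}$ and $N + \tfrac{1}{2}$, generate a translation group of period $2(N+1)$, so the standard reflection-principle computation, identical to the one in \cite{CS}, yields
\begin{align}
\bar{\mathscr{U}}^{N,0}_{S,T,x,y} \ = \ \sum_{k \in \Z} \bar{p}_{\varrho_{S,T}}\!\left(x, y + 2k(N+1)\right) \ + \ \sum_{k \in \Z} \bar{p}_{\varrho_{S,T}}\!\left(x, -y - 1 + 2k(N+1)\right),
\end{align}
where $\varrho_{S,T} = T - S$ denotes the elapsed time.

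The next step is to invoke the classical discrete local central limit theorem, or equivalently the Gaussian heat-kernel upper bound on $\Z$, which for this continuous-time nearest-neighbor walk reads
\begin{align}
\bar{p}_{t}(x,z) \ \lesssim_{\wt{\alpha}_{1}^{N}} \ \frac{1}{N\sqrt{t}} \exp\!\left( -c \, \frac{|x-z|^{2}}{N^{2} t} \right)
\end{align}
uniformly in $t \in \R_{>0}$ and $x,z \in \Z$ for some universal $c \in \R_{>0}$. This bound can be extracted either from the saddle-point analysis of the Bessel-function representation of $\bar{p}_{t}$ or directly from Proposition A.1 in \cite{DT}. Substituting into the image sum, I would split into two regimes according to the size of $\varrho_{S,T}$. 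When $\varrho_{S,T} \lesssim 1$, only the $k = 0$ images contribute at leading order because the Gaussian factor kills the tail, yielding the $N^{-1} \varrho_{S,T}^{-1/2}$ contribution. When $\varrho_{S,T} \gtrsim 1$, on the order of $\sqrt{\varrho_{S,T}}$ images contribute each of size at most $N^{-1} \varrho_{S,T}^{-1/2}$, whose sum saturates at $N^{-1}$ and reflects equilibration to the uniform measure; combining the two regimes gives the claim.

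The only genuinely non-cosmetic point, though still routine, is to verify that the implied constant in the free-space Gaussian estimate for $\bar{p}_{t}$ depends solely on the uniform ellipticity $\wt{\alpha}_{1}^{N} \gtrsim 1$ rather than on finer structural data such as the coefficients $\{\wt{\alpha}_{k}^{N}\}_{k\geq 2}$. This reduces to checking that at low Fourier frequencies the characteristic function of the associated step distribution is dominated by the quadratic coming from the $k=1$ component, which is transparent since the other $k$-contributions only improve the upper frequency estimate. Once this standard input is in hand, the remaining arithmetic is entirely elementary, and the dependence of the implied constant on $\wt{\alpha}_{1}^{N}$ propagates through the image-sum bookkeeping unchanged.
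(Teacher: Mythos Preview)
Your proposal is correct and follows essentially the same approach as the paper: method of images to express $\bar{\mathscr{U}}^{N,0}$ via the free-space kernel $\bar{\mathscr{G}}^{N,0}$, the Gaussian-type estimate from Proposition~A.1 of \cite{DT}, and a short-time/long-time split (the paper sums the image series explicitly to $N^{-1}\varrho_{S,T}^{-1/2}\bigl[1-\exp(-\kappa'\varrho_{S,T}^{-1/2})\bigr]^{-1}$ before splitting, but this is cosmetic). Your closing paragraph is unnecessary and slightly confused: $\bar{\mathscr{G}}^{N,0}$ is already a \emph{nearest-neighbor} kernel with the single scalar jump rate $\tfrac{1}{2}N^{2}\sum_{k}\wt{\alpha}_{k}^{N}k^{2}$, so there is no multi-range step distribution to analyze---the only coefficient dependence enters through this scalar, which is bounded below by $\tfrac{1}{2}N^{2}\wt{\alpha}_{1}^{N}$.
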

%%%
%%%
\begin{proof}
The lower bound is a straightforward consequence of a discrete-type parabolic maximum principle for $\bar{\mathbf{P}}^{N,0}$, or alternatively its interpretation as a probability. Moreover, it suffices to assume $\rho_{S,T} \gtrsim N^{-2}$, because the aforementioned parabolic maximum principle also provides a uniform upper bound for the heat kernel provided its initial data. To provide the desired upper bound, we first recall the following explicit representation of the heat kernel $\bar{\mathbf{P}}^{N,0}$ from Section 3.2 of \cite{P}.
%%%
\begin{itemize}
\item First, let us define $\bar{\mathbf{K}}^{N,0}$ {via the following semi-discrete heat equation on the} boundary-less lattice $\Z$:
\begin{align}
\partial_{T} \bar{\mathbf{K}}^{N,0}_{S,T,x,y} \ &= \ 2^{-1}\left( {\sum}_{k = 1}^{m_{N}} \wt{\alpha}_{k}^{N} |k|^{2} \right) \Delta_{1}^{!!}\bar{\mathbf{K}}_{S,T,x,y}^{N,0} \and \bar{\mathbf{K}}_{S,S,x,y}^{N,0} \ = \ \mathbf{1}_{x=y}.
\end{align}
\item {By} Section 3.2 of \cite{P}, the method-of-images {gives} the following { where we have used $i_{y,k} \in \Z$ in Section 3.2 of \cite{P}:}
\begin{align}
\bar{\mathbf{P}}_{S,T,x,y}^{N,0} \ \overset{\bullet}= \ {\sum}_{k \in \Z} \bar{\mathbf{K}}_{S,T,x,i_{y,k}}^{N,0}.
\end{align}
\end{itemize}
%%%
Employing both the heat kernel estimates {of} Proposition A.1 in \cite{DT} and an elementary calculation like {in} the proof of Proposition 3.15 of \cite{P}, we have the following upper bound per any universal constants $\kappa,\kappa' \in \R_{>0}$:
{
\begin{align}
\bar{\mathbf{P}}_{S,T,x,y}^{N,0} \ \lesssim_{\kappa} \ N^{-1} \rho_{S,T}^{-1/2} {\sum}_{k \in \Z} \exp\left(-\kappa \frac{|x - i_{y,k}|}{N\rho_{S,T}^{1/2}}\right) \ &\lesssim_{\kappa,\kappa'} \ N^{-1} \rho_{S,T}^{-1/2} {\sum}_{k \in \Z} \exp\left(-\kappa' \frac{|k|}{\rho_{S,T}^{1/2}}\right) \\
&\lesssim \ N^{-1} \rho_{S,T}^{-1/2}\left(1 - \exp(-\kappa' \rho_{S,T}^{-1/2})\right)^{-1}. \label{eq:NashPrelimCptPrelim}
\end{align}
}We now observe that if $\rho_{S,T} \lesssim 1$ with some sufficiently large {but} universal implied constant, then $\rho_{S,T}^{-1} \gtrsim 1$ with some universal implied constant as well, which implies the {following elementary} estimate
\begin{align}
N^{-1} \rho_{S,T}^{-1/2} \left(1 - \exp(-\kappa' \rho_{S,T}^{-1/2})\right)^{-1} \ &\lesssim_{\kappa'} \ N^{-1} \rho_{S,T}^{-1/2}. \label{eq:NashPrelimCptST}
\end{align}
On the other hand, in the complementary scenario for which $\rho_{S,T} \gtrsim 1$ with an identical arbitrarily large though universal implied constant, Taylor expansion provides {the following also elementary estimate}:
\begin{align}
N^{-1} \rho_{S,T}^{-1/2}\left(1 - \exp(-\kappa' \rho_{S,T}^{-1/2})\right)^{-1} \ &\lesssim_{\kappa} \ N^{-1}. \label{eq:NashPrelimCptLT}
\end{align}
Combining the estimates \eqref{eq:NashPrelimCptPrelim}, \eqref{eq:NashPrelimCptST}, and \eqref{eq:NashPrelimCptLT} completes the proof.
\end{proof}
%%%
{Given} this a priori pointwise upper bound for the heat kernel {$\bar{\mathbf{P}}^{N,0}$} corresponding to the nearest-neighbor Laplacian, we obtain a general functional inequality reminiscent of the classical Nash-Sobolev inequality {but} with an additional term, {which arises} from the optimal long-time behavior of the upper bound in Lemma \ref{lemma:NashPrelimCpt}.
%%%
\begin{lemma}\fsp \label{lemma:NashIneqCpt}
Define the operator $\mathscr{L} = -\Delta$; this operator is self-adjoint with respect to the uniform ``discrete Lebesgue" measure when equipped with Neumann boundary conditions $\mathscr{A}_{\pm} = 0$; it is positive semidefinite. Moreover, {given any $\varphi: \mathbb{I}_{N,0} \to \R$,}
\begin{align}
\left({\sum}_{x \in \mathbb{I}_{N,0}} \varphi_{x}^{2}\right)^{1/2} \ \lesssim_{\wt{\alpha}_{1}^{N}} \ N^{-1/2}{\sum}_{x \in \mathbb{I}_{N,0}} |\varphi_{x}| \ + \ \left({\sum}_{x \in \mathbb{I}_{N,0}} |\mathscr{L}^{1/2} \varphi_{x} |^{2}\right)^{1/6}\left({\sum}_{x \in \mathbb{I}_{N,0}} |\varphi_{x}|\right)^{2/3}
\end{align}
\end{lemma}
%%%
%%%
\begin{proof}
{The self-adjoint property and the positive semidefinite property of $\mathscr{L}$ follows from a straightforward summation-by-parts calculation. Precisely, self-adjointness follows from the formula in Lemma \ref{lemma:AdjointFlat} for $m_{N}=1$. Positive semi-definiteness follows by realizing $\mathscr{L}$ is the negative of the Markov generator for a random walk with reflecting boundary condition, and Markov generators are always negative semi-definite. Consider now the following semigroup action on $\varphi: \mathbb{I}_{N,0} \to \R$:}
\begin{align}
\varphi_{T,x} \ \overset{\bullet}= \ {\sum}_{y \in \mathbb{I}_{N,0}} \bar{\mathbf{P}}_{0,T,x,y}^{N,0} \varphi_{y}.
\end{align}
{We now recall via either a standard convexity argument or Riesz-Thorin interpolation combined with the maximum principle that this semigroup action is contractive with respect to every $\ell^{p}$-norm. By differentiation with respect to the time-coordinate combined with the heat kernel estimates in Lemma \ref{lemma:NashPrelimCpt}, we have}
\begin{align}
{\sum}_{x \in \mathbb{I}_{N,0}} \varphi_{x}^{2} \ &= \ {\sum}_{x \in \mathbb{I}_{N,0}} \varphi_{T,x} \varphi_{x} \ + \ 2^{-1}N^{2}\int_{0}^{T} {\sum}_{x \in \mathbb{I}_{N,0}} \varphi_{S,x} \mathscr{L}\varphi_{x} \ \d S \\
&= \ {\sum}_{x \in \mathbb{I}_{N,0}} \varphi_{T,x} \varphi_{x} \ + \ 2^{-1}N^{2} \int_{0}^{T} {\sum}_{x \in \mathbb{I}_{N,0}} \left( \mathscr{L}^{1/2} \varphi \right)_{S,x} \mathscr{L}^{1/2} \varphi_{x} \ \d S \\
&\lesssim_{\wt{\alpha}_{1}^{N}} \ N^{-1} \left({\sum}_{x \in \mathbb{I}_{N,0}} |\varphi_{x}|\right)^{2} \ + \ N^{-1} \rho_{0,T}^{-1/2}\left({\sum}_{x \in \mathbb{I}_{N,0}} |\varphi_{x}|\right)^{2} \ + \ N^{2} T {\sum}_{x \in \mathbb{I}_{N,0}} |\mathscr{L}^{1/2} \varphi_{x} |^{2}.
\end{align}
We complete the proof by optimizing the contribution from the latter two quantities on the RHS with respect to {$T\geq0$} to obtain the desired estimate {for the $\ell^{2}$-norm in terms of a Dirichlet form and $\ell^{1}$-norm.}
\end{proof}
%%%
%%%
\subsection{Perturbative Analysis I}
%%%
As far as preliminary estimates go, we have completed those of the non-perturbative variety. The first preliminary estimate of the perturbative {type} we introduce is a Duhamel-type expansion for the heat kernel {$\mathbf{P}^{N,0}$}, realizing this kernel as perturbation of another explicit and analytically tractable function of the full-line heat kernel in \cite{DT}, for example. {We} explicitly emphasize that this subsection is focused exclusively {on} the heat kernel with Neumann boundary $\mathscr{A}_{\pm} = 0$.
%%%
\begin{remark}\fsp\label{remark:ItoP1General}
In principle, via straightforward adaptations of our analysis we may apply the exact same procedure towards the heat kernel {$\mathbf{P}^{N}$} corresponding to any Robin boundary parameter $\mathscr{A} \in \R$. The only additional complexity arises from bookkeeping the evolution of this heat kernel and the auxiliary field {$\mathbf{T}^{N,0}$} defined below near the boundary. It will suffice for our purposes to dodge these additional complexities and focus only on Neumann parameters $\mathscr{A}_{\pm} = 0$, {as} we later introduce another second perturbative mechanism to transfer regularity estimates for $\mathscr{A}_{\pm} = 0$ to arbitrary Robin boundary parameters.
\end{remark}
%%%
To begin, we first define the object around which we would like to perturb to obtain $\mathbf{P}_{S,T}^{N,0}$.
%%%
\begin{notation}\fsp 
First, consider $\mathbf{K}^{N,0}$ to be the heat kernel associated to the following problem on the lattice $\Z$:
\begin{align}
\partial_{T} \mathbf{K}^{N,0}_{S,T,x,y} \ &= \ 2^{-1}{\sum}_{k=1}^{m_{N}}\wt{\alpha}_{k}^{N} \Delta_{k}^{!!}\mathbf{K}_{S,T,x,y}^{N,0} \and \mathbf{K}_{S,S,x,y}^{N,0} \ = \ \mathbf{1}_{x=y}.
\end{align}
Moreover, we define the following auxiliary space-time kernel $\mathbf{T}_{S,T,x,y}^{N,0}$ for $S,T \in \R_{\geq 0}$ satisfying $S \leq T$ and for $x,y \in \Z$:
\begin{align}
\mathbf{T}_{S,T,x,y}^{N,0} \ \overset{\bullet}= \ {\sum}_{k \in \Z}\mathbf{K}_{S,T,x,i_{y,k}}^{N,0}.
\end{align}
\end{notation}
%%%
Second, we introduce a relevant ``{Duhamel} perturbation" at the level of differential operators.
%%%
\begin{notation}\fsp 
{We define the operator $\mathbf{D}_{\partial}^{N} = \mathscr{L}_{\mathrm{Lap}}^{N,!!} - 2^{-1}{\sum}_{k = 1}^{m_{N}} \wt{\alpha}_{k}^{N} \Delta_{k}^{!!}$.}
\end{notation}
%%%
The main result in this subsection is the following Duhamel-type perturbative scheme. 
%%%
\begin{lemma}\fsp \label{lemma:ItoP1Cpt}
Provided any $S,T \in \R_{\geq 0}$ satisfying $S \leq T$ and any pair of points $x,y \in \mathbb{I}_{N,0}$, we have
\begin{align*}
\mathbf{P}_{S,T,x,y}^{N,0} \ = \ \mathbf{T}_{S,T,x,y}^{N,0} \ + \ \int_{S}^{T} {\sum}_{w = 0}^{m_{N}-1} \mathbf{P}_{R,T,x,w}^{N,0} \cdot \mathbf{D}_{\partial}^{N} \mathbf{T}_{S,R,w,y}^{N,0} \ \d R \ + \ \int_{S}^{T} {\sum}_{w = N-m_{N}+1}^{N} \mathbf{P}_{R,T,x,w}^{N,0} \cdot \mathbf{D}_{\partial}^{N} \mathbf{T}_{S,R,w,y}^{N,0} \ \d R,
\end{align*}
where the operator $\mathbf{D}_{\partial}^{N}$ acts on the backwards spatial coordinate $w \in \Z$.
\end{lemma}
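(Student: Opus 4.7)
The plan is a kernel-level Duhamel argument, treating $\mathscr{T}^{N,0}$ as an unperturbed evolution and $\mathbf{D}_{\partial}^{N}$ as the perturbation that turns the free-lattice dynamic into the $\mathscr{L}_{\mathrm{Lap}}^{N,!!}$-dynamic. The crucial preliminary observation is that, by linearity of the image-sum together with the fact that $\mathscr{G}^{N,0}$ solves its own heat equation pointwise on $\Z$, the field $\mathscr{T}_{S,R,w,y}^{N,0}$ satisfies $\partial_R \mathscr{T}_{S,R,w,y}^{N,0} = \frac{1}{2}\sum_{k} \wt{\alpha}_{k}^{N} \Delta_{k,w}^{!!} \mathscr{T}_{S,R,w,y}^{N,0}$ for every $w \in \Z$, whereas $\mathscr{U}_{R,T,x,z}^{N,0}$ satisfies the forward equation generated by $\mathscr{L}_{\mathrm{Lap}}^{N,!!}$ in its $(T,x)$ variables and, via the semigroup identity $\mathscr{U}_{R,T,x,y}^{N,0} = \sum_z \mathscr{U}_{\sigma,T,x,z}^{N,0} \mathscr{U}_{R,\sigma,z,y}^{N,0}$ differentiated in $\sigma$ at $\sigma = R$, the backward equation $\partial_R \mathscr{U}_{R,T,x,z}^{N,0} = -(\mathscr{L}_{\mathrm{Lap}}^{N,!!})^*_z \mathscr{U}_{R,T,x,z}^{N,0}$ in its $(R,z)$ variables. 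Moreover, $\mathscr{L}_{\mathrm{Lap}}^{N,!!}$ and $\frac{1}{2}\sum_{k} \wt{\alpha}_{k}^{N} \Delta_{k}^{!!}$ agree on functions whose stencils fit inside $\llbracket m_N, N - m_N \rrbracket$, so $\mathbf{D}_{\partial}^{N} \mathscr{T}_{S,R,\cdot,y}^{N,0}$ is supported on the two boundary strips $\llbracket 0, m_{N} - 1 \rrbracket \cup \llbracket N - m_{N} + 1, N \rrbracket$.

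Concretely, for fixed $S \leq T$ and $x, y \in \mathscr{I}_{N,0}$, I would consider
$$f(R) \ = \ \sum_{z \in \mathscr{I}_{N,0}} \mathscr{U}_{R,T,x,z}^{N,0} \, \mathscr{T}_{S,R,z,y}^{N,0}, \qquad R \in [S,T],$$
and compute the endpoint values directly: at $R = S$, the image-sum gives $\mathscr{T}_{S,S,z,y}^{N,0} = \sum_{k} \mathbf{1}_{z = i_{y,k}} = \mathbf{1}_{z = y}$ for $z,y \in \mathscr{I}_{N,0}$ (only the trivial image $i_{y,0} = y$ falls inside the box), so $f(S) = \mathscr{U}_{S,T,x,y}^{N,0}$; at $R = T$, using $\mathscr{U}_{T,T,x,z}^{N,0} = \mathbf{1}_{x = z}$, one gets $f(T) = \mathscr{T}_{S,T,x,y}^{N,0}$. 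Differentiating $f$ in $R$ and applying the two evolution equations yields
$$\partial_{R} f(R) \ = \ - \sum_{z} \bigl( (\mathscr{L}_{\mathrm{Lap}}^{N,!!})^{*}_{z} \mathscr{U}_{R,T,x,z}^{N,0} \bigr) \mathscr{T}_{S,R,z,y}^{N,0} \ + \ \sum_{z} \mathscr{U}_{R,T,x,z}^{N,0} \cdot \tfrac{1}{2} \sum_{k} \wt{\alpha}_{k}^{N} \Delta_{k,z}^{!!} \mathscr{T}_{S,R,z,y}^{N,0};$$
a discrete summation-by-parts on the finite lattice $\mathscr{I}_{N,0}$ moves the adjoint in the first sum back onto $\mathscr{T}^{N,0}$ and produces the quantity $\mathscr{L}_{\mathrm{Lap},z}^{N,!!} \mathscr{T}_{S,R,z,y}^{N,0}$. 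Combining, the two sums collapse into $-\sum_{z} \mathscr{U}_{R,T,x,z}^{N,0} \mathbf{D}_{\partial,z}^{N} \mathscr{T}_{S,R,z,y}^{N,0}$, whose support is restricted to the two boundary strips. Integrating from $R = S$ to $R = T$ and rearranging yields exactly the Duhamel identity in the lemma.

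The only non-routine point will be the summation-by-parts step. Because $\mathscr{T}^{N,0}$ is defined on all of $\Z$ but $\mathscr{L}_{\mathrm{Lap}}^{N,!!}$ is an operator on $\mathscr{I}_{N,0}$-functions, the quantity $\mathscr{L}_{\mathrm{Lap},z}^{N,!!} \mathscr{T}_{S,R,z,y}^{N,0}$ produced after applying Lemma \ref{lemma:AdjointFlat} must be interpreted as the action of $\mathscr{L}_{\mathrm{Lap}}^{N,!!}$ on the restriction $\mathscr{T}_{S,R,\cdot,y}^{N,0}|_{\mathscr{I}_{N,0}}$, whereas $\frac{1}{2}\sum_{k} \wt{\alpha}_{k}^{N} \Delta_{k}^{!!} \mathscr{T}_{S,R,z,y}^{N,0}$ uses the natural values of $\mathscr{T}^{N,0}$ at indices in $\Z \setminus \mathscr{I}_{N,0}$. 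The difference of these two interpretations is precisely what the operator $\mathbf{D}_{\partial}^{N}$ is designed to encode, so the algebra closes consistently. Verifying this bookkeeping, together with the backward equation for $\mathscr{U}^{N,0}$ derived from the semigroup property, is essentially all the non-routine content of the argument.
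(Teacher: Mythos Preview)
Your proposal is correct and follows the same Duhamel interpolation as the paper: both consider $\Phi_R = \sum_{w \in \mathscr{I}_{N,0}} \mathscr{U}_{R,T,x,w}^{N,0}\,\mathscr{T}_{S,R,w,y}^{N,0}$, evaluate at the endpoints $R=S,T$, and differentiate in $R$. The one difference is that you pass through the adjoint equation $\partial_R \mathscr{U}_{R,T,x,z}^{N,0} = -(\mathscr{L}_{\mathrm{Lap}}^{N,!!})^{*}_{z}\mathscr{U}_{R,T,x,z}^{N,0}$ and then undo it by summation-by-parts, whereas the paper bypasses the adjoint entirely by using time-homogeneity to write $\partial_R\bigl[\sum_w \mathscr{U}_{R,T,x,w}^{N,0}\psi_w\bigr] = -\sum_w \mathscr{U}_{R,T,x,w}^{N,0}(\mathscr{L}_{\mathrm{Lap}}^{N,!!}\psi)_w$ directly; your two steps compose to exactly this identity, so the bookkeeping concern you flag about $\mathscr{I}_{N,0}$-restriction versus $\Z$-values of $\mathscr{T}^{N,0}$ is handled identically in both arguments and is precisely what makes $\mathbf{D}_{\partial}^{N}\mathscr{T}^{N,0}$ supported on the boundary strips.
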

%%%
%%%
\begin{proof}
{Lemma \ref{lemma:ItoP1Cpt} follows by the Duhamel formula \emph{combined} with the observation that the heat kernel $\mathbf{P}^{N,0}$ is supported, at least in the spatial directions, on $\mathbb{I}_{N,0} \times \mathbb{I}_{N,0} \subseteq \Z^{2}$. To be more precise, we first observe that the kernel $\mathbf{T}^{N,0}$ solves the exact same PDE as $\mathbf{K}^{N,0}$ with identical initial data upon the restriction of spatial coordinates in $\mathbb{I}_{N,0} \times \mathbb{I}_{N,0} \subseteq \Z^{2}$. Consider
\begin{align}
\Phi_{R,x,y}^{N,0} \ \overset{\bullet}= \ {\sum}_{w \in \mathbb{I}_{N,0}} \mathbf{P}_{R,T,x,w}^{N,0} \mathbf{T}_{S,R,w,y}^{N,0}.
\end{align}
Applying the fundamental theorem of calculus with respect to the time coordinate $R$ using the respective relevant PDEs, we have
\begin{align}
\mathbf{T}_{S,T,x,y}^{N,0} \ &= \ \mathbf{P}_{S,T,x,y}^{N,0} \ + \ \int_{S}^{T} \partial_{R} \Phi_{R,x,y}^{N,0} \ \d R
\end{align}
with
\begin{align}
\int_{S}^{T} \partial_{R} \Phi_{R,x,y}^{N,0} \ \d R \ &= \ - \ \int_{S}^{T} {\sum}_{w \in \mathbb{I}_{N,0}} \mathbf{P}_{R,T,x,w}^{N,0} \mathscr{L}_{\mathrm{Lap}}^{N,!!} \mathbf{T}_{S,R,w,y}^{N,0} \ \d R \ + \ \int_{S}^{T} {\sum}_{w \in \mathbb{I}_{N,0}} \mathbf{P}_{R,T,x,w}^{N,0} \partial_{R} \mathbf{T}_{S,R,w,y}^{N,0} \ \d R;
\end{align}
to be totally explicit, we remark that the differentiation performed in the previous time-integral is done so while recalling all the relevant heat kernels are time-homogeneous, so differentiation in the backwards time-coordinate is differentiation in the forwards time-coordinate with an additional sign. Upon elementary rearrangement, this completes the proof.}
\end{proof}
%%%
The utility of this particular perturbative representation for {$\mathbf{P}^{N,0}$} obtained in Lemma \ref{lemma:ItoP1Cpt} is precisely the local nature of the perturbative time-integral supported near the boundary. In particular, the perturbative integral in Lemma \ref{lemma:ItoP1Cpt} has spatial support contained microscopically towards the boundary, so we are not responsible for keeping any record of \emph{global} perturbative effects on macroscopic blocks, for example. Moreover, given the support of these perturbative effects is concentrated specifically near the boundary we can exploit the Neumann boundary behavior of {$\mathbf{T}^{N,0}$} to establish ``upgraded" regularity estimates for {$\mathbf{T}^{N,0}$} in Lemma \ref{lemma:BRegT} whose proof relies on the higher-order extension of the gradient estimates in Proposition A.1 from \cite{DT}, all of which we establish in the appendix.
%%%
\subsection{Perturbative Analysis II}
%%%
As promised, we provide a mechanism to transfer both pointwise estimates and regularity estimates for the heat kernel {$\mathbf{P}^{N,0}$} of specific Robin boundary parameters $\mathscr{A}_{\pm} = 0$ to the heat kernel {$\mathbf{P}^{N}$} of arbitrary Robin boundary parameter. We emphasize the following.
%%%
\begin{itemize}
\item Concerning pointwise estimates, this is only necessary for negative Robin boundary parameters $\mathscr{A}_{\pm} \in \R_{<0}$.
\item Concerning regularity estimates, this is necessary for \emph{no} values of the Robin boundary parameters $\mathscr{A}_{\pm} \in \R$ \emph{assuming} a priori pointwise upper bounds and {by} analyzing additional complexities in the proof of Lemma \ref{lemma:ItoP1Cpt} that we avoided by assuming $\mathscr{A}_{\pm} = 0$; {if we avoid} these additional complexities, then this procedure is necessary for all nonzero $\mathscr{A}_{\pm} \in \R_{\neq 0}$. {As} this {perturbation} is necessary in our proof of Theorem \ref{theorem:KPZ}, we may as well employ it further and avoid the aforementioned complexities.
\end{itemize}
%%%
We now present the second perturbative strategy we employ to transfer estimates from {$\mathbf{P}^{N,0}$} to arbitrary Robin parameters.
%%%
\begin{lemma}\fsp \label{lemma:ItoP2Cpt}
Provided any Robin boundary parameters $\mathscr{A}_{-},\mathscr{A}_{+} \in \R$, { we have the following identity:}
\begin{align}
\mathbf{P}_{S,T,x,y}^{N} \ = \ \mathbf{P}_{S,T,x,y}^{N,0} \ - \ \int_{S}^{T} \mathbf{P}_{R,T,x,0}^{N,0} \cdot N \mathscr{A}_{-} \mathbf{P}_{S,R,0,y}^{N}\ \d R \ - \ \int_{S}^{T} \mathbf{P}_{R,T,x,N}^{N,0} \cdot N \mathscr{A}_{+} \mathbf{P}_{S,R,N,y}^{N}\ \d R.
\end{align}
\end{lemma}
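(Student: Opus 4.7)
The plan is to mirror the Duhamel-style semigroup argument from Lemma \ref{lemma:ItoP1Cpt}, now treating the discrepancy between the Robin boundary parameters $\mathscr{A}_{\pm}$ and the Neumann parameters $0$ as the perturbation rather than the difference of two interior operators. Concretely, I would introduce the auxiliary field
\begin{align*}
\Phi_{R,x,y}^{N} \ \overset{\bullet}= \ \sum_{w \in \mathscr{I}_{N,0}} \mathscr{U}_{R,T,x,w}^{N,0} \, \mathscr{U}_{S,R,w,y}^{N}, \quad R \in [S,T].
\end{align*}
From the initial data of both kernels, evaluation at the endpoints yields $\Phi_{S,x,y}^{N} = \mathscr{U}_{S,T,x,y}^{N,0}$ and $\Phi_{T,x,y}^{N} = \mathscr{U}_{S,T,x,y}^{N}$, so the fundamental theorem of calculus reduces the claim to identifying $\int_{S}^{T} \partial_{R} \Phi_{R,x,y}^{N} \, \d R$ with the two boundary integrals on the right-hand side of the statement, up to sign.

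Next, I would expand $\partial_R \Phi_R^N$ via the product rule, using time-homogeneity of $\mathscr{U}^{N,0}$ in the backwards time coordinate together with the forward parabolic equation for $\mathscr{U}^{N}$. This converts the derivative into the adjoint discrepancy
\begin{align*}
\partial_R \Phi_{R,x,y}^N \ = \ \sum_{w \in \mathscr{I}_{N,0}} \mathscr{U}_{R,T,x,w}^{N,0} \, \mathscr{L}_{\mathrm{Lap}}^{N,!!} \mathscr{U}_{S,R,w,y}^{N} \ - \ \sum_{w \in \mathscr{I}_{N,0}} \bigl(\mathscr{L}_{\mathrm{Lap}}^{N,!!}\bigr)^{\ast} \mathscr{U}_{R,T,x,w}^{N,0} \cdot \mathscr{U}_{S,R,w,y}^{N},
\end{align*}
where both differential operators act on the summation variable $w$. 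Since both kernels are propagated by the same interior operator and differ only through their ghost extensions at $w = -1$ and $w = N+1$, the summation by parts collapses to a pure boundary sum concentrated at the two endpoints. Invoking the Neumann extension $\mathscr{U}_{R,T,x,-1}^{N,0} = \mathscr{U}_{R,T,x,0}^{N,0}$ against the Robin extension $\mathscr{U}_{S,R,-1,y}^{N} = (1 - N^{-1}\mathscr{A}_{-}) \mathscr{U}_{S,R,0,y}^{N}$, the residual at $w = 0$ evaluates to $-N \mathscr{A}_{-} \mathscr{U}_{R,T,x,0}^{N,0} \mathscr{U}_{S,R,0,y}^{N}$, where the factor $N$ (rather than $N^{-1}$) comes from the $N^{2}$ scaling built into $\mathscr{L}_{\mathrm{Lap}}^{N,!!}$. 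The analogous computation at $w = N$ yields the companion residual, and integrating in $R$ over $[S,T]$ produces the claimed identity.

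The main technical obstacle will be executing the summation by parts cleanly, since the boundary action of $\mathscr{L}_{\mathrm{Lap}}^{N,!!}$ on sites in $\llbracket 0, m_{N} - 1 \rrbracket$ and $\llbracket N - m_{N} + 1, N \rrbracket$ involves the cutoffs $k \wedge x$ and $k \wedge (N-x)$ whose adjoint was computed in Lemma \ref{lemma:AdjointFlat}. However, this step runs along essentially the same lines as Lemma \ref{lemma:AdjointFlat} itself, and the interior contributions telescope by symmetry of the jump coefficients $\wt{\alpha}_{k}^{N}$; only the Neumann-minus-Robin discrepancy $\mu_{\mathscr{A}_{\pm}} - 1 = -N^{-1}\mathscr{A}_{\pm}$ at the two endpoints survives the cancellation, which is precisely what produces the coefficients $N \mathscr{A}_{\pm}$ appearing in the claimed formula.
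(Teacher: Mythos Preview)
Your proposal is correct and follows the same interpolation-via-$\Phi_R$ strategy as the paper. The paper's presentation is slightly more streamlined: rather than writing $\partial_R\Phi_R$ as the difference $\mathscr{L}_{\mathrm{Lap}}^{N,!!}\mathscr{U}^N - (\mathscr{L}_{\mathrm{Lap}}^{N,!!})^{\ast}\mathscr{U}^{N,0}$ and then collapsing it through summation by parts, the paper records the one-line observation
\[
\partial_{R}\mathscr{U}_{S,R,w,y}^{N} \ - \ \mathscr{L}_{\mathrm{Lap}}^{N,!!}\mathscr{U}_{S,R,w,y}^{N} \ = \ -\mathbf{1}_{w=0}\,N\mathscr{A}_{-}\,\mathscr{U}_{S,R,w,y}^{N} \ - \ \mathbf{1}_{w=N}\,N\mathscr{A}_{+}\,\mathscr{U}_{S,R,w,y}^{N},
\]
where the operator on the left is taken with the \emph{Neumann} ghost convention; the Duhamel formula then falls out immediately. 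This sidesteps the adjoint computation and the boundary bookkeeping you flag as the ``main technical obstacle,'' since one never has to pair the two kernels' ghost extensions against each other---only the Robin-versus-Neumann discrepancy in the evolution of $\mathscr{U}^{N}$ itself enters. Your route through $(\mathscr{L}_{\mathrm{Lap}}^{N,!!})^{\ast}$ reaches the same endpoint but carries the messy near-boundary adjoint from Lemma~\ref{lemma:AdjointFlat} along for the ride unnecessarily.
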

%%%
%%%
\begin{proof}
Identical to the proof of Lemma \ref{lemma:ItoP1Cpt}, the proof of Lemma \ref{lemma:ItoP2Cpt} follows via interpolation with the semigroup associated to the heat kernel $\mathbf{P}^{N,0}$ combined with the following observation:
\begin{align}
\partial_{R} \mathbf{P}_{S,R,x,y}^{N} \ - \ \mathscr{L}_{\mathrm{Lap}}^{N,!!} \mathbf{P}_{S,R,x,y}^{N} \ &= \ \mathbf{1}_{x = 0} N \mathscr{A}_{-} \mathbf{P}_{S,R,x,y}^{N} \ + \ \mathbf{1}_{x = N} N \mathscr{A}_{+}\mathbf{P}_{S,R,x,y}^{N}.
\end{align}
{In particular, integrating in the $R$-variable in the previous calculation}, this completes the proof.
\end{proof}
%%%
%%%
\section{Heat Kernel Estimates II -- Nash-Type Estimates}\label{section:HKEII}
%%%
{We} retain the framework, notation, and definitions from {Section \ref{section:HKEI}} and use the results therein to establish the desired heat kernel estimates for {$\mathbf{P}^{N,0}$} and {$\mathbf{P}^{N}$}. We emphasize heat kernel estimates established in this section are of on-diagonal-type because the domain is compact with respect to the ``heat kernel scale", making hypothetical off-diagonal factors irrelevant. However, we moreover establish off-diagonal estimates as well which will provide utility for short-times; this is done at the end of the section. To provide some organizational clarity, we provide the following outline for this section.
%%%
\begin{itemize}
\item We first specialize our analysis to the specific heat kernel {$\mathbf{P}^{N,0}$} with Neumann boundary conditions, or equivalently specializing to the case $\mathscr{A}_{\pm} = 0$; ultimately, we obtain Nash-type upper bounds for this heat kernel.
\item Second, we extend our analysis to any arbitrary Robin parameters with perturbative schemes {of} Section \ref{section:HKEI}. Although {the} resulting heat kernel estimates are \emph{not} stable in the long-time regime, they still resemble Nash-type upper bounds for bounded times.
\end{itemize}
%%%
%%%
\subsection{Nash-Type Estimates for $\mathbf{P}^{N,0}$}
%%%
As mentioned prior, the first step in our analysis of {the} relevant heat kernels specializes to Neumann boundary parameters $\mathscr{A}_{\pm} = 0$; the upshot to this specialization is the applicability of the Nash inequality in Lemma \ref{lemma:NashIneqCpt}, which provides the following main result for this subsection.
%%%
\begin{lemma}\fsp \label{lemma:IOnD}
Provided times $S,T \in \R_{\geq 0}$ satisfying $S \leq T \leq \mathfrak{t}^{\max}$, we have the following pointwise upper bound with an implied constant depending only on the maximal jump-length $m_{N} \in \Z_{>0}$ and also $\wt{\alpha}_{1}^{N}, \mathfrak{t}^{\max} \in \R_{\geq 0}$:
{
\begin{align}
0 \ \leq \ \mathbf{P}_{S,T,x,y}^{N,0} \ &\lesssim_{m_{N},\wt{\alpha}_{1}^{N},\mathfrak{t}^{\max}} \ \left(N^{-1} + N^{-1} \rho_{S,T}^{-1/2}\right)\wedge 1.
\end{align}
}
\end{lemma}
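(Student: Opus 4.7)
The trivial upper bound $\mathscr{U}^{N,0} \leq 1$ is the discrete parabolic maximum principle: since $\mathscr{L}_{\mathrm{Lap}}^{N,!!}\mathbf{1} = 0$ in the Neumann case $\mathscr{A}_{\pm}=0$ and the off-diagonal matrix entries of $\mathscr{L}_{\mathrm{Lap}}^{N,!!}$ are non-negative, the semigroup $P_T := \exp(T\mathscr{L}_{\mathrm{Lap}}^{N,!!})$ is Markovian and $\mathscr{U}^{N,0}_{S,T,x,\cdot}$ is a probability distribution bounded by its initial value $1$. This yields the $\wedge 1$ factor. For the Nash-type bound $N^{-1}+N^{-1}\varrho_{S,T}^{-1/2}$, the plan is the classical Nash--Moser ODE scheme: use the Nash--Sobolev inequality of Lemma \ref{lemma:NashIneqCpt} to close a differential inequality for $\|P_T f\|_{L^2(\pi)}^{2}$, then upgrade from $L^2$ to $L^\infty$ via a Cauchy--Schwarz decomposition of the semigroup.

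Work in $L^{2}(\pi)$ with $\pi$ the invariant measure of Lemma \ref{lemma:EllipticEstimateCpt}. Since $\pi_x \asymp 1$ uniformly in $x$, weighted and unweighted $\ell^{p}$-norms are equivalent, and $P_T$ is contractive on $L^{p}(\pi)$ for every $p\in[1,\infty]$: the $L^\infty$-contractivity is the maximum principle, the $L^{1}(\pi)$-contractivity follows from left-invariance of $\pi$, and Riesz--Thorin interpolates. Consequently the Dirichlet form $\mathcal{E}_{\pi}(f,f) = -\langle f, \mathscr{L}_{\mathrm{Lap}}^{N,!!} f\rangle_{L^{2}(\pi)}$ is non-negative, and the uniform ellipticity $\wt{\alpha}_1^N \gtrsim 1$ together with $\pi\asymp 1$ delivers the comparison $\mathcal{E}_{\pi}(f,f) \gtrsim N^{2}\sum_{x}|\mathscr{D}^{1/2}f_{x}|^{2}$, the factor $N^{2}$ reflecting the parabolic rescaling implicit in $\mathscr{L}_{\mathrm{Lap}}^{N,!!}$.

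Fix $y_{0}\in\mathscr{I}_{N,0}$ and let $f_{T}(x) = \mathscr{U}^{N,0}_{S,S+T,x,y_{0}}$; invariance of $\pi$ gives $\|f_{T}\|_{L^{1}(\pi)} = \pi_{y_{0}} \lesssim 1$, conserved in $T$. Squaring Lemma \ref{lemma:NashIneqCpt} and inserting the Dirichlet form comparison yields, for $u(T) := \|f_{T}\|_{L^{2}(\pi)}^{2}$, the inequality $u \lesssim N^{-1} + N^{-2/3}\mathcal{E}_{\pi}(f_{T})^{1/3}$, whence $\dot{u} = -2\mathcal{E}_{\pi}(f_{T}) \leq -c\,N^{2}(u - C N^{-1})^{3}$; standard ODE integration delivers $u(T) \lesssim N^{-1}(1 + \varrho_{S,S+T}^{-1/2})$. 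To pass from $L^{2}$ to $L^{\infty}$, apply the semigroup identity $\mathscr{U}^{N,0}_{S,T,x,y_{0}} = \sum_{z} \mathscr{U}^{N,0}_{(S+T)/2,T,x,z}\,\mathscr{U}^{N,0}_{S,(S+T)/2,z,y_{0}}$ and Cauchy--Schwarz against $\pi$; the second factor is controlled by $\sqrt{u((T-S)/2)}$, and the first factor is controlled by the same Nash analysis applied to the adjoint half-semigroup, whose invariant measure is also $\asymp 1$ by Lemma \ref{lemma:EllipticEstimateCpt}. Combining the two factors produces $\sup_{x,y}\mathscr{U}^{N,0}_{S,T,x,y} \lesssim N^{-1}(1+\varrho_{S,T}^{-1/2})$, as desired.

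The main obstacle is the Dirichlet form comparison $\mathcal{E}_{\pi}(f,f) \gtrsim N^{2}\sum_{x}|\mathscr{D}^{1/2}f_{x}|^{2}$ in the boundary layers $\llbracket 0, m_{N}-1\rrbracket \cup \llbracket N-m_{N}+1, N\rrbracket$: in the bulk $\llbracket m_{N},N-m_{N}\rrbracket$ the operator $\mathscr{L}_{\mathrm{Lap}}^{N,!!}$ is self-adjoint against counting measure and the standard exclusion identity $-\langle f, \Delta_k f\rangle = \tfrac12\sum_{x}(\nabla_k f_x)^{2}$ handles the lower bound, but at the boundary the first-order gradient contributions in $\mathscr{L}_{\mathrm{Lap}}^{N,!!}$ destroy self-adjointness and produce cross-terms. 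Absorbing these into a genuinely non-negative quadratic form comparable to $\sum_{x}|\mathscr{D}^{1/2}f_{x}|^{2}$ is precisely where the two-sided pointwise bound $\pi_x \asymp 1$ from Lemma \ref{lemma:EllipticEstimateCpt} is essential.
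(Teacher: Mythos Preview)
Your proposal is correct and follows essentially the same Nash--Moser strategy as the paper: the differential inequality for $\|P_T f\|_{L^2(\pi)}^2$ via Lemma \ref{lemma:NashIneqCpt}, ODE integration, and then the $1\to 2$ and $2\to\infty$ operator-norm bounds combined through Chapman--Kolmogorov. For the boundary obstacle you flag, the paper resolves it by passing to the symmetrized generator $(\mathscr{L}_{\mathrm{Lap}}^{N,!!})^{\mathrm{Sym}}$ with respect to $\pi$ and invoking the general Dirichlet-form representation for Markov generators (Proposition 10.1, Appendix 1 of \cite{KL}), which directly yields the sum-of-squared-differences form $\gtrsim N^{2}\sum_{|x-w|\leq m_{N}}\wt{\alpha}_{|x-w|}^{N}|f_{x}-f_{w}|^{2}\pi_{x}$ including at the boundary; dropping all but nearest-neighbor terms and using $\pi\asymp 1$ then gives the needed comparison without any separate boundary analysis.
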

%%%
Towards the proof of Lemma \ref{lemma:IOnD}, we establish the following convenient notation.
%%%
\begin{notation}\fsp 
{Fix a} strictly positive invariant measure $\pi \in \Pi^{N}$. With respect to {$\pi$}, define the adjoint $\left( \mathscr{L}_{\mathrm{Lap}}^{N,!!} \right)^{\dagger}$ {and}
\begin{align}
\left( \mathscr{L}_{\mathrm{Lap}}^{N,!!} \right)^{\mathrm{Sym}} \ \overset{\bullet}= \ 2^{-1}\left(\mathscr{L}_{\mathrm{Lap}}^{N,!!} + \left( \mathscr{L}_{\mathrm{Lap}}^{N,!!} \right)^{\dagger}\right).
\end{align}
Provided any function $\varphi: \mathbb{I}_{N,0} \to \C$, we define the following pair of semigroup evolutions:
\begin{align}
\varphi_{T,x} \ &\overset{\bullet}= \ \left(\exp(T \mathscr{L}_{\mathrm{Lap}}^{N,!!}) \varphi\right)_{x} \ = \ {\sum}_{y \in \mathbb{I}_{N,0}} \mathbf{P}_{0,T,x,y}^{N,0} \cdot \varphi_{y} \\
\varphi_{T,x}^{\dagger} \ &\overset{\bullet}= \ \left(\exp(T\left(\mathscr{L}_{\mathrm{Lap}}^{N,!!}\right)^{\dagger}) \varphi\right)_{x} \ = \ {\sum}_{y \in \mathbb{I}_{N,0}} \pi_{x}^{-1} \pi_{y} \mathbf{P}_{0,T,y,x}^{N,0} \cdot \varphi_{y}.
\end{align}
\end{notation}
%%%
%%%
\begin{proof}
{Since the heat kernel is time-homogeneous, it suffices to assume that $S = 0$. Observe first that the proposed lower bound follows from either the parabolic maximum principle or the interpretation of $\mathbf{P}^{N,0}$ as a probability; the same is true for the trivial upper bound. To provide the precise upper bound, consider any function $\varphi: \mathbb{I}_{N,0} \to \C$ satisfying ${\sum}_{x\in\mathbb{I}_{N,0}}|\varphi_{x}| = 1$. We have}
\begin{align}
\partial_{T} {\sum}_{x \in \mathbb{I}_{N,0}} |\varphi_{T,x}|^{2} \pi_{x} \ &= \ -2 {\sum}_{x \in \mathbb{I}_{N,0}} \varphi_{T,x} \left(-\mathscr{L}_{\mathrm{Lap}}^{N,!!}\right)^{\mathrm{Sym}}\varphi_{T,x} \pi_{x}. \label{eq:DForm}
\end{align}
The term on the RHS of \eqref{eq:DForm} is {precisely} the Dirichlet form corresponding to the {symmetric operator therein} with {a} reversible measure $\pi_{\bullet}$ evaluated at $\varphi_{T,\bullet}$. In particular, we establish the following lower bounds, with the first failing to be an equality only due to possibly missing constant prefactors and the second failing to be an equality only due to forgetting interactions beyond length 1 -- see Proposition 10.1 in Appendix 1 of \cite{KL} {for details about the Dirichlet form}:
\begin{align}
{\sum}_{x \in \mathbb{I}_{N,0}} \varphi_{T,x} {\left(-\mathscr{L}_{\mathrm{Lap}}^{N,!!} \right)^{\mathrm{Sym}}} \varphi_{T,x} \pi_{x} \ &\gtrsim \ N^{2} {\sum}_{\substack{x,w \in \mathbb{I}_{N,0}: \\ |x-w| \leq m_{N}}} \wt{\alpha}_{|x-w|}^{N} | \varphi_{T,x} - \varphi_{T,w} |^{2} \pi_{x} \\
&\geq \ \wt{\alpha}_{1}^{N} N^{2} {\sum}_{\substack{x,w \in \mathbb{I}_{N,0}: \\ |x-w| \leq 1}} | \varphi_{T,x} - \varphi_{T,w} |^{2} \pi_{x} . \label{eq:DFormBound}
\end{align}
Similarly for the nearest-neighbor case, recalling the definition of the operator $\mathscr{L} = -\Delta_{1}$, as the flat measure on $\mathbb{I}_{N,0}$ is a reversible measure for the operator $\mathscr{L}$, for $\mathscr{A} = 0$ we have {the following upper bound with universal implied constant:}
\begin{align}
{\sum}_{x \in\mathbb{I}_{N,0}} | \mathscr{L}^{1/2} \varphi_{T,x} |^{2} \ \lesssim \ {\sum}_{\substack{x,w \in \mathbb{I}_{N,0}: \\ |x-w| \leq 1}} | \varphi_{T,x} - \varphi_{T,w} |^{2}. \label{eq:DFormBoundNN}
\end{align}
Combining this bound of \eqref{eq:DFormBoundNN} with the preceding inequalities \eqref{eq:DForm} and \eqref{eq:DFormBound} along with Lemma \ref{lemma:EllipticEstimateCpt} and the assumption $\wt{\alpha}_{1}^{N} \gtrsim 1$ with universal implied constant, we deduce {the following differential inequality}:
\begin{align}
\partial_{T} {\sum}_{x \in \mathbb{I}_{N,0} } |\varphi_{T,x}|^{2} \pi_{x} \ &\lesssim \ -N^{2}{\sum}_{x \in\mathbb{I}_{N,0}} | \mathscr{L}^{1/2} \varphi_{T,x} |^{2}. \label{eq:DFormBoundNNN}
\end{align}
Before we proceed, we remark that the analysis presented until this point is rather standard in the theory of Nash inequalities; precisely, we have not yet seen the influence or role of the underlying compact geometry in the proof of Lemma \ref{lemma:IOnD} at this point. Indeed, this influence is manifest in the Nash inequality of Lemma \ref{lemma:NashIneqCpt}, which we discuss as follows.

We employ the Nash inequality of Lemma \ref{lemma:NashIneqCpt} along with the elliptic estimate Lemma \ref{lemma:EllipticEstimateCpt}; this provides the lower bounds in which the implied constant is universal even in its dependence on $\wt{\alpha}_{1}^{N} \in \R_{>0}$ and $m_{N} \in \Z_{>0}$:
\begin{align}
N^{2} {\sum}_{\substack{x,w \in \mathbb{I}_{N,0}: \\ |x-w| \leq 1}} | \varphi_{T,x} - \varphi_{T,w} |^{2} \ &\gtrsim \ -N^{-1} \left({\sum}_{x \in \mathbb{I}_{N,0}} |\varphi_{T,x}|\right)^{6} \ + \ N^{2}\left({\sum}_{x \in \mathbb{I}_{N,0}} |\varphi_{T,x}|^{2} \pi_{x}\right)^{3} \\
&= \ -N^{-1} \ + \ N^{2}\left({\sum}_{x \in \mathbb{I}_{N,0}} |\varphi_{T,x}|^{2} \pi_{x}\right)^{3};
\end{align}
{to justify the final estimate above, we recall our normalization of the function $\varphi$, which is then non-increasing in time because of the contractive estimates of the semigroup corresponding to $\mathbf{P}^{N,0}$ with respect to the $\ell^{1}$-norm. Combining this with \eqref{eq:DFormBoundNNN} gives}
\begin{align}
\partial_{T} {\sum}_{x \in \mathbb{I}_{N,0}} |\varphi_{T,x}|^{2} \pi_{x} \ \gtrsim \ N^{-1} \ - \ N^{2}\left({\sum}_{x \in \mathbb{I}_{N,0}} |\varphi_{T,x}|^{2} \pi_{x}\right)^{3},
\end{align}
from which classical ODE theory gives the following upper bound:
\begin{align}
{\sum}_{x \in \mathbb{I}_{N,0}} |\varphi_{T,x}|^{2} \pi_{x} \ &\lesssim \ N^{-1}\rho_{0,T} \ + \ N^{-1} \rho_{0,T}^{-1/2}. \label{eq:NashFinalBound}
\end{align}
In particular, recalling the normalization for this test function $\varphi$ provides the operator norm estimate, in which $\ell^{p}$-norms are taken not with respect to the uniform ``discrete Lebesgue" measure, but rather with the invariant measure $\pi \in \Pi^{N}$:
\begin{align}
\|\exp\left(T\mathscr{L}_{\mathrm{Lap}}^{N,!!}\right) \|_{1 \to 2}^{2} \ \lesssim \ N^{-1}\rho_{0,T} \ + \ N^{-1}\rho_{0,T}^{-1/2}.
\end{align}
Repeating the entire preceding calculation with the adjoint heat flow $\varphi^{\dagger}$, we deduce an identical operator-norm estimate for the adjoint flow, which by duality gives us the estimate
\begin{align}
\|\exp\left(T\mathscr{L}_{\mathrm{Lap}}^{N,!!}\right) \|_{2\to\infty}^{2} \ \lesssim \ N^{-1}\rho_{0,T} \ + \ N^{-1}\rho_{0,T}^{-1/2}.
\end{align}
The Chapman-Kolmogorov equation then provides the estimate
{%
\begin{align}
\|\exp\left(T\mathscr{L}_{\mathrm{Lap}}^{N,!!}\right)\|_{1 \to \infty} \ &\leq \ \|\exp\left(2^{-1}T\mathscr{L}_{\mathrm{Lap}}^{N,!!}\right)\|_{1 \to 2}\|\exp\left(2^{-1}T\mathscr{L}_{\mathrm{Lap}}^{N,!!}\right)\|_{2 \to \infty} \ \lesssim \ N^{-1}\rho_{0,T} \ + \ N^{-1}\rho_{0,T}^{-1/2};
\end{align}
}because $\mathbf{P}^{N,0}$ is the kernel associated to the above exponential operators, and as all $\ell^{p}$-norms are equivalent to the corresponding norms with respect to uniform ``discrete Lebesgue" measure, this completes the proof.
\end{proof}
%%%
%%%
\begin{remark}\fsp
The term {$N^{-1}\rho^{-1/2}$ that appears} in the heat kernel upper bound {of} Lemma \ref{lemma:IOnD} is probably sub-optimal in time-dependence, {but} because we are concerned only with compact time-domains, this estimate is certainly sufficient for our purposes. We provide this remark in case of any possible future interest.
\end{remark}
%%%
%%%
\subsection{Nash-Type Estimates for {$\mathbf{P}^{N}$}}
%%%
Through a perturbative scheme, we achieve similar Nash-type estimates for the heat kernel $\mathbf{P}_{S,T}^{N}$ satisfying any arbitrary Robin boundary conditions. As alluded to earlier throughout both this section and the previous, the mechanism we employ for this task is the perturbative Duhamel-type formula in Lemma \ref{lemma:ItoP2Cpt}.
%%%
\begin{lemma}\fsp \label{lemma:IOnDRBPA}
{Provided any Robin boundary parameters $\mathscr{A}_{\pm} \in \R$ along with any $S,T \in \R_{\geq 0}$ satisfying $S \leq T \leq \mathfrak{t}^{\max}$, we have
\begin{align}
|\mathbf{P}_{S,T,x,y}^{N}| \ &\lesssim_{m_{N},\wt{\alpha}_{1}^{N},\mathfrak{t}^{\max},\mathscr{A}_{\pm}} \ \left(N^{-1} + N^{-1} \rho_{S,T}^{-1/2}\right) \wedge 1 \and {\sum}_{y\in\mathbb{I}_{N,0}} |\mathbf{P}_{S,T,x,y}^{N}| \ \lesssim_{m_{N},\wt{\alpha}_{1},\mathfrak{t}^{\max},\mathscr{A}_{\pm}} \ 1.
\end{align}
}
\end{lemma}
%%%
%%%
\begin{proof}
Purely for notational convenience, let us denote by {$\Phi^{N}$} the supremum {of $\mathbf{P}^{N}$ over $x,y$}. By Lemma \ref{lemma:ItoP2Cpt} and the heat kernel estimate in Lemma \ref{lemma:IOnD}, we have the following upper bound
\begin{align}
\Phi_{S,T}^{N} \ &\lesssim_{m_{N},\wt{\alpha}_{1}^{N},\mathfrak{t}^{\max},\mathscr{A}_{\pm}} \ \sup_{x,y\in\mathbb{I}_{N,0}}\mathbf{P}_{S,T,x,y}^{N,0} \ + \ \int_{S}^{T} \rho_{R,T}^{-1/2} \Phi_{S,R}^{N}. \label{eq:IOnDRBPAI}
\end{align}
Thus, by Lemma \ref{lemma:IOnD} again, we have {the following estimate for a time-scaled $\Phi^{N}$ in terms of itself:}
\begin{align}
\rho_{S,T}^{1/2} \Phi_{S,T}^{N} \ &\lesssim_{m_{N},\wt{\alpha}_{1}^{N},\mathfrak{t}^{\max},\mathscr{A}_{\pm}} \ N^{-1} \ + \ \int_{S}^{T} \rho_{R,T}^{-1/2} \rho_{S,R}^{-1/2} \cdot \rho_{S,R}^{1/2} \Phi_{S,R}^{N} \ \d R,
\end{align}
from which we obtain the following estimate by the singular Gronwall inequality combined with Lemma \ref{lemma:UsualSuspectInt}:
\begin{align}
\Phi_{S,T}^{N} \ &\lesssim_{m_{N},\wt{\alpha}_{1}^{N},\mathfrak{t}^{\max},\mathscr{A}_{\pm}} \ N^{-1} \rho_{S,T}^{-1/2}.
\end{align}
On the other hand, if we employ \eqref{eq:IOnDRBPAI} with the trivial upper bound {$\mathbf{P}^{N,0} \lesssim 1$}, we establish the alternative upper bound for {$\Phi^{N}$}; this {gives the pointwise estimate}. The proof { of the spatially-averaged estimate follows the same}.
\end{proof}
%%%
%%%
\subsection{Feller Continuity}
%%%
{Observe the a priori estimates in Lemmas \ref{lemma:IOnD} and \ref{lemma:IOnDRBPA} give optimal a priori heat kernel estimates. However, for the proof of Theorem \ref{theorem:KPZ}, it will be important to show the associated heat semigroups admit a Feller continuity property; otherwise, the convergence in Theorem \ref{theorem:KPZ} holds only in a weighted version of the Skorokhod space $\mathscr{D}_{\infty}$. Our method for establishing the aforementioned Feller property is an off-diagonal improvement of Lemma \ref{lemma:IOnD} and Lemma \ref{lemma:IOnDRBPA}; this is precisely the following sub-optimal though certainly sufficient and convenient sub-exponential estimate.}
%%%
\begin{prop}\fsp \label{prop:IOffDTotal}
Provided any {$\kappa>0$} arbitrarily large but universal, the estimates of \emph{Lemma \ref{lemma:IOnD}} and \emph{Lemma \ref{lemma:IOnDRBPA}} remain valid upon {making the following pair of replacements of heat kernels with an additional exponential weight defined after:}
\begin{align}
\mathbf{P}_{S,T,x,y}^{N,0} \ &\to \ \mathbf{P}_{S,T,x,y}^{N,0} \mathscr{E}_{S,T,x,y}^{N,\kappa} \and \mathbf{P}_{S,T,x,y}^{N} \ \to \ \mathbf{P}_{S,T,x,y}^{N} \mathscr{E}_{S,T,x,y}^{N,\kappa}.
\end{align}
{Above}, we have introduced the five-parameter family of {exponential weights of diffusive type:}
\begin{align}
\mathscr{E}_{S,T,x,y}^{N,\kappa} \ \overset{\bullet}= \ \exp\left(\kappa \frac{|x-y|}{N\rho_{S,T}^{1/2} \vee 1}\right).
\end{align}
\end{prop}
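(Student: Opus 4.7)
The plan is to establish Proposition \ref{prop:IOffDTotal} via exponential tilting, in the spirit of Davies' perturbation method for on-diagonal heat-kernel estimates. The essential observation is that, given any bounded function $\psi:\mathscr{I}_{N,0}\to\R$, the conjugated operator $\mathscr{L}_{\psi}^{N,!!}\overset{\bullet}=e^{-\psi}\mathscr{L}_{\mathrm{Lap}}^{N,!!}e^{\psi}$ generates a semigroup whose integral kernel equals $e^{-\psi(x)}\mathscr{U}^{N,0}_{S,T,x,y}e^{\psi(y)}$. Consequently, any $\ell^{1}\to\ell^{\infty}$ operator-norm bound on the tilted semigroup translates, via the choice $\psi(z)=\alpha\cdot\mathrm{sgn}(y-x)\cdot z$ followed by optimization in the Lipschitz parameter $\alpha\in\R_{>0}$, into precisely the off-diagonal estimate required.

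The core analytic step will be to verify that the symmetric part of the tilted generator, taken with respect to the strictly positive invariant measure $\pi$ of Lemma \ref{lemma:EllipticEstimateCpt}, differs from the symmetrization $(\mathscr{L}_{\mathrm{Lap}}^{N,!!})^{\mathrm{Sym}}$ by a bounded potential of size $\mathrm{O}(\alpha^{2}N^{2})$. For each bond $(x,x\pm k)$ with $k\leq m_{N}$, Taylor expansion of $e^{\psi(x\pm k)-\psi(x)}$ produces a linear term $\alpha k$ whose contribution cancels exactly upon symmetrization, leaving a remainder of order $\alpha^{2}k^{2}$; summed against the rates $N^{2}\wt{\alpha}_{k}^{N}$, this yields the stated perturbation bound. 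That $\pi$ is only an approximate reversing measure rather than an exact one is harmless, since the discrepancy terms computed in Lemma \ref{lemma:AdjointFlat} are localized within an $m_{N}$-window at the boundary and, upon tilting, contribute to the potential a further $\mathrm{O}(\alpha^{2}N^{2})$ only.

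With this perturbation estimate in hand, the proof of Lemma \ref{lemma:IOnD} runs essentially verbatim for the tilted semigroup: the identity \eqref{eq:DForm} acquires an additional term $+C\alpha^{2}N^{2}\sum_{x}\varphi_{T,x}^{2}\pi_{x}$, the Dirichlet-form lower bound \eqref{eq:DFormBound} remains intact up to the same additive correction, and the resulting ODE becomes
\begin{align}
\partial_{T}\sum_{x\in\mathscr{I}_{N,0}}\varphi_{T,x}^{2}\pi_{x}\ \gtrsim\ N^{-1}\ -\ N^{2}\left[\sum_{x\in\mathscr{I}_{N,0}}\varphi_{T,x}^{2}\pi_{x}\right]^{3}\ -\ C\alpha^{2}N^{2}\sum_{x\in\mathscr{I}_{N,0}}\varphi_{T,x}^{2}\pi_{x},
\end{align}
which via a Gronwall-type argument produces an additional multiplicative factor $e^{C\alpha^{2}\varrho_{S,T}}$ in the corresponding $\ell^{1}\to\ell^{\infty}$ bound. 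Specializing to $\alpha=\kappa/(N\varrho_{S,T}^{1/2}\vee 1)$ ensures $\alpha^{2}\varrho_{S,T}\lesssim_{\kappa}1$ on the compact time domain of interest while simultaneously producing the factor $e^{-\alpha|x-y|}=[\mathscr{E}_{S,T,x,y}^{N,\kappa}]^{-1}$, exactly the off-diagonal decay claimed for $\mathscr{U}^{N,0}$.

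Extension to arbitrary Robin boundary parameters proceeds as in Lemma \ref{lemma:IOnDRBPA}: multiplying the representation \eqref{eq:IOnDRBPAI} throughout by $\mathscr{E}^{N,\kappa}_{S,T,x,y}$, the weight appearing inside the time integral becomes $\mathscr{E}^{N,\kappa}_{R,T,x,0}\cdot\mathscr{E}^{N,\kappa}_{S,R,0,y}$, which by the triangle inequality $|x-y|\leq|x-0|+|0-y|$ combined with the monotonicity $\varrho_{R,T}^{1/2}\vee\varrho_{S,R}^{1/2}\leq\varrho_{S,T}^{1/2}$ is bounded by a constant multiple of $\mathscr{E}^{N,\kappa}_{S,T,x,y}$, after which the singular Gronwall argument closes just as before. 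The principal obstacle in this program is the careful verification that the tilted Dirichlet-form perturbation remains $\mathrm{O}(\alpha^{2}N^{2})$ uniformly in $x$ despite the edge-localized gradient pieces $\grad_{-(k\wedge x)}^{!}$ and $\grad_{k\wedge(N-x)}^{!}$ entering the definition of $\mathscr{L}_{\mathrm{Lap}}^{N,!!}$, but the a priori assumption $m_{N}\lesssim 1$ ensures that these nonstandard boundary contributions are confined to bounded windows and therefore absorb harmlessly into the bulk estimate.
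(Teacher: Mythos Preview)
Your approach via Davies' exponential-tilting method is genuinely different from the paper's: the paper argues probabilistically, coupling the $\mathscr{I}_{N,0}$-valued walk pathwise to the free $\Z$-valued walk (Lemma~\ref{lemma:HitEstimate}), then applying Doob's maximal inequality and Azuma--Hoeffding to the latter (Corollary~\ref{corollary:HitEstimate}) before combining with Chapman--Kolmogorov. Your extension from Neumann to general Robin parameters via Lemma~\ref{lemma:ItoP2Cpt} and singular Gronwall matches the paper's.

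There is, however, a real gap in your handling of the boundary. The elastic-reflection operator $\mathscr{L}_{\mathrm{Lap}}^{N,!!}$ admits \emph{no} reversible measure once $m_N\geq 2$: for instance the rate $1\to 0$ equals $\tfrac12\sum_k\wt{\alpha}_k^N$ while the rate $0\to 1$ equals $\tfrac12\wt{\alpha}_1^N$, and the detailed-balance constraints at the bonds $(0,1)$, $(0,2)$, $(1,2)$ are mutually inconsistent. Hence the linear-in-$\alpha$ contribution to $\langle u,\mathscr{L}_\psi u\rangle_\pi - \langle u,\mathscr{L}u\rangle_\pi$ does \emph{not} cancel upon symmetrization near $\partial\mathscr{I}_{N,0}$; what survives is a term of order $\alpha N^2\sum_{\mathrm{dist}(x,\partial)\leq m_N}\pi(x)u(x)^2$. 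The one-dimensional trace inequality $u(0)^2\lesssim N^{-1}\|u\|_{2,\pi}^2 + N^{-1}\mathscr{E}(u)$ turns this into $\alpha N\|u\|_{2,\pi}^2 + \alpha N\,\mathscr{E}(u)$, and the second summand can be absorbed into the main Dirichlet form only when $\alpha N\lesssim 1$, i.e.\ when $\varrho_{S,T}\gtrsim\kappa^2$ --- failing exactly in the short-time regime where $\mathscr{E}^{N,\kappa}$ is largest. Your assertion that the boundary discrepancy contributes only $\mathrm{O}(\alpha^2 N^2)$ is therefore unjustified. The Davies route is salvageable (e.g.\ handle $\varrho_{S,T}\lesssim\kappa^2$ by a direct Poisson jump-count or by iterating the semigroup property), but the patch for short times essentially reduces to the paper's Corollary~\ref{corollary:HitEstimate}.
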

%%%
By the perturbative mechanism from Lemma \ref{lemma:ItoP2Cpt}, it will suffice to establish the estimate from Proposition \ref{prop:IOffDTotal} for the specialization $\mathbf{P}^{N,0}$ satisfying Neumann boundary conditions; we make this precise later. For Neumann boundary conditions, we use the probabilistic interpretation of $\mathbf{P}^{N,0}$ as transition probabilities for to a random walk on $\mathbb{I}_{N,0} \subseteq \Z_{\geq 0}$.
%%%
\begin{notation}\fsp 
{Let $T \mapsto \mathfrak{X}_{T,x}^{\Z,N}$ be the random walk on $\Z$ with initial condition $\mathfrak{X}_{0,x}^{\Z,N} = x$ and transition probabilities}
\begin{align}
\mathbf{P}\left(\mathfrak{X}_{T,x}^{\Z,N} = y\right) \ = \ \mathbf{K}_{0,T,x,y}^{N,0}.
\end{align}
Similarly, we define {$\mathfrak{X}^{N}$} to be the random walk on $\mathbb{I}_{N,0}$ with elastic reflection at the boundaries, with initial condition {$x$}, and with time-homogeneous transition probabilities given by $\mathbf{P}^{N,0}$ in similar fashion.
\end{notation}
%%%
We now give probabilistic hitting-time estimates for both the random walk $\mathfrak{X}_{\bullet,x}^{N}$ and its adjoint walk $\mathfrak{X}_{\bullet,x}^{N,\ast}$; in what follows, the numbers of 77 and 17 can be replaced by any sufficiently large but universal prefactor.
%%%
\begin{lemma}\fsp \label{lemma:HitEstimate}
Consider any $S,T \in \R_{\geq 0}$ satisfying $S \leq T$, and moreover consider any $x \in \mathbb{I}_{N,0}$. We have the following upper bounds uniformly in $\mathfrak{l} \in \R_{\geq 0}$ satisfying $\mathfrak{l} \geq 77 m_{N}$ with a universal implied constant:
\begin{align}
{\sum}_{\substack{w \in \mathbb{I}_{N,0}: \\ |w-x| \geq \mathfrak{l}}} \mathbf{P}_{S,T,w,x}^{N,0} \ + \ {\sum}_{\substack{w \in \mathbb{I}_{N,0}: \\ |w-x| \geq \mathfrak{l}}} \mathbf{P}_{S,T,x,w}^{N,0} \ &\lesssim \ \mathbf{P}\left({{\sup}_{R \in [0,T]} | \mathfrak{X}_{R,x}^{\Z,N} - \mathfrak{X}_{0,x}^{\Z,N} | \geq 17^{-1}\mathfrak{l}}\right). \label{eq:HitEstimate}
\end{align}
\end{lemma}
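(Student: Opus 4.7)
The plan is to reduce both sums on the left-hand side to displacement probabilities for the reflected walk $\mathfrak{X}^N_{\bullet,x}$, and then compare them with the free walk $\mathfrak{X}^{\Z,N}_{\bullet,x}$ through a coupling. By time-homogeneity we set $S = 0$. The first sum equals $\mathbf{P}[|\mathfrak{X}^N_{T,x} - x| \geq \mathscr{L}]$. For the second sum, we employ reversibility of $\mathscr{U}^{N,0}$ with respect to an invariant measure $\pi \in \Pi^N$ produced by Lemma \ref{lemma:EllipticEstimateCpt}; the two-sided bound $\pi_\bullet \asymp 1$, with implied constant depending only on $m_N$, then shows that the second sum equals, up to a universal prefactor, an analogous displacement probability for the walk driven by the adjoint kernel, which has the same reflected-random-walk structure as $\mathfrak{X}^N$ and is treated identically.

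The comparison with $\mathfrak{X}^{\Z,N}$ proceeds through the canonical graphical coupling in which both walks share Poisson jump clocks over the bulk region $\llbracket m_N, N - m_N \rrbracket$, where the operators $\mathscr{L}_{\mathrm{Lap}}^{N,!!}$ and the free-walk generator coincide. Let $\tau$ denote the first time the coupled process enters one of the boundary strips $\llbracket 0, m_N - 1 \rrbracket$ or $\llbracket N - m_N + 1, N \rrbracket$. On $\{\tau > T\}$ the reflected walk and the free walk are pathwise identical on $[0, T]$, so that $\{|\mathfrak{X}^N_{T,x} - x| \geq \mathscr{L}\}$ is contained in $\{\sup_{R \leq T} |\mathfrak{X}^{\Z,N}_{R,x} - x| \geq \mathscr{L}/17\}$. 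On $\{\tau \leq T\}$, the free walk has itself entered the boundary strip by time $\tau$; when $x$ lies at distance $\geq \mathscr{L}$ from both boundary strips, this already forces a supremum displacement of at least $\mathscr{L} - m_N \geq \mathscr{L}/17$ for the free walk, courtesy of the calibration $\mathscr{L} \geq 77 m_N$.

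The main obstacle is the remaining case where $x$ itself begins within distance $\mathscr{L}$ of a boundary strip, so that the first-excursion argument is inconclusive. Here we iterate the strong Markov property at successive entrance and exit times of the boundary strip: between exits the coupled walks synchronize as above, while inside the strip each individual step has length at most $m_N$. Any trajectory of the reflected walk with terminal displacement $\geq \mathscr{L}$ necessarily features at least one bulk excursion whose free-walk image sweeps out a supremum displacement of order $\mathscr{L}$, up to an additive error of $C m_N$ accumulated over the finitely many boundary excursions; the hypothesis $\mathscr{L} \geq 77 m_N$ is precisely calibrated so that all such boundary-related overheads are absorbed into the factor $1/17$ on the right-hand side, yielding the claim.
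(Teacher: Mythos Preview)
Your overall strategy matches the paper's: interpret the two sums as displacement probabilities for the reflected walk and its adjoint (handling the latter via Lemma~\ref{lemma:EllipticEstimateCpt}), then compare to the free walk by coupling Poisson clocks in the bulk. One cosmetic point: with the paper's convention $\mathbf{P}[\mathfrak{X}^N_{T,x}=w]=\mathscr{U}^{N,0}_{0,T,x,w}$, it is the \emph{second} sum that equals the reflected-walk displacement probability and the \emph{first} that requires the adjoint argument; you have these swapped, but this does not affect the substance.

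The genuine gap is in your near-boundary case. The assertion that boundary overheads amount to ``$Cm_N$ accumulated over the finitely many boundary excursions'' is not justified: on $[0,T]$ the reflected walk can make an unbounded (random) number of returns to the boundary strip, so there is no a priori bound on the number of excursions and hence no control on the cumulative correction. It is true that on the event $\{|\mathfrak{X}^N_{T,x}-x|\ge\mathscr{L}\}$ the \emph{last} bulk excursion before $T$ has displacement $\ge\mathscr{L}-2m_N$, but the last exit time is not a stopping time, so you cannot apply the strong Markov property there; and a union bound over all exit times $\sigma_k'$ produces a sum with unboundedly many terms, each comparable to the right-hand side, which is useless.

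The paper sidesteps this entirely with a single level-crossing stopping time rather than an iteration: set $\tau=\inf\{R\le T:\mathfrak{X}^N_{R,x}\ge\tfrac{2}{3}\mathscr{L}+m_N\}\wedge T$. On the displacement event with $x$ near the left boundary one has $\tau<T$ deterministically (else the path is confined to $[0,\tfrac{2}{3}\mathscr{L}+m_N]$, contradicting $|\mathfrak{X}^N_{T,x}-x|\ge\mathscr{L}$), and $|\mathfrak{X}^N_{T,x}-\mathfrak{X}^N_{\tau,x}|\ge\mathscr{L}-\tfrac{6}{7}\mathscr{L}=\tfrac{1}{7}\mathscr{L}$. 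Since $\mathfrak{X}^N_{\tau,x}\ge\tfrac{2}{3}\mathscr{L}+m_N$ lies far from the boundary, your Case~1 coupling now applies on $[\tau,T]$; by the strong Markov property the coupled free-walk segment on $[\tau,T]$ has the law of $\mathfrak{X}^{\Z,N}$ on an interval of length $\le T$, giving the bound with one application of strong Markov rather than an uncontrolled iteration.
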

%%%
%%%
\begin{proof}
Certainly it suffices to {obtain} the upper bound on the RHS of \eqref{eq:HitEstimate} for both quantities on the LHS individually. For this, we begin with the second summation on the LHS, because the bound for the first summation on the LHS follows from the exact same calculation with the additional input from Lemma \ref{lemma:EllipticEstimateCpt} that we explain more precisely later. {First, provided that all relevant heat kernels are time-homogeneous, it suffices to assume $S = 0$. Obtaining the first desired estimate then amounts to proving the following probabilistic inequality with a universal implied constant:}
\begin{align}
\mathbf{P}(| \mathfrak{X}_{T,x}^{N} - \mathfrak{X}_{0,x}^{N} | \geq \mathfrak{l}) \ &\lesssim \ \mathbf{P}\left({{\sup}_{R \in [0,T]} | \mathfrak{X}_{R,x}^{\Z,N} - \mathfrak{X}_{0,x}^{\Z,N} | \geq 17^{-1}\mathfrak{l}}\right).
\end{align}
We prove the previous hitting-time upper bound by applying the following argument via pathwise-coupling; it will serve presentationally convenient to define the following path-space events:
\begin{align}
\mathscr{E}_{T,x}^{N} \ \overset{\bullet}= \ \left\{ | \mathfrak{X}_{T,x}^{N} - \mathfrak{X}_{0,x}^{N} | \geq \mathfrak{l} \right\}, \quad \mathscr{E}_{T,x}^{\Z,N} \ \overset{\bullet}= \ \left\{{{\sup}_{R \in [0,T]} | \mathfrak{X}_{R,x}^{\Z,N} - \mathfrak{X}_{0,x}^{\Z,N} | \geq 17^{-1}\mathfrak{l}}\right\}.
\end{align}
%
%%%
\begin{itemize}
\item Suppose first that $x \geq {2\mathfrak{l}/3} + m_{N}$; in particular, the random walk begins a distance at least ${2\mathfrak{l}/3}$ away from the subset of the lattice $\mathbb{I}_{N,0} \subseteq \Z_{\geq 0}$ on which {the generator of the random walk $\mathfrak{X}^{N}$ does not agree with that of the $\Z$-valued random walk $\mathfrak{X}^{\Z,N}$}. {For such initial conditions, we construct any coupled random walk $\mathfrak{X}^{\Z,N}$ with the same initial condition $\mathfrak{X}^{\Z,N}$ but undergoing dynamics corresponding to the heat kernel $\mathbf{K}^{N,0}$. More precisely, define the stopping time
\begin{align}
\tau_{m_{N}} \ \overset{\bullet}= \ \inf \left\{ R \in [0,T]: \ \mathfrak{X}_{R,x}^{N} \in \llbracket 0, m_{N} \rrbracket \right\} \wedge T.
\end{align}
For times in the random interval $[0,\tau_{m_{N}}] \subseteq [0,T]$, we couple the walks $\mathfrak{X}^{N}$ and $\mathfrak{X}^{\Z,N}$ by coupling the Poisson clocks for each walk corresponding to the same jump-length and direction; indeed, as noted at the beginning of this case of $x \geq 2\mathfrak{l}/3 + m_{N}$, this is a well-defined coupling between the two random walk laws. Now, consider any trajectory $\mathfrak{X}^{N}$ of the $\mathbb{I}_{N,0}$-valued random walk for times in $[0,T] \subseteq \R_{\geq 0}$ belonging to the event $\mathscr{E}^{N}$. In particular, because we have assumed $x \geq 2\mathfrak{l}/3 + m_{N}$, we deduce the coupled $\Z$-valued random walk from the previous bullet point $\mathfrak{X}^{\Z,N}$ belongs in the event $\mathscr{E}^{\Z,N}$. This completes the proof for initial conditions satisfying $x \geq 2\mathfrak{l}/3 + m_{N}$. Reflecting this argument completes the proof for initial conditions which satisfy $N-x \geq 2\mathfrak{l}/3 + m_{N}$ as well.} 
\item {Suppose now $x \leq 2\mathfrak{l}/3 + m_{N}$; which by the assumption $\mathfrak{l} \geq 7m_{N}$ yields the constraint $x \leq 6\mathfrak{l}/7$, for example. Suppose further that we have sampled any generic trajectory $\mathfrak{X}^{N}$ with times in $[0,T] \subseteq \R_{\geq 0}$ belonging to $\mathscr{E}^{N}$. Observe that for a stopping time $\tau \in \R_{\geq 0}$ bounded by $T \in \R_{\geq 0}$ on the event $\mathscr{E}^{N}$ with probability 1, we have $\mathfrak{X}^{N} \geq 2\mathfrak{l}/3 + m_{N}$ at the time $\tau$. Indeed, if this is not the case, then the entire path of $\mathfrak{X}^{N}$ on the time interval $[0,T] \subseteq \R_{\geq 0}$ is contained in $\llbracket0,2\mathfrak{l}/3+m_{N}\rrbracket \subseteq \Z_{\geq 0}$, which clearly contradicts the constraint defining $\mathscr{E}^{N}$. Continuing with the previous bullet point, we define the stopping time
\begin{align}
\tau \ \overset{\bullet}= \ \inf \left\{ R \in [0,T]: \ \mathfrak{X}_{R,x}^{N} \geq 2\mathfrak{l}/3 + m_{N} \right\} \wedge T.
\end{align}
We now construct a $\Z_{\geq0}$-valued random walk $\wt{\mathfrak{X}}^{\Z,N}$ as follows. Provided $\tau \in [0,T]$, we first define $\wt{\mathfrak{X}}^{\Z,N} = x$ for times before $\tau$. We further define $\wt{\mathfrak{X}}^{\Z,N}$ to be equal to $\mathfrak{X}^{N}$ at time $\tau$, and thus $\wt{\mathfrak{X}}^{\Z,N}$ exhibits a jump \emph{not} encoded in the generator of $\mathbf{K}^{N,0}$. We now modify the previous stopping time on which our coupling between $\mathbf{P}^{N,0}$-dynamics and $\mathbf{K}^{N,0}$-dynamics is defined:
\begin{align}
\wt{\tau}_{m_{N}} \ \overset{\bullet}= \ \inf \left\{ R \in [\tau,T]: \ \mathfrak{X}_{R,x}^{N} \in \llbracket0,m_{N}\rrbracket \right\} \wedge T.
\end{align}
We observe that, by the strong Markov property for our random walk, the law of $\wt{\mathfrak{X}}^{\Z,N}$ on the time-interval $[\tau,T]$ is equal to the law of the random walk with transition probabilities given by $\mathbf{K}^{N,0}$ on an identical time interval with the same initial condition, which we recall is exactly the initial condition of $\mathfrak{X}^{N}$ on this time-interval. Therefore, we employ the coupling from the previous case on this time-interval $[0,\tau]$. Observe that any trajectory $\mathfrak{X}^{N}$ belonging to $\mathscr{E}^{N}$ automatically satisfies, for example, the bound
\begin{align}
| \mathfrak{X}_{T,x}^{N} - \mathfrak{X}_{\tau,x}^{N} | \ &\geq \ | \mathfrak{X}_{T,x}^{N} - \mathfrak{X}_{0,x}^{N} | \ - \ | \mathfrak{X}_{\tau,x}^{N} - \mathfrak{X}_{0,x}^{N} | \ \gtrsim \ \mathfrak{l}.
\end{align}
Thus, as in the case $x\geq 2\mathfrak{l}/3 + m_{N}$, the coupled trajectory $\wt{\mathfrak{X}}^{\Z,N}$ must satisfy the following lower bound, for example:
\begin{align}
{\sup}_{R \in [\tau,T]} | \wt{\mathfrak{X}}_{R,x}^{\Z,N} - \wt{\mathfrak{X}}_{\tau,x}^{\Z,N} | \ \gtrsim \ \mathfrak{l}.
\end{align}
Again, because the law of $\wt{\mathfrak{X}}^{\Z,N}$ in the time-interval $[\tau,T]$ is exactly that of the random walk with time-homogeneous transition probabilities given by $\mathbf{K}^{N}$ and initial condition $\wt{\mathfrak{X}}_{\tau,x}^{\Z,N}$, we deduce the desired estimate in this case $x \leq 2\mathfrak{l}/3 + m_{N}$ as well; note the simple but important observation that $\rho_{\tau,T} \leq T$. Again, reflection of the argument completes the proof for initial conditions satisfying $N-x \leq 2\mathfrak{l}/3 + m_{N}$.
}
\end{itemize}
%%%
To prove the upper bound for the first term on the LHS of \eqref{eq:HitEstimate}, we observe that the same pathwise-coupling argument applies to the adjoint random walk $\mathfrak{X}^{N,\ast}$ courtesy of the adjoint calculation from Lemma \ref{lemma:AdjointFlat}. Therefore, we may establish the same upper bound for the first summation on the LHS of \eqref{eq:HitEstimate} if we somehow had an additional factor of $\pi_{x}^{-1}\pi_{w} \in \R_{>0}$ in this summation, where $\pi \in \Pi^{N}$ is any strictly positive invariant measure. However, observe that the elliptic estimate from Lemma \ref{lemma:EllipticEstimateCpt} allows us to insert such a factor at the cost of an $m_{N}$-dependent constant, so we are done.
\end{proof}
%%%
{Lemma \ref{lemma:HitEstimate} and a martingale maximal inequality gives the following large-deviations-type bound for the LHS of \eqref{eq:HitEstimate}.}
%%%
\begin{corollary}\fsp \label{corollary:HitEstimate}
Retain the setting {of} \emph{Lemma \ref{lemma:HitEstimate}}; for some constant {$\kappa>0$ depending only on $m_{N}>0$, we have}
\begin{align}
{\sum}_{\substack{w \in \mathbb{I}_{N,0}: \\ |w-x| \geq \mathfrak{l}}} \mathbf{P}_{S,T,w,x}^{N,0} \ + \ {\sum}_{\substack{w \in \mathbb{I}_{N,0}: \\ |w-x| \geq \mathfrak{l}}} \mathbf{P}_{S,T,x,w}^{N,0} \ &\lesssim \ \mathbf{1}_{|\mathfrak{l}| \lesssim N \rho_{S,T}^{1/2}} \exp\left(-\kappa \frac{\mathfrak{l}^{2}}{N^{2} \rho_{S,T}}\right) \ + \ \mathbf{1}_{|\mathfrak{l}| \gtrsim N \rho_{S,T}^{1/2}} \exp\left(-\kappa \mathfrak{l}\right);
\end{align}
above, the implied constants in the indicator functions are both universal outside {of} their dependence on {$m_{N}>0$}; moreover, these implied constants between the two indicator functions are equal.
\end{corollary}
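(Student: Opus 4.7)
The plan is to reduce the estimate to a classical Bernstein-type large-deviation bound for the spatially-homogeneous compound-Poisson walk $\mathfrak{X}^{\Z,N}$ on $\Z$, and then apply the exponential martingale / Doob technique. Lemma~\ref{lemma:HitEstimate} already accomplishes the reduction: both heat-kernel tail sums are dominated by the running-maximum probability $\mathbf{P}[\sup_{R \in [0,T]}|\mathfrak{X}^{\Z,N}_{R,x}-x| \geq \tfrac{1}{17}\mathscr{L}]$. From that point on, the boundary of $\mathscr{I}_{N,0}$ drops out of the analysis and we are left with a textbook hitting-time computation for a translation-invariant walk with jumps bounded by $m_{N}$.

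Next, I would build the exponential martingale attached to the walk. For every $\lambda \in \R$, a routine Dynkin computation shows that
\begin{align*}
M^{\lambda}_{R} \ \overset{\bullet}= \ \exp\left[\lambda(\mathfrak{X}^{\Z,N}_{R,x}-x) \ - \ R\,\Lambda_{N}(\lambda)\right], \qquad \Lambda_{N}(\lambda) \ \overset{\bullet}= \ N^{2}\sum_{k=1}^{m_{N}} \wt{\alpha}_{k}^{N}\bigl(\cosh(\lambda k)-1\bigr),
\end{align*}
is a non-negative martingale starting at $1$. Applying Doob's $L^{1}$ maximal inequality to $M^{\pm\lambda}$ and taking a union bound over direction yields, for every $\lambda > 0$,
\begin{align*}
\mathbf{P}\left[\sup_{R \in [0,T]}\bigl|\mathfrak{X}^{\Z,N}_{R,x}-x\bigr| \ \geq \ \tfrac{1}{17}\mathscr{L}\right] \ \leq \ 2\exp\left[-\tfrac{1}{17}\lambda\mathscr{L} \ + \ T\,\Lambda_{N}(\lambda)\right].
\end{align*}

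The two regimes of the claimed inequality then emerge from two choices of $\lambda$. Using the Taylor bound $\Lambda_{N}(\lambda) \leq C_{m_{N}}\, N^{2}\lambda^{2}$, valid whenever $|\lambda|\leq m_{N}^{-1}$, in the diffusive window $\mathscr{L}\lesssim N\varrho_{S,T}^{1/2}$ I would take $\lambda = c\mathscr{L}/(N^{2}\varrho_{S,T})$; this $\lambda$ lies inside the Taylor range precisely because of the restriction on $\mathscr{L}$, and substitution yields the Gaussian exponent $-\kappa\mathscr{L}^{2}/(N^{2}\varrho_{S,T})$. In the complementary far-tail window $\mathscr{L}\gtrsim N\varrho_{S,T}^{1/2}$, I would take $\lambda$ of order $m_{N}^{-1}$, possibly growing logarithmically in $\mathscr{L}$ when the latter is very large; the crude bound $\Lambda_{N}(\lambda)\lesssim N^{2}e^{\lambda m_{N}}$, together with $m_{N}\lesssim 1$ from Assumption~\ref{ass:Category1} and $T\leq T_{f}$, allows the drift $T\,\Lambda_{N}(\lambda)$ to be absorbed into a fraction of $\lambda\mathscr{L}$, producing the linear exponent $-\kappa\mathscr{L}$ with $\kappa$ depending only on $m_{N}$.

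The genuine obstacle is merely matching the two regimes continuously at the threshold $\mathscr{L}\asymp N\varrho_{S,T}^{1/2}$ and keeping the implied constants uniform; since $m_{N}\lesssim 1$ by Assumption~\ref{ass:Category1} and the terminal time $T\leq T_{f}$ is finite, this is routine bookkeeping rather than a substantive difficulty.
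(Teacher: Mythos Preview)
Your proposal is correct and takes essentially the same route as the paper: reduce via Lemma~\ref{lemma:HitEstimate} to the free walk on $\Z$, apply Doob's maximal inequality to the exponentiated process, and optimize the exponential parameter to obtain the Bernstein-type dichotomy. The only cosmetic difference is that the paper bounds the moment generating function by conditioning on the Poisson number of jumps and invoking Azuma--Hoeffding on the resulting discrete-time walk (referring to Theorem~5.17 in \cite{Bar}), whereas you compute the compound-Poisson Laplace functional $\Lambda_{N}(\lambda)$ directly; these are equivalent standard derivations of the same bound.
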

%%%
%%%
\begin{proof}
{Observe $\mathfrak{X}^{\Z,N} - \mathfrak{X}^{\Z,N}$ is a continuous-time random-walk martingale with uniformly bounded jumps with initial condition equal to 0 with probability 1. Employing the Doob maximal inequality, we have the following inequality for any $\beta \in \R_{>0}$:
\begin{align}
\mathbf{P}\left({\sup}_{R \in [0,T]} | \mathfrak{X}_{R,x}^{\Z,N} - \mathfrak{X}_{0,x}^{\Z,N} | \geq 17^{-1}\mathfrak{l}\right) \ &= \ \mathbf{P}\left({\sup}_{R \in [0,T]} \exp\left(\beta|\mathfrak{X}_{R,x}^{\Z,N}-\mathfrak{X}_{0,x}^{\Z,N}|\right) \geq \exp\left(\beta\mathfrak{l}/17\right)\right) \\
&\leq \ \exp(-\beta\mathfrak{l}/17) \cdot \E \exp\left(\beta | \mathfrak{X}_{T,x}^{\Z,N} - \mathfrak{X}_{0,x}^{\Z,N} |\right),
\end{align}
that we may estimate by conditioning on the total number of jumps and then by analyzing the induced discrete-time random walk via the Azuma-Hoeffding inequality as the jumps in this martingale are uniformly bounded above by $m_{N}>0$. This procedure is rather standard; for example, such a calculation is performed in the proof of Theorem 5.17 in \cite{Bar}.}
\end{proof}
%%%
%%%
\begin{proof}[Proof of \emph{Proposition \ref{prop:IOffDTotal}}]
The proof of the estimate corresponding to the specialization $\mathbf{P}^{N,0}$ satisfying Neumann boundary conditions follows immediately from Corollary \ref{corollary:HitEstimate} and elementary calculations with the Chapman-Kolmogorov equation. To establish the sub-exponential estimate for general heat kernels $\mathbf{P}^{N}$, we use Lemma \ref{lemma:ItoP2Cpt} to establish the elementary inequality
\begin{align}
\mathbf{P}_{S,T,x,y}^{N}\cdot \mathscr{E}_{S,T,x,y}^{N,\kappa} \ &\lesssim_{\mathscr{A}_{\pm}} \ \mathbf{P}_{S,T,x,y}^{N,0} \cdot \mathscr{E}_{S,T,x,y}^{N,\kappa} \ + \ \Xi_{S,T,x,y}^{N,\kappa} \ + \ \Upsilon_{S,T,x,y}^{N,\kappa}; \label{eq:ABPItoP2}
\end{align}
where
\begin{subequations}
\begin{align}
\Xi_{S,T,x,y}^{N,\kappa} \ &\overset{\bullet}= \ \int_{S}^{T} \mathbf{P}_{R,T,x,0}^{N,0} \cdot \mathscr{E}_{S,T,x,0}^{N,\kappa}\cdot N \mathbf{P}_{S,R,0,y}^{N} \cdot \mathscr{E}_{S,T,0,y}^{N,\kappa}\ \d R; \\
\Upsilon_{S,T,x,y}^{N,\kappa} \ &\overset{\bullet}= \ \int_{S}^{T} \mathbf{P}_{R,T,x,N}^{N,0} \cdot \mathscr{E}_{S,T,x,N}^{N,\kappa}\cdot N \mathbf{P}_{S,R,N,y}^{N} \cdot \mathscr{E}_{S,T,N,y}^{N,\kappa}\ \d R.
\end{align}
\end{subequations}
We now establish notation for a topology with respect to which we may employ a fixed-point-type argument; define
\begin{align}
\Upsilon_{S,T,x,y}^{N,\kappa} \ \overset{\bullet}= \ \rho_{S,T}^{1/2} \mathbf{P}_{S,T,x,y}^{N} \cdot \mathscr{E}_{S,T,x,y}^{N,\kappa}.
\end{align}
By the optimal off-diagonal estimate established for $\mathbf{P}^{N,0}$ via Corollary \ref{corollary:HitEstimate} as mentioned above, we first have
\begin{align}
\mathbf{P}_{S,T,x,y}^{N,0} \cdot \mathscr{E}_{S,T,x,y}^{N,\kappa} \ &\lesssim_{\kappa,T} \ N^{-1} \ + \ N^{-1} \rho_{S,T}^{-1/2}.
\end{align}
The result follows from an identical argument via singular Gronwall inequality as in the proof of Lemma \ref{lemma:IOnDRBPA}.
\end{proof}
%%%
%%%
\subsection{Additional Remarks on Nash Inequalities}
%%%
This subsection is purely for possible future interest; in particular, it will serve no impact on our analysis, though we include this discussion because of its potential applicability for stochastic PDE limits associated to some interacting particle systems where the associated heat kernels are elliptic but with rough coefficients.
 
{Note the evolution equation satisfied by heat kernels $\mathbf{P}^{N}$ are uniformly elliptic away from the boundary; let $\mathbf{P}_{\e} \subseteq \mathbb{I}_{N,0}$ provide an example of a sub-lattice whose distance from the boundary is roughly $\e N$. Moreover, the associated elliptic differential operator on $\mathbf{P}_{\e} \subseteq \mathbb{I}_{N,0}$ is of purely second-order; thus, there exist no boundary dynamics on this bulk sub-lattice by assumption. In particular, any appropriate discrete analog of the robust De Giorgi-Nash-Moser parabolic Harnack inequalities provide $\e$-dependent space-time Holder regularity estimates for heat kernels $\mathbf{P}^{N}$. Upon re-working through the estimates of Bertini-Giacomin in \cite{BG}, choosing the cutoff parameter $\e>0$ appropriately, this space-time Holder regularity is all one requires to prove tightness of the microscopic Cole-Hopf transform $\mathbf{Z}^{N}$ if the model at hand is integrable; in particular, delicate spectral estimates proven in \cite{BG} are significantly more than sufficient towards the proof of tightness of $\mathbf{Z}^{N}$. To identify subsequential limit points as the solutions to the appropriate SHE, in \cite{BG} the authors rely crucially on delicate heat kernel estimates established through this aforementioned spectral theory. As noted in both \cite{DT,Y}, it turns out that this approach may be avoided with a hydrodynamic-type analysis even with sub-optimal entropy production estimates. One conclusion of this discussion is the theory of De Giorgi-Nash-Moser provides a significantly more robust alternative to the spectral theory approach used in previous papers; in particular, the perspective taken in this paper is that refined heat kernel estimates are employed only for the dynamical-averaging strategy at the heart of Theorem \ref{theorem:KPZ}.}
%
%
%
%%%
\section{Heat Kernel Estimates III -- Regularity Estimates}\label{section:HKEIII}
%%%
The current and final section of this paper that is exclusively interested in heat kernel estimates aims for regularity estimates with respect to mesoscopic and macroscopic spatial scales \emph{provided} Nash-type estimates established in Section \ref{section:HKEII}; we reemphasize the importance of the respective roles of Lemma \ref{lemma:IOnD} and Lemma \ref{lemma:IOnDRBPA} in providing \emph{a priori} heat kernel estimates which this section would fail without. Again, to provide some organizational clarity, we begin with a table-of-contents:
%%%
\begin{itemize}
\item The first ingredient we establish consists of regularity estimates in space-time of the heat kernel $\mathbf{P}^{N,0}$; the main tools in this direction are the perturbative mechanism in Lemma \ref{lemma:ItoP1Cpt} combined with the Chapman-Kolmogorov equation for $\mathbf{P}^{N,0}$; the latter ingredient will be important to provide some ``smoothing" effect for the heat kernel towards the boundary of the lattice where the perturbative scheme in Lemma \ref{lemma:ItoP1Cpt} is actually uneffective.
\item The second ingredient we establish consists of comparison between the heat kernel $\mathbf{P}^{N,0}$ and its nearest-neighbor specialization; although this does not resemble any version of a regularity estimate, {such a} comparison will allow us to transfer some regularity estimates from the nearest-neighbor specialization to $\mathbf{P}^{N,0}$ itself; {we remark that} the nearest-neighbor specialization is amenable to exact formulas through the method-of-images, from which appropriate regularity estimates are proven in \cite{CS,P}.
\item Finally, we transfer results in the aforementioned two bullet points to heat kernels with arbitrary Robin parameters, once again through the perturbative scheme of Lemma \ref{lemma:ItoP2Cpt}.
\end{itemize}
%%%
First, it will be convenient to formally introduce the following notation in Proposition \ref{prop:MatchCpt} for the entire paper.
%%%
\begin{notation}\fsp 
Provided any $\beta \in \R_{>0}$, let us define the sub-domain $\mathbb{I}_{N,\beta} \subseteq \mathbb{I}_{N,0}$ via the prescription
\begin{align}
\mathbb{I}_{N,\beta} \ \overset{\bullet}= \ \left(\mathbb{I}_{N,0} \setminus \llbracket 0,N^{\beta} \rrbracket \right) \setminus \llbracket N-N^{\beta},N \rrbracket.
\end{align}
\end{notation}
%%%
Moreover, it will serve convenient as well to introduce the following discrete-time-differential operator.
%%%
\begin{notation}\fsp 
{Suppose $\phi: \R_{\geq 0}^{2} \to \R$ is any function supported on times $S,T\geq0$ satisfying $S \leq T$. For any time-scale $\tau\geq0$,
\begin{align}
\mathscr{D}_{\tau}\phi_{S,T} \ \overset{\bullet}= \ \phi_{(S+\tau)\wedge T,T} - \phi_{S,T}.
\end{align}
}
\end{notation}
%%%
%%%
\subsection{Regularity of $\mathbf{P}^{N,0}$}
%%%
The first {main result of this subsection is stated below}; roughly speaking, it provides spatial regularity of the heat kernel $\mathbf{P}^{N,0}$ away from the boundary. The underlying idea behind these estimates in the following result is the local nature of the ``non self-adjoint" nature of the operator {$\mathbf{P}^{N,0}$}, or more precisely the term appearing in the integral in the perturbative identity Lemma \ref{lemma:ItoP1Cpt} is supported at the boundary of the sub-lattice $\mathbb{I}_{N,0}$. The problem with {such a} term with boundary-support is that the integrand therein exhibits a non-integrable singularity for generic forward spatial variables in $\mathbb{I}_{N,0}$. Provided that we stay outside a large microscopic neighborhood of this boundary, the non-integrable singularity vanishes because of the off-diagonal heat kernel estimates for $\mathbf{T}^{N,0}$.
%%%
\begin{lemma}\fsp \label{lemma:RegXHKCpt}
Consider $0\leq S \leq T \leq \mathfrak{t}^{\max}$ with any $x \in \mathbb{I}_{N,0}$ and any $\beta_{\partial}>0$ arbitrarily small but universal.
%%%
\begin{itemize}
\item There exists $\e \in \R_{>0}$ satisfying $\e \lesssim \beta_{\partial}$ with a universal implied constant such that given any $k\in\Z$ uniformly bounded,
\begin{subequations}
\begin{align}
\sup_{x\in\mathbb{I}_{N,0}}{\sum}_{y \in \mathbb{I}_{N,\beta_{\partial}}}| \grad_{k,y}^{!} \mathbf{P}_{S,T,x,y}^{N,0} - \grad_{k,y}^{!}\mathbf{T}_{S,T,x,y}^{N,0} | \ &\lesssim_{\e} \ N^{-\e}\rho_{S,T}^{-1/2} + \exp(-\log^{100} N); \\
\sup_{x\in\mathbb{I}_{N,0}}{\sum}_{y \in \mathbb{I}_{N,\beta_{\partial}}} | \grad_{k,y}^{!} \mathbf{P}_{S,T,x,y}^{N,0} | \ &\lesssim_{\e} \ N^{-1} \rho_{S,T}^{-1} + \exp(-\log^{100} N)\rho_{S,T}^{-1/2}.
\end{align}
\end{subequations}
\end{itemize}
%%%
\end{lemma}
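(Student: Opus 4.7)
The plan is to exploit the perturbative Duhamel identity of \emph{Lemma \ref{lemma:ItoP1Cpt}}, which expresses the difference $\mathscr{U}_{S,T,x,y}^{N,0} - \mathscr{T}_{S,T,x,y}^{N,0}$ as a time-integral of $\sum_{w \in \partial_{\mathrm{bd}}}\mathscr{U}_{R,T,x,w}^{N,0} \cdot \mathbf{D}_{\partial}^{N}\mathscr{T}_{S,R,w,y}^{N,0}$, where $\partial_{\mathrm{bd}} \overset{\bullet}= \llbracket 0, m_{N}-1\rrbracket \cup \llbracket N-m_{N}+1,N\rrbracket$ is the boundary-strip support of the operator $\mathbf{D}_{\partial}^{N}$. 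Since the discrete gradient $\grad_{k,y}^{!}$ commutes past both the $R$-integration and the $w$-summation, the first bound reduces to controlling
\begin{align*}
\int_{S}^{T} \sum_{w \in \partial_{\mathrm{bd}}} \left|\mathscr{U}_{R,T,x,w}^{N,0}\right| \ \sum_{y \in \mathscr{I}_{N,\beta_{\partial}}} \left|\mathbf{D}_{\partial}^{N}\grad_{k,y}^{!}\mathscr{T}_{S,R,w,y}^{N,0}\right| \ \d R.
\end{align*}

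The crucial input is an upgraded regularity estimate for $\mathscr{T}^{N,0}$, namely the forthcoming Lemma \ref{lemma:BRegT} alluded to in the text, that exploits the macroscopic separation $|w-y| \geq N^{\beta_{\partial}} - m_{N}$ between any $w \in \partial_{\mathrm{bd}}$ and any $y \in \mathscr{I}_{N,\beta_{\partial}}$. To obtain it I would appeal to the method-of-images representation $\mathscr{T}_{S,R,w,y}^{N,0} = \sum_{\ell \in \Z}\mathscr{G}_{S,R,w,i_{y,\ell}}^{N,0}$ together with a one-derivative-higher extension of the sub-Gaussian gradient estimates of Proposition A.1 of \cite{DT}; those pointwise bounds have schematic form $|\grad_{k,y}^{!}\mathscr{G}_{S,R,w,z}^{N,0}| \lesssim N^{-2}\varrho_{S,R}^{-1}\mathscr{E}_{S,R,w,z}^{N,-\kappa}$, and one further difference in the $w$-variable (through $\mathbf{D}_{\partial}^{N}$) costs an additional $(N\varrho_{S,R}^{1/2})^{-1}$ factor. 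Summing over images and over $y \in \mathscr{I}_{N,\beta_{\partial}}$, the $y$-summation converts one $\varrho_{S,R}^{-1/2}$ into a factor of $N$, while the remaining off-diagonal factor $\mathscr{E}^{N,-\kappa}$ evaluated at separation $\gtrsim N^{\beta_{\partial}}$ produces either the gain $N^{-\e}$ with $\e \lesssim \beta_{\partial}$ in the regime where $\varrho_{S,R}$ is macroscopic, or a super-polynomial remainder $\exp(-\log^{100}N)$ in the opposite regime when the ``$\vee 1$'' in the exponent of $\mathscr{E}^{N,-\kappa}$ activates; these two contributions are exactly the two summands in the stated estimate.

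The sum over $w \in \partial_{\mathrm{bd}}$ consists of at most $2m_{N} \lesssim 1$ terms, each bounded pointwise via \emph{Lemma \ref{lemma:IOnD}} by $\lesssim N^{-1} + N^{-1}\varrho_{R,T}^{-1/2}$. The resulting time integral has integrable singularities of the form $\int_{S}^{T}\varrho_{R,T}^{-1/2}\varrho_{S,R}^{-\theta}\, \d R$ with $\theta<1$, controlled through \emph{Lemma \ref{lemma:UsualSuspectInt}}, producing the right-hand side $N^{-\e}\varrho_{S,T}^{-1/2}$ plus the sub-exponential remainder. For the second bound of the lemma, I would combine the first with the triangle inequality $|\grad_{k,y}^{!}\mathscr{U}^{N,0}| \leq |\grad_{k,y}^{!}(\mathscr{U}^{N,0} - \mathscr{T}^{N,0})| + |\grad_{k,y}^{!}\mathscr{T}^{N,0}|$ together with the standard $\ell^{1}$-gradient estimate $\sum_{y \in \mathscr{I}_{N,0}}|\grad_{k,y}^{!}\mathscr{T}_{S,T,x,y}^{N,0}| \lesssim \varrho_{S,T}^{-1/2}$, which follows directly from Proposition A.1 of \cite{DT} after summing the method-of-images representation.

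The main obstacle will be establishing the higher-order gradient estimate on $\mathscr{G}^{N,0}$ underlying Lemma \ref{lemma:BRegT}; this extension of Proposition A.1 of \cite{DT} by one additional spatial difference while preserving the sub-Gaussian off-diagonal weights is standard in spirit but technically laborious and is deferred to the appendix of the article. Once granted, the remaining work reduces to a bookkeeping exercise in space-time scaling, off-diagonal weights, and beta-function-type time-integral identities.
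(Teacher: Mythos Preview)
Your overall architecture --- Duhamel via Lemma~\ref{lemma:ItoP1Cpt}, commuting $\grad_{k,y}^{!}$ inside, invoking the boundary-upgraded regularity of Lemma~\ref{lemma:BRegT}, and closing with the triangle inequality for the second estimate --- matches the paper. However, the bookkeeping of where the $N^{-\e}$ gain actually originates is off, and as written your argument would not close.

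Concretely: after applying Lemma~\ref{lemma:BRegT} at a boundary point $w$, the composite $\mathbf{D}_{\partial}^{N}\grad_{k,y}^{!}$ acting on $\mathscr{T}^{N,0}$ is effectively a \emph{third-order} operator (the method-of-images reflection buys one extra derivative at $w\in\partial_{\mathrm{bd}}$), so summing over $y$ yields
\[
\sum_{y}\bigl|\mathbf{D}_{\partial}^{N}\grad_{k,y}^{!}\mathscr{T}_{S,R,w,y}^{N,0}\bigr| \ \lesssim \ \varrho_{S,R}^{-3/2}.
\]
Combined with the Nash bound $\mathscr{U}_{R,T,x,w}^{N,0}\lesssim N^{-1}\varrho_{R,T}^{-1/2}$ over $\mathscr{O}(m_{N})$ boundary points, the $R$-integrand is $N^{-1}\varrho_{R,T}^{-1/2}\varrho_{S,R}^{-3/2}$, which is \emph{not} integrable at $R=S$; your claim that $\theta<1$ and that Lemma~\ref{lemma:UsualSuspectInt} suffices is therefore wrong. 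Moreover, the off-diagonal factor $\mathscr{E}^{N,-\kappa}$ at separation $\gtrsim N^{\beta_{\partial}}$ gives \emph{no} gain when $\varrho_{S,R}$ is macroscopic, since then $N\varrho_{S,R}^{1/2}\gg N^{\beta_{\partial}}$ and the exponential is essentially $1$.

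The paper's remedy is to split the $R$-integral at $S+N^{-2+\e}$ with $\e\lesssim\beta_{\partial}$. On $[S+N^{-2+\e},T]$ one uses Lemma~\ref{lemma:UsualSuspectIntCutoff} (not Lemma~\ref{lemma:UsualSuspectInt}) to obtain
\[
N^{-1}\int_{S+N^{-2+\e}}^{T}\varrho_{R,T}^{-1/2}\varrho_{S,R}^{-3/2}\,\d R \ \lesssim \ N^{-\e/2}\varrho_{S,T}^{-1/2};
\]
this cutoff of the non-integrable singularity is the true source of the $N^{-\e}$ gain. Only on the short-time piece $[S,S+N^{-2+\e}]$ does the restriction $y\in\mathscr{I}_{N,\beta_{\partial}}$ matter: there $N\varrho_{S,R}^{1/2}\leq N^{\e/2}\ll N^{\beta_{\partial}}$, so the sub-Gaussian tail of $\mathscr{G}^{N,0}$ yields the super-polynomial remainder $\mathrm{e}^{-\log^{100}N}$. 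In short, you have the two mechanisms (cutoff vs.\ off-diagonal decay) and their roles reversed.
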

%%%
%%%
\begin{proof}
Appealing directly to Lemma \ref{lemma:ItoP1Cpt}, we have
\begin{align}
\grad_{k,y}^{!} \mathbf{P}_{S,T,x,y}^{N,0} \ = \ \grad_{k,y}^{!} \mathbf{T}_{S,T,x,y}^{N,0} \ &+ \ \int_{S}^{T} {\sum}_{w = 0}^{m_{N}-1} \mathbf{P}_{R,T,x,w}^{N,0} \cdot \mathbf{D}_{\partial}^{N} \grad_{k,y}^{!} \mathbf{T}_{S,R,w,y}^{N,0} \ \d R \label{eq:ItoP1YGrad} \\
&+ \int_{S}^{T} {\sum}_{w = N-m_{N}+1}^{N} \mathbf{P}_{R,T,x,w}^{N,0} \cdot \mathbf{D}_{\partial}^{N} \grad_{k,y}^{!} \mathbf{T}_{S,R,w,y}^{N,0} \ \d R; \nonumber
\end{align}
indeed, as the operator acting on $\mathbf{T}^{N,0}$ in the integral on the RHS of the formula in Lemma \ref{lemma:ItoP1Cpt} acts on the backwards spatial variable, such operator certainly commutes with {$\grad^{!}$. Directly employing} the regularity estimates for $\mathbf{T}^{N,0}$ obtained in Lemma \ref{lemma:BRegT} gives the following upper bound with universal implied constant:
\begin{align}
\sup_{x\in\mathbb{I}_{N,0}}{\sum}_{y \in \mathbb{I}_{N,0}} | \grad_{k,y}^{!} \mathbf{T}_{S,T,x,y}^{N,0}| \ &\lesssim \ \rho_{S,T}^{-1/2}. \label{eq:ItoP1YGrad'}
\end{align}
It remains to estimate both of these integrals on the RHS of \eqref{eq:ItoP1YGrad}; we analyze explicitly only the first of these integrals, which we denote by {$\Xi^{N}$} for convenience, because the second integral is analyzed in analogous fashion. {We first decompose this integral $\Xi_{S,T}^{N} = \Xi^{N,1} + \Xi^{N,2}$ into two time-intervals as follows for positive $\e$ arbitrarily small but universal and determined shortly:
\begin{subequations}
\begin{align}
\Xi_{S,T}^{N,1} \ &\overset{\bullet}= \ \int_{S}^{S+N^{-2+\e}} {\sum}_{w = 0}^{m_{N}-1} \mathbf{P}_{R,T,x,w}^{N,0} \cdot \mathbf{D}_{\partial}^{N} \mathbf{T}_{S,R,w,y}^{N,0} \ \d R; \\
\Xi_{S,T}^{N,2} \ &\overset{\bullet}= \ \int_{S+N^{-2+\e}}^{T} {\sum}_{w = 0}^{m_{N}-1} \mathbf{P}_{R,T,x,w}^{N,0}\cdot \mathbf{D}_{\partial}^{N}\grad_{k,y}^{!} \mathbf{T}_{S,R,w,y}^{N,0} \ \d R.
\end{align}
\end{subequations}
}Let us analyze the second object $\Xi^{N,2}$ first. We first observe that in both of the integrals $\Xi^{N,1}$ and $\Xi^{N,2}$, the composition of the operators acting on the $\mathbf{T}^{N,0}$-factor is actually a third-order differential operator acting on $\mathbf{T}^{N,0}$; this follows from the definition of $\mathbf{T}^{N,0}$. Thus, Lemma \ref{lemma:BRegT} combined with the on-diagonal estimate in Lemma \ref{lemma:IOnD} provides the estimate
\begin{align}
{\sum}_{y \in \mathbb{I}_{N,\beta_{\partial}}} | \Xi_{S,T}^{N,2} | \ &\leq \ \int_{S+N^{-2+\e}}^{T} {\sum}_{w = 0}^{m_{N}-1} {\sum}_{y\in\mathbb{I}_{N,\beta_{\partial}}} \mathbf{P}_{R,T,x,w}^{N,0} | \mathbf{D}_{\partial,\geq0}^{N} \grad_{k,y}^{!} \mathbf{T}_{S,R,w,y}^{N,0} | \ \d R \\
&\lesssim \ N^{-1} \int_{S+N^{-2+\e}}^{T} \rho_{R,T}^{-1/2} {\sum}_{w = 0}^{m_{N}-1} {\sum}_{y\in\mathbb{I}_{N,\beta_{\partial}}}| \mathbf{D}_{\partial,\geq0}^{N} \grad_{k,y}^{!} \mathbf{T}_{S,R,w,y}^{N,0} | \ \d R \\
&\lesssim_{m_{N}} \ N^{-1} \int_{S+N^{-2+\e}}^{T} \rho_{R,T}^{-1/2}\rho_{S,R}^{-3/2} \ \d R \ \lesssim \ N^{-\e} \rho_{S,T}^{-1/2}
\end{align}
where the last estimate follows from Lemma \ref{lemma:UsualSuspectIntCutoff}. Thus, we are left with analyzing the first integral $\Xi^{N,1}$. Before we begin, observe that none of our analysis so far requires the assumption $y \in \mathbb{I}_{N,\beta_{\partial}}$. Indeed, such an assumption is important just for the following:
\begin{align}
{\sum}_{y \in \mathbb{I}_{N,\beta_{\partial}}} | \Xi_{S,T}^{N,1} | \ &\leq \  \int_{S}^{S+N^{-2+\e}}{\sum}_{w = 0}^{m_{N}-1} {\sum}_{y\in\mathbb{I}_{N,\beta_{\partial}}} \mathbf{P}_{R,T,x,w}^{N,0} | \mathbf{D}_{\partial,\geq0}^{N}  \grad_{k,y}^{!} \mathbf{T}_{S,R,w,y}^{N,0} | \ \d R \\
&\lesssim_{m_{N}} \ N^{3} \int_{S}^{S+N^{-2+\e}} {\sup}_{|w| \lesssim m_{N}} {\sum}_{y \in \mathbb{I}_{N,\beta_{\partial}}} \mathbf{T}_{S,R,w,y}^{N,0} \ \d R.
\end{align}
{Suppose we choose $\e \lesssim \beta_{\partial}$ sufficiently small but universal outside dependence of $\beta_{\partial}>0$. In this case, observe the next calculation for $\kappa>0$ a constant depending only on $m_{N}$; this follows from the definition of $\mathbf{T}^{N,0}$ and an elementary change-of-variables:}
{%
\begin{align}
N^{3} \int_{S}^{S+N^{-2+\e}}{\sum}_{y \in \mathbb{I}_{N,\beta_{\partial}}} \mathbf{T}_{S,R,w,y}^{N,0} \ \d R \ &\lesssim \ N^{3} \int_{S}^{S+N^{-2+\e}}{\sum}_{y \gtrsim N^{\beta_{\partial}}} \mathbf{K}_{S,R,w,y}^{N,0} \ \d R \ \lesssim_{\kappa} \ N^{3} \exp\left(-\kappa N^{\beta_{\partial} - 2\e}\right), \nonumber
\end{align}
}
the latter estimate as certainly sufficient and following from Lemma \ref{lemma:ThirdOrder}. In particular, combining our estimates for $\Xi^{N,1}$ and $\Xi^{N,2}$ along with the analogous parallel estimates for the second integral on the RHS of \eqref{eq:ItoP1YGrad}, we deduce
\begin{align}
{\sup}_{x\in\mathbb{I}_{N,0}}{\sum}_{y \in \mathbb{I}_{N,\beta_{\partial}}} | \grad_{k,y}^{!} \mathbf{P}_{S,T,x,y}^{N,0} - \grad_{k,y}^{!}\mathbf{T}_{S,T,x,y}^{N,0} | \ &\lesssim \ N^{-\e} \ + \ \exp\left(-\kappa N^{\beta_{\partial}-3\e}\right),
\end{align}
which gives the first desired estimate. The remaining estimate is follows from the first estimate, \eqref{eq:ItoP1YGrad'}, and the following:
\begin{align}
\sup_{x\in\mathbb{I}_{N,0}}{\sum}_{y \in \mathbb{I}_{N,\beta_{\partial}}} | \grad_{k,y}^{!} \mathbf{P}_{S,T,x,y}^{N,0} | \ &\lesssim \ \sup_{x\in\mathbb{I}_{N,0}}{\sum}_{y \in \mathbb{I}_{N,\beta_{\partial}}} | \grad_{k,y}^{!} \mathbf{P}_{S,T,x,y}^{N,0} - \grad_{k,y}^{!}\mathbf{T}_{S,T,x,y}^{N,0} | \ + \ \sup_{x\in\mathbb{I}_{N,0}}{\sum}_{y \in \mathbb{I}_{N,0}} | \grad_{k,y}^{!} \mathbf{T}_{S,T,x,y}^{N,0}|. \nonumber
\end{align}
This completes the proof.
\end{proof}
%%%
{We now establish estimates for time-regularity of $\mathbf{P}^{N,0}$. Similar to Lemma \ref{lemma:RegXHKCpt} above, we obtain a spatially-averaged estimate, but distinct from Lemma \ref{lemma:RegXHKCpt} that we additionally employ the Chapman-Kolmogorov equation after. Indeed, the heat kernel $\mathbf{P}^{N}$ is time-homogeneous, hence its dependence on the forwards and backwards time-variables is a function of the difference.}
%%%
\begin{lemma}\fsp \label{lemma:RegTHKCpt}
Consider $S,T \in \R_{\geq 0}$ satisfying $S \leq T \leq \mathfrak{t}^{\max}$ along with $x \in \mathbb{I}_{N,0}$ and $\beta_{\partial} \in \R_{>0}$ arbitrarily small but universal. Provided any time-scale $\tau \in \R_{\geq 0}$ satisfying $\tau \leq 7\rho_{S,T}$, we have the following estimate for any $\e \in \R_{>0}$ and any $\kappa \in \R_{>0}$:
\begin{align}
\sup_{x,y \in \mathbb{I}_{N,0}} | \mathscr{D}_{\tau} \mathbf{P}_{S,T,x,y}^{N} | \mathscr{E}_{S,T,x,y}^{N,\kappa} \ &\lesssim_{m_{N},\wt{\alpha}_{1}^{N},\mathfrak{t}^{\max},\e,\kappa} \ N^{-1} \rho_{S,T}^{-3/2+\e} \tau^{1-\e} \ + \ N^{-2+2\e} \rho_{S,T}^{-1} \ + \ N^{-1-\e} \rho_{S,T}^{-1/2} \tau.
\end{align}
\end{lemma}
%%%
%%%
\begin{proof}
Upon interpolating with Proposition \ref{prop:IOffDTotal}, it suffices to take $\kappa = 0$. Appealing to Lemma \ref{lemma:ItoP1Cpt}, we have
\begin{align}
\mathscr{D}_{\tau} \mathbf{P}_{S,T,x,y}^{N,0} \ = \ \mathscr{D}_{\tau} \mathbf{T}_{S,T,x,y}^{N,0} \ + \ \Xi_{S,T}^{N,\pm} \ + \ \Upsilon_{S,T}^{N,\pm}, \label{eq:ItoP1TGrad}
\end{align}
where we have introduced the quantities
\begin{subequations}
\begin{align}
\Xi_{S,T}^{N,-} \ &\overset{\bullet}= \ \int_{S}^{S+\tau} {\sum}_{w = 0}^{m_{N}-1} \mathbf{P}_{R,T,x,w}^{N,0} \cdot \mathbf{D}_{\partial}^{N} \mathbf{T}_{S,R,w,y}^{N,0} \ \d R, \\
\Xi_{S,T}^{N,+} \ &\overset{\bullet}= \ \int_{S}^{S+\tau} {\sum}_{w = N-m_{N}+1}^{N} \mathbf{P}_{R,T,x,w}^{N,0} \cdot \mathbf{D}_{\partial}^{N} \mathbf{T}_{S,R,w,y}^{N,0} \ \d R; \\
\Upsilon_{S,T}^{N,-} \ &\overset{\bullet}= \ \int_{S+\tau}^{T} {\sum}_{w = 0}^{m_{N}-1} \mathbf{P}_{R,T,x,w}^{N,0} \cdot \mathbf{D}_{\partial}^{N} \mathscr{D}_{\tau} \mathbf{T}_{S,R,w,y}^{N,0} \ \d R; \\
\Upsilon_{S,T}^{N,+} \ &\overset{\bullet}= \ \int_{S+\tau}^{T} {\sum}_{w = N-m_{N}+1}^{N} \mathbf{P}_{R,T,x,w}^{N,0} \cdot \mathbf{D}_{\partial}^{N} \mathscr{D}_{\tau} \mathbf{T}_{S,R,w,y}^{N,0} \ \d R
\end{align}
\end{subequations}
To be completely transparent, as alluded to before the statement of Lemma \ref{lemma:ItoP1Cpt}, we first establish a spatially averaged estimate. Moreover, our analysis for the quantities $\Xi^{N,+}$ and $\Upsilon^{N,+}$ is identical to analysis of the pair $\Xi^{N,-}$ and $\Upsilon^{N,-}$.

Concerning the first term on the RHS of \eqref{eq:ItoP1TGrad}, we employ the time-regularity estimate in Proposition A.1 from \cite{DT}:
\begin{align}
{\sum}_{y \in \mathbb{I}_{N,0}} |\mathscr{D}_{\tau} \mathbf{T}_{S,T,x,y}^{N,0} | \ &\lesssim \ \tau \rho_{S,T}^{-1} \wedge 1.
\end{align}
We now analyze the first integral $\Xi^{N,-}$ on the RHS of \eqref{eq:ItoP1TGrad}; for this, we appeal to the regularity estimate in Lemma \ref{lemma:BRegT} to deduce the following upper bounds valid for any $\e \in \R_{>0}$ arbitrarily small but universal:
\begin{align}
{\sum}_{y \in \mathbb{I}_{N,\beta_{\partial}}} |\Xi_{S,T}^{N,-}| \ &\leq \ \int_{S}^{S+\tau} {\sum}_{w = 0}^{m_{N}-1} \mathbf{P}_{R,T,x,w}^{N,0} \cdot {\sum}_{y \in \mathbb{I}_{N,\beta_{\partial}}} | \mathbf{D}_{\partial}^{N} \mathbf{T}_{S,R,w,y}^{N,0} | \ \d R \\
&\lesssim \ N^{2\e} \int_{S}^{S+\tau} \rho_{S,R}^{-1+\e} {\sum}_{w = 0}^{m_{N} - 1} \mathbf{P}_{R,T,x,w}^{N,0} \ \d R \ \lesssim_{m_{N},\e} \ N^{-1+2\e} \rho_{S,T}^{-1/2}.
\end{align}
We deduce an identical upper bound for the second integral $\Upsilon^{N,-}$ on the RHS of \eqref{eq:ItoP1TGrad} as well by brutally bounding the spatial derivatives of the time-gradient $\mathscr{D}_{\tau} \mathbf{T}^{N,0}$ by the spatial derivatives of the original kernel $\mathbf{T}^{N,0}$ up to some universal constant and following the proof of Lemma \ref{lemma:RegXHKCpt} \emph{provided} we perform the summation over only the sub-lattice $\mathbb{I}_{N,\beta_{\partial}} \subseteq \mathbb{I}_{N,0}$ with $\beta_{\partial} \in \R_{>0}$ arbitrarily small but universal. Ultimately, we deduce
\begin{align}
{\sum}_{y \in \mathbb{I}_{N,\beta_{\partial}}} |\mathscr{D}_{\tau}\mathbf{P}_{S,T,x,y}^{N,0}| \ &\lesssim \ \rho_{S,T}^{-1+\e} \tau^{1-\e} \ + \ N^{-1 + 2\e} \rho_{S,T}^{-1/2} \ + \ N^{-\e} \tau. \label{eq:GTGrad1Av}
\end{align}
To establish the ultimate pointwise estimate, we will employ the Chapman-Kolmogorov equation; this provides
\begin{align}
| \mathscr{D}_{\tau} \mathbf{P}_{S,T,x,y}^{N,0}| \ &\leq \ {\sum}_{w \in \mathbb{I}_{N,\beta_{\partial}}} | \mathscr{D}_{\tau} \mathbf{P}_{S,\frac{T+S}{2},x,w}^{N,0}| \cdot \mathbf{P}_{\frac{T+S}{2},T,w,y}^{N,0} \ + \ {\sum}_{w \not\in \mathbb{I}_{N,\beta_{\partial}}} | \mathscr{D}_{\tau} \mathbf{P}_{S,\frac{T+S}{2},x,w}^{N,0}| \cdot \mathbf{P}_{\frac{T+S}{2},T,w,y}^{N,0}. \label{eq:GTGrad1}
\end{align}
To estimate the first term on the RHS of \eqref{eq:GTGrad1}, we employ the heat kernel estimate from Lemma \ref{lemma:IOnD} combined with the spatially-averaged estimate \eqref{eq:GTGrad1Av}; for the latter term on the RHS of \eqref{eq:GTGrad1}, we employ the on-diagonal estimate in Lemma \ref{lemma:IOnD} with an additional observation that the second summation is over $N^{\beta_{\partial}}$-many sites, up to a universal constant. {Thus}
\begin{align}
|\mathscr{D}_{\tau} \mathbf{P}_{S,T,x,y}^{N,0}| \ &\lesssim \ N^{-1} \rho_{S,T}^{-3/2+\e} \tau^{1-\e} \ + \ N^{-2+2\e} \rho_{S,T}^{-1} \ + \ N^{-1-\e} \rho_{S,T}^{-1/2} \tau \ + \ N^{-2+\beta_{\partial}} \rho_{S,T}^{-1};
\end{align}
this completes the proof.
\end{proof}
%%%
{Let us} complete this discussion concerning regularity estimates for $\mathbf{P}^{N,0}$ with a spatial-temporal gradient estimate. The only motivation we provide for this estimate is that the dynamical averaging scheme which is at the heart of the proof of Theorem \ref{theorem:KPZ} requires delicate heat kernel regularity estimates, and this is simply one of those estimates.
%%%
\begin{lemma}\fsp \label{lemma:GTYGrad}
Consider times $S,T \in \R_{\geq 0}$ satisfying $S \leq T$ along with any spatial coordinate $x \in \mathbb{I}_{N,0}$. Provided any time-scale $\tau \in \R_{\geq 0}$ satisfying $\tau \leq 7\rho_{S,T}$ along with any $k \in \Z$ uniformly bounded, we have the following estimate for any $\beta_{\partial} \in \R_{>0}$ and $\e \in \R_{>0}$ arbitrarily small but universal:
\begin{align}
\sup_{x\in\mathbb{I}_{N,0}}{\sum}_{y \in \mathbb{I}_{N,\beta_{\partial}}} | \grad_{k,y}^{!} \mathscr{D}_{\tau} \mathbf{P}_{S,T,x,y}^{N,0} | \ &\lesssim_{\beta_{\partial},\e,k} \ \rho_{S,T}^{-3/2+\e} \tau^{1-\e} \ + \ N^{-1+2\e} \rho_{S,T}^{-1} \ + \ N^{-\e} \rho_{S,T}^{-1/2} \tau.
\end{align}
\end{lemma}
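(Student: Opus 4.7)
The plan is to combine the strategies behind Lemma \ref{lemma:RegXHKCpt} (spatial gradient) and Lemma \ref{lemma:RegTHKCpt} (discrete time difference) by first applying the Duhamel expansion from Lemma \ref{lemma:ItoP1Cpt} to $\mathscr{U}_{S,T,x,y}^{N,0}$, then taking both the discrete time-difference $\mathscr{D}_{\tau}$ and the forward spatial gradient $\grad_{k,y}^{!}$. Since $\mathbf{D}_{\partial}^{N}$ acts on the backward spatial variable, $\grad_{k,y}^{!}$ commutes freely through the Duhamel integrals; likewise the time-difference $\mathscr{D}_{\tau}$ may be distributed as in the proof of Lemma \ref{lemma:RegTHKCpt} by splitting the integration domain at $R = S+\tau$. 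This yields a decomposition of the form
\begin{align*}
\grad_{k,y}^{!}\mathscr{D}_{\tau}\mathscr{U}_{S,T,x,y}^{N,0} \ &= \ \grad_{k,y}^{!}\mathscr{D}_{\tau}\mathscr{T}_{S,T,x,y}^{N,0} \ + \ \Xi_{S,T,y}^{N,\pm} \ + \ \Upsilon_{S,T,y}^{N,\pm},
\end{align*}
where the $\Xi$-integrals range over $[S,S+\tau]$ and carry a single copy of $\grad_{k,y}^{!}\mathbf{D}_{\partial}^{N}\mathscr{T}_{S,R}^{N,0}$, while the $\Upsilon$-integrals range over $[S+\tau,T]$ and carry $\grad_{k,y}^{!}\mathbf{D}_{\partial}^{N}\mathscr{D}_{\tau}\mathscr{T}_{S,R}^{N,0}$.

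For the leading term $\grad_{k,y}^{!}\mathscr{D}_{\tau}\mathscr{T}_{S,T,x,y}^{N,0}$, expand $\mathscr{T}^{N,0}$ via its method-of-images representation in terms of $\mathscr{G}^{N,0}$ and invoke the joint spatial-temporal regularity estimate of Proposition A.1 in \cite{DT} (and its higher-order refinement that is used to prove Lemma \ref{lemma:BRegT}). After summation in $y \in \mathscr{I}_{N,\beta_{\partial}}$, the mass in the images is damped exponentially by the cutoff, so one obtains a contribution bounded by $\varrho_{S,T}^{-3/2+\e}\tau^{1-\e}$, which is already captured by the first claimed term. For $\Upsilon_{S,T,y}^{N,\pm}$, estimate $\mathscr{U}_{R,T,x,w}^{N,0}$ at the boundary coordinates $w$ using the on-diagonal Nash estimate from Lemma \ref{lemma:IOnD}, and employ Lemma \ref{lemma:BRegT} to see that $\sum_{y\in\mathscr{I}_{N,\beta_{\partial}}}|\grad_{k,y}^{!}\mathbf{D}_{\partial}^{N}\mathscr{D}_{\tau}\mathscr{T}_{S,R,w,y}^{N,0}|$ is bounded by $N^{2\e}\varrho_{S,R}^{-3/2+\e}\tau^{1-\e}$; integrating against $N^{-1}\varrho_{R,T}^{-1/2}$ via Lemma \ref{lemma:UsualSuspectInt} reproduces the dominant contribution $\varrho_{S,T}^{-3/2+\e}\tau^{1-\e}$.

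The term $\Xi_{S,T,y}^{N,\pm}$ is the delicate piece, because the time-window is only of length $\tau$ and the integrand includes a singular factor from $\mathbf{D}_{\partial}^{N}\grad_{k,y}^{!}\mathscr{T}_{S,R,w,y}^{N,0}$. Here the strategy of the proof of Lemma \ref{lemma:RegXHKCpt} transfers verbatim: split the integral at $R = S + N^{-2+\e}$, for the initial sub-window use that the total spatial mass of the third-order derivative of $\mathscr{T}_{S,R}^{N,0}$ on $\mathscr{I}_{N,\beta_{\partial}}$ is exponentially small in $N^{\beta_{\partial}-O(\e)}$ by the Gaussian tail of $\mathscr{G}^{N,0}$ away from $w\in\{0,\dots,m_{N}-1\}\cup\{N-m_{N}+1,\dots,N\}$, and for the remaining sub-window combine Lemma \ref{lemma:BRegT} with Lemma \ref{lemma:IOnD} and Lemma \ref{lemma:UsualSuspectIntCutoff}; the singular piece produces $N^{-1+2\e}\varrho_{S,T}^{-1}$ while the super-exponential residue is absorbed into the $N^{-\e}\varrho_{S,T}^{-1/2}\tau$ term.

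The main obstacle is not the algebra of commuting $\grad_{k,y}^{!}$ and $\mathscr{D}_{\tau}$ through the Duhamel identity, but rather the short-time singularity at $R = S$ compounded with the fact that the perturbative kernel $\mathbf{D}_{\partial}^{N}\mathscr{T}_{S,R}^{N,0}$ is not integrable in $R$ once three derivatives are placed on $\mathscr{T}^{N,0}$; the restriction to $\mathscr{I}_{N,\beta_{\partial}}$ is precisely what allows Gaussian tail bounds to tame this singularity after choosing $\e \lesssim \beta_{\partial}$, exactly as in Lemma \ref{lemma:RegXHKCpt}. Once that estimate is in hand, the passage from a spatially-averaged bound to the displayed bound follows by routine accounting, and the three summands in the statement track, respectively, the $\Upsilon$-piece, the short-time singular sub-window of $\Xi$, and the residual contribution from the boundary-exclusion of $\mathscr{I}_{N,\beta_{\partial}}$.
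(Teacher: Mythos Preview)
Your approach differs substantially from the paper's, and it contains a genuine gap.

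The paper does not re-enter the Duhamel expansion of Lemma~\ref{lemma:ItoP1Cpt} at all. Instead it writes, via Chapman--Kolmogorov,
\[
\grad_{k,y}^{!}\mathscr{D}_{\tau}\mathscr{U}_{0,T,x,y}^{N,0} \;=\; \sum_{w}\bigl(\mathscr{D}_{\tau}\mathscr{U}_{0,T/2,x,w}^{N,0}\bigr)\bigl(\grad_{k,y}^{!}\mathscr{U}_{T/2,T,w,y}^{N,0}\bigr),
\]
splits the $w$-sum over $\mathscr{I}_{N,\beta_{\partial}}$ and its complement, and simply multiplies the spatially-averaged bound \eqref{eq:GTGrad1Av} from the proof of Lemma~\ref{lemma:RegTHKCpt} against the gradient bound of Lemma~\ref{lemma:RegXHKCpt}. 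The three terms in the statement are exactly this product. Nothing further is computed.

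Your direct Duhamel route fails at the $\Upsilon$ step. The claimed bound
\[
\sum_{y\in\mathscr{I}_{N,\beta_{\partial}}}\bigl|\grad_{k,y}^{!}\mathbf{D}_{\partial}^{N}\mathscr{D}_{\tau}\mathscr{T}_{S,R,w,y}^{N,0}\bigr| \;\lesssim\; N^{2\e}\varrho_{S,R}^{-3/2+\e}\tau^{1-\e}
\]
is not what Lemma~\ref{lemma:BRegT} and Lemma~\ref{lemma:ThirdOrder} give: $\grad_{k,y}^{!}\mathbf{D}_{\partial}^{N}$ is already an effectively third-order operator on $\mathscr{T}^{N,0}$ near the boundary, and the additional $\mathscr{D}_{\tau}$ costs one more factor of $\varrho_{S,R}^{-1}$, so the correct exponent is $-\tfrac{5}{2}+\e$, not $-\tfrac{3}{2}+\e$. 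You also invoke Lemma~\ref{lemma:UsualSuspectInt}, which requires both exponents below $1$; here $c_{2}>1$, so only Lemma~\ref{lemma:UsualSuspectIntCutoff} is available, and the resulting integral produces a term of order $N^{-1}\varrho_{S,T}^{-1/2}\tau^{-1/2}$, not the target. Even if you instead brutally drop $\mathscr{D}_{\tau}$ on $\Upsilon$ and combine with $\Xi$ exactly as in Lemma~\ref{lemma:RegXHKCpt}, the Duhamel correction only yields $N^{-\e/2}\varrho_{S,T}^{-1/2}$, which for $\tau\sim N^{-2}$ and $\varrho_{S,T}\asymp 1$ strictly dominates every term on the right-hand side of the lemma. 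The Chapman--Kolmogorov factorization is what circumvents this: by placing $\mathscr{D}_{\tau}$ and $\grad_{k,y}^{!}$ on separate copies of $\mathscr{U}^{N,0}$, one never has to control a mixed fourth-order object on $\mathscr{T}^{N,0}$.
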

%%%
%%%
\begin{proof}
We assume $S = 0$ due to time-homogeneity of the heat kernel. Via the Chapman-Kolmogorov equation, we have
\begin{align}
\grad_{k,y}^{!} \mathscr{D}_{\tau} \mathbf{P}_{0,T,x,y}^{N,0}\ &= \ {\sum}_{w \in \mathbb{I}_{N,\beta_{\partial}}} \mathscr{D}_{\tau} \mathbf{P}_{0,\frac12T,x,w}^{N,0} \cdot \grad_{k,y}^{!} \mathbf{P}_{\frac12T,T,w,y}^{N,0} \ + \ {\sum}_{w \not\in \mathbb{I}_{N,\beta_{\partial}}} \mathscr{D}_{\tau} \mathbf{P}_{0,\frac12T,x,w}^{N,0} \cdot \grad_{k,y}^{!} \mathbf{P}_{\frac12T,T,w,y}^{N,0}.
\end{align}
In particular, consequence of the gradient estimate from Lemma \ref{lemma:RegXHKCpt} and the off-diagonal time-regularity estimate of Lemma \ref{lemma:RegTHKCpt}, this completes the proof.
\end{proof}
%%%
%%%
\subsection{Comparison with $\bar{\mathbf{P}}^{N,0}$}
%%%
{The previous regularity estimates are \emph{sub-optimal} with respect to macroscopic scales, either in $N$-dependent scaling or in their spatially-averaged nature. Indeed, the previous subsection for heat kernel regularity will be employed exclusively with respect to the mesoscopic scale to perform the local dynamical strategy at the heart of the proof of Theorem \ref{theorem:KPZ}. Meanwhile, this subsection will be exclusively dedicated towards establishing the required space-time regularity of the heat kernel $\bar{\mathbf{P}}^{N,0}$ at the macroscopic scale which is key to the proof of Theorem \ref{theorem:KPZ}, namely for establishing tightness. Towards this goal, the strategy we employ is a comparison between $\mathbf{P}^{N,0}$ and its nearest-neighbor specialization $\bar{\mathbf{P}}^{N,0}$. In particular, if we could actually replace the relevant heat kernel $\mathbf{P}^{N,0}$ in the equation for $\mathbf{Z}^{N}$ by its nearest-neighbor specialization, we may inherit regularity and tightness from \cite{P}, for example. Indeed, this is the strategy we use in the proof of Theorem \ref{theorem:KPZ}.}
%%%
\begin{remark}\fsp\label{remark:ComparisonVsDNM}
{Although we already discussed proving tightness of the microscopic Cole-Hopf transform $\mathbf{Z}^{N}$ via De Giorgi-Nash-Moser regularity for $\mathbf{P}^{N,0}$, the strategy above makes identifying subsequential limits significantly simpler.}
\end{remark}
%%%
In view of the previous discussion, it will serve convenient to adopt the following notation.
%%%
\begin{notation}\fsp 
Provided any $S,T \in \R_{\geq 0}$ satisfying $S \leq T$ along with any pair $x,y \in \mathbb{I}_{N,0}$, we define
\begin{align}
\mathbf{G}_{S,T,x,y}^{N,0} \ &\overset{\bullet}= \ \mathbf{P}_{S,T,x,y}^{N,0} \ - \ \bar{\mathbf{P}}_{S,T,x,y}^{N,0}.
\end{align}
We additionally define the following two quantities so in particular we have $\mathbf{G}^{N,0} = \mathbf{G}^{N,0,1} + \mathbf{G}^{N,0,2}$:
\begin{align}
\mathbf{G}_{S,T,x,y}^{N,0,1} \ &\overset{\bullet}= \ \mathbf{P}_{S,T,x,y}^{N,0} \ - \ \mathbf{T}_{S,T,x,y}^{N,0} \and \mathbf{G}_{S,T,x,y}^{N,0,2} \ \overset{\bullet}= \ \mathbf{T}_{S,T,x,y}^{N,0} \ - \ \bar{\mathbf{P}}_{S,T,x,y}^{N,0}.
\end{align}
\end{notation}
%%%
The main goal of this subsection is to establish the following result.
%%%
\begin{prop}\fsp \label{prop:MacroHKCpt}
{For $S,T \in \R_{\geq 0}$ satisfying $S \leq T$ and any $x,y \in \mathbb{I}_{N,0}$, we have the following where $\e>0$ is universal:}
\begin{align}
\mathbf{1}_{y\in\mathbb{I}_{N,\beta_{\partial}}} | \mathbf{G}_{S,T,x,y}^{N,0} | \ &\lesssim \ N^{-1-\e} \rho_{S,T}^{-1} \ + \ N^{-2} \rho_{S,T}^{-1} \ + \ \exp(-\log^{100}N).
\end{align}
Moreover, provided that $\beta_{\partial} \in \R_{>0}$ is arbitrarily small although universal, we further establish the following spatially-averaged estimates uniformly in uniformly bounded indices $k \in \Z$:
\begin{subequations}
\begin{align}
{\sum}_{y\in\mathbb{I}_{N,0}}|\mathbf{G}_{S,T,x,y}^{N,0}| \ &\lesssim \ N^{-\e}\rho_{S,T}^{-1/2} \ + \ N^{-1+\beta_{\partial}} \rho_{S,T}^{-1/2} \ + \ N^{-1} \rho_{S,T}^{-1/2} \ + \ \exp(-\log^{100} N); \\
{\sum}_{y\in\mathbb{I}_{N,\beta_{\partial}}}|\grad_{k,y}^{!}\mathbf{G}_{S,T,x,y}^{N,0}| \ &\lesssim \ N^{-\e}\rho_{S,T}^{-1/2} \ + \ N^{-1}\rho_{S,T}^{-1}.
\end{align}
\end{subequations}
\end{prop}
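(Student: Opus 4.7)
\medskip

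The plan is to exploit the additive decomposition $\mathscr{D}^{N,0} = \mathscr{D}^{N,0,1} + \mathscr{D}^{N,0,2}$ already set up in the notation above, and to handle each piece with one of the two perturbative Duhamel formulas developed in Section \ref{section:HKEI}. Each of the three estimates (pointwise, spatially-averaged, and spatially-averaged gradient) will be proved by combining the corresponding estimate for $\mathscr{D}^{N,0,1}$ and $\mathscr{D}^{N,0,2}$ via the triangle inequality.

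For $\mathscr{D}^{N,0,1} = \mathscr{U}^{N,0} - \mathscr{T}^{N,0}$, I would invoke Lemma \ref{lemma:ItoP1Cpt} verbatim, which identifies this difference with a time-integral of $\mathscr{U}^{N,0}_{R,T,x,w}\cdot\mathbf{D}_{\partial}^{N}\mathscr{T}^{N,0}_{S,R,w,y}$ supported on the microscopic boundary strips $\llbracket 0,m_{N}-1\rrbracket$ and $\llbracket N-m_{N}+1,N\rrbracket$. This is the exact structure analyzed in the proofs of Lemma \ref{lemma:RegXHKCpt} and Lemma \ref{lemma:RegTHKCpt}: split the integral at $R = S+N^{-2+\e}$; for $R$ near $S$, the macroscopic distance between $w$ (near boundary) and $y \in \mathscr{I}_{N,\beta_{\partial}}$ exceeds $N^{\beta_{\partial}}$, so Gaussian tails of $\mathscr{G}^{N,0}$ (via the appendix's higher-order estimates summarized in Lemma \ref{lemma:ThirdOrder} and Lemma \ref{lemma:BRegT}) produce the $\mathrm{e}^{-\log^{100} N}$ term; for $R$ bounded away from $S$, the integrand is a third-order discrete derivative of $\mathscr{T}^{N,0}$, whose bulk regularity bound combines with the on-diagonal heat kernel estimate of Lemma \ref{lemma:IOnD} to produce the $N^{-1-\e}\varrho_{S,T}^{-1}$ term. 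The pointwise gradient and spatially-averaged gradient versions follow by commuting $\grad_{k,y}^{!}$ past the $\mathbf{D}_{\partial}^{N}$ inside the time-integral and re-running the same argument.

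For $\mathscr{D}^{N,0,2} = \mathscr{T}^{N,0} - \bar{\mathscr{U}}^{N,0}$, I would exploit the fact that both kernels arise from method-of-images on the same image lattice $\{i_{y,k}\}_{k\in\Z}$; thus the problem reduces, termwise in the image sum, to controlling the full-line difference $\mathscr{G}^{N,0} - \bar{\mathscr{G}}^{N,0}$, which satisfies a classical Duhamel identity on $\Z$:
\begin{align}
\mathscr{G}^{N,0}_{S,T,x,y} - \bar{\mathscr{G}}^{N,0}_{S,T,x,y} \ = \ \int_{S}^{T} \sum_{z \in \Z} \mathscr{G}^{N,0}_{R,T,x,z} \cdot \left(\tfrac12 \sum_{k=1}^{m_{N}} \wt{\alpha}_{k}^{N} \Delta_{k}^{!!} - \tfrac12\sum_{k=1}^{m_{N}} \wt{\alpha}_{k}^{N}|k|^{2}\Delta_{1}^{!!}\right) \bar{\mathscr{G}}^{N,0}_{S,R,z,y} \ \d R. \nonumber
\end{align}
Taylor-expanding $\Delta_{k}^{!!}$ about $k^{2}\Delta_{1}^{!!}$ on the macroscopic scale shows the bracketed operator is effectively a fourth-order discrete derivative gaining a factor $N^{-2}$ relative to the macroscopic Laplacian; hence acting on $\bar{\mathscr{G}}^{N,0}$ it yields a term of size $N^{-3}\varrho_{S,R}^{-5/2}$ (after invoking the higher-order gradient bounds from Proposition A.1 of \cite{DT}, whose fourth-order extension is assumed in Lemma \ref{lemma:BRegT}). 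Convolving with $\mathscr{G}^{N,0}_{R,T,x,z}$ and integrating via Lemma \ref{lemma:UsualSuspectInt} delivers the $N^{-2}\varrho_{S,T}^{-1}$ contribution. Summing over the images $i_{y,k}$ is controlled as in the proof of Lemma \ref{lemma:NashPrelimCpt}, and the gradient and spatially-averaged versions are obtained by the same scheme upon commuting the relevant operator past the convolution.

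The main obstacle, which motivates the cutoff $y\in\mathscr{I}_{N,\beta_{\partial}}$ in the pointwise and gradient estimates, is the concentration of images $\{i_{y,k}\}$ near the boundary when $y$ itself is within $N^{\beta_{\partial}}$ of $0$ or $N$: there the termwise Duhamel argument for $\mathscr{D}^{N,0,2}$ still succeeds, but the boundary correction $\mathscr{D}^{N,0,1}$ no longer enjoys the exponential-tail estimate since the perturbation and $y$ sit at comparable distance. Removing this cutoff at the level of spatial averages is easier because the bad sites form a set of cardinality $N^{\beta_{\partial}}$, which contributes the harmless $N^{-1+\beta_{\partial}}\varrho_{S,T}^{-1/2}$ term; this is how the unconstrained $\ell^{1}$-estimate is recovered.
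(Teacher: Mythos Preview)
Your approach is essentially the paper's: the same decomposition $\mathscr{D}^{N,0}=\mathscr{D}^{N,0,1}+\mathscr{D}^{N,0,2}$, with the $\mathscr{D}^{N,0,1}$ piece handled exactly as in Lemma~\ref{lemma:MacroHKCpt1} via Lemma~\ref{lemma:ItoP1Cpt} (and Lemma~\ref{lemma:RegXHKCpt} for the gradient), and the $\mathscr{D}^{N,0,2}$ piece reduced by the method-of-images to the full-line difference $\mathscr{G}^{N,0}-\bar{\mathscr{G}}^{N,0}$ and a Duhamel identity on~$\Z$, as in Lemma~\ref{lemma:MacroHKCpt2} and Corollary~\ref{corollary:MacroHKCpt3}. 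Your observation that the symbol mismatch is actually \emph{fourth}-order in~$\theta$ is sharper than the paper's stated $\theta^{3}$ bound in Lemma~\ref{lemma:MacroAuxHKTaylor}, though the final conclusion is the same.

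There is one genuine gap in your treatment of $\mathscr{D}^{N,0,2}$. You write that ``convolving with $\mathscr{G}^{N,0}_{R,T,x,z}$ and integrating via Lemma~\ref{lemma:UsualSuspectInt} delivers the $N^{-2}\varrho_{S,T}^{-1}$ contribution,'' but Lemma~\ref{lemma:UsualSuspectInt} requires both time-exponents to be strictly less than~$1$, whereas your integrand carries $\varrho_{S,R}^{-5/2}$, which is non-integrable at $R=S$. The paper resolves this (see the proof of Lemma~\ref{lemma:MacroHKCpt2}) by splitting the integral at the midpoint $\tfrac{T+S}{2}$ and, on the half containing the singularity, moving the self-adjoint operator $\bar{\mathbf{D}}_{\Z}^{N}$ onto the other kernel via summation-by-parts on~$\Z$; this transfers the blow-up from $\varrho_{S,R}$ to $\varrho_{R,T}$, which is bounded away from zero on that half. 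Without this step your time-integral does not converge, so you should insert it explicitly.
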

%%%
As suggested with the preceding notation, we first estimate $\mathbf{G}^{N,0,1}$ and then afterwards estimate $\mathbf{G}^{N,0,2}$. Beginning with the former, we present the next estimate for which the basic underlying mechanism is the perturbative Lemma \ref{lemma:ItoP1Cpt}; again, similar to Lemma \ref{lemma:RegXHKCpt}, we avoid some large microscopic neighborhood of the boundary of $\mathbb{I}_{N,0}$ for an identical reason.
%%%
\begin{lemma}\fsp \label{lemma:MacroHKCpt1}
{Consider any $S,T\geq0$ satisfying $S \leq T$ along with any $x,y\in\mathbb{I}_{N,0}$. Moreover, consider any positive parameter $\beta_{\partial}$ arbitrarily small but universal. There exists a positive universal constant $\e\lesssim\beta_{\partial}$ such that for any $y\in\mathbb{I}_{N,\beta_{\partial}}$, we have}
\begin{align}
| \mathbf{G}_{S,T,x,y}^{N,0,1} | \ &\lesssim_{m_{N},\e,\beta_{\partial}} \ \left(N^{-1-\e} \rho_{S,T}^{-1} + \exp(-\log^{100} N)\right) \wedge 1.
\end{align}
Moreover, we also have {the following summed estimate with respect to the forwards spatial variable over $\mathbb{I}_{N,0}$:}
\begin{align}
{\sum}_{y \in \mathbb{I}_{N,0}} |\mathbf{G}_{S,T,x,y}^{N,0,1}| \ &\lesssim_{m_{N},\e,\beta_{\partial}} \ \left(N^{-\e} \rho_{S,T}^{-1/2} \ + \ \exp(-\log^{100}N)\right)\wedge1 \ + \ N^{-1+\beta_{\partial}}\rho_{S,T}^{-1/2}.
\end{align}
\end{lemma}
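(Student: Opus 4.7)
The plan is to apply the Duhamel identity of Lemma \ref{lemma:ItoP1Cpt}, which identifies $\mathscr{D}^{N,0,1}_{S,T,x,y}$ exactly with the sum of the two boundary time-integrals appearing therein. By a reflection of coordinates the two integrals are treated identically, so I focus on the integral over $w \in \llbracket 0, m_{N}-1\rrbracket$; call it $\Xi^{N}_{S,T}$. Mirroring the strategy behind Lemma \ref{lemma:RegXHKCpt}, I would split the time domain at $S+N^{-2+\e'}$ for some universal $\e' \in (0,\beta_{\partial})$ to be chosen sufficiently small, writing $\Xi^{N}_{S,T} = \Xi^{N,1}_{S,T} + \Xi^{N,2}_{S,T}$.

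For the short-time piece $\Xi^{N,1}_{S,T}$ on $[S, S+N^{-2+\e'}]$, I would exploit the macroscopic separation $|y-w| \gtrsim N^{\beta_{\partial}}$ between a boundary point $w$ and $y \in \mathscr{I}_{N,\beta_{\partial}}$. Because $N\varrho_{S,R}^{1/2} \leq N^{\e'/2} \ll N^{\beta_{\partial}}$ throughout this window, the kernel $\mathscr{T}^{N,0}_{S,R,w,y}$ lies in its off-diagonal exponential regime; via the method-of-images representation of $\mathscr{T}^{N,0}$ and the higher-order gradient bounds of Lemma \ref{lemma:BRegT}, the quantity $\mathbf{D}_{\partial}^{N} \mathscr{T}^{N,0}_{S,R,w,y}$ inherits super-polynomial decay in $N$, and bounding $\mathscr{U}^{N,0}$ trivially by $1$ then delivers the claimed $\mathrm{e}^{-\log^{100}N}$. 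For the complementary piece $\Xi^{N,2}_{S,T}$, I would combine the Nash bound $\mathscr{U}^{N,0}_{R,T,x,w} \lesssim N^{-1}\varrho_{R,T}^{-1/2}$ from Lemma \ref{lemma:IOnD} with the observation that $\mathbf{D}_{\partial}^{N}$ is essentially $N$ times a first-order discrete gradient acting on the backward variable, so that $|\mathbf{D}_{\partial}^{N} \mathscr{T}^{N,0}_{S,R,w,y}| \lesssim N^{-1}\varrho_{S,R}^{-1}$ pointwise via Lemma \ref{lemma:BRegT}. Splitting the resulting time-integral at the midpoint $(S+T)/2$ and controlling the $(R-S)^{-1}$ singularity by its $N^{-2+\e'}$ cutoff yields a bound of order $N^{-2}\log N \cdot \varrho_{S,T}^{-1/2}$, which is absorbed into $N^{-1-\e}\varrho_{S,T}^{-1}$ for any $\e \lesssim \e'$ in the regime $\varrho_{S,T} \gtrsim N^{-2+\e'}$; the remaining regime $\varrho_{S,T} \lesssim N^{-2+\e'}$ is handled by the wedge with $1$ in the statement.

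The spatially-averaged estimate follows by the same dichotomy, but with the pointwise bound on $\mathbf{D}_{\partial}^{N} \mathscr{T}^{N,0}$ replaced by the averaged-in-$y$ gradient regularity from Lemma \ref{lemma:BRegT}; this produces the $N^{-\e}\varrho_{S,T}^{-1/2}$ term on $\mathscr{I}_{N,\beta_{\partial}}$. The $\lesssim N^{\beta_{\partial}}$ sites outside $\mathscr{I}_{N,\beta_{\partial}}$ are treated brutally by the trivial bound $|\mathscr{D}^{N,0,1}_{S,T,x,y}| \leq \mathscr{U}^{N,0}_{S,T,x,y} + \mathscr{T}^{N,0}_{S,T,x,y} \lesssim N^{-1}\varrho_{S,T}^{-1/2}$ from Lemma \ref{lemma:IOnD}, contributing precisely $N^{-1+\beta_{\partial}}\varrho_{S,T}^{-1/2}$.

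The principal obstacle is the short-time piece: one must combine the method-of-images description of $\mathscr{T}^{N,0}$, the boundary-localized form of $\mathbf{D}_{\partial}^{N}$, and the off-diagonal Gaussian and exponential tails of $\mathscr{G}^{N,0}$ from \cite{DT} uniformly over the reflection index in order to extract super-polynomial decay without combinatorial loss in the image index. The remaining steps amount to singular-integral bookkeeping already exercised in the proof of Lemma \ref{lemma:RegXHKCpt}.
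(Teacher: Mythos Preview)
Your approach mirrors the paper's exactly: the Duhamel identity of Lemma~\ref{lemma:ItoP1Cpt}, the short-time/long-time split at $S+N^{-2+\e'}$, exponential off-diagonal decay of $\mathscr{T}^{N,0}$ for the short-time piece, the Nash bound on $\mathscr{U}^{N,0}$ paired with Lemma~\ref{lemma:BRegT} for the long-time piece, and the brutal treatment of the $\lesssim N^{\beta_\partial}$ boundary sites for the averaged estimate. One numerical slip to flag: your pointwise bound $|\mathbf{D}_\partial^N\mathscr{T}^{N,0}_{S,R,w,y}|\lesssim N^{-1}\varrho_{S,R}^{-1}$ is not what Lemma~\ref{lemma:BRegT} actually delivers --- since $\mathbf{D}_\partial^N$ carries a full factor of $N^{2}$ and the image-pairing in Lemma~\ref{lemma:BRegT} converts the resulting first-order difference of $\mathscr{T}^{N,0}$ at the boundary into a \emph{second}-order gradient of $\mathscr{G}^{N,0}$, the correct pointwise bound is $N^{-1}\varrho_{S,R}^{-3/2}$ (this is precisely the integrand $N^{-2}\varrho_{R,T}^{-1/2}\varrho_{S,R}^{-3/2}$ the paper writes); your midpoint-split integration then produces $N^{-1-\e'/2}\varrho_{S,T}^{-1/2}$ via Lemma~\ref{lemma:UsualSuspectIntCutoff} rather than your claimed $N^{-2}\log N\cdot\varrho_{S,T}^{-1/2}$, but this is still absorbed into the stated bound after the wedge with~$1$, so the argument survives unchanged in structure.
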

%%%
\begin{proof}
{Observe we may assume the a priori lower bound $\rho_{S,T} \gtrsim N^{-2+\e}$ with arbitrarily large though universal implied constant; indeed, in this case, the RHS in the stated bound is simply 1. We now appeal to Lemma \ref{lemma:ItoP1Cpt} combined with the regularity estimates in Lemma \ref{lemma:BRegT} and the off-diagonal estimate in Lemma \ref{lemma:ThirdOrder} to establish the following estimate for $0<\e \lesssim \beta_{\partial}$:}
\begin{align}
\mathbf{1}_{y\in\mathbb{I}_{N,\beta_{\partial}}}| \mathbf{G}_{S,T,x,y}^{N,0,1} | \ &\leq \ \int_{S}^{T} {\sum}_{w = 0}^{m_{N}-1} \mathbf{P}_{R,T,x,w}^{N,0} \cdot | \mathbf{D}_{\partial}^{N} \mathbf{T}_{S,R,w,y}^{N,0} | \ \d R \\
&\lesssim_{m_{N},\e,\beta_{\partial}} \ N^{-2} \int_{S+N^{-2+\e}}^{T} \rho_{R,T}^{-1/2} \rho_{S,R}^{-3/2} \ \d R \ + \ \exp(-\log^{100}N) \\
&\lesssim \ N^{-1-\e} \rho_{R,T}^{-1} \ + \ \exp(-\log^{100} N).
\end{align}
Indeed, above we have crucially {applied} the assumption $y \in \mathbb{I}_{N,\beta_{\partial}}$ like for the proofs of Lemma \ref{lemma:RegXHKCpt} and Lemma \ref{lemma:RegTHKCpt} to obtain the second inequality, and this final inequality is consequence of Lemma \ref{lemma:UsualSuspectIntCutoff}. It remains to observe {$|\mathbf{G}^{N,0}| \lesssim 1$} via Nash-type heat kernel estimates in Lemma \ref{lemma:IOnD} and Lemma \ref{lemma:NashPrelimCpt}, the latter of which applies equally well to the kernel $\mathbf{T}^{N,0}$ {given} the heat kernel estimates in Lemma \ref{lemma:ThirdOrder}. This provides the pointwise estimate.

{For the spatially-averaged estimate, we employ heat kernel estimates of Lemmas \ref{lemma:IOnD} and \ref{lemma:NashPrelimCpt} once again as follows:}
\begin{align}
{\sum}_{y \in \mathbb{I}_{N,0}} |\mathbf{G}_{S,T,x,y}^{N,0,1}| \ &\leq \ {\sum}_{y \in \mathbb{I}_{N,\beta_{\partial}}} |\mathbf{G}_{S,T,x,y}^{N,0,1}| \ + \ {\sum}_{y\not\in\mathbb{I}_{N,\beta_{\partial}}}|\mathbf{G}_{S,T,x,y}^{N,0,1}| \ \lesssim \ {\sum}_{y \in \mathbb{I}_{N,\beta_{\partial}}} |\mathbf{G}_{S,T,x,y}^{N,0,1}| \ + \ N^{-1 + \beta_{\partial}} \rho_{S,T}^{-1/2}.
\end{align}
To estimate the remaining summation, we proceed exactly as with the pointwise estimate; this completes the proof.
\end{proof}
%%%
We now estimate $\mathbf{G}^{N,0,2}$, which amounts to a heat kernel estimate corresponding to the functional CLT; to this end, it will be convenient to introduce the following operator.
%%%
\begin{notation}\fsp 
We define the operator
\begin{align}
\bar{\mathbf{D}}_{\Z}^{N} \ \overset{\bullet}= \ 2^{-1} {\sum}_{k = 1}^{m_{N}} \wt{\alpha}_{k}^{N} \Delta_{k}^{!!} - \left( {\sum}_{k = 1}^{m_{N}} k^{2} \wt{\alpha}_{k}^{N} \right) \Delta_{1}^{!!}.
\end{align}
Moreover, provided $S,T \in \R_{\geq 0}$ satisfying $S \leq T$ along with any $x,y \in \Z$, we define
\begin{align}
\bar{\mathbf{G}}_{S,T,x,y}^{N,0,2} \ \overset{\bullet}= \ \mathbf{K}_{S,T,x,y}^{N,0} \ - \ \bar{\mathbf{K}}_{S,T,x,y}^{N,0}.
\end{align}
\end{notation}
%%%
%%%
\begin{lemma}\fsp \label{lemma:MacroHKCpt2}
Consider times $S,T \in \R_{\geq 0}$ satisfying $S \leq T$ along with any spatial coordinates $x,y \in \Z$. Given any $\kappa \in \R_{\geq 0}$ and any uniformly bounded $k \in \Z$, we have
\begin{align}
| \bar{\mathbf{G}}_{S,T,x,y}^{N,0,2} | \ &\lesssim_{\kappa} \ N^{-2} \rho_{S,T}^{-1} \mathscr{E}_{S,T,x,y}^{N,-\kappa} \and |\grad_{k,y}^{!} \bar{\mathbf{G}}_{S,T,x,y}^{N,0,2} | \ \lesssim_{\kappa} \ N^{-2} \rho_{S,T}^{-3/2} \mathscr{E}_{S,T,x,y}^{N,-\kappa}.
\end{align}
Moreover, we have the spatially-averaged estimates
\begin{align}
{\sum}_{y \in \Z} | \bar{\mathbf{G}}_{S,T,x,y}^{N,0,2} | \mathscr{E}_{S,T,x,y}^{N,\kappa} \ &\lesssim_{\kappa} \ N^{-1} \rho_{S,T}^{-1/2} \and {\sum}_{y \in \Z} | \grad_{k,y}^{!} \bar{\mathbf{G}}_{S,T,x,y}^{N,0,2} | \mathscr{E}_{S,T,x,y}^{N,\kappa} \ \lesssim_{\kappa} \ N^{-1} \rho_{S,T}^{-1}.
\end{align}
\end{lemma}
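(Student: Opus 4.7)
The plan is to establish Lemma \ref{lemma:MacroHKCpt2} by a Duhamel interpolation between the two $\Z$-valued heat kernels $\mathscr{G}^{N,0}$ and $\bar{\mathscr{G}}^{N,0}$, which share the initial condition $\mathbf{1}_{x=y}$ at time $S$ but are propagated by generators differing exactly by $\bar{\mathbf{D}}_{\Z}^{N}$ (up to the factor-of-one-half convention implicit in the definition of $\bar{\mathscr{G}}^{N,0}$). Setting
\[
\Phi_{R,x,y} \ \overset{\bullet}{=} \ \sum_{w \in \Z} \bar{\mathscr{G}}_{R,T,x,w}^{N,0}\, \mathscr{G}_{S,R,w,y}^{N,0}
\]
and applying the fundamental theorem of calculus in $R \in [S,T]$, in direct analogy to the proof of Lemma \ref{lemma:ItoP1Cpt} but now on the full lattice with no boundary contributions, produces the identity
\[
\bar{\mathscr{D}}_{S,T,x,y}^{N,0,2} \ = \ \int_{S}^{T} \sum_{w \in \Z} \bar{\mathscr{G}}_{R,T,x,w}^{N,0} \cdot \bar{\mathbf{D}}_{\Z,w}^{N} \mathscr{G}_{S,R,w,y}^{N,0} \ \d R,
\]
with the perturbative operator acting on the intermediate variable $w$.

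The essential structural input is the observation that $\bar{\mathbf{D}}_{\Z}^{N}$ is effectively a \emph{fourth}-order discrete differential operator, because its leading second-order Taylor contribution cancels by construction. Concretely, I would verify by a direct telescoping calculation --- checking $k=2$ by hand and extending by induction in $k$ --- that for each $k \in \llbracket 1, m_{N} \rrbracket$, the quantity $\Delta_{k}^{!!}\varphi - k^{2} \Delta_{1}^{!!}\varphi$ equals a finite linear combination, with uniformly bounded coefficients, of $(\Delta_{1}^{!!})^{2}\varphi$ evaluated at shifts of size $\O(m_{N})$. This expresses $\bar{\mathbf{D}}_{\Z,w}^{N} \mathscr{G}_{S,R,w,y}^{N,0}$ as a bounded combination of $(\Delta_{1}^{!!})^{2}$ applied to $\mathscr{G}^{N,0}$ near $w$, after which two successive summations by parts in $w$ --- permissible on $\Z$ given the Gaussian-type decay of both kernels --- redistribute the four differences symmetrically, yielding the integrand as a bounded combination of products $\Delta_{1}^{!!}\bar{\mathscr{G}}_{R,T,x,w}^{N,0} \cdot \Delta_{1}^{!!}\mathscr{G}_{S,R,w,y}^{N,0}$.

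I would then bound this integrand at each fixed $R$ by combining the sub-exponentially decaying higher-order gradient estimates for $\mathscr{G}^{N,0}$ and $\bar{\mathscr{G}}^{N,0}$ --- namely, the natural extension of Proposition A.1 of \cite{DT} --- via an $\ell^{\infty} \times \ell^{1}$ Hölder inequality, propagating the weight $\mathscr{E}^{N,-\kappa}$ through the $w$-convolution (after inflating $\kappa$ by a constant) in the standard way. The main obstacle I anticipate is that the resulting time integrand exhibits non-integrable endpoint singularities of the form $\varrho_{R,T}^{-3/2} \varrho_{S,R}^{-1}$; I would circumvent this by splitting $[S,T] = [S,M] \cup [M,T]$ at the midpoint $M = (S+T)/2$ and, within a microscopic neighborhood of each degenerate endpoint, reverting to the cruder pointwise bound $|\bar{\mathbf{D}}_{\Z,w}^{N}\mathscr{G}_{S,R,w,y}^{N,0}| \lesssim \|\mathscr{G}_{S,R,\cdot,y}^{N,0}\|_{\ell^{\infty}}$ combined with $\|\bar{\mathscr{G}}_{R,T,x,\cdot}^{N,0}\|_{\ell^{\infty}}$-type control, the product of which integrates near the endpoint without obstruction. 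Optimizing over the size of this microscopic neighborhood delivers the claimed pointwise bound $N^{-2}\varrho_{S,T}^{-1}\mathscr{E}_{S,T,x,y}^{N,-\kappa}$. The gradient version $\grad_{k,y}^{!}\bar{\mathscr{D}}^{N,0,2}$ is handled by commuting $\grad_{k,y}^{!}$ onto $\mathscr{G}_{S,R,w,y}^{N,0}$ and repeating the argument with one extra discrete derivative, contributing the additional factor $\varrho_{S,T}^{-1/2}$; the spatially averaged bounds follow by integrating the pointwise estimates against $\mathscr{E}^{N,\kappa}$, which is absorbed into a slightly weakened Gaussian factor and contributes the usual effective width factor $N\varrho_{S,T}^{1/2}$.
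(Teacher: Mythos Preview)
Your Duhamel setup and the recognition that $\bar{\mathbf{D}}_{\Z}^{N}$ is effectively fourth-order are exactly the paper's starting point; the midpoint split is also identical. The divergence comes in \emph{how} the derivatives are allocated. The paper does not redistribute symmetrically: since $\bar{\mathbf{D}}_{\Z}^{N}$ is self-adjoint on $\ell^{2}(\Z)$, on $[S,\tfrac{S+T}{2}]$ the entire operator is moved onto $\bar{\mathscr{G}}_{R,T}^{N,0}$, and on $[\tfrac{S+T}{2},T]$ it stays on $\mathscr{G}_{S,R}^{N,0}$. The pointwise bound $|\bar{\mathbf{D}}_{\Z}^{N}\mathscr{G}| \lesssim N^{-2}\varrho^{-2}$ is supplied by the spectral computation of Lemma~\ref{lemma:MacroAuxHKTaylor}; summing the undifferentiated kernel to~$1$ on each half then gives integrands $N^{-2}\varrho_{S,R}^{-2}$ and $N^{-2}\varrho_{R,T}^{-2}$, both integrable away from the \emph{opposite} endpoint, so no microscopic endpoint neighbourhood is ever needed.

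Your symmetric redistribution, by contrast, forces the integrand to behave like $N^{-3}\varrho_{R,T}^{-3/2}\varrho_{S,R}^{-1}$ (or its mirror), and the endpoint patch you propose does not close: the crude bound $|\bar{\mathbf{D}}_{\Z}^{N}\mathscr{G}| \lesssim \|\mathscr{G}\|_{\ell^{\infty}}$ is off by $N^{2}$, since $\bar{\mathbf{D}}_{\Z}^{N}$ carries $\Delta^{!!}$-scaling. With the correct factor $N^{2}$, optimizing the patch width does not recover $N^{-2}\varrho_{S,T}^{-1}$ for macroscopic $\varrho_{S,T}$. The fix is simply to abandon the symmetric split and integrate by parts either zero or four times on each half-interval; your real-space identity $\bar{\mathbf{D}}_{\Z}^{N} = N^{-2}\cdot(\text{bounded combination of shifted }(\Delta_{1}^{!!})^{2})$ then gives $|\bar{\mathbf{D}}_{\Z}^{N}\mathscr{G}| \lesssim N^{-3}\varrho^{-5/2}$ via Lemma~\ref{lemma:ThirdOrder}, which is actually sharper than the paper's Fourier bound for $\varrho \gtrsim N^{-2}$ and yields $N^{-3}\varrho_{S,T}^{-3/2} \leq N^{-2}\varrho_{S,T}^{-1}$ in that regime (with the trivial bound handling $\varrho_{S,T} \lesssim N^{-2}$). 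So your real-space telescoping is a perfectly good substitute for Lemma~\ref{lemma:MacroAuxHKTaylor}; it is only the decision to split two-and-two that creates the difficulty.
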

%%%
%%%
\begin{proof}
{We first observe the spatially-averaged estimates follow from the pointwise estimates combined with elementary calculations.} Moreover, we will assume $\kappa = 0$ purely for convenience; the subsequent analysis applies equally well {given} any {positive $\kappa$} upon inserting additional $\kappa$-dependent constants. By the Duhamel principle, we have
\begin{align}
\bar{\mathbf{G}}_{S,T,x,y}^{N,0,2} \ &= \ \int_{S}^{T} {\sum}_{w \in \Z} \bar{\mathbf{K}}_{R,T,x,w}^{N,0} \cdot \bar{\mathbf{D}}_{\Z}^{N} \mathbf{K}_{S,R,w,y}^{N,0} \ \d R.
\end{align}
We decompose the integral on the RHS into an integral on $[S,\frac{T+S}{2}]$ and on $[\frac{T+S}{2},T]$. On the latter, we apply Lemma \ref{lemma:MacroAuxHKTaylor}:
\begin{align}
\int_{\frac{T+S}{2}}^{T} {\sum}_{w \in \Z} \bar{\mathbf{K}}_{R,T,x,w}^{N,0} \cdot | \bar{\mathbf{D}}_{\Z}^{N} \mathbf{K}_{S,R,w,y}^{N,0} | \ \d R \ &\lesssim \ \int_{\frac{T+S}{2}}^{T} N^{-2} \rho_{S,R}^{-2} {\sum}_{w \in \Z} \bar{\mathbf{K}}_{R,T,x,w}^{N,0} \ \d R \ \lesssim \ N^{-2} \rho_{S,T}^{-1}.
\end{align}
On the {first} domain $[S,2^{-1}(T+S)]$, we cannot hope to {employ} Lemma \ref{lemma:MacroAuxHKTaylor} in the same fashion; indeed, this integral contains the non-integrable singularity of the resulting upper bound. We remedy this by exploiting the geometry $\Z$ and the resulting self-adjoint property of {all the} relevant second-order differential operators to move the differential operators onto the first heat kernel {$\bar{\mathbf{K}}^{N,0}$ by} summation-by-parts. We ultimately obtain the upper bound on this integral of
\begin{align}
\int_{\frac{T+S}{2}}^{T} {\sum}_{w \in \Z} | \bar{\mathbf{D}}_{\Z}^{N} \bar{\mathbf{K}}_{R,T,x,w}^{N,0} | \cdot \mathbf{K}_{S,R,w,y}^{N,0} \ \d R \ &\lesssim \ \int_{S}^{\frac{T+S}{2}} N^{-2} \rho_{R,T}^{-2} {\sum}_{w \in \Z} \mathbf{K}_{S,R,w,y}^{N,0} \ \d R \ \lesssim \ N^{-2} \rho_{S,T}^{-1};
\end{align}
to obtain the first inequality above, we have applied Lemma \ref{lemma:MacroAuxHKTaylor} for the nearest-neighbor heat kernel {$\bar{\mathbf{K}}^{N,0}$}. Thus, combining all estimates thus far, we deduce the following bound with universal implied constant:
\begin{align}
\sup_{x,y \in \Z} | \bar{\mathbf{G}}_{S,T,x,y}^{N,0,2} | \ &\lesssim \ N^{-2} \rho_{S,T}^{-1}.
\end{align}
The proof for the gradient estimate follows from almost identical considerations; this completes the proof.
\end{proof}
%%%
%%%
\begin{corollary}\fsp \label{corollary:MacroHKCpt3}
Retain the setting of \emph{Lemma \ref{lemma:MacroHKCpt2}}; we have { the following pointwise estimates:}
\begin{align}
| \mathbf{G}_{S,T,x,y}^{N,0,2} | \ &\lesssim \ N^{-2} \rho_{S,T}^{-1} \and | \grad_{k,y}^{!} \mathbf{G}_{S,T,x,y}^{N,0,2} | \ \lesssim \ N^{-2} \rho_{S,T}^{-3/2}.
\end{align}
Moreover, we have the spatially-averaged estimates
\begin{align}
{\sum}_{y \in \mathbb{I}_{N,0}} | \bar{\mathbf{G}}_{S,T,x,y}^{N,0,2} | \ &\lesssim \ N^{-1} \rho_{S,T}^{-1/2} \and {\sum}_{y \in \mathbb{I}_{N,0}} | \grad_{k,y}^{!} \bar{\mathbf{G}}_{S,T,x,y}^{N,0,2} | \ \lesssim \ N^{-1} \rho_{S,T}^{-1}.
\end{align}
\end{corollary}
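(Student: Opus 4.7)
The plan is to reduce Corollary \ref{corollary:MacroHKCpt3} to Lemma \ref{lemma:MacroHKCpt2} by exploiting the method-of-images structure shared by both $\mathscr{T}^{N,0}$ and $\bar{\mathscr{U}}^{N,0}$. By the defining formula, $\mathscr{T}_{S,T,x,y}^{N,0} = \sum_{k \in \Z} \mathscr{G}_{S,T,x,i_{y,k}}^{N,0}$; moreover, the identical method-of-images representation $\bar{\mathscr{U}}_{S,T,x,y}^{N,0} = \sum_{k \in \Z} \bar{\mathscr{G}}_{S,T,x,i_{y,k}}^{N,0}$ holds for the nearest-neighbor Neumann heat kernel, as recalled within the proof of Lemma \ref{lemma:NashPrelimCpt} via Section 3.2 of \cite{P}. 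The crucial observation is that the image positions $\{i_{y,k}\}_{k \in \Z}$ are \emph{identical} in the two formulas, as they depend only on the Neumann reflection geometry rather than on the underlying generator. Subtracting yields the pointwise identity
\begin{align*}
\mathscr{D}_{S,T,x,y}^{N,0,2} \ = \ \sum_{k \in \Z} \bar{\mathscr{D}}_{S,T,x,i_{y,k}}^{N,0,2}.
\end{align*}

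Next, I would apply the pointwise bound of Lemma \ref{lemma:MacroHKCpt2} termwise with an exponential weight $\mathscr{E}_{S,T,x,i_{y,k}}^{N,-\kappa}$ for some fixed sufficiently large universal $\kappa$, giving
\begin{align*}
\left| \mathscr{D}_{S,T,x,y}^{N,0,2} \right| \ \lesssim_{\kappa} \ N^{-2} \varrho_{S,T}^{-1} \sum_{k \in \Z} \exp\!\left[ -\kappa \, \frac{|x - i_{y,k}|}{N\varrho_{S,T}^{1/2} \vee 1} \right].
\end{align*}
The remaining image sum is controlled by the same elementary change-of-variables calculation used in the proof of Lemma \ref{lemma:NashPrelimCpt}: since $|x - i_{y,k}| \gtrsim |k| N$ for $|k| \geq 1$, the series becomes a geometric series with ratio $\exp[-\kappa'/\varrho_{S,T}^{1/2}]$ (where $\kappa' > 0$ is universal), and is therefore bounded by a constant depending only on $T_{f}$. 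The gradient estimate follows identically because the forward gradient $\grad_{k,y}^{!}$ commutes with the image summation up to a relabeling of the sum index.

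Finally, the spatially-averaged estimates in the corollary, which involve $\bar{\mathscr{D}}^{N,0,2}$ summed over $y \in \mathscr{I}_{N,0}$, follow immediately from the corresponding $\Z$-summed bounds in Lemma \ref{lemma:MacroHKCpt2} specialized to $\kappa = 0$, upon the trivial inclusion $\mathscr{I}_{N,0} \subseteq \Z$ and nonnegativity of the absolute values.

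The only real bookkeeping step is controlling the image series in the long-time regime $\varrho_{S,T} \asymp 1$, where the exponential decay rate becomes slow; this is the expected obstacle, but it reduces to the same short computation from Lemma \ref{lemma:NashPrelimCpt} and requires no additional analytic input beyond what is already developed in Section \ref{section:HKEI}.
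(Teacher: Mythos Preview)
Your proposal is correct and is precisely the natural argument implicit in the paper's treatment; the paper states Corollary \ref{corollary:MacroHKCpt3} without proof, calling it ``an immediate consequence of Lemma \ref{lemma:MacroHKCpt2}'', and your method-of-images reduction together with the geometric-series bound on the image sum (already carried out in the proof of Lemma \ref{lemma:NashPrelimCpt}) is exactly what is intended. The only minor point worth noting is that in the gradient estimate the images $i_{y,k}$ are affine in $y$ with slope $\pm 1$, so $\grad_{\ell,y}^{!}$ acting on the image sum produces $\grad_{\pm\ell}^{!}$ on each summand rather than a mere relabeling, but this is harmless since Lemma \ref{lemma:MacroHKCpt2} bounds gradients of arbitrary uniformly bounded length.
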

%%%
%%%
\begin{proof}[Proof of \emph{Proposition \ref{prop:MacroHKCpt}}]
This follows from combining Lemma \ref{lemma:RegXHKCpt}, Lemma \ref{lemma:MacroHKCpt1}, Lemma \ref{lemma:MacroHKCpt2}, and Corollary \ref{corollary:MacroHKCpt3}.
\end{proof}
%%%
%%%
\subsection{Estimates for $\mathbf{P}^{N}$}
%%%
We conclude with extensions to the heat kernels satisfying arbitrary Robin boundary conditions; throughout the entirety of this subsection, the Robin boundary parameters $\mathscr{A}_{\pm} \in \R$ will be implicit. We begin with mesoscopic regularity estimates for $\mathbf{P}^{N}$.
%%%
\begin{lemma}\fsp \label{lemma:RegRBPACptTotal}
Consider Robin boundary parameters $\mathscr{A}_{-},\mathscr{A}_{+} \in \R$. Moreover, consider $S,T \in \R_{\geq 0}$ satisfying $S \leq T \leq \mathfrak{t}^{\max}$ along with any $x \in \mathbb{I}_{N,0}$. Moreover, we choose an arbitrarily small although universal parameter $\beta_{\partial} \in \R_{>0}$:
%%%
\begin{itemize}
\item First, provided any $k \in \Z$ uniformly bounded, we have, for any $\e \in \R_{>0}$,
\begin{align}
\sup_{x\in\mathbb{I}_{N,0}} {\sum}_{y \in \mathbb{I}_{N,\beta_{\partial}}}| \grad_{k,y}^{!} \mathbf{P}_{S,T,x,y}^{N} | \ \lesssim_{m_{N},\wt{\alpha}_{1}^{N},\mathfrak{t}^{\max},\mathscr{A}_{\pm},k,\e} \ \rho_{S,T}^{-1/2+\e} \ + \ \exp(-\log^{100} N).
\end{align}
\item Second, provided any time-scale $\tau \in \R_{\geq 0}$ satisfying $\tau \leq 7\rho_{S,T}$ along with any $\e \in \R_{>0}$ arbitrarily small but universal,
\begin{align}
{\sum}_{y \in \mathbb{I}_{N,\beta_{\partial}}} | \mathscr{D}_{\tau} \mathbf{P}_{S,T,x,y}^{N} | \ &\lesssim_{m_{N},\wt{\alpha}_{1}^{N},\mathfrak{t}^{\max},\mathscr{A}_{\pm},\e} \ \tau^{1-\e} \rho_{S,T}^{-1+\e} \ + \ N^{-1+2\e} \rho_{S,T}^{-1/2} \ + \ \tau.
\end{align}
This can be upgraded to the following pointwise estimate for any $\kappa \in \R_{>0}$:
\begin{align}
| \mathscr{D}_{\tau} \mathbf{P}_{S,T,x,y}^{N} | \mathscr{E}_{S,T,x,y}^{N,\kappa} \ &\lesssim_{m_{N},\wt{\alpha}_{1}^{N},\mathfrak{t}^{\max},\mathscr{A}_{\pm},\e,\kappa} \ N^{-1} \rho_{S,T}^{-3/2+\e} \tau^{1 - \e} \ + \ N^{-2+2\e} \rho_{S,T}^{-1} \ + \ N^{-1} \rho_{S,T}^{-1/2} \tau.
\end{align}
\item Third, retaining the setting of the previous bullet point, we have
\begin{align}
\sup_{x\in\mathbb{I}_{N,0}}{\sum}_{y \in \mathbb{I}_{N,\beta_{\partial}}} | \grad_{k,y}^{!} \mathscr{D}_{\tau} \mathbf{P}_{S,T,x,y}^{N} | \ &\lesssim_{m_{N},\wt{\alpha}_{1}^{N},\mathfrak{t}^{\max},\mathscr{A}_{\pm},\e,k} \ \rho_{S,T}^{-3/2+\e} \tau^{1 - \e} \ + \ N^{-1+2\e} \rho_{S,T}^{-1}.
\end{align}
\end{itemize}
%%%
\end{lemma}
%%%
%%%
\begin{proof}
For full clarity, we decompose the proof of Lemma \ref{lemma:RegRBPACptTotal} into the following list of bullet points written in the order the estimates above are presented. Moreover, the quantities $\Xi^{N,0}$ and $\Upsilon^{N,0}$ will be updated in each new bullet point.
%%%
\begin{itemize}
\item We first prove the spatially-averaged spatial-regularity estimate. To this end, we employ Lemma \ref{lemma:ItoP2Cpt}; along with Lemma \ref{lemma:RegXHKCpt} interpolated with the uniform upper bound for $\mathbf{P}^{N}$ via the parabolic maximum principle, this provides
\begin{align}
{\sum}_{y\in\mathbb{I}_{N,\beta_{\partial}}} | \grad_{k,y}^{!} \mathbf{P}_{S,T,x,y}^{N}| \ &\lesssim_{\mathscr{A}_{\pm}} \ {\sum}_{y\in\mathbb{I}_{N,\beta_{\partial}}} | \grad_{k,y}^{!} \mathbf{P}_{S,T,x,y}^{N,0} | \ + \ {\sum}_{y\in\mathbb{I}_{N,\beta_{\partial}}} | \Xi_{S,T,x,y}^{N,0} | \ + \ {\sum}_{y\in\mathbb{I}_{N,\beta_{\partial}}} | \Upsilon_{S,T,x,y}^{N,0} | \\
&\lesssim_{k} \ N^{-1+2\e}\rho_{S,T}^{-1+\e} \ + \ \exp(-\log^{100} N) \ + \ | \Xi_{S,T,x,y}^{N,0} | \ + \ | \Upsilon_{S,T,x,y}^{N,0} |,
\end{align}
in which we have introduced the integrals
\begin{align}
\Xi_{S,T,x,y}^{N,0} \ &\overset{\bullet}= \ \int_{S}^{T} \mathbf{P}_{R,T,x,0}^{N,0} \cdot N | \grad_{k,y}^{!} \mathbf{P}_{S,R,0,y}^{N} | \ \d R \and \Upsilon_{S,T,x,y}^{N,0} \ \overset{\bullet}= \ \int_{S}^{T} \mathbf{P}_{R,T,x,N}^{N,0} \cdot N | \grad_{k,y}^{!} \mathbf{P}_{S,R,N,y}^{N} | \ \d R.
\end{align}
By Proposition \ref{prop:IOffDTotal}, we deduce the estimate
\begin{align}
{\sum}_{y\in\mathbb{I}_{N,\beta_{\partial}}} | \grad_{k,y}^{!} \mathbf{P}_{S,T,x,y}^{N}| \ &\lesssim_{\mathscr{A}_{\pm},k} \ N^{-1}\rho_{S,T}^{-1} \ + \ \int_{S}^{T} \rho_{S,T}^{-1/2} \sup_{w \in \mathbb{I}_{N,0}} {\sum}_{y\in\mathbb{I}_{N,\beta_{\partial}}} |\grad_{k,y}^{!}\mathbf{P}_{S,R,w,y}^{N}| \ \d R
\end{align}
from which, through the singular Gronwall inequality, we deduce the following estimate:
\begin{align}
\sup_{x\in\mathbb{I}_{N,0}}{\sum}_{y\in\mathbb{I}_{N,\beta_{\partial}}} | \grad_{k,y}^{!} \mathbf{P}_{S,T,x,y}^{N}| \ &\lesssim_{\mathscr{A}_{\pm},k,\mathfrak{t}^{\max},\e} \ \rho_{S,T}^{-1+\e}.
\end{align}
This completes the proof of the first bullet point and the desired estimate therein.
\item We now establish the spatially-averaged time-regularity estimate. We once again employ Lemma \ref{lemma:ItoP2Cpt}:
\begin{align}
{\sum}_{y \in \mathbb{I}_{N,\beta_{\partial}}} | \mathscr{D}_{\tau} \mathbf{P}_{S,T,x,y}^{N} | \ &\lesssim \ \rho_{S,T}^{-1+\e} \tau^{1-\e} \ + \ N^{-1+2\e} \rho_{S,T}^{-1/2} \ + \ N^{-\e} \tau \ + \ | \Xi_{S,T,x}^{N,0} | \ + \ | \Upsilon_{S,T,x}^{N,0} |,
\end{align}
where the final estimate is consequence of \eqref{eq:GTGrad1Av}, and we have introduced the integrals
\begin{align}
\Xi_{S,T,x}^{N,0} \ &\overset{\bullet}= \ \int_{S+\tau}^{T} \mathbf{P}_{R,T,x,0}^{N,0} \cdot N \sup_{w \in \mathbb{I}_{N,0}} {\sum}_{y\in\mathbb{I}_{N,\beta_{\partial}}} | \mathscr{D}_{\tau} \mathbf{P}_{S,R,w,y}^{N} | \ \d R; \\
\Upsilon_{S,T,x}^{N,0} \ &\overset{\bullet}= \ \int_{S}^{S+\tau} \mathbf{P}_{R,T,x,N}^{N,0} \cdot N \sup_{w\in\mathbb{I}_{N,0}}{\sum}_{y\in\mathbb{I}_{N,\beta_{\partial}}}  | \mathbf{P}_{S,R,w,y}^{N} | \ \d R.
\end{align}
Employing Proposition \ref{prop:IOffDTotal}, we deduce from the previous calculations and elementary integration the estimate
\begin{align}
{\sum}_{y \in \mathbb{I}_{N,\beta_{\partial}}} | \mathscr{D}_{\tau} \mathbf{P}_{S,T,x,y}^{N} | \ &\lesssim_{\mathscr{A}_{\pm},\e} \ \rho_{S,T}^{-1+\e} \tau^{1-\e} \ + \ N^{-1+2\e} \rho_{S,T}^{-1/2} \ + \ N^{-\e} \tau \ + \ \Xi_{S,T,x}^{N,0}.
\end{align}
The first estimate {in the second bullet point follows via the singular Gronwall inequality applied to this last estimate.}
\item {To prove the pointwise time-regularity estimate, we proceed exactly as in the proof of Lemma \ref{lemma:RegTHKCpt} via Chapman-Kolmogorov equation combined with the Nash-type heat kernel estimate in Lemma \ref{lemma:IOnDRBPA}. Finally, in order to prove the space-time regularity estimate, we proceed exactly as in the proof of Lemma \ref{lemma:GTYGrad} with the Chapman-Kolmogorov equation together with the spatially-averaged spatial-regularity estimate and the spatially-averaged time-regularity estimate of this result.}
\end{itemize}
%%%
This completes the proof.
\end{proof}
%%%
{We conclude this section with an analog of Proposition \ref{prop:MacroHKCpt} for generic Robin conditions.} To this end, we introduce the following heat kernel $\bar{\mathbf{P}}^{N}$ associated to the nearest-neighbor parabolic problem on $\R_{\geq 0} \times \mathbb{I}_{N,0}$.
%%%
\begin{notation}\fsp 
Provided any $(S,T) \in \R_{\geq 0}$ and $x,y \in \mathbb{I}_{N,0}$, define the heat kernel $\bar{\mathbf{P}}^{N}$ via the following integral equation:
\begin{align}
\bar{\mathbf{P}}_{S,T,x,y}^{N} \ &= \ \bar{\mathbf{P}}_{S,T,x,y}^{N,0} \ - \ \int_{S}^{T} \bar{\mathbf{P}}_{R,T,x,0}^{N,0} \cdot N \mathscr{A}_{-}\bar{\mathbf{P}}_{R,T,0,y}^{N}\ \d R \ - \ \int_{S}^{T} \bar{\mathbf{P}}_{R,T,x,N}^{N,0} \cdot N\mathscr{A}_{+}\bar{\mathbf{P}}_{S,R,N,y}^{N}\ \d R.
\end{align}
Alternatively, $\bar{\mathbf{T}}^{N}$ is the heat kernel associated to the parabolic problem:
\begin{subequations}
\begin{align}
\partial_{T} \bar{\mathbf{P}}_{S,T,x,y}^{N} \ = \ \mathscr{L}_{\mathrm{Lap}}^{N,!!} \bar{\mathbf{P}}_{S,T,x,y}^{N} &\and \bar{\mathbf{P}}_{S,S,x,y}^{N}\ = \ \mathbf{1}_{x=y}; \\
\bar{\mathbf{P}}_{S,T,-1,y}^{N} \ = \ \mu_{\mathscr{A}_{-}} \bar{\mathbf{P}}_{S,T,-1,y}^{N} &\and \bar{\mathbf{P}}_{S,T,N+1,y}^{N} \ = \ \mu_{\mathscr{A}_{+}} \bar{\mathbf{P}}_{S,T,N,y}^{N}.
\end{align}
\end{subequations}
Moreover, we define $\mathbf{G}_{S,T,x,y}^{N} \overset{\bullet}= \mathbf{P}_{S,T,x,y}^{N} - \bar{\mathbf{P}}_{S,T,x,y}^{N}$.
\end{notation}
%%%
The proposed analog of Proposition \ref{prop:MacroHKCpt} for any generic Robin boundary parameters are the following estimates.
%%%
\begin{prop}\fsp \label{prop:MacroHKCptRBPA}
The estimates in \emph{Proposition \ref{prop:MacroHKCpt}} remain valid upon replacing $\mathbf{G}^{N,0}$ with $\mathbf{G}^{N}$.
\end{prop}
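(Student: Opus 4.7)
The plan is to imitate the perturbative bootstrap already used to prove Lemma \ref{lemma:RegRBPACptTotal}. Applying the Duhamel-type identity of Lemma \ref{lemma:ItoP2Cpt} to both $\mathscr{U}^{N}$ and $\bar{\mathscr{U}}^{N}$ (the latter being the defining identity for $\bar{\mathscr{U}}^{N}$) and subtracting, then decomposing each boundary time-integral via the algebraic identity $AB - A'B' = (A-A')B + A'(B-B')$, one obtains
\begin{align*}
\mathscr{D}^{N}_{S,T,x,y} &= \mathscr{D}^{N,0}_{S,T,x,y} \\
&\quad - \mathscr{A}_{-} N \int_{S}^{T} \Bigl[ \mathscr{D}^{N,0}_{R,T,x,0} \mathscr{U}^{N}_{S,R,0,y} + \bar{\mathscr{U}}^{N,0}_{R,T,x,0} \mathscr{D}^{N}_{S,R,0,y} \Bigr] \, dR \ - \ (\text{analogous integral at } N).
\end{align*}
This recasts $\mathscr{D}^{N}$ as $\mathscr{D}^{N,0}$, already controlled by Proposition \ref{prop:MacroHKCpt}, plus a source built out of $\mathscr{D}^{N,0}$ and $\mathscr{U}^{N}$, plus a linear perturbation of $\mathscr{D}^{N}$ itself evaluated at the two boundary points.

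The key steps are as follows. First, for the pointwise estimate at $y \in \mathscr{I}_{N,\beta_{\partial}}$, the Nash-type bound $N \bar{\mathscr{U}}^{N,0}_{R,T,x,0} \lesssim \varrho_{R,T}^{-1/2}$ from Lemma \ref{lemma:IOnD} furnishes the integrable singular Gronwall kernel. One applies singular Gronwall, using Lemma \ref{lemma:UsualSuspectInt} to handle the time convolution, to the scalar function $R \mapsto |\mathscr{D}^{N}_{S,R,0,y}|$ (and, separately, its counterpart at the $N$-boundary) so as to close the bootstrap at the two boundary points; the estimate at a generic $x \in \mathscr{I}_{N,0}$ is then recovered by reinserting these boundary bounds into the integral formula. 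The spatially averaged and spatial-gradient variants follow via the same scheme after summing over $y$ or applying $\grad_{k,y}^{!}$ inside the integral — both operations commute with the time convolution — and invoking the averaged counterparts of Proposition \ref{prop:MacroHKCpt}, Lemma \ref{lemma:IOnDRBPA}, and the corresponding clauses of Lemma \ref{lemma:RegRBPACptTotal} as input estimates.

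The main technical obstacle is controlling the source factors $\mathscr{D}^{N,0}_{R,T,x,0}$ and $\mathscr{D}^{N,0}_{R,T,x,N}$: the sharp pointwise bound in Proposition \ref{prop:MacroHKCpt} requires the forward spatial argument to lie in $\mathscr{I}_{N,\beta_{\partial}}$, whereas $0$ and $N$ sit outside this sub-lattice. At these boundary points the best available input is the trivial on-diagonal bound $|\mathscr{D}^{N,0}| \lesssim N^{-1} \varrho^{-1/2}$ extracted from Lemma \ref{lemma:IOnD} together with Lemma \ref{lemma:NashPrelimCpt}. The arithmetic check — that substituting this coarser input and tracking the resulting time-convolutions of the form $\varrho_{R,T}^{-1/2} \varrho_{S,R}^{-1/2}$ through Lemma \ref{lemma:UsualSuspectInt} still reproduces all three clauses of Proposition \ref{prop:MacroHKCpt} after absorbing the excess into $T_{f}$- and $\mathscr{A}_{\pm}$-dependent implied constants — is where the bulk of the bookkeeping lives and constitutes the step that demands the most care.
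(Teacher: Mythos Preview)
Your approach is essentially the paper's: derive an integral equation for $\mathscr{D}^{N}$ by subtracting the two Duhamel identities, split via $AB-A'B'$, and close by singular Gronwall. Two minor differences are worth noting. First, the paper runs the Gronwall directly on $\Upsilon_{S,T}^{N}\overset{\bullet}=\sup_{x}\sup_{y\in\mathscr{I}_{N,\beta_{\partial}}}|\mathscr{D}^{N}_{S,T,x,y}|$; since the perturbation term is $\mathscr{D}^{N}_{S,R,0,y}$ with the \emph{forward} variable $y$ still in $\mathscr{I}_{N,\beta_{\partial}}$, it is already dominated by $\Upsilon^{N}_{S,R}$ and no separate ``close at the boundary first, then propagate to generic $x$'' step is needed. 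Second, regarding what you flag as the main obstacle --- that the source factor $\mathscr{D}^{N,0}_{R,T,x,0}$ has its forward variable at $0\notin\mathscr{I}_{N,\beta_{\partial}}$ --- the paper sidesteps the pointwise bound entirely: it bounds the source integrand by $\Phi^{N}_{S,R}\overset{\bullet}=\bigl[N^{-1-\e}\varrho_{S,R}^{-1}+\ldots\bigr]\wedge 1$ and observes that the cutoff $\wedge 1$ makes $\int_{S}^{T}\Phi^{N}_{S,R}\,\d R\lesssim N^{-1-\frac12\e}$, which (since $\varrho_{S,T}\lesssim T_{f}$ forces $\varrho_{S,T}^{-1}\gtrsim 1$) is absorbed into $N^{-1-\e'}\varrho_{S,T}^{-1}$ for a smaller universal $\e'$. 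So the ``bulk of the bookkeeping'' you anticipate is lighter than you suggest, and your coarser $N^{-1}\varrho^{-1/2}$ input at the boundary is not actually needed.
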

%%%
%%%
\begin{proof}
We split the proof of Proposition \ref{prop:MacroHKCptRBPA} into bullet points. The starting point is the following equation for $\mathbf{G}^{N}$:
\begin{align}
\mathbf{G}_{S,T,x,y}^{N} \ = \ \mathbf{G}_{S,T,x,y}^{N,0} \ &- \ \int_{S}^{T} \mathbf{G}_{R,T,x,0}^{N,0} \cdot N \mathscr{A}_{-} \bar{\mathbf{P}}_{S,R,0,y}^{N}\ \d R \ - \ \int_{S}^{T} \bar{\mathbf{P}}_{R,T,x,0}^{N,0} \cdot N\mathscr{A}_{-}\mathbf{G}_{S,R,0,y}^{N}\ \d R \label{eq:MacroHKCptRBPAI} \\
&- \ \int_{S}^{T} \mathbf{G}_{R,T,x,N}^{N,0} \cdot N \mathscr{A}_{+} \bar{\mathbf{P}}_{S,R,N,y}^{N}\ \d R \ - \ \int_{S}^{T} \bar{\mathbf{P}}_{R,T,x,N}^{N,0} \cdot N\mathscr{A}_{+}\mathbf{G}_{S,R,N,y}^{N}\ \d R. \nonumber
\end{align}
%
%%%
\begin{itemize}
\item To establish a space-time pointwise estimate for $\mathbf{G}^{N}$, we simply view \eqref{eq:MacroHKCptRBPAI} as a perturbation of $\mathbf{G}^{N,0}$. In particular, upon implementing the heat kernel estimates in both Lemma \ref{lemma:IOnD} and Lemma \ref{lemma:IOnDRBPA} combined with the comparison estimate in Proposition \ref{prop:MacroHKCpt}, we have the following estimate {with notation to be defined afterwards:}
\begin{align}
\Upsilon_{S,T}^{N} \ &\lesssim_{\mathscr{A}_{\pm}} \ \Phi_{S,T}^{N} \ + \ \int_{S}^{T} \Phi_{S,R}^{N} \ \d R \ + \ \int_{S}^{T} \rho_{R,T}^{-1/2} \Upsilon_{S,R}^{N} \ \d R
\end{align}
in which
\begin{align}
\Upsilon_{S,T}^{N} \ &\overset{\bullet}= \ \sup_{x,y \in \mathbb{I}_{N,0}} \mathbf{1}_{y\in\mathbb{I}_{N,\beta_{\partial}}} | \mathbf{G}_{S,T,x,y}^{N} | \and \Phi_{S,T}^{N} \ \overset{\bullet}= \ \left(N^{-1-\e} \rho_{S,T}^{-1} \ + \ N^{-2} \rho_{S,T}^{-1} \ + \ \exp\left(-\log^{100}N\right)\right) \wedge 1.
\end{align}
Because $\Phi^{N}$ admits the cutoff near the potential short-time singularity, this $\Phi^{N}$ term is integrable {and integrates on $[S,T]$ to an order $N^{-1-\e/2}$ bound}, so the desired estimate for {$\Upsilon^{N}$} follows via a singular Gronwall estimate; see Lemma \ref{lemma:UsualSuspectInt}.
\item To obtain the spatially-averaged estimate, we simply follow the previous bullet point upon the replacements
\begin{align}
\Upsilon_{S,T}^{N} \ &\overset{\bullet}= \ \sup_{x \in \mathbb{I}_{N,0}} {\sum}_{y\in\mathbb{I}_{N,0}} |\mathbf{G}_{S,T,x,y}^{N}| \and \Phi_{S,T}^{N} \ \overset{\bullet}= \ N^{-\e} \rho_{S,T}^{-1/2} \ + \ N^{-1}\rho_{S,T}^{-1/2} \ + \ \exp(-\log^{100} N).
\end{align}
\item To obtain the spatially-averaged gradient estimate, we follow the procedure detailed in the first bullet point for
\begin{align}
\Upsilon_{S,T}^{N} \ &\overset{\bullet}= \ \sup_{x \in \mathbb{I}_{N,0}} {\sum}_{y\in\mathbb{I}_{N,\beta_{\partial}}} |\grad_{k,y}^{!} \mathbf{G}_{S,T,x,y}^{N}| \and \Phi_{S,T}^{N} \ \overset{\bullet}= \ \left(N^{-\e}\rho_{S,T}^{-1/2} \ + \ N^{-1}\rho_{S,T}^{-1}\right) \wedge N.
\end{align}
\end{itemize}
%%%
This completes the proof.
\end{proof}
%%%
We include one more regularity estimate.
%%%
\begin{lemma}\fsp \label{lemma:1B2BRegHK}
Moreover, provided any Robin boundary parameters $\mathscr{A}_{\pm} \in \R$, we have
\begin{align}
\sup_{x\in\mathbb{I}_{N,0}}{\sum}_{y\in\mathbb{I}_{N,0} }|\grad_{k,y}^{!}\grad_{\ell,y}\bar{\mathbf{P}}_{S,T,x,y}^{N}| \ &\lesssim_{\kappa,\mathfrak{t}^{\max},m_{N},\mathscr{A}_{\pm},\e} \ N^{-1+2\e}\rho_{S,T}^{-1+\e}. \label{eq:1B2BRegHKA} 
\end{align}
\end{lemma}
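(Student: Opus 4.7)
The plan is to bootstrap from the Neumann case $\mathscr{A}_{\pm} = 0$ to arbitrary Robin parameters using the defining integral equation for $\bar{\mathscr{U}}^{N}$ together with a singular Gronwall argument. The spatial sum over $y$ and the supremum over $x$ are preserved at every step because the $y$-dependence in the perturbative integral defining $\bar{\mathscr{U}}^{N}$ is isolated into a single factor.

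First I would establish the spatially-averaged second-order gradient estimate for the Neumann-kernel $\bar{\mathscr{U}}^{N,0}$. Through the method-of-images representation $\bar{\mathscr{U}}^{N,0}_{S,T,x,y} = \sum_{j \in \Z}\bar{\mathscr{G}}^{N,0}_{S,T,x,i_{y,j}}$ employed already in the proof of Lemma \ref{lemma:NashPrelimCpt}, the two forward gradients in $y$ pass through the method-of-images sum to second-order gradients acting at the image points. Combining the second-order extension of the gradient estimates from Proposition A.1 of \cite{DT} (parallel to the higher-order regularity used in the proof of Lemma \ref{lemma:BRegT}) with the same change-of-variables-type calculation as in the proof of Lemma \ref{lemma:NashPrelimCpt} should yield $\sum_{y}|\grad_{k}^{!}\grad_{\ell} \bar{\mathscr{U}}^{N,0}_{S,T,x,y}| \lesssim N^{-2}\varrho_{S,T}^{-1}$.

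Second I would apply $\grad_{k}^{!}\grad_{\ell}$ in $y$ to the integral equation defining $\bar{\mathscr{U}}^{N}$ and take the supremum in $x$ and the sum in $y$. Setting $\Phi_{S,T} := \sup_{x} \sum_{y} |\grad_{k}^{!}\grad_{\ell}\bar{\mathscr{U}}^{N}_{S,T,x,y}|$ and bounding the kernel factors $\bar{\mathscr{U}}^{N,0}_{R,T,x,0}$ and $\bar{\mathscr{U}}^{N,0}_{R,T,x,N}$ via Lemma \ref{lemma:IOnD} produces the Volterra-type inequality $\Phi_{S,T} \leq C N^{-2}\varrho_{S,T}^{-1} + C\int_{S}^{T} \varrho_{R,T}^{-1/2}\Phi_{S,R}\,\d R$.

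The main obstacle is that the forcing term $N^{-2}\varrho_{S,T}^{-1}$ has a non-integrable singularity as $\varrho_{S,T} \to 0$, so a naive singular Gronwall closure does not recover $\Phi_{S,T} \lesssim N^{-2}\varrho_{S,T}^{-1}$. I would resolve this by splitting the analysis at the short-time cutoff $\varrho_{S,T} \sim N^{-2+\e}$. In the short-time regime the trivial pointwise bound $|\grad_{k}^{!}\grad_{\ell}\bar{\mathscr{U}}^{N}_{S,T,x,y}| \lesssim N^{-1}\varrho_{S,T}^{-1/2} \wedge 1$ inherited from Lemma \ref{lemma:IOnDRBPA} together with the sub-exponential spatial localization of Proposition \ref{prop:IOffDTotal} yield the target bound directly, with the $\varrho^{\e}$ power absorbing the cutoff; in the long-time regime the Volterra inequality closes under the weaker ansatz $\Phi_{S,T} \lesssim A \varrho_{S,T}^{-1+\e}$ through an application of Lemma \ref{lemma:UsualSuspectInt}, since the integrand $\varrho_{R,T}^{-1/2}\varrho_{S,R}^{-1+\e}$ is then integrable with Beta-function value of order $\varrho_{S,T}^{-1/2+\e}$. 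The slight weakening from $N^{-2}\varrho^{-1}$ to the stated $N^{-1+2\e}\varrho_{S,T}^{-1+\e}$ is precisely the price of reducing the singularity exponent from $1$ to $1 - \e$ in order to close the bootstrap.
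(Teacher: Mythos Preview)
The paper does not actually provide a proof for this lemma; it is stated without argument at the end of Section~\ref{section:HKEIII}. Your proposed route---method-of-images for the Neumann kernel $\bar{\mathscr{U}}^{N,0}$ followed by the perturbative integral equation defining $\bar{\mathscr{U}}^{N}$ and a singular Gronwall closure---is precisely the machinery assembled in Sections~\ref{section:HKEI}--\ref{section:HKEIII}, and it is a correct approach.

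There is one bookkeeping slip. Your stated Neumann bound $\sum_{y}|\grad_{k}^{!}\grad_{\ell}\bar{\mathscr{U}}^{N,0}_{S,T,x,y}| \lesssim N^{-2}\varrho_{S,T}^{-1}$ is off by a factor of $N$: since $\grad_{k}^{!} = N\grad_{k}$, the second-order averaged estimate from Lemma~\ref{lemma:BRegT} (or directly from Lemma~\ref{lemma:ThirdOrder} via the image sum) gives $\sum_{y}|\grad_{k}\grad_{\ell}\bar{\mathscr{G}}^{N,0}| \lesssim N^{-2}\varrho^{-1}$, hence after the $!$-rescaling the correct bound is $N^{-1}\varrho_{S,T}^{-1}$. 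This actually matches the target $N^{-1+2\e}\varrho_{S,T}^{-1+\e}$ without the full power of $N$ you thought you were sacrificing.

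With the corrected forcing, your short-time/long-time split works, but a slightly cleaner closure is available. Interpolate the sharp bound $N^{-1}\varrho^{-1}$ against the crude one-gradient bound
\[
\sum_{y}\left|\grad_{k}^{!}\grad_{\ell}\bar{\mathscr{U}}^{N,0}_{S,T,x,y}\right| \ \leq \ 2N\sum_{y}\left|\grad_{\ell}\bar{\mathscr{U}}^{N,0}_{S,T,x,y}\right| \ \lesssim \ \varrho_{S,T}^{-1/2}
\]
to obtain $N^{-1+2\e}\varrho_{S,T}^{-1+\e}$ directly. With this integrable forcing the Volterra inequality closes immediately through Lemma~\ref{lemma:UsualSuspectInt} and the singular Gronwall inequality, with no need for a separate short-time regime.
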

%%%
%
%
%
%%%
\section{Stochastic Regularity Estimates}\label{section:Q}
%%%
This section is dedicated towards important regularity estimates for the microscopic Cole-Hopf transform. Unlike previous papers including \cite{BG,CS,DT,P}, for example, although similar to \cite{Y}, these regularity estimates will actually not yield macroscopic regularity estimates for the continuum limit. Instead, these results will be precise estimates employed for analyzing the microscopic Cole-Hopf transform at mesoscopic scales as crucial ingredients for our dynamical strategy.
%%%
\subsection{Stochastic Fundamental Solution}
%%%
The first component to our analysis in this section is to understand the solution to the following integral equation which is effectively the $\mathbf{Z}^{N}$-dynamic in Proposition \ref{prop:MatchCpt} upon forgetting terms therein of interest for our dynamical analysis. In particular, relevant terms in the following equation are reminiscent of those appearing in the dynamics of the microscopic Cole-Hopf transform in \cite{DT}.
%%%
\begin{notation}\label{notation:sfs}\fsp 
Consider any Robin boundary parameters $\mathscr{A}_{\pm} \in \R$. Given any arbitrarily small but universal constant $\beta_{\partial} \in \R_{>0}$, we define $\mathbf{Q}^{N}$ as the unique solution to 
\begin{align}
\mathbf{Q}_{S,T,x,y}^{N} \ = \ \mathbf{P}_{S,T,x,y}^{N} \ + \ \mathbf{Q}_{S,T,x,y}^{N,1} \ + \ \mathbf{Q}_{S,T,x,y}^{N,2} \ + \ \mathbf{Q}_{S,T,x,y}^{N,3} \ + \ \mathbf{Q}_{S,T,x,y}^{N,4},
\end{align}
where
\begin{subequations}
\begin{align}
\mathbf{Q}_{S,T,x,y}^{N,1} \ &\overset{\bullet}= \ \int_{S}^{T} {\sum}_{w \in \mathbb{I}_{N,0}} \mathbf{P}_{R,T,x,w}^{N} \cdot \mathbf{Q}_{S,R,w,y}^{N} \ \d\xi_{R,w}^{N}; \\
\mathbf{Q}_{S,T,x,y}^{N,2} \ &\overset{\bullet}= \ \int_{S}^{T} {\sum}_{w \in \mathbb{I}_{N,0}} \mathbf{P}_{R,T,x,w}^{N} \cdot \mathfrak{w}_{R,w}^{N} \mathbf{Q}_{S,R,w,y}^{N}\ \d R; \\
\mathbf{Q}_{S,T,x,y}^{N,3} \ &\overset{\bullet}= \ {\sum}_{|k| \leq m_{N}} c_{k} \int_{S}^{T} {\sum}_{w \in \mathbb{I}_{N,0}} \mathbf{P}_{R,T,x,w}^{N} \cdot \grad_{k}^{!} \left( \mathbf{1}_{w \in \mathbb{I}_{N,\beta_{\partial}}} \mathfrak{w}_{R,w}^{N,k} \mathbf{Q}_{S,R,w,y}^{N} \right)\ \d R; \\
\mathbf{Q}_{S,T,x,y}^{N,4} \ &\overset{\bullet}= \ N^{-1/2}\wt{\sum}_{|k|\leq N^{\beta_{X}}}\int_{S}^{T}{\sum}_{w\in\mathbb{I}_{N,0}}\mathbf{P}_{R,T,x,w}^{N}\cdot\grad_{10m_{N}k}^{!}\left(\mathbf{1}_{w\in\mathbb{I}_{N,\beta_{X}-\e_{X}}}\mathfrak{b}^{N,k}_{R,w}\mathbf{Q}_{S,R,w,y}^{N}\right)\d R.
\end{align}
\end{subequations}
The allowed space-time coordinates are provided by $S,T \in \R_{\geq 0}$ satisfying $S \leq T \leq \mathfrak{t}^{\max}$, and spatial coordinates $x,y \in \mathbb{I}_{N,0}$. 
\end{notation}
%%%
{The philosophy behind considering this stochastic fundamental solution $\mathbf{Q}^{N}$ is to rewrite the microscopic Cole-Hopf transform $\mathbf{Z}^{N}$ as the solution to a stochastic integral equation with propagator $\mathbf{Q}^{N}$; indeed, this lets us separate our hydrodynamical analysis which is concerned with quantities $\mathbf{Q}^{N,1},\ldots,\mathbf{Q}^{N,4}$ defined above and our dynamical one-block analysis which is concerned with the remaining quantities from the RHS of the evolution equation in Proposition \ref{prop:MatchCpt}. In particular, dealing with all of these terms together is quite complicated. {Let us also emphasize that estimating $\mathbf{Q}^{N,4}$ in this section will follow the same strategy as estimating $\mathbf{Q}^{N,3}$, namely by summation-by-parts to move the spatial gradients onto the $\mathbf{P}^{N}$ heat kernel. Actually, analysis for $\mathbf{Q}^{N,4}$ will be easier and stronger because the $N^{-1/2}$ factor beats the factor $N^{\beta_{X}}$ that becomes relevant when we take the spatial gradient of $\mathbf{P}^{N}$ on this length-scale. For this reason, and to make writing more convenient, we will omit direct calculations for $\mathbf{Q}^{N,4}$ with the understanding that it is treated the same way that $\mathbf{Q}^{N,3}$ is.} The first estimate we require for $\mathbf{Q}^{N}$ is in the following pointwise moment estimate.}
%%%
\begin{lemma}\fsp \label{lemma:QOffD}
Consider any Robin boundary parameters $\mathscr{A}_{\pm} \in \R$. Provided times $S,T \in \R_{\geq 0}$ satisfying $S \leq T \leq \mathfrak{t}^{\max}$ along with any $x,y \in \mathbb{I}_{N,0}$, the following estimate holds provided any $p \in \R_{\geq 1}$, any $\e \in \R_{>0}$, and any $\kappa \in \R_{>0}$:
\begin{align}
\| \mathbf{Q}_{S,T,x,y}^{N} \|_{\mathscr{L}^{2p}_{\omega}} \mathscr{E}_{S,T,x,y}^{N,\kappa} \ &\lesssim_{m_{N},p,\e,\mathfrak{t}^{\max},\beta_{\partial},\mathscr{A}_{\pm},\kappa} \ N^{-1+\e} \rho_{S,T}^{-1/2+\e/2} \wedge 1.
\end{align}
\end{lemma}
%%%
%%%
\begin{proof}
We take $\kappa = 0$; changes for $\kappa \in \R_{>0}$ follow from elementary considerations as in the proof of Proposition 3.2 in \cite{DT}.

We observe that the heat kernel $\mathbf{P}^{N}$ is deterministic, and the desired pointwise estimate for this term follows directly from Proposition \ref{prop:IOffDTotal}. It remains to analyze the $\mathscr{L}^{2p}_{\omega}$-norms of each of $\mathbf{Q}^{N,1},\mathbf{Q}^{N,2},\mathbf{Q}^{N,3},\mathbf{Q}^{N,4}$. {We will start this with $\mathbf{Q}^{N,1}$, which is a martingale term. In \cite{DT}, quantities of this type were treated using Lemma 3.1 therein, which was both stated and proven when integrating $\mathbf{Z}^{N}\d\xi^{N}$ against space-time functions like the heat kernel. The proof extends to any adapted process $\mathbf{F}^{N}\d\xi^{N}$ as long as we can control $\mathbf{F}^{N}$ in terms of its values at times of order $N^{-2}$ earlier, or equivalently, values at order 1 times earlier at the level of the speed 1 version of the particle system as was considered in \cite{DT}. This will be the case for any choice of $\mathbf{F}^{N}$ we make in this paper, such as $\mathbf{F}^{N}=\mathbf{Q}^{N}$, because the dynamics of $\mathbf{Q}^{N}$ are according to a discrete parabolic equation that satisfies a maximum principle in space-time, at least up to multiplicative factors given by the exponential of a uniformly bounded potential for times of order $N^{-2}$ times the exponential of a clock-counting function that is dominated by a Poisson distribution with bounded speed. Control of this multiplicative factor is basically the same Poisson moment estimate in the proof of Lemma 3.1 of \cite{DT}; for details, see Appendix B of \cite{Y}. Ultimately, because comparison of $\mathbf{Q}^{N}$ in terms of earlier data of order $N^{-2}$ time before is in terms of a parabolic maximum principle and therefore the supremum norm on $\mathbb{I}_{N,0}$, we obtain the following estimate for $\mathbf{Q}^{N,1}$:}
\begin{align}
\| \mathbf{Q}_{S,T,x,y}^{N,1} \|_{\mathscr{L}^{2p}_{\omega}}^{2} \ &\lesssim_{p,\mathfrak{t}^{\max}} \ \int_{S}^{T} N {\sum}_{w \in \mathbb{I}_{N,0}} | \mathbf{P}_{R,T,x,w}^{N} |^{2}{\sup_{z\in\mathbb{I}_{N,0}}\| \mathbf{Q}_{S,R,z,y}^{N} \|_{\mathscr{L}^{2p}_{\omega}}^{2}}\ \d R \ + \ N^{-1} \sup_{w \in \mathbb{I}_{N,0}} \| \mathbf{Q}_{S,T,w,y}^{N} \|_{\mathscr{L}^{2p}_{\omega}}^{2} \\
&\lesssim_{m_{N},\mathfrak{t}^{\max},\mathscr{A}_{\pm}} \ \int_{S}^{T} \rho_{R,T}^{-1/2} {\sum}_{w \in \mathbb{I}_{N,0}} \mathbf{P}_{R,T,x,w}^{N}{\sup_{z\in\mathbb{I}_{N,0}}\| \mathbf{Q}_{S,R,z,y}^{N} \|_{\mathscr{L}^{2p}_{\omega}}^{2}}\d R \ + \ N^{-1} \sup_{w \in \mathbb{I}_{N,0}} \| \mathbf{Q}_{S,T,w,y}^{N} \|_{\mathscr{L}^{2p}_{\omega}}^{2}; \label{eq:Q1L2p}
\end{align}
the last estimate above follows from the on-diagonal Nash-type heat kernel estimate in Lemma \ref{lemma:IOnDRBPA} combined with observation that $S,T \leq \mathfrak{t}^{\max}$, so that the only relevant factor in the upper bound Lemma \ref{lemma:IOnDRBPA} is the short-time singularity.

We now estimate the second term $\mathbf{Q}^{N,2}$; the uniform upper bound for $\mathfrak{w}^{N}$ provides the straightforward inequality
\begin{align}
\| \mathbf{Q}_{S,T,x,y}^{N,2} \|_{\mathscr{L}^{2p}_{\omega}}^{2} \ &\lesssim_{\mathfrak{t}^{\max},\mathscr{A}_{\pm}} \ \int_{S}^{T} {\sum}_{w \in \mathbb{I}_{N,0}} \mathbf{P}_{R,T,x,w}^{N} \|\mathbf{Q}_{S,R,w,y}^{N}\|_{\mathscr{L}^{2p}_{\omega}}^{2} \ \d R; \label{eq:Q2L2p}
\end{align}
indeed, the latter upper bound is the consequence of the Cauchy-Schwarz inequality applied with respect to the space-time kernel $\mathbf{P}^{N}$; to this end, observe that the total space-time mass of $\mathbf{P}^{N}$ is bounded uniformly above by a universal constant depending only on $\mathfrak{t}^{\max} \in \R_{\geq 0}$ and $\mathscr{A}_{\pm} \in \R$ courtesy of the heat kernel estimates in Lemma \ref{lemma:IOnDRBPA}.

Concerning the third term $\mathbf{Q}^{N,3}$, following the proof of Lemma \ref{lemma:AdjointFlat}, summation-by-parts provides
\begin{align}
\mathbf{Q}_{S,T,x,y}^{N,3} \ = \ {\sum}_{|k| \leq m_{N}} c_{k} \int_{S}^{T} {\sum}_{w \in \mathbb{I}_{N,\beta_{\partial}}} \grad_{-k,w}^{!} \mathbf{P}_{R,T,x,w}^{N} \cdot \mathfrak{w}_{R,w}^{N,k} \mathbf{Q}_{S,R,w,y}^{N}\ \d R.
\end{align}
Thus, following our analysis for $\mathbf{Q}^{N,2}$ with Lemma \ref{lemma:RegRBPACptTotal}, we obtain
\begin{align}
\| \mathbf{Q}_{S,T,x,y}^{N,3} \|_{\mathscr{L}^{2p}_{\omega}}^{2} \ &\lesssim \ {\sum}_{|k| \leq m_{N}} |c_{k}|\left(\int_{S}^{T} {\sum}_{w \in \mathbb{I}_{N,\beta_{\partial}}} | \grad_{-k,w}^{!} \mathbf{P}_{R,T,x,w}^{N} | \ \d R\right) \int_{S}^{T} {\sum}_{w \in \mathbb{I}_{N,\beta_{\partial}}} | \grad_{-k,w}^{!} \mathbf{P}_{R,T,x,w}^{N} | \| \mathbf{Q}_{S,R,w,y}^{N} \|_{\mathscr{L}^{2p}_{\omega}}^{2} \ \d R \nonumber \\
&\lesssim_{\beta_{\partial},\mathfrak{t}^{\max},\mathscr{A}_{\pm}} {\sum}_{|k| \leq m_{N}} |c_{k}| \int_{S}^{T} {\sum}_{w \in \mathbb{I}_{N,\beta_{\partial}}} | \grad_{-k,w}^{!} \mathbf{P}_{R,T,x,w}^{N} | \| \mathbf{Q}_{S,R,w,y}^{N} \|_{\mathscr{L}^{2p}_{\omega}}^{2} \ \d R. \label{eq:Q3L2p}
\end{align}
We combine the estimates \eqref{eq:Q1L2p}, \eqref{eq:Q2L2p}, and \eqref{eq:Q3L2p} to obtain the following estimate for any $\e \in \R_{>0}$:
\begin{align}
\| \mathbf{Q}_{S,T,x,y}^{N} \|_{\mathscr{L}^{2p}_{\omega}}^{2} \ &\lesssim_{p,\mathfrak{t}^{\max},\beta_{\partial},\mathscr{A}_{\pm}} \ | \mathbf{P}_{S,T,x,y}^{N} |^{2} \ + \ \int_{S}^{T} \rho_{R,T}^{-1/2} \sup_{w \in \mathbb{I}_{N,0}} \| \mathbf{Q}_{S,R,w,y}^{N} \|_{\mathscr{L}^{2p}_{\omega}}^{2} \ \d R \ + \ N^{-1} \sup_{w \in \mathbb{I}_{N,0}} \| \mathbf{Q}_{S,T,w,y}^{N} \|_{\mathscr{L}^{2p}_{\omega}}^{2} \\
&\quad\quad\quad+ \ {\sum}_{|k| \leq m_{N}} |c_{k}| \int_{S}^{T} {\sum}_{w \in \mathbb{I}_{N,\beta_{\partial}}} | \grad_{-k,w}^{!} \mathbf{P}_{R,T,x,w}^{N} | \| \mathbf{Q}_{S,R,w,y}^{N} \|_{\mathscr{L}^{2p}_{\omega}}^{2} \ \d R. \nonumber
\end{align}
Observe the upper bound is uniform in $x \in \mathbb{I}_{N,0}$, so we now apply Lemma \ref{lemma:RegRBPACptTotal} with the singular Gronwall inequality.
\end{proof}
%%%
Lemma \ref{lemma:QOffD} gives an important a priori time-regularity estimate for $\mathbf{Q}^{N}$. First, some relevant and convenient notation.
%%%
\begin{notation}\fsp 
Provided any $S,T \in \R_{\geq 0}$ and $\tau \in \R_{\geq 0}$ satisfying $S \leq T \leq \mathfrak{t}^{\max}$ and $\rho_{S,T} \geq \tau$, along with any $\kappa \in \R_{>0}$:
\begin{align}
\Psi_{S,T,\tau}^{N} \ \overset{\bullet}= \ \sup_{x \in \mathbb{I}_{N,0}} {\sum}_{y\in\mathbb{I}_{N,0}} \| \mathscr{D}_{\tau} \mathbf{Q}_{S,T,x,y}^{N} \|_{\mathscr{L}^{2p}_{\omega}}^{2}.
\end{align}
\end{notation}
%%%
%%%
\begin{lemma}\fsp \label{lemma:QTReg}
{Consider $S,T \in \R_{\geq 0}$ with $S \leq T \leq \mathfrak{t}^{\max}$. For $N^{-2}\lesssim \tau\leq2^{-1}\rho_{S,T}$, for any $\e_{1},\e_{2},\e_{3}>0$, we have}
\begin{align}
\Psi_{S,T,\tau}^{N} \ &\lesssim_{\e_{1},\e_{2},\e_{3},\mathfrak{t}^{\max},m_{N}} \ N^{-1+\e_{1}} \rho_{S,T}^{-1+\e_{2}} \tau^{1/2-\e_{3}}.
\end{align}
\end{lemma}
%%%
%%%
\begin{proof}
By definition, we have $\mathscr{D}_{\tau}\mathbf{Q}^{N} = \mathscr{D}_{\tau} \mathbf{P}^{N} + {\sum}_{j=1}^{3} \mathscr{D}_{\tau}\mathbf{Q}^{N,j}$ with $\mathscr{D}_{\tau} \mathbf{Q}^{N,j}_{S,T,x,y} \overset{\bullet}= \mathbf{Q}^{N,j,1,\tau}_{S,T,x,y} + \mathbf{Q}^{N,j,2,\tau}_{S,T,x,y}$, where 
\begin{subequations}
\begin{align}
\mathbf{Q}_{S,T,x,y}^{N,1,1,\tau} \ &\overset{\bullet}= \ \int_{S+\tau}^{T} {\sum}_{w \in \mathbb{I}_{N,0}} \mathbf{P}_{R,T,x,w}^{N} \cdot \mathscr{D}_{\tau} \mathbf{Q}_{S,R,w,y}^{N} \d\xi_{R,w}^{N}; \\
\mathbf{Q}_{S,T,x,y}^{N,1,2,\tau} \ &\overset{\bullet}= \ \int_{S}^{S+\tau} {\sum}_{w \in \mathbb{I}_{N,0}} \mathbf{P}_{R,T,x,w}^{N} \cdot \mathbf{Q}_{S,R,w,y}^{N} \d\xi_{R,w}^{N}; \\
\mathbf{Q}_{S,T,x,y}^{N,2,1,\tau} \ &\overset{\bullet}= \ \int_{S+\tau}^{T} {\sum}_{w \in \mathbb{I}_{N,0}} \mathbf{P}_{R,T,x,w}^{N} \cdot \mathfrak{w}_{R,w}^{N} \mathscr{D}_{\tau} \mathbf{Q}_{S,R,w,y}^{N}\ \d R; \\
\mathbf{Q}_{S,T,x,y}^{N,2,2,\tau} \ &\overset{\bullet}= \ \int_{S}^{S+\tau} {\sum}_{w \in \mathbb{I}_{N,0}} \mathbf{P}_{R,T,x,w}^{N} \cdot \mathfrak{w}_{R,w}^{N} \mathbf{Q}_{S,R,w,y}^{N}\ \d R; \\
\mathbf{Q}_{S,T,x,y}^{N,3,1,\tau} \ &\overset{\bullet}= \ {\sum}_{|k| \leq m_{N}} c_{k} \int_{S+\tau}^{T} {\sum}_{w \in \mathbb{I}_{N,0}} \mathbf{P}_{R,T,x,w}^{N} \cdot \grad_{k}^{!} \left( \mathbf{1}_{w \in \mathbb{I}_{N,\beta_{\partial}}} \mathfrak{w}_{R,w}^{N,k} \mathscr{D}_{\tau} \mathbf{Q}_{S,R,w,y}^{N} \right)\ \d R; \\
\mathbf{Q}_{S,T,x,y}^{N,3,2,\tau} \ &\overset{\bullet}= \ {\sum}_{|k| \leq m_{N}} c_{k} \int_{S}^{S+\tau} {\sum}_{w \in \mathbb{I}_{N,0}} \mathbf{P}_{R,T,x,w}^{N} \cdot \grad_{k}^{!} \left( \mathbf{1}_{w \in \mathbb{I}_{N,\beta_{\partial}}} \mathfrak{w}_{R,w}^{N,k} \mathbf{Q}_{S,R,w,y}^{N} \right)\ \d R.
\end{align}
\end{subequations}
Thus, the remainder of this argument amounts to estimating each of the above six quantities, combined with the following upper bound on the deterministic heat kernel, provided any $\e \in \R_{>0}$ sufficiently small but universal via Lemma \ref{lemma:RegRBPACptTotal}:
\begin{align}
{\sum}_{y \in \mathbb{I}_{N,0}} | \mathscr{D}_{\tau} \mathbf{P}_{S,T,x,y}^{N} |^{2} \ &\lesssim_{\e} \ N^{-1+2\e} \rho_{S,T}^{-5/2+\e} \tau^{2-\e} \ + \ N^{-2+4\e} \rho_{S,T}^{-1+\e}; \label{eq:QTReg0}
\end{align}
Second, {consequence} of Lemma 3.1 in \cite{DT}, {with basically the same clarifications given at the beginning of the proof of Lemma \ref{lemma:QOffD}}, we establish the following stochastic estimate provided any $p \in \R_{>1}$ upon additionally employing Proposition \ref{prop:IOffDTotal}:
\begin{align}
{\sum}_{y \in \mathbb{I}_{N,0}} \| \mathbf{Q}_{S,T,x,y}^{N,1,1,\tau} \|_{\mathscr{L}^{2p}_{\omega}}^{2} \ &\lesssim_{p,\e} \ \int_{S+\tau}^{T} N {\sum}_{w \in \mathbb{I}_{N,0}} | \mathbf{P}_{R,T,x,w}^{N} |^{2}{\sup_{z\in\mathbb{I}_{N,0}}{\sum}_{y \in \mathbb{I}_{N,0}}\| \mathscr{D}_{\tau} \mathbf{Q}_{S,R,z,y}^{N} \|_{\mathscr{L}^{2p}_{\omega}}^{2}}\ \d R \ + \ N^{-1+2\e} \mathbf{1}_{\rho_{S,T} \lesssim \tau} \nonumber \\
&\lesssim_{p,\e} \ \int_{S+\tau}^{T} \rho_{R,T}^{-1/2} \sup_{w\in\mathbb{I}_{N,0}}{\sum}_{y \in \mathbb{I}_{N,0}}\| \mathscr{D}_{\tau} \mathbf{Q}_{S,R,w,y}^{N} \|_{\mathscr{L}^{2p}_{\omega}}^{2} \ \d R \ + \ N^{-1+2\e} \rho_{S,T}^{-1+\e} \tau^{1-\e}. \label{eq:QTReg1a}
\end{align}
Along a similar line with another application of Lemma \ref{lemma:QOffD} and the integration-lemma in Lemma \ref{lemma:UsualSuspectInt}, we obtain
\begin{align}
{\sum}_{y \in \mathbb{I}_{N,0}} \| \mathbf{Q}_{S,T,x,y}^{N,1,2,\tau} \|_{\mathscr{L}^{2p}_{\omega}}^{2} \ &\lesssim_{p,\e} \ \int_{S}^{S+\tau} \rho_{R,T}^{-1/2} {\sum}_{w \in \mathbb{I}_{N,0}} \mathbf{P}_{R,T,x,w}^{N}{\sup_{z\in\mathbb{I}_{N,0}}{\sum}_{y \in \mathbb{I}_{N,0}}\| \mathbf{Q}_{S,R,z,y}^{N} \|_{\mathscr{L}^{2p}_{\omega}}^{2}} \ \d R \ + \ N^{-1 + 2\e} \mathbf{1}_{\rho_{S,T} \lesssim \tau} \nonumber\\
&\lesssim_{p,\e} \ N^{-1+2\e} \int_{S}^{S+\tau} \rho_{R,T}^{-1/2} \rho_{S,R}^{-1/2+\e} \ \d R \ + \ N^{-1+2\e} \rho_{S,T}^{-1+\e} \tau^{1-\e} \\
&\lesssim_{\mathfrak{t}^{\max},\e} \ N^{-1+2\e} \rho_{S,T}^{-1/2+\e} \tau^{1/2} \ + \ N^{-1+2\e} \rho_{S,T}^{-1+\e} \tau^{1-\e}. \label{eq:QTReg1b}
\end{align}
We now analyze the quantities $\{\mathbf{Q}^{N,2,j,\tau}\}_{j=1,2}$. To this end, we first observe that Proposition \ref{prop:IOffDTotal} allows us to employ the Cauchy-Schwarz inequality with respect to the space-time ``integral" and deduce the upper bounds below for any $\e>0$:
\begin{align}
{\sum}_{y \in \mathbb{I}_{N,0}} \| \mathbf{Q}_{S,T,x,y}^{N,2,1,\tau} \|_{\mathscr{L}^{2p}_{\omega}}^{2} \ &\lesssim_{\mathfrak{w}^{N},\mathfrak{t}^{\max}} \ \int_{S+\tau}^{T} {\sum}_{w \in \mathbb{I}_{N,0}}{\sum}_{y \in \mathbb{I}_{N,0}} \mathbf{P}_{R,T,x,w}^{N} \cdot \| \mathscr{D}_{\tau} \mathbf{Q}_{S,R,w,y}^{N} \|_{\mathscr{L}^{2p}_{\omega}}^{2} \ \d R; \label{eq:QTReg2a} \\
{\sum}_{y \in \mathbb{I}_{N,0}} \| \mathbf{Q}_{S,T,x,y}^{N,2,2,\tau} \|_{\mathscr{L}^{2p}_{\omega}}^{2} \ &\lesssim_{\mathfrak{w}^{N}} \ \tau \int_{S}^{S+\tau} {\sum}_{w \in \mathbb{I}_{N,0}} {\sum}_{y \in \mathbb{I}_{N,0}}\mathbf{P}_{R,T,x,w}^{N} \cdot \| \mathbf{Q}_{S,R,w,y}^{N} \|_{\mathscr{L}^{2p}_{\omega}}^{2} \ \d R \\
&\lesssim_{\e} \ N^{-1+4\e} \tau \int_{S}^{S+\tau} \rho_{S,R}^{-1/2+2\e} \ \d R \ \lesssim \ N^{-1+4\e} \tau^{3/2+2\e}; \label{eq:QTReg2b}
\end{align}
indeed, the estimate for $\mathbf{Q}^{N,2,2,\tau}$ requires the off-diagonal estimate in Lemma \ref{lemma:QOffD}. Analyzing the quantities $\{\mathbf{Q}^{N,3,j,\tau}\}_{j=1,2}$ via the same procedure as above but with Lemma \ref{lemma:RegRBPACptTotal}, we obtain the following for $\e \in \R_{>0}$ arbitrarily small but universal:
\begin{align}
{\sum}_{y \in \mathbb{I}_{N,0}} \| \mathbf{Q}_{S,T,x,y}^{N,3,1,\tau} \|_{\mathscr{L}^{2p}_{\omega}}^{2} \ &\lesssim_{\mathfrak{w}^{N,\bullet},\mathfrak{t}^{\max},m_{N}} \ {\sum}_{|k| \leq m_{N}} |c_{k}| \int_{S+\tau}^{T} {\sum}_{w \in \mathbb{I}_{N,\beta_{\partial}}} {\sum}_{y \in \mathbb{I}_{N,0}} | \grad_{-k} \mathbf{P}_{R,T,x,w}^{N} | \cdot \| \mathscr{D}_{\tau} \mathbf{Q}_{S,R,w,y}^{N} \|_{\mathscr{L}^{2p}_{\omega}}^{2} \ \d R; \label{eq:QTReg3a} \\
{\sum}_{y \in \mathbb{I}_{N,0}} \| \mathbf{Q}_{S,T,x,y}^{N,3,2,\tau} \|_{\mathscr{L}^{2p}_{\omega}}^{2} \ &\lesssim_{\mathfrak{w}^{N,\bullet},\mathfrak{t}^{\max},m_{N}} \ \tau^{1/2} {\sum}_{|k| \leq m_{N}} |c_{k}| \int_{S}^{S+\tau} {\sum}_{w \in \mathbb{I}_{N,\beta_{\partial}}} {\sum}_{y \in \mathbb{I}_{N,0}}| \grad_{-k} \mathbf{P}_{R,T,x,w}^{N} | \cdot  \| \mathbf{Q}_{S,R,w,y}^{N} \|_{\mathscr{L}^{2p}_{\omega}}^{2} \ \d R \nonumber \\
&\lesssim_{\mathfrak{t}^{\max},\e,m_{N}} \ N^{-1+4\e} \tau^{1/2} {\sum}_{|k| \leq m_{N}} |c_{k}| \int_{S}^{S+\tau} \rho_{R,T}^{-1/2} \rho_{S,R}^{-1/2+2\e} \ \d R \\
&\lesssim \ N^{-1+4\e} \tau^{1/2 + 2\e}. \label{eq:QTReg3b}
\end{align}
We combine the estimates \eqref{eq:QTReg0}, \eqref{eq:QTReg1a}, \eqref{eq:QTReg1b}, \eqref{eq:QTReg2a}, \eqref{eq:QTReg2b}, \eqref{eq:QTReg3a}, and \eqref{eq:QTReg3b} with Lemma \ref{lemma:IOnDRBPA} to obtain
\begin{align}
\Psi_{S,T,\tau}^{N,0} \ &\lesssim_{\e,\mathfrak{t}^{\max},m_{N}} \ \kappa_{S,T}^{N,\tau} \ + \ \int_{S+\tau}^{T} \rho_{R,T}^{-1/2} \Psi_{S,R,\tau}^{N,0} \ \d R, \label{eq:QTRegPreGronwall}
\end{align}
where we have introduced $\kappa_{S,T}^{N,\tau} \overset{\bullet}= N^{-1+2\e} \rho_{S,T}^{-5/2 + \e} \tau^{2-\e} + N^{-2+4\e} \rho_{S,T}^{-1+\e} + N^{-1+2\e} \rho_{S,T}^{-1+\e} \tau^{1-\e} +  N^{-1+2\e} \rho_{S,T}^{-1/2+\e} \tau^{1/2}$.

Note $\kappa_{S,T}^{N,\tau}$ has a non-integrable singularity at $S = T$. To remedy this issue, recall the lower bound $\rho_{S,T} \geq 2\tau$, and note
\begin{align}
\int_{S+\tau}^{T} \rho_{R,T}^{-1/2} \Psi_{S,R,\tau}^{N,0} \ \d R \ = \ \int_{S+\tau}^{S+2\tau} \rho_{R,T}^{-1/2} \Psi_{S,R,\tau}^{N,0} \ \d R \ + \ \int_{S+2\tau}^{T} \rho_{R,T}^{-1/2} \Psi_{S,R,\tau}^{N,0} \ \d R.
\end{align}
For the first integral, by Lemma \ref{lemma:QOffD} and Lemma \ref{lemma:UsualSuspectInt}, along with the a priori lower bound $\rho_{S,T} \geq 2\tau$, we have
\begin{align}
\int_{S+\tau}^{S+2\tau} \rho_{R,T}^{-1/2} \Psi_{S,R,\tau}^{N,0} \ \d R \ &\lesssim_{\e} \ N^{-1+4\e} \int_{S+\tau}^{S+2\tau} \rho_{R,T}^{-1/2} \rho_{S,R}^{-1/2 + 2\e} \ \d R \ \lesssim \ N^{-1+4\e} \rho_{S,T}^{-1/2} \tau^{1/2 + 2\e};
\end{align}
indeed, at the cost of an acceptable upper bound, we replaced the action of the time-gradient operator on the stochastic fundamental solution $\mathbf{Q}^{N}$ in {$\Psi^{N,0}$} with the stochastic fundamental solution itself evaluated at two distinct backwards time-coordinates and afterwards employed, as mentioned above, Lemma \ref{lemma:QOffD} and Lemma \ref{lemma:UsualSuspectIntCutoff}. We obtain
\begin{align}
\Psi_{S,T,\tau}^{N,0} \ &\lesssim_{\e,\mathfrak{t}^{\max},m_{N}} \ \kappa_{S,T}^{N,\tau} \ + \ \int_{S+2\tau}^{T} \rho_{S,T}^{-1/2} \Psi_{S,R,\tau}^{N,0} \ \d R;
\end{align}
from which we complete the proof by applying the singular Gronwall inequality along with Lemma \ref{lemma:UsualSuspectIntCutoff}.
\end{proof}
%%%
In contrast to Lemma \ref{lemma:QTReg}, we moreover require the following sub-optimal although pointwise time-regularity estimate.
%%%
\begin{lemma}\fsp \label{lemma:QTReg2}
{Consider $\tau_{N,\star}= N^{-2+\e_{\star}}$ with $\e_{\star}>0$ arbitrarily small but universal. Provided any $p>1$ and any arbitrarily small but universal constants $\e_{1},\e_{2}>0$, we have the following estimate for $S,T\geq0$ satisfying $S \leq T-N^{-7/4}\lesssim\mathfrak{t}^{\max}$:}
\begin{align}
\|\mathscr{D}_{\tau_{N,\star}}\mathbf{Q}_{S,T,x,y}^{N}\|_{\mathscr{L}^{2p}_{\omega}} \ &\lesssim_{p,\e_{1},\e_{2}} \ N^{-5/4+\e_{\star}+\e_{1}} \rho_{S,T}^{-1/2+\e_{2}} \ + \ N^{-9/8+\e_{1}}\rho_{S,T}^{-1/2+\e_{2}} \ + \ N^{-5/4+\e_{1}}\rho_{S,T}^{-1/2+\e_{2}}.
\end{align}
\end{lemma}
%%%
%%%
\begin{proof}
{Observe the following analog of the Chapman-Kolmogorov equation for $\mathbf{Q}^{N}$, a consequence of $\mathbf{Q}^{N}$ being a fundamental solution to its characterizing linear evolution equation, in which $\tau = N^{-7/4}$:}
\begin{align}
\mathscr{D}_{\tau_{N,\star}}\mathbf{Q}_{S,T,x,y}^{N} \ &= \ {\sum}_{w \in \mathbb{I}_{N,0}} \mathbf{Q}_{S+\tau,T,x,w}^{N} \cdot \mathscr{D}_{\tau_{N,\star}}\mathbf{Q}_{S,S+\tau,w,y}^{N}.
\end{align}
By the Holder inequality and Lemma \ref{lemma:QOffD}, we obtain the following for $\e_{1},\e_{2} \in \R_{>0}$ arbitrarily small but universal:
\begin{align}
\|\mathscr{D}_{\tau_{N,\star}}\mathbf{Q}_{S,T,x,y}^{N}\|_{\mathscr{L}^{2p}_{\omega}} \ &\lesssim_{\e_{1},\e_{2},p} \ N^{-1+\e_{1}}\rho_{S,T}^{-1/2+\e_{2}} {\sum}_{w\in\mathbb{I}_{N,0}}\|\mathscr{D}_{\tau_{N,\star}}\mathbf{Q}_{S,S+\tau,w,y}^{N}\|_{\mathscr{L}^{2p}_{\omega}}. \label{eq:QTReg21}
\end{align}
{Let us} now {estimate} the summation on the RHS of \eqref{eq:QTReg21} in {a} relatively naive fashion compared to the proof of Lemma \ref{lemma:QTReg}. In particular, if we let {$\Phi^{N}$} denote this summation, following the proof for Lemma \ref{lemma:QTReg}, using Lemma \ref{lemma:RegRBPACptTotal} while replacing the gradient of any function appearing therein by its value at two distinct points, we have
\begin{align}
\Phi_{S,\tau,y}^{N} \ &\lesssim_{p,\e_{1},\e_{2}} \ \tau^{-1+\e_{1}}\tau_{N,\star}^{1-\e_{2}} \ + \ N^{-1+2\e}\tau^{-1/2} \ + \ \tau_{N,\star} \ + \ {\sum}_{w\in\mathbb{I}_{N,0}}\Phi_{S,\tau,w,y}^{N,1} \ + \ N {\sum}_{w\in\mathbb{I}_{N,0}} \Phi_{S,\tau,w,y}^{N,2}, \label{eq:QTReg22}
\end{align}
where we have introduced the following list of quantities in which $\wt{\mathbf{Q}}_{S,T,x,y}^{N} \overset{\bullet}= \sup_{w\in\mathbb{I}_{N,0}} \mathbf{1}_{|w-x| \leq m_{N}} \mathbf{Q}_{S,T,w,y}^{N}$:
\begin{align*}
\Phi_{S,\tau,w,y}^{N,1} \ &\overset{\bullet}= \ \| \int_{S}^{S+\tau_{N,\star}} {\sum}_{z\in\mathbb{I}_{N,0}} \mathbf{P}_{R,S+\tau,w,z}^{N} \cdot \mathbf{Q}_{S,R,z,y}^{N} \d\xi_{R,z}^{N} \|_{\mathscr{L}^{2p}_{\omega}} \ + \ \| \int_{S+\tau_{N,\star}}^{S+\tau} {\sum}_{z\in\mathbb{I}_{N,0}} \mathbf{P}_{R,S+\tau,w,z}^{N} \cdot \mathscr{D}_{\tau_{N,\star}}\mathbf{Q}_{S,R,z,y}^{N} \d\xi_{R,z}^{N} \|_{\mathscr{L}^{2p}_{\omega}} \\
\Phi_{S,\tau,w,y}^{N,2} \ &\overset{\bullet}= \ \int_{S}^{S+\tau_{N,\star}} {\sum}_{z\in\mathbb{I}_{N,0}}\mathbf{P}_{R,S+\tau,w,z}^{N} \cdot \|\wt{\mathbf{Q}}_{S,R,z,y}^{N}\|_{\mathscr{L}^{2p}_{\omega}} \ \d R \ + \ \int_{S+\tau_{N,\star}}^{S+\tau} {\sum}_{z\in\mathbb{I}_{N,0}}\mathbf{P}_{R,S+\tau,w,z}^{N} \cdot \|\mathscr{D}_{\tau_{N,\star}}\wt{\mathbf{Q}}_{S,R,z,y}^{N}\|_{\mathscr{L}^{2p}_{\omega}} \ \d R.
\end{align*}
First observe that $\mathscr{D}\mathbf{Q}^{N}$ may be bounded from above by the value of $\mathbf{Q}^{N}$ at two different space-time points; in particular, we apply both Proposition \ref{prop:IOffDTotal} and Lemma \ref{lemma:QOffD} to estimate $\Phi^{N,2}$. To estimate the summation for $\Phi^{N,1}$, we avoid Lemma 3.1 of \cite{DT}. Instead, {let us} directly employ the procedure adopted in the proof for Lemma 3.1 of \cite{DT} and deduce a linear bound simply by counting the number of jumps for each Poisson clock. Ultimately, we deduce
\begin{align}
{\sum}_{w\in\mathbb{I}_{N,0}}\Phi_{S,\tau,w,y}^{N,1} + N{\sum}_{w\in\mathbb{I}_{N,0}}\Phi_{S,\tau,w,y}^{N,2} \ &\lesssim_{p} \ N^{3/2}\int_{S}^{S+\tau} {\sum}_{z\in\mathbb{I}_{N,0}}{\sum}_{w\in\mathbb{I}_{N,0}}\mathbf{P}_{R,S+\tau,w,z}^{N}\cdot\|\wt{\mathbf{Q}}_{S,R,z,y}^{N}\|_{\mathscr{L}^{2p}_{\omega}} \ \d R \\
&\lesssim_{p,\e_{1},\e_{2}} \ N^{3/2+\e_{1}} \tau. \label{eq:QTReg24}
\end{align}
We combine \eqref{eq:QTReg21}, \eqref{eq:QTReg22}, and \eqref{eq:QTReg24} to complete the proof upon recalling the respective definitions of $\tau,\tau_{N,\star} \in \R_{>0}$.
\end{proof}
%%%
%%%
\subsection{Cole-Hopf Transform}
%%%
To {establish} Theorem \ref{theorem:KPZ} {via} the dynamical one-block strategy, we require time-regularity estimates for the microscopic Cole-Hopf transform $\mathbf{Z}^{N}$ itself in addition to previous results from { Lemmas \ref{lemma:QOffD}, \ref{lemma:QTReg}, and \ref{lemma:QTReg2}.} To this end, we now appeal to the following Duhamel {form} of $\mathbf{Z}^{N}$ through the heat kernel $\mathbf{P}^{N}$ which we read off directly from Proposition \ref{prop:MatchCpt}, and not the representation with $\mathbf{Q}^{N}$, for example. This Duhamel representation was the representation for $\mathbf{Z}^{N}$-dynamics employed exclusively in \cite{Y}; provided heat kernel estimates for $\mathbf{P}^{N}$ established in prior sections, the proposed time-regularity estimate for $\mathbf{Z}^{N}$ in Lemma \ref{lemma:AsympNCCH} below follows Section 6 of \cite{Y} almost verbatim.
%%%
\begin{notation}\fsp \label{notation:CH}
Retain the notation in \emph{Proposition \ref{prop:MatchCpt}}. Provided times $S,T \in \R_{\geq 0}$ satisfying $S \leq T$ along with $x \in \mathbb{I}_{N,0}$, we consider the decomposition 
\begin{align}
\mathbf{Z}_{T,x}^{N} \ = \ \mathbf{Z}_{T,x}^{N,1} + \mathbf{Z}_{T,x}^{N,2} + \mathbf{Z}_{T,x}^{N,3} + \mathbf{Z}_{T,x}^{N,4} + \mathbf{Z}_{T,x}^{N,4} + \mathbf{Z}_{T,x}^{N,5} + \mathbf{Z}_{T,x}^{N,6} + \mathbf{Z}^{N,7}_{T,x} + \mathbf{Z}^{N,8}_{T,x},
\end{align}
where the terms on the RHS are defined as follows:
\begin{subequations}
\begin{align}
\mathbf{Z}_{T,x}^{N,1} \ &\overset{\bullet}= \ {\sum}_{y \in \mathbb{I}_{N,0}} \mathbf{P}_{0,T,x,y}^{N} \mathbf{Z}_{0,y}^{N}; \\
\mathbf{Z}_{T,x}^{N,2} \ &\overset{\bullet}= \ \int_{0}^{T} {\sum}_{y \in \mathbb{I}_{N,0}} \mathbf{P}_{S,T,x,y}^{N} \cdot \mathbf{Z}_{S,y}^{N} \ \d\xi_{S,y}^{N}; \\
\mathbf{Z}_{T,x}^{N,3} \ &\overset{\bullet}= \ \int_{0}^{T} {\sum}_{y \in \mathbb{I}_{N,0}} \mathbf{P}_{S,T,x,y}^{N} \cdot \mathbf{1}_{y\in\mathbb{I}_{N,\beta_{X}}} N^{1/2} \wt{{\sum}}_{w = 1}^{N^{\beta_{X}}} \tau_{-w} \mathfrak{g}_{S,y}^{N} \mathbf{Z}_{S,y}^{N} + N^{\beta_{X}}\wt{\mathfrak{g}}_{S,y}^{N}\ \d S; \\
\mathbf{Z}_{T,x}^{N,4} \ &\overset{\bullet}= \ \int_{0}^{T} {\sum}_{y \in \mathbb{I}_{N,0}} \mathbf{P}_{S,T,x,y}^{N} \cdot \mathbf{1}_{y\not\in\mathbb{I}_{N,\beta_{X}}} N^{1/2}\mathfrak{b}_{S,y}^{N} \mathbf{Z}_{S,y}^{N}\ \d R; \\
\mathbf{Z}_{T,x}^{N,5} \ &\overset{\bullet}= \ \int_{0}^{T} {\sum}_{y \in \mathbb{I}_{N,0}} \mathbf{P}_{S,T,x,y}^{N} \cdot \mathfrak{w}_{S,y}^{N} \mathbf{Z}_{S,y}^{N}\ \d R; \\
\mathbf{Z}_{T,x}^{N,6} \ &\overset{\bullet}= \ {\sum}_{|k| \leq m_{N}} c_{k} \int_{0}^{T} {\sum}_{y \in \mathbb{I}_{N,0}} \mathbf{P}_{S,T,x,y}^{N} \cdot \grad_{k}^{!} \left(\mathfrak{w}_{S,y}^{N,k} \mathbf{Z}_{S,y}^{N}\right)\ \d R; \\
\mathbf{Z}_{T,x}^{N,7} \ &\overset{\bullet}= \ \int_{0}^{T} {\sum}_{y \in \mathbb{I}_{N,0}} \mathbf{P}_{S,T,x,y}^{N} \cdot N \mathfrak{w}_{S,y}^{N,\pm}\mathbf{Z}_{S,y}^{N}\ \d S; \\
\mathbf{Z}_{T,x}^{N,8} \ &\overset{\bullet}= \ N^{-1/2}\wt{\sum}_{|k|\leq N^{\beta_{X}}}\int_{0}^{T}{\sum}_{y\in\mathbb{I}_{N,0}}\mathbf{P}_{S,T,x,y}^{N}\cdot\grad_{10m_{N}k}^{!}\left(\mathbf{1}_{w\in\mathbb{I}_{N,\beta_{X}-\e_{X}}}\mathfrak{b}^{N,k}_{S,y}\mathbf{Z}_{S,y}^{N}\right)\d S
\end{align}
\end{subequations}
to be totally precise, we have introduced this last functional $\mathfrak{w}^{N,\pm}$ as the extension-by-0 to $\mathbb{I}_{N,0} \setminus \llbracket 0,m_{N}-1 \rrbracket \setminus \llbracket N-m_{N}+1,N\rrbracket$ that is thus supported at the boundary of $\mathbb{I}_{N,0} \subseteq \Z_{\geq0}$:
\begin{align}
\mathfrak{w}_{S,y}^{N,\pm} \ &\overset{\bullet}= \ \mathfrak{w}_{S,y}^{N,+} + \mathfrak{w}_{S,y}^{N,-}.
\end{align}
\end{notation}
%%%
{Again, in our analysis of this decomposition for $\mathbf{Z}^{N}$, we will omit our analysis for $\mathbf{Z}^{N,8}$ since it is an easier version of analysis for $\mathbf{Z}^{N,7}$.} We {now introduce, for the purposes of presentation, a time-gradient acting on $\mathbf{Z}^{N}$ with respect to the ``forward" time-variable.}
%%%
\begin{notation}\fsp 
{Consider any $\varphi: \R_{\geq 0} \times \mathbb{I}_{N,0} \to \R$. Given any $T \in \R_{\geq 0}$ and $x \in \mathbb{I}_{N,0}$ along with any time-scale $\tau\geq0$, define}
\begin{align}
\mathfrak{D}_{\tau} \varphi_{T,x} \ &\overset{\bullet}= \ \varphi_{T+\tau,x} - \varphi_{T,x}.
\end{align}
\end{notation}
%%%
%%%
\begin{lemma}\fsp \label{lemma:AsympNCCH}
{Consider any $\tau\geq0$ satisfying $N^{-2} \lesssim \tau \leq N^{-1}$ with universal implied constant. Provided any positive $\e_{1},\e_{2},\e_{3}$ arbitrarily small but universal, define the event}
\begin{align}
\mathbf{1}[\mathscr{E}_{\tau,\e_{1},\e_{2},\e_{3}}^{N}] \ &\overset{\bullet}= \ \mathbf{1}[\| \mathfrak{D}_{\tau}\mathbf{Z}^{N} \|_{\mathscr{L}^{\infty}_{T,X}} \ \gtrsim \ N^{\e_{1}} \tau^{1/4 - \e_{2}} \ + \ N^{\e_{1}} \tau^{1/4 - \e_{2}} \| \mathbf{Z}^{N} \|_{\mathscr{L}^{\infty}_{T,X}}^{1 + \e_{3}} ];
\end{align}
the implied constant defining this event is universal. Provided any $D \in \R_{>0}$, we have
\begin{align}
\mathbf{P}[\mathscr{E}_{\tau,\e_{1},\e_{2},\e_{3}}^{N}] \ &\lesssim_{\e_{1},\e_{2},\e_{3},D,\mathfrak{t}^{\max},m_{N},\mathscr{A}_{\pm}} \ N^{-D}.
\end{align}
\end{lemma}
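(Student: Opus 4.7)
The plan is to follow exactly the strategy of Proposition 6.2 in \cite{Y}: first obtain pointwise $\mathscr{L}^{2p}_{\omega}$-estimates for each piece in the Duhamel decomposition $\mathfrak{D}_{\tau}\mathfrak{Z}^{N} = \sum_{j=1}^{6}\mathfrak{D}_{\tau}\mathfrak{Z}^{N,j}$ with $p \in \R_{>1}$ arbitrarily large but universal, and then upgrade them to a space-time uniform high-probability estimate through a standard Kolmogorov continuity criterion, paying only an extra factor of $N^{\delta}$ with $\delta \in \R_{>0}$ arbitrarily small. The only new ingredient beyond \cite{Y} is the need to invoke heat-kernel estimates for $\mathscr{U}^{N}$ with arbitrary Robin boundary parameters, namely Lemma \ref{lemma:IOnDRBPA} and Lemma \ref{lemma:RegRBPACptTotal}, rather than the full-line estimates of \cite{DT}.

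For each $j \in \llbracket 1,6 \rrbracket$, I would split the time-integral defining $\mathfrak{D}_{\tau}\mathfrak{Z}^{N,j}_{T,x}$ into a near-diagonal contribution on the time-window $[T-\tau, T+\tau]$ and an off-diagonal contribution on $[0,T-\tau]$. On the near-diagonal piece, I apply the Nash-type pointwise bound of Lemma \ref{lemma:IOnDRBPA} directly (and the BDG-type inequality Lemma 3.1 of \cite{DT} for the martingale piece $\mathfrak{Z}^{N,2}$). On the off-diagonal piece, I move the time-difference operator $\mathfrak{D}_{\tau}$ onto the heat kernel $\mathscr{U}^{N}$ and invoke the time-regularity estimates from Lemma \ref{lemma:RegRBPACptTotal}. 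For $\mathfrak{Z}^{N,1}$, the uniform $\mathscr{L}^{2p}_{\omega}$-boundedness of the near-stationary initial data means this piece contributes to the first summand in the stated estimate, which is independent of $\|\mathfrak{Z}^{N}\|_{\mathscr{L}^{\infty}_{T,X}}$. For $\mathfrak{Z}^{N,3}, \mathfrak{Z}^{N,4}, \mathfrak{Z}^{N,5}$, the uniform boundedness of the weakly vanishing and pseudo-gradient fields, combined with the heat-kernel estimates of Lemma \ref{lemma:RegRBPACptTotal}, yields the required bounds. For $\mathfrak{Z}^{N,6}$, a summation-by-parts moves the gradient onto $\mathscr{U}^{N}$ so that the spatio-temporal regularity of Lemma \ref{lemma:RegRBPACptTotal} applies. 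The piece $\mathfrak{Z}^{N,2}$ is the ultimate source of the final exponent $\tau^{1/4}$: the BDG quadratic-variation estimate provides one square-root, and the Kolmogorov-continuity upgrade provides a second, giving $\tau^{1/4 - \e_{2}}$.

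The main obstacle is the self-improving factor $\|\mathfrak{Z}^{N}\|_{\mathscr{L}^{\infty}_{T,X}}^{1 + \e_{3}}$ on the right-hand side. A naive H\"older inequality gives only linear dependence on $\|\mathfrak{Z}^{N}\|_{\mathscr{L}^{\infty}_{T,X}}$ in the $\mathscr{L}^{2p}_{\omega}$-norm, and pulling this $\mathscr{L}^{\infty}_{T,X}$-norm outside of the $\mathscr{L}^{2p}_{\omega}$-norm requires either a cutoff device or a sub-linear power loss. The additional exponent $\e_{3} \in \R_{>0}$ is precisely the space for this loss: an interpolation between a high-moment pointwise bound on $\mathfrak{Z}_{T,x}^{N}$ and a uniform $\mathscr{L}^{\infty}_{T,X}$-bound, together with a union bound over a sufficiently fine space-time grid, converts the pointwise $\mathscr{L}^{2p}_{\omega}$-estimates into a uniform high-probability estimate with $(1+\e_{3})$-power, exactly as in \cite{Y}. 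Combining this Kolmogorov-type upgrade with the $\mathscr{L}^{2p}_{\omega}$-estimates for each $\mathfrak{D}_{\tau}\mathfrak{Z}^{N,j}$ above completes the argument, with the restriction $N^{-2} \lesssim \tau \leq N^{-1}$ ensuring that the short-time singularities in the heat-kernel regularity estimates are integrable at the required level and that the Kolmogorov step has enough room.
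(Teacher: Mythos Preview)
Your proposal is correct and follows essentially the same approach as the paper: both reduce to the moment estimates of Proposition 6.2 in \cite{Y}, decompose $\mathfrak{D}_{\tau}\mathfrak{Z}^{N,j}$ into a short-time piece and a time-gradient-of-kernel piece, and invoke Lemma \ref{lemma:IOnDRBPA} and Lemma \ref{lemma:RegRBPACptTotal} in place of the full-line estimates. The only point the paper treats more carefully than your sketch is $\mathfrak{Z}^{N,6}$, where the spatially-averaged gradient estimates of Lemma \ref{lemma:RegRBPACptTotal} hold only over $\mathscr{I}_{N,\beta_{\partial}}$, forcing a separate bulk/boundary decomposition before the summation-by-parts; this is a routine refinement rather than a gap in your argument.
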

%%%
%%%
\begin{proof}
It suffices to compute sufficiently high moments in view of what is done in {Section 6 of \cite{Y}. In particular, we will control the time-regularity of $\mathbf{Z}^{N}$ by controlling the time-regularity of each term in its stochastic integral equation at the level of high, but uniformly bounded, moments. The details of these regularity estimates are recorded below.}
%%%
\begin{itemize}[leftmargin=*]
\item The first adjustment that we require is the observation that the time-regularity estimate in Lemma \ref{lemma:RegRBPACptTotal} contains an error of order {$N^{-1}|T-S|^{-1/2}$} in comparison to the time-regularity of the continuum Gaussian heat kernel, which simply forces us to keep track of additional error terms which are ultimately irrelevant. {In particular, we require the following modification of a time-regularity estimate established in Section 6 of \cite{Y}:}
\begin{align}
{\sum}_{y \in \mathbb{I}_{N,0}} | \mathscr{D}_{\tau} \mathbf{P}_{0,T,x,y}^{N} | \mathbf{Z}_{0,y}^{N} \ &\lesssim_{\e,\mathfrak{t}^{\max}} \ N^{\e} \rho_{S,T}^{-1+\e} \tau^{1 - \e} \| \mathbf{Z}^{N} \|_{\mathscr{L}^{\infty}_{T,X}} \ + \ N^{-1 + 2\e} \rho_{S,T}^{-1/2 + \e}  \| \mathbf{Z}^{N} \|_{\mathscr{L}^{\infty}_{T,X}};
\end{align}
{we also require the following modification of time-regularity estimate from {Section 6 of \cite{Y}} in which $\mathfrak{f}^{N}$ is uniformly bounded:}
\begin{align}
\int_{0}^{T} {\sum}_{y \in \mathbb{I}_{N,0}} | \mathscr{D}_{\tau} \mathbf{P}_{S,T,x,y}^{N} | \cdot N^{1/2} \mathfrak{f}_{S,y}^{N} \mathbf{Z}_{S,y}^{N}\ \d S \ &\lesssim_{\e,\mathfrak{t}^{\max},\mathfrak{f}} \ N^{1/2} \rho_{S,T}^{-1+\e} \tau^{1-\e}  \| \mathbf{Z}^{N} \|_{\mathscr{L}^{\infty}_{T,X}} \ + \ N^{-1/2 + 2\e} \rho_{S,T}^{-1/2 + \e}  \| \mathbf{Z}^{N} \|_{\mathscr{L}^{\infty}_{T,X}}. \nonumber
\end{align}
Indeed, the difference between the above pair of estimates and {their respective parallels in Section 6 of \cite{Y}} is additional second quantities on the respective RHS above, {both of which arise via the bounds} {in} Lemma \ref{lemma:RegRBPACptTotal}; {we} emphasize the second of these estimates above provides estimates for the corresponding {$\mathfrak{D}_{\tau} \mathbf{Z}^{N,j}$-quantities} for $j = 3,4,5$. {Concerning the stochastic-integral-type term with $j = 2$, we instead establish the following estimate, for which $\wt{\mathbf{Z}}^{N}$ denotes a uniformly bounded functional of the particle \emph{process}; the ingredients behind this upper bound consist of the BDG-martingale inequality of Lemma 3.1 in \cite{DT}, heat kernel estimates in Lemma \ref{lemma:RegRBPACptTotal}, and the integration-lemma in Lemma \ref{lemma:UsualSuspectInt}:}
\begin{align}
\| \int_{0}^{T} {\sum}_{y \in \mathbb{I}_{N,0}} \mathscr{D}_{\tau} \mathbf{P}_{S,T,x,y}^{N} \cdot \wt{\mathbf{Z}}_{S,y}^{N} \ \d\xi_{S,y}^{N}\|_{\mathscr{L}^{2p}_{\omega}}^{2} \ &\lesssim_{p} \ \int_{0}^{T} N {\sum}_{y \in \mathbb{I}_{N,0}} | \mathscr{D}_{\tau} \mathbf{P}_{S,T,x,y}^{N} |^{2} \ \d S \ + \ N^{-1} \\
&\lesssim_{\e} \ \int_{0}^{T} \rho_{S,T}^{-1+\e} \tau^{1/2 - \e/2} \ \d S \ + \ N^{-1+4\e} \int_{0}^{T} \rho_{S,T}^{-1+\e} \ \d S \ + \ N^{-1} \\
&\lesssim_{\e,\mathfrak{t}^{\max}} \ \tau^{1/2 - \e/2} \ + \ N^{-1+4\e}.
\end{align}
\item The remaining ingredient is {an} analysis concerning indices $j = 6,7$. The first ingredient we require is the decomposition {$\mathbf{Z}^{N,6} = \mathbf{Z}^{N,6,1} + \mathbf{Z}^{N,6,2}$} reminiscent of the definition of the stochastic fundamental solution $\mathbf{Q}^{N}$, where
\begin{subequations}
\begin{align}
\mathbf{Z}_{T,x}^{N,6,1} \ &\overset{\bullet}= \ {\sum}_{|k| \leq m_{N}} c_{k} \int_{0}^{T} {\sum}_{y \in \mathbb{I}_{N,0}} \mathbf{P}_{S,T,x,y}^{N} \cdot \grad_{k}^{!} \left( \mathbf{1}_{y \in \mathbb{I}_{N,\beta_{\partial}}} \mathfrak{w}_{S,y}^{N,k} \mathbf{Z}_{S,y}^{N}\right)\ \d R; \\
\mathbf{Z}_{T,x}^{N,6,2} &\overset{\bullet}= \ {\sum}_{|k| \leq m_{N}} c_{k} \int_{0}^{T} {\sum}_{y \in \mathbb{I}_{N,0}} \mathbf{P}_{S,T,x,y}^{N} \cdot \grad_{k}^{!} \left( \mathbf{1}_{y \not\in \mathbb{I}_{N,\beta_{\partial}}} \mathfrak{w}_{S,y}^{N,k} \mathbf{Z}_{S,y}^{N}\right)\ \d R.
\end{align}
\end{subequations}
Let us first focus on the second term $\mathbf{Z}_{T,x}^{N,6,2}$; for this term, we first write $\mathfrak{D}_{\tau} \mathbf{Z}_{T,x}^{N,6,2} = \mathbf{Z}_{T,\tau,x}^{N,6,2,1} + \mathbf{Z}_{T,\tau,x}^{N,6,2,2}$, where
\begin{subequations}
\begin{align}
\mathbf{Z}_{T,\tau,x}^{N,6,2,1} \ &\overset{\bullet}= \ {\sum}_{|k| \leq m_{N}} c_{k} \int_{0}^{T} {\sum}_{y \in \mathbb{I}_{N,0}} \mathscr{D}_{\tau} \mathbf{P}_{S,T,x,y}^{N} \cdot \grad_{k}^{!} \left( \mathbf{1}_{y \not\in \mathbb{I}_{N,\beta_{\partial}}} \mathfrak{w}_{S,y}^{N,k} \mathbf{Z}_{S,y}^{N} \right)\ \d S; \\
\mathbf{Z}_{T,\tau,x}^{N,6,2,2} \ &\overset{\bullet}= \ {\sum}_{|k| \leq m_{N}} c_{k} \int_{T}^{T+\tau} {\sum}_{y \in \mathbb{I}_{N,0}} \mathbf{P}_{S,T+\tau,x,y}^{N} \cdot \grad_{k}^{!} \left( \mathbf{1}_{y \not\in \mathbb{I}_{N,\beta_{\partial}}} \mathfrak{w}_{S,y}^{N,k} \mathbf{Z}_{S,y}^{N} \right)\ \d S.
\end{align}
\end{subequations}
{We first address $\mathbf{Z}^{N,6,2,1}$; because the functionals $\mathfrak{w}^{N,k}$ are uniformly bounded, we see}
\begin{align}
| \mathbf{Z}_{T,\tau,x}^{N,6,2,1} | \ &\lesssim_{m_{N}} \ N^{1 + \beta_{\partial}} \int_{0}^{T} \| \mathscr{D}_{\tau} \mathbf{P}_{S,T,x,y}^{N} \|_{\mathscr{L}^{\infty}_{x,y}} \ \d S \cdot \| \mathbf{Z}^{N} \|_{\mathscr{L}^{\infty}_{T,X}}.
\end{align}
{We decompose the integral remaining on the RHS into integrals over different integration-domains; this is to regularize the non-integrable singularity arising in the regularity estimate in Lemma \ref{lemma:RegRBPACptTotal}. First, given any $\e>0$ arbitrarily small but universal,}
{
\begin{align}
N^{1+\beta_{\partial}} \int_{0}^{T-\tau} \| \mathscr{D}_{\tau} \mathbf{P}_{S,T,x,y}^{N} \|_{\mathscr{L}^{\infty}_{x,y}} \ \d S \ &\lesssim_{\mathscr{A}_{\pm},\mathfrak{t}^{\max},\e} \ N^{\beta_{\partial}+2\e} \int_{0}^{T - \tau} \rho_{S,T}^{-3/2+\e} \ \d S \cdot \tau^{1 - \e} \ \lesssim_{\mathfrak{t}^{\max},\e} \ N^{\beta_{\partial}+2\e} \tau^{1/2 - \e}.
\end{align}
}
Second, via the heat kernel estimate in Lemma \ref{lemma:IOnDRBPA}, we have
{
\begin{align}
N^{1+\beta_{\partial}} \int_{T-\tau}^{T} \| \mathscr{D}_{\tau} \mathbf{P}_{S,T,x,y}^{N} \|_{\mathscr{L}^{\infty}_{x,y}} \ \d S \ &\lesssim_{\mathscr{A}_{\pm},\mathfrak{t}^{\max}} \ N^{\beta_{\partial}} \int_{T-\tau}^{T} \rho_{S,T}^{-1/2} \ \d S \ \lesssim \ N^{\beta_{\partial}} \tau^{1/2}.
\end{align}
}
Ultimately, we obtain the following estimate for $\mathbf{Z}^{N,6,2,1}$ which is certainly admissible for the proof of the current lemma:
\begin{align}
| \mathbf{Z}_{T,\tau,x}^{N,6,2,1} | \ &\lesssim_{\mathscr{A}_{\pm},\mathfrak{t}^{\max},\e} \ N^{\beta_{\partial}+2\e} \tau^{1/2 - \e}.
\end{align}
To address $\mathbf{Z}^{N,6,2,2}$, we proceed {by using} a similar though simpler calculation relying on the heat kernel estimate in Lemma \ref{lemma:IOnDRBPA}. This is done in the following {in the following display:}
{
\begin{align}
| \mathbf{Z}_{T,\tau,x}^{N,6,2,2} | \ &\lesssim_{m_{N}} \ N^{1 + \beta_{\partial}} \int_{T}^{T+\tau} \| \mathbf{P}_{S,T+\tau,x,y}^{N} \|_{\mathscr{L}^{\infty}_{x,y}} \ \d S \ \lesssim_{\mathscr{A}_{\pm},\mathfrak{t}^{\max}} \ N^{\beta_{\partial}} \tau^{1/2}.
\end{align}
}
Thus, we have the first preliminary estimate
\begin{align}
| \mathbf{Z}_{T,\tau,x}^{N,6,2} | \ &\lesssim_{m_{N},\mathscr{A}_{\pm},\mathfrak{t}^{\max},\e} \ N^{\beta_{\partial}+2\e} \tau^{1/2 - \e}.
\end{align}
\item For organizational purposes, we separate our analysis of $\mathbf{Z}^{N,6,1}$ into another bullet point; the approach is somewhat similar but this bullet point requires an additional higher-order regularity estimate for the heat kernel $\mathbf{P}^{N}$ from Lemma \ref{lemma:RegRBPACptTotal}. More precisely, we again write $\mathfrak{D}_{\tau} \mathbf{Z}_{T,x}^{N,6,1} = \mathbf{Z}_{T,\tau,x}^{N,6,1,1} + \mathbf{Z}_{T,\tau,x}^{N,6,1,2}$, where
\begin{subequations}
\begin{align}
\mathbf{Z}_{T,\tau,x}^{N,6,1,1} \ &\overset{\bullet}= \ {\sum}_{|k| \leq m_{N}} c_{k} \int_{0}^{T} {\sum}_{y \in \mathbb{I}_{N,0}} \mathscr{D}_{\tau} \mathbf{P}_{S,T,x,y}^{N} \cdot \grad_{k}^{!} \left( \mathbf{1}_{y \in \mathbb{I}_{N,\beta_{\partial}}} \mathfrak{w}_{S,y}^{N,k} \mathbf{Z}_{S,y}^{N} \right)\ \d S; \\
\mathbf{Z}_{T,\tau,x}^{N,6,1,2} \ &\overset{\bullet}= \ {\sum}_{|k| \leq m_{N}} c_{k} \int_{T}^{T+\tau} {\sum}_{y \in \mathbb{I}_{N,0}} \mathbf{P}_{S,T+\tau,x,y}^{N} \cdot\grad_{k}^{!} \left( \mathbf{1}_{y \in \mathbb{I}_{N,\beta_{\partial}}} \mathfrak{w}_{S,y}^{N,k} \mathbf{Z}_{S,y}^{N} \right)\ \d S.
\end{align}
\end{subequations}
To analyze the first of the two above quantities, as with the proof of Lemma \ref{lemma:QOffD} and Lemma \ref{lemma:QTReg} along with the previous observation that the functionals $\{\mathfrak{w}^{N,k}\}_{|k| \leq m_{N}}$ are uniformly bounded, we observe
\begin{align}
| \mathbf{Z}_{T,\tau,x}^{N,6,1,1} | \ &\lesssim \ {\sum}_{|k| \leq m_{N}} |c_{k}| \int_{0}^{T} {\sum}_{y \in \mathbb{I}_{N,\beta_{\partial}}} | \grad_{k,y}^{!} \mathscr{D}_{\tau} \mathbf{P}_{S,T,x,y}^{N} | \mathbf{Z}_{S,y}^{N} \ \d S \ \lesssim \ \| \mathbf{Z}^{N} \|_{\mathscr{L}^{\infty}_{T,X}} \int_{0}^{T} {\sum}_{y \in \mathbb{I}_{N,\beta_{\partial}}} | \grad_{k,y}^{!} \mathscr{D}_{\tau} \mathbf{P}_{S,T,x,y}^{N} | \ \d S. \nonumber
\end{align}
In view of the non-integrable singularity appearing in the space-time regularity estimate of Lemma \ref{lemma:RegRBPACptTotal}, we again decompose the integral into separate integration-domains. First, appealing to Lemma \ref{lemma:RegRBPACptTotal}, we have
{
\begin{align*}
\int_{0}^{T-\tau} {\sum}_{y \in \mathbb{I}_{N,\beta_{\partial}}} | \grad_{k,y}^{!} \mathscr{D}_{\tau} \mathbf{P}_{S,T,x,y}^{N} | \ \d S \ &\lesssim_{\beta_{\partial},\mathscr{A}_{\pm},\mathfrak{t}^{\max},\e,k} \ \tau \int_{0}^{T-\tau} \rho_{S,T}^{-3/2} \ \d R \ + \ N^{-1+2\e} \int_{0}^{T-\tau} \rho_{S,T}^{-1} \ \d S \ \lesssim_{\e} \ \tau^{1/2} \ + \ N^{-1+3\e};
\end{align*}
}
above, we have used the estimate $|\log \tau| \lesssim_{\e} N^{\e}$ provided that $N^{-2} \lesssim \tau \leq N^{-1}$. Meanwhile, by the Nash-type heat kernel estimate in Lemma \ref{lemma:IOnDRBPA}, we also have
{
\begin{align}
\int_{T-\tau}^{T} {\sum}_{y \gtrsim N^{\beta_{\partial}}} | \grad_{k,y}^{!} \mathscr{D}_{\tau} \mathbf{P}_{S,T,x,y}^{N} | \ \d S \ &\lesssim_{k,\mathscr{A}_{\pm},\mathfrak{t}^{\max}} \ \int_{T-\tau}^{T} \rho_{S,T}^{-1/2} \ \d S \ \lesssim \ \tau^{1/2}.
\end{align}
}
\item It remains to treat the term corresponding to the index $j = 7$. To this end, {observe the following straightforward preliminary bound which follows from the local nature of $\mathfrak{w}^{N,\pm}$:}
\begin{align}
\| \int_{0}^{T} {\sum}_{y \in \mathbb{I}_{N,0}} |\mathscr{D}_{\tau}\mathbf{P}_{S,T,x,y}^{N}| \cdot N |\mathfrak{w}_{S,y}^{N,\pm}|\mathbf{Z}_{S,y}^{N}\ \d S \|_{\mathscr{L}^{\infty}_{T,X}} \ &\lesssim \ \|\mathbf{Z}^{N}\|_{\mathscr{L}^{\infty}_{T,X}} \| N\int_{0}^{T}\|\mathscr{D}_{\tau}\mathbf{P}_{S,T,x,y}^{N}\|_{\mathscr{L}^{\infty}_{T,X}} \ \d S \|_{\mathscr{L}^{\infty}_{T,X}}.
\end{align}
We now decompose the integral into two pieces via $[0,T] = [0,T-\tau^{1/2}] \cup [T-\tau^{1/2},T]$. For the integral on the first of these blocks, we directly employ Lemma \ref{lemma:RegRBPACptTotal} to deduce
\begin{align}
\| N\int_{0}^{T-\tau^{1/2}}\|\mathscr{D}_{\tau}\mathbf{P}_{S,T,x,y}^{N}\|_{\mathscr{L}^{\infty}_{T,X}} \ \d S \|_{\mathscr{L}^{\infty}_{T,X}} \ &\lesssim_{\e} \ \| \int_{0}^{T-\tau^{1/2}}\left(\tau^{1-\e} \rho_{S,T}^{-3/2+\e} + N^{-1+\e}\rho_{S,T}^{-1} + \tau \rho_{S,T}^{-1/2}\right)\ \d S\|_{\mathscr{L}^{\infty}_{T,X}} \\
&\lesssim_{\mathfrak{t}^{\max},\e} \ \tau^{3/4-\e} \ + \ N^{-1+2\e} \ + \ \tau.
\end{align}
For the integral on the second domain, we {use} Lemma \ref{lemma:IOnDRBPA} and forget about the time-gradient entirely; this provides the estimate
{
\begin{align}
\| N\int_{T-\tau^{1/2}}^{T}\|\mathscr{D}_{\tau}\mathbf{P}_{S,T,x,y}^{N}\|_{\mathscr{L}^{\infty}_{T,X}} \ \d S \|_{\mathscr{L}^{\infty}_{T,X}} \ &\lesssim \ \| \int_{T-\tau^{1/2}}^{T} \rho_{S,T}^{-1/2} \ \d S \|_{\mathscr{L}^{\infty}_{T,X}} \ \lesssim \ \tau^{1/4}.
\end{align}
}
\end{itemize}
This completes the proof. 
\end{proof}
%%%
%%%
\section{Dynamical One-Block Analysis}\label{section:d1b}
%%%
In this section, we develop the necessary ingredients to basically remove what we will view as the most delicate error terms in the stochastic equations in Proposition \ref{prop:MatchCpt}. To make this precise, we first introduce the notation below, followed by the main result of this section, after which we provide further explanation and discussion.
%%%
\begin{notation}\label{notation:d1b1}
\fsp Adopting the notation of \emph{Proposition \ref{prop:MatchCpt}} and \emph{Notation \ref{notation:sfs}}, we define the functional
\begin{align}
\mathfrak{q}_{T,x}^{N} \ &= \ \mathbf{1}_{x\in\mathbb{I}_{N,\beta_{X}+2\e_{X}}}N^{1/2}\wt{\sum}_{1\leq w\leq N^{\beta_{X}}}\tau_{-10m_{N}w}\mathfrak{g}^{N}_{T,x} + \mathbf{1}_{x\in\mathbb{I}_{N,\beta_{X}+2\e_{X}}}N^{\beta_{X}}\wt{\mathfrak{g}}_{T,x}^{N} + N\mathfrak{w}_{T,x}^{N,\pm} \nonumber \\
&\quad\quad + {\sum}_{|k|\leq m_{N}}c_{k} (\mathbf{Z}^{N}_{T,x})^{-1}\grad_{k}^{!}\left(\mathbf{1}_{x\not\in\mathbb{I}_{N,\beta_{\partial}}}\mathfrak{w}_{T,x}^{N,k}\mathbf{Z}^{N}_{T,x}\right). \nonumber
\end{align}
In particular, we have the following representation for the microscopic Cole-Hopf transform:
\begin{align}
\mathbf{Z}_{T,x}^{N} \ = \ {\sum}_{y\in\mathbb{I}_{N,0}}\mathbf{Q}_{0,T,x,y}^{N}\mathbf{Z}_{0,y}^{N} + \int_{0}^{T}{\sum}_{y\in\mathbb{I}_{N,0}}\mathbf{Q}_{S,T,x,y}^{N}\cdot\left(\mathfrak{q}_{S,y}^{N}+\mathbf{1}_{y\not\in\mathbb{I}_{N,\beta_{X}+2\e_{X}}}N^{1/2}\mathfrak{b}_{S,y}^{N}\right)\mathbf{Z}_{S,y}^{N}\d S. \label{eq:SHESFSEvEq}
\end{align}
\end{notation}
%%%
The main result estimates the second term on the RHS of \eqref{eq:SHESFSEvEq}, ultimately reducing proof of Theorem \ref{theorem:KPZ} to analysis of $\mathbf{Q}^{N}$.
%%%
\begin{prop}\label{prop:d1b2}
\fsp Consider the following events parameterized by $\beta_{\mathrm{u}},\e>0$, where the implied constants are universal:
\begin{align}
\mathbf{1}[\mathscr{E}_{\t^{\max},\beta_{\mathrm{u}},\e}] \ &\overset{\bullet}= \ \mathbf{1}\left(\|\int_{0}^{T}{\sum}_{y\in\mathbb{I}_{N,0}}\mathbf{Q}_{S,T,x,y}^{N}\cdot\mathfrak{q}_{S,y}^{N}\mathbf{Z}_{S,y}^{N}\d S\|_{\mathscr{L}^{\infty}_{T,X}} \gtrsim N^{-\beta_{\mathrm{u}}}+N^{-\beta_{\mathrm{u}}}\|\mathbf{Z}^{N}\|_{\mathscr{L}^{\infty}_{T,X}}^{1+\e}\right) \\
\mathbf{1}[\mathscr{E}_{\t^{\max},\beta_{\mathrm{u}}}] \ &\overset{\bullet}= \ \mathbf{1}\left(\|\int_{0}^{T}{\sum}_{y\in\mathbb{I}_{N,0}}\mathbf{Q}_{S,T,x,y}^{N}\cdot\mathbf{1}_{y\not\in\mathbb{I}_{N,\beta_{X}+2\e_{X}}}N^{1/2}\mathfrak{b}^{N}_{S,y}\mathbf{Z}_{S,y}^{N}\d S\|_{\mathscr{L}^{\infty}_{T,X}} \gtrsim N^{-\beta_{\mathrm{u}}}\|\mathbf{Z}^{N}\|_{\mathscr{L}^{\infty}_{T,X}}\right).
\end{align}
Provided $\beta_{\mathrm{u},1},\beta_{\partial}>0$ are sufficiently small, we have the following estimate for $\e,\beta_{\mathrm{u},2}>0$ universal:
\begin{align}
\mathbf{P}[\mathscr{E}_{\t^{\max},\beta_{\mathrm{u},1},\e}] + \mathbf{P}[\mathscr{E}_{\t^{\max},\beta_{\mathrm{u},1}}] \ \lesssim_{\t^{\max},\e} \ N^{-\beta_{\mathrm{u},2}}. 
\end{align}

\end{prop}
%%%
The proof of Proposition \ref{prop:d1b2} consists of many moving parts, so we will summarize them in the following ``table of contents" that basically amount to averaging $\mathfrak{q}^{N}$ on mesoscopic space-time scales and taking advantage of fast relaxation speeds on local scales. First, we emphasize that the reader is invited to skip to the next section if the reader is willing to skip the proof of Proposition \ref{prop:d1b2}, especially given that the details of the proof of Proposition \ref{prop:d1b2} will not be used heavily beyond this section.
%%%
\begin{itemize}
\item The first ingredient, which will be slightly important beyond Proposition \ref{prop:d1b2} itself, is an entropy production bound with respect to a static reference measure that is \emph{not} necessarily an invariant measure. Roughly speaking, this entropy production bound will not be a standard application of general principles, but because the reference measure, to which we ultimately compare the law of the particle system, is not an invariant measure, we must analyze additional error terms that basically amount to the adjoint of the generator of the particle system acting on said reference measure; these certainly vanish if we chose the reference measure to be an invariant measure. We combine entropy production with the log-Sobolev inequality of \cite{Yau} to establish a version of ``local equilibrium" for bulk statistics, for which the dynamics look ``boundary free". For boundary statistics, we do the same but with a standard log-Sobolev inequality instead of that in \cite{Yau} to account for creation/annihilation components in the system dynamics.
\item The second ingredient is the first of two multiscale expansions. Roughly speaking, this second step will replace terms in $\mathfrak{q}^{N}$ with their time-averages on mesoscopic time-scales. The utility is in allowing us to take advantage of strong time-ergodic properties of the particle system; for example, the backbone of the energy solution theory \cite{GJ15} is analyzing statistics of the particle system through their time-averages, with larger-scale time-averages allowing us to average more and therefore ``estimate better". As for how replacement-by-time-average for $\mathfrak{q}^{N}$ in the space-time integral of interest in Proposition \ref{prop:d1b2} is justified rigorously, observe that $\mathbf{Q}^{N}$ and $\mathbf{Z}^{N}$ are quite time-regular by Lemmas \ref{lemma:QTReg}, \ref{lemma:QTReg2}, and \ref{lemma:AsympNCCH}. Thus, on local time-scales, it does not matter if we integrate $\mathfrak{q}^{N}$ against $\mathbf{Q}^{N}$ and $\mathbf{Z}^{N}$ or its time-average. More precisely, the error we pick up is controlled by time-regularity of $\mathbf{Q}^{N}$ and $\mathbf{Z}^{N}$. As both are supposed to look like the stochastic heat equation on $\mathbb{I}_{N,0}$, we ultimately expect time-regularity of Holder exponent roughly $1/4$ for both, which is exactly what we end up ultimately deriving from Lemmas \ref{lemma:QTReg}, \ref{lemma:QTReg2}, and \ref{lemma:AsympNCCH} up to arbitrarily small but fixed positive and universal powers of the scaling parameter $N$. We conclude this bullet point by emphasizing the multiscale component of this time-replacement scheme, at least for bulk statistics in $\mathfrak{q}^{N}$, is to replace by time-average on \emph{some} mesoscopic time-scale, and then improve/increase the time-scale of time-average by a small power of $N$ a bounded number of steps. This is ultimately because Holder regularity of exponent $1/4$ is not strong enough to control the error in replacement by a time-average on a larger time-scale simply due to the powers of $N$ appearing in the first two terms in $\mathfrak{q}^{N}$. However, with every replacement-by-time-average on progressively bigger time-scales, our estimates for said first two terms in $\mathfrak{q}^{N}$ improve, letting us control the error in replacement-by-time-average on progressively bigger time-scales.
\item The third ingredient is the second of two multiscale expansions when analyzing the mesoscopic time-averages of bulk statistics in $\mathfrak{q}^{N}$. This second multiscale expansion is harder to explain, and it comes from Section 7 of \cite{Y}. Roughly speaking, directly reducing analysis of time-averages on larger time-scales to stationary estimates is too difficult because our local equilibrium mechanism breaks down for larger space-time scales. To control a time-average on a larger time-scale, we can also decompose it as an average of time-averages on smaller time-scales; for example, averaging ten terms is the same as averaging two other terms, each given by averaging terms of odd/even index. However, this does not let us take advantage of the benefit in the original time-average on larger time-scale. One strategy that we can employ to not lose this benefit is to introduce cutoffs in the following fashion. Suppose we have a priori upper bounds for the smaller time-scale averages. We can glue these a priori upper bounds into the same upper bounds for time-averages on slightly larger time-scales. We then take advantage of the slightly larger time-scale to improve this a priori bound, glue this improved bound to slightly larger time-scale, and repeat.
\item At the end of the day, after some technical gymnastics, we reduce all of our estimates of $\mathfrak{q}^{N}$ and the time-averages to quantities that can be controlled via the local equilibrium method in the first bullet point above. We record estimates for these that complete the proof of the estimate in Proposition \ref{prop:d1b2} for $\mathfrak{q}^{N}$, whose proofs follow the methodology of the one-block scheme in Section 5 of  \cite{Y} and elementary power-counting in $N$. The estimate for $\mathfrak{b}^{N}$ in Proposition \ref{prop:d1b2} basically follows from the fact that in said estimate, the $\mathfrak{b}^{N}$ term is supported on a small spatial set along with heat-kernel-type estimates for $\mathbf{Q}^{N}$.
\item We delay the technical and long proofs of preliminary results in this section to the appendix, except for those of Lemmas \ref{lemma:d1b4} and \ref{lemma:d1b5} that are impacted most by the presence of a boundary/lack of explicit invariant measure. 
\end{itemize}
%%%
%%%
\subsection{Local Equilibrium}
%%%
We begin with some important notation concerning the relevant static reference measures. Although these reference measures are technically not invariant measures of the particle system, nor will they be localizations, marginals, or other simple transformations of invariant measures, we will still compare the law of the particle system to these measures. Indeed, this is because the obstruction to these not being invariant is lower-order asymmetry of the boundary dynamics. Alternatively, for bulk statistics and their relevant dynamics, which do not see the boundary dynamics, these measures \emph{are} invariant measures, and application of results in this subsection to bulk statistics will be at local space-time scales, away from boundaries. In what follows, we refer to product measures as grand-canonical measures/ensembles and uniform measures as canonical measures/ensembles.
%%%
\begin{definition}\label{definition:d1b3}
\fsp Consider any subset $\mathbb{K}\subseteq\mathbb{I}_{N,0}$ and any $\sigma\in\R$. We first let $\mu_{\sigma,\mathbb{K}}$ be the product Bernoulli measure on $\Omega_{\mathbb{K}}$ whose one-dimensional marginals have expectation $\sigma$. We also define $\mu_{\sigma,\mathbb{K}}^{\mathrm{can}}$ to be the uniform measure on the hyperplane in $\Omega_{\mathbb{K}}$ consisting of all configurations in $\Omega_{\mathbb{K}}$ whose average of $\eta$-values over points in $\mathbb{K}$ is equal to $\sigma$. Now consider any probability density $\mathfrak{f}$ with respect to $\mu_{0,\mathbb{I}_{N,0}}$.  We first define $\Pi_{\mathbb{K}}\mathfrak{f}$ as the probability density with respect to $\mu_{0,\mathbb{K}}$ for the projection/marginal of the probability measure $\mathfrak{f}\d\mu_{0,\mathbb{I}_{N,0}}$ onto $\Omega_{\mathbb{K}}$. We will also define $\Pi_{\sigma,\mathbb{K}}^{\mathrm{can}}\mathfrak{f}$ as the probability density with respect to $\mu_{\sigma,\mathbb{K}}^{\mathrm{can}}$ corresponding to the measure obtained by conditioning $\Pi_{\mathbb{K}}\mathfrak{f}\d\mu_{0,\mathbb{K}}$ on the hyperplane support of $\mu_{\sigma,\mathbb{K}}^{\mathrm{can}}$. We now define the following grand-canonical and canonical relative entropy/KL-divergence functionals for any $\mathbb{K}\subseteq\mathbb{I}_{N,0}$ and any density parameter $\sigma\in\R$:
\begin{align}
\mathfrak{D}_{\mathrm{KL}}^{\mathbb{K}}(\mathfrak{f}) \ = \ \E^{\mu_{0,\mathbb{K}}}\Pi_{\mathbb{K}}\mathfrak{f}\log\Pi_{\mathbb{K}}\mathfrak{f} \and \mathfrak{D}_{\mathrm{KL}}^{\sigma,\mathbb{K}}(\mathfrak{f}) \ = \ \E^{\mu_{\sigma,\mathbb{K}}^{\mathrm{can}}}\Pi_{\sigma,\mathbb{K}}^{\mathrm{can}}\mathfrak{f}\log\Pi_{\sigma,\mathbb{K}}^{\mathrm{can}}\mathfrak{f}.
\end{align}
We will additionally define the following grand-canonical and canonical Dirichlet form functionals, in which we recall $\mathfrak{S}_{x,y}$ is the generator for the constant speed-1 symmetric exclusion process on the bond $\{x,y\}\subseteq\mathbb{I}_{N,0}$:
\begin{align}
\mathfrak{D}_{\mathrm{Dir}}^{\mathbb{K}}(\mathfrak{f}) \ = \ -{\sum}_{x,x+1\in\mathbb{K}}\E^{\mu_{0,\mathbb{K}}}\mathfrak{f}^{1/2}\mathfrak{S}_{x,x+1}\mathfrak{f}^{1/2} \and \mathfrak{D}_{\mathrm{Dir}}^{\sigma,\mathbb{K}}(\mathfrak{f}) \ = \ -{\sum}_{x,x+1\in\mathbb{K}}\E^{\mu_{\sigma,\mathbb{K}}^{\mathrm{can}}}\Pi_{\sigma,\mathbb{K}}^{\mathrm{can}}\mathfrak{f}^{1/2}\mathfrak{S}_{x,x+1}\Pi_{\sigma,\mathbb{K}}^{\mathrm{can}}\mathfrak{f}^{1/2}.
\end{align}
Lastly, consider any probability measure $\mu=\mu_{0}$ on $\Omega$. For any $T\geq0$, we define $\mu_{T}$ to be the probability measure on $\Omega$ given by the law of the particle system at time $T$ if the initial law is $\mu_{0}$. Lastly, we define the Radon-Nikodym derivative $\mathfrak{f}_{T}\d\mu_{0,\mathbb{I}_{N,0}}=\d\mu_{T}$.
\end{definition}
%%%
%%%
\begin{lemma}\label{lemma:d1b4}
\fsp Consider any strictly positive time $T$. For any probability measure $\mu_{0}$ on $\Omega$, we have
\begin{align}
\int_{0}^{T}\mathfrak{D}_{\mathrm{Dir}}^{\mathbb{I}_{N,0}}(\mathfrak{f}_{S})\d S \ \lesssim \ N^{-1} + N^{-1/2}T.
\end{align}
\end{lemma}
%%%
Before we proceed with the proof of Lemma \ref{lemma:d1b4}, let us note that its claim might look striking or appear incorrect provided that the grand-canonical measure is not necessarily an invariant measure for the particle system. We emphasize that the Dirichlet form estimate of Lemma \ref{lemma:d1b4} \emph{does not} imply that the law of the particle system equilibrates to this grand-canonical measure in the infinite time limit. To see this, let us note that the optimal LSI constant for the grand-canonical measure, and likely for the particle dynamics as well, is diffusive in the length-scale; see Theorem A of \cite{Yau}. This implies that the distance, at least in relative entropy, between the law of the particle system and the grand-canonical measure after projecting onto any subset $\mathbb{I}$ is of order $|\mathbb{I}|^{2}$ times the Dirichlet form. If we look globally, so for $|\mathbb{I}|$ of order $N$, then Lemma \ref{lemma:d1b4} implies nothing in the direction of relaxation to grand-canonical measure because of the additional term that grows linearly in time. To alternatively illustrate why the decay of the Dirichlet form with respect to grand-canonical measure does not imply convergence to grand-canonical measure, even if we considered a particle system on the torus with no boundary, then any two grand-canonical measures are invariant, and thus Dirichlet forms with respect to any two grand-canonical measures decay in time. But both measures are invariant, thus starting at one grand-canonical measure does not lead to converging to another grand-canonical measure, namely with different density parameter $\sigma$, even if the Dirichlet form of the first with respect to the latter decays. We conclude this brief note by emphasizing the utility of Lemma \ref{lemma:d1b4} is relaxation to canonical measures, which become relevant when projecting onto ergodic hyperplane supports of canonical measures, on \emph{local} scales instead of global scales, so the diffusive factor mentioned earlier in this paragraph is sufficiently small. Indeed, the time-scale that is relevant for local spatial scales is sufficiently small so that the asymmetry in the particle system, which is the obstruction to grand-canonical measure in Lemma \ref{lemma:d1b4} not being invariant, is too weak to affect closeness to local canonical measures.
%%%
\begin{proof}
For notational convenience, we define the following notation for the time-evolved Dirichlet form and relative entropy for the time $T$ Radon-Nikodym derivative $\mathfrak{f}_{T}$ with respect to the grand-canonical measure $\mu_{0,\mathbb{I}_{N,0}}$:
\begin{align}
\mathfrak{D}_{\mathrm{KL}}(T) \ = \ \mathfrak{D}_{\mathrm{KL}}^{\mathbb{I}_{N,0}}(\mathfrak{f}_{T}) \and \mathfrak{D}_{\mathrm{Dir}}(T) \ = \ \mathfrak{D}_{\mathrm{Dir}}^{\mathbb{I}_{N,0}}(\mathfrak{f}_{T}).
\end{align}
We will now differentiate $\mathfrak{D}_{\mathrm{KL}}(T)$ in time. Because this term is given by the expectation of $\log\mathfrak{f}_{T}$ with respect to the time-$T$ law of the particle system, the Kolmogorov equation lets us write this time-derivative as the expectation, again with respect to the time-$T$ law of the particle system, of the generator of the interacting particle system acting on $\log\mathfrak{f}_{T}$. This final statement does not require us to take the Radon-Nikodym derivative with respect to the invariant measure of the particle system, and it can be shown by observing $\partial_{T}\mathfrak{f}_{T}$ is equal to the adjoint of the generator acting on $\mathfrak{f}_{T}$, where the adjoint is taken with respect to whatever reference measure is used to define the Radon-Nikodym derivative $\mathfrak{f}_{T}$; if we change this reference measure, the adjoint changes accordingly. After taking expectation of the adjoint action on $\mathfrak{f}_{T}$ times $\log\mathfrak{f}_{T}$, all with respect to the grand-canonical reference measure, we can then move the adjoint and have the generator itself act on $\log\mathfrak{f}_{T}$. Ultimately, we obtain the following calculation. First, let us define
\begin{align}
\beta_{k,\min}^{N,\pm} \ &\overset{\bullet}= \ \beta_{k,+}^{N,\pm} \wedge \beta_{k,-}^{N,\pm} \and \beta_{k,\mathrm{rem}}^{N,\pm}(\eta) \ \overset{\bullet}= \ \mathbf{1}_{\beta_{k,\min}^{N,\pm} = \beta_{k,+}^{N,\pm}} \left( \beta_{k,-}^{N,\pm} - \beta_{k,+}^{N,\pm} \right) \frac{1+\eta_{x}}{2} \ + \ \mathbf{1}_{\beta_{k,\min}^{N,\pm} = \beta_{k,-}^{N,\pm}} \left( \beta_{k,+}^{N,\pm} - \beta_{k,-}^{N,\pm} \right) \frac{1-\eta_{x}}{2}. \nonumber
\end{align}
We then compute, with explanation afterwards and with $\mathfrak{T}$ the annihilation/creation operators, the entropy production
\begin{align}
\partial_{T}\mathfrak{D}_{\mathrm{KL}}(T) \ &= \ 2^{-1}N^{2} \E \mathfrak{f}_{T} {\sum}_{k = 1}^{m_{N}} {\sum}_{x,x+k \in \mathbb{I}_{N,0}} \left( \alpha_{k}^{N} + \frac{\alpha_{k}^{N}\gamma_{k}^{N}}{\sqrt{N}} \frac{1+\eta_{x}}{2}\frac{1-\eta_{x+k}}{2} \right) \cdot \mathfrak{S}_{x,x+k} \log \mathfrak{f}_{T} \\
&\quad\quad + \ 2^{-1} N^{2} \E \mathfrak{f}_{T} {\sum}_{x = 1}^{m_{N}} \beta_{x,+}^{N,-} \frac{1-\eta_{x}}{2} \cdot \mathfrak{T}_{x,+} \log \mathfrak{f}_{T} \nonumber \\
&\quad\quad + \ 2^{-1} N^{2} \E \mathfrak{f}_{T} {\sum}_{x = 1}^{m_{N}} \beta_{x,-}^{N,-} \frac{1+\eta_{x}}{2}\cdot\mathfrak{T}_{x,-} \log \mathfrak{f}_{T} \nonumber \\
&\quad\quad + \ 2^{-1} N^{2} \E \mathfrak{f}_{T} {\sum}_{x = N-m_{N}+1}^{N} \beta_{x,+}^{N,+} \frac{1-\eta_{x}}{2} \cdot \mathfrak{T}_{x,+} \log \mathfrak{f}_{T} \nonumber \\
&\quad\quad + \ 2^{-1} N^{2} \E \mathfrak{f}_{T} {\sum}_{x = N-m_{N}+1}^{N} \beta_{x,-}^{N,+} \frac{1+\eta_{x}}{2} \cdot \mathfrak{T}_{x,-} \log \mathfrak{f}_{T} \ \overset{\bullet}= \ \mathbf{I}_{T} \ + \ \mathbf{II}_{T},
\end{align}
where we have set $\mathbf{I}_{T}$ to be the terms that come from the symmetric part of the particle dynamics, which involve the ``common" min-$\beta$ rates, introduced above, that are shared between both the annihilation and creation dynamics, and we have set $\mathbf{II}_{T}$ to be the terms that come from the asymmetric part of the particle dynamics, which involve the above rem-$\beta$ rates:
\begin{subequations}
\begin{align}
\mathbf{I}_{T} \ &\overset{\bullet}= \ 2^{-1} N^{2} \E \mathfrak{f}_{T} {\sum}_{k = 1}^{m_{N}} {\sum}_{x,x+k \in \mathbb{I}_{N,0}} \alpha_{k}^{N}  \mathfrak{S}_{x,x+k} \log \mathfrak{f}_{T} \ + \ 2^{-1} N^{2} \E \mathfrak{f}_{T} {\sum}_{x = 1}^{m_{N}} \beta_{x,\min}^{N,-} \frac{1-\eta_{x}}{2}  \mathfrak{T}_{x,+} \log \mathfrak{f}_{T} \\
&\quad\quad + \ 2^{-1} N^{2} \E \mathfrak{f}_{T} {\sum}_{x = 1}^{m_{N}} \beta_{x,\min}^{N,-} \frac{1+\eta_{x}}{2}  \mathfrak{T}_{x,-} \log \mathfrak{f}_{T} \ + \ 2^{-1} N^{2} \E \mathfrak{f}_{T} {\sum}_{x = N-m_{N}+1}^{N} \beta_{x,\min}^{N,+} \frac{1-\eta_{x}}{2}\mathfrak{T}_{x,+} \log \mathfrak{f}_{T} \nonumber \\
&\quad\quad + \ 2^{-1} N^{2} \E \mathfrak{f}_{T} {\sum}_{x = N-m_{N}+1}^{N} \beta_{x,\min}^{N,+} \frac{1+\eta_{x}}{2}  \mathfrak{T}_{x,-} \log \mathfrak{f}_{T} \nonumber \\
\mathbf{II}_{T} \ &\overset{\bullet}= \ 2^{-1} N^{2} \E \mathfrak{f}_{T} {\sum}_{k = 1}^{m_{N}} {\sum}_{x,x+k \in \mathbb{I}_{N,0}} \frac{\alpha_{k}^{N}\gamma_{k}^{N}}{\sqrt{N}} \frac{1+\eta_{x}}{2}\frac{1-\eta_{x+k}}{2} \mathfrak{S}_{x,x+k} \log \mathfrak{f}_{T} \\
&\quad\quad + \ 2^{-1} N^{2} \E \mathfrak{f}_{T} {\sum}_{x =1}^{m_{N}} \beta_{x,\mathrm{rem}}^{N,-}(\eta)  \mathfrak{T}_{x} \log \mathfrak{f}_{T} \ + \ 2^{-1} N^{2} \E \mathfrak{f}_{T} {\sum}_{x =N-m_{N}+1}^{N} \beta_{x,\mathrm{rem}}^{N,+}(\eta)  \mathfrak{T}_{x} \log \mathfrak{f}_{T}. \nonumber
\end{align}
\end{subequations}
We will first analyze the $\mathbf{I}_{T}$ term. This begins with the following two observations that are relatively straightforward and thus stated without any proof. First, the $\mathfrak{S}$-operators themselves admit the grand-canonical measure at hand as an invariant measure. Second, the sum of the $\mathfrak{T}$-operators in $\mathbf{I}_{T}$ also admit the same grand-canonical measure as an invariant measure, because the creation and annihilation speeds are equal; this is the statement that flipping the sign of a $\pm$-spin at a speed independent of the value of the spin has invariant measure given by mean-zero Bernoulli. In any case, following standard procedure like in Theorem 9.2 of Appendix 1 in \cite{KL} then implies the following bound for $\mathbf{I}_{T}$, which we emphasize sees the grand-canonical measure as an invariant measure as far as the algebra and calculations for Theorem 9.2 of Appendix 1 in \cite{KL} are performed:
\begin{align}
\mathbf{I}_{T} \ \lesssim_{\alpha_{1}^{N}} \ -N^{2}\mathfrak{D}_{\mathrm{Dir}}(T).
\end{align}
It is left to analyze the $\mathbf{II}_{T}$-term. For this, let us note that it is order $N^{-1/2}$ smaller than $\mathbf{I}_{T}$ at the level of $N$-dependence of rates, so we will treat it perturbatively; this is where the additional term in the bound of Lemma \ref{lemma:d1b4} comes from. Before we proceed, let us note that the third term in the sum defining $\mathbf{II}_{T}$ is a mirror-image copy of the second term therein. As the exact numerical values of the speeds in the boundary dynamics are irrelevant, analysis of this third term is identical to that of the second term. Thus, for convenience, we forget said third term for the rest of this proof. With this change in $\mathbf{II}_{T}$, we write $\mathbf{II}_{T}=\mathbf{II}_{T,1}+\mathbf{II}_{T,2}$, where
\begin{subequations}
\begin{align}
\mathbf{II}_{T,1} \ &\overset{\bullet}= \ 2^{-1} N^{2} \E \mathfrak{f}_{T} {\sum}_{k = 1}^{m_{N}} {\sum}_{x,x+k \in \mathbb{I}_{N,0}} \frac{\alpha_{k}^{N}\gamma_{k}^{N}}{\sqrt{N}} \frac{1+\eta_{x}}{2}\frac{1-\eta_{x+k}}{2} \cdot \mathfrak{S}_{x,x+k} \log \mathfrak{f}_{T}; \\
\mathbf{II}_{T,2} \ &\overset{\bullet}= \ 2^{-1} N^{2} \E \mathfrak{f}_{T} {\sum}_{x = 1}^{m_{N}} \beta_{x,\mathrm{rem}}^{N,-}(\eta) \cdot \mathfrak{T}_{x} \log \mathfrak{f}_{T}.
\end{align}
\end{subequations}
By concavity of the logarithm function, and therefore non-positivity of its second derivative, we deduce the following upper bound after Taylor expansion and upon realizing that every difference of logarithms is multiplied only by non-negative quantities in $\mathbf{II}_{T,1}$. We will additionally use invariance of the grand-canonical measure under $\eta$-swap operators $\mathfrak{S}$ to get
\begin{align}
\mathbf{II}_{T,1} \ &\leq \ 2^{-1}N^{2} \E {\sum}_{k = 1}^{m_{N}} {\sum}_{x,x+k \in \mathbb{I}_{N,0}} \frac{\alpha_{k}^{N}\gamma_{k}^{N}}{\sqrt{N}} \frac{1+\eta_{x}}{2}\frac{1-\eta_{x+k}}{2} \cdot \mathfrak{S}_{x,x+k} \mathfrak{f}_{T} \\
&= \ 8^{-1}N^{\frac32} {\sum}_{k = 1}^{m_{N}} \alpha_{k}^{N} \gamma_{k}^{N} {\sum}_{x,x+k \in \mathbb{I}_{N,0}} \E\mathfrak{f}_{T} \left( \eta_{x+k} - \eta_{x} \right).
\end{align}
Moving the sum over $x,x+k$ past the expectation and $\mathfrak{f}_{T}$ gives a telescoping sum that leaves us with order $k$-many boundary terms. Thus, because $\alpha^{N}\gamma^{N}$ coefficients have uniformly bounded first moments as measures on $\Z_{\geq0}$, and because $\E\mathfrak{f}_{T}=1$, we deduce
\begin{align}
\mathbf{II}_{T,1} \ &\lesssim \ N^{\frac32} {\sum}_{k = 1}^{m_{N}}k\alpha_{k}^{N} \gamma_{k}^{N} \E \mathfrak{f}_{T} \ \lesssim \ N^{\frac32}.
\end{align}
To estimate $\mathbf{II}_{T,2}$, we first observe that all $\beta$-factors in $\mathbf{II}_{T,2}$ are order $N^{-1/2}$ by construction. Moreover, we again apply concavity of the logarithm function and non-negativity of $\beta$-factors in $\mathbf{II}_{T,2}$, whose sole purpose is giving the following bound, to deduce the following estimate in which we again employ invariance of the grand-canonical measure under flipping spin operators $\mathfrak{T}_{x}$ that we remarked on to in order to establish the following estimate to complete our analysis of $\mathbf{II}_{T}$:
\begin{align}
\mathbf{II}_{T,2} \ \lesssim \ N^{\frac32}{\sum}_{x=1}^{m_{N}}\E(\mathfrak{T}_{x}\mathfrak{f}_{T} + \mathfrak{f}_{T}) \ \lesssim_{m_{N}} \ N^{\frac32}.
\end{align}
Combining all of our estimates thus far gives that $\partial_{T}\mathfrak{D}_{\mathrm{KL}}(T)\lesssim -N^{2}\mathfrak{D}_{\mathrm{Dir}}(T) + N^{3/2}$, from which the desired estimate follows after integrating in time and using the fact that the relative entropy of any probability measure with respect to the grand-canonical measure at hand is additive in the length-scale, which is standard for relative entropy, and thus order $N$.
\end{proof}
%%%
Consequence of Lemma \ref{lemma:d1b4}, we deduce two entropy inequalities that effectively reduce estimates with respect to a space-time averaged law of the particle system to analysis at the canonical measures. The following result consists of two main estimates. The first is reduction to stationary analysis for bulk statistics and the second is a similar reduction though for statistics that are supported near the boundary. The basic idea for both the estimates are similar with minor technical differences that we highlight in the proof. For example, for bulk statistics we require the important LSI due to Yau in \cite{Yau} with static reference measure given by the canonical measures, whereas for boundary statistics we require an LSI with respect to a specific grand-canonical measure. We could not find this latter LSI in the literature, but its simple proof is an application of the moving particle lemma in the two-blocks step in \cite{GPV}. 
%%%
\begin{lemma}\label{lemma:d1b5}
\fsp Consider any time $T>0$ and $\mathbb{I}\subseteq\mathbb{I}_{N,0}$ satisfying $\mathbb{I}\subseteq\llbracket0,N^{\beta}\rrbracket$ for some $\beta>0$. For any functional $\varphi:\Omega_{\mathbb{I}}\to\R$ and for any positive $\kappa>0$, we have the following with notation to be explained afterwards:
\begin{align}
\E^{\mu_{0,\mathbb{I}}}\varphi\bar{\mathfrak{f}}_{T,\beta}^{\mathbb{I}} \ \lesssim \ \kappa^{-1}T^{-1}N^{-2}|\mathbb{I}|^{3} + \kappa^{-1}N^{-3/2}|\mathbb{I}|^{3} + \kappa^{-1}{\sup}_{\sigma\in\R}\log\E^{\mu_{\sigma,\mathbb{I}}^{\mathrm{can}}}\exp(\kappa|\varphi|).
\end{align}
To define the density $\bar{\mathfrak{f}}^{\mathbb{I}}$ above, let us average $\mathfrak{f}$ first over times in $[0,T]$. Let us now project this time-averaged probability density, via $\Pi$ maps in \emph{Definition \ref{definition:d1b3}}, over shifts $w+\mathbb{I}$ with $w\in\mathbb{I}_{N,\beta}$, with all of these shifts contained inside $\mathbb{I}_{N,0}$ because $\mathbb{I}\subseteq\llbracket0,N^{\beta}\rrbracket$ by assumption. We now realize each of these projections as probability densities with respect to $\mu_{0,\mathbb{I}}$ by composing with the shift-maps $\eta_{x}\mapsto\eta_{x+w}$ for $x\in\mathbb{I}$ and $w\in\mathbb{I}_{N,\beta}$. The $\bar{\mathfrak{f}}^{\mathbb{I}}$ density is then an average of these spatially-shifted time-averaged laws of the system.

Now, consider any time $T>0$ and $\beta>0$ along with any subset $\mathbb{K}\subseteq\mathbb{I}_{N,0}\setminus\mathbb{I}_{N,\beta}$. For any functional $\varphi:\Omega_{\mathbb{K}}\to\R$ and constant $\kappa>0$, we have the following in which the density $\bar{\mathfrak{f}}^{\mathbb{K}}$ is defined by first averaging $\mathfrak{f}$ in time over $[0,T]$ and then projecting onto $\Omega_{\mathbb{K}}$:
\begin{align}
\E^{\mu_{0,\mathbb{K}}}\varphi\bar{\mathfrak{f}}_{T}^{\mathbb{K}} \ \lesssim \ \kappa^{-1}T^{-1}N^{-1}|\mathbb{K}|^{2} + \kappa^{-1}N^{-1/2}|\mathbb{K}|^{2} + \kappa^{-1}\log\E^{\mu_{0,\mathbb{K}}}\exp(\kappa|\varphi|).
\end{align}
\end{lemma}
%%%
%%%
\begin{proof}
We will begin with proving the first estimate. To start, let us first condition on the particle density on $\mathbb{I}$. In what follows, we let $p_{T,\beta,\mathbb{I},\sigma}$ be the probability under $\bar{\mathfrak{f}}_{T,\beta}^{\mathbb{I}}\d\mu_{0,\mathbb{I}}$ of the particle density on $\mathbb{I}$ being equal to $\sigma$, namely the probability of the hyperplane support of the $\sigma$-canonical measure $\mu_{\sigma,\mathbb{I}}^{\mathrm{can}}$, which is only nonzero for a finite set of $\sigma$ since $\Omega_{\mathbb{I}}$ is finite. Conditioning on this particle density on $\mathbb{I}$, we deduce the decomposition
\begin{align}
\E^{\mu_{0,\mathbb{I}}}\varphi\bar{\mathfrak{f}}_{T,\beta}^{\mathbb{I}} \ = \ {\sum}_{\sigma\in\R}p_{T,\beta,\mathbb{I},\sigma}\E^{\mu_{\sigma,\mathbb{I}}^{\mathrm{can}}}\varphi\Pi_{\sigma,\mathbb{I}}^{\mathrm{can}}\bar{\mathfrak{f}}_{T,\beta}^{\mathbb{I}}. \label{eq:d1b51}
\end{align}
We will now apply the entropy inequality from Appendix 1.8 of \cite{KL}. For any positive $\kappa$, we have 
\begin{align}
{\sum}_{\sigma\in\R}p_{T,\beta,\mathbb{I},\sigma}\E^{\mu_{\sigma,\mathbb{I}}^{\mathrm{can}}}\varphi\Pi_{\sigma,\mathbb{I}}^{\mathrm{can}}\bar{\mathfrak{f}}_{T,\beta}^{\mathbb{I}} \ \lesssim \ \kappa^{-1}{\sum}_{\sigma\in\R}p_{T,\beta,\mathbb{I},\sigma}\mathfrak{D}_{\mathrm{KL}}^{\sigma,\mathbb{I}}(\bar{\mathfrak{f}}_{T,\beta}^{\mathbb{I}}) + \kappa^{-1}{\sum}_{\sigma\in\R}p_{T,\beta,\mathbb{I},\sigma}\log\E^{\mu_{\sigma,\mathbb{I}}^{\mathrm{can}}}\exp(\kappa|\varphi|). \label{eq:d1b52}
\end{align}
Replacing the log-expectation on the RHS of \eqref{eq:d1b52} with its supremum over $\sigma$ and observing the $p$-terms therein are probabilities over $\sigma\in\R$, it suffices to control the first term on the RHS of \eqref{eq:d1b52}. To this end, we employ the log-Sobolev inequality in Theorem A of \cite{Yau}; this controls relative entropies in the first term on the RHS of \eqref{eq:d1b52} in terms of canonical Dirichlet forms with a diffusive factor in the length-scale $|\mathbb{I}|$ that we remarked on after the statement of Lemma \ref{lemma:d1b4}:
\begin{align}
{\sum}_{\sigma\in\R}p_{T,\beta,\mathbb{I},\sigma}\mathfrak{D}_{\mathrm{KL}}^{\sigma,\mathbb{I}}(\bar{\mathfrak{f}}_{T,\beta}^{\mathbb{I}}) \ \lesssim \ |\mathbb{I}|^{2}{\sum}_{\sigma\in\R}p_{T,\beta,\mathbb{I},\sigma}\mathfrak{D}_{\mathrm{Dir}}^{\sigma,\mathbb{I}}(\bar{\mathfrak{f}}_{T,\beta}^{\mathbb{I}}) \ = \ |\mathbb{I}|^{2}\mathfrak{D}_{\mathrm{Dir}}^{\mathbb{I}}(\bar{\mathfrak{f}}_{T,\beta}^{\mathbb{I}}), \label{eq:d1b53}
\end{align}
in which the last identity in \eqref{eq:d1b53} rewrites the grand-canonical Dirichlet form on the far RHS of \eqref{eq:d1b53} in terms of conditioning on hyperplane supports of canonical measures and, more precisely, a convex combination of canonical Dirichlet forms. By applying convexity of the Dirichlet form with the entropy production bound in Lemma \ref{lemma:d1b4}, as with the standard one-block scheme in \cite{GPV}, we get the following bound, where the first inequality follows upon realizing that in $\bar{\mathfrak{f}}_{T,\beta}^{\mathbb{I}}$, we are averaging order $N$-many spatially-translated copies of the law of the particle system as well as the law of the particle system in time, that we are allowed to take outside the Dirichlet form by convexity; the additional factor of $|\mathbb{I}|$ comes from redundancies in counting how many $\mathfrak{S}_{x,y}$-operators appear in said averaging for $x,y\in\mathbb{I}$ as is usual with the one-block scheme, whose details can be found in \cite{GPV} or Section 4 of \cite{DT}:
\begin{align}
\mathfrak{D}_{\mathrm{Dir}}^{\mathbb{I}}(\bar{\mathfrak{f}}_{T,\beta}^{\mathbb{I}}) \ \lesssim \ N^{-1}|\mathbb{I}|T^{-1}\int_{0}^{T}\mathfrak{D}_{\mathrm{Dir}}^{\mathbb{I}_{N,0}}(\mathfrak{f}_{S})\d S \ \lesssim \ T^{-1}N^{-2}|\mathbb{I}| + N^{-3/2}|\mathbb{I}|. \label{eq:d1b54}
\end{align}
Combining \eqref{eq:d1b51}, \eqref{eq:d1b52}, \eqref{eq:d1b53}, and \eqref{eq:d1b54} completes the proof of the first estimate in Lemma \ref{lemma:d1b5}. To prove the second estimate in Lemma \ref{lemma:d1b5}, it suffices to follow an identical argument without conditioning on the particle density on $\mathbb{K}$, except we lose a factor of $N^{-1}|\mathbb{K}|$ because we do not spatially average in the second estimate. In particular, the only ingredient we are left to prove is the following log-Sobolev inequality with a diffusive length-scale dependence. We emphasize that this log-Sobolev inequality is with respect to the grand-canonical measure $\mu_{0,\mathbb{K}}$ instead of a canonical measure, and this is both why we will not need to condition on $\eta$-densities on $\mathbb{K}$ and why the log-expectation on the RHS of the second estimate in Lemma \ref{lemma:d1b5} is with respect to $\mu_{0,\mathbb{K}}$. Below, we have fixed $\mathfrak{f}$ non-negative and $\E^{\mu_{0,\mathbb{K}}}\mathfrak{f}^{2}=1$ instead of letting $\mathfrak{f}$ be a probability density, so we do not have to carry square roots:
\begin{align}
\mathfrak{D}_{\mathrm{KL}}^{\mathbb{K}}(\mathfrak{f}^{2}) \ \lesssim \ |\mathbb{K}|^{2}\mathfrak{D}_{\mathrm{Dir}}^{\mathbb{K}}(\mathfrak{f}^{2}). \label{eq:d1b55}
\end{align}
For the purposes of proving \eqref{eq:d1b55}, let us assume $\mathbb{K}\subseteq\mathbb{I}_{N,0}\setminus\mathbb{I}_{N,\beta}$ is completely contained in the left-half, so that $\sup\mathbb{K}\leq N^{\beta}$. For subsets completely contained in the right half, a mirror-image of the following analysis works. For any subset $\mathbb{K}\subseteq\mathbb{I}_{N,0}\setminus\mathbb{I}_{N,\beta}$, we may decompose $\mathbb{K}$ into a disjoint union of a piece in the left-half and a piece into the right-half. By the convexity of the relative entropy and Dirichlet form, it then suffices to combine the left-half and right-half estimates to establish \eqref{eq:d1b55} for general subsets $\mathbb{K}\subseteq\mathbb{I}_{N,0}\setminus\mathbb{I}_{N,\beta}$. With the assumption made at the beginning of this paragraph, we now observe
\begin{align}
\mathfrak{D}_{\mathrm{KL}}^{\mathbb{K}}(\mathfrak{f}^{2}) \ \lesssim \ {\sum}_{x\in\mathbb{K}}\E(1+\eta_{x})(\mathfrak{T}_{x,-}\mathfrak{f})^{2} + {\sum}_{x\in\mathbb{K}}\E(1-\eta_{x})(\mathfrak{T}_{x,+}\mathfrak{f})^{2}. \label{eq:d1b56}
\end{align}
Indeed, the estimate \eqref{eq:d1b56} is the log-Sobolev inequality for the classical Glauber dynamic, which is diagonal with respect to points in $\mathbb{K}$ and therefore follows by tensoring one-dimensional log-Sobolev inequalities with uniformly bounded log-Sobolev inequality constants. Thus, to prove \eqref{eq:d1b55}, given \eqref{eq:d1b56}, it suffices to prove the following inequality:
\begin{align}
{\sum}_{x\in\mathbb{K}}\E(1+\eta_{x})(\mathfrak{T}_{x,-}\mathfrak{f})^{2} + {\sum}_{x\in\mathbb{K}}\E(1-\eta_{x})(\mathfrak{T}_{x,+}\mathfrak{f})^{2} \ \lesssim \ |\mathbb{K}|^{2}\mathfrak{D}_{\mathrm{Dir}}^{\mathbb{K}}(\mathfrak{f}^{2}). \label{eq:d1b57}
\end{align}
Note if $\eta_{x} = 1$ and $\eta_{1} = 1$, then $\mathfrak{T}_{x,-} = \mathfrak{S}_{1,x}\mathfrak{T}_{1,-}$; if $\eta_{x} = 1$ and $\eta_{1} = -1$, then $\mathfrak{T}_{x,-} = \mathfrak{T}_{1,-}\mathfrak{S}_{1,x}$. We thus have
\begin{align}
(1+\eta_{x})\left( \mathfrak{T}_{x,-} \mathfrak{f}\right)^{2} \ &\lesssim \ (1+\eta_{x})(1+\eta_{1}) \left( \mathfrak{T}_{x,-} \mathfrak{f} \right)^{2} \ + \ (1+\eta_{x})(1-\eta_{1}) \left( \mathfrak{T}_{x,-} \mathfrak{f}\right)^{2} \\
&= \ (1+\eta_{x})(1+\eta_{1})\left( \mathfrak{S}_{1,x}\mathfrak{T}_{1,-} \mathfrak{f} + \mathfrak{T}_{1,-}\mathfrak{f}\right)^{2} \ + \ (1+\eta_{x})(1-\eta_{1}) \left( \mathfrak{T}_{1,-}\mathfrak{S}_{1,x}\mathfrak{f} + \mathfrak{S}_{1,x}\mathfrak{f}\right)^{2} \\
&\lesssim \ (1+\eta_{x})(1+\eta_{1})\left(\mathfrak{S}_{1,x}\mathfrak{T}_{1,-}\mathfrak{f}\right)^{2} \ + \ (1+\eta_{x})(1+\eta_{1})\left(\mathfrak{T}_{1,-}\mathfrak{f}\right)^{2} \\
&\quad\quad + \ (1+\eta_{x})(1-\eta_{1}) \left( \mathfrak{T}_{1,-}\mathfrak{S}_{1,x}\mathfrak{f}\right)^{2} \ + \ (1+\eta_{x})(1-\eta_{1})\left(\mathfrak{S}_{1,x}\mathfrak{f}\right)^{2}. \nonumber
\end{align}
Taking expectations and crucially relying on the invariance of the grand-canonical measure $\mu_{0,\mathbb{K}}$ with respect to flipping spin at any deterministic site and under swapping or spins at any pair of deterministic sites, we deduce
\begin{subequations}
\begin{align}
\E(1+\eta_{x})(1+\eta_{1})\left(\mathfrak{S}_{1,x}\mathfrak{T}_{1,-}\mathfrak{f}\right)^{2} \ &= \ \E(1+\eta_{x})(1-\eta_{1})\left( \mathfrak{S}_{1,x}\mathfrak{f}\right)^{2} \leq \ \E\left(\mathfrak{S}_{1,x}\mathfrak{f}  \right)^{2} \\
\E(1+\eta_{x})(1+\eta_{1})\left(\mathfrak{T}_{1,-}\mathfrak{f}\right)^{2} \ &\leq \ \E(1+\eta_{1})\left(\mathfrak{T}_{1,-}\mathfrak{f}\right)^{2} \\
\E(1+\eta_{x})(1-\eta_{1})\left( \mathfrak{T}_{1,-}\mathfrak{S}_{1,x}\mathfrak{f}\right)^{2} \ &= \ \E(1-\eta_{x})(1+\eta_{1})\left(\mathfrak{T}_{1,-}\mathfrak{f}\right)^{2} \ \leq \ \E(1+\eta_{1})\left(\mathfrak{T}_{1,-}\mathfrak{f}\right)^{2} \\
\E(1+\eta_{x})(1-\eta_{1})\left( \mathfrak{S}_{1,x}\mathfrak{f}\right)^{2} \ &\leq \ \E\left(\mathfrak{S}_{1,x}\mathfrak{f}\right)^{2}.
\end{align}
\end{subequations}
Moreover, by the classical moving-particle lemma as in the classical two-blocks scheme in \cite{GPV} or Section 4 of \cite{DT}, we have
\begin{align}
\E\left(\mathfrak{S}_{1,x}\mathfrak{f}\right)^{2} \ &\lesssim \ |x| {\sum}_{y = 1}^{x-1} \E\left(\mathfrak{S}_{y,y+1}\mathfrak{f}\right)^{2}.
\end{align}
We observe that all of these upper bounds are local Dirichlet forms arising in $\mathfrak{D}_{\mathrm{Dir}}^{\mathbb{K}}(\mathfrak{f})$. In particular, provided the ellipticity bound $\alpha_{1}^{N} \gtrsim 1$ and $\beta_{1,\pm}^{N} \gtrsim 1$ both with universal implied constant, the latter courtesy of Assumption \ref{ass:Category2}, we have
\begin{align}
{\sum}_{x\in\mathbb{K}} \E(1+\eta_{x})\left(\mathfrak{T}_{x,-}\mathfrak{f}\right)^{2} \ &\lesssim \ {\sum}_{x\in\mathbb{K}} \E\left(\mathfrak{S}_{1,x}\mathfrak{f}\right)^{2} \ + \ {\sum}_{x\in\mathbb{K}}\E(1+\eta_{1})\left(\mathfrak{T}_{1,-}\mathfrak{f}\right)^{2} \\
&\lesssim \ {\sum}_{x\in\mathbb{K}} |x| {\sum}_{y=1}^{x-1} \E\left(\mathfrak{S}_{y,y+1}\mathfrak{f}\right)^{2} \ + \ |\mathbb{K}| \E(1+\eta_{1})\left(\mathfrak{T}_{1,-}\mathfrak{f}\right)^{2} \\
&\lesssim_{\alpha_{1}^{N},\beta_{1,\pm}^{N}} \ {\sum}_{x\in\mathbb{K}} |x| \cdot \mathfrak{D}_{\mathrm{Dir}}^{\mathbb{K}}(\mathfrak{f}^{2}) \ + \ |\mathbb{K}|\mathfrak{D}_{\mathrm{Dir}}^{\mathbb{K}}(\mathfrak{f}^{2}) \ \lesssim \ |\mathbb{K}|^{2} \mathfrak{D}_{\mathrm{Dir}}^{\mathbb{K}}(\mathfrak{f}^{2}).
\end{align}
An entirely analogous calculation shows that
\begin{align}
{\sum}_{x\in\mathbb{K}} \E(1-\eta_{x})\left( \mathfrak{T}_{x,+} \mathfrak{f}\right)^{2} \ &\lesssim_{\alpha_{1}^{N},\beta_{1,\pm}^{N}} \ |\mathbb{K}|^{2}\mathfrak{D}_{\mathrm{Dir}}^{\mathbb{K}}(\mathfrak{f}^{2}).
\end{align}
This completes the proof.
\end{proof}
%%%
We conclude this subsection with several estimates for stationary particle systems on local scales. These concern the space-time averages of various functionals of the particle system. Their utility comes from reduction to stationarity in Lemma \ref{lemma:d1b5}, and they are fundamental for the proof of Proposition \ref{prop:d1b2}. Although these estimates are established in Section 5 of \cite{Y} for systems without boundary, to account explicitly for the boundary of the model, which is one of the main features of the systems herein that motivate this paper, we present their proofs in the appendix. First, we will establish convenient notation for space-time averages below.
%%%
\begin{definition}\label{definition:d1b6}
\fsp Consider any $\alpha(\mathbf{X})\geq0$ and any $\alpha(\mathbf{T})\in\R$. Provided any $\varphi:\Omega\to\R$ with support $\mathbb{I}$, let us define the following space-time averages on length-scale $N^{\alpha(\mathbf{X})}$ and time-scale $N^{-\alpha(\mathbf{T})}$, in which $\varphi_{S,y}=\tau_{S,y}\varphi$:
\begin{align}
\mathsf{A}^{\alpha(\mathbf{T}),\mathbf{T}}\mathsf{A}^{\alpha(\mathbf{X}),\mathbf{X}}(\varphi_{S,y}) \ = \ N^{\alpha(\mathbf{T})}\int_{0}^{N^{-\alpha(\mathbf{T})}}\wt{\sum}_{1\leq w\leq N^{\alpha(\mathbf{X})}}\varphi_{S+R,y-w|\mathbb{I}|}\d S.
\end{align}
We emphasize that each copy of $\varphi$ on the RHS of the previous definition has support that is disjoint from the other copies of $\varphi$. Let us also note that if $y-w|\mathbb{I}|\not\in\mathbb{I}_{N,0}$, then we do not include such index $w$ in the sum-average on the RHS, just to be consistent.
\end{definition}
%%%
The first local-scale-canonical-measure estimate we present is a version of the Kipnis-Varadhan inequality; we cite Propositions 3.1 and 3.4 in \cite{GJ15} or Lemma 2.3 in \cite{KLO}. In words, the Kipnis-Varadhan inequality asserts that functionals that vanish with respect to any canonical measure, namely pseudo-gradients, behave like martingales when averaging their space-time shifts. Because the particle system evolves at a fast speed of order $N^{2}$, we may leverage this into strong vanishing estimates for space-time averages of pseudo-gradients in the estimate below; in particular, the fast speed implies that we actually average on very long time-scales. Let us also point out that because particles follow random walks, via standard speed of propagation estimates for random walks, when we stipulate empty sites in Lemma \ref{lemma:d1b7} below, this choice is basically arbitrary, so that the expectations in Lemma \ref{lemma:d1b7} are really of particle systems with canonical measure initial configurations.
%%%
\begin{lemma}\label{lemma:d1b7}
\fsp Take $\alpha(\mathbf{X}),\alpha(\mathbf{T})\geq0$, and $\varphi:\Omega\to\R$ with support $\mathbb{I}\subseteq\mathbb{I}_{N,0}$. For arbitrarily small but fixed $\delta$, set $\mathfrak{l}^{\alpha(\mathbf{T}),\alpha(\mathbf{X}),\varphi}=N^{1-\alpha(\mathbf{T})/2+\delta}+N^{3/2-\alpha(\mathbf{T})+\delta}+N^{\alpha(\mathbf{X})+\delta}|\mathbb{I}|$, and suppose the neighborhood of $\mathbb{I}$ of radius $\mathfrak{l}^{\alpha(\mathbf{T}),\alpha(\mathbf{X}),\varphi}$ is contained in $\mathbb{I}_{N,\beta}$ for some positive $\beta>0$; we denote this neighborhood by $\mathbb{I}^{\alpha(\mathbf{T}),\alpha(\mathbf{X}),\varphi}$. Let us additionally make the following assumption.
%%%
\begin{itemize}
\item For any $\sigma\in\R$, the expectation of $\varphi$ with respect to $\mu_{\sigma,\mathbb{I}}^{\mathrm{can}}$ is equal to zero. Moreover, the functional $\varphi$ is uniformly bounded.
\end{itemize}
%%%
For any $D>0$, we have the following in which we use notation to be explained afterwards:
\begin{align}
\E(\sigma,\alpha(\mathbf{T}),\alpha(\mathbf{X}),\varphi) \ \overset{\bullet}= \ \E^{\sigma,\alpha(\mathbf{T}),\alpha(\mathbf{X}),\varphi}\E^{\mathrm{path}}|\mathsf{A}^{\alpha(\mathbf{T}),\mathbf{T}}\mathsf{A}^{\alpha(\mathbf{X}),\mathbf{X}}(\varphi_{0,0})|^{2} \ \lesssim_{D} \ N^{-2+\alpha(\mathbf{T})-\alpha(\mathbf{X})}|\mathbb{I}|^{2} + N^{-D}.
\end{align}
The expectation $\E^{\sigma,\alpha(\mathbf{T}),\alpha(\mathbf{X}),\varphi}$ is expectation with respect to the canonical measure of parameter $\sigma$ on the subset $\mathbb{I}^{\alpha(\mathbf{T}),\alpha(\mathbf{X}),\varphi}$. The expectation $\E^{\mathrm{path}}$ is expectation with respect to the path-space measure induced by the interacting particle system, conditioning on the initial configuration given by no particle outside $\mathbb{I}^{\alpha(\mathbf{T}),\alpha(\mathbf{X}),\varphi}$ and a particle configuration on $\mathbb{I}^{\alpha(\mathbf{T}),\alpha(\mathbf{X}),\varphi}$ that is sampled with respect to a canonical measure in the outer $\E^{\sigma,\alpha(\mathbf{T}),\alpha(\mathbf{X}),\varphi}$ expectation. Moreover, for any event $\mathscr{E}$ that only depends on $\eta$-values in the support of $\mathsf{A}^{\alpha(\mathbf{X}),\mathbf{X}}(\varphi_{0,0})$ until time $N^{-\alpha(\mathbf{T})+\delta/2}$, we have the following first moment estimate with the same notation:
\begin{align}
\E(\sigma,\alpha(\mathbf{T}),\alpha(\mathbf{X}),\varphi,\mathscr{E}) \ \overset{\bullet}= \ \E^{\sigma,\alpha(\mathbf{T}),\alpha(\mathbf{X}),\varphi}\E^{\mathrm{path}}\mathbf{1}_{\mathscr{E}}|\mathsf{A}^{\alpha(\mathbf{T}),\mathbf{T}}\mathsf{A}^{\alpha(\mathbf{X}),\mathbf{X}}(\varphi_{0,0})| \ \lesssim \ N^{-1+\alpha(\mathbf{T})/2-\alpha(\mathbf{X})/2}|\mathbb{I}|\mathbf{P}(\mathscr{E})^{1/2},
\end{align}
where the probability on the far RHS of the last estimate is with respect to the law of the iterated expectation. The same estimates hold if we replace $\mathsf{A}^{\alpha(\mathbf{T}),\mathbf{T}}\mathsf{A}^{\alpha(\mathbf{X}),\mathbf{X}}(\varphi_{0,0})$ by the following maximal-type integral process:
\begin{align}
\sup_{0\leq\t\leq N^{-\alpha(\mathbf{T})}}|N^{\alpha(\mathbf{T})}\int_{0}^{\t}\varphi_{R,0}\d R|.
\end{align}
The same estimates hold also if we replace $\varphi_{0,0}$ with $\varphi_{\t,0}$ provided any time-shift $0\leq\t\leq N^{-\alpha(\mathbf{T})+\delta/2}$.
\end{lemma}
%%%
We now give a boundary version of the Kipnis-Varadhan inequality. The main point of the estimate below is that just above the microscopic time-scale $N^{-2}$, we should not see any of the relevant weak asymmetry clocks in the particle system ring, at least at almost microscopic spatial scales that are relevant for almost microscopic time-scale time-averages. Thus, we assume the particle system is symmetric and with an invariant measure given by the grand-canonical measure $\mu_{0,\mathbb{I}_{N,0}}$ as we noted in the proof of Lemma \ref{lemma:d1b4}. Like with Lemma \ref{lemma:d1b7}, we are then left with something that behaves like a martingale in time, and making this precise, according to Proposition 3.1 in \cite{GJ15}, amounts to obtaining spectral gap estimates for this canonical measure with respect to the symmetrized dynamics in order to estimate time-averages of functionals which now vanish in expectation with respect to $\mu_{0,\mathbb{I}_{N,0}}$, namely weakly vanishing terms. But said spectral gap follows by the log-Sobolev inequality in the proof of Lemma \ref{lemma:d1b5}. Ultimately, we obtain the following estimate, which effectively says that averaging a positive power of $N$ above the microscopic time-scale gives us explicit power-saving in $N$, since the benefits of averaging arise only if the particle system evolves long enough.
%%%
\begin{lemma}\label{lemma:d1b8}
\fsp Consider any weakly vanishing functional $\mathfrak{w}^{N}$ supported in $\mathbb{I}_{N,0}\setminus\mathbb{I}_{N,\beta}$ for some $\beta>0$ along with any $\alpha(\mathbf{T})=2-\e$ for $\e>0$ arbitrarily small that satisfies $\e\gtrsim\beta$ with universal implied constant. Provided $\e,\beta>0$ are sufficiently small but still universal, we have the following estimate for some $\beta_{\mathrm{u}}>0$ universal with notation to be explained afterwards:
\begin{align}
\E(\partial,\mathfrak{w}^{N}) \ \overset{\bullet}= \ \E^{\partial,\alpha(\mathbf{T}),\beta}\E^{\mathrm{path}}|\mathsf{A}^{\alpha(\mathbf{T}),\mathbf{T}}(\mathfrak{w}_{0,0}^{N})|^{2} \ \lesssim \ N^{-\beta_{\mathrm{u}}-10\beta}.
\end{align}
The outer expectation $\E^{\partial,\alpha(\mathbf{T}),\beta}$ is expectation with respect to the grand-canonical measure $\mu_{0,\mathbb{I}}$ for $\mathbb{I}=\mathbb{I}_{N,0}\setminus\mathbb{I}_{N,\beta+3\e}$. The inner expectation $\E^{\mathrm{path}}$ is expectation with respect to the path-space measure as in \emph{Lemma \ref{lemma:d1b7}}, except now conditioning on the initial configuration given by no particle outside $\mathbb{I}$ and particle configuration on $\mathbb{I}$ sampled via the grand-canonical measure in $\E^{\partial,\alpha(\mathbf{T}),\beta}$.
\end{lemma}
%%%
We conclude with an estimate of spatial averages, as opposed to time-averages, of pseudo-gradients, for which we must exploit the ergodic properties of canonical measures. For the proof of the estimate below, we are in a situation of ``more independence", so the proof of the following, and also its exponential-scale nature, is ultimately in estimating the sum of conditionally mean-zero and uniformly bounded random variables, which is classical as we explain in more detail below.
%%%
\begin{lemma}\label{lemma:d1b9}
\fsp Take any $\alpha(\mathbf{X})\geq0$ and take any $\varphi:\Omega\to\R$ with support $\mathbb{I}$ such that $\mathsf{A}^{\alpha(\mathbf{X}),\mathbf{X}}(\varphi_{0,0})$ has support contained in $\mathbb{I}_{N,\beta}$ for some positive $\beta>0$. Similar to \emph{Lemma \ref{lemma:d1b7}}, assume $\varphi$ vanishes in expectation with respect to $\mu_{\sigma,\mathbb{I}}^{\mathrm{can}}$ for any $\sigma$. If we let $\mathscr{E}$ be the event where $|\mathsf{A}^{\alpha(\mathbf{X}),\mathbf{X}}(\varphi_{0,0})|\geq N^{-\alpha(\mathbf{X})/2+\delta}$ for $\delta>0$ fixed, then for any $D>0$ and $\mathbb{I}'$ containing the support of $\mathsf{A}^{\alpha(\mathbf{X}),\mathbf{X}}(\varphi_{0,0})$,
\begin{align}
\E^{\mu_{\sigma,\mathbb{I}'}^{\mathrm{can}}}\mathbf{1}_{\mathscr{E}} \ + \ \E^{\mu_{\sigma,\mathbb{I}'}^{\mathrm{can}}}\mathbf{1}_{\mathscr{E}}\exp\left(N^{\frac12\alpha(\mathbf{X})-\frac32\delta}|\mathsf{A}^{\alpha(\mathbf{X}),\mathbf{X}}(\varphi_{0,0})|\right) \ \lesssim_{D,\delta} \ N^{-D}.
\end{align}
\end{lemma}
%%%
%%%
\begin{proof}
Observe that $\mathsf{A}^{\alpha(\mathbf{X}),\mathbf{X}}(\varphi)$ is an average of uniformly bounded and conditionally mean-zero functionals under any canonical measure on $\mathbb{I}'$. This is because canonical measures pushforward to convex combinations of canonical measures under projection to smaller subsets of $\mathbb{I}_{N,0}$. Therefore, standard martingale theory implies $\mathsf{A}^{\alpha(\mathbf{X}),\mathbf{X}}(\varphi_{0,0})$ is sub-Gaussian of variance of order the inverse $N^{-\alpha(\mathbf{X})}$ of the number of terms in the average. Therefore, the event $\mathscr{E}$ is asking for when a sub-Gaussian random variable exceeds its standard deviation bound by a positive power of $N$; this itself decays in sub-Gaussian manner. At this point, the bound follows from pretending $\mathsf{A}^{\alpha(\mathbf{X}),\mathbf{X}}(\varphi_{0,0})$ is actually Gaussian and then a standard calculation.
\end{proof}
%%%
%%%
\subsection{Technical Cutoff}
%%%
This subsection is dedicated towards a technical estimate that treats only the order $N^{1/2}$-pseudo gradient term in $\mathfrak{q}^{N}$ from Notation \ref{notation:d1b1}; this technical result is not needed for the other terms in $\mathfrak{q}^{N}$. Roughly speaking, let us first observe the first term in $\mathfrak{q}^{N}$ is equal to $N^{1/2}\mathsf{A}^{\beta_{X},\mathbf{X}}(\mathfrak{g}^{N})$ that is cut off to be supported inside the space set $\mathbb{I}_{N,\beta_{X}+2\e_{X}}$. Because $\mathfrak{g}^{N}$ are pseudo-gradients, as noted in Proposition \ref{prop:MatchCpt}, they satisfy the necessary constraints for $\varphi$ in Lemma \ref{lemma:d1b9}. Thus, at the canonical measures, we expect to be able to cut off $\mathsf{A}^{\beta_{X},\mathbf{X}}(\mathfrak{g}^{N})$ by basically its standard deviation. As Lemma \ref{lemma:d1b9} holds at large deviations scale, we will be able to inherit the cutoff at canonical measures to general measures after space-time averaging like in Lemma \ref{lemma:d1b5}. First, we will introduce the following notation to make this cutoff more convenient to write.
%%%
\begin{notation}\label{notation:d1b10}
\fsp Consider any $\alpha(\mathbf{X})\geq0$ and $\varphi:\Omega\to\R$. Define the following spatial-average-with-cutoff where $\mathscr{E}^{\alpha(\mathbf{X}),\mathbf{X}}(\varphi_{S,y})$ is the event on which the spatial average on the RHS below is bounded above in absolute value by $N^{-\alpha(\mathbf{X})/2+\e_{X}/999}$:
\begin{align}
\bar{\mathsf{A}}^{\alpha(\mathbf{X}),\mathbf{X}}(\varphi_{S,y}) \ = \ \mathsf{A}^{\alpha(\mathbf{X}),\mathbf{X}}(\varphi_{S,y})\mathbf{1}(\mathscr{E}^{\alpha(\mathbf{X}),\mathbf{X}}(\varphi_{S,y})).
\end{align}
\end{notation}
%%%
%%%
\begin{lemma}\label{lemma:d1b11}
\fsp There exists a universal and positive $\beta_{\mathrm{u},1}$ and $\beta_{\mathrm{u},2}$ such that outside an event of probability at most $N^{-\beta_{\mathrm{u},2}}$, we have
\begin{align*}
&\|\int_{0}^{T}{\sum}_{y\in\mathbb{I}_{N,0}}\mathbf{Q}_{S,T,x,y}^{N}\cdot\mathbf{1}_{y\in\mathbb{I}_{N,\beta_{X}+2\e_{X}}}N^{1/2}\mathsf{A}^{\beta_{X},\mathbf{X}}(\mathfrak{g}_{S,y})\mathbf{Z}_{S,y}^{N}\d S\|_{\mathscr{L}^{\infty}_{T,X}} \\
\lesssim \ &\|\int_{0}^{T}{\sum}_{y\in\mathbb{I}_{N,0}}\mathbf{Q}_{S,T,x,y}^{N}\cdot\mathbf{1}_{y\in\mathbb{I}_{N,\beta_{X}+2\e_{X}}}N^{1/2}\bar{\mathsf{A}}^{\beta_{X},\mathbf{X}}(\mathfrak{g}_{S,y})\mathbf{Z}_{S,y}^{N}\d S\|_{\mathscr{L}^{\infty}_{T,X}} + N^{-\beta_{\mathrm{u},1}}\|\mathbf{Z}^{N}\|_{\mathscr{L}^{\infty}_{T,X}}. 
\end{align*}
\end{lemma}
%%%
%%%
\begin{remark}\label{remark:d1b12}
\fsp The difference between $\bar{\mathsf{A}}^{\beta_{X},\mathbf{X}}$ and $\mathsf{A}^{\beta_{X},\mathbf{X}}$ is not strictly speaking necessary for this paper. It is mostly a technical convenience that provides a priori estimates to make the writing more convenient.
\end{remark}
%%%
We will delay the proof of Lemma \ref{lemma:d1b11} to the appendix, because it is very similar to the analysis in Section 5 of \cite{Y} given that Lemma \ref{lemma:QOffD} implies $\mathbf{Q}^{N}$ behaves like the heat kernel $\mathbf{P}^{N}$ up to negligibly small factors of $N$ and outside an event of negligibly small probability; pointwise off-diagonal estimates for the heat kernel in \cite{Y} were key to analysis in Section 5 therein.
%%%
\subsection{Multiscale Expansion I}
%%%
This subsection is dedicated towards replacing every term in $\mathfrak{q}^{N}$ in Notation \ref{notation:d1b1} by a time-average on a mesoscopic time-scale. For the first two terms, we require a multiscale replacement-by-time-average procedure, because the time-scales that we want to replace these two terms by is too large to accomplish in one step. Roughly speaking, replacing a term by its time-average inside a space-time integral after multiplying by $\mathbf{Q}^{N}$ and $\mathbf{Z}^{N}$ is controlled by regularity on the same time-scale of $\mathbf{Q}^{N}$ and $\mathbf{Z}^{N}$, respectively. This is one version of integration-by-parts, because the difference between a term and its time-average is controlled by its time-gradients. The regularity of $\mathbf{Q}^{N}$ is controlled in Lemma \ref{lemma:QTReg} and Lemma \ref{lemma:QTReg2}, whereas the regularity of $\mathbf{Z}^{N}$ is controlled in Lemma \ref{lemma:AsympNCCH}. Combining the aforementioned integration-by-parts-type calculation with these regularity estimates and gymnastics with $\mathbf{Q}^{N}$ estimates, we obtain the following, which we will use for $\varphi$ equal to the first two terms in $\mathfrak{q}^{N}$.
%%%
\begin{lemma}\label{lemma:d1b13}
\fsp Consider a sequence of exponents $\{\alpha_{\mathfrak{j}}(\mathbf{T})\}_{\mathfrak{j}=0}^{M}$ that satisfies $\alpha_{0}(\mathbf{T})=\infty$ and $\alpha_{1}(\mathbf{T})=2-\e_{\star}$ and $\alpha_{\mathfrak{j}+1}(\mathbf{T})=\alpha_{\mathfrak{j}}(\mathbf{T})-\delta$ for $\mathfrak{j}\geq1$ with $\e_{\star}$ positive and arbitrarily small but universal, and with $\delta$ positive and bounded uniformly from below such that $N^{-\alpha_{\mathfrak{j}+1}(\mathbf{T})}$ is a positive integer multiple of $N^{-\alpha_{\mathfrak{j}}(\mathbf{T})}$. We note that $\delta$ is allowed to depend on $N$ here. The final index $M$ satisfies $\alpha_{M}(\mathbf{T})\geq1$. For any positive $D,\delta',\e$, we have the following estimate outside an event of probability at most $N^{-D}$ times a $D,\delta',\e$-dependent constant, where $\varphi:\R_{\geq0}\times\mathbb{I}_{N,0}\to\R$ is an arbitrary function:
\begin{align*}
\|\int_{0}^{T}{\sum}_{y\in\mathbb{I}_{N,0}}\mathbf{Q}_{S,T,x,y}^{N}\cdot\varphi_{S,y}\mathbf{Z}_{S,y}^{N}\|_{\mathscr{L}^{\infty}_{T,X}} \ &\lesssim \ \|\int_{0}^{T}{\sum}_{y\in\mathbb{I}_{N,0}}\mathbf{Q}_{S,T,x,y}^{N}\cdot\mathsf{A}^{\alpha_{M}(\mathbf{T}),\mathbf{T}}(\varphi_{S,y})\mathbf{Z}_{S,y}^{N}\|_{\mathscr{L}^{\infty}_{T,X}} \\
&\quad + \ {\sup}_{\mathfrak{j}=1,\ldots,M} \kappa_{\mathfrak{j}}\left(\int_{0}^{\t^{\max}}\wt{\sum}_{y\in\mathbb{I}_{N,0}}|\mathsf{A}^{\alpha_{\mathfrak{j}-1}(\mathbf{T}),\mathbf{T}}(\varphi_{S,y})|^{2}\d S\right)^{1/2} \\
&\quad + N^{-1/2+\delta'+\e_{\star}}\|\varphi\|_{\mathscr{L}^{\infty}_{T,X}}\left(1+\|\mathbf{Z}^{N}\|_{\mathscr{L}^{\infty}_{T,X}}^{1+\e}\right),
\end{align*}
where $\kappa_{\mathfrak{j}}$ controls time-regularity of $\mathbf{Q}^{N}$ and $\mathbf{Z}^{N}$ on scale $N^{-\alpha_{\mathfrak{j}}(\mathbf{T})}$ in view of \emph{Lemma \ref{lemma:QOffD}}, \emph{Lemma \ref{lemma:QTReg}}, \emph{Lemma \ref{lemma:QTReg2}}, and \emph{Lemma \ref{lemma:AsympNCCH}} combined with an elementary off-diagonal summation estimate using the bound in \emph{Lemma \ref{lemma:QOffD}}; let us emphasize that the bound for $\kappa_{\mathfrak{j}}$ holds with the high probability mentioned in the previous paragraph, and the $N$-factor comes from introducing a factor of order $N^{-1}$ in the previous estimate that yields the averages over $\mathbb{I}_{N,0}$ in the second line of the previous display:
\begin{align}
\kappa_{\mathfrak{j}} \ &= \ N{\sup}_{\substack{x,y\in\mathbb{I}_{N,0}\\S\leq T\leq\t^{\max}}}\rho_{S,T}^{1/2}\mathbf{Q}_{S,T,x,y}^{N}\cdot{\sup}_{\substack{x\in\mathbb{I}_{N,0}\\ S\leq T\leq\t^{\max}}}{\sum}_{y\in\mathbb{I}_{N,0}}|\mathbf{Q}_{S,T,x,y}^{N}|\cdot\|\mathfrak{D}_{N^{-\alpha_{\mathfrak{j}}(\mathbf{T})}}\mathbf{Z}^{N}\|_{\mathscr{L}^{\infty}_{T,X}} \\
&\quad + N{\sup}_{\substack{x\in\mathbb{I}_{N,0} \\ T\leq\t^{\max}}}\left(\int_{0}^{T}{\sum}_{y\in\mathbb{I}_{N,0}}|\mathscr{D}_{-N^{\alpha_{\mathfrak{j}}(\mathbf{T})}}\mathbf{Q}^{N}_{S,T,x,y}|^{2}\right)^{1/2}\|\mathbf{Z}^{N}\|_{\mathscr{L}^{\infty}_{T,X}} \ \lesssim \ N^{\delta'}N^{-\alpha_{\mathfrak{j}}(\mathbf{T})/4}\left(1+\|\mathbf{Z}^{N}\|_{\mathscr{L}^{\infty}_{T,X}}^{1+\e}\right).
\end{align}
\end{lemma}
%%%
%%%
\begin{remark}\label{remark:d1b14}
\fsp Let us comment that if we replace $\mathbf{Q}^{N}$ in the first term defining $\kappa_{\mathfrak{j}}$ terms with the heat kernel $\mathbf{P}^{N}$, then said first term is really just controlled by time-regularity of $\mathbf{Z}^{N}$ because of heat kernel estimates for $\mathbf{P}^{N}$, and the estimate for the first term in $\kappa_{\mathfrak{j}}$ follows basically by directly using Lemma \ref{lemma:AsympNCCH} upon choosing $\e,\e_{1},\e_{2},\e_{3}$ therein sufficiently small. We also describe quantitatively the time regularity of $\mathbf{Q}^{N}$ and $\mathbf{Z}^{N}$ in Lemma \ref{lemma:d1b13}, which we recall should be roughly given by a Holder-exponent $1/4$. Indeed, the $\kappa_{\mathfrak{j}}$-bounds are the $1/4$-power of the time-scale $N^{-\alpha_{\mathfrak{j}}(\mathbf{T})}$ that we want to replace the time-scale of $N^{-\alpha_{\mathfrak{j}-1}(\mathbf{T})}$ in the $\mathsf{A}^{\alpha_{\mathfrak{j}-1}(\mathbf{T}),\mathbf{T}}$ time-average, whose coefficient is $\kappa_{\mathfrak{j}}$, by. For instance, the last term in the first bound of Lemma \ref{lemma:d1b13} comes via replacing $\varphi$ by its scale $N^{-2+\e_{\star}}$ time-average, and taking a $1/4$-power gives us basically the factor $N^{-1/2+\e_{\star}}$ in said last term.
\end{remark}
%%%
We now present a version of Lemma \ref{lemma:d1b13} below that is well-adapted to replacing boundary statistics, namely the last two terms in $\mathfrak{q}^{N}$ from Notation \ref{notation:d1b1}, by their time-averages just above the microscopic time-scale. First of all, the result below is not of multiscale nature; only one step is required in contrast to Lemma \ref{lemma:d1b13}. Second of all, the following result is needed because the last two terms in $\mathfrak{q}^{N}$ from Notation \ref{notation:d1b1} come with singular factors of order $N$, which is large enough that we need to exploit the fact that these last two terms in $\mathfrak{q}^{N}$ are supported on basically microscopic length-scales. The proof of Lemma \ref{lemma:d1b13} estimates time-regularity of $\mathbf{Q}^{N}$ via Lemma \ref{lemma:QTReg}, which does not localize since it is an estimate for a global norm of the time-regularity of $\mathbf{Q}^{N}$. In particular, proof of the following result is basically that of Lemma \ref{lemma:d1b13}, but we estimate time-regularity of $\mathbf{Q}^{N}$ via the local estimate in Lemma \ref{lemma:QTReg2}. Ultimately, the $\mathbf{Q}^{N}$-bound in Lemma \ref{lemma:QOffD} basically cancels the aforementioned singular factors of $N$, and its time-regularity estimate in Lemma \ref{lemma:QTReg2} for time-scales just above the microscopic scale $N^{-2}$ gives us an additional power-saving in $N$.
%%%
\begin{lemma}\label{lemma:d1b15}
\fsp Consider $\e_{\star}$ positive, arbitrarily small, but universal along with any function $\varphi:\R_{\geq0}\times\mathbb{I}_{N,0}\to\R$ that is uniformly bounded. For any positive $D,\e$, we have the following estimate for some $\beta_{\mathrm{u}}$ universal and positive outside an event of probability at most $N^{-D}$ times a $D,\e$-dependent constant, assuming that $\beta_{\partial}$ is sufficiently small depending only on $\e_{\star}$:
\begin{align}
&\|\int_{0}^{T}{\sum}_{y\in\mathbb{I}_{N,0}}\mathbf{Q}_{S,T,x,y}^{N}\cdot\mathbf{1}_{y\not\in\mathbb{I}_{N,\beta_{\partial}}}N\varphi_{S,y}\mathbf{Z}_{S,y}^{N}\d S\|_{\mathscr{L}^{\infty}_{T,X}} \nonumber \\
\lesssim \ &\|\int_{0}^{T}{\sum}_{y\in\mathbb{I}_{N,0}}\mathbf{Q}_{S,T,x,y}^{N}\cdot\mathbf{1}_{y\not\in\mathbb{I}_{N,\beta_{\partial}}}N\mathsf{A}^{2-\e_{\star},\mathbf{T}}(\varphi_{S,y})\mathbf{Z}_{S,y}^{N}\d S\|_{\mathscr{L}^{\infty}_{T,X}} + N^{-\beta_{\mathrm{u}}}\left(1+\|\mathbf{Z}^{N}\|_{\mathscr{L}^{\infty}_{T,X}}^{1+\e}\right).
\end{align}
\end{lemma}
%%%
%%%
\subsection{Multiscale Expansion II}
%%%
Lemma \ref{lemma:d1b13} and Lemma \ref{lemma:d1b15} imply to us that in order to estimate the terms in $\mathfrak{q}^{N}$, we will estimate their time-averages instead; we emphasize that instead of estimating directly the first term in $\mathfrak{q}^{N}$, we apply Lemma \ref{lemma:d1b11} and instead estimate a cutoff of it. The cost in time-averaging is negligible according to estimates in Lemma \ref{lemma:d1b13} and Lemma \ref{lemma:d1b15}, provided we can estimate the supremum in Lemma \ref{lemma:d1b13}. The terms in this supremum will ultimately be analyzed by taking expectations while noting the time-averages in said supremum-terms are \emph{local} statistics, thereby allowing us to employ local equilibrium (see Lemma \ref{lemma:d1b5}) and stationary estimates in Lemma \ref{lemma:d1b7}, Lemma \ref{lemma:d1b8}, and Lemma \ref{lemma:d1b9}. The purpose of this subsection, however, is to estimate the first term on the RHS of the estimate in Lemma \ref{lemma:d1b13}, namely the space-time integral of $\mathbf{Q}^{N}\mathbf{Z}^{N}$ against the time-average on the maximal time-scale $N^{-\alpha_{M}(\mathbf{T})}$. Roughly speaking, the following multiscale decomposition rewrites time-averages on time-scale $N^{-\alpha_{M}(\mathbf{T})}$ into time-averages on smaller time-scales, which are ``more local", but equips them with cutoff indicator functions that boost our estimates at local scales. Technically, the benefit of working with the time-averages on the maximal time-scale instead of directly working with these smaller time-scale averages is the presence of these cutoff indicator functions. We emphasize that the following result will only be used to study the first two terms in $\mathfrak{q}^{N}$ from Notation \ref{notation:d1b1}; the boundary statistics in $\mathfrak{q}^{N}$ will be treated more directly without any multiscale decompositions. The proof of the following result has details written in Section 7 of \cite{Y}, and all we require are estimates for the $\mathbf{Q}^{N}$ that basically match those of the heat kernel $\bar{\mathbf{P}}^{N}$ by Lemma \ref{lemma:QOffD}. For this reason, we will delay the proof of the following multiscale decomposition to the appendix. First, however, some notation for the cutoffs.
%%%
\begin{notation}\label{notation:d1b16}
\fsp Take any $\alpha(\mathbf{T}),\alpha\in\R$ and $\varphi:\Omega\to\R$. Define the following time-average with cutoff
\begin{align}
\bar{\mathsf{A}}^{\alpha(\mathbf{T}),\mathbf{T},\alpha}(\varphi_{S,y}) \ = \ \mathsf{A}^{\alpha(\mathbf{T}),\mathbf{T}}(\varphi_{S,y})\mathbf{1}(\mathscr{E}^{\alpha(\mathbf{T}),\mathbf{T},\alpha,\leq}(\varphi_{S,y}))
\end{align}
where $\mathscr{E}^{\alpha(\mathbf{T}),\mathbf{T},\alpha,\leq}(\varphi_{S,y})$ is the event on which a maximal-process-version of $\mathsf{A}^{\alpha(\mathbf{T}),\mathbf{T}}(\varphi_{S,y})$ is at most $N^{-\alpha}$:
\begin{align}
\mathscr{E}^{\alpha(\mathbf{T}),\mathbf{T},\alpha,\leq}(\varphi_{S,y}) \ = \ \left\{\sup_{0\leq\t\leq N^{-\alpha(\mathbf{T})}}N^{\alpha(\mathbf{T})}|\int_{0}^{\t}\varphi_{S+R,y}\d R| \leq N^{-\alpha} \right\}.
\end{align}
Observe that $\bar{\mathsf{A}}^{\alpha(\mathbf{T}),\mathbf{T},\alpha}$ is controlled from above, in absolute value, by $N^{-\alpha}$. This can be seen by noting that if we take $\t=N^{-\alpha(\mathbf{T})}$ in the event we just introduced, then we deduce that the time-average on the RHS of the $\bar{\mathsf{A}}^{\alpha(\mathbf{T}),\alpha}$ definition is controlled by $N^{-\alpha}$ in absolute value, which is unchanged after multiplying by an indicator function. This deterministic bound is technically useful.

Now, define $\mathscr{E}^{\alpha(\mathbf{T}),\mathbf{T},\alpha,>}(\varphi_{S,y})$ to be basically the complement of $\mathscr{E}^{\alpha(\mathbf{T}),\mathbf{T},\alpha,\leq}(\varphi_{S,y})$. In particular, let us define the event
\begin{align}
\mathscr{E}^{\alpha(\mathbf{T}),\mathbf{T},\alpha,>}(\varphi_{S,y}) \ = \ \left\{\sup_{0\leq\t\leq N^{-\alpha(\mathbf{T})}}N^{\alpha(\mathbf{T})}|\int_{0}^{\t}\varphi_{S+R,y}\d R| \gtrsim N^{-\alpha} \right\}.
\end{align}
The implied constant in the $\mathscr{E}^{\alpha(\mathbf{T}),\mathbf{T},\alpha,>}(\varphi_{S,y})$ event will be uniformly bounded but will otherwise be allowed to update from line to line. For example, we may start an argument or calculation in which we take the implied constant to be 1, but then throughout the rest of the argument we may change it by uniformly bounded factors as long as at the end, it is still uniformly bounded.
\end{notation}
%%%
To clarify the technical nature of the following result, we provide some explanation for its content after its statement.
%%%
\begin{lemma}\label{lemma:d1b17}
\fsp Consider the exponent $\alpha_{M}(\mathbf{T})$ from \emph{Lemma \ref{lemma:d1b13}}, and define another sequence of exponents $\{\alpha'_{\mathfrak{k}}(\mathbf{T})\}_{\mathfrak{k}=0}^{M'}$ satisfying $\alpha'_{M'}(\mathbf{T})=\alpha_{M}(\mathbf{T})$ and $\alpha'_{\mathfrak{k}+1}(\mathbf{T})=\alpha'_{\mathfrak{k}}(\mathbf{T})-\delta'$ for $\mathfrak{k}\geq0$ and $\delta'$ uniformly bounded from below and such that $N^{\alpha'_{\mathfrak{k}}(\mathbf{T})-\alpha'_{\mathfrak{k}+1}(\mathbf{T})}\in\Z_{>0}$; we will also choose $\delta'$ small. For any $\{\alpha_{\mathfrak{k}}\}_{\mathfrak{k}\geq0}$ such that $\alpha_{\mathfrak{k}+1}\geq\alpha_{\mathfrak{k}}+\delta'$, we have, for any $\varphi:\R_{\geq0}\times\mathbb{I}_{N,0}\to\R$ satisfying $|\varphi|\leq N^{-\alpha_{0}}$, the expectation estimate below in which $\e$ is any arbitrarily small but positive and universal constant:
\begin{align}
&\E\|\mathbf{Z}^{N}\|_{\mathscr{L}^{\infty}_{T,X}}^{-1}\|\int_{0}^{T}\sum_{y\in\mathbb{I}_{N,0}}\mathbf{Q}_{S,T,x,y}^{N}\mathsf{A}^{\alpha_{M}(\mathbf{T}),\mathbf{T}}(\varphi_{S,y})\mathbf{Z}_{S,y}^{N}\d S\|_{\mathscr{L}^{\infty}_{T,X}} \lesssim \ N^{-\alpha_{M'}+\e}+N^{10\delta'}\sup_{0\leq\mathfrak{k}\leq M'}\sup_{\substack{0\leq\s\leq1 \\ 0\leq\mathfrak{l}_{i}\leq N^{\delta'}}}\Phi^{\mathfrak{k},\s,\mathfrak{l}_{1},\mathfrak{l}_{2}}, \label{eq:d1b17I}
\end{align}
where, for $\t^{\mathfrak{k}}=N^{-\alpha'_{\mathfrak{k}}(\mathbf{T})}$, we have defined the following $\Phi^{\mathfrak{k},\s,\mathfrak{l}_{1},\mathfrak{l}_{2}}$ with an estimate \eqref{eq:d1b17II} to be proven for any fixed $\beta>\alpha_{\mathfrak{k}}$:
\begin{align}
\Phi^{\mathfrak{k},\s,\mathfrak{l}_{1},\mathfrak{l}_{2}} \ &= \ \E\|\mathbf{Z}^{N}\|_{\mathscr{L}^{\infty}_{T,X}}^{-1}\|\int_{0}^{T}{\sum}_{y\in\mathbb{I}_{N,0}}\mathbf{Q}_{S,T,x,y}^{N}\cdot\bar{\mathsf{A}}^{\alpha'_{\mathfrak{k}}(\mathbf{T}),\mathbf{T},\alpha_{\mathfrak{k}}}(\varphi_{S+\s+\mathfrak{l}_{1}\t^{\mathfrak{k}},y})\mathbf{1}(\mathscr{E}^{\alpha'_{\mathfrak{k}}(\mathbf{T}),\mathbf{T},\alpha_{\mathfrak{k}+1},>}(\varphi_{S+\s+\mathfrak{l}_{2}\t^{\mathfrak{k}},y}))\mathbf{Z}_{S,y}^{N}\d S\|_{\mathscr{L}^{\infty}_{T,X}}\nonumber \\
&\lesssim \ N^{-\frac12\alpha_{\mathfrak{k}}+\frac12\beta+\e}\E\int_{\s}^{\t^{\max}+\s}\wt{\sum}_{y\in\mathbb{I}_{N,0}}|\bar{\mathsf{A}}^{\alpha'_{\mathfrak{k}}(\mathbf{T}),\mathbf{T},\alpha_{\mathfrak{k}}}(\varphi_{S+\mathfrak{l}_{1}\t^{\mathfrak{k}},y})\mathbf{1}(\mathscr{E}^{\alpha'_{\mathfrak{k}}(\mathbf{T}),\mathbf{T},\alpha_{\mathfrak{k}+1},>}(\varphi_{S+\mathfrak{l}_{2}\t^{\mathfrak{k}},y}))|\d S \ + \ N^{-\beta}. \label{eq:d1b17II}
\end{align}
\end{lemma}
%%%
Lemma \ref{lemma:d1b17} says the following. We ultimately want to replace $\mathsf{A}^{\alpha_{M}(\mathbf{T}),\mathbf{T}}$ on the LHS of \eqref{eq:d1b17I} with its cutoff by $N^{-\alpha_{M'}}$. To do this, we will first replace it by a cutoff of $N^{-\alpha_{1}}$, then \emph{given} this a priori upper bound, upgrade this cutoff to $N^{-\alpha_{2}}$, and induct. This is entirely elementary and almost what Lemma \ref{lemma:d1b17} says. The only difference is that \eqref{eq:d1b17I} performs each of the aforementioned upgrades in cutoff at slightly different time-scales. Thus, instead of estimating terms that look like $\mathsf{A}^{\alpha_{M}(\mathbf{T}),\mathbf{T}}$ times an upper bound and lower bound cutoff, which we would have to do if we did not change the time-scale between cutoffs, we will have to estimate terms that look like $\mathsf{A}^{\alpha'_{\mathfrak{k}}(\mathbf{T}),\mathbf{T}}$ with upper bound cutoffs times a lower bound indicator for $\mathsf{A}^{\alpha'_{\mathfrak{k}}(\mathbf{T}),\mathbf{T}}$ evaluated at a potentially, but only slightly, different time-variable. The difference in what time-variable we evaluate $\mathsf{A}^{\alpha'_{\mathfrak{k}}(\mathbf{T}),\mathbf{T}}$ with upper bound cutoff and the lower-bound-cutoff indicator function at, respectively, comes from the fact that if we change the time-scale slightly when upgrading the cutoff, data at slightly different times become relevant. More precisely, when we upgrade cutoffs and increase time-scales upon grouping together $\mathsf{A}^{\alpha'_{\mathfrak{k}}(\mathbf{T}),\mathbf{T}}$-terms evaluated at adjacent times, we obtain several cross terms given by $\mathsf{A}^{\alpha'_{\mathfrak{k}}(\mathbf{T}),\mathbf{T}}$-with-cutoff times a lower-bound-cutoff indicator function at a slightly different time. This is basically how the proof of \eqref{eq:d1b17I} goes, which is spelled out in more detail in Section 7 of \cite{Y} and in the appendix. As for the estimate \eqref{eq:d1b17II}, whose proof is also spelled out in the appendix, the idea is that the stochastic fundamental solution $\mathbf{Q}^{N}$ looks like the heat kernel at the level of pointwise estimates, consequence of Lemma \ref{lemma:QOffD}. In particular, we know $\mathbf{Q}^{N}$ is basically controlled by the uniform measure on $\mathbb{I}_{N,0}$ except for a short-time singularity, similar to the heat kernel $\mathbf{P}^{N}$. Thus, we decompose the integral in $\Phi^{\mathfrak{k},\s,\mathfrak{l}_{1},\mathfrak{l}_{2}}$ over $[0,T]$ into two pieces, one of which is a short-time neighborhood of the support of the singularity in $\mathbf{Q}^{N}$, and the other of which is strictly away from the short-time singularity. The former, being short-time, can be estimated directly, and this gives the second term in \eqref{eq:d1b17II}. The latter can be estimated by directly estimating the short-time singularity away from where the singularity actually occurs, and it ultimately gives the first term in \eqref{eq:d1b17II}. We conclude by noting ``how far" from the short-time singularity of $\mathbf{Q}^{N}$ we look to determine the two aforementioned pieces of the $\Phi^{\mathfrak{k},\s,\mathfrak{l}_{1},\mathfrak{l}_{2}}$ integral depends on $\mathfrak{k}$, because as $\mathfrak{k}$ increases, our a priori estimate $N^{-\alpha_{\mathfrak{k}}}$ for the time-average with cutoff in $\Phi^{\mathfrak{k},\s,\mathfrak{l}_{1},\mathfrak{l}_{2}}$ improves, so we are allowed to take progressively larger neighborhoods of the support of the $\mathbf{Q}^{N}$ singularity while still being able to directly estimate the integral over these short-time sets. The reader is invited to see Section 7 of \cite{Y} for clarification on this.

We conclude this subsection with another result that is not of multiscale type and catered to the boundary statistics in $\mathfrak{q}^{N}$ from Notation \ref{notation:d1b1}. The utility of the result below is more along the lines of \eqref{eq:d1b17II} as opposed to \eqref{eq:d1b17I} in Lemma \ref{lemma:d1b17}. In other words, the following does not decompose large-scale time-averages into smaller-scale time-averages with cutoff indicator functions, which we reemphasize reflect the benefit of working with large-scale time-averages. Instead, the upcoming estimate controls $\mathscr{L}^{\infty}$-norms in terms of a single integral whose expectation is much more convenient to compute. Because the proof is fairly straightforward, we record it here to also illustrate some idea of analyses needed for results in this section.
%%%
\begin{lemma}\label{lemma:d1b18}
\fsp Recall the positive $\e_{\star}$ from \emph{Lemma \ref{lemma:d1b15}}, and consider any positive $\e$. For positive $\beta_{\partial}$ and $\varphi:\R_{\geq0}\times\mathbb{I}_{N,0}\to\R$ uniformly bounded, we have the following expectation-norm estimate supported near the boundary of $\mathbb{I}_{N,0}$:
\begin{align}
&\E\|\mathbf{Z}^{N}\|_{\mathscr{L}^{\infty}_{T,X}}^{-1}\|\int_{0}^{T}{\sum}_{y\in\mathbb{I}_{N,0}}\mathbf{Q}_{S,T,x,y}^{N}\cdot\mathbf{1}_{y\not\in\mathbb{I}_{N,\beta_{\partial}}}N\mathsf{A}^{2-\e_{\star}}(\varphi_{S,y})\mathbf{Z}_{S,y}^{N}\d S\|_{\mathscr{L}^{\infty}_{T,X}} \nonumber \\
\lesssim_{\e} \ &N^{\beta_{\partial}+\e}\sup_{y\not\in\mathbb{I}_{N,\beta_{\partial}}}\left(\E\int_{0}^{\t^{\max}}|\mathsf{A}^{2-\e_{\star},\mathbf{T}}(\varphi_{S,y})|^{3}\d S\right)^{1/3} \ + \ N^{-100}. \label{eq:d1b18}
\end{align}
\end{lemma}
%%%
%%%
\begin{proof}
We clearly have the following deterministic bound for what is inside the norm in the expectation on the LHS of \eqref{eq:d1b18}:
\begin{align}
\int_{0}^{T}{\sum}_{y\not\in\mathbb{I}_{N,\beta_{\partial}}}|\mathbf{Q}_{S,T,x,y}^{N}|\cdot N|\mathsf{A}^{2-\e_{\star},\mathbf{T}}(\varphi_{S,y})|\d S. \label{eq:d1b181}
\end{align}
By a union bound, continuity, and net argument, it suffices to suppose $\mathbf{Q}^{N}$ satisfies the bound in Lemma \ref{lemma:QOffD} deterministically; note the complement of the event where $\mathbf{Q}^{N}$ satisfies the bounds in Lemma \ref{lemma:QOffD} has small probability consequence of Lemma \ref{lemma:QOffD}, and because we take expectations here, this ultimately contributes something of order $N^{-100}$ given $\varphi$ is uniformly bounded. Thus, the term in \eqref{eq:d1b181}, on this event, is controlled by the following integral, which we bound by the Holder inequality in time; roughly speaking, the $\mathbf{Q}^{N}$ kernel basically cancels the $N$-factor in \eqref{eq:d1b181} up to a short-time singularity, and we have written the sum over $y\not\in\mathbb{I}_{N,\beta_{\partial}}$ in \eqref{eq:d1b181} as an average over $y\not\in\mathbb{I}_{N,\beta_{\partial}}$ times the size of the complement of $\mathbb{I}_{N,\beta_{\partial}}$, which is order $N^{\beta_{\partial}}$:
\begin{align}
\eqref{eq:d1b181} \ &\lesssim \ N^{\e}|\mathbb{I}_{N,0}\setminus\mathbb{I}_{N,\beta_{\partial}}|\wt{\sum}_{y\not\in\mathbb{I}_{N,\beta_{\partial}}}\int_{0}^{T}\rho_{S,T}^{-1/2}|\mathsf{A}^{2-\e_{\star},\mathbf{T}}(\varphi_{S,y})|\d S \\
&\lesssim \ N^{\beta_{\partial}+\e}\left(\int_{0}^{T}\rho_{S,T}^{-3/4}\d S\right)^{2/3}\wt{\sum}_{y\not\in\mathbb{I}_{N,\beta_{\partial}}}\left(\int_{0}^{T}|\mathsf{A}^{2-\e_{\star},\mathbf{T}}(\varphi_{S,y})|^{3}\d S\right)^{1/3}. \label{eq:d1b182}
\end{align}
The first integral on the RHS of \eqref{eq:d1b182} is bounded uniformly by integrating. The second integral in \eqref{eq:d1b182} can be extended to $[0,\t^{\max}]$ as the integrand is non-negative. Therefore, taking expectation and bounding the average of expectations by the supremum provides almost the desired bound, with the only difference being the expectation is outside of the $1/3$-power of the time-integral. However, the Holder inequality lets us move this $1/3$-power outside the expectation, so we are done.
\end{proof}
%%%
%%%
\subsection{Final Estimates}
%%%
At this point, we have reduced our estimates of $\mathfrak{q}^{N}$ statistics to expectations of various functionals that are based on space-time averages of terms in $\mathfrak{q}^{N}$ from Notation \ref{notation:d1b1}, at least after we specialize Lemmas \ref{lemma:d1b13} and \ref{lemma:d1b17} to bulk statistics and Lemmas \ref{lemma:d1b15} and \ref{lemma:d1b18} to boundary statistics, all with the appropriate $N$-dependent prefactors/scalings; let us emphasize that Lemma \ref{lemma:d1b11} will be used to consider a cutoff version of the first term in $\mathfrak{q}^{N}$ instead of said first term itself. Standard procedure of the one-block scheme of \cite{GPV}, but adapted for dynamic statistics as in \cite{Y}, then lets us employ local equilibrium reductions in Lemma \ref{lemma:d1b5} combined with the stationary estimates at local scales given in Lemmas \ref{lemma:d1b7}, \ref{lemma:d1b8}, and \ref{lemma:d1b9} to estimate these expected space-time averages. This is basically the procedure that we use to complete the proof of Proposition \ref{prop:d1b2}. The main point of this subsection is to make this precise, though almost all proofs will be given in the appendix because of how similar they are to analysis in Section 5 of \cite{Y} and because of how they ultimately amount to a host of power-counting calculations in the scaling parameter $N$. We will start with the space-time averages in Lemma \ref{lemma:d1b13}, specializing $\varphi$ therein to the bulk statistics in $\mathfrak{q}^{N}$ from Notation \ref{notation:d1b1}.
%%%
\begin{lemma}\label{lemma:d1b19}
\fsp We consider first a sequence of exponents $\{\alpha_{\mathfrak{j}}(\mathbf{T})\}_{\mathfrak{j}=0}^{M}$ with $\delta,\e_{\star}$ sufficiently small but universal and positive as in \emph{Lemma \ref{lemma:d1b13}}. Choose $M$ to be the smallest positive integer so that $\alpha_{M}(\mathbf{T})$ is equal to the first choice made for $\alpha'_{M'}(\mathbf{T})$ in \emph{Lemma \ref{lemma:d1b20}} below, which we can arrange if we adjust $\delta$ herein by a uniformly bounded and uniformly positive factor. There exists a positive and universal $\beta_{\mathrm{u}}$ such that we have, uniformly in $1\leq\mathfrak{j}\leq M$, the bound below in which we use notation defined after:
\begin{align}
\E\bar{\kappa}_{\mathfrak{j}}\left(\int_{0}^{\t^{\max}}\wt{\sum}_{y\in\mathbb{I}_{N,0}}|\mathsf{A}^{\alpha_{\mathfrak{j}-1}(\mathbf{T}),\mathbf{T}}(\varphi_{S,y})|^{2}\d S\right)^{1/2} \ \lesssim \ N^{-1/2-\beta_{\mathrm{u}}} \quad \mathrm{where} \quad \varphi_{S,y}=\bar{\mathsf{A}}^{\beta_{X},\mathbf{X}}(\mathfrak{g}_{S,y})\mathbf{1}_{y\in\mathbb{I}_{N,\beta_{X}+2\e_{X}}}. \label{eq:d1b19I}
\end{align}
Above, we defined $\bar{\kappa}_{\mathfrak{j}}$ as the upper bound for $\kappa_{\mathfrak{j}}$ in \emph{Lemma \ref{lemma:d1b13}} times $(1+\|\mathbf{Z}^{N}\|^{1+\e})^{-1}$ with $\e$ and other notation here taken from \emph{Lemma \ref{lemma:d1b13}}. We have $M\lesssim1$ and $\alpha_{M}(\mathbf{T})\geq5/4-\beta$ for arbitrarily small but positive and universal $\beta$. The same holds if we:
%%%
\begin{itemize}
\item Replace the sequence of exponents $\{\alpha_{\mathfrak{j}}(\mathbf{T})\}_{\mathfrak{j}=0}^{M}$ by possibly changing $\delta,\e_{\star}$ to be other sufficiently small but universal constants, and choose the maximal index $M$ so that $\alpha_{M}(\mathbf{T})$ is equal to the second choice of $\alpha'_{M'}(\mathbf{T})$ from \emph{Lemma \ref{lemma:d1b20}} below.
\item Replace the RHS of \eqref{eq:d1b19I} by $N^{-\beta_{X}-\beta_{\mathrm{u}}}$, and replace our choice of $\varphi_{S,y}$ by $\wt{\mathfrak{g}}_{S,y}^{N}\mathbf{1}_{y\in\mathbb{I}_{N,\beta_{X}+2\e_{X}}}$. The constant $\beta_{\mathrm{u}}$ is also allowed to change but it is still both positive and universal. Moreover, we instead have $\alpha_{M}(\mathbf{T})\geq11/8-7\delta$.
\end{itemize}
%%%
\end{lemma}
%%%
Let us now clarify the choices of the maximal index and exponent $\alpha_{M}(\mathbf{T})$ in Lemma \ref{lemma:d1b19}. They will determine the maximal scale for time-average that we will require in order to estimate time-averages of the bulk statistics in $\mathfrak{q}^{N}$ from Notation \ref{notation:d1b1}, and we will make them more precise in the following estimate that controls the upper bound for multiscale $\Phi^{\mathfrak{k},\s,\mathfrak{l}_{1},\mathfrak{l}_{2}}$ terms in the multiscale decomposition of Lemma \ref{lemma:d1b17}. Again, we will specialize this to the choices of $\varphi$ in Lemma \ref{lemma:d1b19}. To clarify, note that for the first choice of $\varphi$ in \eqref{eq:d1b19I}, the estimate in \eqref{eq:d1b19I} is better than $N^{-1/2}$, because in $\mathfrak{q}^{N}$ and Proposition \ref{prop:d1b2}, this choice of $\varphi$ comes with a factor of $N^{1/2}$. In particular, for this choice of $\varphi$ but in Lemma \ref{lemma:d1b17}, we will need to choose $\alpha_{M'}$ in \eqref{eq:d1b17I} to be slightly bigger than $1/2$ for the same reason. Because the $\alpha_{\mathfrak{k}}$ exponents and $\alpha'_{\mathfrak{k}}(\mathbf{T})$ exponents in Lemma \ref{lemma:d1b17} grow at basically the same additive rates, we will be forced to choose $\alpha'_{M'}(\mathbf{T})$ in Lemma \ref{lemma:d1b17}, which determines the maximal time-scale for time-averaging in Lemma \ref{lemma:d1b19} as noted in Lemma \ref{lemma:d1b17}, in a particular way so that $\alpha_{M}'$ is quantitatively, but still only necessarily slightly, bigger than $1/2$. Therefore, the exponent $\alpha_{M}(\mathbf{T})$ in Lemma \ref{lemma:d1b19} comes from choosing correctly the parameters in the following specialization of Lemma \ref{lemma:d1b17}. An analogous story with different exponents holds for the choice of $\alpha_{M}(\mathbf{T})$ in the second choice of $\varphi$ in Lemma \ref{lemma:d1b19}.
%%%
\begin{remark}\label{remark:d1b19}
\fsp The proof of Lemma \ref{lemma:d1b19} follows the strategy explained below. First, we can successfully estimate the time-average of $\varphi$ on smaller time-scales in Lemma \ref{lemma:d1b19} because of the helpful $\bar{\kappa}_{\mathfrak{j}}$-factor in the proposed bound. Second, at least for the first choice of $\varphi$ in Lemma \ref{lemma:d1b19}, at canonical measures, Lemma \ref{lemma:d1b7} for $\alpha(\mathbf{T})=\alpha_{\mathfrak{j}-1}(\mathbf{T})$ and $\alpha(\mathbf{X})=\beta_{X}$ implies the squared time-average in \eqref{eq:d1b19I} is of order $N^{-2+\alpha_{\mathfrak{j}-1}(\mathbf{T})-\beta_{X}}$. By taking square roots and multiplying by $\bar{\kappa}_{\mathfrak{j}}$, which is basically order $N^{-\alpha_{\mathfrak{j}-1}(\mathbf{T})/4}$ up to small powers of $N$, suggests the LHS of \eqref{eq:d1b19I} is controlled by $N^{-1+\alpha_{\mathfrak{j}-1}(\mathbf{T})/4-\beta_{X}/2}$, and since $\alpha_{\mathfrak{j}-1}(\mathbf{T})\leq2$, this heuristic implies the LHS of \eqref{eq:d1b19I} is controlled by $N^{-1+1/2-\beta_{X}/2}\leq N^{-1/2-\beta_{\mathrm{u}}}$, at least at canonical measures. If we apply Lemma \ref{lemma:d1b5}, we can ultimately reduce to canonical measures locally up to an error that is controlled by $N^{-1/2-\beta_{\mathrm{u}}}$, which is how Lemma \ref{lemma:d1b19} is proven, at least for the first choice of $\varphi$ in Lemma \ref{lemma:d1b19}. We emphasize that the key point for this local equilibrium step is that $\varphi$ is basically a local statistic. For the second choice of $\varphi$, the same idea works.
\end{remark}
%%%
%%%
\begin{lemma}\label{lemma:d1b20}
\fsp Consider two sequences of exponents $\{\alpha'_{\mathfrak{k}}(\mathbf{T})\}_{\mathfrak{k}=0}^{M'}$ and $\{\alpha_{\mathfrak{k}}\}_{\mathfrak{k}=0}^{M'}$ as in \emph{Lemma \ref{lemma:d1b17}} defined precisely as follows.
%%%
\begin{itemize}
\item We have $\alpha_{\mathfrak{k}+1}=\alpha_{\mathfrak{k}}+\delta'$ for some positive and universal and arbitrarily small $\delta'$, and choose $\alpha_{0}=2^{-1}\beta_{X}-999^{-1}\e_{X}$.
\item We choose $\alpha'_{\mathfrak{k}}(\mathbf{T})=5/4+\beta_{X}+2^{-1}\alpha_{\mathfrak{k}}-\alpha_{\mathfrak{k}+1}-\beta$ for some positive and universal and arbitrarily small $\beta$.
\item Choose $M'$ to be the smallest positive integer for which $\alpha_{M'}>2^{-1}+\delta'$. 
\end{itemize}
%%%
There exists a positive and universal $\beta_{\mathrm{u}}$ such that uniformly in $0\leq\mathfrak{k}\leq M'$ and $0\leq\s\leq1$ and $0\leq\mathfrak{l}_{i}\leq N^{\delta'}$, we have, in which we employ notation to be explained later, the following expectation estimate:
\begin{align}
\E\bar{\Phi}^{\mathfrak{k},\s,\mathfrak{l}_{1},\mathfrak{l}_{2}} \ \lesssim \ N^{-1/2-\beta_{\mathrm{u}}}. \label{eq:d1b20I}
\end{align}
%
%%%
\begin{itemize}
\item $\bar{\Phi}^{\mathfrak{k},\s,\mathfrak{l}_{1},\mathfrak{l}_{2}}=\eqref{eq:d1b17II}$ in \emph{Lemma \ref{lemma:d1b17}}, where we choose $\varphi_{S,y}=\bar{\mathsf{A}}^{\beta_{X},\mathbf{X}}(\mathfrak{g}_{S,y})\mathbf{1}_{y\in\mathbb{I}_{N,\beta_{X}+2\e_{X}}}$ and $\beta=1/2+\beta_{\mathrm{u}}$.
\end{itemize}
%%%
The same estimate \eqref{eq:d1b20I} holds after we make the following adjustments as well.
%%%
\begin{itemize}
\item The $\delta'$ parameter defining $\alpha_{\mathfrak{k}}$ inductively is possibly changed but still positive, universal, and arbitrarily small. Also $\alpha_{0}=0$.
\item We choose $\alpha_{\mathfrak{k}}'(\mathbf{T})=11/8+2^{-1}\alpha_{\mathfrak{k}}-\alpha_{\mathfrak{k}+1}-\beta$ for some positive and universal and arbitrarily small $\beta$.
\item Choose $M'$ to be the smallest positive integer for which $\alpha_{M'}>\beta_{X}+\delta'$.
\item The RHS of \eqref{eq:d1b20I} is changed to $N^{-\beta_{X}-\beta_{\mathrm{u}}}$, where $\beta_{\mathrm{u}}$ is possibly different but still positive and universal.
\item Define $\bar{\Phi}^{\mathfrak{k},\s,\mathfrak{l}_{1},\mathfrak{l}_{2}}=\eqref{eq:d1b17II}$ from \emph{Lemma \ref{lemma:d1b17}}, and appearing in \emph{\eqref{eq:d1b20I}}, by choosing $\varphi_{S,y}=\wt{\mathfrak{g}}_{S,y}^{N}\mathbf{1}_{y\in\mathbb{I}_{N,\beta_{X}+2\e_{X}}}$ and $\beta=\beta_{X}+\beta_{\mathrm{u}}$.
\end{itemize}
\end{lemma}
%%%
%%%
\begin{remark}\label{remark:d1b20}
\fsp Recall $\bar{\Phi}^{\mathfrak{k},\mathfrak{s},\mathfrak{l}_{1},\mathfrak{l}_{2}}$ from Lemmas \ref{lemma:d1b17} and \ref{lemma:d1b20}. Roughly speaking, the proposed expectation estimate \eqref{eq:d1b20I} holds, at least for the first choice of $\varphi$ in Lemma \ref{lemma:d1b20}, for the following reason. At canonical measures, we expect the $\bar{\mathsf{A}}$-term in $\bar{\Phi}^{\mathfrak{k},\mathfrak{s},\mathfrak{l}_{1},\mathfrak{l}_{2}}$ to be order $N^{-1-\beta_{X}/2+\alpha'_{\mathfrak{k}}(\mathbf{T})/2}\approx N^{-3/8-\beta-4^{-1}\alpha_{\mathfrak{k}}}$ since $\alpha_{\mathfrak{k}}$ and $\alpha_{\mathfrak{k}+1}$ are basically the same; this follows from our choice of $\alpha'_{\mathfrak{k}}(\mathbf{T})$ in Lemma \ref{lemma:d1b20} as well as the local canonical measure estimate in Lemma \ref{lemma:d1b7}. The probability of the indicator function in $\bar{\Phi}^{\mathfrak{k},\mathfrak{s},\mathfrak{l}_{1},\mathfrak{l}_{2}}$, by the Chebyshev inequality, is basically the same order but times $N^{\alpha_{\mathfrak{k}}}$, which implies that $\bar{\Phi}^{\mathfrak{k},\mathfrak{s},\mathfrak{l}_{1},\mathfrak{l}_{2}}$ itself, without the prefactors, is controlled by basically $N^{-3/4-2\beta+2^{-1}\alpha_{\mathfrak{k}}}$. If we now include/multiply by the prefactor $N^{-2^{-1}\alpha_{\mathfrak{k}}+1/4}$ in $\bar{\Phi}^{\mathfrak{k},\mathfrak{s},\mathfrak{l}_{1},\mathfrak{l}_{2}}$, the proposed estimate \eqref{eq:d1b20I} follows, at least for the first choice of $\varphi$ in Lemma \ref{lemma:d1b20}. Reduction to canonical measures locally via Lemma \ref{lemma:d1b5} is done with an error that is also order $N^{-1/2-\beta_{\mathrm{u}}}$, again because $\varphi$ is local, and a similar story holds for the second choice of $\varphi$ in Lemma \ref{lemma:d1b20}. We emphasize here two points. First, the indicator function in $\bar{\Phi}^{\mathfrak{k},\mathfrak{s},\mathfrak{l}_{1},\mathfrak{l}_{2}}$, whose probability we estimated above, has the main utility of restricting to an event that should hold with low probability; as this indicator function comes from the multiscale decomposition in Lemma \ref{lemma:d1b17} for larger-scale time-averages, this is one main benefit of working with larger-scale time-averages. Second, which we have not yet mentioned the utility of, the indicator function cutoff for the $\bar{\mathsf{A}}$-term in $\bar{\Phi}^{\mathfrak{k},\mathfrak{s},\mathfrak{l}_{1},\mathfrak{l}_{2}}$ has the purpose of providing better a priori estimates deterministically, as opposed to using just the a priori estimates of $\varphi$ itself. In particular, observe that the reduction in Lemma \ref{lemma:d1b5} to local canonical measures is done through estimating, at canonical measures, the function $\varphi$ therein, which we will take to be the $\bar{\mathsf{A}}$-term in $\bar{\Phi}^{\mathfrak{k},\mathfrak{s},\mathfrak{l}_{1},\mathfrak{l}_{2}}$, at exponential scale, while the local canonical measure estimates in Lemma \ref{lemma:d1b7} only hold, at best, in second-moment. A priori upper bounds thus ultimately help reduce estimates at exponential scale to those at second-moments; this is the purpose of the upper bound cutoff in $\bar{\mathsf{A}}$-terms in $\bar{\Phi}^{\mathfrak{k},\mathfrak{s},\mathfrak{l}_{1},\mathfrak{l}_{2}}$.
\end{remark}
%%%
As we illustrate in the forthcoming subsection, the estimates of Lemma \ref{lemma:d1b19} and Lemma \ref{lemma:d1b20} will finish the required analysis for the first two terms in $\mathfrak{q}^{N}$ in Proposition \ref{prop:d1b2}, at least after we combine them with Lemma \ref{lemma:d1b11}, Lemma \ref{lemma:d1b13}, and Lemma \ref{lemma:d1b17}. We emphasize that the point of Lemma \ref{lemma:d1b5}, Lemma \ref{lemma:d1b7}, and Lemma \ref{lemma:d1b9} is to show Lemma \ref{lemma:d1b19} and Lemma \ref{lemma:d1b20}. In particular, the presence of boundary in the \emph{global} dynamics does not appear in our proofs of Lemma \ref{lemma:d1b19} and Lemma \ref{lemma:d1b20}, which are concerned with \emph{local bulk} statistics, except for how close we can show the system is to canonical measures locally in the bulk; this last point is manifest in Lemma \ref{lemma:d1b5}. Therefore, to complete the analysis of $\mathfrak{q}^{N}$ that is required in Proposition \ref{prop:d1b2}, we will need to address the edge/boundary statistics/last two terms in $\mathfrak{q}^{N}$ from Notation \ref{notation:d1b1}. For this, similar to Lemma \ref{lemma:d1b19} and Lemma \ref{lemma:d1b20}, we will build on previous results, namely Lemma \ref{lemma:d1b15} and Lemma \ref{lemma:d1b18}. We note that in applying these two estimates, we will specialize to $\varphi$ given by the boundary statistics in $\mathfrak{q}^{N}$ in Notation \ref{notation:d1b1} without their respective $N$-dependent factors. We provide the proof of Lemma \ref{lemma:d1b21} below in order to illustrate how Lemma \ref{lemma:d1b20} is proven as well, namely by averaging the law of the particle system in space-time, applying local equilibrium in Lemma \ref{lemma:d1b5}, and employing stationary estimates for space-time averaged statistics. In particular, the result below can basically be justified using the same type of story as in the Remark \ref{remark:d1b20} but using the boundary Kipnis-Varadhan inequality of Lemma \ref{lemma:d1b8} instead of Lemma \ref{lemma:d1b7}. Precisely, if we average a weakly vanishing functional supported at the boundary quantitatively above the microscopic time-scale, so at time-scale $N^{-2+\e_{\star}}$, then we gain a power-saving in $N$ by Lemma \ref{lemma:d1b8}, and this power-saving is enough to beat all other fixed but arbitrarily small powers of $N$ that are present, namely $N^{\beta_{\partial}+\e}$ in \eqref{eq:d1b21I} below. 
%%%
\begin{lemma}\label{lemma:d1b21}
\fsp Recall the notation of \emph{Lemma \ref{lemma:d1b18}}. Uniformly in $y\not\in\mathbb{I}_{N,\beta_{\partial}}$, we have the following expectation estimate given any positive $\e$ and any weakly vanishing $\varphi$, as long as $\beta_{\partial}$ is sufficiently small depending only on $\e_{\star}$; below, $\beta_{\mathrm{u}}\gtrsim\e_{\star}>0$:
\begin{align}
N^{\beta_{\partial}+\e}\left(\E\int_{0}^{\t^{\max}}|\mathsf{A}^{2-\e_{\star},\mathbf{T}}(\varphi_{S,y})|^{3}\d S\right)^{1/3} \ \lesssim \ N^{-\frac16+2\beta_{\partial}+2\e_{\star}+\e}+N^{-\beta_{\mathrm{u}}+\beta_{\partial}+\e}. \label{eq:d1b21I}
\end{align}
In particular, if we choose $\beta_{\partial}$ and $\e$ sufficiently small depending only on our choice of universal and positive $\e_{\star}$ in \emph{Lemma \ref{lemma:d1b18}} and $\beta_{\mathrm{u}}$, then the LHS is controlled by $N^{-\beta_{\mathrm{u}}}$ for a possibly different but still positive and universal $\beta_{\mathrm{u}}$ uniformly in $y\not\in\mathbb{I}_{N,\beta_{\partial}}$.
\end{lemma}
%%%
%%%
\begin{proof}
As $\varphi$ is weakly vanishing, it is uniformly bounded, so we can change the exponent of 3 on the LHS of \eqref{eq:d1b21I} to 2 at the cost of a uniformly bounded factor. It now suffices to prove the following as $\t^{\max}>0$ is fixed and thus uniformly bounded from below:
\begin{align}
\E(\t^{\max})^{-1}\int_{0}^{\t^{\max}}|\mathsf{A}^{2-\e_{\star},\mathbf{T}}(\varphi_{S,y})|^{2}\d S \ \lesssim \ N^{-\frac16+2\beta_{\partial}+2\e_{\star}+\e}+N^{-\beta_{\mathrm{u}}+\beta_{\partial}+\e}. \label{eq:d1b21I1}
\end{align}
We will now perform the following calculation that we explain afterwards; this is the ``one-block-type" step of \cite{GPV,Y}:
\begin{align}
\E(\t^{\max})^{-1}\int_{0}^{\t^{\max}}|\mathsf{A}^{2-\e_{\star},\mathbf{T}}(\varphi_{S,y})|^{2}\d S \ &= \ (\t^{\max})^{-1}\int_{0}^{\t^{\max}}\E|\mathsf{A}^{2-\e_{\star},\mathbf{T}}(\varphi_{S,y})|^{2}\d S \label{eq:d1b21I2a} \\
&= \ (\t^{\max})^{-1}\int_{0}^{\t^{\max}}\E^{\mu_{0,\mathbb{I}_{N,0}}}\mathfrak{f}_{S}\bar{\E}^{\mathrm{path}}|\mathsf{A}^{2-\e_{\star},\mathbf{T}}(\varphi_{0,y})|^{2}\d S \label{eq:d1b21I2b}\\
&= \ \E^{\mu_{0,\mathbb{I}_{N,0}}}\left(\left((\t^{\max})^{-1}\int_{0}^{\t^{\max}}\mathfrak{f}_{S}\d S\right)\bar{\E}^{\mathrm{path}}|\mathsf{A}^{2-\e_{\star},\mathbf{T}}(\varphi_{0,y})|^{2}\right). \label{eq:d1b21I2c}
\end{align}
The first identity \eqref{eq:d1b21I2a} follows from the Fubini theorem. The second identity \eqref{eq:d1b21I2b} is clarified and justified as follows. Observe that $|\mathsf{A}^{2-\e_{\star},\mathbf{T}}(\varphi_{S,y})|^{2}$ is a functional of the path-space of the particle system with initial condition given by the time-$S$ configuration. Therefore, its expectation can be decomposed into a path-space expectation $\bar{\E}^{\mathrm{path}}$ conditioning on an initial configuration that is sampled according to the law of the time-$S$ configuration, which is given by a Radon-Nikodym derivative $\mathfrak{f}_{S}$ to the grand-canonical measure $\mu_{0,\mathbb{I}_{N,0}}$. The third identity \eqref{eq:d1b21I2c} follows by another application of Fubini and noting the only term in \eqref{eq:d1b21I2b} that depends on the integration variable $S$ is the Radon-Nikodym derivative $\mathfrak{f}_{S}$. 

Let us now implement the following adjustment to $\bar{\E}^{\mathrm{path}}$ and \eqref{eq:d1b21I2c}. Observe that $\bar{\E}^{\mathrm{path}}$ is a \emph{global} statistic; it conditions on the entire particle configuration on $\mathbb{I}_{N,0}$ for the path-space measure/expectation. Following the proof of Lemma \ref{lemma:d1b8}, however, we may replace it by $\E^{\mathrm{path}}$ therein, which takes a configuration on $\mathbb{I}_{N,0}$, takes only the $\eta$-variables/configuration on $\mathbb{I}_{N}\setminus\mathbb{I}_{N,\beta_{\partial}+3\e_{\star}}$, and finally puts no particles outside $\mathbb{I}_{N}\setminus\mathbb{I}_{N,\beta_{\partial}+3\e_{\star}}$. The error we would pick up after this exchange of expectations $\bar{\E}^{\mathrm{path}}$ by $\E^{\mathrm{path}}$ in \eqref{eq:d1b21I2c} would be at most order $N^{-100}$ according to the proof of Lemma \ref{lemma:d1b8}, which tells us the time-average inside $\bar{\E}^{\mathrm{path}}$ in \eqref{eq:d1b21I2c} fails to see any of the differences between these two initial configurations defining $\bar{\E}^{\mathrm{path}}$ and $\E^{\mathrm{path}}$ with the aforementioned high probability.  Let us now note the integral in \eqref{eq:d1b21I2c} is a time-average of the law of the particle system. Therefore, employing Lemma \ref{lemma:d1b5} and the second estimate therein with $T=\t^{\max}$, with $\beta=\beta_{\partial}+3\e_{\star}$, with $\kappa=1$, and with $\varphi$ given by $\E^{\mathrm{path}}|\mathsf{A}^{2-\e_{\star},\mathbf{T}}(\varphi_{0,y})|^{2}$, which we reemphasize has support in $\mathbb{K}=\mathbb{I}_{N,0}\setminus\mathbb{I}_{N,\beta_{\partial}+3\e_{\star}}$ because of the adjustment made in this paragraph to $\bar{\E}^{\mathrm{path}}$, we get
\begin{align}
&\E^{\mu_{0,\mathbb{I}_{N,0}}}\left(\left((\t^{\max})^{-1}\int_{0}^{\t^{\max}}\mathfrak{f}_{S}\d S\right)\E^{\mathrm{path}}|\mathsf{A}^{2-\e_{\star},\mathbf{T}}(\varphi_{0,y})|^{2}\right) \nonumber \\
= \ &\E^{\mu_{0,\mathbb{K}}}\left(\left((\t^{\max})^{-1}\int_{0}^{\t^{\max}}\mathfrak{f}_{S}\d S\right)^{\mathbb{K}}\E^{\mathrm{path}}|\mathsf{A}^{2-\e_{\star},\mathbf{T}}(\varphi_{0,y})|^{2}\right) \label{eq:d1b21I3a} \\
\lesssim_{\t^{\max}} \ &N^{-1}|\mathbb{K}|^{2} + N^{-1/2}|\mathbb{K}|^{2} + \log\E^{\mu_{0,\mathbb{K}}}\exp(\E^{\mathrm{path}}|\mathsf{A}^{2-\e_{\star},\mathbf{T}}(\varphi_{0,y})|^{2}). \label{eq:d1b21I3b}
\end{align}
The identity \eqref{eq:d1b21I3a} follows upon observing the $\E^{\mathrm{path}}$ depends only on $\eta$-values in $\mathbb{K}$. Observe that $\mathbb{K}=\mathbb{I}_{N,0}\setminus\mathbb{I}_{N,\beta_{\partial}+3\e_{\star}}$ has size of order $N^{\beta_{\partial}+3\e_{\star}}$, since $\mathbb{I}_{N,\beta}$ looks at exactly points in $\mathbb{I}_{N,0}$ that are order $N^{\beta}$ away from the boundary. Therefore, the first two terms in \eqref{eq:d1b21I3b} are controlled by the first term on the RHS of \eqref{eq:d1b21I1}. It suffices to now estimate the last term in \eqref{eq:d1b21I3b}. To this end, we first note that the $\E^{\mathrm{path}}$-term in the exponential in \eqref{eq:d1b21I3b} is uniformly bounded, since $\varphi$ is weakly vanishing, and thus uniformly bounded, so the same is true for its time-averages. Because the exponential satisfies $\exp(|x|)\leq1+\kappa|x|$ for all $|x|\lesssim1$ uniformly bounded and some $\kappa$ positive and uniformly bounded, and because $\log(1+x)\leq x$, we get, for some universal $\beta_{\mathrm{u}}\gtrsim\e_{\star}>0$,
\begin{align}
\log\E^{\mu_{0,\mathbb{K}}}\exp(\E^{\mathrm{path}}|\mathsf{A}^{2-\e_{\star},\mathbf{T}}(\varphi_{0,y})|^{2}) \ \lesssim \ \E^{\mu_{0,\mathbb{K}}}\E^{\mathrm{path}}|\mathsf{A}^{2-\e_{\star},\mathbf{T}}(\varphi_{0,y})|^{2} \ \lesssim \ N^{-\beta_{\mathrm{u}}}, \label{eq:d1b21I4}
\end{align}
where the last inequality in \eqref{eq:d1b21I4} follows from Lemma \ref{lemma:d1b8} with $\e=\e_{\star}$ and $\mathfrak{w}=\varphi$, whose support is contained in $\mathbb{I}_{N,0}\setminus\mathbb{I}_{N,2\beta_{\partial}}$ as long as $y\not\in\mathbb{I}_{N,\beta_{\partial}}$ since $\varphi$ is weakly vanishing and therefore has uniformly bounded support length.
\end{proof}
%%%
%%%
\subsection{Proof of Proposition \ref{prop:d1b2}}
%%%
We will start with two preliminary and elementary observations. First, by the Markov inequality, if the expectation of something is controlled by $N^{-\beta_{\mathrm{u}}}$ for some $\beta_{\mathrm{u}}$ positive and universal, then outside of an event with probability at most order $N^{-\beta_{\mathrm{u}}/2}$, it is controlled by $N^{-\beta_{\mathrm{u}}/2}$. Moreover, any uniformly bounded number of events whose complements hold with probability at most order $N^{-\beta_{\mathrm{u}}}$ also has a complement of probability at most order $N^{-\beta_{\mathrm{u}}}$. With this, the probability estimate for the first event $\mathscr{E}_{\t^{\max},\beta_{\mathrm{u},1},\e}$ follows from combining Lemma \ref{lemma:d1b11}, Lemma \ref{lemma:d1b13}, Lemma \ref{lemma:d1b19}, and Lemma \ref{lemma:d1b20} to estimate the space-time integral in the $\mathscr{E}_{\t^{\max},\beta_{\mathrm{u},1},\e}$ event upon replacing $\mathfrak{q}^{N}$ by the first term and second term, respectively, in the $\mathfrak{q}^{N}$ formula in Notation \ref{notation:d1b1}. To estimate the probability of $\mathscr{E}_{\t^{\max},\beta_{\mathrm{u},1},\e}$, we also apply Lemma \ref{lemma:d1b15}, Lemma \ref{lemma:d1b18}, and Lemma \ref{lemma:d1b21} to do the same but upon replacing $\mathfrak{q}^{N}$ by the third and fourth terms in the $\mathfrak{q}^{N}$ formula in Notation \ref{notation:d1b1}; for this, we observe that the last term in said $\mathfrak{q}^{N}$ formula is a linear combination of weakly vanishing functionals supported in a microscopic order $m_{N}$-length neighborhood of $\mathbb{I}_{N,0}\setminus\mathbb{I}_{N,\beta_{\partial}}$ scaled by order $N$ deterministic coefficients, which follows if we expand the discrete gradients of length at most $m_{N}$ acting on the weakly vanishing functionals with support in $\mathbb{I}_{N,0}\setminus\mathbb{I}_{N,\beta_{\partial}}$. There is, however, an error in expanding the gradient when we apply said gradient to $\mathbf{Z}^{N}$, but upon a Taylor expansion, this error is ultimately both of order $N^{1/2}$ and supported outside $\mathbb{I}_{N,\beta_{\partial}}$, so we will treat it as we do $\mathfrak{b}^{N}$ below. Ultimately, this completes the proof for the $\mathscr{E}_{\t^{\max},\beta_{\mathrm{u},1},\e}$ probability estimate. To estimate the probability of $\mathscr{E}_{\t^{\max},\beta_{\mathrm{u},1}}$, we employ a direct method. First, we make the following observations.
%%%
\begin{itemize}
\item Again, we may assume $\mathbf{Q}^{N}$ satisfies the estimate in Lemma \ref{lemma:QOffD} uniformly on $[0,\t^{\max}]\times\mathbb{I}_{N,0}$, up to another factor of $N^{\e}$  for a positive and universal $\e$ that can be taken arbitrarily small. The cost is an event with probability of order at most $N^{-100}$.
\item The set $\mathbb{I}_{N,0}\setminus\mathbb{I}_{N,\beta_{X}+2\e_{X}}$ has size of order $N^{\beta_{X}+2\e_{X}}$, where $\beta_{X}=1/4+\e_{X}$; see Proposition \ref{prop:MatchCpt}.
\item Lastly, we reemphasize that the functional $\mathfrak{b}^{N}$ is uniformly bounded.
\end{itemize}
%%%
With the previous three bullet points, let us first directly estimate
\begin{align}
|\int_{0}^{T}{\sum}_{y\in\mathbb{I}_{N,0}}\mathbf{Q}_{S,T,x,y}^{N}\cdot\mathbf{1}_{y\not\in\mathbb{I}_{N,\beta_{X}+2\e_{X}}}N^{1/2}\mathfrak{b}^{N}_{S,y}\mathbf{Z}_{S,y}^{N}\d S| \ &\lesssim \ N^{-\frac12+\beta_{X}+2\e_{X}+\e}\|\mathbf{Z}^{N}\|_{\mathscr{L}^{\infty}_{T,X}}\int_{0}^{T}\rho_{S,T}^{-1/2}\d S. \label{eq:d1b2proof1}
\end{align}
As $\beta_{X}=1/4+\e_{X}$, after integrating we see the RHS of \eqref{eq:d1b2proof1} is controlled by $N^{-1/4+3\e_{X}+\e}\|\mathbf{Z}^{N}\|$. As this bound holds uniformly in $(T,x)\in[0,\t^{\max}]\times\mathbb{I}_{N,0}$, we have that the $\mathscr{L}^{\infty}_{T,X}$-norm of the LHS of \eqref{eq:d1b2proof1} is controlled from above by $N^{-1/4+3\e_{X}+\e}\|\mathbf{Z}^{N}\|$ as well. This completes the proof of the $\mathscr{E}_{\t^{\max},\beta_{\mathrm{u},1}}$ estimate if $\e_{X},\e$ are sufficiently small, so we are done. \qed
%
%
%
%%%
\section{Proof of Theorem \ref{theorem:KPZ}}\label{section:Proof}
%%%
We split up the proof of Theorem \ref{theorem:KPZ} into the following three components. {First, we show that the second integral from the RHS of \eqref{eq:SHESFSEvEq} is negligible in the large-$N$ limit, so that it remains to analyze the first term on the RHS of \eqref{eq:SHESFSEvEq}, at least with high-probability. By Proposition \ref{prop:d1b2}, the only ingredient that we will need is a high-probability estimate on $\mathbf{Z}^{N}$ obtained via stochastic continuity as in \cite{Y}. Second, we unfold the stochastic fundamental solution $\mathbf{Q}^{N}$ and show the quantities beyond whatever appears in the large-$N$ limit in $\mathrm{SHE}(\mathscr{A}_{\pm})$ are negligible in the large-$N$ limit. This is similar to the procedure adopted in \cite{DT} through Lemma 2.5 therein, though we perform this estimate in a quantitative fashion. Third, after these previous two procedures, we are left with exactly the spatial action of the heat propagator corresponding to $\mathbf{P}^{N}$ on initial data of $\mathbf{Z}^{N}$ and the discrete approximation to the space-time white noise $\xi^{N} \approx \xi$. For this step, we replace $\mathbf{P}^{N} \to \bar{\mathbf{P}}^{N}$, in which case we thus inherit tightness of $\mathbf{Z}^{N}$ from \cite{P}. To identify subsequential limit points as solutions to $\mathrm{SHE}(\mathscr{A}_{\pm})$; we again follow the approach via Lemma 2.5 in \cite{DT}. These steps would then give Theorem \ref{theorem:KPZ}. Throughout this section, it will be convenient to establish the following notation.}
%%%
\begin{notation}\fsp 
{We first define the space-time process below with initial data that of $\mathbf{Z}^{N}$ and propagated by the $\mathbf{Q}^{N}$ kernel:}
\begin{align}
\mathbf{Y}_{T,x}^{N} \ \overset{\bullet}= \ {\sum}_{y \in \mathbb{I}_{N,0}} \mathbf{Q}_{0,T,x,y}^{N} \mathbf{Z}_{0,y}^{N}.
\end{align}
The {process} $\mathbf{Y}_{T,x}^{N}$ is the unique solution {to} the following stochastic parabolic equation for {$\beta_{\partial}>0$} arbitrarily small but universal:
\begin{align}
\mathbf{Y}_{T,x}^{N} \ = \ {\sum}_{y \in \mathbb{I}_{N,0}} \mathbf{P}_{0,T,x,y}^{N} \mathbf{Z}_{0,y}^{N} \ &+ \ \int_{0}^{T} {\sum}_{y \in \mathbb{I}_{N,0}} \mathbf{P}_{S,T,x,y}^{N} \cdot \mathbf{Y}_{S,y}^{N} \ \d\xi_{S,y}^{N} \ + \ \int_{0}^{T} {\sum}_{y\in\mathbb{I}_{N,0}} \mathbf{P}_{S,T,x,y}^{N} \cdot\mathfrak{w}_{S,y}^{N}\mathbf{Y}_{S,y}^{N}\ \d S \nonumber \\
&+ \ {\sum}_{|k| \leq m_{N}} c_{k} \int_{0}^{T} {\sum}_{y \in \mathbb{I}_{N,0}} \mathbf{P}_{S,T,x,y}^{N} \cdot \grad_{k,y}^{!}\left( \mathbf{1}_{y\in\mathbb{I}_{N,\beta_{\partial}}}\mathfrak{w}_{S,y}^{N,k}\mathbf{Y}_{S,y}^{N}\right) \ \d S \nonumber \\
&+ \ {N^{-1/2}\wt{\sum}_{|k|\leq N^{\beta_{X}}}\int_{0}^{T}{\sum}_{y\in\mathbb{I}_{N,0}}\mathbf{P}_{S,T,x,y}^{N}\cdot\grad_{10m_{N}k}^{!}\left(\mathbf{1}_{w\in\mathbb{I}_{N,\beta_{X}-\e_{X}}}\mathfrak{b}^{N,k}_{S,y}\mathbf{Y}_{S,y}^{N}\right)\d S}. \nonumber
\end{align}
Moreover, we define $\mathbf{X}^{N}$ via the stochastic integral equation
\begin{align}
\mathbf{X}_{T,x}^{N} \ &\overset{\bullet}= \ {\sum}_{y\in\mathbb{I}_{N,0}} \mathbf{P}_{0,T,x,y}^{N} \mathbf{Z}_{0,y}^{N} \ + \ \int_{0}^{T} {\sum}_{y\in\mathbb{I}_{N,0}} \mathbf{P}_{S,T,x,y}^{N} \cdot \mathbf{X}_{T,x}^{N} \ \d\xi_{S,y}^{N}.
\end{align}
Lastly, we define $\bar{\mathbf{Z}}^{N}$ via the stochastic integral equation
\begin{align}
\bar{\mathbf{Z}}_{T,x}^{N} \ &\overset{\bullet}= \ {\sum}_{y \in \mathbb{I}_{N,0}} \bar{\mathbf{P}}_{0,T,x,y}^{N}\mathbf{Z}_{0,y}^{N} \ + \ \int_{0}^{T} {\sum}_{y \in \mathbb{I}_{N,0}} \bar{\mathbf{P}}_{S,T,x,y}^{N} \cdot \bar{\mathbf{Z}}_{S,y}^{N} \ \d \xi_{S,y}^{N}.
\end{align}
\end{notation}
%%%
We reiterate that our interpretation of the aforementioned {processes} is that $\mathbf{Y}^{N}$ serves as a proxy for $\mathbf{Z}^{N}$; see step {one} in the previous outline. Similarly, the {process} $\mathbf{X}^{N}$ serves as a proxy for $\mathbf{Y}^{N}$, and $\bar{\mathbf{Z}}^{N}$ serves as a proxy for $\mathbf{X}^{N}$. Lastly, the {process} $\bar{\mathbf{Z}}^{N}$ is afterwards identified to converge to the solution to $\mathrm{SHE}(\mathscr{A}_{\pm})$.
%%%
\subsection{A Priori Estimates}
%%%
Before we proceed with the three-step procedure outlined above, we collect useful a priori estimates for this section. The first of these is a high-probability estimate for $\mathbf{Y}^{N}$, $\mathbf{X}^{N}$, and $\bar{\mathbf{Z}}^{N}$.
%%%
\begin{lemma}\fsp \label{lemma:APrioriYX}
Consider the following event parameterized by $\e \in \R_{>0}$, in which the implied constant is universal:
\begin{align}
\mathbf{1}[\mathscr{E}_{\e}] \ \overset{\bullet}= \ \mathbf{1}[\|\mathbf{Y}^{N}_{T,x}\|_{\mathscr{L}^{\infty}_{T,X}} \ + \ \| \mathbf{X}^{N}_{T,x}\|_{\mathscr{L}^{\infty}_{T,X}} \ + \ \|\bar{\mathbf{Z}}_{T,x}^{N}\|_{\mathscr{L}^{\infty}_{T,X}} \ \gtrsim \ N^{\e} ].
\end{align}
Provided any {$D,\e>0$}, we have {$\mathbf{P}[\mathscr{E}_{\e}] \lesssim_{\e,D} N^{-D}$}.
\end{lemma}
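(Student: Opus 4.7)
The plan is to mirror the strategy used in Section 9 of \cite{Y} and in the proof of Lemma \ref{lemma:AsympNCSFS}: first obtain arbitrarily high moment bounds at each fixed space-time point, then produce matching modulus-of-continuity moment bounds in both space and time, and finally upgrade these via a Kolmogorov-type continuity argument combined with a union bound over a polynomial mesh.

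For the pointwise moment step, consider first $\mathfrak{Y}^{N}$, whose defining integral equation has the same structure as that of $\mathscr{Q}^{N}$ in the proof of Lemma \ref{lemma:QOffD} with the deterministic forcing $\sum_{y} \mathscr{U}_{0,T,x,y}^{N}\mathfrak{Z}_{0,y}^{N}$ in place of the heat kernel. The near-stationary assumption together with Lemma \ref{lemma:IOnDRBPA} bounds this forcing uniformly in $(T,x)$. The martingale integral against $\d\xi^{N}$ is controlled by the BDG-type inequality (Lemma 3.1 in \cite{DT}) and the pointwise heat kernel bound of Lemma \ref{lemma:IOnDRBPA}, producing an integrand of the form $\varrho_{R,T}^{-1/2}\sup_{w}\|\mathfrak{Y}^{N}_{R,w}\|_{\mathscr{L}^{2p}_{\omega}}^{2}$. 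The $\mathfrak{w}^{N}$ and $\grad_{k}(\mathfrak{w}^{N,k}\cdot)$ perturbations are handled exactly as the terms $\mathscr{Q}^{N,2}$ and $\mathscr{Q}^{N,3}$ in Lemma \ref{lemma:QOffD}, i.e.\ Cauchy--Schwarz in the space-time propagator measure combined with the gradient estimate in Lemma \ref{lemma:RegRBPACptTotal}. Singular Gronwall (Lemma \ref{lemma:UsualSuspectInt}) then yields $\|\mathfrak{Y}^{N}_{T,x}\|_{\mathscr{L}^{2p}_{\omega}}\lesssim_{p,T_{f}} 1$ uniformly in $(T,x)$. The arguments for $\mathfrak{X}^{N}$ and $\bar{\mathfrak{Z}}^{N}$ are strictly simpler, since they involve only the martingale term (and in the case of $\bar{\mathfrak{Z}}^{N}$ the nearest-neighbor heat kernel $\bar{\mathscr{U}}^{N}$ which inherits the bounds of Lemma \ref{lemma:IOnDRBPA}).

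For the regularity step, apply the analogues of Lemma \ref{lemma:QTReg} and of the argument in Lemma \ref{lemma:AsympNCCH} by replacing $\mathscr{U}^{N}$ with $\mathscr{D}_{\tau}\mathscr{U}^{N}$ and using Lemma \ref{lemma:RegRBPACptTotal}; this yields a time-regularity moment estimate of the shape $\|\mathfrak{D}_{\tau}\mathfrak{Y}^{N}_{T,x}\|_{\mathscr{L}^{2p}_{\omega}}\lesssim_{p,T_{f},\e} N^{\e}\tau^{1/4-\e}$ for $N^{-2}\lesssim\tau\leq 1$, with identical statements for $\mathfrak{X}^{N}$ and $\bar{\mathfrak{Z}}^{N}$. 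The spatial modulus follows from the same scheme with $\grad_{k,y}\mathscr{U}^{N}$ in place of $\mathscr{U}^{N}$, giving $\|\mathfrak{Y}^{N}_{T,x}-\mathfrak{Y}^{N}_{T,y}\|_{\mathscr{L}^{2p}_{\omega}}\lesssim_{p,T_{f},\e} N^{\e}(|x-y|/N)^{1/2-\e}$, and similarly for the two other fields. Together these moment bounds place each of the three random fields in a uniform $\mathscr{L}^{2p}_{\omega}$ modulus-of-continuity class for all $p$, uniformly in $N$.

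The final step is the passage from moment estimates to the high-probability uniform bound. Discretize $[0,T_{f}]\times\mathscr{I}_{N,0}$ to a mesh of cardinality at most $N^{A}$ for some universal $A\in\R_{>0}$ (using mesh spacing $N^{-A}$ in time and $1$ in space). For any $D\in\R_{>0}$, choosing $p\in\R_{>1}$ sufficiently large and applying Chebyshev's inequality together with a union bound over the mesh, the probability that any of the three fields exceeds $N^{\e/2}$ at a mesh point is bounded by $N^{A}\cdot N^{-2p\e/2}\lesssim N^{-D}$. The modulus-of-continuity estimates above, again via Chebyshev and a union bound over the mesh, guarantee that with probability at least $1-N^{-D}$ the oscillation of each field between neighboring mesh points is at most $N^{-\e/2}$, which extends the mesh-wise bound $N^{\e/2}$ to the required uniform bound $N^{\e}$ over the full space-time domain.

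The main obstacle is the boundary error $N\mathfrak{w}_{S,y}^{N,\pm}\mathfrak{Y}_{S,y}^{N}$ entering the equation for $\mathfrak{Y}^{N}$, where the prefactor $N$ threatens to make the Gronwall integrand non-integrable. This is resolved by the fact that $\mathfrak{w}^{N,\pm}$ is supported on $\llbracket 0,2m_{N}\rrbracket\cup\llbracket N-2m_{N},N\rrbracket$, so the inner sum over $y$ contributes only $\mathscr{O}(m_{N})$ terms; combined with $\sup_{y}\mathscr{U}_{R,T,x,y}^{N}\lesssim N^{-1}\varrho_{R,T}^{-1/2}$ from Lemma \ref{lemma:IOnDRBPA}, the $N$ is absorbed and produces exactly the integrable singularity $\varrho_{R,T}^{-1/2}$ amenable to singular Gronwall, as already exploited for $\mathscr{Q}^{N,4}$ in Lemma \ref{lemma:QOffD}.
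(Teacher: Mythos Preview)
Your proposal is correct and follows essentially the same approach as the paper. The paper's proof is extremely terse---it simply asserts that the estimate for $\mathfrak{Y}^{N}$ ``follows immediately from the result for the initial data combined with Lemma \ref{lemma:QOffD}'' (and analogously for $\mathfrak{X}^{N}$, $\bar{\mathfrak{Z}}^{N}$ via their own stochastic fundamental solutions)---which is exactly the Gronwall-plus-stochastic-continuity argument you have unpacked in detail, including the correct handling of the $N\mathfrak{w}^{N,\pm}$ boundary term as in $\mathscr{Q}^{N,4}$.
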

%%%
%%%
\begin{proof}
{Because we have estimates for initial data, the estimate for $\mathbf{Y}^{N}$ follows immediately from the result for the initial data combined with Lemma \ref{lemma:QOffD}. For $\mathbf{X}^{N}$, identical reasoning holds upon proving the analogous estimate for the stochastic fundamental solution corresponding to the stochastic problem defining $\mathbf{X}^{N}$, though this follows via identical reasoning as the proof of Lemma \ref{lemma:QOffD}. Estimating $\bar{\mathbf{Z}}^{N}$ requires identical considerations. This completes the proof.}
\end{proof}
%%%
%%%
\begin{lemma}\fsp \label{lemma:PathwiseIIMainc}
Provided any $\e \in \R_{>0}$ arbitrarily small but universal, consider the following event in which $\ell_{N} \overset{\bullet}= N^{1/2-\e}$:
\begin{align}
\mathbf{1}[\mathscr{E}_{\e}] \ &\overset{\bullet}= \ \mathbf{1}[\| \grad_{\ell_{N}} \mathbf{Y}_{T,x}^{N}\|_{\mathscr{L}^{\infty}_{T,X}} \ \geq \ N^{-1/5} ].
\end{align}
Then for some universal constant $\delta>0$, we have {$\mathbf{P}[ \mathscr{E}_{\e}] \lesssim_{\e}N^{-\delta}$}.

\end{lemma}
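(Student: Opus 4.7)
The strategy is to apply the gradient operator $\grad_{\ell_N}$ in the spatial variable to the stochastic integral equation defining $\mathfrak{Y}^{N}$, estimate each of the four resulting terms in $\mathscr{L}^{2p}_{\omega}$ for sufficiently large $p$, and then upgrade the pointwise moment bounds to the sup-norm via a Kolmogorov-type argument combined with a union bound over a polynomially-fine space-time net. The key quantitative input is that $\ell_{N}/N = N^{-1/2-\e}$ is small at the macroscopic scale, which combined with the (spatially-averaged and pointwise) heat kernel regularity estimates of Lemma \ref{lemma:RegRBPACptTotal} and the a priori $\mathscr{L}^{\infty}_{T,X}$-bound for $\mathfrak{Y}^{N}$ from Lemma \ref{lemma:APrioriYX} yields a pointwise $\mathscr{L}^{2p}_{\omega}$-estimate on $\grad_{\ell_{N}}\mathfrak{Y}^{N}_{T,x}$ which is a fixed universal power of $N$ smaller than $N^{-1/5}$.

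Concretely, first I would handle the action on the initial data $\sum_{y}(\grad_{\ell_{N},x}\mathscr{U}^{N}_{0,T,x,y})\mathfrak{Z}^{N}_{0,y}$ by using the near-stationary Holder bound $\|\mathfrak{Z}^{N}_{0,X}-\mathfrak{Z}^{N}_{0,Y}\|_{\mathscr{L}^{2p}_{\omega}}\lesssim N^{-u}|X-Y|^{u}$, so that summation-by-parts converts the gradient on $\mathscr{U}^{N}$ into one on $\mathfrak{Z}^{N}_{0,\bullet}$, at the cost of the spatially-averaged heat kernel mass from Lemma \ref{lemma:IOnDRBPA}; this produces a bound of order $(\ell_{N}/N)^{u}\lesssim N^{-u(1/2+\e)}$ which for $u$ close to $1/2$ is substantially better than $N^{-1/5}$. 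Second, for the stochastic integral I would apply Lemma 3.1 in \cite{DT} (BDG for Poisson martingales) to bound $\|\grad_{\ell_{N}}\int_{0}^{T}\sum_{y}\mathscr{U}^{N}_{S,T,x,y}\mathfrak{Y}^{N}_{S,y}\,\d\xi^{N}_{S,y}\|_{\mathscr{L}^{2p}_{\omega}}^{2}$ by $\int_{0}^{T}N\sum_{y}|\grad_{\ell_{N},x}\mathscr{U}^{N}_{S,T,x,y}|^{2}\|\mathfrak{Y}^{N}_{S,y}\|_{\mathscr{L}^{2p}_{\omega}}^{2}\,\d S$ up to $N^{-1}$. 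The gradient-squared summation is estimated by Chapman-Kolmogorov: split $\mathscr{U}^{N}_{S,T}=\sum_{w}\mathscr{U}^{N}_{S,(S+T)/2}\mathscr{U}^{N}_{(S+T)/2,T}$ to isolate one gradient factor, apply Lemma \ref{lemma:RegRBPACptTotal} unit-gradient estimate $\ell_{N}$ times (or use the one-step gradient bound together with the pointwise Nash-type estimate from Lemma \ref{lemma:IOnDRBPA}), producing a prefactor $(\ell_{N}/N)^{2}\varrho_{S,T}^{-2}\cdot N^{-1}\varrho_{S,T}^{-1/2}$ type integrand. Combined with the a priori bound $\|\mathfrak{Y}^{N}\|_{\mathscr{L}^{\infty}_{T,X}}\lesssim N^{\e}$ on a high-probability event, the singular Gronwall inequality delivers a bound of order $N^{-1/2-\e+\mathrm{O}(\e)}$.

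Third, the weakly-vanishing bulk terms are treated identically: one absorbs the uniform boundedness of $\mathfrak{w}^{N}, \mathfrak{w}^{N,k}$ into the Cauchy-Schwarz estimate over the space-time kernel $\mathscr{U}^{N}$, using summation-by-parts to move $\grad_{k}^{!}$ onto $\mathscr{U}^{N}$ and thereby concentrating attention on $\grad_{\ell_{N},x}\grad_{k,y}^{!}\mathscr{U}^{N}$; for this we would invoke the second-order gradient estimate of Lemma \ref{lemma:1B2BRegHK} (after swapping $\bar{\mathscr{U}}^{N}$ for $\mathscr{U}^{N}$ via the Proposition \ref{prop:MacroHKCptRBPA} comparison). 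The boundary correction terms are handled analogously but using only the single gradient estimate since their spatial support is $\mathrm{O}(1)$ in size, so summation in $y$ is trivial and the prefactor $N$ is absorbed by $(\ell_{N}/N)^{u}$ times the heat-kernel short-time decay.

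Once these four pointwise $\mathscr{L}^{2p}_{\omega}$-bounds of order $N^{-\beta}$ for some universal $\beta>1/5$ are assembled, I would upgrade to the uniform estimate by the standard route: establish an analogous stochastic spatial and temporal Holder continuity estimate for $\grad_{\ell_{N}}\mathfrak{Y}^{N}_{T,x}$ (via the identical decomposition, now comparing two space-time points), apply Kolmogorov's continuity criterion on a time-space net of size $N^{C}$ for $C$ universal, and take $p$ large to exhaust the polynomially many points through a union bound, afterwards transferring to the full continuum via the Holder control. The main obstacle is the stochastic integral term: honest bookkeeping of $\sum_{y}|\grad_{\ell_{N},x}\mathscr{U}^{N}_{S,T,x,y}|^{2}$ at the mesoscopic gradient scale $\ell_{N}=N^{1/2-\e}$ requires either the Chapman-Kolmogorov splitting sketched above or an explicit iteration of unit-gradient estimates, and one must carefully verify that the short-time singularity remains integrable after these manipulations; once this is controlled, the remaining terms follow from the same template with strictly simpler estimates.
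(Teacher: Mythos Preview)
Your proposal is correct and matches the paper's approach: the paper's proof is a two-line reference, stating that it suffices to establish arbitrarily high moment estimates on the spatial gradient uniformly in space-time and that this amounts to following the proof of Proposition~3.2 in \cite{DT} for $\mathfrak{Y}^{N}$ on $\mathscr{I}_{N,0}$, followed by stochastic continuity as in Lemma~\ref{lemma:AsympNCSFS}. Your detailed sketch is precisely what that reference unpacks to.
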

%%%
%%%
\begin{proof}
{Via stochastic continuity as in the proof for Corollary 3.3 in \cite{DT}}, it suffices to establish arbitrarily high moment estimates for the spatial gradient uniformly in space-time; we follow the proof of Proposition 3.2 in \cite{DT} for $\mathbf{Y}^{N}$ on the lattice {$\mathbb{I}_{N,0} $.}
\end{proof}
%%%
%%%
\subsection{Step I}
%%%
As briefly described in the outline at the beginning of this section, the main goal for this subsection is the following high-probability estimate.
%%%
\begin{prop}\fsp \label{prop:PathwiseIMain}
Provided any $\beta \in \R_{>0}$, consider the following event with the implied constant universal:
\begin{align}
\mathbf{1}[\mathscr{E}_{\beta}] \ &\overset{\bullet}= \ \mathbf{1}[\|\mathbf{Z}_{T,x}^{N} - \mathbf{Y}_{T,x}^{N} \|_{\mathscr{L}^{\infty}_{T,X}} \ \gtrsim \ N^{-\beta} ].
\end{align}
There exist universal constants $\beta_{\mathrm{u},1},\beta_{\mathrm{u},2} \in \R_{>0}$ such that {$\mathbf{P}[\mathscr{E}_{\beta_{\mathrm{u},1}}]\lesssim_{\e,m_{N},\mathscr{A}_{\pm},\mathfrak{t}^{\max}} N^{-\beta_{\mathrm{u},2}}$.}
\end{prop}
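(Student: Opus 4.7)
\emph{Proof plan.} By construction, $\mathfrak{Z}^{N} - \mathfrak{Y}^{N}$ is precisely the integral
$$\int_{0}^{T}\sum_{y\in\mathscr{I}_{N,0}} \mathscr{Q}_{S,T,x,y}^{N} \cdot \mathfrak{Q}_{S,y}^{N}\mathfrak{Z}_{S,y}^{N}\ \d S,$$
so by Proposition \ref{prop:Consequences}, on a high-probability event $\mathscr{A}_{1}$ we have
$$\|\mathfrak{Z}^{N}-\mathfrak{Y}^{N}\|_{\mathscr{L}^{\infty}_{T,X}}\ \lesssim\ N^{-\beta_{\mathrm{u},1}}\ +\ N^{-\beta_{\mathrm{u},1}}\|\mathfrak{Z}^{N}\|_{\mathscr{L}^{\infty}_{T,X}}^{1+\e}.$$
On a second high-probability event $\mathscr{A}_{2}$, Lemma \ref{lemma:APrioriYX} gives $\|\mathfrak{Y}^{N}\|_{\mathscr{L}^{\infty}_{T,X}}\lesssim N^{\e}$ for any prescribed $\e\in\R_{>0}$. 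Combining these via the triangle inequality, on $\mathscr{A}_{1}\cap\mathscr{A}_{2}$ we obtain the self-referential estimate
$$\|\mathfrak{Z}^{N}\|_{\mathscr{L}^{\infty}_{T,X}}\ \lesssim\ N^{\e}\ +\ N^{-\beta_{\mathrm{u},1}}\|\mathfrak{Z}^{N}\|_{\mathscr{L}^{\infty}_{T,X}}^{1+\e}. \qquad (\ast)$$

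The main obstacle is closing this inequality into a deterministic bound $\|\mathfrak{Z}^{N}\|\lesssim N^{\e}$, since $(\ast)$ alone admits the spurious solution $\|\mathfrak{Z}^{N}\|\gtrsim N^{\beta_{\mathrm{u},1}/\e}$. To rule this out I will run a stopping-time bootstrap, exploiting the observation that $T\mapsto \mathfrak{Z}^{N}_{T,x}$ is c\`adl\`ag with jumps of size $\mathscr{O}(N^{-1/2})$ (coming from the prefactor $\mathrm{e}^{\pm 2\lambda_{N}N^{-1/2}}-1$ in each Poisson update). Define
$$\sigma\ \overset{\bullet}=\ \inf\bigl\{T\in[0,T_{f}]: \|\mathfrak{Z}^{N}_{T,\cdot}\|_{\ell^{\infty}_{X}}>N^{2\e}\bigr\}\wedge T_{f}.$$
On $\{\sigma<T_{f}\}$, right-continuity with $\mathscr{O}(N^{-1/2})$-jumps gives $\|\mathfrak{Z}^{N}\|_{\mathscr{L}^{\infty}([0,\sigma]\times X)}\leq N^{2\e}+\mathscr{O}(N^{-1/2})$. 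The proof of Proposition \ref{prop:Consequences} is local in time (all ingredients --- the heat-kernel estimates of Lemma \ref{lemma:AsympNCSFS}, the regularity estimate of Lemma \ref{lemma:AsympNCCH}, and the dynamical/static one-block bounds of Propositions \ref{prop:D1Ba}--\ref{prop:S1B} --- are uniform in the upper time-cutoff $T\leq T_{f}$), so the estimate of Proposition \ref{prop:Consequences} propagates to the stopping time $\sigma$ in place of $T_{f}$, yielding
$$\|\mathfrak{Z}^{N}-\mathfrak{Y}^{N}\|_{\mathscr{L}^{\infty}([0,\sigma]\times X)}\ \lesssim\ N^{-\beta_{\mathrm{u},1}}\bigl(1+N^{2\e(1+\e)}\bigr)\ \lesssim\ N^{-\beta_{\mathrm{u},1}+3\e}.$$
Choosing $\e$ sufficiently small that $\beta_{\mathrm{u},1}-3\e>\e$, the triangle inequality forces $\|\mathfrak{Z}^{N}\|_{[0,\sigma]}\leq N^{\e}+N^{-\beta_{\mathrm{u},1}+3\e}<N^{2\e}$, contradicting the definition of $\sigma$.

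Thus $\sigma=T_{f}$ on $\mathscr{A}_{1}\cap\mathscr{A}_{2}$, hence $\|\mathfrak{Z}^{N}\|_{\mathscr{L}^{\infty}_{T,X}}\leq N^{2\e}$. Plugging this bound back into Proposition \ref{prop:Consequences} yields
$$\|\mathfrak{Z}^{N}-\mathfrak{Y}^{N}\|_{\mathscr{L}^{\infty}_{T,X}}\ \lesssim\ N^{-\beta_{\mathrm{u},1}+3\e},$$
which gives the desired conclusion upon taking $\beta_{\mathrm{u},1}^{\mathrm{new}}=\beta_{\mathrm{u},1}-3\e>0$ and noting that $\mathscr{A}_{1}\cap\mathscr{A}_{2}$ has probability at least $1-\kappa N^{-\beta_{\mathrm{u},2}}$ for a universal $\beta_{\mathrm{u},2}\in\R_{>0}$. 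The mild point requiring care beyond the contradiction step above is verifying that Proposition \ref{prop:Consequences} indeed upgrades to stopping times --- this reduces to noting that the RHS of every ingredient estimate is a non-decreasing function of the upper time cutoff, so the inequalities survive evaluation at $\sigma$ without any measurability issue.
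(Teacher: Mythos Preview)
Your approach matches the paper's: reduce to Proposition \ref{prop:Consequences}, invoke Lemma \ref{lemma:APrioriYX} for the a priori bound on $\mathfrak{Y}^{N}$, and close the self-referential inequality via a pathwise stopping-time argument (which the paper simply defers to ``Section 9 of \cite{Y}'' rather than writing out). One small imprecision: the jumps of $\mathfrak{Z}^{N}$ are \emph{multiplicative} of size $\mathfrak{Z}^{N}_{T-,x}\cdot(\mathrm{e}^{\pm 2\lambda_{N}N^{-1/2}}-1)$, not additive $\mathscr{O}(N^{-1/2})$, so at the stopping time $\sigma$ one gets $\|\mathfrak{Z}^{N}\|_{[0,\sigma]}\leq N^{2\e}(1+\mathscr{O}(N^{-1/2}))$ rather than $N^{2\e}+\mathscr{O}(N^{-1/2})$ --- but this is harmless for the contradiction step.
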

%%%
%%%
\begin{proof}
By Proposition \ref{prop:d1b2}, it suffices to prove that for any $\e \in \R_{>0}$ sufficiently small, we have the following estimate with high-probability on the event $\mathscr{E}_{\mathfrak{t}^{\max},\beta_{\mathrm{u}},\e}$ defined therein:
\begin{align}
\|\mathbf{Z}^{N}\|_{\mathscr{L}^{\infty}_{T,X}} \ &\lesssim_{\e} \ N^{\e}.
\end{align}
{To this end, we will appeal to a pathwise continuity strategy. In particular, consider the first time before time $\mathfrak{t}^{\max}$ where $\mathbf{Z}^{N}$ exceeds $N^{\e/2}$, which is certainly a lower bound for the RHS of the above estimate when we include the $\e$-dependent factor as long as $N$ is sufficiently large depending only on $\e$; let this first time be denoted by $\mathfrak{t}$. If $\t<\mathfrak{t}^{\max}$ strictly, then observe that at some time past $\t$, the microscopic Cole-Hopf transform $\mathbf{Z}^{N}$ evolves by a factor of at most 2 with high probability. Indeed $\mathbf{Z}^{N}$ evolves according to an exponentiated drift of uniformly bounded speed times the exponential of something that counts $N^{-1/2}$ times the number of jumps at a given point in $\mathbb{I}_{N,0}$, which, with the required high probability, is uniformly bounded in very short times. This is because the total number of jumps in the particle system in any short-time window is Poisson of speed at most order $N^{3}$, as the Poisson clocks are each speed of order $N^{2}$ and there are order $N$-many clocks in the system. Thus, by Proposition \ref{prop:d1b2}, slightly past the stopping time $\t$ we see that the second term on the RHS of \eqref{eq:SHESFSEvEq} is vanishingly small in $N$, and thus the result follows since $\mathbf{Y}^{N}\lesssim N^{\e/2}$ with the required high probability by Lemma \ref{lemma:APrioriYX}, and the difference $\mathbf{Z}^{N}-\mathbf{Y}^{N}$ is equal to the second term on the RHS of \eqref{eq:SHESFSEvEq} by construction of the process $\mathbf{Y}^{N}$.}
\end{proof}
%%%
%%%
\subsection{Step II}
%%%
As discussed prior, in this subsection we now compare $\mathbf{Y}^{N}$ and $\mathbf{X}^{N}$; this is precisely stated as follows.
%%%
\begin{prop}\fsp \label{prop:PathwiseIIMain}
Provided any $\beta \in \R_{>0}$, define the following event with universal implied constant:
\begin{align}
\mathbf{1}[\mathscr{E}_{\beta}] \ &\overset{\bullet}= \ \mathbf{1}[\| \mathbf{X}_{T,x}^{N} - \mathbf{Y}_{T,x}^{N} \|_{\mathscr{L}^{\infty}_{T,X}} \ \gtrsim \ N^{-\beta} ].
\end{align}
There exist universal constants $\beta_{\mathrm{u},1},\beta_{\mathrm{u},2} \in \R_{>0}$ such that {$\mathbf{P}[\mathscr{E}_{\beta_{\mathrm{u},1}}]\lesssim_{\mathfrak{t}^{\max},m_{N}} N^{-\beta_{\mathrm{u},2}}$.}
\end{prop}
%%%
Before we provide the proof of Proposition \ref{prop:PathwiseIIMain}, we require first the following probabilistic estimate that we establish through analytic means; observe that if the interacting particle system both exhibited the grand-canonical ensemble $\mu_{0,\mathbb{I}_{N,0}}^{}$ as its invariant measure and began at this invariant measure, the following estimate is effectively the classical CLT. Since this previous condition is certainly false in general, we follow \cite{DT} and establish the estimate instead through the regularity of {$\mathbf{Z}^{N}$}.
%%%
\begin{lemma}\fsp \label{lemma:PathwiseIIMaind}
Provided any $\e,\delta \in \R_{>0}$, consider the following event with universal implied constant:
\begin{align}
\mathbf{1}[\mathscr{E}_{\e,\delta}] \ &\overset{\bullet}= \ \mathbf{1}[| \wt{{\sum}}_{1\leq w\leq N^{1/2 - \e}} \tau_{w}\eta_{T,x}^{N} \cdot \mathbf{Y}_{T,x}^{N}| \gtrsim N^{-\delta} ].
\end{align}
For {$\e,\delta>0$} sufficiently small but universal, there exists a universal constant {$\delta'>0$ so} that {$\mathbf{P}[\mathscr{E}_{\e,\delta}]\lesssim_{\mathfrak{t}^{\max},m_{N},\mathscr{A}_{\pm},\e,\delta,\delta'} N^{-\delta'}$.}
\end{lemma}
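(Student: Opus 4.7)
The plan is to exploit the Cole-Hopf identity to convert a spatial average of occupation variables into a discrete gradient of $\mathfrak{Z}^{N}$, and then apply the gradient estimate in Lemma~\ref{lemma:PathwiseIIMainc} together with the comparison between $\mathfrak{Z}^{N}$ and $\mathfrak{Y}^{N}$ from Proposition~\ref{prop:PathwiseIMain}. Set $L = N^{\frac12-\e}$ and $S_{T,x} = \sum_{w=1}^{L} \tau_{w}\eta_{T,x}^{N}$; by construction $|S_{T,x}| \leq L$. From Definition~\ref{definition:HFmCH} we have the exact identity
\begin{align}
\mathfrak{Z}_{T,x+L}^{N} \ = \ \mathfrak{Z}_{T,x}^{N} \exp\left( -\lambda_{N} N^{-\frac12} S_{T,x} \right),
\end{align}
so that $|\lambda_{N} N^{-\frac12} S_{T,x}| \lesssim N^{-\e}$, which is small. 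Taylor expanding the exponential to second order yields
\begin{align}
\grad_{L}^{!} \mathfrak{Z}_{T,x}^{N} \ = \ -\lambda_{N} N^{-\frac12} S_{T,x} \mathfrak{Z}_{T,x}^{N} \ + \ E_{T,x}, \qquad |E_{T,x}| \ \lesssim \ N^{-1} S_{T,x}^{2} \mathfrak{Z}_{T,x}^{N} \ \lesssim \ N^{-2\e} \mathfrak{Z}_{T,x}^{N}.
\end{align}

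Solving for $S_{T,x} \cdot \mathfrak{Z}_{T,x}^{N}$, dividing by $L = N^{\frac12-\e}$, and using $L^{-1} N^{\frac12 - 2\e} = N^{-\e}$, we obtain
\begin{align}
L^{-1} S_{T,x} \cdot \mathfrak{Z}_{T,x}^{N} \ = \ - \lambda_{N}^{-1} N^{\e} \grad_{L}^{!} \mathfrak{Z}_{T,x}^{N} \ + \ \mathscr{O}\left( N^{-\e} \mathfrak{Z}_{T,x}^{N} \right).
\end{align}
Using Proposition~\ref{prop:PathwiseIMain} and the trivial bound $|L^{-1}S_{T,x}| \leq 1$ allows the replacement $\mathfrak{Z}^{N} \rightsquigarrow \mathfrak{Y}^{N}$ on the left-hand side and in the gradient factor on the right-hand side, each at a cost of $N^{-\beta_{\mathrm{u},1}}$ on a very high probability event. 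Applying Lemma~\ref{lemma:PathwiseIIMainc} to control $|\grad_{L}^{!}\mathfrak{Y}_{T,x}^{N}| \leq N^{-1/5}$ and Lemma~\ref{lemma:APrioriYX} to bound $\mathfrak{Z}_{T,x}^{N} \leq N^{\eta}$ for $\eta \in \R_{>0}$ arbitrarily small, we end with
\begin{align}
\left| L^{-1} S_{T,x} \cdot \mathfrak{Y}_{T,x}^{N} \right| \ \lesssim \ N^{-\frac15+\e+\eta} \ + \ N^{-\e+\eta} \ + \ N^{-\beta_{\mathrm{u},1}},
\end{align}
simultaneously in $(T,x)$ on the aforementioned high-probability event.

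Choosing $\e \in \R_{>0}$ sufficiently small and $\eta \ll \e$ then yields the estimate with some positive universal exponent $\delta \in \R_{>0}$ on the right-hand side. A union bound over the polynomial-in-$N$ many coordinates, combined with the high-probability content of Proposition~\ref{prop:PathwiseIMain}, Lemma~\ref{lemma:PathwiseIIMainc}, and Lemma~\ref{lemma:APrioriYX}, produces a small universal constant $\delta' \in \R_{>0}$ for the probability estimate as stated. The main obstacle to this plan is only bookkeeping: one must be careful with the interplay between the Taylor-error exponent $-\e$, the gradient-regularity exponent $-1/5$, and the a priori loss $N^{\eta}$, choosing each small-parameter appropriately so that they combine to a single universal positive power saving.
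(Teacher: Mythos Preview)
Your argument is correct and follows the same underlying mechanism as the paper's proof: convert the spatial average of $\eta$'s into a discrete gradient of $\mathfrak{Z}^{N}$ via the Cole--Hopf relation, then invoke the gradient regularity of Lemma~\ref{lemma:PathwiseIIMainc} and the comparison $\mathfrak{Z}^{N}\approx\mathfrak{Y}^{N}$ from Proposition~\ref{prop:PathwiseIMain}.

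One notational slip to fix: throughout your displays you write $\grad_{L}^{!}$, but your Taylor expansion and all subsequent algebra are only consistent with the \emph{unscaled} gradient $\grad_{L}$. Indeed, $\grad_{L}^{!}=N\grad_{L}$ in the paper's convention, and with the scaled gradient the term $N^{\e}\grad_{L}^{!}\mathfrak{Y}^{N}$ would be of order $N^{1+\e-1/5}$, which diverges. With $\grad_{L}$ everywhere the bookkeeping is correct.

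There is a mild but genuine difference in execution. The paper introduces the cutoff event $\{\mathfrak{Y}^{N}_{T,x}\leq N^{-\beta}\}$, on whose complement it has a lower bound $\mathfrak{Z}^{N}_{T,x}\gtrsim N^{-\beta/2}$ allowing division by $\mathfrak{Z}^{N}$ and use of the logarithmic identity for $\dashsum\eta$. You instead Taylor-expand the exponential and solve directly for the \emph{product} $S_{T,x}\mathfrak{Z}^{N}_{T,x}$, which is exactly the quantity needed; this sidesteps any lower bound on $\mathfrak{Z}^{N}$ and hence the cutoff event altogether. Your route is slightly cleaner for that reason, while the paper's logarithmic form is closer to the presentation in \cite{DT}. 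Both yield the same sub-optimal but sufficient power saving.
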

%%%
%%%
\begin{remark}\fsp
We emphasize the current estimate from Lemma \ref{lemma:PathwiseIIMaind} is sub-optimal if compared to the result with respect to the product Bernoulli measure of appropriate parameter; however, this will be more than sufficient.
\end{remark}
%%%
%%%
\begin{proof}
{All statements in this proof hold with high-probability of at least $1 - N^{-\beta_{\mathrm{u}}}$ with $\beta_{\mathrm{u}}>0$ universal}. Because we require a uniformly bounded number of these statements, {the final conclusion will also occur with probability at least $1 - N^{-\beta_{\mathrm{u}}'}$} for another universal constant {$\beta_{\mathrm{u}}'>0$.} {First, we introduce the following for $\beta \in \R_{>0}$ arbitrarily small but universal:}
\begin{align}
\mathbf{1}[\mathscr{E}_{T,x}^{N}] \ &\overset{\bullet}= \ \mathbf{1}[ \mathbf{Y}_{T,x}^{N} \leq N^{-\beta} ].
\end{align}
Although quite unlikely, or equivalently although {$\mathscr{E}^{N}$} has small probability in all likelihood, we technically have no method of proving this; however, like in the proof of Lemma 2.5 from \cite{DT}, this event is actually ``good" towards proving the desired estimate. {Second, we introduce this cutoff as in the proof of Lemma 2.5 in \cite{DT}:}
{
\begin{align}
|\wt{{\sum}}_{1\leq w\leq N^{1/2 - \e}} \tau_{w}\eta_{T,x}^{N} \cdot \mathbf{Y}_{T,x}^{N}| \ &= \ |\wt{{\sum}}_{1\leq w\leq N^{1/2 - \e}} \tau_{w}\eta_{T,x}^{N} \cdot \mathbf{Y}_{T,x}^{N}| \mathbf{1}_{\mathscr{E}_{T,x}^{N}} \ + \ |\wt{{\sum}}_{1\leq w\leq N^{1/2 - \e}}  \tau_{w}\eta_{T,x}^{N} \cdot \mathbf{Y}_{T,x}^{N}| \mathbf{1}_{\left(\mathscr{E}_{T,x}^{N}\right)^{C}}. \nonumber
\end{align}
}
By construction of the aforementioned cutoff, the first term on the RHS is certainly bounded above by $N^{-\beta}$. Concerning the second term, we first observe that Lemma \ref{lemma:APrioriYX} provides the following high-probability estimate given any {$\delta>0$:}
\begin{align}
\| \mathbf{Y}_{T,x}^{N} \|_{\mathscr{L}^{\infty}_{T,X}(0)} \ &\lesssim_{\mathfrak{t}^{\max},m_{N},\mathscr{A}_{\pm},\delta} \ N^{\delta}.
\end{align}
Like in the proof of Lemma 2.5 in \cite{DT}, by definition of $\mathbf{Z}^{N}$ we may now write the following representation of the fluctuations of the particle density in terms of the regularity of the microscopic Cole-Hopf transform for {$\ell_{N} = N^{1/2-\e}$:}
\begin{align}
\wt{{\sum}}_{1\leq w\leq N^{1/2 - \e}}\tau_{w}\eta_{T,x}^{N} \ &= \ -2^{-1}\lambda_{N}^{-1}N^{\e} \log \left( 1 + (\mathbf{Z}_{T,x}^{N})^{-1}\grad_{\ell_{N}} \mathbf{Z}_{T,x}^{N} \right).
\end{align}
Outside {of} the event {$\mathscr{E}^{N}$ for $\beta>0$ sufficiently small, by Proposition \ref{prop:PathwiseIMain} we also have $\mathbf{Z}^{N} \gtrsim N^{-\beta/2}$.} Moreover, by Proposition \ref{prop:PathwiseIMain} once again, {we} establish the following estimate uniformly over {$T\geq0$} and $x\in\mathbb{I}_{N,0}$ with {$\beta_{\mathrm{u}}>0$} universal with high-probability, in which the second inequality follows from an application of Lemma \ref{lemma:PathwiseIIMainc}:
{
\begin{align}
|\grad_{\ell_{N}} \mathbf{Z}_{T,x}^{N}| \ &\lesssim \ | \grad_{\ell_{N}} \mathbf{Y}_{T,x}^{N}| \ + \ N^{-\beta_{\mathrm{u}}} \ \lesssim \ N^{-1/5} \ + \ N^{-\beta_{\mathrm{u}}}.
\end{align}
}Thus, by standard inequalities {for the logarithm, we obtain the following for the fluctuations of the particle density:
\begin{align}
|\wt{{\sum}}_{1\leq w\leq N^{1/2 - \e}}\tau_{w}\eta_{T,x}^{N}| \ &\lesssim \ \lambda_{N}^{-1}N^{\e} (\mathbf{Z}_{T,x}^{N})^{-1}|\grad_{\ell_{N}} \mathbf{Z}_{T,x}^{N}|\ \lesssim \ N^{\e+\beta/2} N^{-1/5} \ + \ N^{\e+\beta/2} N^{-\beta_{\mathrm{u}}}.
\end{align}
For $\e,\beta>0$ sufficiently small depending only on the universal constant $\beta_{\mathrm{u}}>0$, this completes the proof.}
\end{proof}
%%%
This next preliminary ingredient is another consequence of spatial regularity; {roughly speaking, it lets us replace weakly vanishing quantities appearing inside the integral equation defining $\mathbf{Y}^{N}$ with appropriate spatial averages.}
%%%
\begin{lemma}\fsp \label{lemma:PathwiseIIMaine}
Consider the following quantities for $\e \in \R_{>0}$ arbitrarily small but universal:
\begin{subequations}
\begin{align}
\Upsilon_{T,x}^{N} \ &\overset{\bullet}= \ \int_{0}^{T} {\sum}_{y \in \mathbb{I}_{N,0}} \mathbf{P}_{S,T,x,y}^{N} \cdot\mathfrak{w}_{S,y}^{N} \mathbf{Y}_{S,y}^{N}\ \d S; \\
\wt{\Upsilon}_{T,x}^{N} \ &\overset{\bullet}= \ \int_{0}^{T} {\sum}_{y \in \mathbb{I}_{N,0}} \mathbf{P}_{S,T,x,y}^{N} \cdot{\mathsf{A}^{\frac12-\e,\mathbf{X}}(\mathfrak{w}_{S,y}^{N})}\mathbf{Y}_{S,y}^{N}\ \d S; \\
\Phi_{T,x}^{N} \ &\overset{\bullet}= \ {\sum}_{|k| \leq m_{N}} c_{k} \int_{0}^{T} {\sum}_{y \in \mathbb{I}_{N,0}} \grad_{k,y}^{!} \mathbf{P}_{S,T,x,y}^{N} \cdot\mathbf{1}_{y \in \mathbb{I}_{N,\beta_{\partial}}} \mathfrak{w}_{S,y}^{N,k}\mathbf{Y}_{S,y}^{N}\ \d S; \\
\wt{\Phi}_{T,x}^{N} \ &\overset{\bullet}= \ {\sum}_{|k| \leq m_{N}} c_{k} \int_{0}^{T} {\sum}_{y \in \mathbb{I}_{N,0}} \grad_{k,y}^{!} \mathbf{P}_{S,T,x,y}^{N} \cdot\mathbf{1}_{y \in \mathbb{I}_{N,\beta_{\partial}}}{\mathsf{A}^{\frac12-\e,\mathbf{X}}(\mathfrak{w}_{S,y}^{N})}\mathbf{Y}_{S,y}^{N}\ \d S.
\end{align}
\end{subequations}
Moreover, provided $\delta \in \R_{>0}$, define the following events with universal implied constant:
\begin{align}
\mathbf{1}[\mathscr{E}_{\delta,1}] \ &\overset{\bullet}= \ \mathbf{1}[\| \Upsilon_{T,x}^{N} - \wt{\Upsilon}_{T,x}^{N} \|_{\mathscr{L}^{\infty}_{T,X}} \ \gtrsim \ N^{-\delta} ] \and \mathbf{1}[\mathscr{E}_{\delta,2}] \ \overset{\bullet}= \ \mathbf{1}[\| \Phi_{T,x}^{N} - \wt{\Phi}_{T,x}^{N} \|_{\mathscr{L}^{\infty}_{T,X}} \ \gtrsim \ N^{-\delta}].
\end{align}
Provided $\delta \in \R_{>0}$ sufficiently small but universal, there exists a universal constant $\delta' \in \R_{>0}$ such that
\begin{align}
\mathbf{P}[\mathscr{E}_{\delta,1}]+ \mathbf{P}[\mathscr{E}_{\delta,2}] \ &\lesssim_{\mathfrak{t}^{\max},m_{N},\mathscr{A},\e,\delta,\delta'} \ N^{-\delta'}.
\end{align}
\end{lemma}
%%%
%%%
\begin{proof}
We provide the same first paragraph of disclaimers from the proof of Lemma \ref{lemma:PathwiseIIMaind}. {By Lemma \ref{lemma:RegRBPACptTotal} and Proposition \ref{prop:MacroHKCptRBPA} combined with the a priori estimate for $\mathbf{Y}^{N}$ in Lemma \ref{lemma:APrioriYX}, we have the following with high-probability for some $\delta''>0$:}
\begin{subequations}
\begin{align}
\Upsilon_{T,x}^{N} \ &= \ \int_{0}^{T} {\sum}_{y \in \mathbb{I}_{N,0}} \bar{\mathbf{P}}_{S,T,x,y}^{N} \cdot\mathfrak{w}_{S,y}^{N}\mathbf{Y}_{S,y}^{N}\ \d S \ + \ \mathscr{O}(N^{-\delta''}); \\
\bar{\Upsilon}_{T,x}^{N} \ &= \ \int_{0}^{T} {\sum}_{y \in \mathbb{I}_{N,0}} \bar{\mathbf{P}}_{S,T,x,y}^{N} \cdot{\mathsf{A}^{\frac12-\e,\mathbf{X}}(\mathfrak{w}_{S,y}^{N})}\mathbf{Y}_{S,y}^{N}\ \d S \ + \ \mathscr{O}(N^{-\delta''}); \\
\Phi_{T,x}^{N} \ &= \ {\sum}_{|k| \leq m_{N}} c_{k} \int_{0}^{T} {\sum}_{y \in \mathbb{I}_{N,0}} \grad_{k,y}^{!} \bar{\mathbf{P}}_{S,T,x,y}^{N}\cdot\mathbf{1}_{y \in \mathbb{I}_{N,\beta_{\partial}}^{\infty}} \mathfrak{w}_{S,y}^{N,k}\mathbf{Y}_{S,y}^{N}\ \d S \ + \ \mathscr{O}(N^{-\delta''}); \\
\bar{\Phi}_{T,x}^{N} \ &= \ {\sum}_{|k| \leq m_{N}} c_{k} \int_{0}^{T} {\sum}_{y \in \mathbb{I}_{N,0}} \grad_{k,y}^{!} \bar{\mathbf{P}}_{S,T,x,y}^{N} \cdot\mathbf{1}_{y \in \mathbb{I}_{N,\beta_{\partial}}^{\infty}}{\mathsf{A}^{\frac12-\e,\mathbf{X}}(\mathfrak{w}_{S,y}^{N})}\mathbf{Y}_{S,y}^{N}\ \d S \ + \ \mathscr{O}(N^{-\delta''}).
\end{align}
\end{subequations}
As in the proof of Lemma 2.5 in \cite{DT}, provided the spatial regularity estimate for $\mathbf{Y}^{N}$ in Lemma \ref{lemma:PathwiseIIMainc} and the a priori estimate in Lemma \ref{lemma:APrioriYX}, we establish the desired probability estimate for the event $\mathscr{E}_{\delta,1}$; we emphasize the utility behind the regularity estimate for the kernel $\bar{\mathbf{P}}^{N}$ from Corollary 3.3 in \cite{P} as well, because the gradient estimate therein applies to either the backwards or forwards spatial coordinate as the nearest-neighbor Laplacian is self-adjoint with respect to the uniform measure and any Robin boundary parameter. {To establish the stated probability estimate for $\mathscr{E}_{\delta,2}$, an identical argument we inherited via Lemma 2.5 in \cite{DT} with the regularity estimate in Corollary 3.3 in \cite{P} to estimate the probability of $\mathscr{E}_{\delta,1}$ applies equally well if we instead employ the regularity estimate in Lemma \ref{lemma:1B2BRegHK} in place of Corollary 3.3 in \cite{P}. This completes the proof.}
\end{proof}
%%%
As an immediate consequence of the preceding few ingredients combined with the quantitative classical one-block and two-blocks estimates in Proposition \ref{prop:1B2B}, we obtain our final preliminary estimate towards the proof of Proposition \ref{prop:PathwiseIIMain}.
%%%
\begin{lemma}\fsp \label{lemma:PathwiseIIMainf}
Retain the setting {of} \emph{Lemma \ref{lemma:PathwiseIIMaine}}, and {for} any {$\delta>0$}, define the following events with universal implied constant:
{
\begin{align}
\mathbf{1}[\mathscr{F}_{\delta,1}] \ &\overset{\bullet}= \ \mathbf{1}[\| \Upsilon_{T,x}^{N} \|_{\mathscr{L}^{\infty}_{T,X}} \ \gtrsim \ N^{-\delta}] \and \mathbf{1}[\mathscr{F}_{\delta,2}] \ \overset{\bullet}= \ \mathbf{1}[\| \Phi_{T,x}^{N} \|_{\mathscr{L}^{\infty}_{T,X}} \ \gtrsim \ N^{-\delta}].
\end{align}
}
Provided $\delta \in \R_{>0}$ sufficiently small but universal, there exists a universal constant $\delta' \in \R_{>0}$ such that
\begin{align}
\mathbf{P}[\mathscr{F}_{\delta,1}] \ + \ \mathbf{P}[\mathscr{F}_{\delta,2}] \ &\lesssim_{\mathfrak{t}^{\max},m_{N},\mathscr{A},\e,\delta,\delta'} \ N^{-\delta'}.
\end{align}
\end{lemma}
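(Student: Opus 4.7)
The plan is to treat $\mathscr{F}_{\delta,1}$ and $\mathscr{F}_{\delta,2}$ in parallel by reducing each to the single density-fluctuation estimate of Lemma \ref{lemma:PathwiseIIMaind} after performing a quantitative one-block and two-blocks replacement. The first move is Lemma \ref{lemma:PathwiseIIMaine}: on a high-probability event the triangle inequality replaces $\Upsilon^{N}$ by $\bar{\Upsilon}^{N}$ and $\Phi^{N}$ by $\bar{\Phi}^{N}$ at the cost of an $N^{-\delta''}$ error, so from this point forward I may work with the spatially-averaged weakly-vanishing fields $\mathrm{Av}_{\mathfrak{w}^{N}}^{1/2-\e}$ and $\mathrm{Av}_{\mathfrak{w}^{N,k}}^{1/2-\e}$ integrated against $\bar{\mathscr{U}}^{N}$ and $\grad_{k,y}^{!}\bar{\mathscr{U}}^{N}$ respectively.

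Second, I invoke the quantitative classical one-block and two-blocks estimate from Proposition \ref{prop:1B2B} to replace each such spatial average by its canonical expectation $\E^{\mu_{\varrho,\Lambda}^{\mathrm{can}}}[\mathfrak{w}^{N}]$ evaluated at the empirical density $\varrho$ on a mesoscopic block $\Lambda$ slightly larger than $N^{1/2-\e}$, up to a further $N^{-\delta_{\mathrm{u}}}$ correction. Because $\mathfrak{w}^{N}$ is weakly vanishing, its grand-canonical expectation at parameter $\varrho = 0$ vanishes, and combining a Taylor expansion in $\varrho$ with the standard finite-volume equivalence of ensembles yields
\begin{align}
\E^{\mu_{\varrho,\Lambda}^{\mathrm{can}}}[\mathfrak{w}^{N}] \ = \ C_{\mathfrak{w}}\, \varrho \ + \ \mathscr{O}(\varrho^{2}) \ + \ \mathscr{O}(|\Lambda|^{-1}),
\end{align}
for a deterministic constant $C_{\mathfrak{w}} \in \R$ depending only on $\mathfrak{w}^{N}$; the same expansion holds for $\mathfrak{w}^{N,k}$.

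Inserting this expansion into $\bar{\Upsilon}^{N}$ and $\bar{\Phi}^{N}$, the leading linear-in-$\varrho$ term, when multiplied by $\mathfrak{Y}^{N}$, is precisely $C_{\mathfrak{w}} \cdot \ \dashsum_{w=1}^{N^{1/2-\e}} \tau_{w}\eta^{N} \cdot \mathfrak{Y}^{N}$, which Lemma \ref{lemma:PathwiseIIMaind} bounds by $N^{-\delta}$ with high probability; the quadratic remainder $\varrho^{2}\mathfrak{Y}^{N}$ is likewise controlled by the same lemma together with the a priori $\mathscr{L}^{\infty}_{T,X}$ bound on $\mathfrak{Y}^{N}$ from Lemma \ref{lemma:APrioriYX}. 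These pointwise high-probability bounds are then integrated against $\bar{\mathscr{U}}_{S,T,x,y}^{N}$ for $\bar{\Upsilon}^{N}$ by the Nash-type estimate of Lemma \ref{lemma:IOnDRBPA}, whose short-time singularity is integrable, and against $\grad_{k,y}^{!}\bar{\mathscr{U}}_{S,T,x,y}^{N}$ for $\bar{\Phi}^{N}$ by the gradient estimate of Lemma \ref{lemma:RegRBPACptTotal}, yielding a uniform space-time estimate of $N^{-\delta'}$ once $\delta$ is taken sufficiently small.

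The principal obstacle will be the $\bar{\Phi}^{N}$-case, where the gradient of $\bar{\mathscr{U}}^{N}$ degrades to $\varrho_{S,T}^{-1/2}$ near the diagonal and the cutoff $\mathbf{1}_{y \in \mathscr{I}_{N,\beta_{\partial}}^{\infty}}$ contributes an extra $N^{\beta_{\partial}}$; these must be absorbed by the residual smallness $N^{-\delta}$ from the density-fluctuation estimate, which forces $\delta \in \R_{>0}$ to be a small but universal multiple of $\min(\beta_{\mathrm{u}}, \delta_{\mathrm{u}}, \delta_{\mathrm{PathwiseIIMaind}})$. Intersecting the finitely many high-probability events appearing in Lemma \ref{lemma:PathwiseIIMaine}, Lemma \ref{lemma:PathwiseIIMaind}, Lemma \ref{lemma:APrioriYX}, and Proposition \ref{prop:1B2B} then yields the stated $N^{-\delta'}$ probability bound for both $\mathscr{F}_{\delta,1}$ and $\mathscr{F}_{\delta,2}$.
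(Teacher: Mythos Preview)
Your proposal is correct and follows essentially the same approach as the paper, which gives a one-line proof citing exactly the four ingredients you use: Lemma \ref{lemma:APrioriYX}, Lemma \ref{lemma:PathwiseIIMaind}, Lemma \ref{lemma:PathwiseIIMaine}, and Proposition \ref{prop:1B2B}. Your unpacking of how these fit together---reduce to $\bar{\Upsilon}^{N},\bar{\Phi}^{N}$ via Lemma \ref{lemma:PathwiseIIMaine}, apply the quantitative one-block/two-blocks of Proposition \ref{prop:1B2B} to pass to the local-equilibrium expectation, Taylor-expand in the empirical density using the weakly-vanishing property, and close with the density-fluctuation bound of Lemma \ref{lemma:PathwiseIIMaind} and the a priori control from Lemma \ref{lemma:APrioriYX}---is precisely the intended mechanism.
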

%%%
%%%
\begin{proof}
This follows immediately from Lemma \ref{lemma:APrioriYX}, Lemma \ref{lemma:PathwiseIIMaind}, Lemma \ref{lemma:PathwiseIIMaine} and Proposition \ref{prop:1B2B}.
\end{proof}
%%%
%%%
\begin{proof}[Proof of \emph{Proposition \ref{prop:PathwiseIIMain}}]
{Upon summation-by-parts to move the gradient onto the heat kernel in the last term in the $\mathbf{Y}^{N}$ equation, said term, in arbitrarily high moments, is order $N^{-1/2+\beta_{X}}\lesssim N^{-1/5}$ if $\e_{X}$ is small enough.} {Thus, estimating the third, fourth, and fifth terms on the RHS of the $\mathbf{Y}^{N}$ equation via Lemma \ref{lemma:PathwiseIIMainf}, we may use pathwise analysis { as in the proof of Proposition \ref{prop:PathwiseIMain}.}}
\end{proof}
%%%
%%%
\subsection{Pathwise Analysis III}
%%%
{By} Proposition \ref{prop:PathwiseIMain} and Proposition \ref{prop:PathwiseIIMain}, it remains to {prove convergence for $\mathbf{X}^{N}$.} 
%%%
\begin{remark}\fsp\label{remark:CompareFinal}
Although we could certainly directly analyze $\mathbf{X}^{N}$ in the fashion similar to the combination of the respective analysis in \cite{P,DT} by establishing suitable heat kernel estimates for $\bar{\mathbf{P}}^{N}$, {it is more convenient to compare $\mathbf{X}^{N}$ to $\bar{\mathbf{Z}}^{N}$, from which we can then directly inherit the analysis in \cite{P} up to an additional ingredient for the martingale term.}
\end{remark}
%%%
In view of Remark \ref{remark:CompareFinal}, the main result in this subsection is the following comparison estimate.
%%%
\begin{prop}\fsp \label{prop:PathwiseIIIMain}
First, we define the following event with universal implied constant for any {$\delta>0$},
\begin{align}
\mathbf{1}[\mathscr{E}_{\delta}] \ &\overset{\bullet}= \ \mathbf{1}[\| \mathbf{X}_{T,x}^{N} - \bar{\mathbf{Z}}_{T,x}^{N} \|_{\mathscr{L}^{\infty}_{T,X}} \ \gtrsim \ N^{-\delta} ].
\end{align}
{If $\delta>0$ is sufficiently small but universal, there exists a universal constant $\delta'>0$ such that $\mathbf{P}[\mathscr{E}_{\delta}] \lesssim_{\mathfrak{t}^{\max},m_{N},\mathscr{A}_{\pm},\delta,\delta'} N^{-\delta'}$.}
\end{prop}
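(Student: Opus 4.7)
The plan is to realize $\mathfrak{X}^{N} - \bar{\mathfrak{Z}}^{N}$ as the solution to a perturbative linear stochastic integral equation whose inhomogeneity is governed by the heat-kernel comparison of \emph{Proposition \ref{prop:MacroHKCptRBPA}}. Subtracting the integral equations defining $\mathfrak{X}^{N}$ and $\bar{\mathfrak{Z}}^{N}$ and writing $\mathscr{D}_{S,T,x,y}^{N} \overset{\bullet}= \mathscr{U}_{S,T,x,y}^{N} - \bar{\mathscr{U}}_{S,T,x,y}^{N}$, we obtain the decomposition
\begin{align*}
\mathfrak{X}_{T,x}^{N} - \bar{\mathfrak{Z}}_{T,x}^{N} \ = \ \sum_{y \in \mathscr{I}_{N,0}} \mathscr{D}_{0,T,x,y}^{N} \mathfrak{Z}_{0,y}^{N} \ + \ \int_{0}^{T} \sum_{y \in \mathscr{I}_{N,0}} \mathscr{D}_{S,T,x,y}^{N} \mathfrak{X}_{S,y}^{N} \ \d\xi_{S,y}^{N} \ + \ \int_{0}^{T} \sum_{y \in \mathscr{I}_{N,0}} \bar{\mathscr{U}}_{S,T,x,y}^{N} \left( \mathfrak{X}_{S,y}^{N} - \bar{\mathfrak{Z}}_{S,y}^{N} \right) \d\xi_{S,y}^{N}.
\end{align*}
The first (essentially deterministic) term is bounded pointwise by $N^{-\e}$ for some universal $\e \in \R_{>0}$ by splitting the spatial sum between $\mathscr{I}_{N,\beta_{\partial}}$ (where the pointwise comparison of \emph{Proposition \ref{prop:MacroHKCptRBPA}} is applied) and its complement (whose contribution is controlled via the spatially-averaged estimate therein combined with the uniform $\mathscr{L}^{2p}_{\omega}$-bound on $\mathfrak{Z}_{0,\bullet}^{N}$ under the near-stationary assumption).

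For the two stochastic integrals, applying the martingale $\mathscr{L}^{2p}_{\omega}$-inequality of \emph{Lemma 3.1 in \cite{DT}} reduces the first to
\begin{align*}
\int_{0}^{T} N \sum_{y \in \mathscr{I}_{N,0}} \left|\mathscr{D}_{S,T,x,y}^{N}\right|^{2} \left\|\mathfrak{X}_{S,y}^{N}\right\|_{\mathscr{L}^{2p}_{\omega}}^{2} \ \d S \ \lesssim \ \int_{0}^{T} N \left\| \mathscr{D}_{S,T}^{N} \right\|_{\mathscr{L}^{\infty}_{x,y}} \sum_{y\in\mathscr{I}_{N,0}} \left|\mathscr{D}_{S,T,x,y}^{N}\right| \ \d S,
\end{align*}
where the uniform $\mathscr{L}^{2p}_{\omega}$-bound on $\mathfrak{X}^{N}$ coming from the proof of \emph{Lemma \ref{lemma:APrioriYX}} has been used and the pointwise/$\ell^{1}_{y}$ comparison estimates of \emph{Proposition \ref{prop:MacroHKCptRBPA}} have been interpolated; evaluating the resulting time-integral via \emph{Lemma \ref{lemma:UsualSuspectInt}} yields a bound of order $N^{-\e}$ uniformly in $x \in \mathscr{I}_{N,0}$. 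The second stochastic integral contributes the Gronwall-type kernel
\begin{align*}
\int_{0}^{T} N \sum_{y \in \mathscr{I}_{N,0}} \left|\bar{\mathscr{U}}_{S,T,x,y}^{N}\right|^{2} \cdot \sup_{y} \left\|\mathfrak{X}_{S,y}^{N} - \bar{\mathfrak{Z}}_{S,y}^{N}\right\|_{\mathscr{L}^{2p}_{\omega}}^{2} \ \d S,
\end{align*}
which by the on-diagonal Nash estimate of \emph{Lemma \ref{lemma:IOnDRBPA}} has $\varrho_{S,T}^{-1/2}$-type weight and is absorbed by the singular Gronwall inequality exactly as in the proof of \emph{Lemma \ref{lemma:QOffD}}, producing a uniform-in-$x$ $\mathscr{L}^{2p}_{\omega}$-bound of order $N^{-\e}$.

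To upgrade this high-moment estimate to the high-probability $\mathscr{L}^{\infty}_{T,X}$-estimate claimed in the statement, we carry out the stochastic-continuity Kolmogorov argument used for \emph{Lemma \ref{lemma:AsympNCSFS}}; this requires parallel $\mathscr{L}^{2p}_{\omega}$-regularity estimates for the time-increment $\mathfrak{D}_{\tau}(\mathfrak{X}^{N} - \bar{\mathfrak{Z}}^{N})$, which are obtained through the same perturbative decomposition upon differencing in time the two heat kernels and invoking \emph{Lemma \ref{lemma:QTReg}} together with the time-gradient version of \emph{Proposition \ref{prop:MacroHKCptRBPA}} deduced in the same fashion as its pointwise component. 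The main technical obstacle will be bookkeeping the contribution from the boundary layer $\mathscr{I}_{N,0} \setminus \mathscr{I}_{N,\beta_{\partial}}$, where the pointwise comparison in \emph{Proposition \ref{prop:MacroHKCptRBPA}} degenerates; this is handled using the spatially-averaged estimates therein together with the observation that the boundary layer contains only $\mathscr{O}(N^{\beta_{\partial}})$ sites for $\beta_{\partial} \in \R_{>0}$ arbitrarily small but universal, so the loss is absorbed into the $N^{-\e}$-gain produced by the comparison.
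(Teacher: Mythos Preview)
Your decomposition and overall strategy coincide with the paper's argument in the long-time regime, but there is a gap at short times. Your claim that the initial-data term $\sum_{y}\mathscr{D}_{0,T,x,y}^{N}\mathfrak{Z}_{0,y}^{N}$ is bounded by $N^{-\e}$ \emph{uniformly in $T$} is not justified: the comparison estimates in Proposition \ref{prop:MacroHKCptRBPA} carry factors $\varrho_{0,T}^{-1}$ (pointwise on $\mathscr{I}_{N,\beta_{\partial}}$) and $\varrho_{0,T}^{-1/2}$ (spatially-averaged) that blow up as $T\to 0$, and interpolating with the trivial cap $|\mathscr{D}^{N}|\lesssim 1$ produces no $N$-decay. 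Splitting the spatial sum between $\mathscr{I}_{N,\beta_{\partial}}$ and its complement, as you propose, does not help here because the obstruction is temporal, not spatial.

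The paper resolves this by an explicit time-splitting. For $T\in[0,N^{-\e}]$ it does not compare the two heat kernels at all; instead it shows separately that $\sum_{y}\mathscr{U}_{0,T,x,y}^{N}\mathfrak{Z}_{0,y}^{N}$ and $\sum_{y}\bar{\mathscr{U}}_{0,T,x,y}^{N}\mathfrak{Z}_{0,y}^{N}$ are each within $N^{-\e'}$ of $\mathfrak{Z}_{0,x}^{N}$ in $\mathscr{L}^{2p}_{\omega}$, using the off-diagonal estimates of Proposition \ref{prop:IOffDTotal} together with the near-stationary H\"older regularity of the initial data, while both stochastic integrals are $\mathscr{O}(N^{-\e/2})$ over an interval of length $N^{-\e}$ by Lemma 3.1 of \cite{DT}. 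For $T\geq N^{-\e}$ the argument is essentially yours, with one further refinement: in the singular-Gronwall step for the self-referencing stochastic integral against $\bar{\mathscr{U}}^{N}$, the paper again splits the integration domain at $S=N^{-\e}$ and feeds in the already-established short-time bound on $\mathfrak{X}^{N}-\bar{\mathfrak{Z}}^{N}$ to control the portion $\int_{0}^{N^{-\e}}$, so that the Gronwall iteration runs only over $[N^{-\e},T]$.
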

%%%
%%%
\begin{proof}
For simplicity, it will be convenient to define {the following difference of interest:}
\begin{align}
\Upsilon_{T,x}^{N} \ &\overset{\bullet}= \ \mathbf{X}_{T,x}^{N} - \bar{\mathbf{Z}}_{T,x}^{N}.
\end{align}
We {will} now consider two time-regimes, for which we fix an arbitrarily small {but universal parameter $\e>0$.} Roughly speaking, the first of these scenarios is the short-time regime, in which $\mathbf{X}^{N}$ and $\bar{\mathbf{Z}}^{N}$ are both close to the shared common initial data, because the effect {to} the evolutions are negligible for this time-scale. {The second scenario is whatever is left; although heat kernels $\mathbf{P}^{N}$ and $\bar{\mathbf{P}}^{N}$ are certainly different, on mesoscopic scales their effects are asymptotically equivalent as suggested by the functional CLT.}
%%%
\begin{itemize}[leftmargin=*]
\item First, we fix { $T \in [0,N^{-\e}]$.} For this situation, we first record the following consequences of the a priori bounds for near-stationary initial condition and the off-diagonal heat kernel estimates for both $\mathbf{P}^{N}$ and $\bar{\mathbf{P}}^{N}$ {from} Proposition \ref{prop:IOffDTotal}, {where $\e'>0$} is another universal parameter depending only on {$\e>0$} and the near-stationary initial condition:
\begin{align}
\| {\sum}_{y \in \mathbb{I}_{N,0}} \mathbf{P}_{0,T,x,y}^{N} \mathbf{Z}_{0,y}^{N} \ - \ \mathbf{Z}_{0,x}^{N} \|_{\mathscr{L}^{2p}_{\omega}} + \| {\sum}_{y \in \mathbb{I}_{N,0}} \bar{\mathbf{P}}_{0,T,x,y}^{N} \mathbf{Z}_{0,y}^{N} \ - \ \mathbf{Z}_{0,x}^{N} \|_{\mathscr{L}^{2p}_{\omega}} \ &\lesssim_{p} \ N^{-\e'}.
\end{align}
Moreover, again for $T \in [0,N^{-\e}]$, by the martingale inequality from Lemma 3.1 in \cite{DT}, we also have
\begin{align}
\| \int_{0}^{T} {\sum}_{y \in \mathbb{I}_{N,0}} \mathbf{P}_{S,T,x,y}^{N} \cdot\mathbf{X}_{S,y}^{N} \ \d\xi_{S,y}^{N}\|_{\mathscr{L}^{2p}_{\omega}}^{2} \ &\lesssim_{p} \ N^{-1} \| \|\mathbf{X}_{T,x}^{N}\|_{\mathscr{L}^{2p}_{\omega}}^{2}\|_{\mathscr{L}^{\infty}_{T,X}} \ + \ \int_{0}^{T} \rho_{S,T}^{-1/2} \ \d S \cdot \| \|\mathbf{X}_{T,x}^{N}\|_{\mathscr{L}^{2p}_{\omega}}^{2}\|_{\mathscr{L}^{\infty}_{T,X}} \\
&\lesssim \ N^{-\e/2} \| \|\mathbf{X}_{T,x}^{N}\|_{\mathscr{L}^{2p}_{\omega}}^{2}\|_{\mathscr{L}^{\infty}_{T,X}};
\end{align}
the same estimate holds upon replacing $(\mathbf{P}^{N},\mathbf{X}^{N}){\to} (\bar{\mathbf{P}}^{N},\bar{\mathbf{Z}}^{N})$. We deduce the following bound for {$\e''>0$} universal:
{
\begin{align}
\| \mathbf{X}_{T,x}^{N} - \mathbf{Z}_{0,x}^{N} \|_{\mathscr{L}^{2p}_{\omega}} \ + \ \| \bar{\mathbf{Z}}_{T,x}^{N} - \mathbf{Z}_{0,x}^{N} \|_{\mathscr{L}^{2p}_{\omega}} \ &\lesssim_{p} \ N^{-\e''} \ + \ N^{-\e''} \| \mathbf{X}_{T,x}^{N} \|_{\mathscr{L}^{2p}_{\omega}} \ + \ N^{-\e''} \| \bar{\mathbf{Z}}_{T,x}^{N} \|_{\mathscr{L}^{2p}_{\omega}} \ \lesssim_{p,\mathfrak{t}^{\max},m_{N}} \ N^{-\e''};
\end{align}
}
the final bound is via Lemma \ref{lemma:APrioriYX}. Stochastic continuity as in the proof of Lemma \ref{lemma:APrioriYX} completes the proof for $T \in [0,N^{-\e}]$.
\item Consider times $T \in [N^{-\e},\mathfrak{t}^{\max}]$ and recall $\mathbf{G}^{N} = \mathbf{P}^{N} - \bar{\mathbf{P}}^{N}$. A straightforward calculation yields
\begin{align}
\Upsilon_{T,x}^{N} \ &= \ {\sum}_{y \in \mathbb{I}_{N,0}} \mathbf{G}_{0,T,x,y}^{N} \mathbf{Z}_{0,y}^{N} \ + \ \int_{0}^{T} {\sum}_{y \in \mathbb{I}_{N,0}} \mathbf{G}_{S,T,x,y}^{N} \cdot\mathbf{X}_{S,y}^{N}\d\xi_{S,y}^{N} \ + \ \int_{0}^{T} {\sum}_{y \in \mathbb{I}_{N,0}} \bar{\mathbf{P}}_{S,T,x,y}^{N}\cdot\Upsilon_{S,y}^{N} \ \d\xi_{S,y}^{N}\\
&\overset{\bullet}= \ \Upsilon_{T,x}^{N,1} \ + \ \Upsilon_{T,x}^{N,2} \ + \ \Upsilon_{T,x}^{N,3}.
\end{align}
Recalling $T \geq N^{-\e}$, as consequence of Proposition \ref{prop:MacroHKCpt}, for $\e \in \R_{>0}$ sufficiently small and $\beta \in \R_{>0}$ universal, we deduce
{
\begin{align}
\| \Upsilon_{T,x}^{N,1} \|_{\mathscr{L}^{2p}_{\omega}} \ &\lesssim_{p,\mathfrak{t}^{\max},m_{N},\mathscr{A}_{\pm}} \ N^{-\beta} \|\| \mathbf{Z}_{0,x}^{N} \|_{\mathscr{L}^{2p}_{\omega}}\|_{\mathscr{L}^{\infty}_{T,X}(0)} \ \lesssim_{p} \ N^{-\beta}.
\end{align}
}Concerning the second term, we proceed similarly {via Lemma 3.1 in \cite{DT}} and the a priori estimates in Lemma \ref{lemma:APrioriYX}; this provides the following estimate for {$\beta'>0$} a universal constant and for {$\delta''>0$} arbitrarily small:
\begin{align}
\| \Upsilon_{T,x}^{N,2} \|_{\mathscr{L}^{2p}_{\omega}}^{2} \ &\lesssim_{\mathfrak{t}^{\max},m_{N},\mathscr{A}_{\pm},p,\delta''} \ N^{-\e''} \ + \ {\int_{0}^{T} N^{-\beta'} \rho_{S,T}^{-1+\delta''} {\sum}_{y \in \mathbb{I}_{N,0}} |\mathbf{G}_{S,T,x,y}^{N}| \| \|\mathbf{X}_{T,x}^{N}\|_{\mathscr{L}^{2p}_{\omega}}^{2}\|_{\mathscr{L}^{\infty}_{T,X}}^{2} \ \d S} \\
&\lesssim_{\mathfrak{t}^{\max}} \ N^{-\e''} \ + \ N^{-\beta'} \| \|\mathbf{X}_{T,x}^{N}\|_{\mathscr{L}^{2p}_{\omega}}^{2}\|_{\mathscr{L}^{\infty}_{T,X}} \ \lesssim_{p} \ N^{-\e''} \ + \ N^{-\beta'};
\end{align}
to obtain the second inequality, we instead employ Lemma \ref{lemma:IOnDRBPA}. {Finally, we analyze $\Upsilon^{N,3}$ by decomposing $[0,T] = [0,N^{-\e}] \cup [N^{-\e'},T]$; proceeding with estimates for $\bar{\mathbf{T}}^{N}$, we have} {the following estimate:}
\begin{align}
\| \Upsilon_{T,x}^{N,3} \|_{\mathscr{L}^{2p}_{\omega}}^{2} \ &\lesssim_{p,\mathfrak{t}^{\max},m_{N},\mathscr{A}_{\pm}} \ \int_{0}^{T} \rho_{S,T}^{-1/2} {\sum}_{y \in \mathbb{I}_{N,0}} \bar{\mathbf{P}}_{S,T,x,y}^{N}{\sup}_{w\in\mathbb{I}_{N,0}}\| \Upsilon_{S,w}^{N} \|_{\mathscr{L}^{2p}_{\omega}}^{2}\ \d S \\
&\lesssim_{\mathfrak{t}^{\max},m_{N},\mathscr{A}_{\pm}} \ N^{-\e''} \|\| \Upsilon_{T,X}^{N} \|_{\mathscr{L}^{2p}_{\omega}}\|_{\mathscr{L}^{\infty}_{T,X}} \ + \ \int_{N^{-\e}}^{T}  {\sum}_{y \in \mathbb{I}_{N,0}} \bar{\mathbf{P}}_{S,T,x,y}^{N}{\sup}_{w\in\mathbb{I}_{N,0}}\| \Upsilon_{S,w}^{N} \|_{\mathscr{L}^{2p}_{\omega}}^{2}\ \d S \\
&\lesssim_{\mathfrak{t}^{\max},m_{N},\mathscr{A}_{\pm},p} \ N^{-\e''} \ + \ \int_{N^{-\e}}^{T}  {\sum}_{y \in \mathbb{I}_{N,0}} \bar{\mathbf{P}}_{S,T,x,y}^{N}{\sup}_{w\in\mathbb{I}_{N,0}}\| \Upsilon_{S,w}^{N} \|_{\mathscr{L}^{2p}_{\omega}}^{2} \ \d S.
\end{align}
Ultimately, we obtain {the following estimate upon combining the previous estimates for $\Upsilon^{N,1}$ and $\Upsilon^{N,2}$ and $\Upsilon^{N,3}$:}
\begin{align}
\| \Upsilon_{T,x}^{N} \|_{\mathscr{L}^{2p}_{\omega}}^{2} \ &\lesssim_{p,\mathfrak{t}^{\max},m_{N},\mathscr{A}} \ N^{-\e''} \ + \ N^{-\beta'} \ + \ \int_{N^{-\e}}^{T}  {\sum}_{y \in \mathbb{I}_{N,0}} \bar{\mathbf{P}}_{S,T,x,y}^{N}{\sup}_{w\in\mathbb{I}_{N,0}}\| \Upsilon_{S,w}^{N} \|_{\mathscr{L}^{2p}_{\omega}}^{2}\ \d S,
\end{align}
{from which the Gronwall inequality for the spatial supremum of the LHS above yields the desired bound for $T \in [N^{-\e},\mathfrak{t}^{\max}]$.}
\end{itemize}
%%%
This completes the proof.
\end{proof}
%%%
%%%
\subsection{Proof of Theorem \ref{theorem:KPZ}}
%%%
Employing Proposition \ref{prop:PathwiseIMain}, Proposition \ref{prop:PathwiseIIMain}, and Proposition \ref{prop:PathwiseIIIMain}, we first observe that tightness of the microscopic Cole-Hopf transform with respect to the Skorokhod topology on $\mathscr{D}(\R_{\geq 0}, \mathscr{C}(\mathbb{I}_{\infty}))$ is equivalent to tightness in the same space of the proxy $\bar{\mathbf{Z}}^{N}$; the latter result follows from the proof of Proposition 5.4 in \cite{P} with the martingale inequality in Lemma 3.1 of \cite{DT} accounting for the non-simple nature of the particle random walks. {To identify limit points, we follow the proof of Theorem 5.7 in \cite{P}; the only ingredient that remains is the following hydrodynamic limit analog of Lemma 2.5 of \cite{DT}.}
%%%
\begin{lemma}\fsp \label{lemma:FinalWV}
Consider any generic smooth test function $\varphi \in \mathscr{C}^{\infty}(\mathbb{I}_{\infty})$ along with any weakly vanishing term $\mathfrak{w}^{N}$. Provided any uniformly bounded time $T \in \R_{\geq 0}$, we have
\begin{align}
{\E|\int_{0}^{T} N^{-1} {\sum}_{x \in \mathbb{I}_{N,0}} \varphi_{N^{-1} x} \mathfrak{w}_{S,x}^{N} \mathbf{Z}_{S,x}^{N} \ \d S| \ \longrightarrow_{N \to \infty} \ 0.}
\end{align}
\end{lemma}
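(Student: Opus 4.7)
The plan is to combine the high-probability a priori bound $\|\mathfrak{Z}^{N}\|_{\mathscr{L}^{\infty}_{T,X}} \lesssim N^{\e}$ with a quantitative dynamical averaging of the weakly vanishing field $\mathfrak{w}^{N}$. First, I would condition on the high-probability event $\mathscr{G}_{\e} = \{\|\mathfrak{Z}^{N}\|_{\mathscr{L}^{\infty}_{T,X}} \leq N^{\e}\}$ for $\e \in \R_{>0}$ arbitrarily small but universal, which holds with probability at least $1 - N^{-\beta_{\mathrm{u}}}$ for some universal $\beta_{\mathrm{u}} \in \R_{>0}$ upon combining \emph{Proposition \ref{prop:PathwiseIMain}}, \emph{Proposition \ref{prop:PathwiseIIMain}}, \emph{Proposition \ref{prop:PathwiseIIIMain}}, and \emph{Lemma \ref{lemma:APrioriYX}}. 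The contribution of the complementary event $\mathscr{G}_{\e}^{c}$ to the expectation is handled via the Cauchy-Schwarz inequality combined with the uniform boundedness of $\mathfrak{w}^{N}$ and standard arbitrarily-high-moment estimates for $\mathfrak{Z}^{N}$, giving a negligible contribution of order $N^{-\beta_{\mathrm{u}}/2}$.

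Second, on the event $\mathscr{G}_{\e}$, I would perform the dynamical replacement $\mathfrak{w}^{N}_{S,x} \rightsquigarrow \mathfrak{A}^{0}_{\mathfrak{w}^{N}_{S,x},\tau}$ for $\tau = N^{-2+\delta}$, with $\delta \in \R_{>0}$ arbitrarily small though universal. The replacement error takes the form of a difference between $\mathfrak{Z}^{N}$ evaluated at nearby times, which is controlled pathwise using the time-regularity estimate of \emph{Lemma \ref{lemma:AsympNCCH}} following the argument in \emph{Lemma \ref{lemma:PathwiseIIMaing}}; this produces an error of order $N^{\e-\delta/4}$ times $\|\mathfrak{Z}^{N}\|^{1+\e}_{\mathscr{L}^{\infty}_{T,X}}$, which is negligible for $\delta \gg \e$.

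Third, I would estimate the expectation of the time-averaged version
\begin{align*}
\E\left[\left|\int_{0}^{T} \ \ \dashsum_{x \in \mathscr{I}_{N,0}} \varphi_{N^{-1}x}\, \mathfrak{A}^{0}_{\mathfrak{w}^{N}_{S,x},\tau} \mathfrak{Z}^{N}_{S,x} \ \d S\right|\right]
\end{align*}
by Cauchy-Schwarz against $\mathfrak{Z}^{N}$ on $\mathscr{G}_{\e}$, reducing matters to bounding $\E [|\mathfrak{A}^{0}_{\mathfrak{w}^{N}_{S,x},\tau}|]$ uniformly in the space-time variable. For this last step I would follow the strategy of \emph{Proposition \ref{prop:D1BEdge}}: combine the entropy inequality (\emph{Lemma \ref{lemma:EntIneqBoundary}} for $x$ near the boundary, and \emph{Lemma \ref{lemma:EntIneqInt}} for $x$ in the bulk) with the Dirichlet form bound of \emph{Proposition \ref{prop:EProd}} to transfer expectations from the path measure to the reference grand-canonical ensemble $\mu^{\mathrm{gc}}_{0,\Lambda}$, and then use the Kipnis-Varadhan inequality relative to the symmetric part of the generator, for which $\mu^{\mathrm{gc}}_{0,\Z}$ is reversible. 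The weakly vanishing hypothesis $\E^{\mu^{\mathrm{gc}}_{0,\Z}}\mathfrak{w}^{N} = 0$ combined with the uniform $\mathscr{L}^{\infty}_{\omega}$ bound and the finite support of $\mathfrak{w}^{N}$ makes the $H^{-1}$-norm relative to the symmetric generator uniformly bounded in $N$, yielding $\E[|\mathfrak{A}^{0}_{\mathfrak{w}^{N}_{S,x},\tau}|] \lesssim N^{-\beta'_{\mathrm{u}}}$ after the appropriate perturbation between the true dynamics and its symmetric reference dynamics, whose $\mathscr{L}^{2}$-distance is of order $N^{-1/2}$ as detailed in \emph{Remark \ref{remark:EProdOK}}.

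The main obstacle is the third step: because weakly vanishing fields are only required to have zero expectation under the specific grand-canonical ensemble $\mu^{\mathrm{gc}}_{0,\Z}$ and not under the full family of canonical ensembles, the interior entropy inequality of \emph{Lemma \ref{lemma:EntIneqInt}} cannot be applied verbatim in the form that exploits a supremum over densities. Thus I would need to run the Kipnis-Varadhan argument directly against $\mu^{\mathrm{gc}}_{0,\Z}$, using the fact that removing the weak asymmetry from the Kawasaki exchange rates and equalizing the boundary creation/annihilation rates produces a reference symmetric generator for which $\mu^{\mathrm{gc}}_{0,\Z}$ is genuinely invariant, then controlling the perturbation back to the true generator through the entropy production bound in \emph{Proposition \ref{prop:EProd}}. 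Since the relevant time-scale $\tau = N^{-2+\delta}$ is barely above the microscopic scale, this perturbative transfer costs only $N^{-\beta'_{\mathrm{u}}}$, which is more than enough to conclude convergence to zero.
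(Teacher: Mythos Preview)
Your approach differs from the paper's and contains a genuine gap in the bulk. The paper does \emph{not} use time-averaging/Kipnis--Varadhan for the weakly vanishing terms; instead it invokes the classical spatial one-block/two-blocks scheme of Proposition~\ref{prop:1B2B} (equivalently, the argument of Lemma~2.5 in \cite{DT}): first replace $\mathfrak{w}^{N}$ by a spatial average over a mesoscopic block, then replace that by the grand-canonical expectation $\E^{\mu_{\varrho}^{\mathrm{gc}}}\mathfrak{w}^{N}$ at the empirical local density $\varrho = \varrho_{S,y}^{1/2-\delta}$, and finally use the density-fluctuation estimate of Lemma~\ref{lemma:PathwiseIIMaind} (which encodes the regularity of $\mathfrak{Z}^{N}$) to show $\varrho \approx 0$, whence the expectation is close to $\E^{\mu_{0}^{\mathrm{gc}}}\mathfrak{w}^{N} = 0$. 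The smoothness of $\varphi$ only makes the replacement-of-test-function step easier than in Lemma~\ref{lemma:PathwiseIIMaine}.

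The gap in your dynamical route is precisely the obstacle you flag but do not resolve. For bulk sites, the localized symmetric Kawasaki generator conserves particle number, so every canonical ensemble $\mu_{\varrho,\Lambda}^{\mathrm{can}}$ is invariant for it. A weakly vanishing field with $\E^{\mu_{\varrho}^{\mathrm{can}}}\mathfrak{w}^{N}\neq 0$ for $\varrho\neq 0$ (e.g.\ the degree-two monomials $\eta_{x-j}\eta_{x+\ell}$ of Remark~\ref{remark:WVEx}) is therefore \emph{not} in the range of that local symmetric generator, so its local $H^{-1}$-norm is infinite and the Kipnis--Varadhan bound is vacuous. Your proposed fix---running K--V against the global generator including the boundary Glauber part---does not help at the time-scale $\tau = N^{-2+\delta}$: on that time-scale the boundary creation/annihilation has not propagated into the bulk, and concretely the global spectral gap is only $O(N^{-2})$, so the global $H^{-1}$-norm of a bulk-supported weakly vanishing field is of order $N$, which kills any gain from time-averaging. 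This is exactly why the paper reserves the dynamical one-block machinery (Propositions~\ref{prop:D1Ba}--\ref{prop:D1Bb}) for \emph{pseudo-gradient} fields, and handles weakly vanishing fields through the spatial replacement plus the density-control argument instead.
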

%%%
The proof of Lemma \ref{lemma:FinalWV} follows by the one-block and two-blocks argument in Proposition \ref{prop:1B2B}, though it is actually simpler for this scenario because the test function $\varphi \in \mathscr{C}^{\infty}(\mathbb{I}_{\infty})$ admits no space-time singularities. Alternatively, we follow the proof of Lemma 2.5 in \cite{DT}. This completes the proof of Theorem \ref{theorem:KPZ}.
\appendix
%%%
\section{Well-Posedness of the Boundary Coefficients}
%%%
{The purpose of this appendix section is to establish the existence of a choice of boundary parameters $\beta^{N,\pm}$ satisfying necessary constraints, for example those in Assumption \ref{ass:Category2}. The procedure that we apply to solve the associated system of linear equations is an iterative approach which may be thought of as row reduction.}
%%%
\begin{lemma}\fsp \label{lemma:BCLinAlg}
{There exists a unique solution $\{\beta_{j,\pm}^{N,-}\}_{j = 1}^{m_{N}}$ and $\{\beta_{j,\pm}^{N,+}\}_{j = 1}^{m_{N}}$ to the systems of equations from \emph{Assumption \ref{ass:Category2}}.}
\end{lemma}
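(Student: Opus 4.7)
The plan is to observe that the system from Assumption \ref{ass:Category2} decouples across the superscript $\pm$, because every quantity $\mathbf{I}_\pm, \mathbf{II}_\pm, \mathbf{III}_\pm, \mathbf{IV}_\pm$ involving the unknowns $\beta_{\ell,\pm}^{N,\bullet}$ only references coefficients sharing the same superscript. It therefore suffices to treat one superscript (say $-$) and repeat the identical argument after reflecting coordinates for $+$.

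Having fixed a superscript, I would solve the resulting $2m_N$ equations for the $2m_N$ unknowns $\{\beta_{j,+}^{N,-}, \beta_{j,-}^{N,-}\}_{j=1}^{m_N}$ by \emph{backward induction on $j$}. The key observation is that $\mathbf{I}_\pm = -\sum_{\ell = j+1}^{m_N}(\beta_{\ell,+}^{N,\pm} - \beta_{\ell,-}^{N,\pm})$ depends only on differences of coefficients with strictly larger index, while $\mathbf{II}_\pm, \mathbf{III}_\pm, \mathbf{IV}_\pm$ depend only on deterministic data ($N$, the $\wt{\alpha}_k^N$, $\alpha_k^N$, $\gamma_k^N$, $\lambda_N$, $\mathscr{A}_\pm$). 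Thus at the base case $j = m_N$ the sum $\mathbf{I}_\pm$ is empty, the two equations in Assumption \ref{ass:Category2} reduce to a $2 \times 2$ nonsingular linear system (one specifying $\beta_{m_N,+}^{N,-} + \beta_{m_N,-}^{N,-}$, one specifying $\beta_{m_N,+}^{N,-} - \beta_{m_N,-}^{N,-}$), and uniquely determine both unknowns. For the inductive step, once $\{\beta_{\ell,\pm}^{N,-}\}_{\ell > j}$ are known, $\mathbf{I}_\pm$ is a known constant, and the same $2 \times 2$ system at level $j$ has a unique solution. Iterating down to $j = 1$ produces the unique tuple.

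The remaining issue — and the main obstacle — is verifying that the resulting $\beta_{j,\pm}^{N,-}$ are actually positive, as required by the definition of $\mathrm{B}^N$. For this I would track orders of magnitude inductively. The sum equation gives
\begin{equation*}
\beta_{j,+}^{N,-} + \beta_{j,-}^{N,-} = \sum_{k=j}^{m_N} \wt{\alpha}_k^N + \mathscr{O}(N^{-1/2}),
\end{equation*}
which is bounded below by a universal positive constant courtesy of the ellipticity $\alpha_1^N \gtrsim 1$ from Assumption \ref{ass:Category1}. For the difference, I would show inductively that $|\beta_{j,+}^{N,-} - \beta_{j,-}^{N,-}| \lesssim N^{-1/2}$. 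Indeed, Taylor expansion of $\kappa_{N,x}^\pm$ reveals a cancellation of the $N^{-1/2}$-order terms in the two exponentials, leaving $\kappa_{N,x}^\pm = \mathscr{O}(N^{-1})$; multiplied by $N^{1/2}$ in the definition of $\mathbf{IV}_\pm$, this yields $\mathbf{IV}_\pm = \mathscr{O}(N^{-1/2})$. The terms $\mathbf{II}_\pm$ and $\mathbf{III}_\pm$ are manifestly $\mathscr{O}(N^{-1/2})$. Finally, $\mathbf{I}_\pm$ is a sum of at most $m_N \lesssim 1$ previously-bounded differences, each $\mathscr{O}(N^{-1/2})$ by the inductive hypothesis, so $\mathbf{I}_\pm = \mathscr{O}(N^{-1/2})$ as well. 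Combining with the lower bound on the sum, each individual coefficient is positive and uniformly bounded away from zero for all $N$ sufficiently large. This closes the induction and completes the proof.
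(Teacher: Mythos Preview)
Your core argument---decouple across the superscript, then solve by backward induction on $j$, observing that at each level the two equations form a nonsingular $2\times 2$ system because $\mathbf{I}_\pm$ involves only indices $\ell>j$---is exactly the paper's approach, and is correct.

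You go beyond the lemma by also checking positivity of the resulting $\beta_{j,\pm}^{N,\pm}$; the paper's proof does not do this, and the lemma as stated asks only for existence and uniqueness of a solution to the linear system. Your positivity sketch is essentially right (the Taylor-expansion cancellation yielding $\kappa_{N,x}^\pm=\mathscr{O}(N^{-1})$ is even noted in the paper after Proposition~\ref{prop:SDECpt}), but one claim is slightly off: the sum $\sum_{k=j}^{m_N}\wt{\alpha}_k^N$ is bounded below by a universal constant via $\alpha_1^N\gtrsim 1$ only when $j=1$; for $j>1$ Assumption~\ref{ass:Category1} gives no uniform lower bound on $\alpha_j^N$. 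Positivity for each fixed large $N$ still follows since each $\alpha_k^N>0$ and the difference is $\mathscr{O}(N^{-1/2})$, but the ``universal positive constant'' phrasing is not justified. This does not affect the lemma itself.
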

%%%
%%%
\begin{proof}
We consider only {$\beta^{N,-}$; for the coefficients $\beta^{N,+}$, the exact same calculation works upon swapping $\mathscr{A}_{-} \rightsquigarrow \mathscr{A}_{+}$.} {We first observe that the following system of equations clearly admits a unique solution obtained by adding/subtracting the two equations:}
\begin{align*}
\beta_{m_{N},+}^{N,-} \ + \ \beta_{m_{N},-}^{N,-} \ &= \ \wt{\alpha}_{m_{N}}^{N} \\
\beta_{m_{N},+}^{N,-} \ - \ \beta_{m_{N},-}^{N,-} \ &= \ 2^{-1}\lambda_{N} N^{-1/2}\left({\sum}_{k = 1}^{m_{N}} k \wt{\alpha}_{k}^{N} \ + \ {\sum}_{k = 1}^{m_{N}-1} k \wt{\alpha}_{k}^{N} \ + \ (m_{N}-1) \wt{\alpha}_{m_{N}}^{N}\right) \ + \ \lambda_{N}^{-1} N^{1/2} \mathscr{A}_{-} \ - \ 2 \lambda_{N} N^{1/2} \kappa_{N,m_{N}-1}^{-}.
\end{align*}
{To inductively solve the system of equations in Assumption \ref{ass:Category2}, observe the following induced system of equations provides us with a unique solution to $\beta_{j,\pm}^{N,-}$ as the isolated ``sub-system":}
\begin{align}
\beta_{j,+}^{N,-} \ + \ \beta_{j,-}^{N,-} \ &= \ {\sum}_{\ell = j}^{m_{N}} \wt{\alpha}_{\ell}^{N} \nonumber \\
\beta_{j,+}^{N,-} \ - \ \beta_{j,-}^{N,-} \ &= \ - {\sum}_{\ell = j+1}^{m_{N}} \left( \beta_{\ell,+}^{N,-} - \beta_{\ell,-}^{N,-} \right) \ + \ \lambda_{N}^{-1} N^{1/2} \mathscr{A}_{-} \ + \ 2 \lambda_{N} N^{1/2} \kappa_{N,j-1}^{-} \nonumber \\
&\quad\quad + 2^{-1}\lambda_{N} N^{-1/2}\left({\sum}_{k = 1}^{m_{N}} k \wt{\alpha}_{k}^{N} \ + \ {\sum}_{k = 1}^{j-1} k \wt{\alpha}_{k}^{N} \ + \ (j-1) {\sum}_{k = j}^{m_{N}} \wt{\alpha}_{j}^{N}\right). \nonumber
\end{align}
Indeed, the RHS of each {of these equations} {depends only on indices $\ell>j$,} so we obtain a unique exact formula for {$\beta^{N,-}$ for the index $\ell=j$.} Continuing inductively completes the proof of existence and uniqueness.
\end{proof}
%%%
%
%
%
%%%
\section{Auxiliary Heat Kernel Estimates}
%%%
We record in this appendix section a precise extension of Proposition A.1 from \cite{DT} to higher-order derivatives; we recall that these ingredients are important towards the perturbative scheme begun in Lemma \ref{lemma:ItoP1Cpt} we use to establish regularity estimates for the heat kernels $\mathbf{P}_{S,T}^{N}$ with arbitrary Robin boundary parameter $\mathscr{A} \in \R$.
%%%
\begin{lemma}\fsp \label{lemma:ThirdOrder}
Provided any $\ell \in \Z_{\geq 0}$ and any $\mathsf{v} = (\mathsf{v}_{1},\ldots,\mathsf{v}_{\ell}) \in \Z^{\ell}$, we have the following estimate uniformly in $S,T \in \R_{\geq 0}$, in $x,y \in \Z$, and in $\kappa \in \R_{>0}$ arbitrarily large but universal, {
where $\mathsf{Q}_{\mathsf{v}}(x) \overset{\bullet}= \{w \in \Z: |w-x| \lesssim \mathsf{v}_{1} + \ldots + \mathsf{v}_{\ell} \}$:}
\begin{align}
| \grad_{\mathsf{v}} \mathbf{K}_{S,T,x,y}^{N,0} | \ \lesssim_{\kappa,\ell} \ \|\mathsf{v}\|_{\infty} \cdot N^{-\ell - 1} \rho_{S,T}^{-\ell/2-1/2} \sup_{w \in \mathsf{Q}_{\mathsf{v}}(x)} \exp\left(-\kappa\frac{|w-y|}{N\rho_{S,T}^{1/2} \vee 1}\right)
\end{align}
\end{lemma}
%%%
%%%
\begin{proof}
{First, by iteration, it suffices to assume that $|\mathsf{v}_{j}| = 1$ for every $j \in \llbracket1,\ell\rrbracket$. Following the proof of Proposition A.1 in \cite{DT}, we obtain the following spectral representation for $\mathbf{K}^{N,0}$:
\begin{align}
\mathbf{K}_{0,T,x,0}^{N,0} \ &= \ (2\pi)^{-1}\int_{-\pi}^{\pi} \exp(-ix\xi)\Phi_{0,T}^{N}(\xi) \ \d \xi \quad \mathrm{where} \quad | \Phi_{0,T}^{N}(\xi) | \ \lesssim \ \exp\left(-\kappa_{0} N^{2} T \xi^{2}\right).
\end{align}
}
Thus, for any $\mathsf{v} = (\mathsf{v}_{1},\ldots,\mathsf{v}_{\ell}) \in \Z^{\ell}$, we have the following formula for gradients of {$\mathbf{K}^{N,0}$, in which $\mathscr{S}_{\xi,\mathsf{v}_{j}} \overset{\bullet}=  \exp(-i\mathsf{v}_{j}\xi) - 1$:}
\begin{align}
\grad_{\mathsf{v}}\mathbf{K}_{0,T,x,0}^{N,0} \ &= \ 2^{-1}\pi^{-1}\int_{-\pi}^{\pi} {\prod}_{j = 1}^{\ell} \mathscr{S}_{\xi,\mathsf{v}_{j}} \exp(-ix\xi)\Phi_{0,T}^{N}(\xi) \ \d \xi.
\end{align}
{To bound the $\mathscr{S}$-quantities like in the proof of Proposition A.1 in \cite{DT}, under our assumptions on $\mathsf{v} \in \Z^{\ell}$ we obtain the estimate
\begin{align}
| \grad_{\mathsf{v}}\mathbf{K}_{0,T,x,0}^{N,0} | \ &\lesssim \ {\prod}_{j = 1}^{\ell}|\mathsf{v}_{j}| \int_{-\pi}^{\pi} \left(\mathscr{N}_{\xi,T}^{N}\right)^{\ell}\exp(-ix\xi)\Phi_{0,T}^{N}(\xi) \ \d\xi \quad \mathrm{where} \quad \mathscr{N}_{\xi,T}^{N} \overset{\bullet}= |\xi| + N^{-1} \rho_{0,T}^{-1/2}.
\end{align}
In particular, an elementary calculation implies
\begin{align}
| \grad_{\mathsf{v}}\mathbf{K}_{0,T,x,0}^{N,0} | \ &\lesssim_{\ell} \ {\prod}_{j = 1}^{\ell}|\mathsf{v}_{j}| \int_{-\pi}^{\pi} |\xi|^{\ell}\exp(-\kappa_{0}N^{2} T \xi^{2}) \ \d \xi \ + \ N^{-\ell} \rho_{0,T}^{-\ell/2} {\prod}_{j = 1}^{\ell}|\mathsf{v}_{j}| \int_{-\pi}^{\pi}\exp(-\kappa_{0} N^{2} T \xi^{2}) \ \d\xi \\
&\lesssim_{\ell} \ \|\mathsf{v}\|_{\infty} \cdot N^{-\ell-1} \rho_{S,T}^{-\ell/2-1/2},
\end{align}
where the latter upper bound follows from computing the integrals though on the integration domain $\R$.}

To account for off-diagonal factor, we adapt the contour of integration as in the proof of Proposition A.1 in \cite{DT}.
\end{proof}
%%%
Applying Lemma \ref{lemma:ThirdOrder}, we obtain the following boundary regularity estimate.
%%%
\begin{lemma}\fsp \label{lemma:BRegT}
Consider any $\ell \in \Z_{\geq 0}$ and $\mathsf{v} = (\mathsf{v}_{1},\ldots,\mathsf{v}_{\ell}) \in \Z^{\ell}$; provided any pair of times $S,T \in \R_{\geq 0}$ such that $S \leq T$ along with any pair of points $x,y \in \Z$, we have
{
\begin{align}
| \grad_{\mathsf{v}} \mathbf{T}_{S,T,x,y}^{N,0} | \ &\lesssim \ \mathscr{S}_{x,\mathsf{v}} \cdot N^{-\ell - 1} \rho_{S,T}^{-\ell/2-1/2} \wedge 1 \and {\sum}_{y \in \Z} | \grad_{\mathsf{v}} \mathbf{T}_{S,T,x,y}^{N,0}| \ \lesssim \ \mathscr{S}_{x,\mathsf{v}} \cdot N^{-\ell} \rho_{S,T}^{-\ell/2} \wedge 1,
\end{align}
}
where $\mathscr{S}_{x,\mathsf{v}} \ \overset{\bullet}= \ \frak{d}_{x} {\prod}_{j = 1}^{\ell}|\mathsf{v}_{j}| + {\prod}_{j = 1}^{\ell}|\mathsf{v}_{j}|$, where $\mathfrak{d}_{x} \overset{\bullet}= |x| \wedge |N-x|$ is the distance from $x \in \mathbb{I}_{N,0}$ to the boundary.
\end{lemma}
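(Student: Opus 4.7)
The strategy is to leverage the method-of-images formula $\mathscr{T}_{S,T,x,y}^{N,0} = \sum_{k \in \Z} \mathscr{G}_{S,T,x,i_{y,k}}^{N,0}$ together with the higher-order gradient estimates for the full-line heat kernel $\mathscr{G}^{N,0}$ established in Lemma \ref{lemma:ThirdOrder}. The image coordinates admit the parameterization $i_{y,k} = \sigma_k y + c_k$ with $\sigma_k \in \{\pm 1\}$ (alternating in $k$) and shift constants $c_k$ satisfying $|c_k| \asymp |k| N$, so the chain rule gives $\grad_{\vec n}^y \mathscr{G}^{N,0}(x,i_{y,k}) = \grad_{\sigma_k \vec n}^{z} \mathscr{G}^{N,0}(x,z)|_{z=i_{y,k}}$ for each summand; this reduces the question to summing gradient estimates of $\mathscr{G}^{N,0}$ over the image set.

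Next, I apply Lemma \ref{lemma:ThirdOrder} to each gradient at image $k$. The sub-Gaussian off-diagonal factor $\exp(-\kappa |x - i_{y,k}|/(N\varrho_{S,T}^{1/2} \vee 1))$ restricts the effective sum to the $O(\varrho_{S,T}^{1/2} + 1)$ image indices whose image-point lies within the heat-kernel scale $N\varrho_{S,T}^{1/2}$ of $x$. To isolate the $\d_X$-enhancement, I pair the direct image $k=0$ with its reflection through the nearer of the two boundaries: since this reflection moves $y$ to a point at distance only $\asymp \d_X$ from its symmetric partner relative to $x$, summing the two paired contributions via a discrete Taylor expansion in the $z$-variable introduces an extra factor of size $\d_X/(N\varrho_{S,T}^{1/2})$, producing the $\d_X \prod|n_j|$ component of $\mathscr{S}_{x,\vec n}$. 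The finitely many remaining images (those corresponding to further reflections still within heat-kernel range) contribute the additive $\prod|n_j|$ term.

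The main technical hurdle is that the long-range kernel $\mathscr{G}^{N,0}$ only approximately respects the reflection symmetry underlying the method of images: pairing direct and reflected images does not yield exact cancellation as in the nearest-neighbor case. However, since $m_N \lesssim 1$ by Assumption \ref{ass:Category1} and Lemma \ref{lemma:ThirdOrder} provides gradient bounds uniform in the jump structure, the resulting mismatch terms are absorbed into the stated bound. The $\wedge 1$ cap handles the regime $\varrho_{S,T} \lesssim N^{-2}$ where each individual image contribution is trivially bounded by $1$ via the discrete-time parabolic maximum principle for $\mathscr{G}^{N,0}$.

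Finally, for the spatially-averaged bound, I apply the $\ell^{1}$-type estimate $\sum_{z \in \Z} |\grad_{\vec n}^z \mathscr{G}^{N,0}(x,z)| \lesssim \prod |n_j| N^{-\ell} \varrho_{S,T}^{-\ell/2}$, which follows from Lemma \ref{lemma:ThirdOrder} by summing the pointwise bound against its exponential decay. Summing over $y \in \Z$ commutes with the image sum (only finitely many $y$-translates of each image contribute by the exponential decay), and the same pairing argument used for the pointwise bound applies mutatis mutandis, producing the claimed $\mathscr{S}_{x,\vec n} \cdot N^{-\ell} \varrho_{S,T}^{-\ell/2}$ upper bound.
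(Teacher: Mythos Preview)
Your overall strategy (method-of-images + Lemma \ref{lemma:ThirdOrder} + pairing reflections to gain a derivative at the boundary) is the same as the paper's, but you misidentify where the difficulty lies and thereby miss the clean mechanism the paper actually uses.

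The full-line kernel $\mathscr{G}^{N,0}$ is the heat kernel of a \emph{symmetric} random walk on $\Z$, so it enjoys the exact reflection identity $\mathscr{G}^{N,0}_{S,T,x,y} = \mathscr{G}^{N,0}_{S,T,-x,-y}$. There is no ``approximate reflection symmetry'' issue and no mismatch terms to absorb; you are conflating the (genuine) fact that $\mathscr{T}^{N,0}\neq\mathscr{U}^{N,0}$ for long-range walks---handled separately by Lemma \ref{lemma:ItoP1Cpt}---with the regularity of $\mathscr{T}^{N,0}$ itself, which is what this lemma concerns. Because of exact symmetry, the paper pairs \emph{every} consecutive pair of images $(i_{y,2k},i_{y,2k-1})$, not just the $k=0$ term with its nearest reflection, and obtains an exact algebraic identity: at $x=0$ the $\ell$-th order gradient of $\mathscr{T}^{N,0}$ rewrites as an $(\ell+1)$-th order gradient of $\mathscr{G}^{N,0}$ summed over the even-indexed images only. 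Lemma \ref{lemma:ThirdOrder} then applies directly with one extra derivative, yielding the boundary enhancement with no Taylor-remainder bookkeeping.

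For general $x$ the paper does not argue via ``distance $\asymp \d_X$ between paired images''; instead it writes $\grad_k \mathscr{T}_{S,T,x,y}^{N,0} = \grad_{x+k}\mathscr{T}_{S,T,0,y}^{N,0} - \grad_{x}\mathscr{T}_{S,T,0,y}^{N,0}$, reducing to the boundary case at the cost of gradients of length $\asymp x$, which via Lemma \ref{lemma:ThirdOrder} produces exactly the factor $\d_X\prod|n_j|$ in $\mathscr{S}_{x,\vec n}$. Your Taylor-expansion route would also work once you recognize the pairing is exact, but the identity-based argument is shorter and avoids the spurious ``hurdle.''
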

%%%
%%%
\begin{remark}\fsp
Comparing Lemma \ref{lemma:BRegT} above with the regularity estimates established in Proposition A.1 from \cite{DT}, for example, we observe that near the boundary at a microscopic scale we have ``gained" one derivative, at least from the PDE perspective.
\end{remark}
%%%
%%%
\begin{proof}
Suppose first that $x = 0$. Applying definition and afterwards rearranging terms, we have
{
\begin{align}
\grad_{n} \mathbf{T}_{S,T,0,y}^{N,0} \ = \ {\sum}_{k\in \Z} \grad_{n} \mathbf{K}_{S,T,i_{y,k}}^{N,0} \ &= \ {\sum}_{k \in \Z} \left(\grad_{n+1}\mathbf{K}_{S,T,0,i_{y,2k}}^{N,0} \ - \ \grad_{n+1}\mathbf{K}_{S,T,n,i_{y,2k-1}}^{N,0}\right) \ = \ {\sum}_{k \in \Z} \grad_{n}\grad_{n+1}\mathbf{K}_{S,T,n,i_{y,2k}}^{N,0}. \nonumber
\end{align}
}
Employing Lemma \ref{lemma:ThirdOrder} provides the desired estimate for first-order gradients. Provided a general $x \in \mathbb{I}_{N,0}$, we first write
\begin{align}
\grad_{k} \mathbf{T}_{S,T,x,y}^{N,0} \ &= \ \grad_{x+k} \mathbf{T}_{S,T,0,y}^{N,0} \ - \ \grad_{x} \mathbf{T}_{S,T,0,y}^{N,0},
\end{align}
from which we deduce the result for first-order gradients. Concerning the higher-order gradients, we take gradients of the previous two sets of calculations and apply Lemma \ref{lemma:ThirdOrder} once again.
\end{proof}
%%%
We moreover require another preparatory lemma that will serve as an important a priori estimate on the full-line heat kernel $\mathbf{K}_{S,T}^{N,0}$ in the proof of Proposition \ref{prop:MacroHKCpt}.
%%%
\begin{lemma}\fsp \label{lemma:MacroAuxHKTaylor}
Provided any $S,T \in \R_{\geq 0}$ satisfying $S \leq T$ along with any $x,y \in \Z$, {we have, with a universal implied constant,}
{
\begin{align}
|\bar{\mathbf{D}}_{\Z}^{N} \mathbf{K}_{S,Tx,y}^{N,0} | \ &\lesssim \ \left( {\sum}_{k = 1}^{m_{N}} k^{3} \wt{\alpha}_{k}^{N} \right) N^{-2} \rho_{S,T}^{-2}.
\end{align}
}
Moreover, the same estimate holds upon replacing {$\mathbf{K}^{N,0}$} by the nearest-neighbor heat kernel {$\bar{\mathbf{K}}^{N,0}$} on the full-line $\Z$.
\end{lemma}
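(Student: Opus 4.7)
The key structural observation is that $\bar{\mathbf{D}}_\Z^N$ should be read as the difference of generators $\frac12 \sum_k \wt{\alpha}_k^N[\Delta_k^{!!} - k^2\Delta_1^{!!}]$ (matching its role in the Duhamel identity used in the proof of Lemma \ref{lemma:MacroHKCpt2}). At the level of Fourier symbols, each summand $\cos(k\xi) - k^2 \cos(\xi) + (k^2-1)$ vanishes to order $\xi^4$ at the origin, one order better than a generic second-order operator, and against the Gaussian-type bound for the symbol of $\mathscr{G}^{N,0}$ this provides exactly one additional factor of $N^{-1}\varrho_{S,T}^{-1/2}$ of decay. The strategy is to convert this symbol-level gain into a sharp pointwise statement via the already-established Lemma \ref{lemma:ThirdOrder} at order three.

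First, I would establish the elementary "tent" identity
\begin{align*}
\Delta_k^{!!}\varphi_x \ = \ \sum_{j=-k+1}^{k-1}(k-|j|)\,\Delta_1^{!!}\varphi_{x+j}
\end{align*}
for any $\varphi: \Z\to\R$ and $k \geq 1$; this is verified by direct expansion, and its weights satisfy $\sum_{j}(k-|j|)=k^2$, matching the leading $\xi^2$-component of the symbol of $\Delta_k^{!!}$. Subtracting $k^2\Delta_1^{!!}\varphi_x$ rewrites the Taylor-residue as
\begin{align*}
\Delta_k^{!!}\varphi_x - k^2\Delta_1^{!!}\varphi_x \ = \ \sum_{j=-k+1}^{k-1}(k-|j|)\,\grad_j^{!}\Delta_1^{!!}\varphi_x,
\end{align*}
which is a weighted sum of third-order finite differences with span vectors $(1,1,|j|)$.

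Next, I would apply this identity with $\varphi = \mathscr{G}^{N,0}_{S,T,\cdot,y}$ and invoke Lemma \ref{lemma:ThirdOrder} with $\ell = 3$ and $\vec{n} = (1,1,j)$ to obtain
\begin{align*}
\bigl|\grad_j^{!}\Delta_1^{!!}\mathscr{G}_{S,T,x,y}^{N,0}\bigr| \ \lesssim \ |j|\,N^{-4}\varrho_{S,T}^{-2}.
\end{align*}
Summing against the coefficients $(k-|j|)$ and using the elementary bound $\sum_{|j|<k}(k-|j|)|j| \lesssim k^3$ yields $|\Delta_k^{!!}\mathscr{G}^{N,0} - k^2\Delta_1^{!!}\mathscr{G}^{N,0}| \lesssim k^3\,N^{-4}\varrho_{S,T}^{-2}$. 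A final summation over $k$ against $\wt{\alpha}_k^N$ gives $|\bar{\mathbf{D}}_\Z^N\mathscr{G}^{N,0}_{S,T,x,y}| \lesssim (\sum_k k^3\wt{\alpha}_k^N)\,N^{-4}\varrho_{S,T}^{-2}$, which is strictly stronger than, and therefore implies, the claimed $N^{-2}\varrho_{S,T}^{-2}$ estimate. The statement for $\bar{\mathscr{G}}^{N,0}$ follows by the same argument because the Fourier proof underlying Lemma \ref{lemma:ThirdOrder} applies unchanged to the nearest-neighbor kernel (whose symbol satisfies the identical Gaussian-type decay $|\wt{\Phi}_{0,T}^N(\xi)| \lesssim e^{-cN^2T\xi^2}$).

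No serious obstacle arises beyond the bookkeeping of the tent-function identity and a brief reconciliation of the defining factor of $\bar{\mathbf{D}}_\Z^N$ with its role as the difference of the two generators; under the literal reading the leading residue $\frac12(\sum_k k^2\wt{\alpha}_k^N)\Delta_1^{!!}\mathscr{G}^{N,0}$ would persist but is itself controlled via Lemma \ref{lemma:ThirdOrder} with $\ell = 2$, yielding $N^{-3}\varrho_{S,T}^{-3/2}$, which is safely majorized by the target. In either interpretation the estimate of Lemma \ref{lemma:MacroAuxHKTaylor} follows from the third-order gradient bound combined with the tent decomposition.
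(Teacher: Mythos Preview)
Your tent-function decomposition is a legitimate alternative to the paper's argument. The paper works directly with the Fourier representation of $\mathscr{G}^{N,0}$: it Taylor-expands the multiplier $\frac12\sum_k\wt{\alpha}_k^N N^2[\mathrm{e}^{ik\theta}+\mathrm{e}^{-ik\theta}-2] - \frac12(\sum_k k^2\wt{\alpha}_k^N)N^2[\mathrm{e}^{i\theta}+\mathrm{e}^{-i\theta}-2]$ to extract a factor of $|\theta|^3$ (in fact $\theta^4$, but $|\theta|^4\lesssim|\theta|^3$ on $[-\pi,\pi]$) and integrates against the Gaussian-type envelope $\mathrm{e}^{-\kappa N^2\varrho_{S,T}\theta^2}$. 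Your route instead realises the same cancellation in real space via $\Delta_k=\sum_{|j|<k}(k-|j|)\tau_j\Delta_1$ and then invokes Lemma~\ref{lemma:ThirdOrder} at $\ell=3$; this is arguably cleaner since it reduces immediately to a packaged third-order bound.

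However, your $N$-bookkeeping is wrong. The post-subtraction identity should read
\[
\Delta_k^{!!}\varphi_x - k^2\Delta_1^{!!}\varphi_x \ = \ \sum_{|j|<k}(k-|j|)\,\grad_j\Delta_1^{!!}\varphi_x
\]
with the \emph{unscaled} $\grad_j$, since both sides already carry one factor of $N^2$. Writing $\grad_j\Delta_1^{!!}=-N^2\grad_j\grad_1\grad_{-1}$ and applying Lemma~\ref{lemma:ThirdOrder} with $\ell=3$ yields $|\grad_j\Delta_1^{!!}\mathscr{G}^{N,0}|\lesssim |j|\,N^{-2}\varrho_{S,T}^{-2}$, not $|j|N^{-4}\varrho_{S,T}^{-2}$; summing then gives exactly the target, and your claimed $N^{-4}$ bound is false. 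The same mis-scaling invalidates your ``either reading works'' remark: under the literal definition the residual $\frac12(\sum_k k^2\wt{\alpha}_k^N)\Delta_1^{!!}\mathscr{G}^{N,0}$ is of size $N^{-1}\varrho_{S,T}^{-3/2}$, which is \emph{not} majorized by $N^{-2}\varrho_{S,T}^{-2}$ for $\varrho_{S,T}\gg N^{-2}$. The paper's displayed definition of $\bar{\mathbf{D}}_\Z^N$ carries a typo (the coefficient of $\Delta_1^{!!}$ should be $\frac12\sum_k k^2\wt{\alpha}_k^N$ to match the difference of generators used in Lemma~\ref{lemma:MacroHKCpt2}); only with that correction does either argument go through.
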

%%%
%%%
\begin{proof}
{We give a proof for $\mathbf{K}^{N,0}$; the argument applies to the nearest-neighbor heat kernel $\bar{\mathbf{K}}^{N,0}$. Moreover, it suffices to assume $y = 0$ since every relevant heat kernel is spatially-homogeneous; this is simply for notational convenience. First, using the spectral integral representation of the heat kernel $\mathbf{K}^{N,0}$ provided in equation (A.6) from \cite{DT}, which we also use earlier in Lemma \ref{lemma:ThirdOrder},
\begin{align*}
{\sum}_{k = 1}^{m_{N}} \wt{\alpha}_{k}^{N} \Delta_{k}^{!!} \mathbf{K}_{S,Tx,0}^{N,0} \ &= \ \pi^{-1}\int_{-\pi}^{\pi} 2^{-1}{\sum}_{k = 1}^{m_{N}} \wt{\alpha}_{k}^{N} \Delta_{k}^{!!} \exp(ix\theta)\exp\left(-N^{2} \rho_{S,T} {\sum}_{\ell = 1}^{m_{N}} \wt{\alpha}_{\ell}^{N} \left( 1 - \cos(\ell \theta) \right)\right) \d\theta \\
&= \pi^{-1}\int_{-\pi}^{\pi} 2^{-1}{\sum}_{k = 1}^{m_{N}} \wt{\alpha}_{k}^{N} \cdot N^{2} \mathrm{e}^{ix\theta}(\mathrm{e}^{ik\theta} + \mathrm{e}^{-ik\theta} - 2)\exp\left(-N^{2} \rho_{S,T} {\sum}_{\ell = 1}^{m_{N}} \wt{\alpha}_{\ell}^{N} \left( 1 - \cos(\ell \theta) \right)\right) \ \d\theta.
\end{align*}
}Meanwhile, by the same token we have
\begin{align*}
\left({\sum}_{k = 1}^{m_{N}} k^{2} \wt{\alpha}_{k}^{N} \right) \Delta_{1}^{!!} \mathbf{K}_{S,Tx,0}^{N,0} \ &= \ \pi^{-1}\int_{-\pi}^{\pi} \left({\sum}_{k = 1}^{m_{N}} k^{2} \wt{\alpha}_{k}^{N} \right) \cdot N^{2} \mathrm{e}^{ix\theta}(\mathrm{e}^{i\theta} + \mathrm{e}^{-i\theta} - 2)\exp\left(-N^{2} \rho_{S,T} {\sum}_{\ell = 1}^{m_{N}} \wt{\alpha}_{\ell}^{N} \left( 1 - \cos(\ell \theta) \right)\right) \d\theta.
\end{align*}
Observe the relevant difference in the respective integrands is bounded via Taylor expansion as follows:
\begin{align}
{\sum}_{k = 1}^{m_{N}} \wt{\alpha}_{k}^{N} \cdot N^{2} \mathrm{e}^{ix\theta}(\mathrm{e}^{ik\theta} + \mathrm{e}^{-ik\theta} - 2) \ - \ \left( {\sum}_{k = 1}^{m_{N}} k^{2} \wt{\alpha}_{k}^{N} \right) \cdot N^{2} \mathrm{e}^{ix\theta}(\mathrm{e}^{i\theta} + \mathrm{e}^{-i\theta} - 2) \ &\lesssim \ \left( {\sum}_{k = 1}^{m_{N}} k^{3} \wt{\alpha}_{k}^{N} \right) \theta^{3}.
\end{align}
Moreover, as noted in the two-sided bounds of equation (A.7) from \cite{DT}, we also obtain the following upper bound on the other factor of the integrand for some $\kappa \in \R_{>0}$ universal outside its dependence on $\wt{\alpha}_{1}^{N} \in \R_{>0}$:
\begin{align}
\exp\left(-N^{2} \rho_{S,T} {\sum}_{\ell = 1}^{m_{N}} \wt{\alpha}_{\ell}^{N} \left( 1 - \cos(\ell \theta) \right)\right) \ &\leq \ \exp\left(-\kappa N^{2} \rho_{S,T} \theta^{2}\right).
\end{align}
Combining this with the straightforward bound $|\mathrm{e}^{ix\theta}| \leq 1$, we have
\begin{align}
|\bar{\mathbf{D}}_{\Z}^{N}\mathbf{K}_{S,Tx,0}^{N,0}| \ &\lesssim \ \ \left( {\sum}_{k = 1}^{m_{N}} k^{3}\wt{\alpha}_{k}^{N} \right) N^{2} \cdot \int_{-\pi}^{\pi} \theta^{3} \exp\left(-\kappa N^{2} \rho_{S,T} \theta^{2}\right)\d \theta,
\end{align}
from which the desired estimate follows from a straightforward integral calculation.
\end{proof}
%%%
%
%
%
%%%
\section{An Elementary Integral Calculation}
%%%
Throughout our derivation of necessary a priori regularity estimates for relevant heat kernels and stochastic fundamental solutions, we appeal to two integral inequalities. The first of these concerns time-integrals of integrable singularities.
%%%
\begin{lemma}\fsp \label{lemma:UsualSuspectInt}
Provided any $S,T \in \R_{\geq 0}$ satisfying $S \leq T$ and any pair $c_{1},c_{2} \in \R_{<1}$, we have
\begin{align}
\int_{S}^{T} \rho_{R,T}^{-c_{1}} \rho_{0,R}^{-c_{2}} \ \d R \ \lesssim_{c_{1},c_{2}} \ \rho_{S,T}^{1-c_{1}-c_{2}}.
\end{align}
\end{lemma}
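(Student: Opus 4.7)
The plan is to use the standard dyadic-splitting argument for Beta-function-type time integrals, splitting the integration interval $[S,T]$ at the midpoint $M \overset{\bullet}= \tfrac{1}{2}(S+T)$ so that on each half exactly one of the two factors is bounded away from its singularity by a fixed fraction of $\varrho_{S,T}$, while the other factor is integrable and contributes the desired power of $\varrho_{S,T}$ after an explicit one-variable integration.

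Concretely, first I would note the elementary bounds $\varrho_{R,T} \geq \tfrac12 \varrho_{S,T}$ for $R \in [S, M]$ and $\varrho_{S,R} \geq \tfrac12 \varrho_{S,T}$ for $R \in [M, T]$; combined with $\varrho_{0,R} \geq \varrho_{S,R}$, these give $\varrho_{0,R}^{-c_2} \leq \varrho_{S,R}^{-c_2}$ in the relevant regime of positive $c_2$, and for negative $c_i$ the corresponding factor is uniformly bounded (on the implicit compact time-domain $[0,T_f]$) so can be absorbed into the implied constant. Then on $[S,M]$ I would pull $\varrho_{R,T}^{-c_1} \lesssim_{c_1} \varrho_{S,T}^{-c_1}$ out of the integral and integrate $\varrho_{S,R}^{-c_2}$ explicitly to get a contribution of order
\begin{align*}
\varrho_{S,T}^{-c_1} \cdot \frac{\varrho_{S,M}^{1-c_2}}{1-c_2} \ \lesssim_{c_2} \ \varrho_{S,T}^{1-c_1-c_2},
\end{align*}
where the integrability $1-c_2>0$ is exactly the hypothesis $c_2<1$. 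The symmetric argument on $[M,T]$ handles the other half, yielding the same bound with roles of $c_1,c_2$ swapped.

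The only potential subtlety is the case where one of the $c_i$ is negative, in which case the corresponding factor is no longer singular but can grow like $\varrho^{|c_i|}$; here one simply uses boundedness of $\varrho$ on the ambient compact time-window together with the fact that the stated right-hand side $\varrho_{S,T}^{1-c_1-c_2}$ is then larger than the naive bound, so the estimate is automatic up to a constant depending on $T_f$. There is no genuine obstacle here — the whole argument is a two-line Beta-function computation — and I would not expect any step to present difficulty.
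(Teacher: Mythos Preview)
Your midpoint-split approach is exactly the paper's proof. One remark: the printed statement has $\varrho_{0,R}$, but the paper's own proof (and every application of the lemma in the article) uses $\varrho_{S,R}$ --- this is evidently a typo, and with $\varrho_{S,R}$ the integral is a genuine Beta-type convolution. Your reduction $\varrho_{0,R}^{-c_2} \leq \varrho_{S,R}^{-c_2}$ for $c_2>0$ is correct, but your handling of negative $c_2$ via a $T_f$-dependent constant does not recover the stated bound: taking $c_1=0$, $c_2=-1$ gives $\int_S^T R\,\d R = \tfrac12(T+S)(T-S)$ versus $(T-S)^2$ on the right, and the ratio blows up as $T-S\to 0$ with $S$ bounded away from zero. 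So the literal $\varrho_{0,R}$-statement is actually false in that regime, and your difficulty there is not a gap in your argument but a symptom of the typo.

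For the intended $\varrho_{S,R}$-statement no sign case-split is needed at all: on $[S,M]$ one has $\tfrac12\varrho_{S,T} \leq \varrho_{R,T} \leq \varrho_{S,T}$, so $\varrho_{R,T}^{-c_1} \leq \max(2^{c_1},1)\,\varrho_{S,T}^{-c_1}$ for \emph{every} real $c_1$, and the remaining integral $\int_S^M \varrho_{S,R}^{-c_2}\,\d R = \tfrac{1}{1-c_2}(\varrho_{S,T}/2)^{1-c_2}$ is finite for every $c_2<1$. The symmetric bound on $[M,T]$ finishes the job with no reference to $T_f$.
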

%%%
%%%
\begin{proof}
We first decompose the integral into halves; precisely, we first decompose $[S,T] = [S,\frac{T+S}{2}] \cup [\frac{T+S}{2},T]$. This decomposition provides the {following sequence of upper bounds for the LHS of the proposed estimate:
\begin{align*}
\int_{S}^{\frac{T+S}{2}} \rho_{R,T}^{-c_{1}} \rho_{S,R}^{-c_{2}} \ \d R \ + \ \int_{\frac{T+S}{2}}^{T} \rho_{R,T}^{-c_{1}} \rho_{S,R}^{-c_{2}} \ \d R \ &\leq \ 2^{c_{1}} \rho_{S,T}^{-c_{1}} \int_{S}^{\frac{T+S}{2}} \rho_{S,R}^{-c_{2}} \ \d R \ + \ 2^{c_{2}} \rho_{S,T}^{-c_{2}} \int_{\frac{T+S}{2}}^{T} \rho_{R,T}^{-c_{1}} \ \d R \\
&\leq \ (1-c_{2})^{-1}2^{-1+c_{1}+c_{2}} \rho_{S,T}^{1-c_{1}-c_{2}} \ + \ (1-c_{1})^{-1}2^{-1+c_{2}+c_{2}} \rho_{S,T}^{1-c_{1}-c_{2}},
\end{align*}
}which completes the proof upon additional elementary bounds.
\end{proof}
%%%
The second preliminary time-integral estimate concerns non-integrable singularities but with a cutoff away from said singularity. The point of the following elementary upper bound is to provide a precise estimate for the otherwise clear qualitative convergence of the integral.
%%%
\begin{lemma}\fsp \label{lemma:UsualSuspectIntCutoff}
{Consider any $S,T \in \R_{\geq 0}$ satisfying $S \leq T$ and any pair $c_{1} \in \R_{<1}$ and $c_{2} \in \R_{>1}$. Provided any $\e>0$, we have
\begin{align}
\int_{S+\e}^{T} \rho_{R,T}^{-c_{1}} \rho_{S,R}^{-c_{2}} \ \d R \ &\lesssim_{c_{1},c_{2}} \ \rho_{S+\e,T}^{-c_{1}} \e^{-c_{2}+1} \and \mathbf{1}_{\rho_{S,T}\gtrsim\e}\int_{S+\e}^{T} \rho_{R,T}^{-c_{1}} \rho_{S,R}^{-c_{2}} \ \d R \ \lesssim_{c_{1},c_{2}} \ \rho_{S,T}^{-c_{1}} \e^{-c_{2} + 1}.
\end{align}
}
\end{lemma}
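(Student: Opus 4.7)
\medskip

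\noindent\textbf{Proof Proposal for Lemma \ref{lemma:UsualSuspectIntCutoff}.} The plan is entirely elementary: split the integration domain $[S+\e,T]$ at the midpoint $M \overset{\bullet}= (T+S+\e)/2$, use the bound $\varrho_{R,T} \geq \varrho_{S+\e,T}/2$ on the left half and the bound $\varrho_{S,R} \geq (T-S+\e)/2$ on the right half, and handle each resulting one-sided integral by direct computation. No deep idea is required; the only mild subtlety is that the target upper bound $\varrho_{S+\e,T}^{-c_1}\e^{-c_2+1}$ does not dominate the natural output of the right-half integral in all regimes at once, so a small case analysis is needed.

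\smallskip

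\noindent First I would estimate the left-half integral $\int_{S+\e}^{M}\varrho_{R,T}^{-c_1}\varrho_{S,R}^{-c_2}\, \d R$. On this interval one has $\varrho_{R,T}\geq \varrho_{S+\e,T}/2$, so $\varrho_{R,T}^{-c_1}$ can be pulled out as a constant multiple of $\varrho_{S+\e,T}^{-c_1}$. The remaining $R$-integral $\int_{S+\e}^{M}\varrho_{S,R}^{-c_2}\, \d R$ evaluates exactly (via the substitution $u=R-S$ and $c_2>1$) to a quantity bounded by $(c_2-1)^{-1}\e^{-c_2+1}$, which gives the advertised bound on this half immediately.

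\smallskip

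\noindent The right-half integral $\int_{M}^{T}\varrho_{R,T}^{-c_1}\varrho_{S,R}^{-c_2}\, \d R$ is the step that requires the case split. On $[M,T]$ we have $\varrho_{S,R}\geq \tfrac12(T-S+\e) \geq \max\{\e,\varrho_{S+\e,T}/2\}$, while $\int_{M}^{T}\varrho_{R,T}^{-c_1}\, \d R$ evaluates to a constant multiple of $\varrho_{S+\e,T}^{1-c_1}$ via $c_1<1$. In the regime $\e\geq \varrho_{S+\e,T}/2$ I bound $\varrho_{S,R}^{-c_2}\leq \e^{-c_2}$ and note $\varrho_{S+\e,T}^{1-c_1}\leq 2\e\cdot\varrho_{S+\e,T}^{-c_1}$ since $\varrho_{S+\e,T}\leq 2\e$; in the complementary regime $\varrho_{S+\e,T}\geq 2\e$ I instead bound $\varrho_{S,R}^{-c_2}\leq 2^{c_2}\varrho_{S+\e,T}^{-c_2}$ and note $\varrho_{S+\e,T}^{1-c_2}\leq \e^{1-c_2}$ since the exponent $1-c_2$ is negative. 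Either regime delivers the bound $\varrho_{S+\e,T}^{-c_1}\e^{-c_2+1}$ up to a constant depending only on $c_1,c_2$.

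\smallskip

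\noindent The second displayed estimate is then immediate: the hypothesis $\varrho_{S,T}\gtrsim \e$ with a sufficiently large implied constant forces $\varrho_{S+\e,T}=\varrho_{S,T}-\e$ to be comparable to $\varrho_{S,T}$ itself, so the prefactor $\varrho_{S+\e,T}^{-c_1}$ may be replaced by $\varrho_{S,T}^{-c_1}$ up to a $c_1$-dependent constant. The main (and only) obstacle is organizing the case split for the right-half integral cleanly so that one target bound captures both the $\e$-dominated and $\varrho$-dominated regimes; everything else is direct integration against a power.
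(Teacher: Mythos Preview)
Your proposal is correct and follows essentially the same approach as the paper: both arguments split $[S+\e,T]$ at the midpoint $(T+S+\e)/2$, bound the non-singular factor on each half, and integrate the remaining power directly. The only difference is cosmetic: where the paper handles the right-half contribution in one line (using $\varrho_{0,(T+\e)/2}\gtrsim T$ and $\e\leq T$), you carry out an explicit two-case comparison between $\e$ and $\varrho_{S+\e,T}$, which is slightly more careful but amounts to the same estimate.
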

%%%
%%%
\begin{proof}
{For notational convenience, consider $S = 0$; the proof for general $S\geq0$ follows from a change-of-variables transformation via time-translation. Moreover, we may certainly assume, a priori, that $\e \leq T$. Otherwise the stated integral vanishes. We again decompose the integral into halves as in the proof of Lemma \ref{lemma:UsualSuspectInt} as follows; this provides the set of upper bounds below:
\begin{align}
\int_{\e}^{T} \rho_{R,T}^{-c_{1}} \rho_{0,R}^{-c_{2}} \ \d R \ = \ \int_{\e}^{\frac{T+\e}{2}} \rho_{R,T}^{-c_{1}} \rho_{0,R}^{-c_{2}} \ \d R \ + \ \int_{\frac{T+\e}{2}}^{T} \rho_{R,T}^{-c_{1}} \rho_{0,R}^{-c_{2}} \ \d R \ &\leq \ \rho_{\frac{T+\e}{2},T}^{-c_{1}} \int_{\e}^{\frac{T+\e}{2}} \rho_{0,R}^{-c_{2}} \ \d R \ + \ \rho_{0,\frac{T+\e}{2}}^{-c_{2}} \int_{\frac{T+\e}{2}}^{T} \rho_{R,T}^{-c_{1}} \ \d R \nonumber \\
&\lesssim_{c_{1},c_{2}} \ \rho_{\frac{T+\e}{2},T}^{-c_{1}} \e^{-c_{2}+1} \ + \ \rho_{0,\frac{T+\e}{2}}^{-c_{2}} \rho_{0,T}^{1-c_{1}}.
\end{align}
Observe $2^{-1}(T+\e) \gtrsim T$ clearly, and recall $\e \leq T$. These imply that the second term on the upper bound above is bounded above by $\rho_{0,\frac{T+\e}{2}}^{-c_{2}} \rho_{0,T}^{1-c_{1}}\leq\rho_{\e,T}^{-c_{1}} \e^{-c_{2}+1}$, and this completes the proof.}
\end{proof}
%%%
%
%
%
%%%
\section{Quantitative Classical Replacement Lemma}
%%%
{The goal of this section is to make precise the classical one-block and two-blocks estimates of \cite{GPV}, which is traditionally used in a topological framework. The only additional input is a precise equivalence of ensembles of estimates which we borrow from \cite{GJ15} and the log-Sobolev inequality of Yau from \cite{Yau}. Recall that the utility behind the result below is to address the weakly vanishing quantities arising in Proposition \ref{prop:MatchCpt} in a quantitative variation of the fashion in \cite{DT}.}
%%%
\begin{prop}\fsp \label{prop:1B2B}
{Take any weakly vanishing $\mathfrak{w}$ and let $\delta>0$ be arbitrarily small but universal. There exists $\beta_{\mathrm{u}}>0$ such that
\begin{align}
\E\|\int_{0}^{T} {\sum}_{y \in \mathbb{I}_{N,\beta}} \mathbf{P}_{S,T,x,y}^{N} \cdot |\mathsf{A}^{\beta,\mathbf{X}}(\mathfrak{w}_{S,y}) - \mathsf{E}^{\beta}(\mathfrak{w}_{S,y})| \ \d S \|_{\mathscr{L}^{\infty}_{T,X}} \ &\lesssim_{\mathfrak{t}^{\max},\beta} \ N^{-\beta_{\mathrm{u}}}; \label{eq:1B} \\
\E\|\int_{0}^{T}{\sum}_{y\in\mathbb{I}_{N,1/2}}\mathbf{P}_{S,T,x,y}^{N}\cdot |\mathsf{E}^{\beta}(\mathfrak{w}_{S,y})-\mathsf{E}^{1/2-\delta}(\mathfrak{w}_{S,y}) | \ \d S \|_{\mathscr{L}^{\infty}_{T,X}} \ &\lesssim_{\mathfrak{t}^{\max},\beta} \ N^{-\beta_{\mathrm{u}}}. \label{eq:2B}
\end{align}
Above, provided any $S\geq0$ and $y\in\mathbb{I}_{N,\beta}$ along with any $\beta>0$, we have defined $\mathsf{E}^{\beta}(\mathfrak{w}_{S,y})$ to be the expectation with respect to the canonical measure on the support of $\mathfrak{w}$ with parameter/density given by the average $\mathsf{A}^{\beta,\mathbf{X}}(\eta_{S,y})$. Moreover, the same estimates hold upon replacing $\mathbf{P}_{S,T,x,y}^{N}$ with $\grad_{k,y}^{!} \mathbf{P}_{S,T,x,y}^{N}$ for any $k \in \Z$ uniformly bounded.}
\end{prop}
%%%
%%%
\begin{proof}
We first establish \eqref{eq:1B}; as with the proof of {Lemma \ref{lemma:d1b13}, because the heat kernel $\mathbf{P}^{N}$ on the LHS of the estimate below has basically the same off-diagonal behavior of the stochastic kernel $\mathbf{Q}^{N}$ that was used in the proof of Lemma \ref{lemma:d1b13}, for any $\e>0$,}
\begin{align*}
\int_{0}^{T} {\sum}_{y \in \mathbb{I}_{N,\beta}} \mathbf{P}_{S,T,x,y}^{N} \cdot |\mathsf{A}^{\beta,\mathbf{X}}(\mathfrak{w}_{S,y}) - \mathsf{E}^{\beta}(\mathfrak{w}_{S,y})| \ \d S \ &\lesssim_{\mathfrak{t}^{\max},\mathscr{A}_{\pm}} \ N^{2\e}\left(\int_{0}^{T} \wt{{\sum}}_{y \in \mathbb{I}_{N,\beta}}|\mathsf{A}^{\beta,\mathbf{X}}(\mathfrak{w}_{S,y}) - \mathsf{E}^{\beta}(\mathfrak{w}_{S,y})|^{2} \ \d S\right)^{1/2}.
\end{align*}
Again, it remains to estimate the expectation for this last upper bound; moreover, with the Cauchy-Schwarz inequality it suffices to remove the square root if {$\e>0$} is chosen sufficiently small but still universal. {To this end, like in the classical one-block estimate or Proposition 5.3 in \cite{Y} combined with the entropy inequality in Lemma \ref{lemma:d1b5}, we deduce the following for $\e>0$ arbitrarily small but universal, in which $\mathbb{I} = \llbracket-N^{\beta},N^{\beta}\rrbracket \subseteq \Z$ is the support on which the parameter $\mathsf{A}^{\beta,\mathbf{X}}(\eta_{0,y})$ is defined:
\begin{align}
\E\left(\int_{0}^{T} \wt{{\sum}}_{y \in \mathbb{I}_{N,\beta}}|\mathsf{A}^{\beta,\mathbf{X}}(\mathfrak{w}_{S,y}) - \mathsf{E}^{\beta}(\mathfrak{w}_{S,y})|^{2} \ \d S\right) \ &\lesssim_{\e} \ \sup_{\sigma \in [-1,1]}\log \E^{\mu_{\sigma,\mathbb{I}}^{\mathrm{can}}} \exp\left(|\mathsf{A}^{\beta,\mathbf{X}}(\mathfrak{w}_{0,0}) - \mathsf{E}^{\beta}(\mathfrak{w}_{0,0})|^{2}\right) \\
&\quad+ \ N^{-3/2+\e+3\beta}. \nonumber
\end{align}
}{Because $\e,\beta>0$ are arbitrarily small but universal, it remains to control the first term above. For this, we observe
\begin{align}
\log \E^{\mu_{\sigma,\mathbb{I}}^{\mathrm{can}}} \exp\left(|\mathsf{A}^{\beta,\mathbf{X}}(\mathfrak{w}_{0,0}) - \mathsf{E}^{\beta}(\mathfrak{w}_{0,0})|^{2}\right) \ &\lesssim_{\|\mathfrak{w}^{N}\|_{\mathscr{L}^{\infty}_{\omega}}} \ \E^{\mu_{\sigma,\mathbb{I}}^{\mathrm{can}}}\left(|\mathsf{A}^{\beta,\mathbf{X}}(\mathfrak{w}_{0,0}) - \mathsf{E}^{\beta}(\mathfrak{w}_{0,0})|^{2}\right).
\end{align}
}By Proposition 3.6 in \cite{GJ15}, we first replace the expectation with respect to $\mu_{\sigma,\mathbb{I}}^{\mathrm{can}}$ by the expectation with respect to $\mu_{\sigma,\mathbb{I}}^{}$ at the cost of an allowable error. {Moreover, the resulting expectation with respect to the grand-canonical ensemble admits an estimate with standard probability theory as we then have independence of occupation variables. For $\beta>0$ sufficiently small and for $\e>0$ sufficiently small depending only on $\beta>0$, this completes the proof for \eqref{eq:1B}. Let us now prove the estimate \eqref{eq:2B}; following the usual strategy for the two-blocks estimate in \cite{GPV}, for example illustrated in the proof of Proposition 4.4 in \cite{DT} combined with calculations before taking expectation in our proof of \eqref{eq:1B}, it suffices to estimate, for $\ell\in\llbracket N^{\beta},N^{1/2 - \delta}\rrbracket$,
\begin{align}
\E\int_{0}^{T} \wt{{\sum}}_{y \in \mathbb{I}_{N,1/2}} | \mathsf{A}^{\beta,\mathbf{X}}(\eta_{S,y})-\mathsf{A}^{\beta,\mathbf{X}}(\eta_{S,y+\ell})|^{2} \ \d S.
\end{align}
}Again, following { the standard two-blocks estimate in \cite{GPV} as detailed in the proof of Proposition 4.4 in \cite{DT}} and combining this with the entropy inequality from Lemma \ref{lemma:d1b5}, we have the following estimate for any $\delta \in \R_{>0}$ arbitrarily small but universal:
\begin{align}
\E\int_{0}^{T} \wt{{\sum}}_{y \in \mathbb{I}_{N,1/2}}| \mathsf{A}^{\beta,\mathbf{X}}(\eta_{S,y})-\mathsf{A}^{\beta,\mathbf{X}}(\eta_{S,y+\ell})|^{2} \ \d S \ &\lesssim_{\delta} \ N^{-1/2+\delta+3\beta} \ + \ N^{-\beta_{\mathrm{u}}};
\end{align}
indeed, for $\mathbb{I}' \subseteq \Z$ the union of two possibly disjoint sub-lattices of size $\lesssim N^{\beta}$, the classical moving-particle lemma provides the Dirichlet form bound on $\bar{\mathfrak{f}}_{T,N}^{\mathbb{I}'}$ of $N^{-1/2+\delta}$ up to a prefactor depending on {$\delta>0$} as detailed in the proof of Proposition 4.4 in \cite{DT}, from which we obtain the above estimate by following the proof of Lemma \ref{lemma:d1b5} and the proof of Proposition 4.4 in \cite{DT}. In particular, choosing {$\delta,\beta>0$} arbitrarily small but universal, we obtain the desired estimate.
{To establish these same estimates but replacing $\mathbf{P}_{S,T,x,y}^{N}$ with $\grad_{k,y}^{!} \mathbf{P}_{S,T,x,y}^{N}$ for any $k \in \Z$ uniformly bounded, we first choose {$\delta>0$} arbitrarily small but universal and write}
\begin{align}
\int_{0}^{T} {\sum}_{y \in \mathbb{I}_{N,\beta}} \grad_{k,y}^{!} \mathbf{P}_{S,T,x,y}^{N} \cdot | \mathsf{A}^{\beta,\mathbf{X}}(\eta_{S,y})-\mathsf{A}^{\beta,\mathbf{X}}(\eta_{S,y+\ell})| \ \d S \ &= \ \mathbf{I} \ + \ \mathbf{II},
\end{align}
where
\begin{subequations}
\begin{align}
\mathbf{I} \ &\overset{\bullet}= \ \int_{0}^{T-N^{-\delta}} {\sum}_{y \in \mathbb{I}_{N,\beta}} \grad_{k,y}^{!} \mathbf{P}_{S,T,x,y}^{N} \cdot | \mathsf{A}^{\beta,\mathbf{X}}(\eta_{S,y})-\mathsf{A}^{\beta,\mathbf{X}}(\eta_{S,y+\ell})| \ \d S; \\
\mathbf{II} \ &\overset{\bullet}= \ \int_{T-N^{-\delta}}^{T} {\sum}_{y \in \mathbb{I}_{N,\beta}} \grad_{k,y}^{!} \mathbf{P}_{S,T,x,y}^{N} \cdot | \mathsf{A}^{\beta,\mathbf{X}}(\eta_{S,y})-\mathsf{A}^{\beta,\mathbf{X}}(\eta_{S,y+\ell})| \ \d S.
\end{align}
\end{subequations}
The term $\mathbf{II}$ is analyzed directly via the regularity estimates in Lemma \ref{lemma:RegRBPACptTotal}. {For the term $\mathbf{I}$, first we observe that we may replace $\mathbf{P}^{N}$ with $\bar{\mathbf{P}}^{N}$ as with the proof for Lemma \ref{lemma:PathwiseIIMaine}. Employing the regularity estimate in Proposition 3.2 of \cite{P}, we have the following upper bound for $x \in \mathbb{I}_{j} \subseteq \mathbb{I}_{N,0}$:}
\begin{align}
|\mathbf{I}| \ &\lesssim_{\mathfrak{t}^{\max},m_{N},\mathscr{A}_{\pm},\delta} \ N^{2\delta} \int_{0}^{T-N^{-\delta}} \wt{{\sum}}_{y\in\mathbb{I}_{N,\beta}}| \mathsf{A}^{\beta,\mathbf{X}}(\eta_{S,y})-\mathsf{A}^{\beta,\mathbf{X}}(\eta_{S,y+\ell})| \ \d S \ + \ \exp(-\log^{100}N);
\end{align}
indeed, by construction of the term $\mathbf{I}$, the singularity in the gradient of the heat kernel $\bar{\mathbf{P}}^{N}$ is cut-off up to an additional factor of $N^{\delta}$; as $\delta \in \R_{>0}$ is arbitrarily small but universal, we may proceed exactly as in the proof of the original estimate \eqref{eq:1B}. Moreover, the analog of \eqref{eq:2B} but for the gradient $\grad_{k,y}^{!}\mathbf{P}^{N}$ follows from an identical procedure beginning with the cutoff from the singularity of this gradient. This completes the proof.
\end{proof}
%%%
%
%
%
%%%
\section{Index for Notation}
%%%
%%%
\subsection{Expectation Operators}
%%%
For any probability measure $\mu$ on a generic probability space, we denote by $\E^{\mu}$ the expectation with respect to this probability measure. Moreover, when additionally provided with a $\sigma$-algebra $\mathscr{F}$, let us denote by $\E^{\mu}_{\mathscr{F}}$ the conditional expectation operator with respect to the probability measure $\mu$ with respect to conditioning on this $\sigma$-algebra $\mathscr{F}$.
%%%
\subsection{Lattice Differential Operators and $N$-dependent Scaling}
%%%
Provided any index $k \in \Z$, we define the discrete differential operators $\grad_{k},\Delta_{k}$ acting on any suitable space of functions $\varphi: \Z \to \R$ through the following formula:
\begin{align}
\grad_{k}\varphi_{x} \ = \ \varphi_{x+k} - \varphi_{x} \and \Delta_{k}\varphi_{x} \ = \ \varphi_{x+k} + \varphi_{x-k} - 2 \varphi_{x}.
\end{align}
Moreover, define the appropriately rescaled operators $\grad_{k}^{!} = N \grad_{k}$ and $\Delta_{k}^{!!}= N^{2} \Delta_{k}$ that should be interpreted as approximations to their continuum differential counterparts. More generally, provided any generic bounded linear operator $\mathscr{L}$ acting on any linear space, each additional $!$ in the superscript denotes another scaling factor of $N$, so $\mathscr{L}^{!} \overset{\bullet}= N\mathscr{L}$ and $\mathscr{L}^{!!} \overset{\bullet}= N^{2}\mathscr{L}$.
%%%
\subsection{Landau Notation for Asymptotics}
%%%
We will employ the Landau $\mathscr{O}$-notation. We emphasize that provided any generic set $\mathbb{I}$, the notation $a \lesssim_{\mathbb{I}} b$ is equivalent to $a = \mathscr{O}(b)$, where the implied constant is allowed to depend only on $\mathbb{I}$.
%%%
\subsection{Summation with Average}
%%%
Provided any set $\mathbb{I}$ along with any function $\varphi: \mathbb{I} \to \R$, we define $\wt{{\sum}}_{x \in \mathbb{I}} \varphi_{x}=|\mathbb{I}|^{-1} {\sum}_{x \in \mathbb{I}} \varphi_{x}$.
%%%
\subsection{Miscellaneous Space-Time Objects}
%%%
We first define the following maximal space-time norm for any $\varphi: \R_{\geq 0} \times \mathbb{I}_{N,0} \to \R$, in which $\mathfrak{t}^{\max}\geq0$ is interpreted as a terminal time-horizon:
\begin{align}
\|\varphi_{T,x}\|_{\mathscr{L}^{\infty}_{T,X}} \ \overset{\bullet}= \ \sup_{T \in [0,\mathfrak{t}^{\max}]} \sup_{x \in \mathbb{I}_{N,0}} |\varphi_{T,x}|.
\end{align}
Simply for notational convenience and compact presentation, provided any $S,T \in \R_{\geq 0}$, let us define $\rho_{S,T} \overset{\bullet}= |T-S|$. Provided coordinates $(T,X) \in \R_{\geq 0} \times \R$, define the associated space-time shift-operator $\tau_{T,X}$ acting on possibly random processes:
\begin{align}
\tau_{T,X}\phi_{s,y}(\eta_{r,z}^{N}) \ \overset{\bullet}= \ \phi_{T+s,X+y}(\eta_{T+r,X+z}^{N}).
\end{align}
%
%%%
\section{Dynamical One-Block Scheme -- Technical Estimates}
%%%
%%%
\begin{proof}[Proof of \emph{Lemma \ref{lemma:d1b7}}]
We start by first observing that a random walk, whose symmetric component is speed of order $N^{2}$ and whose asymmetric component is speed $N^{3/2}$, has maximal displacement of at least $\mathfrak{l}^{\alpha(\mathbf{T}),\alpha(\mathbf{X}),\varphi}$ by time $N^{\alpha(\mathbf{T})}$ with probability at most $N^{-D}$ times a $D$-dependent constant given any positive $D$. This is because of the smaller power $N^{\delta}$ inside $\mathfrak{l}^{\alpha(\mathbf{T}),\alpha(\mathbf{X}),\varphi}$ along with standard sub-Gaussian concentration inequalities for running suprema of random walks. We now make the following observation. Consider the interacting particle system on $\mathbb{I}^{\alpha(\mathbf{T}),\alpha(\mathbf{X}),\varphi}$, which we know is disjoint from the boundary of $\mathbb{I}_{N,0}$ by assumption, but we introduce periodic boundary conditions on $\mathbb{I}^{\alpha(\mathbf{T}),\alpha(\mathbf{X}),\varphi}$, therefore realizing it as a torus. Let us now suppose that the space-time average of $\varphi$, whose second moment we are taking, is actually evaluated along this exclusion process on $\mathbb{I}^{\alpha(\mathbf{T}),\alpha(\mathbf{X}),\varphi}$ with periodic boundary conditions. In this case, the proposed bound for $\E(\sigma,\alpha(\mathbf{T}),\alpha(\mathbf{X}),\varphi)$ follows by using the Kipnis-Varadhan inequality, or more precisely Propositions 3.1 and 3.4 of \cite{GJ15}, which say $\mathsf{A}^{\alpha(\mathbf{T}),\mathbf{T}}\mathsf{A}^{\alpha(\mathbf{X}),\mathbf{X}}(\varphi_{0,0})$, at level of second moment bounds, behaves like a martingale in space-time with the additional speed scaling of $N^{2}$ in time, therefore providing the additional $N^{-2}$ factor. We clarify that Propositions 3.1 and 3.4 of \cite{GJ15} hold because the quantities we are averaging in $\mathsf{A}^{\alpha(\mathbf{X}),\mathbf{X}}(\varphi_{0,0})$ have disjoint supports in $\mathbb{I}_{N,0}$. Moreover, each shift of $\varphi$ we are averaging in $\mathsf{A}^{\alpha(\mathbf{X}),\mathbf{X}}(\varphi_{0,0})$ is uniformly bounded because $\varphi$ itself is uniformly bounded. Third, the condition that $\varphi$ and its spatial shifts vanish in expectation with respect to any grand-canonical measure on $\Omega_{\mathbb{I}^{\alpha(\mathbf{T}),\alpha(\mathbf{X}),\varphi}}$ is satisfied here because of the assumption that $\varphi$ and its spatial translates vanish in expectation with respect to any canonical measure, because canonical measures generate grand-canonical measures via convex combinations. Fourth, the additional factor of $N^{-2}$ on the RHS of the proposed estimate in this lemma comes from the additional $N^{2}$-rescaling in time, which is not accounted for in Proposition 3.4 of \cite{GJ15} but rather after Proposition 3.1 in \cite{GJ15}. Finally, we obtain the $N^{\alpha(\mathbf{T})}$ factor because Propositions 3.1 and 3.4 of \cite{GJ15} estimate the time-dependence of the time-integral of $\mathsf{A}^{\alpha(\mathbf{X}),\mathbf{X}}(\varphi_{0,0})$ as square root in time, and thus taking its average gives us an additional factor of the $(-1/2)$-power in the time-scale $N^{-\alpha(\mathbf{T})}$ for the space-time average $\mathsf{A}^{\alpha(\mathbf{T}),\mathbf{T}}\mathsf{A}^{\alpha(\mathbf{X}),\mathbf{X}}(\varphi_{0,0})$, and this is turned into the factor of $N^{\alpha(\mathbf{T})}$ after squaring. We also pick up the factor of $N^{-\alpha(\mathbf{X})}$ by similar considerations, or equivalently that Propositions 3.1 and 3.4 in \cite{GJ15} provide an estimate on $\mathsf{A}^{\alpha(\mathbf{T}),\mathbf{T}}\mathsf{A}^{\alpha(\mathbf{X}),\mathbf{X}}(\varphi_{0,0})$ that treats the spatial shifts of $\varphi$ as orthogonal to each other. To summarize this paragraph, it suffices to prove that we can replace the $\E^{\mathrm{path}}$-expectation with respect to the particle system on the original set $\mathbb{I}_{N,0}$ by an expectation with respect to the periodic system on $\mathbb{I}^{\alpha(\mathbf{T}),\alpha(\mathbf{X}),\varphi}$ while introducing an error in $\E(\sigma,\alpha(\mathbf{T}),\alpha(\mathbf{X}),\varphi)$ that is at most $N^{-D}$ times a $D$-dependent constant for any $D>0$. Let us note here the replacement of the space-time average by the maximal-type integral process does not change this argument, as Proposition 3.1 in \cite{GJ15} extends to the running supremum of the absolute value of the integral without any change; see Lemma 2.4 in \cite{KLO}.

To make the aforementioned replacement, we first observe $\mathsf{A}^{\alpha(\mathbf{T}),\mathbf{T}}\mathsf{A}^{\alpha(\mathbf{X}),\mathbf{X}}(\varphi_{0,0})$ depends only on the $\eta$-values in the support of $\mathsf{A}^{\alpha(\mathbf{X}),\mathbf{X}}(\varphi_{0,0})$ for times until $N^{-\alpha(\mathbf{T})}$. We also observe the support of $\mathsf{A}^{\alpha(\mathbf{X}),\mathbf{X}}(\varphi_{0,0})$ is given by the union, over $w\in\llbracket1,N^{\alpha(\mathbf{X})}\rrbracket$, of the shifts $-w+\mathbb{I}$, where $\mathbb{I}$ is the support of $\varphi$. Therefore, because $\varphi$ and all its space-time averages are all uniformly bounded, it suffices to find a coupling of the original particle system on $\mathbb{I}_{N,0}$ and the periodic system on $\mathbb{I}^{\alpha(\mathbf{T}),\alpha(\mathbf{X}),\varphi}$ such that the $\eta$-values on the support of $\mathsf{A}^{\alpha(\mathbf{X}),\mathbf{X}}(\varphi_{0,0})$ between the two systems differ with probability at most $N^{-D}$ times $D$-dependent constants provided any positive $D$. We emphasize that the set $\mathbb{I}^{\alpha(\mathbf{T}),\alpha(\mathbf{X}),\varphi}$ on which the periodic system lives contains the support of $\mathsf{A}^{\alpha(\mathbf{X}),\mathbf{X}}(\varphi_{0,0})$ in its interior, and that the distance from this support to the boundary of $\mathbb{I}^{\alpha(\mathbf{T}),\alpha(\mathbf{X}),\varphi}$ is of order $\mathfrak{l}^{\alpha(\mathbf{T}),\alpha(\mathbf{X}),\varphi}$ by construction, since $\mathbb{I}^{\alpha(\mathbf{T}),\alpha(\mathbf{X}),\varphi}$ is the radius $\mathfrak{l}^{\alpha(\mathbf{T}),\alpha(\mathbf{X}),\varphi}$ neighborhood of the support of $\varphi$, and also $\mathfrak{l}^{\alpha(\mathbf{T}),\alpha(\mathbf{X}),\varphi}$ is much larger than $N^{\alpha(\mathbf{X})}|\mathbb{I}|$, which is the length of the support of $\mathsf{A}^{\alpha(\mathbf{X}),\mathbf{X}}(\varphi_{0,0})$. Let us now construct the aforementioned coupling in the following bullet points.
%%%
\begin{itemize}
\item Call the original particle system Species 1 and call the periodic system Species 2. We observe that the random walks in Species 1 and Species 2 move according to symmetric and asymmetric exclusion processes. We also observe that the symmetric part of the exclusion process on any bond $\{x,y\}$ can be realized pathwise as swapping the $\eta$-values at $x$ and $y$ at a constant environment-independent speed. For any shared bonds between Species 1 and Species 2, we employ the same Poisson clock for such $\eta$-swaps. For any bond that is shared between the two species, we also use the basic coupling for the totally asymmetric exclusion process on that bond, so that particles in the two species jump together whenever possible. For any bonds that are not shared between the two species, we assign clocks arbitrarily; examples of bonds that are not shared between the two species are bonds involving a point outside the set $\mathbb{I}^{\alpha(\mathbf{T}),\alpha(\mathbf{X}),\varphi}$ where the periodic system lives, as well as the bonds in $\mathbb{I}^{\alpha(\mathbf{T}),\alpha(\mathbf{X}),\varphi}$ coming from the periodic boundary conditions of the local particle system. To conclude the coupling construction, we assume that the initial configuration of Species 1 is the initial configuration of Species 2 on $\mathbb{I}^{\alpha(\mathbf{T}),\alpha(\mathbf{X}),\varphi}$ and then no particles outside this set; what happens outside this set at the initial time is not so important, and this is choice is made just to be concrete.
\item Define a discrepancy to be a point in $\mathbb{I}_{N,0}$ where the two species have disagreeing $\eta$-values. Initially, there are no discrepancies in $\mathbb{I}^{\alpha(\mathbf{T}),\alpha(\mathbf{X}),\varphi}$ by construction. As noted prior to this list of bullet points, we are left to show that under the coupling constructed in the previous bullet point, the probability of seeing any discrepancy in the support of $\mathsf{A}^{\alpha(\mathbf{X}),\mathbf{X}}(\varphi_{0,0})$ before time $N^{-\alpha(\mathbf{T})}$ is at most $N^{-D}$ times a $D$-dependent constant for any positive $D$. Let us now observe that under the previous coupling, the dynamics of any discrepancy in $\mathbb{I}^{\alpha(\mathbf{T}),\alpha(\mathbf{X}),\varphi}$ is given by a random walk of constant $N^{2}$ symmetric speed, of a random $N^{3/2}$ asymmetric speed, and a random killing mechanism coming either from two discrepancies cancelling each other out or the discrepancy being moved outside $\mathbb{I}^{\alpha(\mathbf{T}),\alpha(\mathbf{X}),\varphi}$ due to a jump in the original particle system on $\mathbb{I}_{N,0}$; this discrepancy random walk follows periodic boundary conditions on $\mathbb{I}^{\alpha(\mathbf{T}),\alpha(\mathbf{X}),\varphi}$ as well. Moreover, outside an event of probability at most $N^{-2D}$ times some $D$-dependent constant, there are at most $N^{D}$-many possible discrepancies in $\mathbb{I}^{\alpha(\mathbf{T}),\alpha(\mathbf{X}),\varphi}$ until time $N^{-\alpha(\mathbf{T})}$, because discrepancies can only be created near the boundary of $\mathbb{I}^{\alpha(\mathbf{T}),\alpha(\mathbf{X}),\varphi}$ according to one of a bounded number of rate $N^{2}$ Poisson clocks. The conclusion of this bullet point is that the probability of seeing a discrepancy appearing in the support of $\mathsf{A}^{\alpha(\mathbf{X}),\mathbf{X}}(\varphi_{0,0})$ before time $N^{-\alpha(\mathbf{T})}$ is the probability that the aforementioned discrepancy random walk propagates displacement of order $\mathfrak{l}^{\alpha(\mathbf{T}),\alpha(\mathbf{X}),\varphi}$ before time $N^{-\alpha(\mathbf{T})}$, and then times $N^{D}$ to account for the $N^{D}$-many possible discrepancies and also with another additive error of order $N^{-2D}$. As noted at the beginning of the proof, this probability is controlled by arbitrarily large negative powers of $N$.
\end{itemize}
%%%
The above bullet points provide a coupling between the original particle system and the periodic system on $\mathbb{I}^{\alpha(\mathbf{T}),\alpha(\mathbf{X}),\varphi}$ for which the values of $\mathsf{A}^{\alpha(\mathbf{X}),\mathbf{X}}(\varphi_{S,0})$ for $S\in[0,N^{-\alpha(\mathbf{T})}]$ are equal outside an event with probability at most $N^{-D}$ times a $D$-dependent constant for any $D>0$. Thus, we may assume the path-space expectation $\E^{\mathrm{path}}$ is with respect to the law of this periodic system on $\mathbb{I}^{\alpha(\mathbf{T}),\alpha(\mathbf{X}),\varphi}$ with initial configuration sampled according to the outer expectation in $\E(\sigma,\alpha(\mathbf{T}),\alpha(\mathbf{X}),\varphi)$. The first paragraph of this proof then completes the argument for the $\E(\sigma,\alpha(\mathbf{T}),\alpha(\mathbf{X}),\varphi)$ estimate, as we noted at the end of that paragraph. As for the $\E(\sigma,\alpha(\mathbf{T}),\alpha(\mathbf{X}),\varphi,\mathscr{E})$ estimate, it suffices to employ the Cauchy-Schwarz inequality with respect to the iterated expectation and then apply the first $\E(\sigma,\alpha(\mathbf{T}),\alpha(\mathbf{X}),\varphi)$ estimate. Finally, if we replace $\varphi_{0,0}$ with $\varphi_{\t,0}$ for $0\leq\t\leq N^{-\alpha(\mathbf{T})+\delta/2}$, the same argument works. Indeed, the replacement by local periodic system still holds because we only shifted the time-interval of interest by something which is a strictly negative power of $N$ smaller than $N^{-\alpha(\mathbf{T})+\delta}$, and a speed-$N^{2}$ symmetric random walk with speed-$N^{3/2}$ asymmetry travels a maximal displacement $N^{-\alpha(\mathbf{T})+\delta}$ by time $\t+N^{-\alpha(\mathbf{T})}$ with at most exponentially small probability in $N^{\delta/2}$ if $0\leq\t\leq N^{-\alpha(\mathbf{T})+\delta/2}$. Moreover, since canonical measure initial conditions for said local periodic system are invariant, after this reduction to local periodic system, the $\E(\sigma,\alpha(\mathbf{T}),\alpha(\mathbf{X}),\varphi)$ expectation is stationary, so the additional time-shift by $\t$ in $\varphi$ is irrelevant when taking second moments. This completes the proof.
\end{proof}
%%%
%%%
\begin{proof}[Proof of \emph{Lemma \ref{lemma:d1b8}}]
We define the following localized particle system on $\mathbb{I}=\mathbb{I}_{N,0}\setminus\mathbb{I}_{N,\beta+3\e}$. Observe $\mathbb{I}$ is the union of two sets of size $N^{\beta+3\e}$, one of which contains the left boundary of $\mathbb{I}_{N,0}$ and the other of which contains the right boundary of $\mathbb{I}_{N,0}$, as $\mathbb{I}$ removes from $\mathbb{I}_{N,0}$ the middle ``bulk" set $\mathbb{I}_{N,\beta+3\e}$. On each these two ``left" and ``right" pieces, we let the particles perform random walks as in the original particle system, except near the right boundary of the ``left" piece, we impose the boundary dynamics of the original process near the right boundary of $\mathbb{I}_{N,0}$. Similarly, near the left boundary of the ``right" piece of $\mathbb{I}=\mathbb{I}_{N,0}\setminus\mathbb{I}_{N,\beta+3\e}$ we impose the boundary dynamics of the original process near the left boundary of $\mathbb{I}_{N,0}$. To clarify, on each piece of $\mathbb{I}=\mathbb{I}_{N,0}\setminus\mathbb{I}_{N,\beta+3\e}$ we impose the original dynamics but on a smaller length-scale of $N^{\beta+3\e}$ and also with respect to the same speed-factor in time of $N^{2}$.

A similar argument like in the proof of Lemma \ref{lemma:d1b7} lets us replace the $\E^{\mathrm{path}}$ expectation in $\E(\partial,\mathfrak{w}^{N})$ with an expectation with respect to the aforementioned localized particle system whose initial condition is sampled using the outer expectation of $\E^{\partial,\alpha(\mathbf{T}),\beta}$ in $\E(\partial,\mathfrak{w}^{N})$. The only required modification we need to the coupling argument therein is to couple the boundary dynamics near the boundary of $\mathbb{I}_{N,0}$ of the original particle system and of the localized particle system by realizing the symmetric part of the boundary dynamics as flipping the spin at a constant speed and then coupling these spin-flip Poisson clocks in the two systems. To be totally clear, we also couple here the asymmetric part of the boundary dynamics in the ``basic coupling" fashion, namely $\eta$-values in the two species flip together whenever possible similar to the basic coupling used for asymmetric clocks in the proof of Lemma \ref{lemma:d1b7}. 

We now make one more reduction after the localization in the previous paragraph. We will forget all of the asymmetric clocks in the localized particle system constructed in the first paragraph of this argument. Let us estimate the error in the value of $\E(\partial,\mathfrak{w}^{N})$ after this forgetting. Observe that $\E(\partial,\mathfrak{w}^{N})$, after localization in the previous paragraph, is the second moment of the time average of $\mathfrak{w}^{N}$, whose support is contained in $\mathbb{I}_{N,0}\setminus\mathbb{I}_{N,\beta}$, on the time-scale $N^{-2+\e}$. Because the clocks we are forgetting have speed of order $N^{3/2}$, of which there exist an order $m_{N}|\mathbb{I}|\lesssim_{m_{N}}N^{\beta+3\e}$ number, the probability that we see any of these clocks ring by time $N^{-2+\e}$ in the localized particle system is order $N^{-1/2+\beta+5\e}$ outside an event of exponentially small probability in $N$. Roughly, at space-time scales that are basically microscopic up to small powers $N^{\beta+\e}$, we do not expect to see any of the lower-order Poisson clocks ring. Ultimately, because $\mathfrak{w}^{N}$ is uniformly bounded, the error we pick up in $\E(\partial,\mathfrak{w}^{N})$ after forgetting all the asymmetric clocks is controlled by the RHS of the proposed upper bound.

In view of the three paragraphs above, we may pretend $\E(\partial,\mathfrak{w}^{N})$ is the second moment of the time-average of $\mathfrak{w}^{N}$ with respect to the law of the localized particle system but without its asymmetry clocks, and whose initial configuration is sampled according to the grand-canonical measure $\mu_{0,\mathbb{I}}$. Now observe this grand-canonical measure is the \emph{unique} invariant measure of the symmetrized localized particle system; we used this in the proof of Lemma \ref{lemma:d1b4}. Thus, we can apply the Kipnis-Varadhan inequality of Appendix 1.6 in \cite{KL}, in a fashion to be explained afterwards, to deduce the estimate
\begin{align}
\E(\partial,\mathfrak{w}^{N}) \ \lesssim \ N^{-2+\alpha(\mathbf{T})}N^{2\beta}. \label{eq:d1b81}
\end{align}
To interpret the RHS of \eqref{eq:d1b81}, we first note that it resembles that of the estimate in Lemma \ref{lemma:d1b7} except $\alpha(\mathbf{X})=0$ because we are not spatial averaging, and $|\mathbb{I}|^{2}$ in Lemma \ref{lemma:d1b7} becomes the square of the length of the support of $\mathfrak{w}^{N}$; this length is of order $N^{\beta}$ by assumption. Let us clarify that the Kipnis-Varadhan inequality from Appendix 1.6 of \cite{KL} actually requires us to estimate a certain negative-index Sobolev norm of $\mathfrak{w}^{N}$ defined by the generator of the symmetrized and localized particle system with respect to the grand-canonical invariant measure similar to Proposition 3.4 of \cite{GJ15}, which was important in the proof of Lemma \ref{lemma:d1b7}. This bound only requires a spectral gap bound for the generator with respect to said invariant measure; in Proposition 3.4 of \cite{GJ15} and the proof of Lemma \ref{lemma:d1b7}, it is the spectral gap of the symmetric exclusion process on a torus with respect to any canonical measure on said torus, and this spectral gap is implied by the log-Sobolev inequality from \cite{Yau}, whereas the spectral gap bound for the symmetrized and localized particle system with respect to the grand-canonical measure $\mu_{0,\mathbb{I}}$ follows by the log-Sobolev inequality derived in the proof of Lemma \ref{lemma:d1b5}. As $\alpha(\mathbf{T})=2-\e$, picking $\e$ a sufficiently large but universal multiple of $\beta$ finishes the proof given \eqref{eq:d1b81}.
\end{proof}
%%%
%%%
\begin{proof}[Proof of \emph{Lemma \ref{lemma:d1b11}}]
This argument is basically an application of the one-block scheme from the classical paper \cite{GPV} combined with a log-Sobolev inequality as in Lemma \ref{lemma:d1b5}, along with a large-deviations estimate in Lemma \ref{lemma:d1b9} that is required when using the relative entropy to reduce to canonical measure estimates that tells us the cutoff defining $\bar{\mathsf{A}}^{\beta_{X},\mathbf{X}}$ is almost negligible at canonical measures. To make this precise, let us first define $\varphi=\bar{\mathsf{A}}^{\beta_{X},\mathbf{X}}(\mathfrak{g})-\mathsf{A}^{\beta_{X},\mathbf{X}}(\mathfrak{g})$. By the triangle inequality, it suffices to prove the following estimate outside an event with probability of order $N^{-\beta_{\mathrm{u},2}}$:
\begin{align}
\|\int_{0}^{T}{\sum}_{y\in\mathbb{I}_{N,0}}\mathbf{Q}_{S,T,x,y}^{N}\cdot\mathbf{1}_{y\in\mathbb{I}_{N,\beta_{X}+2\e_{X}}}N^{1/2}\varphi_{S,y}\mathbf{Z}_{S,y}^{N}\d S\|_{\mathscr{L}^{\infty}_{T,X}} \ \lesssim \ N^{-\beta_{\mathrm{u},1}}\|\mathbf{Z}^{N}\|_{\mathscr{L}^{\infty}_{T,X}}. \label{eq:d1b111}
\end{align}
Let us note that, via union bound and a net argument as in the proof of Lemma \ref{lemma:d1b13}, we may assume $\mathbf{Q}^{N}$ satisfies the estimate in Lemma \ref{lemma:QOffD} uniformly over times in $[0,\t^{\max}]$ and spatial variables in $\mathbb{I}_{N,0}$ simultaneously on the same event. Since $\mathbf{Z}^{N}$ is controlled by its supremum in space-time, it suffices to prove, instead, the estimate below:
\begin{align}
\|\int_{0}^{T}{\sum}_{y\in\mathbb{I}_{N,0}}\mathbf{Q}_{S,T,x,y}^{N}\cdot\mathbf{1}_{y\in\mathbb{I}_{N,\beta_{X}+2\e_{X}}}N^{1/2}|\varphi_{S,y}|\d S\|_{\mathscr{L}^{\infty}_{T,X}} \ \lesssim \ N^{-\beta_{\mathrm{u},1}}. \label{eq:d1b112}
\end{align}
Let us now define $T_{N}=T-N^{-1/2-\beta}$ for $\beta$ arbitrarily small but universal and positive and write 
\begin{align}
&\int_{0}^{T}{\sum}_{y\in\mathbb{I}_{N,0}}\mathbf{Q}_{S,T,x,y}^{N}\cdot\mathbf{1}_{y\in\mathbb{I}_{N,\beta_{X}+2\e_{X}}}N^{1/2}|\varphi_{S,y}|\d S \nonumber \\
= \ &\int_{T_{N}}^{T}{\sum}_{y\in\mathbb{I}_{N,0}}\mathbf{Q}_{S,T,x,y}^{N}\cdot\mathbf{1}_{y\in\mathbb{I}_{N,\beta_{X}+2\e_{X}}}N^{1/2}|\varphi_{S,y}|\d S \label{eq:d1b113a} \\
+ \ &\int_{0}^{T_{N}}{\sum}_{y\in\mathbb{I}_{N,0}}\mathbf{Q}_{S,T,x,y}^{N}\cdot\mathbf{1}_{y\in\mathbb{I}_{N,\beta_{X}+2\e_{X}}}N^{1/2}|\varphi_{S,y}|\d S. \label{eq:d1b113b}
\end{align}
Lemma \ref{lemma:QOffD}, namely its ``deterministic version" we have assumed after \eqref{eq:d1b111}, implies that $\mathbf{Q}^{N}$ is a probability measure on $\mathbb{I}_{N,0}$ with respect to its forward spatial variable up to a factor of order $N^{\beta/2}$, for example. Thus, because $\varphi$ is uniformly bounded, we have
\begin{align}
|\eqref{eq:d1b113a}| \ \lesssim \ N^{\frac12+\frac12\beta}\int_{T_{N}}^{T}\d S \ \lesssim \ N^{-\frac12\beta}. \label{eq:d1b114}
\end{align}
As the RHS of \eqref{eq:d1b114} is independent of space-time variables, the estimate \eqref{eq:d1b114} itself extends to the same bound for the $\mathscr{L}^{\infty}_{T,X}$-norm of \eqref{eq:d1b113a}. Thus, we are left to estimate said norm of \eqref{eq:d1b113b}. To this end, by Lemma \ref{lemma:QOffD} and its deterministic version, we obtain the following estimate in which the short-time singularity of $\mathbf{Q}^{N}$ from Lemma \ref{lemma:QOffD} can be controlled uniformly in the integral because we have cut off a neighborhood of the singularity:
\begin{align}
|\eqref{eq:d1b113b}| \ &\lesssim \ N^{\frac12+\frac12\beta}\int_{0}^{T_{N}}\rho_{S,T}^{-1/2}\wt{\sum}_{y\in\mathbb{I}_{N,0}}\mathbf{1}_{y\in\mathbb{I}_{N,\beta_{X}+2\e_{X}}}|\varphi_{S,y}|\d S \\
&\lesssim \ N^{\frac34+\beta}\int_{0}^{T_{N}}\wt{\sum}_{y\in\mathbb{I}_{N,0}}\mathbf{1}_{y\in\mathbb{I}_{N,\beta_{X}+2\e_{X}}}|\varphi_{S,y}|\d S. \label{eq:d1b115}
\end{align}
We can certainly extend the previous time-integration domain to $[0,\t^{\max}]$ because the integrand is non-negative. As the resulting term is independent of space-time variables, we obtain a bound for the $\mathscr{L}^{\infty}_{T,X}$-norm of \eqref{eq:d1b113b}. Therefore, we ultimately deduce
\begin{align}
\|\eqref{eq:d1b113b}\|_{\mathscr{L}^{\infty}_{T,X}} \ \lesssim \ N^{\frac34+\beta}\int_{0}^{\t^{\max}}\wt{\sum}_{y\in\mathbb{I}_{N,0}}\mathbf{1}_{y\in\mathbb{I}_{N,\beta_{X}+2\e_{X}}}|\varphi_{S,y}|\d S. \label{eq:d1b116}
\end{align}
By the Markov inequality, it therefore suffices to show the expectation of the term on the LHS of \eqref{eq:d1b116} is controlled by $N^{-\beta_{\mathrm{u}}}$ for some universal and positive $\beta_{\mathrm{u}}$. Again, we reiterate that the intuitive reason for this is that with respect to canonical measures, we expect $\varphi$ to vanish with incredibly high probability, and in the general case we apply the local equilibrium reduction in Lemma \ref{lemma:d1b5}. Precisely, let us first note that the averaging over $\mathbb{I}_{N,0}$ on the RHS of \eqref{eq:d1b116} is actually, up to a uniformly bounded factor, averaging over $\mathbb{I}_{N,\beta_{X}+2\e_{X}}$. We now take expectations to get the following calculation that we justify afterwards, in which $\varphi$ has support $\mathbb{I}$:
\begin{align}
\E\int_{0}^{\t^{\max}}\wt{\sum}_{y\in\mathbb{I}_{N,0}}\mathbf{1}_{y\in\mathbb{I}_{N,\beta_{X}+2\e_{X}}}|\varphi_{S,y}|\d S \ &= \ \int_{0}^{\t^{\max}}\wt{\sum}_{y\in\mathbb{I}_{N,\beta_{X}+2\e_{X}}}\E|\varphi_{S,y}|\d S \label{eq:d1b117a} \\
&= \ \int_{0}^{\t^{\max}}\wt{\sum}_{y\in\mathbb{I}_{N,\beta_{X}+2\e_{X}}}\E^{\mu_{0,\mathbb{I}}}\tau_{-y}\mathfrak{f}_{S}^{\mathbb{I}}|\varphi|\d S \label{eq:d1b117b} \\
&= \ \E^{\mu_{0,\mathbb{I}}}\bar{\mathfrak{f}}_{\t^{\max}}^{\mathbb{I}}|\varphi|. \label{eq:d1b117c}
\end{align}
The first identity \eqref{eq:d1b117a} is the Fubini theorem. The second identity \eqref{eq:d1b117b} is explained as follows. The expectation of $|\varphi_{S,y}|$ on the RHS of \eqref{eq:d1b117a} is expectation of $\varphi_{0,y}$ with respect to the time-$S$ law of the particle system projected on the support of $\varphi_{0,y}$, which is shifted by $y\in\mathbb{I}_{N,\beta_{X}+2\e_{X}}$. This is the same as taking expectation of $\varphi$ itself with respect to the law of the time-$S$ particle system projected onto the support of $\varphi$ and then shifted by $-y$. The term $\mathfrak{f}_{S}$ is the Radon-Nikodym derivative with respect to $\mu_{0,\mathbb{I}_{N,0}}$ of the time-$S$ law, the term $\mathfrak{f}_{S}^{\mathbb{I}}$ is its projection on the support of $\varphi$, and $\tau_{-y}$ is the map that shifts a configuration by $-y$. Notationally, we fix $\varphi$, whose support length is order $N^{\beta_{X}}$, so that its shifts $\varphi_{0,y}$ have support disjoint from the boundary of $\mathbb{I}_{N,0}$ for $y\in\mathbb{I}_{N,\beta_{X}+2\e_{X}}$ so that the shifts $\tau_{-y}\mathfrak{f}_{S}^{\mathbb{I}}$ do not see $\eta$-values at the boundary. Lastly, $\bar{\mathfrak{f}}_{\t^{\max}}^{\mathbb{I}}$ is the space-time average of $\tau_{-y}\mathfrak{f}_{S}^{\mathbb{I}}$ over $S\in[0,\t^{\max}]$ and $y\in\mathbb{I}_{N,\beta_{X}+2\e_{X}}$. In particular, the third identity \eqref{eq:d1b117c} follows via Fubini as in the classical one-block scheme of \cite{GPV}; note $\varphi$ is independent of the space-time integration variables. To estimate \eqref{eq:d1b117c}, we will apply Lemma \ref{lemma:d1b5} with $T=\t^{\max}$, with $\varphi$ chosen at the beginning of this proof for $\beta=\beta_{X}$, and with $\kappa=N^{\beta_{X}/2-\e_{X}}$. This gives
\begin{align}
\E^{\mu_{0,\mathbb{I}}}\bar{\mathfrak{f}}_{\t^{\max}}^{\mathbb{I}}|\varphi| \ \lesssim \ N^{-\frac32-\frac12\beta_{X}+\e_{X}}|\mathbb{I}|^{3} + \kappa^{-1}\sup_{\sigma\in\R}\log\E^{\mu_{\sigma,\mathbb{I}}^{\mathrm{can}}}\exp(\kappa|\varphi|). \label{eq:d1b118}
\end{align}
Recalling $|\mathbb{I}|\lesssim N^{\beta_{X}}$, since it is the support of $\varphi$ defined at the beginning of this proof, the first term on the RHS of \eqref{eq:d1b118} is order $N^{-7/8+3\e_{X}}$. Multiplying this by $N^{3/4+\beta}$ when inserting this into the RHS of \eqref{eq:d1b116} shows that its contribution is at most $N^{-\beta_{\mathrm{u}}}$ for $\beta_{\mathrm{u}}$ positive and universal. Thus, we are left with estimating the second term on the RHS of \eqref{eq:d1b118}. To this end, let us first observe the following inequality in which $\mathscr{E}$ denotes the event that $|\mathsf{A}^{\beta_{X},\mathbf{X}}(\mathfrak{g})|\geq N^{-\beta_{X}/2+\e_{X}/999}$; the following estimate holds because outside $\mathscr{E}$, we have $\varphi=0$ and therefore its exponential is equal to 1, while on the event $\mathscr{E}$, we have $\varphi=\mathsf{A}^{\beta_{X},\mathbf{X}}(\mathfrak{g})$:
\begin{align}
\E^{\mu_{\sigma,\mathbb{I}}^{\mathrm{can}}}\exp(\kappa|\varphi|) \ \leq \ 1 \ + \ \E^{\mu_{\sigma,\mathbb{I}}^{\mathrm{can}}}\mathbf{1}(\mathscr{E})\exp(\kappa|\mathsf{A}^{\beta_{X},\mathbf{X}}(\mathfrak{g})|). \label{eq:d1b119}
\end{align}
We may directly employ Lemma \ref{lemma:d1b9} with $\varphi=\mathfrak{g}$ and $\alpha(\mathbf{X})=\beta_{X}$ and $\mathbb{I}'=\mathbb{I}$ and $\delta\asymp\e_{X}$ in order to deduce the second term is at most order $N^{-100}$, for example. Thus, taking logarithms and using $\log(1+x)\leq x$ for all $x$, we deduce via \eqref{eq:d1b119} that the second term on the RHS of \eqref{eq:d1b118} is controlled by $N^{-100}$ uniformly in $\sigma\in\R$, which is certainly controlled by $N^{-\beta_{\mathrm{u}}}$ after we multiply by $N^{3/4+\beta}$ when we plug this bound back into \eqref{eq:d1b116}. This completes the proof.
\end{proof}
%%%
%%%
\begin{proof}[Proof of \emph{Lemma \ref{lemma:d1b13}}]
The proof consists of two steps that are morally similar and only different in a minor fashion. The first step consists of replacing $\varphi$ with $\mathsf{A}^{\alpha_{1}(\mathbf{T}),\mathbf{T}}$, and the second step consists of replacing $\mathsf{A}^{\alpha_{\mathfrak{j}}(\mathbf{T}),\mathbf{T}}$ by $\mathsf{A}^{\alpha_{\mathfrak{j}+1}(\mathbf{T}),\mathbf{T}}$ for $\mathfrak{j}\geq1$ until we get to $\mathfrak{j}+1=M$. Let us explain the first step and then explain how the second step follows from similar considerations as well as what technical ingredients/parts are different. We first observe the difference $\mathsf{A}^{\alpha_{1}(\mathbf{T}),\mathbf{T}}(\varphi)-\varphi$ is equal to an average of time-gradients of $\varphi$ on time-scales between 0 and $N^{-\alpha_{1}(\mathbf{T})}$. Thus, the first step amounts to analyzing the following term:
\begin{align}
\Phi \ = \ \sup_{0\leq\tau\leq N^{-\alpha_{1}(\mathbf{T})}}\|\int_{0}^{T}{\sum}_{y\in\mathbb{I}_{N,0}}\mathbf{Q}_{S,T,x,y}^{N}\cdot\mathfrak{D}_{\tau}\varphi_{S,y}\mathbf{Z}_{S,y}^{N}\d S\|_{\mathscr{L}^{\infty}_{T,X}}.
\end{align}
We will employ the following identity to move the time-gradient of $\varphi$ onto the other two factors in the space-time integral in $\Phi$ at the cost of two boundary terms; let us emphasize that the following identity can be checked directly, and that time-gradients act on $\mathbf{Q}^{N}$ the integration/backwards time-variable $S$, and in particular $\mathfrak{D}_{\tau}$ always acts on $S$ below, thinking of $T$ as fixed for now:
\begin{align}
\mathbf{Q}_{S,T,x,y}^{N}\cdot\mathfrak{D}_{\tau}\varphi_{S,y}\mathbf{Z}_{S,y}^{N} \ &= \ \mathfrak{D}_{\tau}\left(\mathbf{Q}_{S-\tau,T,x,y}^{N}\cdot\varphi_{S,y}\mathbf{Z}_{S-\tau,y}^{N}\right) + \varphi_{S,y}\mathfrak{D}_{-\tau}\left(\mathbf{Q}_{S,T,x,y}^{N}\mathbf{Z}_{S,y}^{N}\right) \\
&= \ \mathfrak{D}_{\tau}\left(\mathbf{Q}_{S-\tau,T,x,y}^{N}\cdot\varphi_{S,y}\mathbf{Z}_{S-\tau,y}^{N}\right) + \varphi_{S,y}\mathscr{D}_{-\tau}\mathbf{Q}_{S,T,x,y}^{N} \cdot \mathbf{Z}_{S-\tau,y}^{N} + \varphi_{S,y}\mathbf{Q}_{S,T,x,y}^{N}\mathfrak{D}_{-\tau}\mathbf{Z}_{S,y}^{N} \label{eq:d1b131} \\
&= \ \Psi_{1,S,T,x,y}+\Psi_{2,S,T,x,y}+\Psi_{3,S,T,x,y}.
\end{align}
We clarify the $\mathscr{D}$-operator in \eqref{eq:d1b131} is to highlight the time-gradient being taken in the backwards time-variable of the $\mathbf{Q}^{N}$ kernel. Let us now sum over $\mathbb{I}_{N,0}$ and integrate on $S\in[0,T]$ each term in \eqref{eq:d1b131}. Starting with the first term in \eqref{eq:d1b131}, let us observe that integrating a scale-$\tau$ time-gradient in time gives the ``boundary" terms in integration-by-parts, namely integrals of length $\tau$ near the boundary of $[0,T]$. Precisely, bounding these length-$\tau$ integrals by $\tau$ times the supremum of what we are integrating, we first have the following estimate for any positive $\delta'$ with the required probability consequence of summing the $\mathbf{Q}^{N}$ estimate in Lemma \ref{lemma:QOffD} over $y\in\mathbb{I}_{N,0}$. Although Lemma \ref{lemma:QOffD} is a pointwise moment estimate, we may actually assume it holds uniformly over the allowed space-time variables therein on the same high probability event if we increase $N^{-1+\e}$ to $N^{-1+2\e}$ on the RHS, because if we give up this factor of $N^{\e}$, then a union bound and Chebyshev inequality, by using Lemma \ref{lemma:QOffD} for $p\gtrsim_{\e}1$ sufficiently large, allows us to assume that the estimate holds uniformly over a very fine discretization of any compact space-time of mesh-length $N^{-200}$, after which we may bootstrap to the entire compact space-time by continuity and the observation that with the required high probability, we see at most $N^{\e}$-many clocks ring between points in the aforementioned discretization. Ultimately, we deduce
\begin{align}
|\int_{0}^{T}{\sum}_{y\in\mathbb{I}_{N,0}}\Psi_{1,S,T,x,y}\d S| \ \lesssim \ \tau \sup_{S\leq T}\sup_{x\in\mathbb{I}_{N,0}}{\sum}_{y\in\mathbb{I}_{N,0}}|\mathbf{Q}_{S,T,x,y}^{N}\cdot\varphi_{S+\tau,y}\mathbf{Z}^{N}_{S,y}| \ \lesssim \ N^{\delta'}\tau\|\varphi\|_{\mathscr{L}^{\infty}_{T,X}}\|\mathbf{Z}^{N}\|_{\mathscr{L}^{\infty}_{T,X}}. \label{eq:d1b132}
\end{align}
Because $\tau\leq N^{-1}$ and $\|\mathbf{Z}^{N}\|_{\mathscr{L}^{\infty}_{T,X}}\lesssim1+\|\mathbf{Z}^{N}\|_{\mathscr{L}^{\infty}_{T,X}}^{1+\e}$, we deduce from \eqref{eq:d1b132}, which is uniform in space-time, that
\begin{align}
\|\int_{0}^{T}{\sum}_{y\in\mathbb{I}_{N,0}}\Psi_{1,S,T,x,y}\d S\|_{\mathscr{L}^{\infty}_{T,X}} \ \lesssim \ N^{-1/2+\delta'+\e_{\star}}\|\varphi\|_{\mathscr{L}^{\infty}_{T,X}}\left(1+\|\mathbf{Z}^{N}\|_{\mathscr{L}^{\infty}_{T,X}}^{1+\e}\right). \label{eq:d1b133}
\end{align}
We move to the second term in \eqref{eq:d1b131}. Integrating this space-time, afterwards applying the Cauchy-Schwarz inequality and trivially bounding $\mathbf{Z}^{N}$ by its space-time supremum gives the estimate
\begin{align}
|\int_{0}^{T}{\sum}_{y\in\mathbb{I}_{N,0}}\Psi_{2,S,T,x,y}\d S|^{2} \ \lesssim \ \int_{0}^{T}{\sum}_{y\in\mathbb{I}_{N,0}}|\mathscr{D}_{-\tau}\mathbf{Q}_{S,T,x,y}^{N}|^{2}\d S \cdot \int_{0}^{T}{\sum}_{y\in\mathbb{I}_{N,0}}|\varphi_{S,y}|^{2}\d S \cdot \|\mathbf{Z}^{N}\|_{\mathscr{L}^{\infty}_{T,X}}^{2}. \label{eq:d1b134}
\end{align}
We may now similarly assume the result of Lemma \ref{lemma:QTReg} holds not just at the level of moments but uniformly in space-time with the required high probability, if we increase $N^{-1+\e_{1}}$ on the RHS of the estimate in Lemma \ref{lemma:QTReg} to $N^{-1+2\e_{1}}$. With this, we may estimate the RHS of \eqref{eq:d1b134} if we again use the bound $\|\mathbf{Z}^{N}\|\lesssim1+\|\mathbf{Z}^{N}\|^{1+\e}$:
\begin{align}
\int_{0}^{T}{\sum}_{y\in\mathbb{I}_{N,0}}|\mathscr{D}_{-\tau}\mathbf{Q}_{S,T,x,y}^{N}|^{2}\d S \cdot \|\mathbf{Z}^{N}\|_{\mathscr{L}^{\infty}_{T,X}}^{2} \ &\lesssim \ N^{-1+\delta'}\tau^{1/2-\delta'}\int_{0}^{T}\rho_{S,T}^{-1+\delta'}\d S \left(1+\|\mathbf{Z}^{N}\|_{\mathscr{L}^{\infty}_{T,X}}^{1+\e}\right)^{2} \\
&\lesssim \ N^{-2+3\delta'+\e_{\star}}\left(1+\|\mathbf{Z}^{N}\|_{\mathscr{L}^{\infty}_{T,X}}^{1+\e}\right)^{2}. \label{eq:d1b135}
\end{align}
We clarify the last bound \eqref{eq:d1b135} follows from recalling $\tau\leq N^{-2+\e_{\star}}$. We also clarify that Lemma \ref{lemma:QTReg} requires $\tau\gtrsim N^{-2}$, which is not necessarily the case because we consider any $\tau\leq N^{-2+\e_{\star}}$. However, we observe that time-gradients on time-scales less than $N^{-2}$ can always be written in terms of time-gradients on time-scales of order $N^{-2+\e_{\star}}$, thus the final conclusion \eqref{eq:d1b135} still holds. From \eqref{eq:d1b134} and \eqref{eq:d1b135}, we deduce the following estimate which is uniform over space-time, and in which we move one power of $N^{-1}$ in \eqref{eq:d1b135} to the sum of $|\varphi|^{2}$ to turn said sum over $\mathbb{I}_{N,0}$ into an average:
\begin{align}
\|\int_{0}^{T}{\sum}_{y\in\mathbb{I}_{N,0}}\Psi_{2,S,T,x,y}\d S\|_{\mathscr{L}^{\infty}_{T,X}}^{2} \ &\lesssim \ N^{-1+3\delta'+\e_{\star}}\left(1+\|\mathbf{Z}^{N}\|_{\mathscr{L}^{\infty}_{T,X}}^{1+\e}\right)^{2}\int_{0}^{\t^{\max}}\wt{\sum}_{y\in\mathbb{I}_{N,0}}|\varphi_{S,y}|^{2}\d S \\
&\lesssim \ N^{-1+3\delta'+\e_{\star}}\|\varphi\|_{\mathscr{L}^{\infty}_{T,X}}^{2}\left(1+\|\mathbf{Z}^{N}\|_{\mathscr{L}^{\infty}_{T,X}}^{1+\e}\right)^{2}. \label{eq:d1b136}
\end{align}
We are left with estimating the third and final term in  \eqref{eq:d1b131}. To this end, let us observe that we may trade $\mathfrak{D}_{\tau}\mathbf{Z}^{N}$ in for $1+\|\mathbf{Z}^{N}\|^{1+\e}$ times a factor of $N^{\delta'}\tau^{1/4}$; this follows from Lemma \ref{lemma:AsympNCCH}, which also requires $\tau\gtrsim N^{-2}$ but this can be resolved as done in order to deduce \eqref{eq:d1b135}. We therefore have the following inequality with the required high probability:
\begin{align}
|\int_{0}^{T}{\sum}_{y\in\mathbb{I}_{N,0}}\Psi_{3,S,T,x,y}\d S| \ \lesssim \ N^{\delta'}\tau^{1/4}\left(1+\|\mathbf{Z}^{N}\|_{\mathscr{L}^{\infty}_{T,X}}^{1+\e}\right)\int_{0}^{T}\rho_{S,T}^{-1/4}\rho_{S,T}^{1/4}{\sum}_{y\in\mathbb{I}_{N,0}}|\mathbf{Q}_{S,T,x,y}^{N}|\cdot|\varphi_{S,y}|\d S. \label{eq:d1b137}
\end{align}
Let us now estimate the integral on the RHS of \eqref{eq:d1b137}. We apply the Cauchy-Schwarz inequality with respect to space-time:
\begin{align}
|\int_{0}^{T}\rho_{S,T}^{-1/4}\rho_{S,T}^{1/4}{\sum}_{y\in\mathbb{I}_{N,0}}|\mathbf{Q}_{S,T,x,y}^{N}|\cdot|\varphi_{S,y}|\d S|^{2} \ \lesssim \ \int_{0}^{T}\rho_{S,T}^{-1/2}{\sum}_{y\in\mathbb{I}_{N,0}}|\mathbf{Q}_{S,T,x,y}^{N}|\d S \cdot \int_{0}^{T}\rho_{S,T}^{1/2}{\sum}_{y\in\mathbb{I}_{N,0}}|\mathbf{Q}_{S,T,x,y}^{N}|\cdot|\varphi_{S,y}|^{2}\d S. \nonumber
\end{align}
Again taking the estimates of Lemma \ref{lemma:QOffD} simultaneously in space-time with the required high probability, an elementary summation over $y\in\mathbb{I}_{N,0}$ of the RHS of this estimate in Lemma \ref{lemma:QOffD} implies the RHS of the previous bound is controlled by
\begin{align}
N^{\delta'}\int_{0}^{T}\rho_{S,T}^{-1/2}\d S \cdot \int_{0}^{T}\wt{\sum}_{y\in\mathbb{I}_{N,0}}|\varphi_{S,y}|^{2}\d S \ \lesssim \ N^{\delta'}\int_{0}^{\t^{\max}}\wt{\sum}_{y\in\mathbb{I}_{N,0}}|\varphi_{S,y}|^{2}\d S \ \lesssim \ N^{\delta'}\|\varphi\|_{\mathscr{L}^{\infty}_{T,X}}^{2}. \label{eq:d1b138}
\end{align}
Combining the estimates from \eqref{eq:d1b137} to \eqref{eq:d1b138} yields the following for the LHS of \eqref{eq:d1b137} uniformly in space-time:
\begin{align}
\|\int_{0}^{T}{\sum}_{y\in\mathbb{I}_{N,0}}\Psi_{3,S,T,x,y}\d S\|_{\mathscr{L}^{\infty}_{T,X}} \ &\lesssim \ N^{2\delta'}\tau^{1/4}\|\varphi\|_{\mathscr{L}^{\infty}_{T,X}}\left(1+\|\mathbf{Z}^{N}\|_{\mathscr{L}^{\infty}_{T,X}}^{1+\e}\right) \\
&\lesssim \ N^{-1/2+2\delta'+\e_{\star}}\|\varphi\|_{\mathscr{L}^{\infty}_{T,X}}\left(1+\|\mathbf{Z}^{N}\|_{\mathscr{L}^{\infty}_{T,X}}^{1+\e}\right). \label{eq:d1b139}
\end{align}
Combining \eqref{eq:d1b131}, \eqref{eq:d1b133}, \eqref{eq:d1b136}, and \eqref{eq:d1b139} shows that the error in replacing $\varphi$ by $\mathsf{A}^{\alpha_{1}(\mathbf{T}),\mathbf{T}}(\varphi)$ in the space-time integral on the LHS of the proposed estimate in Lemma \ref{lemma:d1b13} is controlled by the last term on the RHS of said estimate. This finishes the first step that we mentioned at the beginning of this proof. For the second step, which replaces $\mathsf{A}^{\alpha_{\mathfrak{j}}(\mathbf{T}),\mathbf{T}}$ with $\mathsf{A}^{\alpha_{\mathfrak{j}+1}(\mathbf{T}),\mathbf{T}}$ for $\mathfrak{j}\geq1$ until we get to $\mathfrak{j}+1=M$, it suffices to use the same argument but for $\varphi$ replaced by $\mathsf{A}^{\alpha_{\mathfrak{j}}(\mathbf{T}),\mathbf{T}}$. Indeed, the time-average $\mathsf{A}^{\alpha_{\mathfrak{j}+1}(\mathbf{T}),\mathbf{T}}$ is an average-in-time of $\mathsf{A}^{\alpha_{\mathfrak{j}}(\mathbf{T}),\mathbf{T}}$. However, when we follow this argument, in the $\Psi_{2}$ and $\Psi_{3}$ estimates of \eqref{eq:d1b136} and \eqref{eq:d1b139}, we leave alone the space-time average of $\mathsf{A}^{\alpha_{\mathfrak{j}}(\mathbf{T}),\mathbf{T}}$ and do not estimate it by the supremum of $\mathsf{A}^{\alpha_{\mathfrak{j}}(\mathbf{T}),\mathbf{T}}$. The coefficients $\kappa_{\mathfrak{j}}$ then pick out all of the norms and estimates for $\mathbf{Q}^{N}$ and the time-regularity of $\mathbf{Z}^{N}$ that appear in the above argument while leaving alone $\|\mathfrak{D}_{\tau}\mathbf{Z}^{N}\|$. We finish by commenting the estimates for $\kappa_{\mathfrak{j}}$ follow by Lemma \ref{lemma:QOffD}, Lemma \ref{lemma:QTReg}, Lemma \ref{lemma:QTReg2}, and Lemma \ref{lemma:AsympNCCH}.
\end{proof}
%%%
%%%
\begin{proof}[Proof of \emph{Lemma \ref{lemma:d1b15}}]
Let us follow the first step in the proof of Lemma \ref{lemma:d1b13} with $\varphi$ equal to $\mathbf{1}_{\not\in\mathbb{I}_{N,\beta_{\partial}}}N\varphi$ here. Upon adopting the notation of said first step, we estimate each term in \eqref{eq:d1b131} after integrating in space-time. We are left to show, if $\tau\leq N^{-2+\e_{\star}}$, that
\begin{subequations}
\begin{align}
N\int_{0}^{T}{\sum}_{y\in\mathbb{I}_{N,0}}\mathbf{1}_{y\not\in\mathbb{I}_{N,\beta_{\partial}}}\mathfrak{D}_{\tau}\left(\mathbf{Q}_{S-\tau,T,x,y}^{N}\cdot\varphi_{S,y}\mathbf{Z}_{S-\tau,y}^{N}\right)\d S \ &\lesssim \ N^{-\beta_{\mathrm{u}}}\left(1+\|\mathbf{Z}^{N}\|_{\mathscr{L}^{\infty}_{T,X}}^{1+\e}\right) \label{eq:d1b141a} \\
N\int_{0}^{T}{\sum}_{y\in\mathbb{I}_{N,0}}\mathbf{1}_{y\not\in\mathbb{I}_{N,\beta_{\partial}}}\mathscr{D}_{-\tau}\mathbf{Q}_{S,T,x,y}^{N}\cdot\varphi_{S,y}\mathbf{Z}_{S-\tau,y}^{N}\d S \ &\lesssim \ N^{-\beta_{\mathrm{u}}}\left(1+\|\mathbf{Z}^{N}\|_{\mathscr{L}^{\infty}_{T,X}}^{1+\e}\right) \label{eq:d1b141b} \\
N\int_{0}^{T}{\sum}_{y\in\mathbb{I}_{N,0}}\mathbf{1}_{y\not\in\mathbb{I}_{N,\beta_{\partial}}}\mathbf{Q}_{S,T,x,y}^{N}\cdot\varphi_{S,y}\mathfrak{D}_{-\tau}\mathbf{Z}_{S,y}^{N}\d S \ &\lesssim \ N^{-\beta_{\mathrm{u}}}\left(1+\|\mathbf{Z}^{N}\|_{\mathscr{L}^{\infty}_{T,X}}^{1+\e}\right). \label{eq:d1b141c}
\end{align}
\end{subequations}
To prove \eqref{eq:d1b141a}, we again observe that integrating a scale-$\tau$ time-gradient gives two integrals of length $\tau$. In particular, following the proof of \eqref{eq:d1b133}, the LHS of \eqref{eq:d1b141a} is controlled by the following upon forgetting the indicator function of $\mathbb{I}_{N,0}\setminus\mathbb{I}_{N,\beta_{\partial}}$:
\begin{align}
N\tau \sup_{S\leq T}\sup_{x\in\mathbb{I}_{N,0}}{\sum}_{y\in\mathbb{I}_{N,0}}|\mathbf{Q}_{S,T,x,y}^{N}\cdot\varphi_{S+\tau,y}\mathbf{Z}^{N}_{S,y}| \ \lesssim \ N^{1+\delta'}\tau\|\mathbf{Z}^{N}\|_{\mathscr{L}^{\infty}_{T,X}},
\end{align}
which certainly implies \eqref{eq:d1b141a} because $\tau\leq N^{-2+\e_{\star}}$. We now move to \eqref{eq:d1b141b}. First, we observe that the complement of $\mathbb{I}_{N,\beta_{\partial}}$ has size of order $N^{\beta_{\partial}}$, because $\mathbb{I}_{N,\beta_{\partial}}$ is defined to be the set of points that are more than $N^{\beta_{\partial}}$ from the boundary of $\mathbb{I}_{N,0}$. Taking Lemma \ref{lemma:QTReg2} but uniformly in space-time on the same high probability event instead of in a pointwise moment sense, which we may do by the same discretization/union bound trick in the proof of Lemma \ref{lemma:d1b13}, we deduce the LHS of \eqref{eq:d1b141b} is controlled by
\begin{align}
N^{1+\beta_{\partial}}\|\mathbf{Z}^{N}\|_{\mathscr{L}^{\infty}_{T,X}}\int_{0}^{T}\left(N^{-5/4+\e_{\star}+\delta'} \rho_{S,T}^{-1/2+\delta'} \ + \ N^{-9/8+\delta'}\rho_{S,T}^{-1/2+\delta'} \ + \ N^{-5/4+\e_{1}}\rho_{S,T}^{-1/2+\delta'}\right)\d S,
\end{align}
which is then controlled by the RHS of \eqref{eq:d1b141b} after integrating; this establishes \eqref{eq:d1b141b}. We are now left to establish \eqref{eq:d1b141c}. For this, we follow the proof of \eqref{eq:d1b139}. In particular, by Lemma \ref{lemma:AsympNCCH}, we are allowed to trade in $\mathfrak{D}_{-\tau}\mathbf{Z}^{N}$ for $N^{\delta'}\tau^{1/4}(1+\|\mathbf{Z}^{N}\|^{1+\e})$ with the required high probability. Via the off-diagonal estimate for $\mathbf{Q}^{N}$ in Lemma \ref{lemma:QOffD}, the LHS of \eqref{eq:d1b141c} is controlled by
\begin{align}
&N^{1+\delta'}\tau^{1/4}\left(1+\|\mathbf{Z}^{N}\|_{\mathscr{L}^{\infty}_{T,X}}^{1+\e}\right)\int_{0}^{T}{\sum}_{y\in\mathbb{I}_{N,0}}\mathbf{1}_{y\not\in\mathbb{I}_{N,\beta_{\partial}}}|\mathbf{Q}_{S,T,x,y}^{N}|\d S \\
\lesssim \ &N^{1+2\delta'}\tau^{1/4}|\mathbb{I}_{N,0}\setminus\mathbb{I}_{N,\beta_{\partial}}|\left(1+\|\mathbf{Z}^{N}\|_{\mathscr{L}^{\infty}_{T,X}}^{1+\e}\right)\int_{0}^{T}N^{-1}\rho_{S,T}^{-1/2}\d S,
\end{align}
which is certainly controlled by the RHS of \eqref{eq:d1b141c} after integrating and realizing $|\mathbb{I}_{N,0}\setminus\mathbb{I}_{N,\beta_{\partial}}|\lesssim N^{\beta_{\partial}}$, so we are done.
\end{proof}
%%%
%%%
\begin{proof}[Proof of \emph{Lemma \ref{lemma:d1b17}}]
Let us start by proving \eqref{eq:d1b17II}, because this argument is fairly short and elementary. We start by replacing every term inside the integral defining $\Phi^{\mathfrak{k},\s,\mathfrak{l}_{1},\mathfrak{l}_{2}}$ by its absolute value, and forgetting all $\mathbf{Z}^{N}$ factors since $\mathbf{Z}^{N}$ is certainly bounded above by its space-time maximum. As with the proof of Lemma \ref{lemma:d1b13}, let us assume that $\mathbf{Q}^{N}$ satisfies the pointwise bound in Lemma \ref{lemma:QOffD} with probability 1, as the complement of such event holds with probability of order $N^{-200}$. We write, for $T_{N}=T-N^{\alpha_{\mathfrak{k}}-\beta-\e}$, the following bound by linearity of integration with respect to the integration domain and the triangle inequality:
\begin{align}
&\int_{0}^{T}{\sum}_{y\in\mathbb{I}_{N,0}}\mathbf{Q}_{S,T,x,y}^{N}\cdot|\bar{\mathsf{A}}^{\alpha'_{\mathfrak{k}}(\mathbf{T}),\mathbf{T},\alpha_{\mathfrak{k}}}(\varphi_{S+\s+\mathfrak{l}_{1}\t^{\mathfrak{k}},y})\mathbf{1}(\mathscr{E}^{\alpha'_{\mathfrak{k}}(\mathbf{T}),\mathbf{T},\alpha_{\mathfrak{k}+1},>}(\varphi_{S+\s+\mathfrak{l}_{2}\t^{\mathfrak{k}},y}))|\d S \nonumber\\
\lesssim \ &\int_{0}^{T_{N}}{\sum}_{y\in\mathbb{I}_{N,0}}\mathbf{Q}_{S,T,x,y}^{N}\cdot|\bar{\mathsf{A}}^{\alpha'_{\mathfrak{k}}(\mathbf{T}),\mathbf{T},\alpha_{\mathfrak{k}}}(\varphi_{S+\s+\mathfrak{l}_{1}\t^{\mathfrak{k}},y})\mathbf{1}(\mathscr{E}^{\alpha'_{\mathfrak{k}}(\mathbf{T}),\mathbf{T},\alpha_{\mathfrak{k}+1},>}(\varphi_{S+\s+\mathfrak{l}_{2}\t^{\mathfrak{k}},y}))|\d S \label{eq:d1b17II1a} \\
+ \ &\int_{T_{N}}^{T}{\sum}_{y\in\mathbb{I}_{N,0}}\mathbf{Q}_{S,T,x,y}^{N}\cdot|\bar{\mathsf{A}}^{\alpha'_{\mathfrak{k}}(\mathbf{T}),\mathbf{T},\alpha_{\mathfrak{k}}}(\varphi_{S+\s+\mathfrak{l}_{1}\t^{\mathfrak{k}},y})\mathbf{1}(\mathscr{E}^{\alpha'_{\mathfrak{k}}(\mathbf{T}),\mathbf{T},\alpha_{\mathfrak{k}+1},>}(\varphi_{S+\s+\mathfrak{l}_{2}\t^{\mathfrak{k}},y}))|\d S. \label{eq:d1b17II1b}
\end{align}
For the term \eqref{eq:d1b17II1a} term, we employ the pointwise estimate of Lemma \ref{lemma:QOffD} and $\rho_{S,T}\geq N^{\alpha_{\mathfrak{k}}-\beta}$ for $S\leq T_{N}$:
\begin{align}
\eqref{eq:d1b17II1a} \ &\lesssim \ N^{\e}\int_{0}^{T_{N}}\rho_{S,T}^{-1/2}\wt{\sum}_{y\in\mathbb{I}_{N,0}}|\bar{\mathsf{A}}^{\alpha'_{\mathfrak{k}}(\mathbf{T}),\mathbf{T},\alpha_{\mathfrak{k}}}(\varphi_{S+\s+\mathfrak{l}_{1}\t^{\mathfrak{k}},y})\mathbf{1}(\mathscr{E}^{\alpha'_{\mathfrak{k}}(\mathbf{T}),\mathbf{T},\alpha_{\mathfrak{k}+1},>}(\varphi_{S+\s+\mathfrak{l}_{2}\t^{\mathfrak{k}},y}))|\d S \\
&\lesssim \ N^{-\frac12\alpha_{\mathfrak{k}}+\frac12\beta+2\e}\int_{0}^{\t^{\max}}\wt{\sum}_{y\in\mathbb{I}_{N,0}}|\bar{\mathsf{A}}^{\alpha'_{\mathfrak{k}}(\mathbf{T}),\mathbf{T},\alpha_{\mathfrak{k}}}(\varphi_{S+\s+\mathfrak{l}_{1}\t^{\mathfrak{k}},y})\mathbf{1}(\mathscr{E}^{\alpha'_{\mathfrak{k}}(\mathbf{T}),\mathbf{T},\alpha_{\mathfrak{k}+1},>}(\varphi_{S+\s+\mathfrak{l}_{2}\t^{\mathfrak{k}},y}))|\d S, \label{eq:d1b17II2}
\end{align}
where the replacement $T_{N}\to\t^{\max}$ follows from noting the integrand is non-negative. To complete the proof of \eqref{eq:d1b17II}, it therefore suffices to estimate \eqref{eq:d1b17II1b}. To this end, we note Lemma \ref{lemma:QOffD} is basically a probability measure in its forward spatial variable up to a factor of $N^{\e}$ for any $\e$. We also note the term in absolute values in the integral in \eqref{eq:d1b17II1b} is at most $N^{-\alpha_{\mathfrak{k}}}$ by construction, because it cuts off the $\mathsf{A}^{\alpha_{\mathfrak{k}}'(\mathbf{T})}$-time-average by $N^{-\alpha_{\mathfrak{k}}}$. Therefore, we have the estimate
\begin{align}
\eqref{eq:d1b17II1b} \ \lesssim \ N^{\e}(T-T_{N})N^{-\alpha_{\mathfrak{k}}} \ \lesssim \ N^{-\beta}. \label{eq:d1b17II3}
\end{align}
Given \eqref{eq:d1b17II1a}, \eqref{eq:d1b17II1b}, \eqref{eq:d1b17II2}, and \eqref{eq:d1b17II3}, the estimate \eqref{eq:d1b17II} follows. To prove \eqref{eq:d1b17I}, we start with the small-scale decomposition
\begin{align}
\mathsf{A}^{\alpha_{M}(\mathbf{T}),\mathbf{T}}(\varphi_{S,y}) \ = \ \wt{\sum}_{0\leq\mathfrak{j}<N^{M'\delta'}}\mathsf{A}^{\alpha'_{0}(\mathbf{T}),\mathbf{T}}(\varphi_{S+\mathfrak{j}\t^{0},y}) \ = \ \wt{\sum}_{0\leq\mathfrak{j}<N^{M'\delta'}}\bar{\mathsf{A}}^{\alpha'_{0}(\mathbf{T}),\mathbf{T},\alpha_{0}}(\varphi_{S+\mathfrak{j}\t^{0},y}). \label{eq:d1b17I1}
\end{align}
The first identity follows by observing that because $\t^{M'}$ is a positive integer multiple of $\t^{0}$, we may write an average on scale $\t^{M'}$ as an average of appropriately shifted time-averages each on scale $\t^{0}$. The second identity in \eqref{eq:d1b17I1} follows by noting $|\varphi|\leq N^{-\alpha_{0}}$ by assumption, so the same is true for its time-averages. We will now upgrade the cutoff for each of the summands on the RHS of \eqref{eq:d1b17I1}. In particular, let us now consider the following decomposition that follows straightforwardly:
\begin{align}
\bar{\mathsf{A}}^{\alpha'_{0}(\mathbf{T}),\mathbf{T},\alpha_{0}}(\varphi_{S+\mathfrak{j}\t^{0},y}) \ = \ \bar{\mathsf{A}}^{\alpha'_{0}(\mathbf{T}),\mathbf{T},\alpha_{1}}(\varphi_{S+\mathfrak{j}\t^{0},y}) \ + \ \Psi_{\mathfrak{j}}, \label{eq:d1b17I2}
\end{align}
where $\Psi_{\mathfrak{j}}$ is the scale $\t^{0}=N^{-\alpha'_{0}(\mathbf{T})}$ time-average of $\varphi_{S+\mathfrak{j}\t^{0}}$ with upper bound cutoff of $N^{-\alpha_{0}}$ coming from the same cutoff on the LHS of \eqref{eq:d1b17I2} and a lower bound cutoff of $N^{-\alpha_{1}}$ from failure of this cutoff that is assumed in the first term on the RHS of \eqref{eq:d1b17I2}:
\begin{align}
\Psi_{\mathfrak{j}} \ = \ \bar{\mathsf{A}}^{\alpha'_{0}(\mathbf{T}),\mathbf{T},\alpha_{0}}(\varphi_{S+\mathfrak{j}\t^{0},y})\mathbf{1}(\mathscr{E}^{\alpha'_{0}(\mathbf{T}),\mathbf{T},\alpha_{1},>}(\varphi_{S+\mathfrak{j}\t^{0},y})). \label{eq:d1b17I3}
\end{align}
Observe now that after we multiply $\Psi_{\mathfrak{j}}$ by $\mathbf{Q}^{N}\mathbf{Z}^{N}$, integrate in space-time, and take expectations, we end up with something that is controlled by $\Phi^{0,\s,0,0}$ with $\s=\mathfrak{j}\t^{0}\leq1$, where this last bound on $\s$ follows from assumption of $\mathfrak{j}\leq N^{M'\delta'}$ and construction of $\t^{0}$. Thus, we will now examine the first term on the RHS of \eqref{eq:d1b17I2}. More generally, we now implement the following procedure, whose two steps may be thought of as upgrading the time-scale of the time-averages from $\t^{\mathfrak{k}}$ to $\t^{\mathfrak{k}+1}$, and then given this upgrade in time-scale, upgrade the cutoff from $N^{-\alpha_{\mathfrak{k}+1}}$ to $N^{-\alpha_{\mathfrak{k}+2}}$. We clarify that this procedure will be done for $0\leq\mathfrak{k}<M'$.
%%%
\begin{itemize}
\item Replace $\wt{\sum}_{0\leq\mathfrak{j}<N^{(M'-\mathfrak{k})\delta'}}\bar{\mathsf{A}}^{\alpha'_{\mathfrak{k}}(\mathbf{T}),\mathbf{T},\alpha_{\mathfrak{k}+1}}(\varphi_{S+\mathfrak{j}\t^{\mathfrak{k}},y})$ by $\wt{\sum}_{0\leq\mathfrak{j}<N^{(M'-(\mathfrak{k}+1))\delta'}}\bar{\mathsf{A}}^{\alpha'_{\mathfrak{k}+1}(\mathbf{T}),\mathbf{T},\alpha_{\mathfrak{k}+1}}(\varphi_{S+\mathfrak{j}\t^{\mathfrak{k}+1},y})$ with error $\Psi_{\mathfrak{k},1}$.
\item Replace $\wt{\sum}_{0\leq\mathfrak{j}<N^{(M'-\mathfrak{k})\delta'}}\bar{\mathsf{A}}^{\alpha'_{\mathfrak{k}}(\mathbf{T}),\mathbf{T},\alpha_{\mathfrak{k}}}(\varphi_{S+\mathfrak{j}\t^{\mathfrak{k}},y})$ by $\wt{\sum}_{0\leq\mathfrak{j}<N^{(M'-\mathfrak{k})\delta'}}\bar{\mathsf{A}}^{\alpha'_{\mathfrak{k}}(\mathbf{T}),\mathbf{T},\alpha_{\mathfrak{k}+1}}(\varphi_{S+\mathfrak{j}\t^{\mathfrak{k}},y})$ with error $\Psi_{\mathfrak{k},2}$.
\end{itemize}
%%%
Once we implement the above two-step procedure for all $0\leq\mathfrak{k}<M'$ and estimated the errors $\Psi_{\mathfrak{k},1}$ and $\Psi_{\mathfrak{k},2}$ in terms of $\Phi^{\mathfrak{k},\s,\mathfrak{l}_{1},\mathfrak{l}_{2}}$ terms for appropriate choices of $\s,\mathfrak{l}_{1},\mathfrak{l}_{2}$, we will be left with analyzing the following term:
\begin{align}
\int_{0}^{T}{\sum}_{y\in\mathbb{I}_{N,0}}\mathbf{Q}_{S,T,x,y}^{N}\cdot\bar{\mathsf{A}}^{\alpha'_{M'}(\mathbf{T}),\mathbf{T},\alpha_{M'}}(\varphi_{S,y})\mathbf{Z}_{S,y}^{N}\d S. \label{eq:d1b17I4}
\end{align}
As with the proof of Lemma \ref{lemma:d1b18}, we assume $\mathbf{Q}^{N}$ satisfies the estimates of Lemma \ref{lemma:QOffD} deterministically. After dividing by $\|\mathbf{Z}^{N}\|$, for the purposes of an upper bound we may forget $\mathbf{Z}^{N}$ in \eqref{eq:d1b17I4} upon replacing the integrand by its absolute value. Observe the $\bar{\mathsf{A}}$ term in \eqref{eq:d1b17I4} is bounded by $N^{-\alpha_{M'}}$ deterministically by construction of the cutoff time-average. Thus, deterministically, we get
\begin{align}
\|\mathbf{Z}^{N}\|_{\mathscr{L}_{T,X}^{\infty}}^{-1}|\eqref{eq:d1b17I4}| \ \lesssim \ N^{-\alpha_{M'}}\int_{0}^{T}{\sum}_{y\in\mathbb{I}_{N,0}}|\mathbf{Q}_{S,T,x,y}^{N}|\d S. \label{eq:d1b17I5}
\end{align}
Appealing to Lemma \ref{lemma:QOffD}, which implies $\mathbf{Q}^{N}$ is basically a probability measure on $\mathbb{I}_{N,0}$ in its forward spatial variable up to a factor of $N^{\e}$ for any fixed positive $\e$, we deduce the RHS of \eqref{eq:d1b17I5} is controlled by $N^{-\alpha_{M'}+\e}$, and this is uniform in space-time. Combining the arguments thus far, it suffices to implement the aforementioned two-step procedure and estimate $\Psi_{\mathfrak{k},i}$ errors accordingly. 
%%%
\begin{itemize}
\item Let us start with the first replacement step, namely upgrading the scale of time-averaging. First, we will group summation indices in the following fashion, derived in similar fashion as \eqref{eq:d1b17I1}, whose utility will be explained afterwards:
\begin{align}
\wt{\sum}_{0\leq\mathfrak{j}<N^{(M'-\mathfrak{k})\delta'}}\bar{\mathsf{A}}^{\alpha'_{\mathfrak{k}}(\mathbf{T}),\mathbf{T},\alpha_{\mathfrak{k}+1}}(\varphi_{S+\mathfrak{j}\t^{\mathfrak{k}},y}) \ = \ \wt{\sum}_{0\leq\mathfrak{m}<N^{(M'-(\mathfrak{k}+1))\delta'}}\wt{\sum}_{0\leq\mathfrak{j}<N^{\delta'}}\bar{\mathsf{A}}^{\alpha'_{\mathfrak{k}}(\mathbf{T}),\mathbf{T},\alpha_{\mathfrak{k}+1}}(\varphi_{S+\mathfrak{m}\t^{\mathfrak{k}+1}+\mathfrak{j}\t^{\mathfrak{k}},y}). \label{eq:d1b17I6}
\end{align}
If we did not have cutoffs/bars for the summands on the RHS of \eqref{eq:d1b17I6}, then the inner average on the RHS of \eqref{eq:d1b17I6} would just be the time-average of $\varphi$ on $S+\mathfrak{m}\t^{\mathfrak{k}+1}+[0,\t^{\mathfrak{k}+1}]$ for the same reasons used to deduce the identity \eqref{eq:d1b17I1}; recall $\t^{\mathfrak{k}+1}=\t^{\mathfrak{k}}N^{\delta'}$. We will now account for the cutoffs/bars. Let $\Psi_{\mathfrak{m}}$ denote the index-$\mathfrak{m}$ inner-average on the RHS of \eqref{eq:d1b17I6}. Let us now write
\begin{align}
\Psi_{\mathfrak{m}} \ = \ \mathbf{1}(\mathscr{E}^{\alpha'_{\mathfrak{k}+1}(\mathbf{T}),\mathbf{T},\alpha_{\mathfrak{k}+1},\leq}(\varphi_{S+\mathfrak{m}\t^{\mathfrak{k}+1},y}))\Psi_{\mathfrak{m}} \ + \ \mathbf{1}(\mathscr{E}^{\alpha'_{\mathfrak{k}+1}(\mathbf{T}),\mathbf{T},\alpha_{\mathfrak{k}+1},>}(\varphi_{S+\mathfrak{m}\t^{\mathfrak{k}+1},y}))\Psi_{\mathfrak{m}} \ = \ \Psi_{\mathfrak{m},1} + \Psi_{\mathfrak{m},2}, \label{eq:d1b17I7}
\end{align}
which can be checked by noting the events inside the indicator functions in \eqref{eq:d1b17I7} are complements of each other. We will treat $\Psi_{\mathfrak{m},2}$ as an error term at the end of this bullet point. First we explore further $\Psi_{\mathfrak{m},1}$. To this end, we first recall $\Psi_{\mathfrak{m}}$ is the index-$\mathfrak{m}$ average of scale-$\t^{\mathfrak{k}}$ time-averages on the RHS of \eqref{eq:d1b17I6}. As before, if these scale-$\t^{\mathfrak{k}}$ time-averages did not have cutoffs/bars, then $\Psi_{\mathfrak{m}}$ would be $\mathsf{A}^{\alpha'_{\mathfrak{k}+1}(\mathbf{T}),\mathbf{T}}(\varphi_{S+\mathfrak{m}\t^{\mathfrak{k}+1},y})$. After multiplying by the indicator function defining $\Psi_{\mathfrak{m},1}$ in \eqref{eq:d1b17I7}, this would give us $\bar{\mathsf{A}}^{\alpha'_{\mathfrak{k}+1}(\mathbf{T}),\mathbf{T},\alpha_{\mathfrak{k}+1}}(\varphi_{S+\mathfrak{m}\t^{\mathfrak{k}+1},y})$, and thus after averaging over $0\leq\mathfrak{m}<N^{(M'-(\mathfrak{k}+1))\delta'}$, this would complete the first of the two aforementioned steps modulo analysis of $\Psi_{\mathfrak{m},2}$. Therefore, the next step we take is computing the error in $\Psi_{\mathfrak{m},1}$ after we remove the cutoffs in the summands defining $\Psi_{\mathfrak{m}}$. To be precise, let us now write
\begin{align}
\Psi_{\mathfrak{m}} \ &= \ \wt{\sum}_{0\leq\mathfrak{j}<N^{\delta'}}\mathsf{A}^{\alpha_{\mathfrak{k}}'(\mathbf{T}),\mathbf{T}}(\varphi_{S+\mathfrak{m}\t^{\mathfrak{k}+1}+\mathfrak{j}\t^{\mathfrak{k}},y}) \ + \ \wt{\sum}_{0\leq\mathfrak{j}<N^{\delta'}}\mathbf{1}(\mathscr{E}^{\alpha'_{\mathfrak{k}}(\mathbf{T}),\mathbf{T},\alpha_{\mathfrak{k}+1},>}(\varphi_{S+\mathfrak{m}\t^{\mathfrak{k}+1}+\mathfrak{j}\t^{\mathfrak{k}},y}))\mathsf{A}^{\alpha_{\mathfrak{k}}'(\mathbf{T}),\mathbf{T}}(\varphi_{S+\mathfrak{m}\t^{\mathfrak{k}+1}+\mathfrak{j}\t^{\mathfrak{k}},y}) \nonumber \\
&= \ \mathsf{A}^{\alpha_{\mathfrak{k}+1}'(\mathbf{T}),\mathbf{T}}(\varphi_{S+\mathfrak{m}\t^{\mathfrak{k}+1},y}) \ + \ \wt{\sum}_{0\leq\mathfrak{j}<N^{\delta'}}\mathbf{1}(\mathscr{E}^{\alpha'_{\mathfrak{k}}(\mathbf{T}),\mathbf{T},\alpha_{\mathfrak{k}+1},>}(\varphi_{S+\mathfrak{m}\t^{\mathfrak{k}+1}+\mathfrak{j}\t^{\mathfrak{k}},y}))\mathsf{A}^{\alpha_{\mathfrak{k}}'(\mathbf{T}),\mathbf{T}}(\varphi_{S+\mathfrak{m}\t^{\mathfrak{k}+1}+\mathfrak{j}\t^{\mathfrak{k}},y}) \nonumber \\
&= \ \Psi_{\mathfrak{m},3} \ + \ \Psi_{\mathfrak{m},4}. \nonumber
\end{align}
As explained in the previous paragraph, after multiplying $\Psi_{\mathfrak{m},3}$ by the indicator function defining $\Psi_{\mathfrak{m},1}$ and averaging over $\mathfrak{m}$, we then finish the first replacement step modulo studying $\Psi_{\mathfrak{m},2}$ and the product between $\Psi_{\mathfrak{m},4}$ and the indicator function defining $\Psi_{\mathfrak{m},1}$. Studying both of these will give the remainder of this bullet point. Let us start with $\Psi_{\mathfrak{m},2}$. To this end, we first observe the following indicator function inequality, in which the $N^{\delta'}$ times an average on the RHS below can be replaced with just a sum:
\begin{align}
\mathbf{1}(\mathscr{E}^{\alpha'_{\mathfrak{k}+1}(\mathbf{T}),\mathbf{T},\alpha_{\mathfrak{k}+1},>}(\varphi_{S+\mathfrak{m}\t^{\mathfrak{k}+1},y})) \ \leq \ N^{\delta'}\wt{\sum}_{0\leq\mathfrak{n}<N^{\delta'}}\mathbf{1}(\mathscr{E}^{\alpha'_{\mathfrak{k}}(\mathbf{T}),\mathbf{T},\alpha_{\mathfrak{k}+1},>}(\varphi_{S+\mathfrak{m}\t^{\mathfrak{k}+1}+\mathfrak{n}\t^{\mathfrak{k}},y})). \label{eq:d1b17I8}
\end{align}
Indeed, the event on the LHS of \eqref{eq:d1b17I8} is the event that the averaged integral of $\varphi$ on $S+\mathfrak{m}\t^{\mathfrak{k}+1}+[0,\t]$ exceeds $N^{-\alpha_{\mathfrak{k}+1}}$ in absolute value for some $0\leq\t\leq\t^{\mathfrak{k}+1}$. As $\t^{\mathfrak{k}+1}=\t^{\mathfrak{k}}N^{\delta'}$, such integral averages time-averages of $\varphi$ on $S+\mathfrak{m}\t^{\mathfrak{k}+1}+\mathfrak{n}\t^{\mathfrak{k}}+[0,\t']$ for $0\leq\t'\leq\t^{\mathfrak{k}}$ and $0\leq\mathfrak{n}<N^{\delta'}$. Thus, on the event on the LHS of \eqref{eq:d1b17I8}, one of these scale $\mathfrak{t}^{\mathfrak{k}}$ integrals must exceed $N^{-\alpha_{\mathfrak{k}+1}}$ for some $0\leq\t'\leq\t^{\mathfrak{k}}$. This is just the statement that if an average exceeds some bound in absolute value, then one of the terms in said average must also exceed this bound. The sum on the RHS of \eqref{eq:d1b17I8} comes from a union bound over which of these scale $\t^{\mathfrak{k}}$ averages exceeds $N^{-\alpha_{\mathfrak{k}+1}}$. Multiplying the RHS of \eqref{eq:d1b17I8} by $\Psi_{\mathfrak{m}}$ to recover $\Psi_{\mathfrak{m},2}$, up to absolute values, gives
\begin{align}
|\Psi_{\mathfrak{m},2}| \ \leq \ N^{\delta'}\wt{\sum}_{0\leq\mathfrak{n},\mathfrak{j}<N^{\delta'}}\mathbf{1}(\mathscr{E}^{\alpha'_{\mathfrak{k}}(\mathbf{T}),\mathbf{T},\alpha_{\mathfrak{k}+1},>}(\varphi_{S+\mathfrak{m}\t^{\mathfrak{k}+1}+\mathfrak{n}\t^{\mathfrak{k}},y}))|\bar{\mathsf{A}}^{\alpha'_{\mathfrak{k}}(\mathbf{T}),\mathbf{T},\alpha_{\mathfrak{k}+1}}(\varphi_{S+\mathfrak{m}\t^{\mathfrak{k}+1}+\mathfrak{j}\t^{\mathfrak{k}},y})|. \label{eq:d1b17I9}
\end{align}
We first note that we may replace $|\bar{\mathsf{A}}^{\alpha'_{\mathfrak{k}}(\mathbf{T}),\mathbf{T},\alpha_{\mathfrak{k}+1}}|$ on the RHS of \eqref{eq:d1b17I9} with the term $|\bar{\mathsf{A}}^{\alpha'_{\mathfrak{k}}(\mathbf{T}),\mathbf{T},\alpha_{\mathfrak{k}}}|$ with a less sharp cutoff for the sake of an upper bound. Now, observe that after this replacement, each summand being averaged on the RHS of \eqref{eq:d1b17I9} is of the form $\Phi^{\mathfrak{k},\s,\mathfrak{l}_{1},\mathfrak{l}_{2}}$ with $\s=\mathfrak{m}\t^{\mathfrak{k}+1}$ and $\mathfrak{l}_{1}=\mathfrak{j}$ and $\mathfrak{l}_{2}=\mathfrak{n}$, that is after we multiply by $\mathbf{Q}^{N}\mathbf{Z}^{N}$, integrate in space-time, and take expectations. Therefore, we are left to analyze the product between $\Psi_{\mathfrak{m},4}$ and the indicator function defining $\Psi_{\mathfrak{m},1}$. To this end, we consider first the following inequality for said $\Psi_{\mathfrak{m},1}$ indicator function, which we explain afterwards:
\begin{align}
\mathbf{1}(\mathscr{E}^{\alpha'_{\mathfrak{k}+1}(\mathbf{T}),\mathbf{T},\alpha_{\mathfrak{k}+1},\leq}(\varphi_{S+\mathfrak{m}\t^{\mathfrak{k}+1},y})) \ \leq \ \mathbf{1}(\mathscr{E}^{\alpha'_{\mathfrak{k}}(\mathbf{T}),\mathbf{T},\alpha_{\mathfrak{k}},\leq}(\varphi_{S+\mathfrak{m}\t^{\mathfrak{k}+1}+\mathfrak{j}\t^{\mathfrak{k}},y})) \quad \mathrm{for} \ \mathrm{all} \ 0\leq\mathfrak{j}<N^{\delta'}. \label{eq:d1b17I10}
\end{align}
To justify the inequality \eqref{eq:d1b17I10}, we first note the following integral inequality for $0\leq\mathfrak{j}<N^{\delta'}$:
\begin{align}
\sup_{0\leq\t'\leq\t^{\mathfrak{k}}}(\t^{\mathfrak{k}})^{-1}|\int_{0}^{\t'}\varphi_{S+\mathfrak{m}\t^{\mathfrak{k}+1}+\mathfrak{j}\t^{\mathfrak{k}}+R,y}\d R| \ &\lesssim \ \sup_{0\leq\t\leq\t^{\mathfrak{k}+1}}(\t^{\mathfrak{k}})^{-1}|\int_{0}^{\t}\varphi_{S+\mathfrak{m}\t^{\mathfrak{k}+1}+R,y}\d R| \label{eq:d1b17I11a}\\
&= \ N^{\delta'}\sup_{0\leq\t\leq\t^{\mathfrak{k}+1}}(\t^{\mathfrak{k}+1})^{-1}|\int_{0}^{\t}\varphi_{S+\mathfrak{m}\t^{\mathfrak{k}+1}+R,y}\d R|. \label{eq:d1b17I11b}
\end{align}
The first inequality \eqref{eq:d1b17I11a} follows by covering the interval $S+\mathfrak{m}\t^{\mathfrak{k}+1}+\mathfrak{j}\t^{\mathfrak{k}}+[0,\t']$, for $0\leq\t'\leq\t^{\mathfrak{k}}$, by intervals $S+\mathfrak{m}\t^{\mathfrak{k}+1}+[0,\t]$, for $0\leq\t\leq\t^{\mathfrak{k}+1}$ given by $\t=\mathfrak{j}\t^{\mathfrak{k}}$ and $\t=\mathfrak{j}\t^{\mathfrak{k}}+\t'$; note that because $\t^{\mathfrak{k}+1}=N^{\delta'}\t^{\mathfrak{k}}$ and $0\leq\mathfrak{j}<N^{\delta'}$, these choices satisfy $0\leq\mathfrak{j}\t^{\mathfrak{k}},\mathfrak{j}\t^{\mathfrak{k}}+\t'\leq\t^{\mathfrak{k}+1}$ for $0\leq\t'\leq\t^{\mathfrak{k}}$. This final identity $\t^{\mathfrak{k}+1}=N^{\delta'}\t^{\mathfrak{k}}$ also gives us \eqref{eq:d1b17I11b}. Now, if \eqref{eq:d1b17I11b} is controlled by $N^{-\alpha_{\mathfrak{k}+1}}$, then the LHS of \eqref{eq:d1b17I11a} is controlled by $N^{\delta'}N^{-\alpha_{\mathfrak{k}+1}}\leq N^{-\alpha_{\mathfrak{k}}}$ for any $0\leq\mathfrak{j}<N^{\delta'}$. This provides \eqref{eq:d1b17I10}. Applying \eqref{eq:d1b17I10} to estimate the product of the LHS of \eqref{eq:d1b17I10} with $\Psi_{\mathfrak{m},4}$, we deduce that the product $|\mathbf{1}(\mathscr{E}^{\alpha'_{\mathfrak{k}+1}(\mathbf{T}),\mathbf{T},\alpha_{\mathfrak{k}+1},\leq}(\varphi_{S+\mathfrak{m}\t^{\mathfrak{k}+1},y}))\Psi_{\mathfrak{m},4}|$ is controlled by the following term/average:
\begin{align}
\wt{\sum}_{0\leq\mathfrak{j}<N^{\delta'}}|\mathbf{1}(\mathscr{E}^{\alpha'_{\mathfrak{k}}(\mathbf{T}),\mathbf{T},\alpha_{\mathfrak{k}+1},>}(\varphi_{S+\mathfrak{m}\t^{\mathfrak{k}+1}+\mathfrak{j}\t^{\mathfrak{k}},y}))\mathsf{A}^{\alpha_{\mathfrak{k}}'(\mathbf{T}),\mathbf{T}}(\varphi_{S+\mathfrak{m}\t^{\mathfrak{k}+1}+\mathfrak{j}\t^{\mathfrak{k}},y})\mathbf{1}(\mathscr{E}^{\alpha'_{\mathfrak{k}}(\mathbf{T}),\mathbf{T},\alpha_{\mathfrak{k}},\leq}(\varphi_{S+\mathfrak{m}\t^{\mathfrak{k}+1}+\mathfrak{j}\t^{\mathfrak{k}},y}))|. \label{eq:d1b17I12}
\end{align}
We observe the product of the last two factors in the index-$\mathfrak{j}$ summand in \eqref{eq:d1b17I12} is equal to the cutoff $\bar{\mathsf{A}}^{\alpha_{\mathfrak{k}}'(\mathbf{T}),\mathbf{T},\alpha_{\mathfrak{k}}}(\varphi_{S+\mathfrak{m}\t^{\mathfrak{k}+1}+\mathfrak{j}\t^{\mathfrak{k}},y})$ by construction. Thus, each summand in \eqref{eq:d1b17I12} is controlled by $\Phi^{\mathfrak{k},\s,\mathfrak{l}_{1},\mathfrak{l}_{2}}$ with $\s=\mathfrak{m}\t^{\mathfrak{k}+1}$ and $\mathfrak{l}_{i}=\mathfrak{j}$, at least after we multiply by $\mathbf{Q}^{N}\mathbf{Z}^{N}$, integrate in space-time, and take expectations. This completes the first step.
\item We are now left with estimating the error in the second of the aforementioned replacement steps, namely by improving the cutoff for time-averages on scale $\t^{\mathfrak{k}}$ from $N^{-\alpha_{\mathfrak{k}}}$ to $N^{-\alpha_{\mathfrak{k}+1}}$. This follows the same argument as our analysis around \eqref{eq:d1b17I2} and \eqref{eq:d1b17I3}. In particular, when we improve said cutoffs in this fashion for any fixed time-average/index-$\mathfrak{m}$ summand and then average over all $0\leq\mathfrak{m}<N^{(M'-\mathfrak{k})\delta'}$, the error we get is an average of $0\leq\mathfrak{m}<N^{(M'-\mathfrak{k})\delta'}$ of 
\begin{align}
\bar{\mathsf{A}}^{\alpha'_{\mathfrak{k}}(\mathbf{T}),\mathbf{T},\alpha_{\mathfrak{k}}}(\varphi_{S+\mathfrak{j}\t^{\mathfrak{k}},y})\mathbf{1}(\mathscr{E}^{\alpha'_{\mathfrak{k}}(\mathbf{T}),\mathbf{T},\alpha_{\mathfrak{k}+1},>}(\varphi_{S+\mathfrak{j}\t^{\mathfrak{k}},y})),
\end{align}
each of which are controlled, in absolute value, by $\Phi^{\mathfrak{k},\s,\mathfrak{l}_{1},\mathfrak{l}_{2}}$ for $\s=\mathfrak{j}\t^{\mathfrak{k}}$ and $\mathfrak{l}_{i}=0$, again after we multiply by $\mathbf{Q}^{N}\mathbf{Z}^{N}$, integrate in space-time, and take expectations. 
\end{itemize}
%%%
This completes the proof.
\end{proof}
%%%
%%%
\begin{proof}[Proof of \emph{Lemma \ref{lemma:d1b19}}]
First, we focus on the first choice of $\varphi$ and exponents. The bound $M\lesssim1$ follows from the fact that $\alpha_{\mathfrak{j}}(\mathbf{T})$ exponents increase by a uniformly positive amount for each increase in the index $\mathfrak{j}$. The lower bound on $\alpha_{M}(\mathbf{T})$ follows by direct inspection along with recalling, for choices of $\alpha_{\mathfrak{k}}$ in Lemma \ref{lemma:d1b20}, that $\alpha_{\mathfrak{k}+1}-2^{-1}\alpha_{\mathfrak{k}}\leq2^{-1}\alpha_{\mathfrak{k}+1}+\delta'\leq1/4+\delta'$ and $\beta_{X}>1/4$. By the Cauchy-Schwarz inequality, it suffices to prove, for a universal $\beta_{\mathrm{u}}$ uniformly positive, for the square of the LHS of \eqref{eq:d1b19I}, in which the exponent $\beta$ in the $\bar{\kappa}_{\mathfrak{j}}$ bound below is arbitrarily small but positive and universal:
\begin{align}
\E\bar{\kappa}_{\mathfrak{j}}^{2}\int_{0}^{\t^{\max}}\wt{\sum}_{y\in\mathbb{I}_{N,0}}|\mathsf{A}^{\alpha_{\mathfrak{j}-1}(\mathbf{T}),\mathbf{T}}(\varphi_{S,y})|^{2}\d S \ \lesssim \ N^{-1-\beta_{\mathrm{u}}} \quad \mathrm{where} \quad \bar{\kappa}_{\mathfrak{j}}\lesssim N^{-\frac14\alpha_{\mathfrak{j}-1}(\mathbf{T})+\e_{X}} \ \lesssim \ N^{-\frac14+\frac14\beta+\e_{X}}. \label{eq:d1b19I1}
\end{align}
Let us forget the constant $\bar{\kappa}_{\mathfrak{j}}^{2}$ for now and then reinsert it later. Recalling that our choice of $\varphi$ is supported on $\mathbb{I}_{N,\beta_{X}+2\e_{X}}$, the spatial average on the LHS of \eqref{eq:d1b19I1} is actually a spatial average on $\mathbb{I}_{N,\beta_{X}+2\e_{X}}$; the normalizations in these two averages are comparable because $\mathbb{I}_{N,\beta_{X}+2\e_{X}}$ has the same size as $\mathbb{I}_{N,0}$ up to little-oh terms. Moreover, as $\mathbb{I}_{N,\beta_{X}+2\e_{X}}\setminus\mathbb{I}_{N,1/2}$ has small size of order $N^{1/2}$, and because our choice of $\varphi$ satisfies $|\varphi|\leq N^{-\beta_{X}+\e_{X}/999}\leq N^{-1/4}$, we may assume the average on the LHS of \eqref{eq:d1b19I1} is a spatial average over $\mathbb{I}_{N,1/2}$. Now, following the proof of Lemma \ref{lemma:d1b21}, we are left to estimate the following term in which $\bar{\mathfrak{f}}_{N}$ is a space-time average of the Radon-Nikodym derivative of the law of the particle system over the time-interval $[0,\t^{\max}]$ and the shifts over the spatial set $\mathbb{I}_{N,1/2}$ as in Lemma \ref{lemma:d1b5} and where $w_{X}$ is the infimum of $\mathbb{I}_{N,1/2}$, so that $\mathfrak{g}_{0,w_{X}}$ and its spatial-average-with-cutoff below do not see the boundary of $\mathbb{I}_{N,0}$; in contrast to Lemma \ref{lemma:d1b21} and its proof, we also have spatial averaging over the set $\mathbb{I}_{N,1/2}$ in $\bar{\mathfrak{f}}_{N}$:
\begin{align}
\E^{\mu_{0,\mathbb{I}_{N,0}}}\bar{\mathfrak{f}}_{N}\E^{\mathrm{path}}|\mathsf{A}^{\alpha_{\mathfrak{j}-1}(\mathbf{T}),\mathbf{T}}\bar{\mathsf{A}}^{\beta_{X},\mathbf{X}}(\mathfrak{g}_{0,w_{X}})|^{2}. \label{eq:d1b19I2}
\end{align}
We clarify that the expectation $\E^{\mathrm{path}}$ is with respect to the path-space measure induced by the particle dynamic conditioning on, and therefore a function of, the initial configuration on $\mathbb{I}_{N,0}$ given by sampling a configuration on $\mathbb{I}_{N,0}$ in the outer expectation in \eqref{eq:d1b19I2}, taking only its $\eta$-values/configuration on a set $\mathbb{I}^{\mathrm{path}}$ defined as follows according to the statement of Lemma \ref{lemma:d1b7}. First take the set $\mathbb{I}$ given by the support of $\mathfrak{g}_{0,w_{X}}$, and consider the radius $\mathfrak{l}^{\alpha_{\mathfrak{j}-1}(\mathbf{T}),\beta_{X},\mathfrak{g}}$ neighborhood of $\mathbb{I}$; recalling $\mathfrak{l}^{\alpha_{\mathfrak{j}-1}(\mathbf{T}),\beta_{X},\mathfrak{g}}$ from the statement of Lemma \ref{lemma:d1b7}, this set $\mathbb{I}^{\mathrm{path}}$ does not intersect the boundary given $\alpha_{\mathfrak{j}-1}(\mathbb{T})\geq6/5$, for example. Precisely, this implies $\mathfrak{l}^{\alpha_{\mathfrak{j}-1}(\mathbf{T}),\beta_{X},\mathfrak{g}^{N}}\ll N^{1/2}$ upon inspection of definition in Lemma \ref{lemma:d1b7}, which implies that the radius $\mathfrak{l}^{\alpha_{\mathfrak{j}-1}(\mathbf{T}),\beta_{X},\mathfrak{g}^{N}}$ neighborhood of any subset of $\mathbb{I}_{N,1/2}$ must be separated from the boundary of $\mathbb{I}_{N,0}$. The initial configuration for the path-space expectation $\E^{\mathrm{path}}$ is then completed by taking no particles outside $\mathbb{I}^{\mathrm{path}}$. Observe the $\E^{\mathrm{path}}$-term in \eqref{eq:d1b19I2} is a functional of $\Omega_{\mathbb{I}^{\mathrm{path}}}$, so we may project both $\mu_{0,\mathbb{I}_{N,0}}$ and $\bar{\mathfrak{f}}_{N}$ onto their $\Omega_{\mathbb{I}^{\mathrm{path}}}$ marginals, respectively. This then allows us to employ Lemma \ref{lemma:d1b5} with $T=\t^{\max}$, for $\mathbb{I}$ therein equal to $\mathbb{I}^{\mathrm{path}}$ with $\beta=1/2$, for $\varphi=\E^{\mathrm{path}}$ in \eqref{eq:d1b19I2}, and for $\kappa=N^{\beta_{X}-999^{-1}\e_{X}}$, so
\begin{align}
\eqref{eq:d1b19I2} \ \lesssim \ N^{-\beta_{X}+\frac{1}{999}\e_{X}-\frac32}|\mathbb{I}^{\mathrm{path}}|^{3} \ + \ \kappa^{-1}\sup_{\sigma\in\R}\log\E^{\mu_{\sigma,\mathbb{I}^{\mathrm{path}}}^{\mathrm{can}}}\exp(\kappa\E^{\mathrm{path}}|\mathsf{A}^{\alpha_{\mathfrak{j}-1}(\mathbf{T}),\mathbf{T}}\bar{\mathsf{A}}^{\beta_{X},\mathbf{X}}(\mathfrak{g}_{0,w_{X}})|^{2}). \label{eq:d1b19I3}
\end{align}
Recalling that $\beta_{X}=1/4+\e_{X}$ and noting that $|\mathbb{I}^{\mathrm{path}}| \lesssim \mathfrak{l}^{\alpha_{\mathfrak{j}-1}(\mathbf{T}),\beta_{X},\mathfrak{g}}$ with $\mathfrak{l}^{\alpha_{\mathfrak{j}-1}(\mathbf{T}),\beta_{X},\mathfrak{g}}$ from Lemma \ref{lemma:d1b7} and with $\alpha_{\mathfrak{j}-1}(\mathbf{T})\geq 5/4-\beta$ for $\beta$ arbitrarily small but universal and positive, we see the first term on the RHS of \eqref{eq:d1b19I3} is controlled by
\begin{align}
N^{-\beta_{X}+\frac{1}{999}\e_{X}-\frac32}|\mathfrak{l}^{\alpha_{\mathfrak{j}-1}(\mathbf{T}),\beta_{X},\mathfrak{g}}|^{3} \ &\lesssim \ N^{-\frac74}\left(N^{3-\frac32\alpha_{\mathfrak{j}-1}(\mathbf{T})} + N^{\frac92-3\alpha_{\mathfrak{j}-1}(\mathbf{T})} + N^{3\beta_{X}}\right) \\
&\lesssim \ N^{\frac54-\frac32\alpha_{\mathfrak{j}-1}(\mathbf{T})} + N^{\frac{11}{4}-3\alpha_{\mathfrak{j}-1}(\mathbf{T})} + N^{-1+3\e_{X}}. \label{eq:d1b19I4}
\end{align}
Recalling $\bar{\kappa}_{\mathfrak{j}}^{2}\lesssim N^{-\alpha_{\mathfrak{j}-1}(\mathbf{T})/2+\e_{X}}$ for instance and $\alpha_{\mathfrak{j}-1}\geq5/4-\beta$ for $\beta$ arbitrarily small and uniformly positive, multiplying \eqref{eq:d1b19I4} by $\bar{\kappa}_{\mathfrak{j}}^{2}$ and elementary power-counting shows the contribution of the first term on the RHS of \eqref{eq:d1b19I3} is controlled from above by $N^{-1-\beta_{\mathrm{u}}}$. Thus, we are left with analyzing the second term on the RHS of \eqref{eq:d1b19I3}. To this end, let us observe the term inside the exponential therein is deterministically uniformly bounded, because the cutoff $\bar{\mathsf{A}}^{\beta_{X},\mathbf{X}}$ is controlled by $N^{-\beta_{X}/2+999^{-1}\e_{X}}$. Now, like the proof of Lemma \ref{lemma:d1b21}, calculus for exponential and logarithm imply the second term on the RHS of \eqref{eq:d1b19I3} is controlled by
\begin{align}
\sup_{\sigma\in\R}\E^{\mu_{\sigma,\mathbb{I}^{\mathrm{path}}}^{\mathrm{can}}}\E^{\mathrm{path}}|\mathsf{A}^{\alpha_{\mathfrak{j}-1}(\mathbf{T}),\mathbf{T}}\bar{\mathsf{A}}^{\beta_{X},\mathbf{X}}(\mathfrak{g}_{0,w_{X}})|^{2}. \label{eq:d1b19I5}
\end{align}
By Lemma \ref{lemma:d1b9}, we may remove the bar/cutoff from $\bar{\mathsf{A}}^{\beta_{X},\mathbf{X}}$ and replace it by $\mathsf{A}^{\beta_{X},\mathbf{X}}$. This is because Lemma \ref{lemma:d1b9} implies that at any canonical measure, the difference between $\bar{\mathsf{A}}^{\beta_{X},\mathbf{X}}$ and $\mathsf{A}^{\beta_{X},\mathbf{X}}$ is nonzero with exponentially low probability, so the ultimate cost in this replacement is exponentially small in $N$. At this point, we may employ Lemma \ref{lemma:d1b7} with $\alpha(\mathbf{T})=\alpha_{\mathfrak{j}-1}(\mathbf{T})$ and $\alpha(\mathbf{X})=\beta_{X}$ and $\varphi$ equal to $\mathfrak{g}$ with uniformly bounded support. This provides the following estimate:
\begin{align}
\eqref{eq:d1b19I5} \ \lesssim \ N^{-2+\alpha_{\mathfrak{j}-1}(\mathbf{T})-\beta_{X}} \ = \ N^{-\frac94+\alpha_{\mathfrak{j}-1}(\mathbf{T})}. \label{eq:d1b19I6}
\end{align}
Multiplying the RHS of \eqref{eq:d1b19I6} by $\bar{\kappa}_{\mathfrak{j}}^{2}\lesssim N^{-\alpha_{\mathfrak{j}-1}(\mathbf{T})/2+\e_{X}}$ and recalling $\alpha_{\mathfrak{j}-1}(\mathbf{T})\leq2$ shows that if $\e_{X}$ is taken sufficiently small but universal, the contribution of the second term on the RHS of \eqref{eq:d1b19I3} is also controlled by $N^{-1-\beta_{\mathrm{u}}}$. This completes the proof for the first choice of exponents and of test function $\varphi$. As for the second choice of exponents and $\varphi$, the same argument applies. We provide an explanation of the necessary adjustments in the list below for clarity if this is of interest.
%%%
\begin{itemize}
\item First, we clarify that it suffices to prove \eqref{eq:d1b19I1} but with $\varphi_{S,y}=\wt{\mathfrak{g}}_{S,y}$ and with $N^{-1-\beta_{\mathrm{u}}}$ on the RHS replaced by $N^{-2\beta_{X}-\beta_{\mathrm{u}}}$.
\item Following the argument given above until \eqref{eq:d1b19I2}, it suffices to estimate \eqref{eq:d1b19I2} with the same choice of $\bar{\mathfrak{f}}_{N}$ but with $\bar{\mathsf{A}}^{\beta_{X},\mathbf{X}}(\mathfrak{g}_{0,w_{X}})$ replaced by $\wt{\mathfrak{g}}_{0,w_{X}}$. Indeed, cutting off the spatial average from over $\mathbb{I}_{N,\beta_{X}+2\e_{X}}$ to over $\mathbb{I}_{N,1/2}$ introduces order $N^{1/2}$-order error, that is then multiplied by $N^{-1}\bar{\kappa}_{\mathfrak{j}}^{2}$ to get something controlled by $N^{-1/2}\bar{\kappa}_{\mathfrak{j}}^{2}\lesssim N^{-2\beta_{X}-\beta_{\mathrm{u}}}$, because $\beta_{X}$ is basically $1/4$ and $\bar{\kappa}_{\mathfrak{j}}$ is a fixed negative power of $N$. We clarify $\E^{\mathrm{path}}$ now refers to path-space expectation with initial configuration given by sampling a configuration on a set $\wt{\mathbb{I}}^{\mathrm{path}}$ to be defined shortly, and then no particles outside $\wt{\mathbb{I}}^{\mathrm{path}}$. The set $\wt{\mathbb{I}}^{\mathrm{path}}$ is given by taking the support of $\wt{\mathfrak{g}}_{0,w_{X}}$, which we recall from Proposition \ref{prop:MatchCpt} has length of order $N^{\beta_{X}}$, and then taking its radius $\mathfrak{l}^{\alpha_{\mathfrak{j}-1}(\mathbf{T}),0,\wt{\mathfrak{g}}}$ neighborhood. We note that this set $\wt{\mathbb{I}}^{\mathrm{path}}$ is also separated from the boundary of $\mathbb{I}_{N,0}$ for the same reason that the previous set $\mathbb{I}^{\mathrm{path}}$ was.
\item Following \eqref{eq:d1b19I3}, we instead apply Lemma \ref{lemma:d1b5} with $T=\t^{\max}$, for $\mathbb{I}$ therein equal to the set $\wt{\mathbb{I}}^{\mathrm{path}}$ with $\beta=1/2$, for $\varphi=\E^{\mathrm{path}}$ in the previous bullet point, and $\kappa=1$. This reduces the proof to estimating the following two terms:
\begin{align}
N^{-\frac32}|\wt{\mathbb{I}}^{\mathrm{path}}|^{3} + \sup_{\sigma\in\R}\log\E^{\mu_{\sigma,\wt{\mathbb{I}}^{\mathrm{path}}}^{\mathrm{can}}}\exp(\E^{\mathrm{path}}|\mathsf{A}^{\alpha_{\mathfrak{j}-1}(\mathbf{T}),\mathbf{T}}(\wt{\mathfrak{g}}_{0,w_{X}})|^{2}). \label{eq:d1b19I7}
\end{align}
For the first term in \eqref{eq:d1b19I7}, we recall $|\wt{\mathbb{I}}^{\mathrm{path}}|\lesssim\mathfrak{l}^{\alpha_{\mathfrak{j}-1}(\mathbf{T}),0,\wt{\mathfrak{g}}}$, and therefore, by construction of this last $\mathfrak{l}$-term in Lemma \ref{lemma:d1b7}, we have the following in which $\e$ is arbitrarily small but positive and universal; recall the support length of $\wt{\mathfrak{g}}^{N}$ is order $N^{\beta_{X}}$:
\begin{align}
N^{-\frac32}|\wt{\mathbb{I}}^{\mathrm{path}}|^{3} \ &\lesssim \ N^{-\frac32}\left(N^{3-\frac32\alpha_{\mathfrak{j}-1}(\mathbf{T})+3\e}+N^{\frac92-3\alpha_{\mathfrak{j}-1}(\mathbf{T})+3\e}+N^{3\beta_{X}+3\e}\right) \\
&\lesssim \ N^{\frac32-\frac32\alpha_{\mathfrak{j}-1}(\mathbf{T})+3\e}+N^{3-3\alpha_{\mathfrak{j}-1}(\mathbf{T})+3\e}+N^{-\frac34+3\e_{X}+3\e}. \label{eq:d1b19I8}
\end{align}
Multiplying \eqref{eq:d1b19I8} by $\bar{\kappa}_{\mathfrak{j}}^{2}\lesssim N^{-\alpha_{\mathfrak{j}-1}(\mathbf{T})/2+\e_{X}}$ and recalling that $\alpha_{\mathfrak{j}-1}(\mathbf{T})\geq11/8-\beta$ with $\beta$ arbitrarily small but universal, we deduce the first term in \eqref{eq:d1b19I7} is controlled by $N^{-2\beta_{X}-\beta_{\mathrm{u}}}$ after elementary power-counting; recall $\beta_{X}=1/4+\e_{X}$ with $\e_{X}$ arbitrarily small but universal. As for the second term in \eqref{eq:d1b19I7}, note the term inside the exponential is uniformly bounded because $\wt{\mathfrak{g}}$ is uniformly bounded. Thus, similar to the discussion prior to \eqref{eq:d1b19I5}, we are left to estimate
\begin{align}
\E^{\mu_{\sigma,\wt{\mathbb{I}}^{\mathrm{path}}}^{\mathrm{can}}}\E^{\mathrm{path}}|\mathsf{A}^{\alpha_{\mathfrak{j}-1}(\mathbf{T}),\mathbf{T}}(\wt{\mathfrak{g}}_{0,w_{X}})|^{2} \ \lesssim \ N^{-2+\alpha_{\mathfrak{j}-1}(\mathbf{T})}, \label{eq:d1b19I9}
\end{align}
where the estimate in \eqref{eq:d1b19I9} follows from Lemma \ref{lemma:d1b7} with the choice $\varphi=\wt{\mathfrak{g}}^{N}$ and $\alpha(\mathbf{T})=\alpha_{\mathfrak{j}-1}(\mathbf{T})$ and $\alpha(\mathbf{X})=0$. We note that the support of $\wt{\mathfrak{g}}^{N}$ is actually nontrivially growing in the scaling parameter $N$, though this is not reflected in \eqref{eq:d1b19I9} when we apply Lemma \ref{lemma:d1b7} to get \eqref{eq:d1b19I9}. However, because $\wt{\mathfrak{g}}^{N}$ admits a pseudo-gradient factor whose support is uniformly bounded and that is responsible for $\wt{\mathfrak{g}}^{N}$ vanishing in expectation with respect to any canonical measure on its support, the factor in Lemma \ref{lemma:d1b7} that depends on the support of $\varphi$ is actually determined by the support of the pseudo-gradient factor in $\wt{\mathfrak{g}}^{N}$, which is uniformly bounded. For details of this localization-to-pseudo-gradient-factor, we refer to Section 3 of \cite{Y}. In any case, multiplying the RHS of \eqref{eq:d1b19I9} by $\bar{\kappa}_{\mathfrak{j}}^{2}\lesssim N^{-\alpha_{\mathfrak{j}-1}(\mathbf{T})/2+\e_{X}}$ and elementary power-counting shows the contribution of the second term on the RHS of \eqref{eq:d1b19I7} is also controlled by $N^{-2\beta_{X}-\beta_{\mathrm{u}}}=N^{-1/2-\beta_{\mathrm{u}}-2\e_{X}}$. For this last claim, we again require $\alpha_{\mathfrak{j}-1}(\mathbf{T})\leq2$.
\end{itemize}
%%%
This completes the proof.
\end{proof}
%%%
%%%
\begin{proof}[Proof of \emph{Lemma \ref{lemma:d1b20}}]
Let us first define $\psi_{S,y}=|\bar{\mathsf{A}}^{\alpha'_{\mathfrak{k}}(\mathbf{T}),\mathbf{T},\alpha_{\mathfrak{k}}}(\varphi_{S+\mathfrak{l}_{1}\t^{\mathfrak{k}},y})\mathbf{1}(\mathscr{E}^{\alpha'_{\mathfrak{k}}(\mathbf{T}),\mathbf{T},\alpha_{\mathfrak{k}+1},>}(\varphi_{S+\mathfrak{l}_{2}\t^{\mathfrak{k}},y}))|$ and the following data:
%%%
\begin{itemize}
\item Define $w_{X}$ to be the infimum of $\mathbb{I}_{N,1/2}$, and let $\mathbb{I}$ be the support of $\mathfrak{g}_{0,w_{X}}$.
\item Define $\mathfrak{l}^{\mathfrak{k}}=\mathfrak{l}^{\alpha'_{\mathfrak{k}}(\mathbf{T}),\beta_{X},\varphi}$, which we recall is specified in Lemma \ref{lemma:d1b7}, and define $\mathbb{I}^{\mathrm{path}}$ to be the radius $\mathfrak{l}^{\mathfrak{k}}$ neighborhood of $\mathbb{I}$.
\end{itemize}
%%%
Following the proof of Lemma \ref{lemma:d1b19} until \eqref{eq:d1b19I2}, but with $\mathfrak{l}^{\alpha_{\mathfrak{j}-1}(\mathbf{T}),\beta_{X},\mathfrak{g}}$ therein replaced by $\mathfrak{l}^{\mathfrak{k}}$, with $\mathbb{I}^{\mathrm{path}}$ therein replaced by $\mathbb{I}^{\mathrm{path}}$ here, and with $\mathbb{I}$ therein replaced by $\mathbb{I}$ here, it suffices to establish the desired upper bound for the following term obtained after space-time averaging the law of the particle system against the bulk statistic $\psi$; we clarify that the Radon-Nikodym derivative $\bar{\mathfrak{f}}_{N}$ with respect to $\mu_{0,\mathbb{I}_{N,0}}$ is the law of the particle system averaged over spatial translates for $y\in\mathbb{I}_{N,1/2}$ and over times in $\s+[0,\t^{\max}]$, and the $\E^{\mathrm{path}}$ expectation is with respect to the path-space law of the particle system conditioning on the initial configuration given by sampling $\eta$-values on $\mathbb{I}^{\mathrm{path}}$ here according to the outer expectation below and then putting no particles outside $\mathbb{I}^{\mathrm{path}}$:
\begin{align}
N^{-\frac12\alpha_{\mathfrak{k}}+\frac14}\E^{\mu_{0,\mathbb{I}_{N,0}}}\bar{\mathfrak{f}}_{N}\E^{\mathrm{path}}|\psi_{0,w_{X}}|. \label{eq:d1b20I1}
\end{align}
Observe $\E^{\mathrm{path}}$ in \eqref{eq:d1b20I1} is only a functional on $\Omega_{\mathbb{I}^{\mathrm{path}}}$ marginals, thus we can project both the grand-canonical measure and $\bar{\mathfrak{f}}_{N}$ in \eqref{eq:d1b20I1} onto their $\mathbb{I}^{\mathrm{path}}$-marginals. Thus, we can employ Lemma \ref{lemma:d1b5} with $T=\t^{\max}$, with $\mathbb{I}$ therein equal to $\mathbb{I}^{\mathrm{path}}$ here for $\beta=1/2$, for $\varphi$ equal to $\E^{\mathrm{path}}$ in \eqref{eq:d1b20I1}, and for $\kappa=N^{\alpha_{\mathfrak{k}}}$. Because $\t^{\max}$ is a fixed positive number, this gives
\begin{align}
|\eqref{eq:d1b20I1}| \ \lesssim \ N^{-\frac32\alpha_{\mathfrak{k}}-\frac54}|\mathbb{I}^{\mathrm{path}}|^{3} + N^{-\frac12\alpha_{\mathfrak{k}}+\frac14}\kappa^{-1}\sup_{\sigma\in\R}\log\E^{\mu_{\sigma,\mathbb{I}^{\mathrm{path}}}^{\mathrm{can}}}\exp(\kappa\E^{\mathrm{path}}|\psi_{0,w_{X}}|). \label{eq:d1b20I2}
\end{align}
With $|\mathbb{I}^{\mathrm{path}}|\lesssim\mathfrak{l}^{\mathfrak{k}}$ from the beginning of this proof, for the first term on the RHS of \eqref{eq:d1b20I2}, we get, for $\e$ arbitrarily small but fixed,
\begin{align}
N^{-\frac32\alpha_{\mathfrak{k}}-\frac54}|\mathbb{I}^{\mathrm{path}}|^{3} \ &\lesssim \ N^{-\frac32\alpha_{\mathfrak{k}}-\frac54}\left(N^{3-\frac32\alpha'_{\mathfrak{k}}(\mathbf{T})+3\e} + N^{\frac92-3\alpha'_{\mathfrak{k}}(\mathbf{T})+3\e} + N^{3\beta_{X}+3\e}\right) \\
&= \ N^{\frac74-\frac32\alpha'_{\mathfrak{k}}(\mathbf{T})-\frac32\alpha_{\mathfrak{k}}+3\e} + N^{-\frac{13}{4}-3\alpha'_{\mathfrak{k}}(\mathbf{T})-\frac32\alpha_{\mathfrak{k}}+3\e} + N^{-\frac12-\frac32\alpha_{\mathfrak{k}}+3\e_{X}+3\e}. \label{eq:d1b20I3}
\end{align}
Let us note that $\alpha_{\mathfrak{k}}\leq2^{-1}+3\delta'$ for $0\leq\mathfrak{k}\leq M'$ because $\alpha_{\mathfrak{k}}$ increases by $\delta'$ in $\mathfrak{k}$, and $\alpha_{M'}$ is the first exponent to exceed $2^{-1}+\delta'$. We also have $\alpha_{\mathfrak{k}}\geq\alpha_{0}$ is at least roughly $1/8$ by construction in the statement of Lemma \ref{lemma:d1b20}. Combining these with our choice of $\alpha'_{\mathfrak{k}}(\mathbf{T})$ made in the statement of Lemma \ref{lemma:d1b20} and elementary power-counting shows that \eqref{eq:d1b20I3} is controlled by $N^{-1/2-\beta_{\mathrm{u}}}$. Thus, we are left with estimating the second term on the RHS of \eqref{eq:d1b20I2}. To this end, we observe that $\psi$ is uniformly bounded by $N^{-\alpha_{\mathfrak{k}}}$ by definition; the $\bar{\mathsf{A}}$-factor defining it is cut off from above by $N^{-\alpha_{\mathfrak{k}}}$ in absolute value deterministically by construction. So, recalling $\kappa=N^{\alpha_{\mathfrak{k}}}$ in \eqref{eq:d1b20I2}, we deduce the term inside the exponential therein is uniformly bounded. Like with the proof of Lemma \ref{lemma:d1b19}, standard convexity and smoothness inequalities show that the second term on the RHS of \eqref{eq:d1b20I2} is controlled by
\begin{align}
N^{-\frac12\alpha_{\mathfrak{k}}+\frac14}\sup_{\sigma\in\R}\E^{\mu_{\sigma,\mathbb{I}^{\mathrm{path}}}^{\mathrm{can}}}\E^{\mathrm{path}}|\psi_{0,w_{X}}| \quad\mathrm{for}\quad \psi_{0,w_{X}}=|\bar{\mathsf{A}}^{\alpha'_{\mathfrak{k}}(\mathbf{T}),\mathbf{T},\alpha_{\mathfrak{k}}}\bar{\mathsf{A}}^{\beta_{X},\mathbf{X}}(\mathfrak{g}^{N}_{\mathfrak{l}_{1}\t^{\mathfrak{k}},w_{X}})\mathbf{1}(\mathscr{E}^{\alpha'_{\mathfrak{k}}(\mathbf{T}),\mathbf{T},\alpha_{\mathfrak{k}+1},>}(\varphi_{\mathfrak{l}_{2}\t^{\mathfrak{k}},w_{X}}))|\label{eq:d1b20I4}
\end{align}
Similar to the proof of Lemma \ref{lemma:d1b19}, we can replace $\bar{\mathsf{A}}^{\beta_{X},\mathbf{X}}$ with $\mathsf{A}^{\beta_{X},\mathbf{X}}$ in the $\psi$ definition in \eqref{eq:d1b20I4} up to a cost that is exponentially small in $N$. Moreover, we can replace $\bar{\mathsf{A}}^{\alpha'_{\mathfrak{k}}(\mathbf{T}),\mathbf{T},\alpha_{\mathfrak{k}}}$ therein with $\mathsf{A}^{\alpha'_{\mathfrak{k}}(\mathbf{T}),\mathbf{T}}$ for the sake of an upper bound, because this replacement is just dropping a cutoff indicator function. Therefore, at this point, we employ Lemma \ref{lemma:d1b7} with the following choices. We choose $\varphi_{0,0}=\mathfrak{g}_{0,w_{X}}^{N}$ with uniformly bounded support $\mathbb{I}$ and with time-shift $\t=\mathfrak{l}_{1}\t^{\mathfrak{k}}$. We also choose $\alpha(\mathbf{T})=\alpha'_{\mathfrak{k}}(\mathbf{T})$ and $\alpha(\mathbf{X})=\beta_{X}$. Lastly, we choose $\mathscr{E}$ to be the event in the $\psi$-definition in \eqref{eq:d1b20I4}; note this event depends only on particle system data on the time-interval $\mathfrak{l}_{2}\t^{\mathfrak{k}}+[0,N^{-\alpha'_{\mathfrak{k}}}(\mathbf{T})]$, which is certainly contained in the time-interval $[0,N^{-\alpha_{\mathfrak{k}}'(\mathbf{T})+\delta'}]$ for arbitrarily small but positive $\delta'$ since, borrowing notation of Lemma \ref{lemma:d1b17}, we have $\mathfrak{l}_{2}\leq N^{\delta'}$ and $\t^{\mathfrak{k}}=N^{-\alpha'_{\mathfrak{k}}(\mathbf{T})}$. Thus, the supremum in \eqref{eq:d1b20I4} is controlled by
\begin{align}
N^{-\frac12\alpha_{\mathfrak{k}}+\frac14}N^{-1+\frac12\alpha'_{\mathfrak{k}}(\mathbf{T})-\frac12\beta_{X}}|\mathbb{I}|\mathbf{P}(\mathscr{E})^{1/2} \ \lesssim \ N^{-\frac12\alpha_{\mathfrak{k}}-\frac78+\frac12\alpha'_{\mathfrak{k}}+\frac12\e_{X}}\mathbf{P}(\mathscr{E})^{1/2},\label{eq:d1b20I5}
\end{align}
where the estimate in \eqref{eq:d1b20I5} follows from recalling $\beta_{X}=4^{-1}+\e_{X}$ and that the support length $|\mathbb{I}|$ of $\mathfrak{g}^{N}$ is uniformly bounded. We now recall $\mathscr{E}$, defined prior to \eqref{eq:d1b20I5}, is the event on which the supremum of the time-average of $\bar{\mathsf{A}}^{\beta_{X},\mathbf{X}}(\mathfrak{g}^{N})$ exceeds $N^{-\alpha_{\mathfrak{k}+1}}$. Thus, by the Chebyshev inequality, we deduce that the probability of $\mathbf{P}(\mathscr{E})$ is controlled by $N^{2\alpha_{\mathfrak{k}+1}}$ times the second moment of $\psi$ in \eqref{eq:d1b20I4} but without the indicator function and replacing $\mathfrak{l}_{1}\t^{\mathfrak{k}}$ by $\mathfrak{l}_{2}\t^{\mathfrak{k}}$, though this last replacement does not change any estimates. Therefore, by Lemma \ref{lemma:d1b7} with the same choices made before \eqref{eq:d1b20I5} but for time-shift $\t=\mathfrak{l}_{2}\t^{\mathfrak{k}}$, we deduce
\begin{align}
\mathbf{P}(\mathscr{E}) \ \lesssim \ N^{2\alpha_{\mathfrak{k}+1}}N^{-2+\alpha'_{\mathfrak{k}}(\mathbf{T})-\beta_{X}}|\mathbb{I}|^{2} \ \lesssim \ N^{-\frac94+\alpha'_{\mathfrak{k}}(\mathbf{T})+2\alpha_{\mathfrak{k}+1}+\e_{X}}. \label{eq:d1b20I6}
\end{align}
Combining \eqref{eq:d1b20I5} and \eqref{eq:d1b20I6}, the former of which is a bound, up to a uniformly bounded factor, for the second term on the RHS of \eqref{eq:d1b20I2} that we recall are left to control by order $N^{-1/2-\beta_{\mathrm{u}}}$, we deduce that, indeed, the RHS of \eqref{eq:d1b20I5} is controlled by $N^{-1/2-\beta_{\mathrm{u}}}$ after power-counting, because of our choices of $\alpha'_{\mathfrak{k}}(\mathbf{T})$ in relation to $\alpha_{\mathfrak{k}}$ and of $\alpha_{\mathfrak{k}+1}$ in relation to $\alpha_{\mathfrak{k}}$ we made in the statement of Lemma \ref{lemma:d1b20}. This completes the proof of the desired estimate for the first set of choices of $\alpha_{\mathfrak{k}}$ exponents and $\alpha'_{\mathfrak{k}}(\mathbf{T})$ exponents and $\varphi$ functional. For the second choice in Lemma \ref{lemma:d1b20}, the same argument works with the following adjustments, which we explain.
%%%
\begin{itemize}
\item The prefactor in \eqref{eq:d1b20I1} is now $N^{-\frac12\alpha_{\mathfrak{k}}+\frac12\beta_{X}}$; this was noted in Lemma \ref{lemma:d1b20}. In general, we replace $N^{-\frac12\alpha_{\mathfrak{k}}+\frac14}$ by $N^{-\frac12\alpha_{\mathfrak{k}}+\frac12\beta_{X}}$.
\item When we apply Lemma \ref{lemma:d1b5} to estimate \eqref{eq:d1b20I1} for the new choices of exponents and functional, we instead choose the set $\mathbb{I}^{\mathrm{path}}$ to be the radius $\mathfrak{l}^{\mathfrak{k}}$ neighborhood of the support of $\wt{\mathfrak{g}}^{N}_{0,w_{X}}$, the latter of which is length of order $N^{\beta_{X}}$. 
\item The expectation $\E^{\mathrm{path}}$ is with respect to the path-space measure of the particle dynamic with initial configuration supported on the new set $\mathbb{I}^{\mathrm{path}}$ defined in the previous bullet point. 
\item When applying Lemma \ref{lemma:d1b7}, we will now choose $\alpha(\mathbf{X})$ therein to be zero, as there is no spatial averaging for this choice of $\varphi$.
\item Elementary adjustments in power-counting in $N$ now finish the proof. We only emphasize here that the support length of our new choice of $\varphi$ grows nontrivially in $N$. This only affects possibly the estimates that come from local stationary input of Lemma \ref{lemma:d1b7}. However, as in the end of the proof of Lemma \ref{lemma:d1b19}, because $\wt{\mathfrak{g}}^{N}$ admits a pseudo-gradient factor with uniformly bounded support length, and because the estimate from Lemma \ref{lemma:d1b7} does not see the support length of $\varphi$ but only that of its pseudo-gradient factor, this does not introduce difficulties. Again, for details behind reduction to pseudo-gradient factor, we refer to Section 3 of \cite{Y}.
\end{itemize}
%%%
This completes the proof.
\end{proof}
%%%

%%%

%%%
\end{document}